%% file: EquidistributionTannakaArticle.tex
\documentclass[oneside, 16pt]{amsart}
\usepackage[utf8]{inputenc}

\usepackage{wrapfig}
\usepackage{PersCMDS}

\usepackage{amssymb}
\usepackage{mathtools}
\usepackage{ mathrsfs }
\usepackage{tikz-cd}

\usepackage[hidelinks]{hyperref}
\usepackage{enumitem}

\usepackage[style=alphabetic]{biblatex}

\usepackage{subfiles}

\newtheorem{theorem}{Theorem}[section]
\newtheorem*{theorem*}{Theorem}
\newtheorem{corollary}[theorem]{Corollary}
\newtheorem{proposition}[theorem]{Proposition}
\newtheorem{lemma}[theorem]{Lemma}
\theoremstyle{definition}
\newtheorem{definition}[theorem]{Definition}
\newtheorem{remark}[theorem]{Remark}
\newtheorem{example}[theorem]{Example}

\newcommand{\SC}{\mathscr{C}}
\newcommand{\Perv}[2]{\text{Perv}_{#1}({#2})}
\newcommand{\PervUnr}[2]{\text{Perv}_{#1\text{-unr}}({#2})}
\newcommand{\PervInt}[2]{\text{Perv}_{#1\text{-uint}}({#2})}
\newcommand{\cvar}[1]{\mathscr{C}(#1)}
\newcommand{\FMSH}[1]{\text{FM}_{\chi, \Delta, !}(#1)}
\newcommand{\FMST}[1]{\text{FM}_{\chi, \Delta, *}(#1)}
\newcommand{\FMQ}[1]{\text{FM}_{\chi, \Delta, ?}(#1)}
\newcommand{\FMSHunr}[1]{\text{FM}_{\chi, !}(#1)}
\newcommand{\FMSTunr}[1]{\text{FM}_{\chi, *}(#1)}
\newcommand{\FMQunr}[1]{\text{FM}_{\chi, ?}(#1)}
\newcommand{\dualUni}{\widehat{U}}
\newcommand{\quotationMark}[1]{``#1"}
\renewcommand{\derCat}[3]{\text{D}_{#1}^b(#2)}

\title{Equidistribution for Tannakian Monodromy Groups}
\author[]{Beat Zurbuchen}

\begin{document}
	\begin{abstract}
		We prove that a perverse sheaf on a connected commutatitve algebraic group over a finite field is generically unramified. This implies an equidistribution theorem for Tannakian monodromy groups in previously unavailable generality. We also prove a stratification theorem for exponential sums in families indexed by a scheme and the characters of a connected commutative algebraic group. Our method is based on Tannakian categories introduced by Gabber and Loeser. This method naturally yields fiber functors. We also prove vanishing theorems over a connected commutative algebraic group, classify the negligible sheaves, and prove relative weak propagation theorems for tori. 
	\end{abstract}
	\maketitle
	\section{Introduction}
	Equidistribution of exponential sums indexed by the characters of a connected commutative algebraic group $G$ over a finite field $k$ were first studied in this generality by \cite{KowalskiTannaka}. The formalism of loc. cit. attaches a geometric and an arithmetic Tannakian monodromy group to a perverse sheaf on $G$ to control the equidistribution of the associated Fourier coefficients. This formalism yields equidistribution for continuous trace test functions, but not for arbitrary continuous class functions. Equidistribution is expected to hold more generally, for example, Katz proved equidistribution for all continuous class functions for Tannakian monodromy groups attached to perverse sheaves on $\BG_m$ in \cite{KatzConvEqui}. 
	
	The fundamental issue in the formalism of \cite{KowalskiTannaka} is that the constructed Tannakian categories are not equipped with fiber functors. They introduce the definition of a \textit{generically unramified sheaf}, i.e. a perverse sheaf whose associated Tannakian category admits sufficiently many fiber functors. Under this assumption, they are able to prove the expected equidistribution theorems in  \cite[Thm.~4.15,~Thm.~4.11]{KowalskiTannaka}. The goal of the present work is to construct the required fiber functors for these equidistribution theorems to hold: 
	\begin{theorem*}
	Let $M \in \Perv{}{G}.$ Then $M$ is generically unramified. 
\end{theorem*}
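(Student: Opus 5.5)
We recall the notion from \cite{KowalskiTannaka}: $M$ is generically unramified if there is a generic set of characters $\chi$ of $G$ (complement of a finite union of tacs, i.e. torsion translates of character groups of quotients) on which $M_\chi = M\otimes\mathcal{L}_\chi$ satisfies
\[
H^i_c(G_{\bar k},M_\chi)=0 \text{ for } i\neq 0, \qquad H^0_c(G_{\bar k},M_\chi)\xrightarrow{\ \sim\ } H^0(G_{\bar k},M_\chi),
\]
and such that $\chi\mapsto H^0_c(G_{\bar k},M_\chi)$ is a fiber functor on the Tannakian category $\langle M\rangle$ generated by $M$ modulo negligibles. The plan is to pass through the Gabber--Loeser picture: $\mathrm{Perv}(G)$ modulo negligible objects, with convolution $*_!$, is built from the Mellin transform valued in modules over the coefficient ring of the character variety $\mathscr{C}(G)$. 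I would first establish the relative vanishing theorem: for $M$ perverse, the Mellin transforms $\mathrm{FM}_{\chi,!}(M)$ and $\mathrm{FM}_{\chi,*}(M)$ (derived pushforward of $M\otimes\mathcal{L}_{\mathrm{univ}}$) have cohomology only in degree $0$ generically on $\mathscr{C}(G)$, and there they are locally free and equal via the forget-supports map. Using the Chevalley structure $0\to L\to G\to A\to 0$ ($L$ affine, $A$ abelian) I would reduce to the pieces $\mathbb{G}_m$, $\mathbb{G}_a$, unipotent, and abelian varieties, where the statements are classical (Gabber--Loeser for tori, Fourier--Deligne for $\mathbb{G}_a$, Weissauer/Krämer generic vanishing for abelian varieties), and glue by relative generic vanishing along $G\to A$ and a stratification of $\mathscr{C}(G)$ by tacs.

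Next, the fiber functors. For each object $N$ of $\langle M\rangle$ (finitely many up to the tensor operations needed), the bad locus of $\chi$ is contained in a finite union of tacs, but $\langle M\rangle$ has infinitely many objects, so one cannot simply take the complement of a union. The key step is to show that the bad locus for all of $\langle M\rangle$ is controlled by $M$ alone: by a Künneth formula $H^*_c(M_\chi *_! N_\chi)=H^*_c(M_\chi)\otimes H^*_c(N_\chi)$, the functor $\omega_\chi=H^0_c(G_{\bar k},-\otimes\mathcal{L}_\chi)$ is exact and tensor on the full subcategory of objects which are good at $\chi$, and subquotients of good objects are good (vanishing in nonzero degrees passes to subquotients of perverse sheaves by the long exact sequence once the forget-supports isomorphism holds, using $H^{-1}$ and $H^1$ being zero on both sides). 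Thus if $\chi$ is good for $M$ and $D(M)$, it is good for all tensor generated objects, and $\omega_\chi$ is a fiber functor with image independent of negligibles, since negligible objects have vanishing Mellin transform generically; this needs the classification of negligibles to ensure the quotient by negligibles is compatible.

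The main obstacle is the first step in the mixed case: nonsplit extensions of an abelian variety by a torus or unipotent group, where the character variety $\mathscr{C}(G)$ is not a product and the universal local system does not decompose. I would attack it with a relative weak propagation argument for tori: treat $G\to A$ as a family of tori over $A$, prove vanishing relative to $A$ uniformly in characters of $L$ outside a tac-union, then apply generic vanishing on $A$ to the resulting pushforward, keeping track that the exceptional locus is a finite union of tacs of $\mathscr{C}(G)$ (via constructibility of the stratification indexed by $\mathscr{C}(L)$ and descent to $\mathscr{C}(G)$).
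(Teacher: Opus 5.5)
The gap is in your second step, which is where the paper's real work lies. You claim that if $\chi$ is good (weakly unramified) for $M$ and $M^\vee$, then it is good for every object of $\langle M\rangle$ and $\omega_\chi$ is a fiber functor. This is false.

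\emph{Counterexample.} Take $G=\mathbb{G}_a$, $\rho$ a quadratic Kummer character, $j\colon\mathbb{G}_m\hookrightarrow\mathbb{A}^1$, and $M=j_!\mathscr{L}_\rho[1]$. Up to twists, $\mathrm{FT}(M)\cong j_!\mathscr{L}_\rho[1]$ and $M^\vee\cong M$. At the trivial character,
$H^*_c(\mathbb{G}_a,M)=H^*(\mathbb{G}_a,M)=H^*(\mathbb{G}_m,\mathscr{L}_\rho)[1]=0$, so $M$ and $M^\vee$ are good there in your sense. Yet:
\begin{itemize}
\item $M$ has Tannakian dimension $1$ (Gauss sums), so $\omega_0$ is not faithful.
\item Up to twists, $\mathrm{FT}(M*_!M)\cong j_!\overline{\mathbb{Q}}_\ell[1]$ and $\mathrm{FT}(M*_*M)\cong j_*\overline{\mathbb{Q}}_\ell[1]$. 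Hence the intermediate convolution is $M*M\cong\delta_0$, and $\omega_0(M*M)\neq 0=\omega_0(M)^{\otimes 2}$, although both $M$ and $M*M$ are good at $0$.
\end{itemize}

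\emph{Why your argument misses this.} Your Künneth formula is correct for $M*_!N$. But the tensor product of $\langle M\rangle$ is the intermediate convolution, which differs from $M*_!N$ by negligible constituents whose Fourier support can contain $\chi$. Here that constituent is the shifted constant sheaf, i.e.\ the inverse Fourier transform of the skyscraper in the kernel of $j_!\overline{\mathbb{Q}}_\ell[1]\to\overline{\mathbb{Q}}_\ell[1]$. Weak unramifiedness at a single point cannot see such constituents. Your long exact sequence argument for subquotients also only controls the extreme degrees. In dimension $\geq 2$ you only get $H^i(A_\chi)\cong H^{i-1}(C_\chi)$ in the middle degrees, and for non-affine $G$ there is no Artin vanishing at all. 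Shrinking to some generic set does not fix this by itself: each iterated convolution produces new negligible pieces with their own bad tacs. This infinite-union problem is exactly why FFK had to take generic unramifiedness as a hypothesis.

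\emph{What the paper does instead.} It replaces pointwise conditions by conditions on the complement of one fixed closed set.
\begin{itemize}
\item \emph{Definition.} $\chi$ is unramified if the Fourier--Mellin transforms $\mathrm{FT}(\pi_?(M_\chi))$ are lisse and perverse near $0$, with forget-supports isomorphism there. This excludes the example above, where $\mathrm{FT}(M)$ is not lisse at $0$; compare the dimension condition in Proposition~\ref{PROP_GysinUnramifiedCriterion}.
\item \emph{Vanishing.} Theorem~\ref{THM_VanishingTHM} gives a proper thin $\Delta$ with $M$ $\Delta$-unramified.
\item \emph{Fourier supports.} $\mathrm{FSupp}_!=\mathrm{FSupp}_*$ and compatibility with filtrations, both deduced from the relative classification of negligible sheaves.
\item \emph{Localization.} The $\Delta$-localized category, on which $H^*_c(G,(-)_\chi)$ is defined for all $\chi\notin\Delta$.
\item \emph{Isomorphism criterion.} Theorem~\ref{THM_LocalizationTheCriterion}, proved by relative propagation on tori. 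It genuinely uses the hypothesis on all of $\Delta^c$: a negligible constituent is detected at generic characters of a component of its support lying outside $\Delta$.
\item \emph{Perversity of convolution.} The decomposition and hard Lefschetz theorems (Lemma~\ref{LEM_ConvolutionSectionSemisimple}, Theorem~\ref{THM_isoconv}) show $M*_!N$ is perverse in $D^b_\Delta(G)$ with $\Delta$-unramified constituents. So a single $\Delta$ serves the whole category, and Lemma~\ref{LEM_unramifiednegligible} matches it with FFK's.
\end{itemize}

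Conversely, the obstacle you single out does not arise. Over a finite field, $G$ is isogenous to $A\times T\times U$ (Theorem~\ref{THM_AppendixStructureTheorem}), and unramifiedness is insensitive to isogenies (Proposition~\ref{PROP_RamifiedFiniteDescent}). Generic vanishing for a single object was already available in FFK.
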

	In fact, we prove that the ramified locus is contained in a thin proper subset. Conceptually, this says that a perverse sheaf on a group ramifies outside a thin closed subset. This is expected because ramification, in the classical setting, occurs along a closed subset. We write out one consequence of the results proved here for the equidistribution of class functions.
	\begin{theorem*} Let $M \in \Perv{}{G}$ be pure of weight zero. Suppose $\mathscr{X}\subseteq\widehat{G}$ is a complement of a thin proper subset such that $M$ is unramified at all $\chi \in\mathscr{X}.$ Assume the geometric and the arithmetic Tannakian monodromy group of $M$ agree. There exists a compact Lie group $K$ and conjugacy classes $\Theta_{M, \chi} \in K^\sharp$ such that all continuous class-functions $f\colon K^\sharp \rightarrow \BC$ satisfy
	\[
	\lim_{n \rightarrow \infty} \frac{1}{|\mathscr{X}(k_n)|}\sum_{\chi \in \mathscr{X}(k_n)} f(\theta_{M, \chi}) = \int_K f(g)dg. 
	\]
\end{theorem*}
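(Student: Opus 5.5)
The plan is to deduce the statement from the generically-unramified theorem above, combined with the equidistribution machinery of \cite{KowalskiTannaka}. Fix $M$ pure of weight zero. For $\chi\in\mathscr{X}$, unramifiedness means two things: the twisted cohomology $H^*_c(G_{\bar k},M\otimes\mathscr{L}_\chi)$ is concentrated in degree zero, and the forgetful functor $N\mapsto H^0_c(G_{\bar k},N\otimes\mathscr{L}_\chi)$ is a fiber functor on the Tannakian category $\langle M\rangle^{\mathrm{ari}}$ generated by $M$.

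\textbf{Step 1: conjugacy classes.} The fiber functor $\omega_\chi$ identifies $\langle M\rangle$ with $\mathrm{Rep}(G^{\mathrm{ari}}_M)$. Frobenius acts on $\omega_\chi$ compatibly with tensor products, so it defines an element $\mathrm{Fr}_{\chi}\in G^{\mathrm{ari}}_M(\overline{\mathbb{Q}}_\ell)$. This element is well defined up to conjugacy, since fiber functors are unique up to isomorphism. By purity of weight zero, the Frobenius eigenvalues are pure. Choose an isomorphism $\overline{\mathbb{Q}}_\ell\cong\mathbb{C}$ and let $K$ be a maximal compact subgroup of $G^{\mathrm{ari}}_M(\mathbb{C})=G^{\mathrm{geo}}_M(\mathbb{C})$; here we use the assumption that the two groups agree. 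The semisimple part of $\mathrm{Fr}_\chi$ has unitary eigenvalues in every representation, hence lies in a compact subgroup. It is therefore conjugate into $K$, and this defines $\theta_{M,\chi}\in K^\sharp$. For $\chi\in\mathscr{X}(k_n)$ we use $\mathrm{Fr}_{k_n}$.

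\textbf{Step 2: reduction to characters.} By Peter--Weyl, the traces $\mathrm{tr}\,\rho$ of irreducible representations $\rho$ of $K$ span a dense subspace of the continuous class functions for the sup norm. Since the measures involved are probability measures, it suffices to prove the limit for $f=\mathrm{tr}\,\rho$. Each such $\rho$ corresponds to a subquotient $N_\rho$ of a tensor construction on $M$ and its dual, so $N_\rho\in\langle M\rangle$. For $\chi\in\mathscr{X}$ the fiber-functor property gives
\[ \mathrm{tr}\,\rho(\theta_{M,\chi})=\mathrm{tr}\bigl(\mathrm{Fr}_{k_n}\mid H^0_c(G_{\bar k},N_\rho\otimes\mathscr{L}_\chi)\bigr). \]
This equals, up to sign, the exponential sum $\sum_{x\in G(k_n)}\chi(x)\,t_{N_\rho}(x)$ by the trace formula.

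\textbf{Step 3: orthogonality estimate.} Sum over all $\chi\in\widehat{G}(k_n)$ and use orthogonality of characters. The full sum equals $|G(k_n)|$ times a stalk contribution at the identity. Equivalently, by the Plancherel-type argument of \cite{KowalskiTannaka}, it is controlled by the multiplicity of the trivial representation in $\rho$ (geometric invariants), up to $O(|G(k_n)|\,q^{-n/2})$; here purity and Deligne's theorem are used. For nontrivial $\rho$ the arithmetic Frobenius acts trivially on geometric invariants only if they vanish, and equality of the groups gives that vanishing. Hence the average over $\widehat{G}(k_n)$ tends to $\int_K\mathrm{tr}\,\rho=\delta_{\rho,1}$.

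\textbf{Step 4: removing the thin set, the main obstacle.} We must pass from all characters to $\mathscr{X}(k_n)$. Two facts are needed. First, the complement of $\mathscr{X}$ is thin, so by Lang--Weil-type counting in \cite{KowalskiTannaka} its proportion of $k_n$-points is $O(q^{-n/2})$ or at least $o(1)$. Second, at the removed characters the terms must be uniformly bounded. For this I would invoke the Betti-number bounds: $\sum_i\dim H^i_c(G_{\bar k},N_\rho\otimes\mathscr{L}_\chi)$ is bounded independently of $\chi$ and $n$, by the stratification theorem for exponential sums proved later in the paper. Together with weight bounds $\le 0$, this shows the removed terms contribute $o(1)$. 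This is exactly the point where the thinness of the ramified locus, rather than mere density of $\mathscr{X}$, is essential.
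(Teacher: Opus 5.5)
Your overall architecture is the one the paper uses. Its proof of Theorem~\ref{THM_Equi2} runs the argument of \cite[Thm.~4.15]{KowalskiTannaka}: Weyl's criterion, the trace formula at unramified characters, orthogonality over all of $\widehat{G}(k_n)$, and then removal of the excluded characters. The problem is Step~4, and it is a real gap, not a detail.

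The terms you must remove are the full sums $S(N_\rho,\chi)=\sum_{x\in G(k_n)}\chi(x)t_{N_\rho}(x)$ for $\chi\notin\mathscr{X}$, and these are \emph{not} uniformly bounded. Bounded Betti numbers are fine. But Deligne's theorem only says that $H^i_c(G_{\overline{k}},N_\rho\otimes\mathscr{L}_\chi)$ has weights $\le i$, not $\le 0$. At ramified characters the cohomology is no longer concentrated in degree $0$, and positive-degree classes of positive weight do occur.

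For instance, let $M=i_*\overline{\mathbb{Q}}_\ell(1/2)[1]$, with $i$ the inclusion of the line $x+y=1$ into $\mathbb{G}_m^2$. This $M$ is irreducible, pure of weight $0$, and has Euler characteristic $1$. Yet $H^1_c(\mathbb{G}_m^2,M)$ is nonzero of weight $1$, and $S(M,1)=-(|k_n|-2)|k_n|^{-1/2}$.

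A priori one only has $|S(N_\rho,\chi)|\ll|k_n|^{\dim G/2}$. Against an exceptional proportion $O(|k_n|^{-1})$, this bounds the removed contribution by $O(|k_n|^{\dim G/2-1})$, which does not tend to $0$ once $\dim G\ge 2$.

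The missing idea is a quantitative trade-off: the sums may only be large on sets whose density is correspondingly small. This is exactly why the paper's proof cites Theorem~\ref{THM_stratsfourier}. Apply it to each $N_\rho$, which is pure of weight $0$.
\begin{itemize}
\item It gives thin strata $\Delta_i$ of codimension $\ge i$ with $S(N_\rho,\chi)\ll|k_n|^{(i-1)/2}$ for $\chi\notin\Delta_i$.
\item Proposition~\ref{PROP_thinsubsetsestimate} gives $|\Delta_i\cap\widehat{G}(k_n)|\ll|k|^{-ni}|\widehat{G}(k_n)|$.
\item So the excluded characters in $\Delta_i\setminus\Delta_{i+1}$, for $i\ge 1$, contribute $\ll|k_n|^{-i}\cdot|k_n|^{i/2}=|k_n|^{-i/2}$ to the normalized average.
\item The excluded characters outside $\Delta_1$ have bounded sums and lie in a proper thin set, so they contribute $O(|k_n|^{-1})$.
\end{itemize}
You do cite the stratification theorem, but only for the weaker Betti-number statement rather than this weighted bound, which is what is actually needed.

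Two smaller points.
\begin{itemize}
\item That every $N_\rho\in\langle M\rangle$ is again unramified at each $\chi\in\mathscr{X}$ is not part of the definition of unramified. You need it for the trace formula in Step~2 to return exactly $\mathrm{tr}\,\rho(\theta_{M,\chi})$, and it is also what makes $\omega_\chi$ a fiber functor. In the paper this is Theorems~\ref{THM_isoconv} and~\ref{THM_fiberfunc}, which use that the ramified locus is closed.
\item The unitarity claim in Step~1 uses the isomorphism $H^0_c\cong H^0$ that comes from unramifiedness, not purity of $M$ alone.
\end{itemize}
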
 
The conjugacy classes $\Theta_{M, \chi}$ in the above theorem are related to the perverse sheaf $M$ via the Tannakian formalism in the following way. For each $\chi \in \mathscr{X}$ and each object $M'$ in the Tannakian category generated by $M$ under convolution, we have the vector space $H^0_c(G, M'_\chi)\otimes_\iota\BC$ on which $K$ acts naturally. The conjugacy class $\Theta_{M,\chi}$ acts coincides with that of the Frobenius endomorphism. The Lefschetz trace formula often yields a concrete expression for the trace of such an operator. The terms with $\chi \in \widehat{G}$ and $\chi \notin \mathscr{X}$ can be bounded by the generalized Riemann hypothesis and the stratification theorem (see below). This allows one to bound these terms with a power saving. Thus Tannakian monodromy groups become a much more flexible tool for the study of families of exponential sums indexed by the characters of a group.
	
This equidistribution statement can be generalized in various directions.  We refer the interested reader to \cite[Ch.~4]{KowalskiTannaka}, especially \cite[Thm.~4.15,~Thm.~4.11]{KowalskiTannaka}. In this article, the main equidistribution theorems are Theorem \ref{THM_Equi1} and Theorem \ref{THM_Equi2}.
\subsection{Statements}
The construction of the fiber functors requires a variety of theorems which are of independent interest. The goal of this section is to summarize these theorems.  We fix notations: the letter $G$ denotes a connected commutative algebraic group  over a finite field $k$ and we simply call $G$  a \textit{group}. The letter $\overline{k}$ denotes a fixed algebraic closure of $k$ and $k_n \subseteq \overline{k}$ is the unique extension of $k$ of degree $n \geq 1$ in $\overline{k}$. The letter $X$ always denotes a scheme of finite type over $k$, $S$ a semiabelian variety, $T$ a torus, and $U$ a unipotent (commutative) group over $k$.
	\subsubsection{Stratification theorems}In \cite{KowalskiTannaka}, the authors apply their vanishing theorems to prove stratification theorems for exponential sums indexed by the characters of a group. We can also utilize the vanishing theorem (see below) to obtain stratification theorems for exponential sums.  We obtain two improvements over the stratification theorems in loc. cit. First, the ramification loci in the stratification theorem are closed. This is one of the crucial improvements over \cite{KowalskiTannaka} that allows for the construction of fiber functors. Secondly, the stratification theorems hold uniformly in a family indexed by a scheme $X$. To our knowledge, this is the first stratification theorem that holds uniformly in families for groups of the form $S\times U$. 
\begin{theorem*}
	Let $X$ be of pure dimension $d$. We assume $G = S\times U$. Let $K \in \derCat{c}{G\times X}{\Qbarl}$ be a complex of weight $\leq 0$ and with perverse amplitude $\leq d$. There exists:
	\begin{enumerate}
		\item A stratification by closed subsets \[\widehat{U}\times X = Z_{0} \supseteq Z_1 \supseteq  \ldots \supseteq Z_{n - 1} \supseteq Z_n = \emptyset \]
		such that for all $0 \leq i \leq n$ the subset $Z_i$ has codimension $\geq i$. 
		\item A stratification by thin subsets
		\[
		\cvar{S} = \Delta_{0} \supset \Delta_1 \supseteq \ldots \supseteq \Delta_{n - 1} \supseteq \Delta_n = \emptyset
		\]
		such that for all $0\leq j \leq n$ the subset $\Delta_j$ has codimension $\geq j$.
	\end{enumerate}
	Let $m \geq 1$ and $0\leq i, j\leq n - 1$.  For all $\chi \in \widehat{G}(k_m)$ and $x \in X(k_m)$ with $(\pi_U^*(\chi), x) \notin Z_{i + 1}$ and $\pi_S^*(\chi) \notin \Delta_{j + 1}$, we have
	\[
	\sum_{t \in G(k_m)}\chi(t)t_K(t, x)\ll_K |k_m|^{(i + j)/2}.
	\]
\end{theorem*}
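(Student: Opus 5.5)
The plan is to turn the character sum into a trace of Frobenius on compactly supported cohomology of a twisted complex, and then to bound the weights and the degrees of that cohomology. Grothendieck--Lefschetz gives
\[
\sum_{t\in G(k_m)}\chi(t)t_K(t,x)=\sum_q(-1)^q\operatorname{Tr}\bigl(\mathrm{Frob}_{k_m}\mid H^q_c(G_{\bar k},(K\otimes \mathcal{L}_\chi)|_{G\times x})\bigr).
\]
Since $K$ has weight $\le 0$ and $\mathcal{L}_\chi$ is pure of weight $0$, Deligne's Weil II bounds the weights of $H^q_c$ by $q$. Each eigenvalue therefore contributes at most $|k_m|^{q/2}$. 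It then suffices to prove two things. First, $H^q_c$ vanishes for $q>i+j$ when $(\pi_U^*\chi,x)\notin Z_{i+1}$ and $\pi_S^*\chi\notin\Delta_{j+1}$. Second, the total Betti numbers are bounded uniformly in $\chi$ and $x$.

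For the uniform bound I would use the decomposition $G=S\times U$ and treat the two factors separately. On the unipotent factor, the characters of $U$ are the points of the Serre dual $\widehat U$, which is a unipotent group. Here the twist by $\mathcal{L}_\psi$, $\psi\in\widehat U$, is realized by a Deligne--Fourier transform. Concretely, form $R\mathrm{pr}_{!}(K\otimes \mathcal{L}_{\mathrm{univ}})$ on $S\times\widehat U\times X$, where $\mathcal{L}_{\mathrm{univ}}$ is the universal character sheaf on $U\times\widehat U$. This is a constructible complex, so there is a finite stratification of $\widehat U\times X$ on which its cohomology sheaves are lisse. Proper base change then bounds Betti numbers uniformly in the $U$- and $X$-directions. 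On the $S$-factor, the characters of $S$ do not form a variety but only the space $\cvar{S}$. For that factor I would use the Gabber--Loeser / Kowalski--Forey--Fresán--Katz type results: the Mellin transform of a constructible complex on $S$ has generic cohomological amplitude, together with uniform Betti bounds by Katz's and Sawin's results. Applying these fibrewise in the lisse strata for $(\psi,x)$ gives the uniform Betti bound.

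For the vanishing I would argue by induction on codimension, exploiting the perverse amplitude. The perverse amplitude $\le d$ on $G\times X$ with $\dim X=d$ means the restriction to a general fibre $G\times x$ is semiperverse, and on a codimension-$i$ locus of $x$ it has perverse amplitude $\le i$. On the $U$-side, the Fourier transform is perverse t-exact up to shift. Its perverse amplitude $\le d$ on $\widehat U\times X$ translates, through the usual estimate of cohomology sheaf supports of a semiperverse complex, into the statement that $\mathcal{H}^{q}$ is supported in codimension $\ge q$. This determines the closed sets $Z_i$, taken as the supports of the higher cohomology sheaves after relative Artin vanishing. On the $S$-side, the generic vanishing theorem, assumed from the paper's earlier sections in its relative form for semiabelian $S$, says that for $\chi$ outside a thin set of codimension $\ge j+1$ the group $H^q_c(S,N_\chi)$ vanishes for $q>j$ when $N$ is semiperverse. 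The sets $\Delta_j$ are obtained by iterating this: take the union of the thin "bad" sets of the finitely many perverse cohomology sheaves coming from the strata of the $U$-stratification, each with the appropriate codimension. Combining the two factors via Künneth and the Leray spectral sequence for $G\to S$ shows that the degrees add, giving vanishing above $i+j$.

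I expect the main obstacle to be the $S$-side uniformity in families. The thin sets $\Delta_j$ must be independent of $x\in X$ and $\psi\in\widehat U$, and they must have codimension exactly $\ge j$ rather than merely being proper. This requires a relative generic vanishing theorem over a base, for which I would try a relative Mellin transform over $\widehat U\times X$, stratified so that it is lisse in the base directions. Constructibility of the ramification loci of the relative Mellin transform then gives finitely many thin sets, and their union works uniformly.
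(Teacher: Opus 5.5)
Your architecture matches the paper's. You reduce to a degree bound by Weil~II and Fourier transform along $U$, so that $U$-characters become base points $\psi\in\widehat{U}$. You then get the $Z_i$ from fibrewise perverse amplitude over $\widehat{U}\times X$, get the $\Delta_j$ from generic vanishing on $S$ stratified by tacs, and add the two defects to get $i+j$.

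The gap is the step you flag yourself, and the mechanism you propose for it does not work. What must be shown is that the thin sets $\Delta_j$ can be chosen independently of $(\psi,x)$. For each fixed $\chi\in\mathscr{C}(S)$ the relative Mellin coefficient $\pi_{!}(\mathrm{FT}(K)_\chi)$ on $\widehat{U}\times X$ is constructible, so its non-lisse locus is a proper closed subset. But that subset depends on $\chi$, and a union over infinitely many $\chi$ need not lie in any proper closed subset. So ``constructibility of the ramification loci'' gives neither a single stratification of the base nor finitely many thin sets. The same objection applies to your $Z_i$, which you describe as supports of higher cohomology sheaves on $\widehat{U}\times X$: the only such complexes in play are the Mellin coefficients, and they depend on $\chi$. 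The $\chi$-independent input is the fibrewise perverse amplitude of $\mathrm{FT}(K)$ over $\widehat{U}\times X$ (Proposition~\ref{PROP_PervRelativePerversity}).

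The missing idea is tameness. Every $\mathcal{L}_\chi$ with $\chi\in\mathscr{C}(S)$ factors through $\pi_1^t(S_{\overline{k}})$, so it is tamely ramified along one fixed normal crossings compactification $\overline{S}$ (Lemma~\ref{LEM_charsabtame}). Hence $(\mathrm{FT}(K)_\chi)_{\chi\in\mathscr{C}(S)}$ is a tame family, constructible along a single stratification. Orgogozo's uniform theorem (Theorems~\ref{THM_StratificationOrgogozosTheorem} and~\ref{THM_StratificationFourier}) then gives one dense open subset of $\widehat{U}\times X$, and by induction one stratification, along which all $\pi_{!}(\mathrm{FT}(K)_\chi)$ and $\pi_{*}(\mathrm{FT}(K)_\chi)$ are lisse and commute with base change, simultaneously for every $\chi$. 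This is also why the $U$-direction must be Fourier transformed away first: Artin--Schreier sheaves are wild, and no such uniformity holds in $\psi$.

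The paper then picks finitely many test points meeting every connected component of every stratum. It applies the absolute stratification on $S$ to the fibres at those points and takes the finite union of the resulting thin sets. Lisseness and base change then spread the degree bounds to every $(\psi,x)$ in a stratum.

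A second point: the absolute $S$-side statement you assume (vanishing above degree $j$ off a thin set of codimension $\geq j+1$) is available earlier only for $j=0$ (Theorem~\ref{THM_VanSemiAb}). The paper proves the general case by induction on $\dim S$. It decomposes the exceptional thin set into tacs $\chi_i\cdot\pi_i^*\mathscr{C}(S_i)$, rewrites $H^*_?(S,M_\chi)$ as $H^*_?(S_i,(\pi_{i?}M)_{\chi'})$ by the projection formula, and uses the amplitude bounds for $\pi_{i?}$.
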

For estimates on the size of proper thin subsets, the reader is referred to Proposition \ref{PROP_thinsubsetsestimate}. We remark that, in the terminology of \cite[Def.~1.23]{KowalskiTannaka}, a thin subset $\Delta \subseteq\cvar{S}$ is a finite union of tacs (because $S$ is a semiabelian variety). For arbitrary groups, which are not a product, we record an absolute stratification theorem in Theorem \ref{THM_stratsfourier2} 

\subsubsection{Vanishing theorems} In order to prove the existence of the fiber functors and the above stratification theorem, stronger vanishing theorems are required. In particular, constructing fiber functors requires us to prove that the vanishing loci are {closed.} Since vanishing also has to be proven for the unipotent part, we have to prove {relative vanishing theorems for semiabelian varieties.}  Theorem \ref{THM_VanSemiAb} is the main vanishing theorem of this article:
\begin{theorem*}
	Let $M \in \Perv{}{S\times X}$. There exists a dense open subset $U \subseteq X$ and a proper thin subset $\Delta \subset \cvar{S}$ such that:
	\begin{enumerate}
		\item The complexes $\pi_{X?}(M_\chi)$ are lisse over $U$ and their formation commutes with arbitrary base change $X' \rightarrow U$ for all $\chi \in \mathscr{C}(S)$.
		\item The forget supports map
		\[
		\pi_{X!}(M_\chi) \rightarrow \pi_{X*}(M_\chi)
		\]
		is an isomorphism for all $\chi \notin \Delta$.
		\item The complexes $\pi_{X?}(M_\chi)$ are perverse for all $\chi \notin\Delta.$
	\end{enumerate}
\end{theorem*}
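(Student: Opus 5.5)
Throughout, $\pi_X\colon S\times X\to X$ denotes the projection, and for a character sheaf $\mathscr{L}_\chi$ with $\chi\in\cvar{S}$ we write $M_\chi = M\otimes \pi_S^*\mathscr{L}_\chi$. The plan is to reduce to the two basic cases $S=T$ a torus and $S=A$ an abelian variety, using the extension $1\to T\to S\to A\to 1$, and to treat the family parameter $X$ by generic base change.

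\textbf{Part (1).} I would work on the \emph{universal} family rather than character by character. Let $\mathcal{L}_{\mathrm{univ}}$ be the universal character sheaf on $S\times\cvar{S}$. For a torus it is the Mellin sheaf $\Lambda$ over $\Qbarl[\pi_1^t(T)]$; in general one uses a Fourier--Mukai type kernel. Form
\[
\mathcal{K} = \pi_{X\times\cvar{S}\,?}\bigl(M\boxtimes\mathcal{L}_{\mathrm{univ}}\bigr)
\]
on $X\times\cvar{S}$. By Deligne's generic base change theorem applied to $S\times X\to X$ (with $M$ fixed), there is a dense open $U\subseteq X$ over which the formation of $\pi_{X!}$ and $\pi_{X*}$ of $M$ twisted by the universal local system commutes with arbitrary base change, and the cohomology sheaves are lisse over $U$. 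Specializing to each $\chi$ then gives (1). The lisse-ness comes from the fact that the cohomology sheaves are modules over the group ring, and after shrinking $U$ they are lisse along $X$. This needs generic base change uniformly for the pro-étale coefficient system, which I would get from the finiteness of tame fundamental groups modulo $\ell$-adic completion, or alternatively by first passing to $S[\ell^n]$-covers.

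\textbf{Parts (2) and (3).} Over the generic point $\eta$ of $X$ (or after restricting to $U$), the absolute vanishing theorem for semiabelian varieties applies to $M_{\bar\eta}$: the cone of $\pi_!(M_\chi)\to\pi_*(M_\chi)$ vanishes outside a proper thin $\Delta$. For tori this is Gabber--Loeser; for abelian varieties it is Krämer--Weissauer; for semiabelian varieties one combines them. By the base change compatibility from (1), this cone on $U$ has fibers equal to the absolute cones. Hence $\pi_{X!}(M_\chi)\to\pi_{X*}(M_\chi)$ is an isomorphism over $U$ for $\chi\notin\Delta$.

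The statement, however, asserts the isomorphism on all of $X$. To handle this I would argue by noetherian induction on $X$. Stratify $X$ as $U\sqcup(X\setminus U)$ and apply the result to $i^*M$, $i^!M$ on $S\times(X\setminus U)$, shifted to be perverse. Taking the union of the finitely many thin sets stays thin. The triangles $j_!j^*\to\mathrm{id}\to i_*i^*$, together with the fact that $\pi_X$ commutes with $i_*$ and $j_!$, then give (2) for $M$ itself. Perversity in (3) follows from (2) by Artin vanishing. Since $\pi_X$ is affine on the torus part, $\pi_{X*}$ is right t-exact and $\pi_{X!}$ is left t-exact. For the abelian part, $\pi$ is proper, and one uses the generic vanishing theorem to get both exactness properties outside a thin set. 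When $!=*$, both bounds hold, so the object is perverse.

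The main obstacle is making the bad set $\Delta$ \emph{independent of $x\in X$}: a thin set valid at the generic point need not control special fibers. The noetherian induction above handles this only if the vanishing for the restricted complexes is compatible with the forget-supports map of $M$. I expect the real work to be the torus case, done relatively via the Mellin transform. There I would show that the cone of the universal forget-supports map is a complex on $X\times\cvar{T}$ whose support projects to a proper closed, hence thin, subset of $\cvar{T}$, using constructibility of the relative Mellin transform and dimension counting. The abelian factor is handled by the same argument with the Fourier--Mukai transform, and the two are glued through the fibration $S\to A$.
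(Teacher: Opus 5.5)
\textbf{There is a genuine gap in (2) and (3), at the step from $U$ to all of $X$.}

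Your noetherian induction uses that $\pi_X$ commutes with $j_!$. That is true only for $\pi_{X!}$. For $\pi_{X*}$, the natural map $j_!\pi_{U*}F\to\pi_{X*}\bigl((\mathrm{id}\times j)_!F\bigr)$ is an isomorphism exactly when $i^*\pi_{X*}\bigl((\mathrm{id}\times j)_!F\bigr)=0$. That is base change for the non-proper morphism $\pi_X$ along $Z=X\setminus U\hookrightarrow X$, which is precisely what generic base change does not give you on $Z$.

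Concretely, let $C$ be the cone of $\pi_{X!}(M_\chi)\to\pi_{X*}(M_\chi)$. You know $C|_U=0$ and $i^*\pi_{X!}(M_\chi)=\pi_{Z!}(i^*M_\chi)$. But in general $i^*\pi_{X*}(M_\chi)\neq\pi_{Z*}(i^*M_\chi)$. Induction on the perverse cohomology sheaves of $i^*M$ and $i^!M$ (these are not perverse up to shift) at best shows that $i^*$ of the forget-supports map is a split monomorphism and $i^!$ of it a split epimorphism. That does not force $C=0$.

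Your fallback for tori does not close the gap. The step ``proper closed, hence thin'' is false. Closed subsets of $\cvar{T}$ in the standard topology need not be finite unions of tacs: for $T=\mathbb{G}_m$, a set with one point in each of infinitely many components is closed but lies in no proper thin set. Thinness of bad loci is a genuine structure theorem, and you give neither it nor closedness of the projected support. A smaller point: the absolute vanishing theorems available in this paper are over finite fields. They should be applied at a closed point of $U$ and propagated by lisseness, not at $\bar\eta$.

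\textbf{What is missing, and how the paper proceeds.} The missing ingredient is a torus vanishing theorem relative over an \emph{arbitrary} base. The paper imports this from \cite{KowalskiTannaka} as Theorem \ref{THM_VnshTorus}. It proves only the abelian case by induction on $\dim X$ (Theorem \ref{THM_VanishingAbelianRel}). There your noetherian strategy does work, because $A$ is proper: proper base change computes $\pi_{X*}$ on the boundary divisor, Artin vanishing bounds $i^*M$, and duality gives the other half.

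The paper then passes by an isogeny to $S=A\times T$. The extension $1\to T\to S\to A\to 1$ alone gives only the factorization through $A\times X$. The isogeny also gives a proper map to $T\times X$ and splits the characters. The paper then goes around the square:
\begin{itemize}
\item Relative torus vanishing for $A\times T\times X\to A\times X$, pushed forward along the proper $A\times X\to X$, gives (2) for $\chi_T\notin\Delta_T$.
\item Relative abelian vanishing makes the pushforward to $T\times X$ perverse for $\chi_A\notin\Delta_A$. Artin vanishing for the affine $T\times X\to X$ then gives amplitude $\geq 0$ for $\pi_{X!}$ and $\leq 0$ for $\pi_{X*}$.
\item Hence (3) holds off $\Delta_T\times\cvar{A}\cup\cvar{T}\times\Delta_A$.
\end{itemize}

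\textbf{A second issue in (1).} Your universal-coefficient argument works on one component $\chi_f\cdot\mathscr{C}^\ell(S)$ at a time. But there are countably many prime-to-$\ell$ components, and a countable intersection of dense opens need not contain a dense open. Passing to $S[\ell^n]$-covers does not see prime-to-$\ell$ characters. The required uniformity comes from tameness of every $\mathscr{L}_\chi$ along the normal crossings compactification (Lemma \ref{LEM_charsabtame}), combined with Orgogozo's uniform theorem (Theorem \ref{THM_StratificationOrgogozosTheorem}).
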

We remark, that point (1) holds for all characters on the semiabelian variety $S$.  The terminology of a thin subset is borrowed from \cite{Kr_mer_2015}.

\subsubsection{Relative weak propagation} In order to construct the desired fiber functors, we have to prove a relative weak propagation theorem. The reason for this is that the relative propagation theorems amplify the vanishing via a spectral sequence argument. The idea is inspired by \cite{liumaximwangpropagation}, where propagation theorems are established in the absolute case for a semiabelian variety in a stronger form. To our knowledge, propagation theorems were only considered in the absolute case before and relative propagation theorems have not been known to hold. Our first propagation theorem is Theorem \ref{THM_Propagation1}.
\begin{theorem*}
	Let $M \in \Perv{}{T\times X}$. Suppose there is a closed subset $\Delta \subseteq\cvar{T}$ such that
	\[
	\pervCoh{0}{\pi_{X!}(M_\chi)} = 0.
	\]
	for all $\chi \notin \Delta$. Each perverse subquotient $M'$ of $M_{\overline{k}}$ satisfies
	\[
	\pi_{X?}(M'_\chi) = 0
	\]
	for all $\chi \notin \Delta$ (and either choice of support). 
\end{theorem*}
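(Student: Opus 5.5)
The plan is to reduce to a statement about the Mellin transform along the torus, viewed as a family over the character variety. The main steps are: exploit left exactness of the lowest perverse cohomology, pass to a universal coefficient computation over the group algebra of $T$, and then use an Euler characteristic argument to move the generic vanishing from the lowest degree to all degrees.

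\textbf{Step 1: amplitude.} First record the perverse amplitude. Since $\pi_X\colon T\times X\to X$ is affine, $\pi_{X!}$ is left $t$-exact. Since its fibres have dimension $r=\dim T$, it sends $\Perv{}{T\times X}$ into ${}^p\mathrm{D}^{[0,r]}$. The same holds for $\pi_{X*}$ with amplitude $[-r,0]$. Hence $M'\mapsto \pervCoh{0}{\pi_{X!}(M'_\chi)}$ is the lowest perverse cohomology of a left $t$-exact functor, so it is left exact on perverse sheaves. Consequently the hypothesis passes to every perverse subobject of $M_{\overline{k}}$.

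\textbf{Step 2: the simple case, then subquotients.} It then suffices to prove: if $N$ is a perverse sheaf with $\pervCoh{0}{\pi_{X!}(N_\chi)}=0$ for all $\chi\notin\Delta$, then $\pi_{X!}(N_\chi)=0$ for $\chi\notin\Delta$. Granting this, subobjects of $M_{\overline{k}}$ are handled directly. A quotient $M''=M_{\overline{k}}/M'$ is handled by the long exact sequence of ${}^p\mathcal{H}^i\pi_{X!}$: vanishing of $\pi_{X!}(M'_\chi)$ gives $\pervCoh{0}{\pi_{X!}(M''_\chi)}=0$, and we then apply the claim again. For the $*$-support we use the same argument for the highest perverse cohomology of $\pi_{X*}$, or Verdier duality (with $\chi\mapsto\chi^{-1}$, after replacing $\Delta$ by $\Delta^{-1}$). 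Alternatively, we first show that generically $\pi_{X!}=\pi_{X*}$ by Theorem \ref{THM_VanSemiAb}(2).

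\textbf{Step 3: the universal coefficient argument.} For the claim itself, I would induct on $r$, writing $T=\BG_m\times T'$ and viewing $T'\times X$ as the new base. This reduces the problem to $T=\BG_m$, provided the closed set $\Delta$ behaves well under restriction to the slices $\chi'\times\cvar{\BG_m}$; I would check this using the structure of thin subsets. For $\BG_m$, form the universal rank one local system $\mathcal{L}$ on $\BG_m$ with coefficients in $\Lambda=\Qbarl[[t-1]]$, or in the coordinate ring of a component of $\cvar{\BG_m}$. Set $K=\pi_{X!}(N\otimes\mathcal{L})$, a complex of $\Lambda$-modules of perverse amplitude $[0,1]$, with $\pi_{X!}(N_\chi)=K\otimes^L_\Lambda\Lambda/\mathfrak m_\chi$. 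The Tor spectral sequence gives
\[
\pervCoh{0}{\pi_{X!}(N_\chi)} \;\supseteq\; \mathrm{Tor}_1^\Lambda\bigl(\pervCoh{1}{K},\Lambda/\mathfrak m_\chi\bigr),
\]
together with a quotient equal to $\pervCoh{0}{K}\otimes\Lambda/\mathfrak m_\chi$. Vanishing for $\chi\notin\Delta$ then forces $\pervCoh{0}{K}=0$ near such $\chi$ by Nakayama, and shows that $\pervCoh{1}{K}$ is torsion-free there.

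\textbf{Step 4: the main obstacle.} The hardest point is to show that $\pervCoh{1}{K}$ is generically zero, not merely torsion-free. The plan is an Euler characteristic argument: its generic rank equals the fibrewise Euler characteristic of $N$ along $\BG_m$, up to sign. One would then need to show that this Euler characteristic vanishes once the lowest cohomology vanishes generically. On a geometric fibre over $x\in X$ this is the Gabber--Loeser generic vanishing theorem on $\BG_m$: $H^i_c(\BG_m,N_{x,\chi})=0$ for $i\neq 0$ generically, with $\chi_c\geq 0$. This has to be combined with the base change from Theorem \ref{THM_VanSemiAb}(1) over a dense open subset of $X$, followed by a noetherian induction on $X$.
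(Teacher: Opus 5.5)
Steps 1 and 2 are fine for the $!$-statement. The genuine gap is the reduction to $T=\mathbb{G}_m$ in Step 3, and it has nothing to do with how $\Delta$ meets the slices.

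To apply the rank-one case to $p\colon \mathbb{G}_m\times(T'\times X)\to T'\times X$, you need ${}^p\mathcal{H}^0(p_!(N_{\chi_1}))=0$ for $\chi_1$ off a proper closed subset of $\cvar{\mathbb{G}_m}$. Your hypothesis only controls ${}^p\mathcal{H}^0$ of the composite pushforward to $X$. In the perverse Leray spectral sequence $E_2^{ab}={}^p\mathcal{H}^a\pi'_{X!}\bigl({}^p\mathcal{H}^b(p_!N_{\chi_1})_{\chi'}\bigr)$ with $b\in\{0,1\}$, the hypothesis only kills $E_2^{00}$. The row $b=1$ feeds into ${}^p\mathcal{H}^{\geq 1}\pi_{X!}(N_\chi)$, on which nothing is assumed, and pushing along $T'$ first has the same defect.

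Here is an example where the reduction fails outright. Take $X$ a point, $T=\mathbb{G}_m^2$, and $N=\iota_{1*}\overline{\mathbb{Q}}_\ell[1]\oplus\iota_{2*}\overline{\mathbb{Q}}_\ell[1]$, with $\iota_1,\iota_2$ the two coordinate embeddings. The hypothesis holds with $\Delta$ the union of the two codimension-one tacs $\{\chi:\iota_j^*\chi=1\}$. Yet for every decomposition $T\cong\mathbb{G}_m\times T'$, some $p\circ\iota_j$ is an isogeny onto $T'$. Then ${}^p\mathcal{H}^0(p_!(N_{\chi_1}))$ contains a nonzero perverse sheaf $(p\iota_j)_*\mathscr{L}[1]$, with $\mathscr{L}$ a rank-one local system, for \emph{every} $\chi_1$. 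So the rank-one statement is vacuous there. Your Tor/Nakayama argument uses in an essential way that $\Lambda$ is one-dimensional and that $K$ has amplitude $[0,1]$, so it does not extend to rank $r$ and this reduction cannot be bypassed.

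Two further points are not covered.

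\emph{The $*$-statement.} It does not follow from the $!$-statement as you suggest. Verdier duality turns it into a $!$-statement for $\mathrm{inv}^*D(M')$, for which you have no hypothesis. Theorem \ref{THM_VanSemiAb}(2) identifies $\pi_{X!}$ and $\pi_{X*}$ only off a proper thin subset, not on all of $\cvar{T}\setminus\Delta$. What is actually needed is that, for each constituent, the $!$- and $*$-nonvanishing loci coincide exactly.

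\emph{Step 4.} It is only sketched. The sign $\chi_c\geq 0$ is available only where the fibres of $N$ are perverse, and this need not hold on the closed strata of your noetherian induction.

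The paper avoids all of this without inducting on $\dim T$. Your Euler-characteristic idea corresponds to its first step. By Theorem \ref{THM_VnshTorus} and the left exactness from your Step 1, each term $M_i$ of a composition series has $\pi_{X!}((M_i)_\chi)=0$ for generic $\chi$, so every constituent is relatively negligible. The missing ingredient is the classification, which turns this generic vanishing into vanishing on all of $\cvar{T}\setminus\Delta$:
\begin{enumerate}
\item Theorem \ref{THM_ClassCharactersRelative} writes each constituent as $\pi_i^*N_i\otimes\mathscr{L}_{\chi_i}[d_i]$.
\item The $!$- and $*$-nonvanishing loci of that constituent both equal a tac $\Delta_i$, and ${}^p\mathcal{H}^0$ is nonzero at generic points of $\Delta_i$.
\item Take an irreducible component of $\bigcup_i\Delta_i$ not contained in $\Delta$, and the least $i$ for which it is a component of $\Delta_i$. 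Left exactness then gives a contradiction.
\end{enumerate}
Hence $\Delta_i\subseteq\Delta$ for all $i$, which yields the conclusion for both supports in every rank.
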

This theorem is proven from the relative classification of negligible sheaves, the vanishing theorems, and Artin's vanishing theorem.  We can also obtain a result for perverse degree one, see Theorem \ref{THM_Propagation2}.
\begin{theorem*}
	Let $T$ be a torus over $k$ and $X$ a scheme of finite type over $k$. Let $M \in \Perv{}{G}$. Suppose there is a closed subset $\Delta \subseteq\cvar{T}$ such that
	\[
	\pervCoh{1}{\pi_{X!}(M_\chi)} = 0.
	\]
	for all $\chi \notin \Delta$. Then we have
	\[
	\pervCoh{n}{\pi_{X!}(M_\chi)} = 0
	\]
	for all $\chi \notin \Delta$ and $n \geq 1$. 
\end{theorem*}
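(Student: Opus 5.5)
The plan is to use the relative version of the classical mechanism behind propagation for tori: push forward along a one-parameter subgroup, and use the duality between perverse degrees $n$ and $-n$ together with Artin vanishing. Since the result is geometric, I first pass to $\overline{k}$. Write $K_\chi = \pi_{X!}(M_\chi)$. The relative vanishing theorem (Theorem \ref{THM_VanSemiAb}) applied to $T\times X$, together with Artin's vanishing theorem for the affine morphism $\pi_X\colon T\times X\to X$, shows that $\pi_{X!}(M_\chi)$ has perverse amplitude in $[0,\dim T]$ and $\pi_{X*}(M_\chi)$ has amplitude in $[-\dim T,0]$. The statement to prove is therefore that vanishing of ${}^p\mathcal H^1$ for $\chi\notin\Delta$ forces vanishing in every degree $n\ge 1$.

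I would argue by induction on $\dim T$, splitting $T=\mathbb{G}_m\times T'$ and factoring $\pi_X$ as $T\times X\to T'\times X\to X$. For $T=\mathbb{G}_m$ the amplitude is already $[0,1]$, so there is nothing to prove. In general, put $N_{\chi_1}=\pi_{T'\times X!}(M_{\chi_1})$ for $\chi=(\chi_1,\chi')$. By the rank-one case this complex has perverse cohomology only in degrees $0$ and $1$, and the Leray spectral sequence
\[E_2^{a,b}={}^p\mathcal H^a\bigl(\pi_{X!}({}^p\mathcal H^b(N_{\chi_1})_{\chi'})\bigr)\Rightarrow {}^p\mathcal H^{a+b}(K_\chi)\]
has only two rows. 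This is where the spectral sequence amplification mentioned in the introduction comes in. Degree $1$ of the abutment receives $E^{1,0}$ and $E^{0,1}$. Both pieces survive, because no differential can hit $E^{0,1}$ (there is nothing in negative $a$), and $E^{1,0}$ can only be hit from $E^{-1,1}=0$. So ${}^p\mathcal H^1(K_\chi)=0$ forces
\[{}^p\mathcal H^1\bigl(\pi_{X!}({}^p\mathcal H^0(N_{\chi_1})_{\chi'})\bigr)=0 \quad\text{and}\quad {}^p\mathcal H^0\bigl(\pi_{X!}({}^p\mathcal H^1(N_{\chi_1})_{\chi'})\bigr)=0.\]

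The second vanishing is exactly the hypothesis of Theorem \ref{THM_Propagation1}, applied to the perverse sheaf ${}^p\mathcal H^1(N_{\chi_1})$ on $T'\times X$. It gives that ${}^p\mathcal H^1(N_{\chi_1})_{\chi'}$ has vanishing pushforward in all degrees. The first vanishing, by the induction hypothesis for $T'$, kills all higher ${}^p\mathcal H^n$ of the degree-$0$ row. The abutment then vanishes in degrees $n\ge1$.

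The main obstacle is that $\chi_1$ varies: I need these statements on the fibres $\chi_1\times\mathscr C(T')$ outside $\Delta$. The fibre of $\Delta$ over a fixed $\chi_1$ is closed in $\mathscr C(T')$, so the induction and Theorem \ref{THM_Propagation1} apply fibrewise, but only for $\chi_1$ in a good set. I expect to handle this by enlarging $X$ to $\mathbb{G}_m\times X$: instead of pushing forward along the $\mathbb{G}_m$ factor, treat it as part of the base. The twist by $\chi_1$ then only appears after the final pushforward, and the whole argument runs uniformly in $\chi$. Making the Leray bookkeeping compatible with this choice is where I anticipate the real work.
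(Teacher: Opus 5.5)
Your skeleton is the paper's: induction on $\dim T$, a rank-one quotient $T\to T'$, the two-row Leray spectral sequence, the induction hypothesis on row $0$, and Theorem \ref{THM_Propagation1} on row $1$. Two points need correcting.

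\textbf{The survival claim for $E^{0,1}$ is wrong.} You argue that ``both pieces survive'' because nothing hits $E^{0,1}$. But $d_2\colon E_2^{0,1}\to E_2^{2,0}$ leaves $E^{0,1}$, so $E_\infty^{0,1}=\ker(d_2)$ is not automatically all of $E_2^{0,1}$.

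What $\pervCoh{1}{K_\chi}=0$ gives directly is only $E_2^{1,0}=0$. That term does survive: $E_2^{-1,1}=0=E_2^{3,-1}$, and $E_\infty^{1,0}=F^1$ is a subobject of $\pervCoh{1}{K_\chi}$.

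The correct order, which is the order in the paper, is as follows.
\begin{enumerate}
\item Use $E_2^{1,0}=0$ for all $\chi'$ outside the exceptional set.
\item Apply the induction hypothesis for $T'$ to the perverse sheaf $\pervCoh{0}{N_{\chi_1}}$. This gives $E_2^{p,0}=0$ for all $p\geq 1$, in particular $E_2^{2,0}=0$.
\item Only now is $E_2^{0,1}=E_\infty^{0,1}$, a quotient of $\pervCoh{1}{K_\chi}=0$.
\item At that point Theorem \ref{THM_Propagation1} applies to $\pervCoh{1}{N_{\chi_1}}$.
\end{enumerate}

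\textbf{The ``main obstacle'' does not exist, so drop the proposed fix.} Fix $\chi_1$ arbitrarily. The paper equivalently fixes $\chi\notin\Delta$, works on the coset $\chi\cdot\pi^*\cvar{T'}$, and takes the exceptional set to be $(\pi^*)^{-1}(\overline{\chi}\cdot\Delta)$.

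In your notation, set $\Delta_{\chi_1}=\{\chi'\in\cvar{T'} : (\chi_1,\chi')\in\Delta\}$. It is closed, being the preimage of $\Delta$ under a translate of the continuous map $\pi^*$ (Lemma \ref{LEM_ContinuousSABStandard}).

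Both the induction hypothesis and Theorem \ref{THM_Propagation1} are stated for an arbitrary closed subset. Neither requires properness or a generic $\chi_1$. So they apply for every $\chi_1$, with exceptional set $\Delta_{\chi_1}$.

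Their hypotheses hold for all $\chi'\notin\Delta_{\chi_1}$. This is because $\pi_{X!}\big((\pervCoh{b}{N_{\chi_1}})_{\chi'}\big)$ is computed from $\pi_{X!}(M_{(\chi_1,\chi')})$. That uses the projection formula (Proposition \ref{PROP_ProjectionFormula2}) and $t$-exactness of twisting, and $(\chi_1,\chi')\notin\Delta$.

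Every $\chi\notin\Delta$ has the form $(\chi_1,\chi')$ with $\chi'\notin\Delta_{\chi_1}$, so no uniformity in $\chi_1$ is needed. Moving $\mathbb{G}_m$ into the base is therefore unnecessary. It is also unclear how the hypothesis, which concerns $\pi_{X!}$ and not pushforward to $\mathbb{G}_m\times X$, would transfer to that setting.

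A minor point: the amplitude bound $[0,\dim T]$ follows from Artin vanishing and the fibre-dimension bound alone. Theorem \ref{THM_VanSemiAb} is not needed for it.
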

This theorem is proven by induction on the dimension of $T$ and the first propagation theorem. We are not sure how the relative propagation theorems generalize to higher perverse degrees nor how they generalize to semiabelian varieties. The application of Artin's vanishing theorem prevents a direct generalization to semiabelian varieties for the first propagation theorem. We also remark that the argument applies to all fields, not just finite fields, by the results of \cite{GabberLoeserTore}, but we do not pursue it here.

\subsubsection{Classification of negligible sheaves} We go into detail about the classification of negligible sheaves. One of the cornerstones of the theory presented here is the classification of negligible sheaves on a constant seimabelian variety over a base scheme. Similar classifications have been achieved for abelian varieties, by \cite{Kr_mer_2015}, for tori, by \cite{GabberLoeserTore}, and for semiabelian varieties over the complex numbers by \cite{liumaximwangpropagation}. We prove a relative classification theorem for constant semiabelian varieties over finite fields. The main result is Theorem \ref{THM_ClassCharactersRelative}, which states: 
	\begin{theorem*}
		Let $M \in \Perv{}{S\times X}$ be a geometrically irreducible perverse sheaf, such that there is a character $ \nu \in \mathscr{C}(S)$ with $$[\pi_{X!}(M_\nu)] = 0$$ in the $K$-group of $X$. There is a quotient $\pi\colon S \rightarrow S'$ with connected fibers of dimension $d > 0$, a perverse sheaf $N \in \Perv{}{S'\times X}$, and a character $\chi \in \widehat{S}(k)$ such that
		\[
		M = \pi^*(N)\otimes\SL_\chi[d].
		\]
		Moreover, for each $\chi \in \cvar{S}$ we have $\pi_{X!}(M_\chi) \neq 0$ if  and only if $\chi \in \overline{\chi}\cdot\pi^*(\cvar{S'})$. Dually, we also have $\pi_{X*}(M_\chi) \neq 0$ if  and only if $\chi \in \overline{\chi}\cdot\pi^*(\cvar{S'})$
	\end{theorem*}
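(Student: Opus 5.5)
We reduce to the absolute classification of negligible perverse sheaves on semiabelian varieties (Krämer for abelian varieties, Gabber--Loeser for tori, Liu--Maxim--Wang in general), applied fiberwise, and then spread the resulting structure over $X$.

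\emph{Step 1: pass to the generic point.} Since $M$ is geometrically irreducible, it is the intermediate extension of an irreducible lisse-type object on a locally closed $S\times V$. Shrinking $V$ to a dense open of its support and replacing $X$ by it, the complexes $\pi_{X!}(M_\chi)$ are lisse and commute with base change. This uses point (1) of the vanishing theorem for semiabelian varieties stated above. The hypothesis $[\pi_{X!}(M_\nu)]=0$ then gives, at a geometric generic point $\bar\eta$, $[R\Gamma_c(S_{\bar\eta}, M_{\bar\eta}\otimes \SL_\nu)]=0$. The stalk Euler characteristic is the rank of the class in the $K$-group, and it vanishes, so the fiber $M_{\bar\eta}$ has Euler characteristic zero after twisting. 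Here $M_{\bar\eta}[-\dim X]$ is a (shifted) irreducible perverse sheaf on $S_{\bar\eta}$.

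\emph{Step 2: apply the absolute classification on the generic fiber.} Over an algebraically closed field, an irreducible perverse sheaf with $\chi=0$ on a semiabelian variety is of the form $\pi^*N_0\otimes \SL_\chi[d]$. Here $\pi\colon S\to S'$ has connected positive-dimensional fibers, and $\pi$ can be taken to be the quotient by the connected stabilizer of $M_{\bar\eta}$. The stabilizer is defined by an algebraic condition, namely $t^*M\simeq M$ up to a rank-one character sheaf. The subgroup $S$ is constant, and the set of its connected subgroups is countable and Galois-stable, so the stabilizer is defined over $k$. Likewise, the character $\chi$ is determined up to $\pi^*\cvar{S'}$. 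Frobenius stability of $M$ forces its class to be Frobenius-fixed, so after modifying within the coset we get $\chi\in\widehat S(k)$.

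\emph{Step 3: descend to $S\times X$ and extend from the open set.} Consider $M\otimes \SL_{\chi^{-1}}[-d]$ on the open part. It is invariant under translation by $\ker\pi$, which is connected. By the standard descent for perverse sheaves along smooth morphisms with connected fibers ($\pi^*[d]$ is fully faithful on perverse sheaves), it comes from some $N$ on $S'\times V$. Over all of $X$, both $M$ and $\pi^*(j_{!*}N)\otimes\SL_\chi[d]$ are intermediate extensions of the same object. Intermediate extension commutes with smooth pullback, so they agree.

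\emph{Step 4: the support statement.} The projection formula gives $\pi_{X!}(M_\psi)=\pi'_{X!}(N\otimes \pi_!(\SL_{\psi\chi})[d])$. Now $\pi_!\SL_{\psi\chi}$ vanishes unless $\psi\chi$ is trivial on $\ker\pi$, that is, unless $\psi\in\overline{\chi}\cdot\pi^*\cvar{S'}$. In that case it is a shifted character sheaf, tensored with $H_c(\ker\pi)\neq0$. Nonvanishing for $N_{\psi'}$ requires an argument. I would use $\pi^*N$ irreducible, hence $N\neq0$, together with the vanishing theorem: the cohomology of $N_{\psi'}$ is nonzero for generic $\psi'$, since $N$ has nonzero Euler characteristic after reducing by its own stabilizer. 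For special $\psi'$, I would use that $\pi'_{X!}(N_{\psi'})$ lies in the top perverse degree and cannot vanish, by induction on the stabilizer.

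The $*$ version follows by Verdier duality. The main obstacle is Step 2, specifically rationality: showing that the fiberwise stabilizer is constant in $x$ and defined over $k$, and that $\chi$ can be chosen $k$-rational.
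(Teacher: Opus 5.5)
Your overall plan is in the spirit of the paper: classify fibrewise, spread the result out over $X$, descend, then read off supports. But you place the main obstacle in the wrong spot. The serious gaps are in Steps 1–3, i.e. in getting from fibrewise information to the global identity $M=\pi^*N\otimes\SL_\chi[d]$.

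\emph{Reducible generic fibre.} The geometric generic fibre of a geometrically irreducible $M$ need not be irreducible. Take $S=\mathbb{G}_m^2$, $X=\mathbb{G}_m$, and $M=\Qbarl_Z[2]$ with $Z=\{s_1^n=x\}$, $n\geq 2$ prime to $p$. The hypothesis holds: for $\nu=(1,\nu_2)$ with $\nu_2\neq 1$ one has $\pi_{X!}(M_\nu)=0$. Yet $M|_{S_{\bar\eta}}[-1]$ has $n$ simple summands, so you cannot feed it to a classification of irreducible objects.

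\emph{Base field of the classification.} You apply the absolute classification over $\overline{k(\eta)}$. For semiabelian varieties in characteristic $p$, this paper establishes it only for sheaves coming from finite fields (Theorem \ref{THM_Classification}); Liu--Maxim--Wang work over $\mathbb{C}$. You supply no specialization argument to bridge this.

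\emph{Step 3 does not produce $N$.} This is the decisive gap.
\begin{itemize}
\item Invariance on the geometric generic fibre cannot see twists by local systems pulled back from $V$.
\item Full faithfulness of $\pi^*[d]$ gives uniqueness of $N$, not existence. From pointwise translation invariance you would still have to build an equivariant structure.
\end{itemize}
What is needed is a nonzero map from $M$ into the essential image of $\pi^*[d]$. The paper obtains it from Lemma \ref{LEM_pullbackcrit} (BBD, Cor.~4.2.6.2), as follows.
\begin{itemize}
\item It works at a closed point $x$ of a dense open $U$ over which all $\pi_{X?}(M_\chi)$ are lisse and commute with base change.
\item There it applies the finite-field classification through Lemma \ref{LEM_ClassificationFourierSupp}, which tolerates reducible fibres. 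The Fourier support is a finite union of tacs $\chi_i\cdot\pi_i^*\cvar{S'_i}$, and for one of them $\pervCoh{d_i-c}{i_x^*\pi_{i!}(M_{\overline{\chi}_i})}\neq 0$.
\item Lemma \ref{LEM_PervRelativePerversity} spreads this to $\pervCoh{d_i}{\pi_{i!}(M_{\overline{\chi}_i})}\neq 0$.
\item This gives a surjection $M_{k'}\to\pi_i^*N\otimes\SL_{\chi_i}[d_i]$ on all of $S\times X$. It is an isomorphism because $M_{k'}$ is irreducible, so no gluing of intermediate extensions is needed.
\end{itemize}

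Two smaller steps also need repair.

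\emph{Rationality (Step 2).} That the set of connected subgroups is countable and Galois-stable does not make any particular one $k$-rational. You need two things:
\begin{itemize}
\item The subgroup (equivalently, the tac) attached to $M$ must itself be Frobenius-stable. The paper deduces this from Frobenius invariance of the generic Fourier support, using $U(k_n)\neq\emptyset$ for $n\gg 0$.
\item A Frobenius-stable coset $\chi\cdot\pi^*\cvar{S'}$ must contain a Frobenius-fixed character. This is the vanishing $H^1(W_k,\cvar{S'})=0$ of Proposition \ref{PROP_GroupCohomologyCharacterGroup}, as packaged in Theorem \ref{THM_DescendingTACS}.
\end{itemize}

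\emph{Special characters (Step 4).} Your assertion that $\pi'_{X!}(N_{\psi'})$ sits in a single (top) perverse degree for special $\psi'$ is false in general. Already for $X$ a point, $N=\Qbarl_C[1]$ with $C=\{x+y=1\}\subset\mathbb{G}_m^2$ and $\psi'=1$ has compactly supported cohomology in degrees $0$ and $1$. No induction is needed. By base change over $U$ and Proposition \ref{PROP_eulerpoincaresav}, the stalk of $\pi'_{X!}(N_{\psi'})$ at $x\in U$ has Euler characteristic $\chi(S',i_x^*N)$, independent of $\psi'$. So nonvanishing for every $\psi'$ follows once $\chi(S',i_x^*N)\neq 0$, which is exactly how the paper concludes.
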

	The argument is based on the classification proven in \cite{Kr_mer_2015}. We achieve this by closely following the argument given in \cite{GabberLoeserTore}. One of the crucial steps during the induction is the amplification of the absolute classification to a relative classification. Since we are working exclusively over finite fields, we can not reproduce the argument from \cite{GabberLoeserTore} and prove the amplification based on a specialization argument. 
	
	We remark two things about the classification which appear to be new. First, note that our classification uniquely characterizes the quotient and the character in terms of the ''Fourier support" of the perverse sheaf $M$. The Fourier support plays a central role to our construction of Tannakian category and could be useful in other contexts as well. 
	
	Secondly, the classification is arithmetic, in the sense that the quotient and the character descend to the base field $k$. To our knowledge, both of these points have not appeared in the literature before. The condition on the support is sure to generalize to other fields as well. We are not sure whether the descent generalizes to fields other than finite fields because the argument involves the computation of a Galois cohomology group. We also achieve the classification of negligible sheaves on all groups of the form $S\times U$ in Theorem \ref{THM_ClassificationFull}. In principle, the classification is possible on all connected commutative algebraic groups. For our purposes, this is not required because we can reduce most statements to a product $S\times U$ by the structure theorems for algebraic groups. 	
\subsection{Tannakian categories} We outline the construction of the Tannakian categories, in the hopes of making it more accessible. The primary goal of our construction is that the Tannakian categories come with fiber functors given by Fourier coefficients. 

 \subsubsection{Unramified characters} This is achieved by giving a new definition of an unramified character. This definition is given by Definition \ref{DEF_unramifiedhcar}. We believe this is the right notion to consider for the construction of fiber functors because the unramified characters can be characterized by the following proposition, at least when the perverse sheaf in question is pure or the group is affine. See Proposition \ref{PROP_GysinUnramifiedCriterion} for a slightly more general and precise statement. We hope that the following Proposition allows one to find closed unramified loci in practice.
\begin{theorem*}
	Let $M \in \Perv{}{G}$ and $\Delta\subseteq \cvar{G}$ a closed subset. Suppose $M$ is pure or $G$ is affine. The sheaf $M$ is $\Delta$-unramified, i.e. unramified at all characters $\chi \notin\Delta$, if and only if:
	\begin{enumerate}
		\item The sheaf $M$ is weakly unramified at all $\chi \notin \Delta$.
		\item We have
		\[
		\text{dim}(H^0(G, M_\chi)) = \text{dim}(H^0(G, M_{\chi'}))
		\]
		for all $\chi, \chi' \notin \Delta$.
	\end{enumerate}
\end{theorem*}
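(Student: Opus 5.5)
We have to guess the definitions, since the paper has not yet given them. Presumably ``weakly unramified at $\chi$'' means that the forget-supports map $H^*_c(G,M_\chi)\to H^*(G,M_\chi)$ is an isomorphism and that these groups are concentrated in degree $0$. ``Unramified'' (Definition \ref{DEF_unramifiedhcar}) presumably adds a requirement that the Fourier coefficient $\chi\mapsto H^0(G,M_\chi)$ behaves as a fiber functor near $\chi$. Concretely, the family version $\pi_{\widehat G !}(M\boxtimes \mathscr{L}_{\mathrm{univ}})$ over the character variety should be lisse, with formation commuting with base change, in a neighbourhood of $\chi$ inside the complement of $\Delta$. The plan is to reduce everything to the universal family of twists and then apply a semicontinuity argument.

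The forward direction is direct. If $M$ is $\Delta$-unramified, then weak unramifiedness at each $\chi\notin\Delta$ is part of the definition. The complex $\pi_{?}(M_{\mathrm{univ}})$ restricted to $\cvar{G}\setminus\Delta$ is lisse and concentrated in one degree, so its stalk dimension is locally constant. To get the global equality in (2), we need $\cvar{G}\setminus\Delta$ to be connected. Complements of thin closed subsets in $\cvar{G}$ should be connected, since $\cvar{G}$ is a disjoint union of tori and abelian components and $\Delta$ has codimension $\ge 1$. Alternatively, one can use the Euler characteristic: it equals $\chi(G,M)$, which is independent of the character (Lefschetz/Deligne). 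Under weak unramifiedness this gives $\dim H^0(G,M_\chi)=\chi(G,M)$ directly, so (2) holds without any connectedness argument.

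The converse is the real content. Write $K=\pi_{\cvar{G}!}(\widetilde M)$ for the universal twisted pushforward, where $\widetilde M$ is the pullback of $M$ to $G\times\cvar{G}$ tensored with the universal character sheaf. I would first apply the relative vanishing theorem (Theorem \ref{THM_VanSemiAb} and its unipotent analogue, via the reduction $G\sim S\times U$ by the structure theorems). This shows that $K$ has coherent or constructible cohomology with formation commuting with base change to points $\chi$: $K_\chi\cong R\Gamma_c(G,M_\chi)$. This is proper base change for $!$-pushforward. Condition (1) says that each stalk is concentrated in degree $0$. Condition (2) says the stalk rank of $\mathcal{H}^0(K)$ is constant on $\cvar{G}\setminus\Delta$. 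For the $\ell$-adic setting on the Fourier side, which in the affine case is a Mellin transform and a coherent module over the group ring, a perfect complex with cohomology concentrated in a single degree at every point and of constant rank is locally free. This is the standard fact that constant fibre rank on a reduced base gives projectivity. In this way one obtains lisseness and base change, i.e. the unramified condition.

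The main obstacle is making this ``constant rank implies locally free'' step work when $G$ is not affine. In that case $\cvar{G}$ has abelian-variety factors and no coherent Mellin module is available. I expect this is exactly where purity enters. For pure $M$, the decomposition theorem and hard Lefschetz-type weight arguments force $K$ to be a direct sum of shifted semisimple pieces. I would then use the dual $*$-version, together with the forget-supports isomorphism from (1), to get a self-dual complex whose generic rank equals its rank at every point. Upper semicontinuity of the fibre ranks combined with equality everywhere then rules out jumps. In the affine case I would instead invoke Gabber--Loeser's Mellin transform directly.
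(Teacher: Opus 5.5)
There is a genuine gap, and it begins with the definition you guessed. Definition~\ref{DEF_unramifiedhcar} does not impose a condition on a universal family over $\cvar{G}$. The paper never forms a universal twist over $\cvar{G}$; the only family of twists it uses is the Fourier transform over $\widehat{U}$, which parametrizes the unipotent characters. Moreover, in the standard topology $\cvar{G}$ has countably many connected components, so local constancy would not globalize anyway. The definition is a condition, for each fixed $\chi$, on $\text{FT}(\pi_?(M_\chi))$ near $0\in\widehat{U}$: the forget-supports map is an isomorphism there, and these complexes are perverse and lisse there. For semiabelian $G$ this reduces to weak unramifiedness, so the real content of the statement sits on the unipotent side.

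Two consequences follow for your forward direction. First, weak unramifiedness is \emph{not} part of the definition and has to be deduced. The paper identifies the canonical map $i_0^*\text{FT}(\pi_!(M_\chi))\otimes i_0^!\Qbarl\to i_0^!\text{FT}(\pi_*(M_\chi))$ with a shifted twist of $H^*_c(G,M_\chi)\to H^*(G,M_\chi)$ (Proposition~\ref{PROP_GysinGeneralGroup}). That canonical map is an isomorphism for lisse complexes, and a lisse perverse sheaf on $\widehat{U}$ has stalks in degree $-d$.

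Second, your fallback for condition (2) is false. $\chi_c(G,M_\chi)$ is independent of $\chi$ only on the semiabelian factor (Proposition~\ref{PROP_eulerpoincaresav}), because Artin--Schreier twists are wild. Take $G=\BA^1$, $M=\Qbarl[1]$, $\Delta=\{0\}$. Then $\text{FT}(M)$ is a skyscraper at $0$, so $M$ is $\Delta$-unramified. But $H^*_c(\BA^1,M_\psi)=0$ for $\psi\neq 0$, whereas $\chi_c(\BA^1,M)=-1$. The correct constant is the Euler characteristic of the \emph{generic stalk} of $\text{FT}(q_*M)$ over $\widehat{U}$ (Corollary~\ref{COR_unramdimformula}). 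One reaches it from closed-point stalks using lisseness near the point.

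For the converse, ``constant fibre rank on a reduced base implies locally free'' is a statement about coherent modules. What must be shown lisse are constructible $\ell$-adic complexes on $\widehat{U}$, and for those constant stalk rank proves nothing: take $j_!\Qbarl\oplus i_{0*}\Qbarl$ on $\BA^1$. The missing idea is to first prove that $\text{FM}_{\chi,\Delta,!}(M)$ is perverse. Artin vanishing for the affine map $T\times U\to U$ gives perverse degrees $\geq 0$. The stalk formula (Lemma~\ref{LEM_fmstalks}) together with concentration of $H^*_c$ in degree $0$ gives degrees $\leq 0$. Then Lemma~\ref{LEM_PerverseLisseCrit} applies: on the unibranch $\widehat{U}$ the specialization maps of $H^{-d}$ of a perverse sheaf are injective, so constant stalk dimension forces lisseness.

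You also never produce the forget-supports isomorphism $\text{FM}_{\chi,\Delta,!}(M)\to\text{FM}_{\chi,\Delta,*}(M)$ demanded by the definition. The paper obtains it because the canonical map computes the pointwise forget-supports map at every closed point, which is an isomorphism by (1), and the canonical map is an isomorphism once both sides are lisse.

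Finally, purity enters concretely, not through a semicontinuity argument. One pushes forward along the proper abelian factor and decomposes $\pi_{T\times U*}(M_\chi)=\bigoplus_i M^i_{(\chi_T,\psi)}[-i]$. Hard Lefschetz gives $M^i\cong M^{-i}(-i)$, and together with concentration in degree $0$ this kills the Fourier coefficients of $M^i$, $i\neq 0$, outside $\Delta$. That reduces the problem to the affine case. ``Upper semicontinuity of fibre ranks'' is not available for constructible sheaves without exactly the perversity argument above.
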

Roughly speaking, this says that a character is unramified if and only if the Fourier coefficient is concentrated in degree zero, pure, and has the \quotationMark{right} dimension, i.e. the Tannakian dimension of $M$. In this way, our definition is almost optimal for constructing fiber functors, because it mainly requires the Fourier coefficient to have the right dimension. We then prove the following general vanishing theorem in Theorem \ref{THM_VanishingTHM}.
\begin{theorem*}
	Let $M \in \Perv{}{G}$. There exists a proper thin subset $\Delta\subset\cvar{G}$ such that $M$ is $\Delta$-unramified. 
\end{theorem*}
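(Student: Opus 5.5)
We first reduce to the case $G = S\times U$. By the structure theorems, after passing to an isogeny (or a finite extension, using that thin subsets are stable under the relevant operations) $G$ is an extension of a semiabelian variety by a unipotent group. Over a finite field this extension is split up to isogeny, so there is an isogeny $S\times U\to G$. Pulling $M$ back along this isogeny and pushing forward, $M$ is a direct summand of the pushforward of a perverse sheaf on $S\times U$. Characters of $G$ pull back to characters of $S\times U$ by a finite map with finite kernel. So a proper thin subset of $\cvar{S\times U}$ gives, by image/preimage, a proper thin subset of $\cvar{G}$. We therefore assume $G=S\times U$ and write $\chi=(\chi_S,\psi)$ with $\psi\in\dualUni$.

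The main step is to treat the unipotent factor as a base and apply the relative vanishing theorem. Let $\mathcal{F}$ be the Fourier–Deligne transform on $U$ (via Serre duality of $U$). This turns $M$ into a perverse sheaf $\widetilde{M}\in\Perv{}{S\times\dualUni}$, up to shift, whose fibers over $\psi$ compute the twisted cohomology: $(\pi_{\dualUni ?}(\widetilde{M}_{\chi_S}))_\psi\simeq R\Gamma_?(G,M_{(\chi_S,\psi)})$ after a shift. The Fourier transform preserves perversity, which is why we use it. Apply Theorem \ref{THM_VanSemiAb} with $X=\dualUni$. This gives a dense open $V\subseteq\dualUni$ and a proper thin $\Delta_0\subset\cvar{S}$ with the following properties:
\begin{itemize}
\item for all $\chi_S$, the objects $\pi_{\dualUni?}(\widetilde{M}_{\chi_S})$ are lisse on $V$ with base change;
\item for $\chi_S\notin\Delta_0$, the forget supports map is an isomorphism;
\item for $\chi_S\notin\Delta_0$, these objects are perverse.
\end{itemize}
Lisse plus perverse on the smooth $V$ means a shifted local system. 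Its stalks, namely the Fourier coefficients, are therefore concentrated in degree $0$, satisfy $H^0_c=H^0$, and have locally constant rank. Set $\Delta=\Delta_0\times\dualUni\cup\cvar{S}\times(\dualUni\setminus V)$, which is proper thin. Outside $\Delta$, $M$ is weakly unramified. Purity of the stalks follows from purity of $M$ and Deligne, since compact and ordinary cohomology agree. In the general mixed case we would first pass to the weight filtration, or use the affine clause of Proposition \ref{PROP_GysinUnramifiedCriterion}.

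The remaining condition in Proposition \ref{PROP_GysinUnramifiedCriterion} is that $\dim H^0(G,M_\chi)$ is constant outside $\Delta$. Over $V$ the rank is constant for fixed $\chi_S$, so it suffices to show that it is independent of $\chi_S\notin\Delta_0$. The Euler characteristic $\chi(G,M_\chi)$ is independent of $\chi$ in the $K$-group: twisting by a rank one local system does not change $[\pi_{X!}(\cdot)]$ in the relative sense. Since the cohomology is concentrated in degree $0$, the dimension equals the Euler characteristic, hence is constant. This dimension-matching step, together with verifying that $\Delta$ is closed and thin in the sense of Definition \ref{DEF_unramifiedhcar}, is where we expect the main obstacle.

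The subtlety lies in the mixed non-affine case, when $S$ has an abelian part and $M$ is not pure. Weak unramifiedness may then require the Gysin-type condition of Proposition \ref{PROP_GysinUnramifiedCriterion}. Our first attempt would be to handle it by dévissage along the weight filtration of $M$ and to take the union of the finitely many thin loci obtained. This is legitimate because a finite union of proper thin subsets is again proper thin.
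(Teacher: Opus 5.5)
Your set-up matches the paper's proof: reduce to $G=S\times U$ along an isogeny, pass to $\text{FT}(M)\in\Perv{}{S\times\dualUni}$, apply Theorem~\ref{THM_VanSemiAb} over the base $X=\dualUni$, and take $\Delta=\Delta_0\times\cvar{U}\cup\cvar{S}\times(\dualUni\setminus V)$.

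The gap is in how you conclude. Instead of stopping after Theorem~\ref{THM_VanSemiAb}, you pass to stalks (weak unramifiedness and constancy of $\dim H^0$) and try to return to unramifiedness via Proposition~\ref{PROP_GysinUnramifiedCriterion}. However, the implication (b)$\Rightarrow$(a) there is only available when $M$ is geometrically semisimple or $G$ is affine. So as written your argument does not cover a non-semisimple $M$ on a group with nontrivial abelian part. Your proposed d\'evissage along the weight filtration does not close this as stated, for two reasons:
\begin{enumerate}
\item It presupposes $M$ mixed.
\item More importantly, it needs $\Delta$-unramifiedness to be stable under extensions, which you neither state nor prove. This is true, because lisse perverse objects and isomorphisms of forget-supports maps propagate through the distinguished triangles of Fourier--Mellin transforms, but it is an extra argument.
\end{enumerate}
Your purity remark is also beside the point. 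Purity of stalks is not one of the conditions of Proposition~\ref{PROP_GysinUnramifiedCriterion}; what purity of $M$ buys you is geometric semisimplicity.

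The detour is unnecessary. By Lemma~\ref{LEM_fmprojformula} and the shift formula in Theorem~\ref{THM_FTBasic}(5), for $\chi=(\chi_S,\psi)$ the transform $\FMQunr{M}$ near $0$ is the translate of $\pi_{\dualUni ?}(\text{FT}(M)_{\chi_S})$ near $\psi$. Your three bullets are: lisse over $V$ for all $\chi_S$, and forget-supports isomorphism and perversity for $\chi_S\notin\Delta_0$. These are literally the three conditions of Definition~\ref{DEF_unramifiedhcar} (equivalently Proposition~\ref{PROP_fmcritunr}) at every $(\chi_S,\psi)$ with $\chi_S\notin\Delta_0$ and $\psi\in V$. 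This unwinding is exactly how the paper finishes. It needs no purity, semisimplicity or dimension count, and works for arbitrary $M$.

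Separately, your claim that $\chi(G,M_\chi)$ is independent of $\chi$ is false in the $U$-direction. Artin--Schreier twists change Euler characteristics; for example, for $\Qbarl[1]$ on $\mathbb{A}^1$ it is $-1$ at the trivial character and $0$ otherwise. Only independence in $\chi_S$ (Proposition~\ref{PROP_eulerpoincaresav}) holds, which is all you actually used.
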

\subsubsection{Tannakian categories} We now describe the construction of the Tannakian categories and the fiber functors in detail. The Tannakian categories we consider here were first constructed in \cite[Sec.~3.9]{GabberLoeserTore}. As is well-known, the convolution product of two perverse sheaves is, in general, not perverse. However, it is perverse up to certain negligible factors. One constructs a \textit{Tannakian category} with convolution as a tensor product by localizing the derived category at some category of negligible complexes, i.e. complexes which consist of negligible sheaves. One then proves that the \textit{localized convolution product} preserves perversity. Our goal is to localize the Tannakian category in such a way that the Fourier coefficients remain well-defined on the localization. 

This is achieved by gaining control over the negligible factors in a convolution product of two perverse sheaves. Let $\Delta\subset\cvar{G}$ be a closed subset. Roughly speaking, we show that the negligible factors in a convolution of two $\Delta$-unramified perverse sheaves have vanishing Fourier coefficients outside $\Delta$. Moreover, we then prove that the convolution product of two $\Delta$-unramified perverse sheaves is $\Delta$-unramified. This is the basic mechanism of our construction, that we now describe in more detail. 

Instead of neglecting all perverse sheaves, our primary goal is to only neglect perverse sheaves whose Fourier support is contained in a closed subset $\Delta$. The definition of the Fourier support of a complex $K \in \derCat{c}{G}{}$ is
\[
\text{FSupp}_?(K) := \{\chi \in \cvar{G} ~|~ H^*_?(K, M_\chi) \neq 0\}^{\text{cl}}
\]
See also Definition \ref{DEF_fsupp}. Note that we take the closure in the standard topology. An important point in the construction is Theorem \ref{THM_EqualitySupport}, which states
\[
\text{FSupp}_!(K)  = \text{FSupp}_*(K). 
\]
Based on this equality, we simply write $\text{FSupp}(K)$ for the Fourier support. This equality is proven from the classification of negligible sheaves. By Poincare duality, this implies that the Fourier support behaves well with respect to duality
\[
\text{FSupp}(K) = \text{FSupp}(K^\vee).
\]
The basic idea is then to localize the category of perverse sheaves at the category
\[
S_{\Delta} := \{M \in \Perv{}{G}~|~ \text{FSupp}(M) \subseteq \Delta\}.
\]
The Fourier support also behaves well with respect to filtrations of complexes, see Theorem \ref{THM_SupportLemma}. This compatibility of filtrations is also proven from the classification of characters. We do not know how to obtain the compatibility without the classification of negligible sheaves. This compatibility with exact sequences implies that the category $S_\Delta$ is Serre. Thus we can form the Serre quotient 
\[
\Perv{\Delta}{G} := \Perv{}{G}/S_{\Delta}.
\]
We can now form the category
\[
N_\Delta := \{K \in \derCat{c}{G}{} ~|~ \text{FSupp}(K) \subseteq \Delta\}.
\]
and then the Verdier quotient
\[
\derCat{\Delta}{G}{} := \derCat{c}{G}{}/N_\Delta.
\]
The compatibility of the Fourier support with filtrations implies that $\derCat{\Delta}{G}{}$ is a triangulated category with $t$-structure. The heart of this $t$-structure is naturally equivalent to $\Perv{\Delta}{G}$. This is of fundamental importance to our construction. The point of the above localization is that the Fourier coefficient
\[
\derCat{\Delta}{G}{}  \rightarrow \derCat{\text{coh}}{\Qbarl\text{-Vec}}{},~K\mapsto H^*_c(G, K_\chi)
\]
is well-defined for all $\chi \notin \Delta$. These functors will give the desired fiber functors on the Tannakian categories.
\subsubsection{Convolution product} We describe the basic mechanism behind our construction of the convolution product. To obtain the Tannakian categories, we must prove that the localized convolution product of two perverse sheaves is perverse. We proceed (almost) as in \cite[Prop.~3.9.3]{GabberLoeserTore}. The main point there, roughly speaking, is \cite[Prop.~3.9.2]{GabberLoeserTore}. Let $\Delta\subseteq \cvar{G}$ be a closed subset and $M, N \in \Perv{}{G}$ two $\Delta$-unramified perverse sheaves. We have isomorphisms
\[
H^*_c(G, (M*_!N)_\chi) \rightarrow H^*(G, (M*_*N)_\chi)
\] 
for all $\chi \notin \Delta$ by the Künneth formula. Morally (and, for tori, literally), Theorem \ref{THM_LocalizationTheCriterion} says that the above isomorphisms imply that the natural forget supports map
\[
M*_!N \rightarrow M*_*N
\]
is an isomorphism in $\derCat{\Delta}{G}{}$. Therefore, all arrows in the square
\[
\begin{tikzcd}
	{H^*_c(G, (M*_!N)_\chi) } \arrow[r] \arrow[d] & {H^*_c(G, (M*_*N)_\chi) } \arrow[d] \\
	{H^*(G, (M*_!N)_\chi) } \arrow[r]             & {H^*(G, (M*_*N)_\chi) }            
\end{tikzcd}
\]
are isomorphisms for all $\chi \notin \Delta$. The above criterion for isomorphism allows us to apply the decomposition theorem and the hard Lefschetz theorem to the convolution $M*_!N$. This implies that $M*_!N$ is perverse and that all perverse subquotients of $M*_!N$ are $\Delta$-unramified. 
\subsection{Character theory} We describe the character theory in detail, because it is rather long. This summary, hopefully, makes it more accessible. The first step is to construct the \textit{fundamental group $\pi_1^{\text{char}}(G)$} of $G$ in Definition \ref{DEF_FundGroup}. This is a commutative profinite group. The fundamental group behaves well in exact sequences, see Proposition \ref{PROP_FundamentalGroupExact} and Proposition \ref{PROP_DescentFundamentalGroup}. It also satisfies a product formula by Proposition \ref{PROP_KunnethFundamental} A \textit{character} is a continuous group morphism
\[
\pi_1^{\text{char}}(G) \rightarrow K
\]
to the group of units in a finite extension $K/\BQ_\ell$. The \textit{character group} is denoted $\cvar{G}$ and is the group of all characters as above. The exactness of the fundamental group carries over to the character group by Theorem \ref{THM_AppendixCharacterGroupExact} to Propositions \ref{PROP_CharExact} and Proposition \ref{PROP_CharDescent}. We have the Künneth formula $$\cvar{G\times G'} = \cvar{G}\times\cvar{G'}$$ by Proposition \ref{PROP_CharKunneth}. Crucially, Proposition \ref{PROP_CharDescent} implies that the map
\[
q^*\colon \cvar{G'} \rightarrow \cvar{G}
\]
is surjective with finite kernel for an isogeny $q\colon G \rightarrow G'$. 
\begin{example}
	Let $G = S$ be a semiabelian variety. Proposition \ref{PROP_fundcompsemiab} gives an isomorphism $$\pi_1^{\text{char}}(S) = \pi_1^t(S_{\overline{k}}).$$ Hence we recover the characters considered by Gabber-Loeser in \cite{GabberLoeserTore} for a torus and the characters considered by Krämer-Weissauer in \cite{Kr_mer_2015} for an abelian variety.
\end{example} 
\begin{example}\label{finiteorder_EXP}
	If a character $\chi \in \cvar{G}$ has finite order, then it \quotationMark{comes from a group morphism} 
	\[
	\chi\colon G(k_n) \rightarrow \Qbarl^*.
	\]
	by Lemma \ref{LEM_DescentCharAri}. We see that the characters of finite order coincide with the set of characters $\widehat{G}$ considered in \cite{KowalskiTannaka}. 
\end{example}
\begin{example}
	When $G = U$ is unipotent, the fundamental group $\pi_1^{\text{char}}(G)$ is a pro-$p$ group. This implies that any character has finite order. By Example \ref{finiteorder_EXP} the set of characters $\cvar{U}$ is equal to the set of characters considered in \cite{KowalskiTannaka}. In particular, the set $\cvar{U}$ coincides with set of closed points of the dual group $\widehat{U}_{\overline{k}}$.
\end{example}
We equip the space of characters $\cvar{G}$ with two topologies, the thin and the standard topology. When we say a subset is closed, we always mean closed with respect to the standard topology.
\begin{example}
	The characters $\cvar{U}$ are in bijection with the closed points of $\widehat{U}_{\overline{k}}$. We define the thin and the standard topology on $\cvar{U}$ by saying a set $Z \subseteq \cvar{U}$ is closed, if it is the set of closed points of a closed subscheme of $\widehat{U}_{\overline{k}}$ (see Definition \ref{DEF_charuniclosed}).
\end{example}
\begin{example}
	Let $S$ be a semiabelian variety. A subset $\Delta\subseteq\cvar{S}$ is called closed in the thin topology if it is a finite union of tacs (see Definition \ref{DEF_charsemiabthin}). We call it closed in the standard topology, if it is closed with respect to the Zariski topology constructed in \cite{GabberLoeserTore} for a torus (see Section \ref{SEC_charsemiabstand} for a precise definition).
\end{example}
\begin{example}
	Let $S$ be a semiabelian variety and $U$ a unipotent group. A subset $\Delta\subseteq \cvar{S\times U}$ is called \textit{thin}, if there are tacs $\Delta_1, \ldots, \Delta_n \subseteq \cvar{S}$ and closed subsets $Z_i \subseteq \cvar{U}$ such that
	\[
	\Delta = \Delta_1\times Z_1 \cup \ldots \cup \Delta_n\times Z_n.
	\]
	This is precisely the product topology on $\cvar{S\times U} = \cvar{S}\times\cvar{U}$ by Proposition \ref{PROP_TopologyProductStructure}. 
\end{example}
For a general group $G$, we reduce the definition of the topology to a product as follows. There exists an isogeny $q\colon G \rightarrow S\times U$ by the structure theorem, Theorem \ref{THM_AppendixStructureTheorem}. We declare the map
\[
q^*\colon \cvar{S\times U} \rightarrow \cvar{G}
\]
to be a quotient map in the thin and in the standard topology (see Definition \ref{DEF_topology}). This turns out to be a well-defined topology such that $\cvar{G}$ is a topological group in both topologies. In the thin topology, the space $\cvar{G}$ is a Noetherian irreducible space (see Proposition \ref{PROP_thintop}). In the standard topology, the space $\cvar{G}$ is a countable disjoint union of irreducible Noetherian spaces (see Proposition \ref{PROP_stdtop}) Based on this, we define the \textit{dimension} of a subset $\Delta\subseteq \cvar{G}$ to be the Krull dimension in the standard topology (see Definition \ref{DEF_topology}). For any isogeny $q\colon G\rightarrow G'$, the induced map
\[
q^*\colon \cvar{G'} \rightarrow \cvar{G}
\]
is a surjective closed quotient map by Proposition \ref{PROP_CharIsogenyClosed}. For a closed subset $\Delta\subseteq\cvar{G'}$, i.e. closed in the standard topology, we have
\[
\text{dim}(\Delta) = \text{dim}(q^*(\Delta))
\]
by Proposition \ref{PROP_IsogenyPreservesDimension}. For a thin subset $\Delta \subseteq\cvar{G}$ with codimension $d$, we have the estimate
\[
\frac{|\Delta\cap \widehat{G}(k_n)|}{|\widehat{G}(k_n)|} \ll_\Delta |k|^{-nd}
\]
by Proposition \ref{PROP_thinsubsetsestimate}.
\vspace*{2.5mm}
\begin{center}
	\textbf{Notations and Conventions}
\end{center}
\vspace*{1.5mm}
\begin{itemize}[]\item The letter $k$ denotes a finite field of characteristic $p$. We fix an algebraic closure $\overline{k}/k$. We denote the extension of degree $n$ in $\overline{k}$ by $k_n$.
	\item The letter $\ell$ is a prime distinct from $p$. The letter $\iota$ denotes an isomorphism $\iota\colon \Qbarl \rightarrow \BC$.
	\item The letters $X$ and $Y$ always denote a finite type scheme  over $k$. We define $X_{k'} := X\times_k k'$ for an extension $k'/k$. In section ..., we write $X$ for a finite type schemes over $k$ or $\overline{k}$.
	\item We write $H^*_?(X, K)$ to indicate a statement for both usual and compactly supported cohomology. The statement $H^*_?(X, K) = 0$ means $H^*_c(X, K) = 0$ and $H^*(X, K) = 0$. The statement there exists a choice of supports such that $H^*_?(X, K) = 0$ means $H^*_c(X, K) = 0$ or $H^*(X, K) = 0$. 
	\item We denote the category of $\Qbarl$-complexes on a scheme $X$ by $\derCat{c}{X}{\Qbarl}$ (see \cite[~{}2.2.14]{BBD}). The category of perverse sheaves is denoted by $\Perv{}{X}$. 
	\item We call a perverse sheaf $M$ on $X_{\overline{k}}$ \textit{arithmetic} if there is a perverse sheaf $M' \in \Perv{}{X_{k'}}$ over a finite extension $k'/k$ such that $M'_{\overline{k}}\cong M$. It follows from the Jordan-Hölder theorem and \cite[Prop.~5.1.2]{BBD} that all irreducible subquotients of an arithmetic sheaf are arithmetic. We call a perverse sheaf $M$ on $X_{\overline{k}}$ \textit{quasi-arithmetic} if all irreducible subquotients of $M$ are arithmetic. We only consider quasi-arithmetic perverse sheaves in this text. Note that the category of quasi-arithmetic perverse sheaves is a Serre subcategory of the category of all perverse sheaves on $X_{\overline{k}}$. We define the derived category of complexes on $X_{\overline{k}}$ to be the category of all complexes $K$ on $X_{\overline{k}}$ such that $\pervCoh{n}{K}$ is quasi-arithmetic for all $n \in \BZ$. The category $\derCat{c}{G_{\overline{k}}}{\Qbarl}$ is triangulated and has a perverse $t$-structure. Remark that if $K \in \derCat{c}{G}{}$, then $K_{\overline{k}}$ is quasi-arithmetic. 
	\item A group $G, G',\ldots $ or $H$ is a connected commutative algebraic group over $k$. A finite group $\Gamma$ is a finite commutative group scheme over $k$. We must sometimes allow for disconnected groups - we indicate it by saying the group $G$ is \textit{possibly disconnected.} In Sections ... and ...,  we consider groups over $k$ or $\overline{k}$ without changing the notation. We explain the change of notation at the beginning of the section.
	\item The letter $U$ always denotes a commutative unipotent group, $A$ an abelian variety, $T$ a torus, and $S$ a semiabelian variety over $k$. Note that there is a maximal torus $T \subseteq S$ of dimension $d_t$. We sometimes write the dimension of a semiabelian variety $S$ as $d = d_t + d_a$.
	\item When we write $G = T\times U$, then we assume $G$ is isomorphic to the product of a torus and a unipotent group $U$ and we fix such an isomorphism. This notation also applies to other choices of combinations of groups.
\end{itemize}
\newpage

\subfile{./Preliminaries}
\newpage
\subfile{./VanishingTheorems}
\newpage
\subfile{./Classification}
\newpage
\subfile{./supports}
\newpage
\subfile{./TannakianCategories}
\newpage

\subfile{./Appendix}
\newpage
\tableofcontents
	\newpage
	\printbibliography
\end{document}

%% file: Preliminaries.tex
	\section{Preliminaries}

	\subsection{Fundamental group}
	We construct a  fundamental group that classifies the characters of an algebraic group. The construction is such that the arithmetic characters recover the dual group from \cite{KowalskiTannaka}. By introducing the fundamental group below, we also subsume all other previously considered definitions for a group $G$. 
	
	It is technically convenient to introduce the fundamental group of a disconnected group as well - we shall see, however, that this definition only remembers the connected component (see Corollary \ref{COR_ImageFundGroup}). This will be helpful in proving Proposition \ref{PROP_DescentFundamentalGroup}, which plays a central role in many of our arguments with the character group.
	\begin{definition}\label{DEF_FundGroup}
		Let $G$ be a possibly disconnected group. For all $m|n$ with $n \geq 1$ we define the \textit{trace} to be
		\[
		\text{Tr}_{k_m/k_n}\colon G (k_m) \rightarrow G (k_n),~x \mapsto \prod_{\substack{m|i|n \\ i > 0}} \text{Fr}_{k_i}(x).
		\]
		We define the \textit{fundamental group of $G$} to be the limit along the traces
		\[
		\pi_1^{\text{char}}(G) := \varprojlim_{n} {G}(k_n).
		\]
		This is an inverse limit in the category of commutative profinite groups along the ordered set of positive integers ordered by the divisibility relation. 
	\end{definition}
	\begin{remark}\label{REM_InvBaseChangeFundamGrp}
		Let $G $ be a possibly disconnected group over a finite field $k$ and $n \geq 1$. There is a map
		\[
		\pi_1^{\text{char}}(G) \rightarrow \pi_1^{\text{char}}(G_{k_n})
		\]
		given by functoriality of the limit. The diagram defining the fundamental group of $G_{k_n}$ is final in the diagram defining the fundamental group of $G$. Hence the above map is a canonical isomorphism. In particular, the fundamental group does not depend on the basefield $k$.
	\end{remark}
	\subsubsection{Weil group cohomology} We require certain basic facts on the group cohomology of algebraic groups over a finite field for our constructions. The crucial fact to remember is that the Lang torsor is a torsor in the etale topology, hence it induces a surjective map on the geometric points.
	\begin{proposition}[{\cite[p.~113,~Prop.~3]{Serre_AlgebraicGroupsClassFields}}]\label{PROP_LangTorsorTorsor}
		Let $n \geq 1$ a positive integer. The \textit{Lang torsor}
		\[
		\SL_n\colon G_{k_n} \rightarrow G_{k_n}, x \mapsto x^{-1}\text{Fr}_{k_n}(x)
		\] 
		is a Galois covering with automorphism group $G(k_n)$. The automorphisms are the translations by elements in $G(k_n)$. The map
		\[
		G(\overline{k}) \rightarrow G(\overline{k}),~x\mapsto x^{-1}\text{Fr}_{k_n}(x)
		\]
		is surjective. For all $m|n$, there is a commutative diagram 
		\[
		\begin{tikzcd}
			G_{k_m} \arrow[rd, "\SL_m"'] \arrow[r, "\text{Tr}_{k_m/k_n}"] & G_{k_m} \arrow[d, "\SL_n"] \\
			& G_{k_m}                                 
		\end{tikzcd}
		\]
	\end{proposition}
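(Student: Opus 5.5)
The plan is the classical proof of Lang's theorem, written out for the commutative case; we may also simply cite \cite[p.~113,~Prop.~3]{Serre_AlgebraicGroupsClassFields}. Write $F := \text{Fr}_{k_n}$ for the relative Frobenius of $G_{k_n}$ and $\SL_n(x) = x^{-1}F(x)$.

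\textbf{Step 1: $\SL_n$ is an étale homomorphism.} Since $G$ is commutative, $\SL_n$ is a group homomorphism $G_{k_n}\to G_{k_n}$. Its kernel is $\{x : F(x)=x\}$, which is the finite constant group $G(k_n)$. On tangent spaces, $F$ has differential zero, so $d\SL_n = -\text{id}$ at the identity. By translation (using that $\SL_n$ is a homomorphism), $d\SL_n$ is an isomorphism at every point. Hence $\SL_n$ is étale, since $G$ is smooth.

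\textbf{Step 2: surjectivity on $\overline{k}$-points.} The image of the homomorphism $\SL_n$ is a closed subgroup. Its dimension equals $\dim G$ because the kernel is finite. Since $G$ is connected, the image is all of $G$. Consequently $x\mapsto x^{-1}F(x)$ is surjective on $G(\overline{k})$.

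\textbf{Step 3: torsor structure.} A surjective étale homomorphism with finite constant kernel $\Gamma = G(k_n)$ is a $\Gamma$-torsor. Concretely, the map $G\times\Gamma \to G\times_{G} G$ given by $(x,\gamma)\mapsto(x,x\gamma)$ is an isomorphism, which we check on geometric points and tangent spaces. The automorphisms of this torsor are exactly the translations by $G(k_n)$. Thus $\SL_n$ is a finite Galois covering with group $G(k_n)$. I expect this scheme-theoretic identification to be the only point needing care, and it is standard.

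\textbf{Step 4: the compatibility diagram.} For $m\mid n$, interpret the trace as the norm $\text{Tr}_{k_m/k_n}(x) = \prod_{j=0}^{n/m-1} \text{Fr}_{k_m}^{j}(x)$. Because $G$ is commutative, a telescoping computation gives
\[
\text{Tr}_{k_m/k_n}(x)^{-1}\,\text{Fr}_{k_m}\bigl(\text{Tr}_{k_m/k_n}(x)\bigr)=x^{-1}\text{Fr}_{k_m}^{n/m}(x) = x^{-1}\text{Fr}_{k_n}(x).
\]
This is the required identity between the Lang maps at levels $m$ and $n$ and the trace. It is an identity of morphisms of schemes, since both sides are built from group operations and Frobenius. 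Hence the diagram commutes.
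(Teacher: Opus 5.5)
Your proof is correct and is the standard argument behind the cited result. The paper gives no proof of its own, only the reference to Serre, and that reference argues as in your Steps 1--3. $\SL_n$ is a homomorphism with differential $-\mathrm{id}$, so it is étale with finite constant kernel $G(k_n)$. Its image is a closed subgroup of full dimension, hence all of the connected group $G$. It is therefore a $G(k_n)$-torsor whose deck transformations are the translations.

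The one thing you should state explicitly is the re-indexing in Step 4. You prove $\SL_m\circ\text{Tr}=\SL_n$, where $\text{Tr}(x)=\prod_{j=0}^{n/m-1}\text{Fr}_{k_m}^{j}(x)$ goes from level $n$ down to level $m$. The displayed triangle instead asserts $\SL_n\circ\text{Tr}_{k_m/k_n}=\SL_m$, with the trace going from $G(k_m)$ to $G(k_n)$. The display is only correct with $n\mid m$, that is, after interchanging $m$ and $n$, and that is exactly your identity. The same applies to the paper's divisor-product formula for the trace, which is not the norm in general. So say that you are correcting the indexing, rather than claiming that the displayed diagram commutes as written.
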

	\begin{remark}
		The diagram in Proposition \ref{PROP_LangTorsorTorsor} allows us to construct a pro-object in the category of Galois coverings of $G$
		\[
		\varprojlim_{n} G_{\overline{k}} \rightarrow G_{\overline{k}}
		\]
		by passing the Lang torsors to the limit. The fundamental group  $\pi_1^{\text{char}}$ is the Galois group of the pro-etale Lang cover.
	\end{remark}
	The following exact sequence can be deduced from Proposition \ref{PROP_LangTorsorTorsor}  by computing the group cohomology of the Frobenius-module $G(\overline{k})$.
	\begin{corollary}\label{COR_ExactSequencePoints}
		Let $G, G',$ and $G''$ be possibly disconnected groups. Suppose there is a short exact sequence
		\[
		0 \rightarrow G' \rightarrow G \rightarrow G'' \rightarrow 0.
		\] 
		Let $n, m \geq 1$ be positive integers such that $m|n$. We have a commutative diagram with exact columns
		\[
		\begin{tikzcd}[column sep=tiny]		%
			0 \arrow[r] & G'(k_m) \arrow[r] \arrow[d, "\text{Tr}_{k_m/k_n}"] & G(k_m) \arrow[r] \arrow[d, "\text{Tr}_{k_m/k_n}"] & G''(k_m) \arrow[r] \arrow[d, "\text{Tr}_{k_m/k_n}"] & \pi_0(G'_{\overline{k}})_{\text{Fr}_{k_m}} \arrow[r] \arrow[d, "\pi"] & \pi_0(G_{\overline{k}})_{\text{Fr}_{k_m}} \arrow[r] \arrow[d, "\pi"] & \pi_0(G''_{\overline{k}})_{\text{Fr}_{k_m}} \arrow[d, "\pi"] \arrow[r] & 0 \\
			0 \arrow[r]                       & G'(k_n) \arrow[r]                                  & G(k_n) \arrow[r]                                  & G''(k_n) \arrow[r]                                  & \pi_0(G'_{\overline{k}})_{\text{Fr}_{k_n}} \arrow[r]                  & \pi_0(G_{\overline{k}})_{\text{Fr}_{k_n}} \arrow[r]                  & \pi_0(G''_{\overline{k}})_{\text{Fr}_{k_n}} \arrow[r]                  & 0
		\end{tikzcd}
		\]
		where $\pi$ is the natural map induced by functoriality of the coinvariants.
	\end{corollary}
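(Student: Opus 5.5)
The plan is to derive the diagram from the long exact sequence of Galois cohomology of $\hat{\BZ}$ (generated by $\text{Fr}_{k_m}$) acting on geometric points. For a possibly disconnected group $H$ over $k_m$, write $F = \text{Fr}_{k_m}$. The short exact sequence $0 \to G'(\overline{k}) \to G(\overline{k}) \to G''(\overline{k}) \to 0$ of discrete $F$-modules is exact because the quotient map $G \to G''$ is faithfully flat and $\overline{k}$ is algebraically closed. For the cyclic procyclic action, $H^0 = H(\overline{k})^{F} = H(k_m)$, and $H^1$ is given by the coinvariants $H(\overline{k})/(F-1)H(\overline{k})$, where $(F-1)x := x^{-1}F(x)$ in multiplicative notation. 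Since every element of $H(\overline{k})$ is fixed by some power of $F$, this module is torsion and continuous, so these identifications are valid. The long exact sequence
\[
0 \to G'(k_m) \to G(k_m) \to G''(k_m) \to H^1(G') \to H^1(G) \to H^1(G'') \to H^2 = 0
\]
terminates because $\hat{\BZ}$ has cohomological dimension $1$ on torsion modules. Equivalently, one can argue directly by the snake lemma applied to $F-1$ on the three-term sequence; this gives exactly the six-term row with right-hand zero.

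The key step is to identify $H(\overline{k})_{F}$ with $\pi_0(H_{\overline{k}})_{F}$. Let $H^\circ$ be the identity component. By Proposition \ref{PROP_LangTorsorTorsor}, applied to the connected group $H^\circ$, the Lang map $x \mapsto x^{-1}F(x)$ is surjective on $H^\circ(\overline{k})$, so $H^\circ(\overline{k})_F = 0$. Now apply the exact sequence from the previous paragraph to $0 \to H^\circ \to H \to \pi_0(H) \to 0$. The term $H^1(H^\circ)$ vanishes, so $H(\overline{k})_F \to \pi_0(H_{\overline{k}})_F$ is surjective. Its kernel is the image of $H^\circ(\overline{k})$, and that image lies in $(F-1)H(\overline{k})$ because it already lies in $(F-1)H^\circ(\overline{k})$. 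Hence the map is an isomorphism. This identification is natural in $H$, so the $H^1$ terms become $\pi_0(\cdot)_{\text{Fr}_{k_m}}$ as stated.

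The remaining point, and the main one to check carefully, is compatibility between $m$ and $n$ for $m|n$. The plan is to use the corestriction (norm) map from the subgroup $n\hat{\BZ}$ to $m\hat{\BZ}$, in the paper's indexing where the trace goes from $G(k_m)$ to $G(k_n)$; write $N = \prod_{i} \text{Fr}_{k_i}$ for the sum over coset representatives. On $H^0$, corestriction is exactly $\text{Tr}_{k_m/k_n}$ as in Definition \ref{DEF_FundGroup}. On $H^1$, viewed as coinvariants, corestriction is the natural projection of coinvariants. One verifies this with the standard description of cores for cyclic groups: a class $[x] \in H(\overline{k})/(F_m-1)$ maps to $[x] \in H(\overline{k})/(F_n-1)$, where $F_n$ is a power of $F_m$ and $(F_n-1) \subseteq (F_m-1)$ as images. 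Compare this carefully with the direction in which the diagram is drawn, since it may be the inverse orientation of the indexing.

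Corestriction is a morphism of $\delta$-functors, so it commutes with the connecting homomorphism. Naturality of the $\pi_0$ identification then gives the commutative diagram with the maps $\pi$. I expect the main obstacle to be the bookkeeping that the connecting map commutes with trace versus projection. If the abstract argument about morphisms of $\delta$-functors is unconvincing, this can be checked directly: take $y \in G''(k_m)$ and lift it to $x \in G(\overline{k})$, so that $\delta y = [x^{-1}F_m x]$. Then $N(x)$ lifts $\text{Tr}(y)$, and $N(x)^{-1}F_n N(x) = x^{-1}F_m(x)$ up to reordering in the commutative group, as a telescoping product. This shows $\delta_n(\text{Tr}\, y) = \pi(\delta_m y)$.
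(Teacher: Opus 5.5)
Your proposal is correct and follows the paper's route; the paper only remarks that the sequence comes from Lang's theorem by computing the Frobenius cohomology of $G(\overline{k})$, and your three steps supply exactly the omitted details: the snake lemma for $\text{Fr}-1$, the Lang-theorem identification $G(\overline{k})_{\text{Fr}} \cong \pi_0(G_{\overline{k}})_{\text{Fr}}$, and the telescoping check on the connecting map. The one thing to tidy is the orientation: the vertical maps only make sense from the larger field to the smaller one, so $\text{Fr}_{k_m}$ must be a power of $\text{Fr}_{k_n}$ (as your identity $N(x)^{-1}F_nN(x)=x^{-1}F_m(x)$ already assumes), whereas your corestriction paragraph writes the coinvariant projection in the opposite direction, where it is not well defined --- an artifact of the paper's own reversed indexing.
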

	We also note the following well-known surjectivity of the trace map (see the reference \cite[~{}Ch.~VI,~§1, ~6]{Serre_AlgebraicGroupsClassFields})
	\begin{lemma}\label{LEM_TraceIsSurjectiveConnected}
		Suppose we are given positive integers $n,m \geq 1$ with $n|m$. We have
		\[
		\text{Tr}_{k_{n}/k_m}(G(k_n)) = G(k_m).
		\]
	\end{lemma}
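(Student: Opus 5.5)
We read the trace as the norm from the larger field to the smaller one, since that is what makes the diagram in Proposition \ref{PROP_LangTorsorTorsor} commute. Write $k_s \subseteq k_b$ for the smaller and larger field, with $r = b/s$ and $\sigma = \text{Fr}_{k_s}$, so $\text{Fr}_{k_b} = \sigma^r$. The map in question is then
\[
N\colon G(k_b) \rightarrow G(k_s),\quad x \mapsto \prod_{j=0}^{r-1} \sigma^j(x).
\]
The inclusion of the image in $G(k_s)$ holds because $\sigma$ permutes the factors cyclically and $G$ is commutative. So the content of the lemma is surjectivity, and the plan is a telescoping argument resting on Lang's theorem, i.e.\ the surjectivity statement in Proposition \ref{PROP_LangTorsorTorsor} for the connected group $G$.

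\textbf{Step 1: solve a Lang equation.} Fix $y \in G(k_s)$. Regard $y$ as a point of $G(\overline{k})$ and apply the surjectivity of $z \mapsto z^{-1}\text{Fr}_{k_b}(z)$ on $G(\overline{k})$ from Proposition \ref{PROP_LangTorsorTorsor}. This gives $z \in G(\overline{k})$ with $z^{-1}\sigma^r(z) = y$. Connectedness of $G$ is used exactly here.

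\textbf{Step 2: define the preimage and check it is $k_b$-rational.} Set $x := z^{-1}\sigma(z) \in G(\overline{k})$. By commutativity,
\[
x^{-1}\sigma^r(x) = \sigma\bigl(z^{-1}\sigma^r(z)\bigr)\cdot\bigl(z^{-1}\sigma^r(z)\bigr)^{-1} = \sigma(y)\,y^{-1}.
\]
Since $y$ is $k_s$-rational, $\sigma(y) = y$, so $x^{-1}\sigma^r(x) = 1$ and hence $x \in G(k_b)$.

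\textbf{Step 3: compute the norm.} Commutativity lets the product telescope:
\[
N(x) = \prod_{j=0}^{r-1}\sigma^j(z)^{-1}\sigma^{j+1}(z) = z^{-1}\sigma^r(z) = y.
\]
Therefore $N$ is surjective, which is the statement.

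The only real input, and the only place a genuine obstacle could arise, is Lang's theorem in Step 1; this is exactly where connectedness is needed. For disconnected groups the cokernel of $N$ is a Tate cohomology group of $\pi_0$, consistent with Corollary \ref{COR_ExactSequencePoints}. Beyond that, the only care required is keeping the indexing conventions of $\text{Tr}$ straight, namely that the trace is the norm from the larger field to the smaller one.
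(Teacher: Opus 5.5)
Your proof is correct. It is the standard Lang's-theorem argument behind the reference the paper cites in place of a proof (Serre, \emph{Algebraic Groups and Class Fields}, Ch.~VI, \S1, no.~6): write $y = z^{-1}\mathrm{Fr}_{k_b}(z)$ and use the factorization of the Lang map through the norm, which is the commutative triangle in Proposition~\ref{PROP_LangTorsorTorsor}. Your reading of $\mathrm{Tr}$ as the norm from the larger field to the smaller one is the intended one, since the paper's literal indexing, a map $G(k_n)\to G(k_m)$ with $n \mid m$, does not make sense.
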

	The following is certainly also well-known, but we do not know where it is recorded in the literature. As remarked before, it implies that the fundamental group is unaffected by restriction to the connected component.
	\begin{corollary}\label{COR_ImageFundGroup}
		Let $G$ be a possibly disconnected group. For all positive integers $n, m \geq 1$ with $m|n$, the image of 
		\[
		\text{Tr}_{k_m/k_n}\colon G(k_m) \rightarrow G(k_n)
		\]
		is equal to $G^{0}(k_n)$ if $(m/n)~|~|\pi_0(G_{\overline{k}})|$. In particular, we have
		\[
		\pi_1^{\text{char}}(G^0) = \pi_1^{\text{char}}(G).
		\]
	\end{corollary}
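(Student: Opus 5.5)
The plan is to read the statement with the convention of Definition \ref{DEF_FundGroup}: the trace goes from the larger field to the smaller one, and the hypothesis says that the relative degree $r$ of the larger field over the smaller one is divisible by $N := |\pi_0(G_{\overline{k}})|$. The first assertion then splits into two inclusions. For ``$\supseteq$'', apply Lemma \ref{LEM_TraceIsSurjectiveConnected} to the connected group $G^0$: its trace map is surjective onto $G^0$ of the smaller field. Since the trace of $G$ restricts to the trace of $G^0$, the image contains $G^0$ of the smaller field. For ``$\subseteq$'', apply Corollary \ref{COR_ExactSequencePoints} to
\[
0 \rightarrow G^0 \rightarrow G \rightarrow \pi_0(G) \rightarrow 0 .
\]
The commutative diagram shows that it suffices to prove that the trace on the finite étale group $\pi_0(G)$ kills all its rational points over the larger field.

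That step is the computation I expect to be the main obstacle. Let $\phi$ denote the Frobenius of the smaller field acting on $A := \pi_0(G_{\overline{k}})$, and let $x \in A$ be fixed by $\phi^{r}$. The trace is $y = \sum_{j<r}\phi^j(x)$. Let $d \mid r$ be the length of the $\phi$-orbit of $x$. Then $y = (r/d)\, z$ with $z = \sum_{j<d}\phi^j(x)$, so we want the exponent of $A$ to divide $r/d$. This is clear if $\phi$ acts trivially on $A$, since then $y = r x = 0$ because $N \mid r$.

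In general I would first try to bound $d$: it is at most the order of $\phi$ as an automorphism of $A$. I would then try to show that $(r/d)z = 0$ by noting that $z$ is $\phi$-invariant and comparing with the coinvariants appearing in Corollary \ref{COR_ExactSequencePoints}. If this does not work uniformly, I would strengthen the hypothesis to $N\cdot |\mathrm{Aut}(A)| \mid r$, or equivalently pass first to an extension that trivializes the Galois action on $A$ and then take a further extension of degree divisible by $N$. Since only a cofinal system of indices is needed afterwards, this weaker form suffices for the second assertion.

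For the equality $\pi_1^{\text{char}}(G^0) = \pi_1^{\text{char}}(G)$, observe that the indices $n$ whose transition degrees are divisible by $N$ (or by the strengthened bound) form a cofinal subsystem of the divisibility-ordered index set. The limit can therefore be computed along this subsystem. The inclusion $G^0 \subseteq G$ induces a map of inverse systems $G^0(k_n) \rightarrow G(k_n)$, injective at each level, compatible with traces, and hence giving an injection on limits. Conversely, a compatible family $(g_n)_n$ in $\varprojlim G(k_n)$ satisfies $g_n = \text{Tr}(g_{n'})$ for a suitable larger index $n'$ in the cofinal subsystem. By the first part, $g_n$ then lies in $G^0(k_n)$. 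So every element of the limit comes from $\varprojlim G^0(k_n)$, and the map is an isomorphism of profinite groups; continuity is automatic since all groups in the systems are finite.
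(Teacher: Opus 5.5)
Your reduction is the same as the paper's. You use surjectivity of the trace on $G^0$ (Lemma \ref{LEM_TraceIsSurjectiveConnected}) and the exact rows of Corollary \ref{COR_ExactSequencePoints}, and you reduce to showing that the trace on $A=\pi_0(G_{\overline{k}})$ vanishes. Your fallback is correct: requiring $N\cdot|\mathrm{Aut}(A)|$ to divide the degree, or first trivializing the Frobenius action. With your cofinality argument it does give $\pi_1^{\text{char}}(G^0)=\pi_1^{\text{char}}(G)$.

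The gap is the first assertion as stated. When Frobenius permutes the components, you do not show that $\sum_{j<r}\phi^j$ kills $A^{\phi^r}$ under the hypothesis $N\mid r$ alone. Strengthening the hypothesis proves a weaker statement. Bounding the orbit length $d$ cannot close this, because $r/d$ need not be a multiple of the exponent of $A$. Take $A=\mathbb{Z}/p^2$ with $p$ odd, $\phi$ multiplication by $1+p$, $r=p^2=N$ and $x=1$. Then $d=p$ and $r/d=p$ does not kill $A$. The trace vanishes only because $z=\sum_{j<p}(1+p)^j\equiv p \pmod{p^2}$ is itself divisible by $p$. The paper's one-line argument (trace and inclusion compose to multiplication by the degree, then the snake lemma) is, as written, also conclusive only when Frobenius acts trivially on $A$. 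So the input below is needed on either route.

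The missing idea is to factor the norm into $p$-steps; the statement is then true exactly as stated.
\begin{enumerate}
\item Set $B=A^{\phi^r}$ and fix a prime $p$. Write $r=p^v m$ with $p\nmid m$, and $|B_p|=p^a$. Then $a\le v$, since $|B_p|$ divides $N$, which divides $r$.
\item Put $\psi=\phi^m$, so that $\psi^{p^v}=1$ on $B$. Writing $j=i+mk$ with $i<m$, $k<p^v$, and expanding $k$ in base $p$, one gets
\[
\sum_{j<r}\phi^j=\Bigl(\sum_{i<m}\phi^i\Bigr)\prod_{t=0}^{v-1}F_t, \qquad F_t:=\sum_{j<p}\psi^{jp^t}.
\]
\item Choose a composition series of $B_p$ as a $\mathbb{Z}[\psi]$-module. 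It has length at most $a$.
\item Each factor $S$ is an $\mathbb{F}_p$-vector space on which $(\psi-1)^{p^v}=\psi^{p^v}-1=0$. Then $\ker(\psi-1)$ is a nonzero submodule of the simple module $S$, so $\psi$ acts trivially on $S$.
\item Hence each $F_t$ acts on every factor as multiplication by $p$, that is, by zero. So $F_t$ maps each step of the series into the previous one.
\item The product of the $v\ge a$ operators $F_t$ therefore kills $B_p$, and so does the full trace.
\end{enumerate}
Since this holds for every prime $p$, the trace on $\pi_0$ vanishes on all of $B$ whenever $N$ divides the degree. Your exact-sequence argument then yields the first assertion with no strengthening of the hypothesis.
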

	\begin{proof}
		By Corollary \ref{COR_ExactSequencePoints}, we have a commutative diagram with exact columns
		\[
		\begin{tikzcd}
			0 \arrow[r] & G^0(k_m) \arrow[r] \arrow[d, "\text{Tr}_{k_m/k_n}", two heads] & G(k_m) \arrow[r] \arrow[d, "\text{Tr}_{k_m/k_n}"] & (G/G^0)(k_m) \arrow[r] \arrow[d, "\text{Tr}_{k_m/k_n}"] & {} \\
			0 \arrow[r] & G^0(k_n) \arrow[r]                                             & G(k_n) \arrow[r]                                  & (G/G^0)(k_n) \arrow[r] & 0             
		\end{tikzcd}
		\]
		We can compose the trace with the natural inclusion
		\[
		 (G/G^0)(k_n) \xrightarrow{\text{Tr}_{k_m/k_n}} (G/G^0)(k_m) \rightarrow (G/G^0)(k_n)
		\]
		to obtain multiplication by $m/n$. One can now compute the image of the trace from the Snake Lemma. 
	\end{proof}
	\subsubsection{Functoriality of the fundamental group} 
	\begin{proposition}\label{PROP_FundamentalGrpFunctorial}
		Let $G$ and $H$ be possibly disconnected groups over a finite field $k$ and $\pi\colon G \rightarrow H$ be a morphism. We have a continuous map
		\[
		\pi_*\colon \pi_1^{\text{char}}(G) \rightarrow \pi_1^{\text{char}}(H).
		\]
		which is the limit of the maps
		\[
		G (k_n) \rightarrow H(k_n).
		\]
		The fundamental group defines a functor from the category of possibly disconnected groups defined over $k_0$ to the category of profinite commutative groups.
	\end{proposition}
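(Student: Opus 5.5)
The plan is to build $\pi_*$ as a morphism of inverse systems and then pass to the limit, so that everything reduces to compatibility of $\pi$ with the trace maps. Since $\pi\colon G\to H$ is a morphism of group schemes over $k$, it commutes with every Frobenius $\text{Fr}_{k_i}$. For each $n\ge 1$ it therefore induces a group homomorphism $\pi_n\colon G(k_n)\to H(k_n)$ on rational points. These are homomorphisms because $\pi$ is a homomorphism and $G,H$ are commutative. Each $G(k_n)$ is finite and discrete, so each $\pi_n$ is automatically continuous.

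The key step is to check that the $\pi_n$ form a morphism of diagrams indexed by the divisibility poset. That is, for $m|n$ we need
\[
\pi_n\circ \text{Tr}_{k_m/k_n}=\text{Tr}_{k_m/k_n}\circ \pi_m .
\]
Take $x\in G(k_m)$. By Definition \ref{DEF_FundGroup}, $\text{Tr}_{k_m/k_n}(x)$ is a finite product of the elements $\text{Fr}_{k_i}(x)$. Applying $\pi$ turns this into the product of the $\pi(\text{Fr}_{k_i}(x))$, because $\pi$ is a homomorphism. Each factor equals $\text{Fr}_{k_i}(\pi(x))$, because $\pi$ is defined over $k$ and hence Frobenius-equivariant on $\overline{k}$-points. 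Since the indexing sets of the two products are the same, the identity follows.

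Once compatibility holds, the universal property of the inverse limit in the category of commutative profinite groups gives a unique continuous homomorphism
\[
\pi_*\colon \pi_1^{\text{char}}(G)\to \pi_1^{\text{char}}(H)
\]
whose composition with each projection to $H(k_n)$ equals $\pi_n$ composed with the projection from $\pi_1^{\text{char}}(G)$ to $G(k_n)$. This is exactly the statement that $\pi_*$ is the limit of the maps $G(k_n)\to H(k_n)$. Continuity is automatic for limit-induced maps. Concretely, a compatible family $(x_n)$ is sent to $(\pi(x_n))$, and this description is visibly continuous for the product topology.

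Functoriality follows from the uniqueness clause of the universal property. We have $(\text{id}_G)_n=\text{id}$, so $(\text{id}_G)_*=\text{id}$. Also $(\rho\circ\pi)_n=\rho_n\circ\pi_n$, so $(\rho\circ\pi)_*$ and $\rho_*\circ\pi_*$ agree after every projection and are therefore equal. There is no real obstacle in this argument. The only point needing care is that the trace is a product of Galois conjugates taken with the same index set on both sides, and this is immediate from the definition.
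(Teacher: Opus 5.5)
Your proposal is correct and carries out exactly the construction the statement itself describes. The paper gives no separate proof: it takes $\pi_*$ to be the limit of the induced maps $G(k_n)\to H(k_n)$, whose compatibility with the traces follows, as you show, from $\pi$ being a $k$-homomorphism that commutes with Frobenius. Your compatibility check uses only that each trace is a product of Frobenius translates over an index set depending on $m,n$ alone, so it does not depend on the precise index set written in Definition \ref{DEF_FundGroup}.
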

	\begin{proposition}\label{PROP_FundamentalGroupExact}
		We consider an exact sequence 
		\[
		0 \rightarrow G' \rightarrow G \rightarrow G'' \rightarrow 0.
		\]
		We have a short exact sequence
		\[
		0 \rightarrow \pi_1^{\text{char}}(G')\rightarrow \pi_1^{\text{char}}(G)\rightarrow \pi_1^{\text{char}}(G'') \rightarrow 0
		\]
		of profinite groups.
	\end{proposition}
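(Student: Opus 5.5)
We pass to the limit in the exact sequences of Corollary \ref{COR_ExactSequencePoints} and use the fact that inverse limits of compact (profinite) groups are exact. First, by Corollary \ref{COR_ImageFundGroup}, replacing each group by its identity component does not change the fundamental group. We therefore restrict attention to the levels $G'(k_n)\to G(k_n)\to G''(k_n)$ coming from Corollary \ref{COR_ExactSequencePoints}.

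For each $n$, Corollary \ref{COR_ExactSequencePoints} gives an exact sequence
\[
0\to G'(k_n)\to G(k_n)\to G''(k_n)\to \pi_0(G'_{\overline{k}})_{\text{Fr}_{k_n}}\to\cdots,
\]
compatible with the traces. Let $C_n\subseteq G''(k_n)$ be the image of $G(k_n)$, which is a finite-index subgroup. We then have short exact sequences of finite groups
\[
0\to G'(k_n)\to G(k_n)\to C_n\to 0,
\]
and these form an inverse system under the traces. All terms are finite, so the system satisfies Mittag-Leffler, and in the category of profinite groups $\varprojlim$ is exact on such systems. Hence
\[
0\to \pi_1^{\text{char}}(G')\to\pi_1^{\text{char}}(G)\to\varprojlim_n C_n\to 0
\]
is exact.

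It remains to identify $\varprojlim_n C_n$ with $\pi_1^{\text{char}}(G'')$. This is the main point to check. The cokernels $D_n:=G''(k_n)/C_n$ inject into $\pi_0(G'_{\overline{k}})_{\text{Fr}_{k_n}}$, whose order is bounded by $|\pi_0(G'_{\overline{k}})|=:e$. The transition map on coinvariants is the natural map $\pi$, and composing the trace with the inclusion gives multiplication by the index, exactly as in the proof of Corollary \ref{COR_ImageFundGroup}. Thus, for $m\mid n$ with $e\mid n/m$, the map $D_m\to D_n$ factors through multiplication by $n/m$ on a group of exponent dividing $e$, so it is zero. Consequently the pro-system $(D_n)$ is pro-zero, meaning its cofinal transition maps vanish, and $\varprojlim D_n=\varprojlim^1 D_n=0$. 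Applying $\varprojlim$ to $0\to C_n\to G''(k_n)\to D_n\to 0$ then gives $\varprojlim C_n\cong\pi_1^{\text{char}}(G'')$.

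When all three groups are connected, the argument simplifies. In that case $\pi_0(G')$ is trivial, so $D_n=0$ outright, and Lemma \ref{LEM_TraceIsSurjectiveConnected} already makes all transition maps surjective. For the disconnected case, the bookkeeping of how the trace acts on the coinvariant terms is the main obstacle; I would verify the "multiplication by the index" claim using the snake-lemma diagram from Corollary \ref{COR_ImageFundGroup}.

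Finally, continuity of the maps in the limit sequence is automatic. Injectivity at the left end uses only left exactness of $\varprojlim$.
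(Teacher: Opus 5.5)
For the statement as the paper intends it, your argument is correct and is the paper's proof. By the paper's conventions an unqualified group is connected, so $\pi_0(G'_{\overline{k}})=0$. The rows of Corollary~\ref{COR_ExactSequencePoints} then become short exact sequences $0\to G'(k_n)\to G(k_n)\to G''(k_n)\to 0$ of finite groups, compatible with the traces, and $\varprojlim$ is exact on such systems (Mittag-Leffler). Your paragraph on the connected case, combined with your limit argument, is exactly this.

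Your extension to disconnected groups is wrong, and the statement is in fact false there. The failing step is the claim that $(D_n)$ is pro-zero. The maps between the coinvariant terms in Corollary~\ref{COR_ExactSequencePoints} are the natural surjections of coinvariants $\pi_0(G'_{\overline{k}})_{\text{Fr}_{k_n}}\to\pi_0(G'_{\overline{k}})_{\text{Fr}_{k_m}}$, i.e.\ corestriction on $H^1$. They are not multiplication by the index. The identity ``trace composed with inclusion equals multiplication by the index'' used in Corollary~\ref{COR_ImageFundGroup} concerns the $H^0$-terms $(G/G^0)(k_n)$, not these $H^1$-terms.

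Once Frobenius acts trivially on the component groups, $D_n=\ker\bigl(\pi_0(G'_{\overline{k}})\to\pi_0(G_{\overline{k}})\bigr)$ with identity transition maps. Your own limit argument therefore yields the exact sequence
\[
0\to\pi_1^{\text{char}}(G')\to\pi_1^{\text{char}}(G)\to\pi_1^{\text{char}}(G'')\to\ker\bigl(\pi_0(G'_{\overline{k}})\to\pi_0(G_{\overline{k}})\bigr)\to 0,
\]
whose last term is usually non-zero. This is precisely Proposition~\ref{PROP_DescentFundamentalGroup}: for an isogeny with finite kernel $\Gamma$, the cokernel is $\Gamma(\overline{k})$.

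Concretely, take squaring on $\mathbb{G}_m$ with $p\neq 2$. It induces multiplication by $2$ on $\varprojlim_n k_n^{\times}\cong\prod_{\ell\neq p}\mathbb{Z}_\ell$, which is not surjective. In this example $e=2$, yet the maps on $D_n=\mathbb{Z}/2$ are identities rather than zero.

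Your opening reduction fails for the same reason. Replacing each group by its identity component need not preserve exactness of $0\to G'\to G\to G''\to 0$; in the example $G'^0=0$. You should restrict to connected groups, where your proof is complete.
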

	\begin{proof} We have an exact sequence by Corollary \ref{COR_ExactSequencePoints}
		\[
		0 \rightarrow G'(k_n) \rightarrow G(k_n) \rightarrow G''(k_n) \rightarrow 0.
		\]
	\end{proof}
	We obtain the product formula.
	\begin{proposition}\label{PROP_KunnethFundamental}
		The natural map
		\[
		\pi_1^{\text{char}}(G\times G') \rightarrow \pi_1^{\text{char}}(G)\times\pi_1^{\text{char}}(G'),~\gamma \mapsto (\pi_{1*}(\gamma), \pi_{2*}(\gamma))
		\]
		is an isomorphism.
	\end{proposition}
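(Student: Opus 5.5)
The natural map is assembled levelwise. Inverse limits commute with finite products, so it suffices to identify the inverse system defining $\pi_1^{\text{char}}(G\times G')$ with the product of the two inverse systems. I would carry this out in three steps.

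\textbf{Step 1 (points).} For each $n\geq 1$, the projections $\pi_1,\pi_2$ induce a bijection
\[
(G\times G')(k_n)\to G(k_n)\times G'(k_n).
\]
This is a group isomorphism because the group law on $G\times G'$ is defined componentwise.

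\textbf{Step 2 (traces).} These isomorphisms are compatible with the transition maps. The Frobenius $\text{Fr}_{k_i}$ on $G\times G'$ is the product of the Frobenii on the two factors. Since the group law is componentwise, the trace
\[
\text{Tr}_{k_m/k_n}(x,x')=\prod_{m|i|n}\text{Fr}_{k_i}(x,x')
\]
equals $(\text{Tr}_{k_m/k_n}(x),\text{Tr}_{k_m/k_n}(x'))$. Hence we have an isomorphism of inverse systems of finite commutative groups over the divisibility-ordered set of positive integers.

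\textbf{Step 3 (passing to the limit).} Apply $\varprojlim$. In the category of profinite commutative groups, $\varprojlim_n (A_n\times B_n)\cong \varprojlim_n A_n\times\varprojlim_n B_n$, since limits commute with limits. It remains to check that the resulting isomorphism is exactly $\gamma\mapsto(\pi_{1*}\gamma,\pi_{2*}\gamma)$. This follows because, by Proposition \ref{PROP_FundamentalGrpFunctorial}, $\pi_{i*}$ is the limit of the levelwise projections. Continuity of the map and of its inverse is automatic, since both are limits of continuous maps of finite discrete groups. Alternatively, the inverse is induced by the inclusions $G\to G\times G'$ and $G'\to G\times G'$, added together using commutativity.

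The only point needing care is the compatibility of the trace with the product in Step 2, and this is immediate. I expect no real obstacle. One could also deduce the result from Proposition \ref{PROP_FundamentalGroupExact} applied to the split exact sequence $0\to G\to G\times G'\to G'\to 0$: the section splits the sequence and identifies the middle term with the product. The direct argument above is cleaner.
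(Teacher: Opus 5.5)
Your proof is correct, but it is not the route the paper takes. The paper proves the statement in one line. It invokes Proposition \ref{PROP_FundamentalGroupExact} for the two split exact sequences $0\to G'\to G\times G'\to G\to 0$ and $0\to G\to G\times G'\to G'\to 0$. (The second displayed sequence in the paper has a typo: its last term should be $\pi_1^{\text{char}}(G')$.) It leaves implicit the formal step that the sections then identify $\pi_1^{\text{char}}(G\times G')$ with the product. This is exactly the alternative you sketch at the end.

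Your main argument instead works at the finite level: $(G\times G')(k_n)=G(k_n)\times G'(k_n)$, the trace maps are componentwise because Frobenius and the group law are, and $\varprojlim$ commutes with finite products. The advantage is that it is entirely formal. It does not pass through Corollary \ref{COR_ExactSequencePoints}, with its Lang-torsor input and Frobenius-coinvariant terms. It also does not need exactness of inverse limits of finite groups, which Proposition \ref{PROP_FundamentalGroupExact} tacitly uses. And it makes the map, its inverse and the topology explicit.

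The paper's route is shorter once the exactness proposition is available. It also fits the paper's general pattern of deriving properties of $\pi_1^{\text{char}}$ from exact sequences. For a product, however, that machinery is more than is needed.
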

	\begin{proof} Indeed, we have exact sequences
		\begin{align*}
			0 \rightarrow \pi_1^{\text{char}}(G')&\rightarrow \pi_1^{\text{char}}(G\times G') \rightarrow \pi_1^{\text{char}}(G) \rightarrow 0 \\
			0 \rightarrow \pi_1^{\text{char}}(G)&\rightarrow \pi_1^{\text{char}}(G\times G') \rightarrow \pi_1^{\text{char}}(G) \rightarrow 0.
		\end{align*}
	\end{proof}
	The following lemma plays a fundamental role in the theory:
	\begin{proposition}\label{PROP_DescentFundamentalGroup}
		We consider an exact sequence
		\[
		0 \rightarrow \Gamma \rightarrow G \rightarrow G' \rightarrow 0,
		\]
		where $\Gamma$ is a finite group. We have an exact sequence
		\[
		0 \rightarrow \pi_1^{\text{char}}(G) \rightarrow \pi_1^{\text{char}}(G') \rightarrow \Gamma(\overline{k}) \rightarrow 0.
		\] 
	\end{proposition}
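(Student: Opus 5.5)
The plan is to take the inverse limit, along the traces, of the level-wise exact sequences from Corollary \ref{COR_ExactSequencePoints}. Apply that corollary to $0 \rightarrow \Gamma \rightarrow G \rightarrow G' \rightarrow 0$. Since $G$ and $G'$ are connected, their $\pi_0$-terms vanish. Since $\Gamma$ is finite, $\pi_0(\Gamma_{\overline{k}}) = \Gamma(\overline{k})$. So for every $n$ we get an exact sequence
\[
0 \rightarrow \Gamma(k_n) \rightarrow G(k_n) \rightarrow G'(k_n) \rightarrow \Gamma(\overline{k})_{\text{Fr}_{k_n}} \rightarrow 0,
\]
and these are compatible with the trace maps on the first three terms and the natural maps $\pi$ on the coinvariants. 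Split it into two short exact sequences of finite groups through the image $I_n \subseteq G'(k_n)$. Inverse limits of finite (hence compact) groups over a directed set are exact, and the set of positive integers ordered by divisibility is directed. Hence the limit of each short exact sequence is short exact, and splicing gives an exact sequence
\[
0 \rightarrow \varprojlim_n \Gamma(k_n) \rightarrow \pi_1^{\text{char}}(G) \rightarrow \pi_1^{\text{char}}(G') \rightarrow \varprojlim_n \Gamma(\overline{k})_{\text{Fr}_{k_n}} \rightarrow 0 .
\]
It remains to compute the two outer limits.

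\textbf{The left-hand limit vanishes.} Choose $N$ such that every point of $\Gamma(\overline{k})$ is defined over $k_N$. The integers divisible by $N$ form a cofinal subsystem, so we may compute the limit along it. For $N \mid n \mid m$, the Frobenius $\text{Fr}_{k_n}$ acts on $\Gamma(k_m) = \Gamma(\overline{k})$ with order dividing $N$. Therefore the trace $\text{Tr}_{k_m/k_n}$, a sum of $m/n$ Frobenius conjugates, equals $\tfrac{m}{nN'}$ times a fixed orbit sum, where $N'$ is the order of $\text{Fr}_{k_n}$ on $\Gamma(\overline{k})$. If $m$ is additionally divisible by $nN|\Gamma(\overline{k})|$, this trace is zero. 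So every element of the limit has zero component at every level $n$, and $\varprojlim_n \Gamma(k_n) = 0$.

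\textbf{The right-hand limit is $\Gamma(\overline{k})$.} For $N \mid n$, Frobenius $\text{Fr}_{k_n}$ acts trivially on $\Gamma(\overline{k})$, so $\Gamma(\overline{k})_{\text{Fr}_{k_n}} = \Gamma(\overline{k})$. The transition maps $\pi$ are induced by the identity on $\Gamma(\overline{k})$ via functoriality of coinvariants. On the cofinal subsystem $N\mathbb{Z}_{>0}$ the system is therefore constant with identity transitions, and its limit is $\Gamma(\overline{k})$.

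The main point to check carefully is that the maps $\pi$ on the coinvariants in Corollary \ref{COR_ExactSequencePoints} really are the natural projections, with the direction of divisibility consistent with the trace maps. It must also be confirmed that the connecting maps $G'(k_n) \rightarrow \Gamma(\overline{k})_{\text{Fr}_{k_n}}$ (obtained via the Lang torsor, Proposition \ref{PROP_LangTorsorTorsor}) commute with traces and $\pi$. The corollary's commutative diagram asserts this, so I will rely on it. The trace computation above is then routine, using that Frobenius acts through a finite cyclic quotient of order dividing $N$.
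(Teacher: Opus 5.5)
Your proposal is correct and follows essentially the same route as the paper: pass the level-wise exact sequences of Corollary~\ref{COR_ExactSequencePoints} to the limit, reduce to the case where Frobenius acts trivially on $\Gamma(\overline{k})$, and identify the outer terms. The paper makes that reduction by base change via Remark~\ref{REM_InvBaseChangeFundamGrp}, which is exactly your cofinality argument. Beyond this, you justify exactness of the inverse limit explicitly using finiteness of the groups, and you compute $\varprojlim_n \Gamma(k_n) = 0$ directly from the trace being multiplication by $m/n$, where the paper cites Corollary~\ref{COR_ImageFundGroup} instead.
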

	\begin{proof} By Remark \ref{REM_InvBaseChangeFundamGrp}, it is sufficient if we provide the exact sequence after a finite extension of $k$. Thus we can assume the action of Frobenius is trivial on $\Gamma(\overline{k})$. We can pass the exact sequence in Corollary \ref{COR_ExactSequencePoints} to the limit and obtain the short exact sequence
		\[
		0 \rightarrow \pi_1^{\text{char}}(\Gamma) \rightarrow \pi_1^{\text{char}}(G) \rightarrow \pi_1^{\text{char}}(G') \rightarrow \Gamma(\overline{k}) \rightarrow 0.
		\]
		Our assumption on the Frobenius action on $\Gamma(\overline{k})$ implies that the maps in the exact sequence from Corollary \ref{COR_ExactSequencePoints} are given by the identity. This justifies our evaluation of the term on the right in the above exact sequence. Lemma \ref{COR_ImageFundGroup} show $\pi_1^{\text{char}}(\Gamma)  = 0.$
	\end{proof}

	\subsection{Character variety}
	\begin{definition} A \textit{character of $G$}  is a continuous group morphism
		\[
		\chi\colon \pi_1^{\text{char}}(G) \rightarrow K^*
		\]
		for a finite extension $K/\BQ_\ell$. We denote the group of characters by $\mathscr{C}(G)$. The set $\cvar{G}$ is called the \textit{character variety} of $G$.
		
		Let $n \geq 1$. An \textit{arithmetic character $\chi$ over $k_n$} is a group morphism
		\[
		G(k_n) \rightarrow \Qbarl^*.
		\]
		We denote the set of arithmetic character over $k_n$ by $\widehat{G}(k_n)$, i.e. we denote it with the notation for the Pontryagin dual.
	\end{definition}
	
	We begin by constructing the arithmetic characters. We formulate the construction of the Lang torsor in convenient form.
	\begin{proposition}\label{PROP_LangTorsorArithmetic}
		For each $n \geq 1$, there is a surjective continuous group morphism
		\[
		\SL_{n*}\colon \pi_1(G_{k_n}, e) \rightarrow G(k_n).
		\]
		This morphism is functorial in $G$.
	\end{proposition}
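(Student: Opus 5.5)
The plan is to construct $\SL_{n*}$ from the Lang torsor of Proposition \ref{PROP_LangTorsorTorsor}. By that proposition, $\SL_n\colon G_{k_n}\rightarrow G_{k_n}$ is a finite étale Galois covering whose automorphism group is $G(k_n)$, acting by translations. The source $G_{k_n}$ is connected, so this is a connected Galois cover. I will pick a geometric point $\bar e$ over the identity of the source, which lies over $e$ in the target. Galois theory for $\pi_1(G_{k_n},e)$ then gives a continuous surjection
\[
\pi_1(G_{k_n}, e) \rightarrow \text{Aut}(\SL_n) \cong G(k_n).
\]
Concretely, $\gamma\in\pi_1(G_{k_n},e)$ acts on the fiber $\SL_n^{-1}(e)(\overline{k}) = G(k_n)$. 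Its image is the unique $g\in G(k_n)$ with $\gamma\cdot \bar e = g\cdot\bar e$, that is, $g = \gamma(\bar e)$. Continuity holds because the map factors through the finite quotient classifying the cover. Surjectivity is equivalent to connectedness of the total space, which holds because the cover is a nontrivial torsor over a connected $G_{k_n}$ with connected total space.

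The group-morphism property needs care, because identifying the deck group with $G(k_n)$ uses the chosen base point. The standard identification $\pi_1\rightarrow \text{Aut}$ is an anti-homomorphism or a homomorphism depending on conventions. Since $G(k_n)$ is commutative, this ambiguity is harmless, and the map $\gamma\mapsto \gamma(\bar e)$ is a group morphism. I will check this with the formula $\gamma(g\bar e) = g\gamma(\bar e)$: the translations are automorphisms of the cover and therefore commute with the monodromy action.

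For functoriality, let $f\colon G\rightarrow H$ be a morphism of groups. Then $f$ commutes with Frobenius, so $f(x^{-1}\text{Fr}(x)) = f(x)^{-1}\text{Fr}(f(x))$. Hence $f$ gives a morphism of coverings from $\SL_n^G$ to $\SL_n^H$ lying over $f$. This morphism is equivariant for $f\colon G(k_n)\rightarrow H(k_n)$ and sends $\bar e$ to $\bar e$. Comparing the fibers over $e$ gives a commutative square with $f_*\colon\pi_1(G_{k_n},e)\rightarrow\pi_1(H_{k_n},e)$.

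The main obstacle is making sure that the total space of the Lang torsor is connected and that the Galois-theoretic identification is canonical enough to be functorial. Connectedness is immediate because the source of $\SL_n$ is $G_{k_n}$ itself, and $G$ is assumed connected. Canonicity comes from choosing the base point over the identity, which is preserved by every group morphism. For possibly disconnected $G$ the same construction works only if the target is replaced by the identity component, but the statement concerns connected $G$.
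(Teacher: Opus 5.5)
Your proposal is correct and follows the paper's route. The paper's proof just invokes Proposition \ref{PROP_LangTorsorTorsor} (the Lang torsor is a connected finite étale Galois cover with deck group $G(k_n)$) together with Galois theory; your base point over the identity, your use of commutativity of $G(k_n)$ to turn the monodromy map into a homomorphism, and your functoriality check via $f(x^{-1}\text{Fr}(x)) = f(x)^{-1}\text{Fr}(f(x))$ supply the details the paper leaves implicit.
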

	\begin{proof} We have proven in Proposition \ref{PROP_LangTorsorTorsor} that the Lang torsor is finite etale cover with Galois group $G(k_n)$. 
	\end{proof}
	We now define the arithmetic character sheaves and describe their trace functions.
	\begin{definition} Let $\chi \in \widehat{G}(k_n)$. We construct a character of $\pi_1(G_{k_n}, e)$ by composing $\SL_{n*}$ with $\chi$, as follows
		\[
		\pi_1(G_{k_n}, e) \xrightarrow{\SL_{n*}} G(k_n) \xrightarrow{\chi} \Qbarl^*.
		\]
		The associated lisse sheaf is called the \textit{arithmetic character sheaf associated to $\chi$} and is denoted by $\SL_\chi$. 
		
		For a complex $K \in \derCat{c}{G\times X}{\Qbarl}$, we denote by
		\[
		K_\chi := K\otimes\pi_{G}^*(\SL_\chi)
		\]
		the twist of $K$ by $\SL_\chi$. Let $m \geq 1$ be a positive integer with $m|n$. The trace function of $K_\chi$ is given by
		\[
		t_{K_\chi}(g, x)= \chi(\text{Tr}_{k_m/k_n}(g))t_K(g, x)
		\]
		for all  $(g, x) \in G(k_m)\times X(k_m)$.
	\end{definition}
	\begin{lemma}\label{LEM_FundGroupSurjects}
		There is a surjective, continuous group morphism
		\[
		\SL\colon \pi_1(G_{\overline{k}}, e) \rightarrow \pi_1^{\text{char}}(G).
		\]
		given by passing the diagram of group morphisms $\SL_n$ described in Lemma \ref{PROP_LangTorsorArithmetic} to the limit. This morphism is functorial in the group $G$.
	\end{lemma}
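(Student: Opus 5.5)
The plan is to obtain $\SL$ as the inverse limit of the maps $\SL_{n*}$ from Proposition \ref{PROP_LangTorsorArithmetic}. First, for each $n \geq 1$ we compose the natural map $\pi_1(G_{\overline{k}}, e) \rightarrow \pi_1(G_{k_n}, e)$ with $\SL_{n*}$. This gives a continuous morphism
\[
\phi_n\colon \pi_1(G_{\overline{k}}, e) \rightarrow G(k_n).
\]
The image of $\pi_1(G_{\overline{k}}, e)$ in $\pi_1(G_{k_n}, e)$ is the geometric fundamental group, which is the Galois group of the cover after base change to $\overline{k}$. The Lang torsor $\SL_n$ is a Galois covering of $G_{k_n}$ with group $G(k_n)$, and its total space $G_{k_n}$ is geometrically connected because $G$ is connected. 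Hence $\SL_n$ stays connected over $\overline{k}$, and $\phi_n$ is surjective: its image is the Galois group of the geometric cover, namely all of $G(k_n)$.

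Next, we check that the $\phi_n$ are compatible with the transition maps. For $m|n$, Proposition \ref{PROP_LangTorsorTorsor} gives $\SL_n\circ \text{Tr}_{k_m/k_n} = \SL_m$, so $\SL_m$ factors through $\SL_n$. Translation by $g\in G(k_m)$ upstairs induces translation by $\text{Tr}_{k_m/k_n}(g)$ downstairs, because the trace is a group morphism. Consequently, on Galois groups the induced map $G(k_m)\rightarrow G(k_n)$ is the trace, and $\text{Tr}_{k_m/k_n}\circ\phi_m = \phi_n$. Some care is needed with base points: choosing $e$ upstairs in every Lang cover is compatible, since $\text{Tr}(e)=e$. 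The $\phi_n$ therefore form a compatible family, and the universal property of the limit produces a continuous morphism $\SL\colon \pi_1(G_{\overline{k}}, e) \rightarrow \pi_1^{\text{char}}(G)$.

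The main step is surjectivity. Since $\pi_1(G_{\overline{k}}, e)$ is compact and $\SL$ is continuous, the image is a closed subgroup of $\pi_1^{\text{char}}(G)$. Its projection to every $G(k_n)$ is surjective by the first step. A closed subgroup of a profinite inverse limit that surjects onto every stage of a cofinal system is dense, hence equal to the whole limit. So $\SL$ is surjective.

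Functoriality follows from the functoriality of the Lang torsor in Proposition \ref{PROP_LangTorsorArithmetic}. A morphism $G\rightarrow H$ commutes with $x\mapsto x^{-1}\text{Fr}(x)$, so the squares at each finite level commute, and passing to the limit and comparing with Proposition \ref{PROP_FundamentalGrpFunctorial} gives the claim. I expect the only delicate points to be the base-point bookkeeping and the geometric connectedness of the Lang cover.
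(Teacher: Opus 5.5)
Your proposal is correct and follows the same route as the paper, which passes the compatible surjections $\SL_{n*}$ of Proposition~\ref{PROP_LangTorsorArithmetic} to the limit. You also make explicit two points the paper leaves implicit: surjectivity already holds on the geometric group $\pi_1(G_{\overline{k}}, e)$ because the Lang cover is geometrically connected, and surjectivity onto the inverse limit follows because the image is compact, hence closed.
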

	\begin{proof}
		The maps $\SL_{n*}$ are surjective and are compatible with the system of trace mappings by Proposition \ref{PROP_LangTorsorArithmetic}. Therefore, the map to the limit exists and is surjective.
	\end{proof}
	We construct character sheaves for all characters.
	\begin{definition} Let $\chi \in\cvar{G}$. We construct a character of $\pi_1(G_{\overline{k}}, e)$ by composing $\SL$ with $\chi$, as follows
		\[
		\pi_1(G_{\overline{k}}, e) \xrightarrow{\SL} \pi_1^{\text{char}}(G) \xrightarrow{\chi} \Qbarl^*.
		\]
		The associated lisse sheaf is called the \textit{character sheaf associated to $\chi$} and is denoted by $\SL_\chi$. 
		
		For a complex $K \in \derCat{c}{G\times X}{\Qbarl}$, we denote by
		\[
		K_\chi := K\otimes\pi_{G}^*(\SL_\chi)
		\]
		the twist of $K$ by $\SL_\chi$. 
	\end{definition}
	We now reformulate the exactness properties we obtained in the last section for the character group. The fundamental tool is Theorem \ref{THM_AppendixCharacterGroupExact}. We only state the exactness properties and note the respective proposition from the previous section.
	\begin{proposition}\label{PROP_CharFunc}
		Let $\pi\colon G \rightarrow G'$ a group morphism. We have an induced map
		\[
		\pi^*\colon \mathscr{C}(G') \rightarrow \mathscr{C}(G).
		\] 
		The assignment $G \mapsto \mathscr{C}(G)$ is a contravariant functor. Let $\chi \in \cvar{G}$ be a character. There is a natural isomorphism
		\[
		\pi^*(\SL_{\chi}) = \SL_{\pi^*(\chi)}.
		\]
	\end{proposition}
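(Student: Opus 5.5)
The plan has three steps: define $\pi^*$ by precomposition, check functoriality, and then identify the character sheaves by comparing the two monodromy representations of $\pi_1(G_{\overline{k}}, e)$.

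\textbf{Definition and functoriality of $\pi^*$.} By Proposition \ref{PROP_FundamentalGrpFunctorial}, $\pi$ induces a continuous group morphism $\pi_*\colon \pi_1^{\text{char}}(G) \rightarrow \pi_1^{\text{char}}(G')$, namely the limit of the maps $G(k_n) \rightarrow G'(k_n)$. For $\chi\colon \pi_1^{\text{char}}(G') \rightarrow K^*$ we set
\[
\pi^*(\chi) := \chi\circ\pi_*.
\]
This is a composite of continuous group morphisms with the same target $K^*$, so $\pi^*(\chi) \in \cvar{G}$. The map $\pi^*$ is a group homomorphism because the group structure on $\cvar{G}$ is pointwise multiplication of values. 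Contravariant functoriality, $(\pi'\circ\pi)^* = \pi^*\circ\pi'^*$ and $\text{id}^* = \text{id}$, follows directly from the covariant functoriality of $\pi_1^{\text{char}}$ in Proposition \ref{PROP_FundamentalGrpFunctorial}.

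\textbf{Identification of the character sheaves.} The key input is naturality of the surjection $\SL\colon \pi_1(G_{\overline{k}}, e) \rightarrow \pi_1^{\text{char}}(G)$ from Lemma \ref{LEM_FundGroupSurjects}. I would check that the square
\[
\begin{tikzcd}
\pi_1(G_{\overline{k}}, e) \arrow[r, "\SL"] \arrow[d, "\pi_*"] & \pi_1^{\text{char}}(G) \arrow[d, "\pi_*"] \\
\pi_1(G'_{\overline{k}}, e) \arrow[r, "\SL"] & \pi_1^{\text{char}}(G')
\end{tikzcd}
\]
commutes. It suffices to verify this at each finite level $n$, since $\pi_1^{\text{char}}(G')$ is the limit of the $G'(k_n)$ and a map into a limit is determined by its components.

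At level $n$, the Lang isogeny commutes with $\pi$, because $\pi$ is a homomorphism defined over $k$ and hence commutes with $\text{Fr}_{k_n}$. Concretely,
\[
\pi(x^{-1}\text{Fr}_{k_n}(x)) = \pi(x)^{-1}\text{Fr}_{k_n}(\pi(x)).
\]
So $\pi$ defines a morphism of torsors from the Lang cover of $G_{k_n}$ to that of $G'_{k_n}$. This morphism is equivariant along $G(k_n) \rightarrow G'(k_n)$ and sends the base point $e$ to $e$. By the standard dictionary between finite Galois covers and quotients of $\pi_1$, the induced maps on Galois groups form a commutative square with $\SL_{n*}$ on both sides; this is the functoriality in Proposition \ref{PROP_LangTorsorArithmetic}.

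\textbf{Conclusion.} The lisse sheaf $\pi^*(\SL_\chi)$ corresponds to the representation
\[
\chi\circ\SL\circ\pi_*\colon \pi_1(G_{\overline{k}}, e) \rightarrow K^*,
\]
since pullback of lisse sheaves corresponds to restriction of representations along $\pi_*$ on étale fundamental groups. By the commutative square, this representation equals
\[
\chi\circ\pi_*\circ\SL = \pi^*(\chi)\circ\SL,
\]
which is the representation defining $\SL_{\pi^*(\chi)}$. The equivalence between rank-one lisse sheaves and characters is an equivalence of categories, so we get a natural isomorphism $\pi^*(\SL_\chi) \cong \SL_{\pi^*(\chi)}$.

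\textbf{Main obstacle.} The only subtle point is base points and basefields. The sheaves here live over $k$ (or $k_n$), while $\SL$ is defined on the geometric fundamental group. Over $\overline{k}$ the isomorphism follows from the argument above. To descend it, I would use that $\SL_\chi$ is defined through the arithmetic Lang covers of $G_{k_n}$, and that the identification above is already a map of covers defined over $k_n$. Alternatively, one can note that rank-one lisse sheaves with a trivialized stalk at $e$ have no nontrivial automorphisms compatible with the trivialization, so the geometric isomorphism is canonical and descends.
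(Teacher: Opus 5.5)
Your proposal is correct and follows the same route as the paper. In both, $\pi^*$ is precomposition with the map $\pi_*$ of Proposition \ref{PROP_FundamentalGrpFunctorial}, and the isomorphism $\pi^*(\SL_\chi)=\SL_{\pi^*(\chi)}$ comes from the naturality of $\SL$ in Lemma \ref{LEM_FundGroupSurjects}; the only difference is that you verify this naturality explicitly via the compatibility of the Lang torsors with $\pi$, where the paper just cites it (and you rightly take $\chi\in\cvar{G'}$, the $\cvar{G}$ in the statement being a typo).
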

	\begin{proof}
		This follows from Proposition \ref{PROP_FundamentalGrpFunctorial}. The functoriality of the character sheaf follows from the functoriality noted in Lemma \ref{LEM_FundGroupSurjects}.
	\end{proof}
	\begin{proposition}\label{PROP_CharExact}
		Let 
		\[
		0 \rightarrow G' \rightarrow G \rightarrow G'' \rightarrow 0
		\]
		be an exact sequence. We have an induced exact sequence on character groups
		\[
		0 \rightarrow \cvar{G''} \rightarrow \cvar{G} \rightarrow \cvar{G'} \rightarrow 0.
		\]
	\end{proposition}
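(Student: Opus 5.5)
The plan is to dualize the short exact sequence of profinite groups from Proposition \ref{PROP_FundamentalGroupExact},
\[
0 \rightarrow \pi_1^{\text{char}}(G')\rightarrow \pi_1^{\text{char}}(G)\rightarrow \pi_1^{\text{char}}(G'') \rightarrow 0,
\]
by applying the contravariant functor $\mathscr{C}(-) = \text{Hom}_{\text{cont}}(-, \Qbarl^*)$. Here a continuous morphism is one landing in $K^*$ for some finite $K/\BQ_\ell$. The induced maps are the pullbacks of Proposition \ref{PROP_CharFunc}. The whole argument therefore reduces to the exactness of this Hom functor on short exact sequences of commutative profinite groups, which is the content of Theorem \ref{THM_AppendixCharacterGroupExact}.

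First, exactness on the left and in the middle is formal. Since $\pi_1^{\text{char}}(G) \rightarrow \pi_1^{\text{char}}(G'')$ is surjective, and a continuous surjection of profinite groups is a quotient map, a character of $G''$ that pulls back to the trivial character is trivial. Likewise, a character $\chi$ of $\pi_1^{\text{char}}(G)$ that is trivial on the closed subgroup $\pi_1^{\text{char}}(G')$ factors through the topological quotient $\pi_1^{\text{char}}(G)/\pi_1^{\text{char}}(G') \cong \pi_1^{\text{char}}(G'')$. This isomorphism is again a homeomorphism by compactness. The factored map is still continuous with values in the same $K^*$, so $\chi$ comes from $\cvar{G''}$.

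The main step is surjectivity of the restriction $\cvar{G} \rightarrow \cvar{G'}$, that is, extending a continuous character $\chi'\colon \pi_1^{\text{char}}(G') \rightarrow K^*$ to $\pi_1^{\text{char}}(G)$, possibly after enlarging $K$. I expect this to be the main obstacle. The approach I would follow runs in three steps.

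\begin{enumerate}
\item Write $\mathcal{O}_K^* \cong \mu(K) \times (1+\mathfrak{m}^r)$ with $1+\mathfrak{m}^r \cong \BZ_\ell^{a}$. By compactness, the image of $\chi'$ lies in $\mathcal{O}_K^*$ times a finitely generated free group, and the latter is trivial because the image is compact. So $\chi'$ takes values in $\mu(K)\times\BZ_\ell^a$.
\item Extend the finite-order part by Pontryagin duality. $\Qbarl^*$ contains all prime-to-$\ell$ and $\ell$-power roots of unity, so it contains $\BQ/\BZ$, and $\text{Hom}_{\text{cont}}(-,\BQ/\BZ)$ is exact on profinite abelian groups.
\item Extend the $\BZ_\ell^a$-part using that continuous maps to $\BZ_\ell$ factor through the maximal pro-$\ell$ quotient. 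Exactness then reduces to the injectivity of $\text{Ext}$-type obstructions, handled by writing $\BZ_\ell = \varprojlim \BZ/\ell^n$ and using Mittag-Leffler. The relevant finite Hom groups are compatible by the finite-level surjections of Corollary \ref{COR_ExactSequencePoints}.
\end{enumerate}

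A subtlety is that an extension of a $\BZ_\ell$-valued map may only exist after passing to a finite-index subgroup, or into $\BQ_\ell$ rather than $\BZ_\ell$. To absorb this, I would allow a finite extension $K'/K$ and take $\ell$-power roots. This is exactly why the target is all of $\Qbarl^*$ rather than a fixed $K^*$.

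Assuming Theorem \ref{THM_AppendixCharacterGroupExact} packages this extension property, the proof is then a direct application to the sequence of Proposition \ref{PROP_FundamentalGroupExact}. The remaining work is to check that the maps it produces agree with the pullbacks $\pi^*$ of Proposition \ref{PROP_CharFunc}, which holds by functoriality of the limit.
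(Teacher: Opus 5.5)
Your route is the paper's. Its proof is the single sentence that Theorem \ref{THM_AppendixCharacterGroupExact} applied to the sequence of Proposition \ref{PROP_FundamentalGroupExact} gives the claim.

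\textbf{The gap.} You describe that theorem as exactness of $\mathscr{C}(-)$ on arbitrary short exact sequences of commutative profinite groups. It is not: it assumes that the maximal pro-$\ell$ quotients of the three groups are finitely generated, and you never verify this.

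\textbf{Why the hypothesis is needed.} It cannot be dropped, so your step 3 cannot be done by a Mittag-Leffler argument in the generality you state. For example, embed $\prod_{n\geq 1}\BZ_\ell$ into itself by $(x_n)\mapsto(\ell^n x_n)$. This is a closed embedding with cokernel $\prod_n\BZ/\ell^n$. Take the continuous character $(x_n)\mapsto(1+\ell)^{\sum_n \ell^{\lfloor n/2\rfloor}x_n}$ of the source. An extension $\chi$ with values in some $(K')^*$ would satisfy $\chi(e_n)^{\ell^n}=(1+\ell)^{\ell^{\lfloor n/2\rfloor}}$ for the coordinate vectors $e_n$. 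Then the image of $1+\ell$ in the torsion-free group $(K')^*/\mu(K')\cong\BZ\times\BZ_\ell^{[K':\BQ_\ell]}$ would be divisible by every power of $\ell$. That is impossible, since $1+\ell$ is not a root of unity.

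\textbf{What the hypothesis does in the paper.} In the proof of the theorem, finite generation identifies $\mathscr{C}(\pi_\ell)$ with $\text{Hom}_{\BZ_\ell}(\pi_\ell,M)$. Here $M=\varinjlim_\lambda M_\lambda$ is the union of the pro-$\ell$ parts of the groups $\mathcal{O}_\lambda^*$; it is $\ell$-divisible, hence an injective $\BZ_\ell$-module. A $\BZ_\ell$-linear map out of a finitely generated module lands in a single $M_\lambda$. This is the precise form of your remark that one finite extension $K'/K$ and finitely many $\ell$-power roots suffice.

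\textbf{How to close it.} Check the hypothesis for $\pi_1^{\text{char}}$, as the paper does in the remark following Theorem \ref{THM_AppendixCharacterGroupExact}.
\begin{enumerate}
\item Let $U\subseteq G$ be the maximal unipotent subgroup. Then $\pi_1^{\text{char}}(U)$ is pro-$p$ by Lemma \ref{LEM_charuniprop}.
\item By Proposition \ref{PROP_FundamentalGroupExact} and the exactness of $\pi\mapsto\pi_\ell$ (Lemma \ref{LEM_abprofinexact}), the pro-$\ell$ part of $\pi_1^{\text{char}}(G)$ equals that of the semiabelian quotient $S=G/U$.
\item That pro-$\ell$ part is $\BZ_\ell^{2d_a+d_t}$, by Proposition \ref{PROP_fundcompsemiab} and the computation of $H^1(S,\BZ_\ell)$.
\item For $G'$ and $G''$, either repeat the argument, or note that their pro-$\ell$ parts are a closed submodule and a quotient of that of $G$. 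These are finitely generated because $\BZ_\ell$ is Noetherian.
\end{enumerate}
With this verified, the proposition is a direct application of the theorem, and your argument for exactness on the left and in the middle is fine as written.
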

	\begin{proof}
		This follows from Theorem \ref{THM_AppendixCharacterGroupExact} applied to Proposition \ref{PROP_FundamentalGroupExact}. 
	\end{proof}
	\begin{proposition}\label{PROP_CharDescent}
		Let 
		\[
		0 \rightarrow \Gamma \rightarrow G\rightarrow G' \rightarrow 0
		\]
		be an exact sequence such that $\Gamma$ is finite. We have an induced exact sequence
		\[
		0 \rightarrow   \widehat{\Gamma}(\overline{k})\rightarrow \cvar{G'} \rightarrow \cvar{G} \rightarrow 0.
		\]
		The symbol $\widehat{\Gamma}(\overline{k})$ refers to the group of $\ell$-adic characters $\Gamma(\overline{k}) \rightarrow \Qbarl^*$.
	\end{proposition}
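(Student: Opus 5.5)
The plan is to dualize the exact sequence of Proposition \ref{PROP_DescentFundamentalGroup} using Theorem \ref{THM_AppendixCharacterGroupExact}, in the same way Proposition \ref{PROP_CharExact} was deduced from Proposition \ref{PROP_FundamentalGroupExact}. By Proposition \ref{PROP_DescentFundamentalGroup} we have a short exact sequence of commutative profinite groups
\[
0 \rightarrow \pi_1^{\text{char}}(G) \xrightarrow{q_*} \pi_1^{\text{char}}(G') \rightarrow \Gamma(\overline{k}) \rightarrow 0,
\]
where the first map is the one induced by functoriality (Proposition \ref{PROP_FundamentalGrpFunctorial}). Applying the contravariant functor $\text{Hom}_{\text{cont}}(-, \Qbarl^*)$, with the convention that each character takes values in some finite extension $K/\BQ_\ell$, yields
\[
0 \rightarrow \text{Hom}_{\text{cont}}(\Gamma(\overline{k}), \Qbarl^*) \rightarrow \cvar{G'} \xrightarrow{q^*} \cvar{G} \rightarrow 0,
\]
and the left term is by definition $\widehat{\Gamma}(\overline{k})$. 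Hence the statement follows once this functor is known to be exact on short exact sequences of commutative profinite groups.

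Left exactness is formal. A character of $\pi_1^{\text{char}}(G')$ that vanishes on the image of $\pi_1^{\text{char}}(G)$ factors through the quotient $\Gamma(\overline{k})$. The quotient map is open because it is a continuous surjection of compact groups, so the factored map is continuous. Moreover, any homomorphism out of the finite discrete group $\Gamma(\overline{k})$ automatically lands in the roots of unity of some finite extension of $\BQ_\ell$.

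The substantive point is surjectivity of $q^*$: every continuous character of the closed subgroup $\pi_1^{\text{char}}(G)$ must extend to $\pi_1^{\text{char}}(G')$. This is exactly the content I expect Theorem \ref{THM_AppendixCharacterGroupExact} to supply, and I would simply invoke it. If I had to argue it directly, I would use the fact that the subgroup has finite index $|\Gamma(\overline{k})|$.

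First, a continuous character $\chi$ with values in $\mathcal{O}_K^*$ decomposes as a finite-order part times a pro-$\ell$ part. After enlarging $K$ and passing to an open subgroup, one reduces to extending a map into the $\ell$-divisible group $1+\ell^r\mathcal{O}_K$, or, more precisely, into its divisible hull inside $\Qbarl^*$. Second, since the index is finite, the needed roots of values lie in a finite extension of $K$. Divisibility of $\Qbarl^*$, combined with a Baer-type extension argument, then gives an abstract extension, and continuity is automatic because the subgroup is open.

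The main obstacle is therefore this extension step, in particular keeping the values inside a single finite extension of $\BQ_\ell$ while preserving continuity. The finite index of $\pi_1^{\text{char}}(G)$ in $\pi_1^{\text{char}}(G')$, which comes from $\Gamma$ being finite, is what makes this work.
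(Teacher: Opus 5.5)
Your proposal is correct and matches the paper's proof, which is exactly the one-line deduction: dualize the sequence of Proposition \ref{PROP_DescentFundamentalGroup} using Theorem \ref{THM_AppendixCharacterGroupExact}; that theorem's hypothesis of finitely generated pro-$\ell$ quotients is checked in the remark following it. Your direct finite-index extension sketch for surjectivity is a valid alternative justification, but it is not needed once the appendix theorem is invoked.
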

	\begin{proof}
		This is an immediate consequence of Proposition \ref{PROP_DescentFundamentalGroup} and Theorem \ref{THM_AppendixCharacterGroupExact}.
	\end{proof}
	\begin{proposition}\label{PROP_CharKunneth}
		We have the Künneth isomorphism 
		\[
		\mathscr{C}(G\times G') =\mathscr{C}(G)\times \mathscr{C}(G').
		\]
		Let $(\chi, \chi') \in \cvar{G\times G'}$ be a character. We have a canonical isomorphism of lisse sheaves
		\[
		\SL_{(\chi, \chi')}= \SL_\chi\boxtimes \SL_{\chi'}.
		\]
	\end{proposition}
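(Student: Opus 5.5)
The first claim follows from the product formula for the fundamental group. Proposition \ref{PROP_KunnethFundamental} gives a natural isomorphism $\pi_1^{\text{char}}(G\times G')\cong \pi_1^{\text{char}}(G)\times\pi_1^{\text{char}}(G')$ of profinite groups, namely $\gamma\mapsto(\pi_{1*}\gamma,\pi_{2*}\gamma)$. Characters are continuous homomorphisms into $K^*$ for finite extensions $K/\BQ_\ell$. Since $K^*$ is commutative, a continuous homomorphism out of a product of two profinite groups is the same as a pair of continuous homomorphisms out of the factors. Concretely, the pair $(\chi,\chi')$ is sent to $(g,g')\mapsto \chi(g)\chi'(g')$. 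Conversely, a character $\psi$ is sent to its restrictions along the two inclusions $G\to G\times G'$ and $G'\to G\times G'$. The target field of $\chi\chi'$ is the compositum of the two finite extensions, which is again finite over $\BQ_\ell$. Continuity is clear in both directions. The map $\cvar{G\times G'}\to\cvar{G}\times\cvar{G'}$ should be stated as $\psi\mapsto(i_1^*\psi,i_2^*\psi)$ via Proposition \ref{PROP_CharFunc}. Its inverse is $(\chi,\chi')\mapsto \pi_1^*\chi\cdot\pi_2^*\chi'$. That these are mutually inverse follows from $\pi_{1*}i_{1*}=\text{id}$, $\pi_{2*}i_{1*}=0$, and the analogous identities for $i_2$, all of which hold by functoriality (Proposition \ref{PROP_FundamentalGrpFunctorial}).

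For the sheaves, I will use Proposition \ref{PROP_CharFunc}, which gives $\pi_1^*\SL_\chi=\SL_{\pi_1^*\chi}$ and $\pi_2^*\SL_{\chi'}=\SL_{\pi_2^*\chi'}$. It then remains to show that character sheaves are multiplicative in the character, i.e.\ $\SL_{\psi\psi'}\cong\SL_\psi\otimes\SL_{\psi'}$ for $\psi,\psi'\in\cvar{G\times G'}$. This is immediate from the definition. The sheaf $\SL_\psi$ is the rank one lisse sheaf attached to the character $\psi\circ\SL$ of $\pi_1((G\times G')_{\overline{k}},e)$, and the tensor product of rank one lisse sheaves corresponds to the product of the associated characters of $\pi_1$. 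Combining these facts gives
\[
\SL_{(\chi,\chi')}=\SL_{\pi_1^*\chi\cdot\pi_2^*\chi'}\cong \pi_1^*\SL_\chi\otimes\pi_2^*\SL_{\chi'}=\SL_\chi\boxtimes\SL_{\chi'}.
\]

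The only point needing care, and the main potential obstacle, is checking that this identification is compatible with the surjections $\SL$ of Lemma \ref{LEM_FundGroupSurjects}. The concern is that the functoriality of $\SL$ in the group might not be compatible with the Künneth isomorphism. Functoriality applied to the projections $\pi_1,\pi_2$ is all that is used above, so the compatibility holds automatically. The argument is therefore formal, and no geometric input beyond Proposition \ref{PROP_KunnethFundamental} and Proposition \ref{PROP_CharFunc} is needed.
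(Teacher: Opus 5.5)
Your proposal is correct and takes the same approach as the paper, which derives the statement directly from Proposition \ref{PROP_KunnethFundamental} and Proposition \ref{PROP_CharFunc}. You have spelled out the steps the paper leaves implicit: continuous characters of a product of profinite groups split as pairs, the identities $\pi_{1*}i_{1*}=\mathrm{id}$ and $\pi_{2*}i_{1*}=0$ make the two maps mutually inverse, and rank-one character sheaves are multiplicative in the character, which yields $\SL_{(\chi,\chi')}\cong\pi_1^*\SL_\chi\otimes\pi_2^*\SL_{\chi'}$.
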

	\begin{proof}
		This follows from Proposition \ref{PROP_KunnethFundamental} and Proposition \ref{PROP_CharFunc}.
	\end{proof}
	\subsubsection{Cohomology of character sheaves and projection formulas} 
	An important property of character sheaves is the vanishing of their cohomology. We can recover it in our current setting from the Künneth formula.
	\begin{proposition}\label{PROP_VanishingCohomology}
	Let $\chi \in \cvar{G}$ be a non-trivial character. We have
	\[
	H^*_?(G, K_\chi) = 0.
	\]
\end{proposition}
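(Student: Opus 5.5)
Throughout I read the statement with $K=\Qbarl$ the constant sheaf on $G$, so $K_\chi=\SL_\chi$. That seems to be what is meant, since a general $K$ cannot satisfy it. The plan is the classical ``translation-invariance'' argument, phrased so that it only uses the Künneth isomorphism of Proposition \ref{PROP_CharKunneth} and the surjectivity of Lemma \ref{LEM_FundGroupSurjects}. We work over $\overline{k}$ throughout; this is harmless by proper base change.

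\textbf{Step 1: the character sheaf is multiplicative.} Let $m\colon G\times G\to G$ be the multiplication. Since $\pi_1^{\text{char}}$ is a functor (Proposition \ref{PROP_FundamentalGrpFunctorial}), $m_*$ on $\pi_1^{\text{char}}(G\times G)=\pi_1^{\text{char}}(G)\times\pi_1^{\text{char}}(G)$ is the sum map, and hence $m^*\chi=(\chi,\chi)$. Propositions \ref{PROP_CharFunc} and \ref{PROP_CharKunneth} then give
\[
m^*\SL_\chi\cong \SL_\chi\boxtimes\SL_\chi .
\]
Now consider the automorphism $\phi(g,x)=(g,gx)$ of $G\times G$, which satisfies $m=\text{pr}_2\circ\phi$. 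Compute $R\text{pr}_{1!}(m^*\SL_\chi)$ in two ways. Via $\phi$, it is $R\text{pr}_{1!}\text{pr}_2^*\SL_\chi$, which by base change is the constant complex $V:=R\Gamma_c(G,\SL_\chi)$. Via the Künneth decomposition and the projection formula, it is $\SL_\chi\otimes V$. So we obtain an isomorphism of complexes on $G$
\[
\SL_\chi\otimes V\cong V\otimes\Qbarl .
\]

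\textbf{Step 2: conclude vanishing.} Take cohomology sheaves: for each $i$ we get $\SL_\chi\otimes H^i_c(G,\SL_\chi)\cong$ a constant sheaf. If $H^i_c(G,\SL_\chi)\neq 0$, the monodromy of the left side acts on this finite-dimensional space through the scalar $\chi\circ\SL$, while on the right it acts trivially. Hence $\chi\circ\SL$ is trivial on $\pi_1(G_{\overline{k}},e)$. Because $\SL$ is surjective (Lemma \ref{LEM_FundGroupSurjects}), this forces $\chi=1$, a contradiction. Therefore $H^*_c(G,\SL_\chi)=0$.

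\textbf{Step 3: ordinary cohomology.} Apply Verdier duality. We have $D(\SL_\chi)\cong\SL_{\chi^{-1}}[2\dim G](\dim G)$, and $\chi^{-1}$ is also non-trivial, so
\[
H^*(G,\SL_\chi)\cong H^*_c(G,\SL_{\chi^{-1}})^\vee[\ldots]=0 .
\]

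The main obstacle is making Step 1 rigorous: one needs the identification $m^*\chi=(\chi,\chi)$ together with the compatibility of $\SL$ with products. I would check this directly on the finite levels $G(k_n)$. There the Lang torsor of $G\times G$ is the product of the Lang torsors, and multiplication is a group homomorphism commuting with traces.
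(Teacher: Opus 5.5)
Your proof is correct and is essentially the paper's: the paper simply invokes the translation-invariance argument of \cite{SaibiFourier}, whose only input is the multiplicativity $m^*(\SL_\chi)\cong\SL_\chi\boxtimes\SL_\chi$ coming from Proposition \ref{PROP_CharKunneth}, and your Steps 1--2 write that argument out in full. (In Step 3, your formula for $D(\SL_\chi)$ needs $G$ smooth; you can arrange this by passing to $G_{\mathrm{red}}$, or you can avoid duality altogether by running the same argument with $R\mathrm{pr}_{1*}$ and smooth base change.)
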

\begin{proof}
	The argument in \cite{SaibiFourier} applies verbatim, because we have the isomorphism $m^*(\SL_\chi) = \SL_\chi\boxtimes\SL_\chi$ by Proposition \ref{PROP_CharKunneth}.
\end{proof}
We record a projection formula for relative Fourier coefficients.
\begin{proposition}\label{PROP_ProjectionFormula2}
	Let $\pi\colon G \rightarrow G'$ be a group morphism with connected kernel $G''$. Let $\chi \in \cvar{G'}$ be a character and $K \in \derCat{c}{G\times X}{\Qbarl}$. We have
	\[
	\pi_?(K_{\pi^*(\chi)}) = \pi_?(K)_\chi.
	\]
\end{proposition}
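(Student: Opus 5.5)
The plan is to reduce the statement to the ordinary projection formula for $\pi\times\mathrm{id}_X$, using $\pi^*(\SL_\chi)\cong\SL_{\pi^*(\chi)}$ from Proposition \ref{PROP_CharFunc}. Here $\pi_?$ denotes $(\pi\times\mathrm{id}_X)_?$ for $?\in\{!,*\}$, and the twist on the right-hand side is by $\pi_{G'}^*(\SL_\chi)$ on $G'\times X$.

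First, by the definition of the twist and Proposition \ref{PROP_CharFunc}, we have
\[
K_{\pi^*(\chi)} = K\otimes\pi_G^*(\SL_{\pi^*(\chi)})\cong K\otimes \pi_G^*\pi^*(\SL_\chi)=K\otimes(\pi\times\mathrm{id}_X)^*\pi_{G'}^*(\SL_\chi).
\]
This uses $\pi_{G'}\circ(\pi\times\mathrm{id}_X)=\pi\circ\pi_G$.

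For compact supports, the projection formula $f_!(K\otimes f^*L)\cong f_!(K)\otimes L$ then gives $\pi_!(K_{\pi^*(\chi)})\cong\pi_!(K)_\chi$ immediately. This holds for any $L\in\derCat{c}{G'\times X}{\Qbarl}$.

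For $?=*$, the projection formula fails in general, so I will use that $L=\pi_{G'}^*(\SL_\chi)$ is lisse of rank one. The natural map $f_*(K)\otimes L\to f_*(K\otimes f^*L)$ is an isomorphism for lisse $L$, since the question is local on $G'\times X$. Étale locally $L$ is trivial: over an étale $V\to G'\times X$ with $L|_V\cong\Qbarl$, both sides agree by smooth (indeed étale) base change for $f_*$. Alternatively, one can write $L^{\vee}\otimes(-)=\mathcal{H}om(L,-)$ and use the adjunction identity $f_*\mathcal{H}om(f^*L,K)\cong\mathcal{H}om(L,f_*K)$, which holds for arbitrary $L$. Replacing $L$ by $L^\vee$ (again a character sheaf, namely $\SL_{\chi^{-1}}$), this gives $f_*(K\otimes f^*L)\cong f_*(K)\otimes L$.

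The only step needing care is this $*$-case. The adjunction argument via internal Hom handles it cleanly, so I expect no real obstacle. In particular, the hypothesis that the kernel $G''$ is connected is not needed for this formal identity. It presumably matters only for related statements, such as the compatibility of Fourier coefficients along fibres or the surjectivity of $\pi$ for later use.
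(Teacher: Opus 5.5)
Your argument is correct and is essentially the paper's proof, which simply invokes the projection formula for lisse sheaves together with the identification $\pi^*(\SL_\chi)=\SL_{\pi^*(\chi)}$. One small caution: a lisse $\Qbarl$-sheaf such as $\SL_\chi$ with $\chi$ of infinite order is not étale locally trivial (only its mod-$\lambda^n$ reductions are). For the $*$-case, rely on your internal-Hom adjunction argument rather than on the first local-triviality sketch.
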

\begin{proof}
	This follows from the projection formula for lisse sheaves.
\end{proof}
	There is also the following relative variant, which we present together with a certain projection formula.
	\begin{proposition}\label{PROP_ProjectionFormula1}
		Let $\pi\colon G \rightarrow G'$ be a surjective morphism with possibly disconnected kernel $G''$. Let $X$ be a scheme and $K \in \derCat{c}{G'\times X}{\Qbarl}$. Consider $\chi \in \cvar{G}$.  We have the projection formula
		\[
		\pi_{?}((\pi^* K)_\chi) = K\otimes \pi_{?}(\SL_\chi).
		\]
		To evaluate $\pi_{?}(\SL_\chi)$, we distinguish two cases:	
		If $\pi^*(\chi) = 1$, there exists $\chi' \in \mathscr{C}(G')$ with $\pi^*(\chi') = \chi$ by Proposition \ref{PROP_CharExact}. We have
		\[
		\pi_{?}(\SL_\chi) = \pi_{?}(\Qbarl)_{\chi'}.
		\]
		If $\pi^*(\chi) \neq 1$, then
		\[
		\pi_{?}(\SL_\chi) = 0.
		\]
		Suppose the kernel $G''$ is connected. The pushforward satisfies
		\[
		{H}^n(\pi_{?}(\Qbarl)) = H^n(G'', \Qbarl)\otimes\Qbarl.
		\]
	\end{proposition}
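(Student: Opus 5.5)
The plan is to handle the two supports separately for the projection formula, then compute $\pi_?(\SL_\chi)$ by base change along $\pi$ itself. For $?=!$, the identity $\pi_!((\pi^*K)_\chi)=\pi_!(\pi^*K\otimes\SL_\chi)\cong K\otimes\pi_!(\SL_\chi)$ is the usual projection formula. Here and below $\pi$ means $\pi\times\text{id}_X$, and all statements are relative over $X$.

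For $?=*$, I would use that $\pi$ is a surjective morphism of smooth groups with kernel $G''$, hence a $G''$-torsor. Such a torsor is trivial étale locally on $G'$: there is an étale surjection $V\to G'$ with $G\times_{G'}V\cong G''\times V$. On this trivialization I want to identify $\SL_\chi$ with $\SL_{\chi|_{G''}}\boxtimes\SL'$ for a lisse sheaf $\SL'$ on $V$. To get this, I would use the isomorphism $m^*\SL_\chi=\SL_\chi\boxtimes\SL_\chi$ from Proposition \ref{PROP_CharKunneth}, pulled back along the action map $G''\times G\to G$. The $*$-projection formula for the lisse sheaf $\SL_\chi$ is then the Künneth formula for $*$-pushforward of a box product along the projection $G''\times V\to V$, where the exterior factor is lisse. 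By étale descent this gives the formula on $G'$.

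Next I would compute $\pi_?(\SL_\chi)$. By the same local model (or via the shear isomorphism $G''\times G\cong G\times_{G'}G$, $(h,g)\mapsto(hg,g)$, together with smooth base change for $*$ and proper base change for $!$), the pullback $\pi^*\pi_?(\SL_\chi)$ is $R\Gamma_?(G'',\SL_\chi|_{G''})\otimes\SL_\chi$. Since $\pi$ is surjective, a complex whose pullback under $\pi$ vanishes is itself zero. So if the restriction of $\chi$ to $G''$ is nontrivial, Proposition \ref{PROP_VanishingCohomology} gives $R\Gamma_?(G'',\SL_\chi|_{G''})=0$, and hence $\pi_?(\SL_\chi)=0$. (I read the dichotomy in the statement as being about this restriction $\chi|_{G''}$.)

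If $G''$ is disconnected, I would apply Proposition \ref{PROP_VanishingCohomology} componentwise. I expect this to be the main technical point. The restriction of $\chi$ to $G''$ factors through $\pi_1^{\text{char}}(G''^0)$ by Corollary \ref{COR_ImageFundGroup}, so nontriviality is detected on $G''^0$. The cohomology of each component $G''^0$-coset is the cohomology of $G''^0$ twisted by a character; this twisted cohomology vanishes, so the total vanishes.

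If the restriction is trivial, Proposition \ref{PROP_CharExact} gives $\chi'$ with $\chi=\pi^*\chi'$, so $\SL_\chi=\pi^*\SL_{\chi'}$ by Proposition \ref{PROP_CharFunc}. The projection formula just proven, applied with $K=\SL_{\chi'}$ and the trivial character, then yields $\pi_?(\SL_\chi)=\pi_?(\Qbarl)_{\chi'}$. Finally, for connected $G''$, I would show that the cohomology sheaves $H^n(\pi_?\Qbarl)$ are constant with stalk $H^n(G'',\Qbarl)$. They are lisse, since étale locally $\pi$ is the projection $G''\times V\to V$ and Künneth gives $H^n(G'',\Qbarl)\otimes\Qbarl$ there. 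They are also $G$-translation equivariant, and translation by $G(\overline{k})$ acts trivially on $H^*(G'',\Qbarl)$ because $G''$ is connected. Hence the descent datum is trivial and the sheaves are constant.
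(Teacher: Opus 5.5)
Your proof is correct and is essentially the paper's argument: base change along $\pi$ itself via $G''\times G\cong G\times_{G'}G$, the Künneth isomorphism of Proposition \ref{PROP_CharKunneth} to split $\SL_\chi$, and Proposition \ref{PROP_VanishingCohomology} for the vanishing. Your reading of the dichotomy as a condition on $\chi|_{G''}$ is the intended one, and the remaining differences are cosmetic: you treat a disconnected kernel componentwise explicitly, which the paper does not, and you get constancy of $H^n(\pi_?\Qbarl)$ from triviality of the descent datum over the connected $G''\times G$, whereas the paper reduces to the split case via full faithfulness of $\pi^*$ for smooth morphisms with connected fibers (\cite[Prop.~4.2.5]{BBD}).
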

	\begin{proof} We have a Cartesian square
		\[
		\begin{tikzcd}
			G\times G''\times X \arrow[d, "m"] \arrow[r, "\pi_G"] & G \times X \arrow[d] \\
			G\times X \arrow[r]                     & G' \times X        
		\end{tikzcd}
		\]
		By smooth basechange, it is sufficient if we prove the projection formula after base change to $G$. In the split case, it is a consequence of Proposition \ref{PROP_CharKunneth} and the projection formula.
		
		The evaluation of the pushforward in the first case follows from Proposition \ref{PROP_ProjectionFormula2} and Proposition \ref{PROP_CharFunc}. The vanishing can be proven from smooth base change, Proposition \ref{PROP_VanishingCohomology}, and the Künneth formula. Moreover, \cite[Prop.~4.2.5]{BBD} allows us to reduce the statement on $\pi_?(\Qbarl)$ to the split case by the above Cartesian square.
\end{proof}
\subsubsection{Frobenius invariant characters and subsets}
We now prove a certain lemma, which will allow us to prove that Frobenius invariant linear subsets come from an arithmetic subgroup. We begin by computing the cohomology of the Weil group acting on the character group. We characterize quotients in terms of their character subgroup. This allows us to give a certain characterization of Frobenius invariant subsets inside the character variety.  
\begin{definition}
	We define Frobenius on the fundamental group to be the morphism associated to Frobenius $\text{Fr}_k\colon G \rightarrow G$
	\[
	\text{Fr}^*_k\colon \pi_1^{\text{char}}(G) \rightarrow \pi_1^{\text{char}}(G).
	\]
	This morphims admits an inverse, because Frobenius acts bijectively on the set $G(\overline{k})$. Hence we obtain a action of the Weil group $W_k$ on $\pi_1^{\text{char}}(G)$ by continuous group automorphisms.
\end{definition}
\begin{proposition}\label{PROP_FundamentalGroupCoinvariants}
	We have
	\[
	\pi_1^{\text{char}}(G)_{\text{Fr}_k^n} = G(k_n),
	\]
	in the sense that the kernel of the natural projection $\pi_1^{\text{char}}(G) \rightarrow \pi_1^{\text{char}}(G)_{\text{Fr}_k^n}$ agrees with the kernel of the natural projection $\pi_1^{\text{char}}(G) \rightarrow G(k_n)$.
\end{proposition}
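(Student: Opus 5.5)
The plan is to compute both kernels explicitly in terms of the inverse system and check that they coincide. Write $P_n\colon \pi_1^{\text{char}}(G) \rightarrow G(k_n)$ for the projection to the $n$-th term of the limit. The Frobenius $\text{Fr}_k$ acts on each $G(k_m)$ compatibly with the traces, since the traces are products of Frobenius twists and $G$ is commutative. On the limit this action is $\text{Fr}_k^*$. The coinvariant quotient $\pi_1^{\text{char}}(G)_{\text{Fr}_k^n}$ is the quotient by the closed subgroup $I_n$ generated by the elements $\text{Fr}_k^{n*}(\gamma)\gamma^{-1}$. In a profinite group one should take the closure, and since $\gamma \mapsto \text{Fr}_k^{n*}(\gamma)\gamma^{-1}$ is a continuous homomorphism from a compact group, its image is already closed. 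So $I_n$ is exactly the image of $\text{Fr}_k^{n*} - 1$ in additive notation. The goal is to show $I_n = \ker(P_n)$.

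\emph{Inclusion $I_n \subseteq \ker P_n$.} Since $\text{Fr}_k^n$ fixes $G(k_n)$ pointwise and $P_n$ is Frobenius equivariant, we get $P_n(\text{Fr}_k^{n*}(\gamma)\gamma^{-1}) = \text{Fr}_{k_n}(P_n(\gamma))P_n(\gamma)^{-1} = 1$.

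\emph{Inclusion $\ker P_n \subseteq I_n$.} This is the main step. Let $\gamma = (\gamma_m)_m$ with $\gamma_n = 1$. We want $\delta = (\delta_m)$ with $\text{Fr}_{k_n}(\delta_m)\delta_m^{-1} = \gamma_m$ for all $m$ divisible by $n$; this cofinal subsystem suffices by Remark \ref{REM_InvBaseChangeFundamGrp}.

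Fix $m$ with $n \mid m$. By Hilbert 90 for the cyclic group $\text{Gal}(k_m/k_n)$ acting on $G(k_m)$, the kernel of $\text{Tr}_{k_m/k_n}$ on $G(k_m)$ equals the image of $\text{Fr}_{k_n} - 1$. This follows from Lang's theorem (Proposition \ref{PROP_LangTorsorTorsor}): $H^1(\text{Gal}(k_m/k_n), G(k_m))$ injects into $H^1(k_n, G)$, which vanishes for connected $G$. Hence the set $D_m = \{\delta_m \in G(k_m) : \text{Fr}_{k_n}(\delta_m)\delta_m^{-1} = \gamma_m\}$ is nonempty, because $\text{Tr}_{k_m/k_n}(\gamma_m) = \gamma_n = 1$.

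Each $D_m$ is finite, and the traces map $D_{m'}$ into $D_m$ for $m \mid m'$, since $\text{Tr}$ commutes with $\text{Fr}_{k_n}$. Thus the $D_m$ form an inverse system of nonempty finite sets, and its limit is nonempty by compactness (König's lemma). A point of this limit gives the required $\delta \in \pi_1^{\text{char}}(G)$ with $\text{Fr}_k^{n*}(\delta)\delta^{-1} = \gamma$.

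The main obstacle is the level-wise Hilbert 90 statement: identifying $\ker \text{Tr}_{k_m/k_n}$ with $(\text{Fr}_{k_n}-1)G(k_m)$. I would deduce it from the exact sequence of Corollary \ref{COR_ExactSequencePoints}, applied to the Lang isogeny $\SL_n$ with kernel $G(k_n)$, together with surjectivity of the trace (Lemma \ref{LEM_TraceIsSurjectiveConnected}). Alternatively, one can count: the image of $\text{Fr}_{k_n} - 1$ on $G(k_m)$ has index $|G(k_n)|$, which is the size of its kernel, and the trace is surjective onto $G(k_n)$. So both subgroups have the same index, and one contains the other.
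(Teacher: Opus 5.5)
Your argument is correct, and it is essentially the paper's proof written out by hand. The paper applies Proposition \ref{PROP_DescentFundamentalGroup} to the Lang isogeny $\text{Fr}_k^n - 1\colon G \to G$ with kernel $G(k_n)$; that proposition rests on exactly your two ingredients, namely the level-wise exact sequence of Corollary \ref{COR_ExactSequencePoints} from Lang's theorem (your Hilbert 90 step, with connecting map $\text{Tr}_{k_m/k_n}$) and a passage to the limit (your compactness argument). Your version also shows directly that the resulting map $\pi_1^{\text{char}}(G) \to G(k_n)$ is the natural projection, which the paper leaves implicit.
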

\begin{proof}
	Note that we have an exact sequence of algebraic groups
	\[
	0 \rightarrow G(k_n) \rightarrow G \xrightarrow{\text{Fr}_{k}^n - 1} G\rightarrow 0
	\]
	given by the Lang torsor. We can apply Proposition \ref{PROP_DescentFundamentalGroup} to obtain the statement of the Lemma.
\end{proof}
\begin{definition}
	We define the \textit{Frobenius morphism} $\text{Fr}_{k}^*$ to be the morphism
	\[
	\text{Fr}_k^*\colon \cvar{G} \rightarrow \cvar{G}
	\]
	induced by the Frobenius morphism defined above. We obtain an action of the Weil group $W_k$ on $\cvar{G}$.
\end{definition}
\begin{proposition}\label{PROP_GroupCohomologyCharacterGroup}
	We can compute the group cohomology with respect to the Weil group 
	\begin{align*}
		H^0(W_{k}, \mathscr{C}(G)) &= \widehat{G}(k) \\
		H^1(W_{k}, \mathscr{C}(G)) &= 0
	\end{align*}
\end{proposition}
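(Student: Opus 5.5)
We want $H^0(W_k,\cvar{G})=\widehat{G}(k)$ and $H^1(W_k,\cvar{G})=0$. Since $W_k\cong\BZ$ is generated by $\text{Fr}_k$, group cohomology of a $W_k$-module $A$ is computed by the two-term complex $A\xrightarrow{\text{Fr}_k^*-1}A$. So $H^0$ is the kernel and $H^1$ is the cokernel of $\text{Fr}_k^*-1$ on $\cvar{G}$. The plan is to recognize this map as the map on characters induced by an isogeny and then invoke Proposition \ref{PROP_CharDescent}.

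\textbf{Step 1: identify the map.} By Proposition \ref{PROP_FundamentalGrpFunctorial} and Proposition \ref{PROP_CharFunc}, the endomorphism $\phi:=\text{Fr}_k-1\colon G\to G$, $x\mapsto x^{-1}\text{Fr}_k(x)$, induces $\phi^*\colon\cvar{G}\to\cvar{G}$. Because the character group is a group and $\pi_1^{\text{char}}$ is functorial and additive on homomorphisms of commutative groups, we have
\[
\phi^*(\chi)=\text{Fr}_k^*(\chi)\cdot\chi^{-1}.
\]
The additivity holds levelwise on $G(k_n)$ and hence in the limit. So $\phi^*$ is exactly the differential of the complex computing $W_k$-cohomology.

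\textbf{Step 2: apply descent to the Lang isogeny.} By Proposition \ref{PROP_LangTorsorTorsor}, the Lang map gives an exact sequence
\[
0\to G(k)\to G\xrightarrow{\phi}G\to 0,
\]
with $\Gamma=G(k)$ the finite constant group. This is the sequence already used in the proof of Proposition \ref{PROP_FundamentalGroupCoinvariants}. Proposition \ref{PROP_CharDescent} then yields an exact sequence
\[
0\to\widehat{\Gamma}(\overline{k})\to\cvar{G}\xrightarrow{\phi^*}\cvar{G}\to 0.
\]
Surjectivity of $\phi^*$ gives $H^1(W_k,\cvar{G})=0$. The kernel of $\phi^*$ is $\widehat{\Gamma}(\overline{k})$, the group of characters $G(k)\to\Qbarl^*$, which is $\widehat{G}(k)$.

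\textbf{Step 3: match the embeddings.} We must check that this kernel is embedded in $\cvar{G}$ in the natural way, namely by pulling back along $\pi_1^{\text{char}}(G)\to G(k)$. This is the content of Proposition \ref{PROP_FundamentalGroupCoinvariants}, which identifies the coinvariants $\pi_1^{\text{char}}(G)_{\text{Fr}_k}$ with $G(k)$. Frobenius-invariant characters are precisely those factoring through these coinvariants, so they are exactly the characters of $G(k)$. This independently confirms the description of $H^0$.

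\textbf{Main obstacle.} The delicate point is Step 1 together with the compatibility in Step 3. One must verify that the map $\cvar{G}\to\cvar{G}$ produced abstractly in Proposition \ref{PROP_CharDescent}, via the appendix duality Theorem \ref{THM_AppendixCharacterGroupExact}, really is $\chi\mapsto\text{Fr}_k^*\chi/\chi$. One must also verify that the identification of the kernel with $\widehat{G}(k)$ is the natural one, rather than one differing by an automorphism. I would settle this by computing levelwise on $G(k_n)$, where $\phi$ acts by $x\mapsto x^{-1}\text{Fr}_k(x)$ and the traces commute with $\text{Fr}_k$, and then passing to the limit.
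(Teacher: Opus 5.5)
Your proposal is correct and is essentially the paper's proof: the paper applies Proposition~\ref{PROP_CharDescent} to the Lang isogeny to obtain the exact sequence $0\to\widehat{G}(k)\to\mathscr{C}(G)\xrightarrow{\text{Fr}_k^*-1}\mathscr{C}(G)\to 0$, and reads off $H^0$ and $H^1$ from it. Your Steps 1 and 3 only spell out what the paper leaves implicit, namely that pulling back along $x\mapsto x^{-1}\text{Fr}_k(x)$ is $\chi\mapsto\text{Fr}_k^*(\chi)/\chi$, and that the kernel is the natural copy of $\widehat{G}(k)$.
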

\begin{proof}
	By Proposition \ref{PROP_CharDescent}, the Lang torsor induces an exact sequence
	\[
	0 \rightarrow \widehat{G}(k) \rightarrow \mathscr{C}(G) \xrightarrow{\text{Fr}_k^* - 1} \mathscr{C}(G) \rightarrow 0.
	\]
\end{proof}
We also need the following lemma:
\begin{lemma}\label{LEM_DescentCharAri}
	For a character $\chi\in \mathscr{C}(G)$, the following three conditions are equivalent:
	\begin{enumerate}\item The group morphism $\chi$ factors through a projection
		\[
		\pi_1^{\text{char}}(G) \rightarrow G(k_n),
		\]
		i.e. is arithmetic.
		\item We have $\text{Fr}^*(\chi) = \chi$.
		\item The character $\chi$ has finite order.	
	\end{enumerate}
\end{lemma}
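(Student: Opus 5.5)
We prove the cycle of implications $(1)\Rightarrow(2)\Rightarrow(3)\Rightarrow(1)$. The key tools are Proposition \ref{PROP_FundamentalGroupCoinvariants}, which identifies $G(k_n)$ with the coinvariants $\pi_1^{\text{char}}(G)_{\text{Fr}_k^n}$, and the fact that each $G(k_n)$ is a finite group.

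\emph{$(1)\Rightarrow(3)$.} Suppose $\chi$ factors through the projection $\pi_1^{\text{char}}(G)\rightarrow G(k_n)$. Since $G(k_n)$ is finite, say of order $N$, we get $\chi^N = 1$, so $\chi$ has finite order.

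\emph{$(3)\Rightarrow(1)$.} Suppose $\chi^N=1$. Then $\chi$ lands in the finite group $\mu_N(K)$, so $\ker\chi$ is an open subgroup of the profinite group $\pi_1^{\text{char}}(G)=\varprojlim_n G(k_n)$. Open subgroups of an inverse limit of finite groups contain the kernel of some projection $\pi_1^{\text{char}}(G)\rightarrow G(k_n)$. Here the index set is directed by divisibility and the transition maps are the traces, which are surjective by Lemma \ref{LEM_TraceIsSurjectiveConnected}. Hence $\chi$ factors through $G(k_n)$ for some $n$, which is (1). Surjectivity of the projections also shows that the factored map is a genuine character of $G(k_n)$.

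\emph{$(1)\Rightarrow(2)$.} By Proposition \ref{PROP_FundamentalGroupCoinvariants}, the kernel of $\pi_1^{\text{char}}(G)\rightarrow G(k_n)$ equals the closed subgroup generated by the elements $\text{Fr}^{n}(\gamma)\gamma^{-1}$. Therefore a character factoring through $G(k_n)$ satisfies $(\text{Fr}^*)^n\chi=\chi$. This is $\text{Fr}^*$-invariance over $k_n$. By Remark \ref{REM_InvBaseChangeFundamGrp} the fundamental group is insensitive to base field, so I read (2) as invariance under the Frobenius relative to some finite field $k_n$, i.e.\ $(\text{Fr}_k^*)^n\chi=\chi$ for some $n\ge1$. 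Read literally with $n=1$, (1) would not imply (2), because a character of $G(k_n)$ need not be $\text{Fr}_k$-invariant. Under this reading the implication is immediate.

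\emph{$(2)\Rightarrow(1)$.} Suppose $(\text{Fr}_k^*)^n\chi=\chi$. Then $\chi$ kills every element $\text{Fr}^n(\gamma)\gamma^{-1}$, hence the closed subgroup they generate, since $\chi$ is continuous. So $\chi$ factors through the coinvariants, which equal $G(k_n)$ by Proposition \ref{PROP_FundamentalGroupCoinvariants}. Alternatively, Proposition \ref{PROP_GroupCohomologyCharacterGroup} applied over $k_n$ gives $H^0(W_{k_n},\cvar{G})=\widehat{G}(k_n)$ directly.

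The main obstacle is pinning down the meaning of (2), as explained in the $(1)\Rightarrow(2)$ step. The only other point needing care is justifying that the coinvariants are taken with respect to the \emph{closed} subgroup generated by the elements $\text{Fr}^n(\gamma)\gamma^{-1}$; continuity of $\chi$ handles this.
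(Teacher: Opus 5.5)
Your proof is correct and follows the paper's own argument. The paper gets (1)$\Leftrightarrow$(2) from Proposition \ref{PROP_FundamentalGroupCoinvariants}, (3)$\Rightarrow$(1) by factoring a finite-order character through some projection $\pi_1^{\text{char}}(G)\rightarrow G(k_n)$, and (1)$\Rightarrow$(3) from the finiteness of $G(k_n)$; your open-kernel argument simply fills in the paper's one-line justification of (3)$\Rightarrow$(1). Your reading of (2) as $(\text{Fr}_k^*)^n\chi=\chi$, with the same $n$ as in (1), is the right one: the paper leaves it implicit, and with $n=1$ it is exactly the form used later in Theorem \ref{THM_DescendingTACS}.
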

\begin{proof}
	The equivalence of the first two conditions follows from Proposition \ref{PROP_FundamentalGroupCoinvariants}. Note that if $\chi$ has finite order, then it factors through one of the projections, hence it satisfies (1). Moreover, (1) clearly implies (3), hence the claim.
\end{proof}
We prove that the character subgroup uniquely determines the quotients of a group.
\begin{lemma}\label{LEM_CharactersFindSubgroups}
	Let $G$ be a connected commutative algebraic group $G$ and $H, H' \subseteq G$ connected subgroups in $G$. Then $H \subseteq H'$ if and only if
	\[
	\pi_{G/H}^*\mathscr{C}(G/H) \subseteq \pi_{G/H'}^*\mathscr{C}(G/H').
	\]
\end{lemma}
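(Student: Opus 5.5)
The plan is to turn the statement into a comparison of closed subgroups of the profinite group $\pi_1^{\text{char}}(G)$. Inclusions of character subgroups will then reverse to inclusions of these subgroups, and inclusions of these subgroups will be compared with inclusions of algebraic subgroups. For a connected subgroup $H \subseteq G$, Proposition \ref{PROP_FundamentalGroupExact} applied to $0 \to H \to G \to G/H \to 0$ identifies $\pi_1^{\text{char}}(H)$ with a closed subgroup $P_H \subseteq \pi_1^{\text{char}}(G)$, namely the kernel of $\pi_{G/H*}$. Then Proposition \ref{PROP_CharExact} identifies $\pi_{G/H}^*\mathscr{C}(G/H)$ with the characters of $\pi_1^{\text{char}}(G)$ that are trivial on $P_H$.

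\textbf{Step 1 (Pontryagin-type duality).} I show that a closed subgroup $P$ of the commutative profinite group $\pi_1^{\text{char}}(G)$ is recovered as the common kernel of all continuous characters $\pi_1^{\text{char}}(G)\to\Qbarl^*$ vanishing on $P$. It suffices to separate any $\gamma\notin P$ from $P$ by such a character. Since $P$ is closed, there is an open subgroup $V$ with $\gamma \notin P+V$, so $\gamma$ has nontrivial image in the finite abelian group $\pi_1^{\text{char}}(G)/(P+V)$. A character of that finite group into roots of unity in $\Qbarl^*$ separates this image from $0$, and it lies in $\mathscr{C}(G)$. Consequently, for connected $H,H'$,
\[
\pi_{G/H}^*\mathscr{C}(G/H) \subseteq \pi_{G/H'}^*\mathscr{C}(G/H') \iff P_{H'} \subseteq P_H .
\]
Here I take as given that the pullback is injective and has exactly this image, by Proposition \ref{PROP_CharExact}.

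\textbf{Step 2 (algebraic subgroups from fundamental groups).} I prove that $P_{H'}\subseteq P_H$ if and only if $H'\subseteq H$. The forward-easy direction is functoriality, Proposition \ref{PROP_FundamentalGrpFunctorial}. For the converse, I consider the composite $H' \to G \to G/H$ and its image $I$, a connected subgroup of $G/H$. The assumption says that the induced map $\pi_1^{\text{char}}(H')\to \pi_1^{\text{char}}(G/H)$ is zero. By Proposition \ref{PROP_FundamentalGroupExact} applied to $H'\to I$, the map $\pi_1^{\text{char}}(H') \to \pi_1^{\text{char}}(I)$ is surjective, and by injectivity $\pi_1^{\text{char}}(I) \hookrightarrow \pi_1^{\text{char}}(G/H)$. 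Hence $\pi_1^{\text{char}}(I)=0$. Then $I(k_n)=0$ for all $n$, because the projections $\pi_1^{\text{char}}(I)\to I(k_n)$ are surjective by Lemma \ref{LEM_TraceIsSurjectiveConnected}. A connected group with no nontrivial $k_n$-points for every $n$ is trivial, so $I=0$ and $H'\subseteq H$.

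Combining the two steps gives the equivalence between the displayed inclusion of character subgroups and the corresponding inclusion of $H$ and $H'$. The only non-formal input is Step 2, specifically that a connected group with trivial $\pi_1^{\text{char}}$ is trivial, and Lemma \ref{LEM_TraceIsSurjectiveConnected} supplies this. The step I expect to require the most care is keeping track of the direction: enlarging the subgroup shrinks the annihilating character group. So when writing out the final proof I will match the roles of $H$ and $H'$ against the displayed inclusion exactly as stated, using the chain (displayed inclusion) $\iff P_{H'}\subseteq P_H \iff H'\subseteq H$. I will check whether this chain agrees with the orientation asserted in the statement or reverses it, and record the orientation that the chain actually gives.
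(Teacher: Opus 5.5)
Your chain
\[
\pi_{G/H}^*\mathscr{C}(G/H)\subseteq \pi_{G/H'}^*\mathscr{C}(G/H') \iff P_{H'}\subseteq P_H \iff H'\subseteq H
\]
has the correct orientation, and you should commit to it. It reverses the printed statement, and the printed statement is false. Take $H=0$ and $H'=G$ with $\dim G>0$. Then $H\subseteq H'$, but $\pi_{G/H}^*\mathscr{C}(G/H)=\mathscr{C}(G)$ is not contained in $\pi_{G/H'}^*\mathscr{C}(G/H')=\{1\}$, because $\pi_1^{\text{char}}(G)\to G(k_n)$ is onto and $G(k_n)\neq 0$ for large $n$. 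The paper's own diagram chase, with the hypothesis as the dashed arrow, also yields $H'\subseteq H$ once it is labeled correctly: the subgroup of $G/H$ that occurs is $(H+H')/H\cong H'/(H\cap H')$, not $H/(H\cap H')$. The lemma is only used later in its equality form (Corollary~\ref{COR_TACSDetermineGroupUniquely}), so nothing downstream is affected.

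There is, however, a step in Step~2 that fails as written. You apply Proposition~\ref{PROP_FundamentalGroupExact} to $H'\to I$, but its kernel $H'\cap H$ need not be connected, and for such kernels surjectivity on $\pi_1^{\text{char}}$ is false. By Proposition~\ref{PROP_DescentFundamentalGroup}, an isogeny with kernel $\Gamma$ induces an \emph{injection} on $\pi_1^{\text{char}}$ with cokernel $\Gamma(\overline{k})$. For example, take $G=\mathbb{G}_m^2$, $H=\mathbb{G}_m\times\{1\}$, $H'=\{(t,t^2)\}$ and $p\neq 2$. Then $H'\to I=G/H$ is $t\mapsto t^2$, which is multiplication by $2$ on $\pi_1^{\text{char}}$.

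The repair is easy. Factor $H'\to H'/(H'\cap H)^0\to I\hookrightarrow G/H$. The first map is onto on $\pi_1^{\text{char}}$ and the other two are injective, so the vanishing of the composite forces $\pi_1^{\text{char}}(H'/(H'\cap H)^0)=0$. More directly, project $P_{H'}\subseteq P_H$ to $G(k_n)$: by Lemma~\ref{LEM_TraceIsSurjectiveConnected} the images are $H'(k_n)$ and $H(k_n)$, so $H'(\overline{k})\subseteq H(\overline{k})$.

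Either way, the final step from \emph{no nontrivial points} to \emph{trivial} needs the groups to be reduced, for instance $H'$ smooth. Some such hypothesis is unavoidable. $\pi_1^{\text{char}}$ only sees $\overline{k}$-points, so it cannot distinguish the subgroup $\mu_p\subseteq\mathbb{G}_m$ from $0$.

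Apart from this, your route differs from the paper's mainly in which side of the duality it works on. The paper stays with characters. The hypothesis says the composite $\mathscr{C}(G/H)\to\mathscr{C}(G)\to\mathscr{C}(H')$ vanishes. This composite factors through the surjection $\mathscr{C}(G/H)\to\mathscr{C}(J)$ with $J=(H+H')/H$ (Proposition~\ref{PROP_CharExact}). Hence $\mathscr{C}(J)\to\mathscr{C}(H')$ vanishes, which forces $J=0$. So exactness of $\mathscr{C}$ does the work of your separation Step~1; this comes ultimately from the injectivity of the divisible coefficient module in Theorem~\ref{THM_AppendixCharacterGroupExact}. No double-annihilator argument is needed.

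The paper glosses over the same isogeny point you did. The map $\mathscr{C}(J)\to\mathscr{C}(H')$ only has finite kernel by Proposition~\ref{PROP_CharDescent}. That still suffices, since a nonzero connected group has infinitely many characters. Your version costs the extra duality step, but it makes the order reversal explicit, and that is what exposes the misprinted orientation.
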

\begin{proof}
	We have the diagram with exact rows and columns
	\[
	\begin{tikzcd}
		& 0 \arrow[d]                                     & 0 \arrow[d]         \\
		0 \arrow[r] & \SC(G/H + H') \arrow[d] \arrow[r]               & \SC(G/H') \arrow[d] \\
		0 \arrow[r] & \SC(G/H) \arrow[d] \arrow[r] \arrow[ru, dashed] & \SC(G) \arrow[d]    \\
		0 \arrow[r] & \SC(H/H\cap H') \arrow[d] \arrow[r]             & \SC(H') \arrow[d]   \\
		& 0                                               & 0                  
	\end{tikzcd}
	\]
	Hence
	\[
	\mathscr{C}(H/H\cap H') = 0.
	\]
	This implies $H \subseteq H'$ by Corollary \ref{COR_ImageFundGroup}.
\end{proof}
\begin{corollary}\label{COR_TACSDetermineGroupUniquely}
	Let $H$ and $H'$ be connected subgroups of $G$ and $\chi_1, \chi_2 \in \mathscr{C}(G)$. We have
	\[
	\chi_1\cdot\pi_{G/H_1}^*(\mathscr{C}(G/H_1)) = \chi_2\cdot\pi_{G/H_2}^*(\mathscr{C}(G/H_2))
	\]
	if and only if $H_1 = H_2$ and $\chi_1/\chi_2 \in \mathscr{C}(G/H_1)$.
\end{corollary}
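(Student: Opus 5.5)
The "if" direction is formal. If $H_1 = H_2 =: H$ and $\chi_1/\chi_2 \in \pi_{G/H}^*\mathscr{C}(G/H)$, then multiplying the subgroup $\pi_{G/H}^*\mathscr{C}(G/H)$ by $\chi_1$ or by $\chi_2$ gives the same coset. So both sides agree.

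For the converse, I treat the two sets as cosets of subgroups in the abelian group $\mathscr{C}(G)$. Write $A_i := \pi_{G/H_i}^*\mathscr{C}(G/H_i)$. Each $A_i$ is a subgroup, since $\pi^*$ is a group morphism by Proposition \ref{PROP_CharFunc}.

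\begin{enumerate}
\item Assume $\chi_1 A_1 = \chi_2 A_2$. The trivial character lies in $A_1$, so $\chi_1 \in \chi_2 A_2$, hence $\chi_1/\chi_2 \in A_2$.
\item Symmetrically, $\chi_2/\chi_1 \in A_1$. Since $A_1$ is a subgroup, also $\chi_1/\chi_2 \in A_1$.
\item From $\chi_1/\chi_2 \in A_2$ we get $\chi_1 A_2 = \chi_2 A_2 = \chi_1 A_1$. Multiplying by $\chi_1^{-1}$ gives $A_1 = A_2$.
\item Lemma \ref{LEM_CharactersFindSubgroups}, applied in both directions to the connected subgroups $H_1, H_2$, gives $H_1 \subseteq H_2$ and $H_2 \subseteq H_1$. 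Hence $H_1 = H_2$.
\item Together with $\chi_1/\chi_2 \in A_1$, this is exactly the claim, reading the condition $\chi_1/\chi_2 \in \mathscr{C}(G/H_1)$ via the inclusion $\pi_{G/H_1}^*$.
\end{enumerate}

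The substantive input is Lemma \ref{LEM_CharactersFindSubgroups}, which recovers a connected subgroup from its annihilating character subgroup. It rests on the exactness of Proposition \ref{PROP_CharExact} and on Corollary \ref{COR_ImageFundGroup}. Once that lemma is available, everything else is elementary coset manipulation.

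One minor point needs attention. The pullback $\pi_{G/H}^*$ must be injective, so that the statement $\chi_1/\chi_2 \in \mathscr{C}(G/H_1)$ makes sense as membership in a subgroup of $\mathscr{C}(G)$. This injectivity follows from Proposition \ref{PROP_CharExact} applied to $0 \to H \to G \to G/H \to 0$, where $H$ is connected so that $G/H$ is again a connected group.
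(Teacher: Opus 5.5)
Your argument is correct and is essentially the paper's proof: the paper also observes that the coset equality forces $\chi_1\cdot\pi_{G/H_2}^*(\mathscr{C}(G/H_2)) = \chi_2\cdot\pi_{G/H_2}^*(\mathscr{C}(G/H_2))$, hence $A_1 = A_2$, and then concludes $H_1 = H_2$ from Lemma \ref{LEM_CharactersFindSubgroups}. Your explicit treatment of the trivial direction and of the injectivity of $\pi_{G/H}^*$ via Proposition \ref{PROP_CharExact} fills in details the paper leaves implicit.
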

\begin{proof}
	The equality implies 
	\[
	\chi_2\cdot\pi_{G/H_2}^*(\mathscr{C}(G/H_2)) = \chi_1\cdot\pi_{G/H_2}^*(\mathscr{C}(G/H_2))
	\]
	and hence
	\[
	\pi_{G/H_1}^*(\mathscr{C}(G/H_1)) =\pi_{G/H_2}^*(\mathscr{C}(G/H_2)). 
	\]
	This implies the claim by Lemma \ref{LEM_CharactersFindSubgroups}.
\end{proof}
\begin{remark}
	Let $H_{\overline{k}} \subseteq G_{\overline{k}}$ be a subgroup. There is a finite extension $k'/k$ such that the subgroup descends to a subgroup $H_{k'} \subseteq G_{k'}$. By Remark \ref{REM_InvBaseChangeFundamGrp}, the image of $\cvar{G_{\overline{k}}/H_{\overline{k}}}$ in $\cvar{G}$ is well-defined.
\end{remark}
\begin{theorem}\label{THM_DescendingTACS}
	Let $H_{\overline{k}} \subseteq G_{\overline{k}}$ a closed, connected subgroup. Let $\chi \in \mathscr{C}(G)$ be a character such that the subset $\Delta \subseteq \mathscr{C}(G)$ given by
	\[
	\Delta := \chi\cdot\pi_{G_{\overline{k}}/H_{\overline{k}}}^*(\mathscr{C}(G_{\overline{k}}/H_{\overline{k}}))
	\]
	satisfies $\text{Fr}_k^*(\Delta) = \Delta$. The group $H_{\overline{k}}$ descends to a subgroup $H \subseteq G$ and there exists an arithmetic character $\chi' \in \widehat{G}(k)$ with
	\[
	\Delta = \chi'\cdot \pi_{G/H}^*(\mathscr{C}(G/H)). 
	\]
\end{theorem}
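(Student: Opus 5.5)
The argument has two steps: first descend the subgroup, then descend the character using the vanishing of $H^1(W_k,\cdot)$ from Proposition \ref{PROP_GroupCohomologyCharacterGroup}.

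\emph{Descent of $H_{\overline{k}}$.} Fix a finite extension $k'/k$ over which $H_{\overline{k}}$ is defined. The Frobenius $\text{Fr}_k$ of $G$ carries $H_{\overline{k}}$ to another closed connected subgroup $H^{\sigma}_{\overline{k}} := \text{Fr}_k(H_{\overline{k}})$. By functoriality of the character group (Proposition \ref{PROP_CharFunc}), applied to the isomorphism $G_{\overline{k}}/H_{\overline{k}}\to G_{\overline{k}}/H^\sigma_{\overline{k}}$ induced by $\text{Fr}_k$, we get
\[
\text{Fr}_k^*\bigl(\pi^*_{G/H}\cvar{G/H}\bigr) = \pi^*_{G/H^\sigma}\cvar{G/H^\sigma},
\]
possibly with $\sigma$ replaced by $\sigma^{-1}$ depending on conventions; this is harmless. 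Hence $\text{Fr}_k^*(\Delta) = \text{Fr}_k^*(\chi)\cdot \pi^*_{G/H^\sigma}\cvar{G/H^\sigma}$. Since this equals $\Delta$, Corollary \ref{COR_TACSDetermineGroupUniquely} (valid over $k'$ by Remark \ref{REM_InvBaseChangeFundamGrp}) gives $H^\sigma_{\overline{k}} = H_{\overline{k}}$. A closed subvariety of $G_{\overline{k}}$ stable under the $k$-Frobenius descends to $k$ (Galois descent for closed subschemes). So $H\subseteq G$ is a subgroup defined over $k$.

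\emph{Descent of the character.} Set $C := \pi^*_{G/H}\cvar{G/H}\subseteq \cvar{G}$. This is a $W_k$-stable subgroup, isomorphic to $\cvar{G/H}$ via the injective map $\pi^*_{G/H}$ of Proposition \ref{PROP_CharExact}. The same corollary also gives $\text{Fr}_k^*(\chi)/\chi \in C$. Thus the class of $\chi$ in $\cvar{G}/C$ is $W_k$-invariant. By Proposition \ref{PROP_CharExact}, the sequence
\[
0\to \cvar{G/H}\to \cvar{G}\to \cvar{H}\to 0
\]
is exact and $W_k$-equivariant, so $\cvar{G}/C \cong \cvar{H}$ equivariantly. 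The long exact sequence for $W_k$-cohomology contains
\[
H^0(W_k,\cvar{G}) \to H^0(W_k,\cvar{H}) \to H^1(W_k,\cvar{G/H}).
\]
The last group vanishes by Proposition \ref{PROP_GroupCohomologyCharacterGroup} applied to the connected group $G/H$. So the invariant class lifts to some $\chi'\in H^0(W_k,\cvar{G}) = \widehat{G}(k)$ with $\chi'C = \chi C$, i.e.\ $\Delta = \chi'\cdot\pi^*_{G/H}\cvar{G/H}$.

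\emph{Main obstacle.} The delicate step is the first one. We must verify that the geometric Frobenius action on $\cvar{G}$ (through $\text{Fr}_k$ on $G(\overline{k})$ and the traces) really transports $\pi^*\cvar{G/H}$ to $\pi^*\cvar{G/\text{Fr}(H)}$ when $H$ is only defined over $k'$. The point is that $\text{Fr}_k\colon G\to G$ is a $k$-morphism restricting to a $k'$-isomorphism $H\to \text{Fr}_k(H)$, and the identification is compatible with Remark \ref{REM_InvBaseChangeFundamGrp}. The $H^1$ argument in the second step is routine once the $W_k$-equivariance of the exact sequence is noted.
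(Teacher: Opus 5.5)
Your proposal is correct and follows essentially the same route as the paper. You first descend $H_{\overline{k}}$ by applying Corollary \ref{COR_TACSDetermineGroupUniquely} to $\text{Fr}_k^*(\Delta)=\Delta$, then use $H^1(W_k,\mathscr{C}(G/H))=0$ from Proposition \ref{PROP_GroupCohomologyCharacterGroup} to modify $\chi$ by an element of $\pi_{G/H}^*\mathscr{C}(G/H)$ into a Frobenius-invariant, hence arithmetic, character. The only difference is packaging: the paper writes down the cocycle $\sigma\mapsto\sigma^*(\chi)/\chi$ directly, whereas you obtain the same class as the image of the invariant coset $\chi\cdot\pi_{G/H}^*\mathscr{C}(G/H)$ under the connecting map of the long exact $W_k$-cohomology sequence; you are also more careful than the paper about working over $k'$ before $H$ is known to descend.
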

\begin{proof}
	We have
	\[
	\text{Fr}_k^*(\chi)\cdot \pi_{G_{\overline{k}}/\text{Fr}_k(H_{\overline{k}})}^*(\mathscr{C}(G_{\overline{k}}/\text{Fr}_k(H_{\overline{k}}))) = \chi\cdot\pi_{G_{\overline{k}}/H_{\overline{k}}}^*(\mathscr{C}(G_{\overline{k}}/H_{\overline{k}})).
	\]
	Corollary \ref{COR_TACSDetermineGroupUniquely}, we have
	\[
	\text{Fr}_k(H_{\overline{k}}) = H_{\overline{k}}.
	\]
	This implies $H_{\overline{k}}$ exists as a subgroup $H \subseteq G$ over $k$.
	
	We define the cocycle
	\[
	\alpha\colon W_k \rightarrow \mathscr{C}(G/H),~\sigma \mapsto \frac{\sigma^*(\chi)}{\chi}.
	\]
	The map $\alpha$ defines a cohomology class
	\[
	\alpha \in H^1(W_k, \mathscr{C}(G/H))
	\]
	in the group cohomology of the $W_k$-module $\mathscr{C}(G/H)$. This class vanishes by Proposition \ref{PROP_GroupCohomologyCharacterGroup}. Thus there is $\chi_0 \in \mathscr{C}(G/H)$ such that
	\[
	\frac{\sigma(\chi/\chi_0)}{\chi/\chi_0} = 1.
	\]
	Hence $\chi/\chi_0$ is Frobenius invariant. This implies $\chi/\chi_0 \in \widehat{G}(k)$ by Lemma \ref{LEM_DescentCharAri}. Since $\chi/\chi_0 \in \Delta$, we are done.
\end{proof}
	\subsection{Semiabelian varieties} We discuss the character theory of a semiabelian variety $S$ over a finite field $k$ in more detail. We classify the characters by identifying the character group with the Tate group of the semiabelian variety. We then introduce the character variety and recall certain well-known facts about relative Fourier coefficients of semiabelian varieties.
	\subsubsection{Characters} We begin by proving the comparison theorem. We compare the fundamental group constructed here to the tame fundamental group. The definition of the tame fundamental group requires us to construct a normal crossing compactification of $S$. We recall a well-known construction and show that all character sheaves are tamely ramified. We prove the comparison following a well-known method.

	\begin{definition}
		We have an exact sequence
		\[
		0 \rightarrow T_{\overline{k}} \rightarrow S_{\overline{k}} \rightarrow A_{\overline{k}} \rightarrow 0.
		\]
		The scheme $S_{\overline{k}}$ is a $T_{\overline{k}}$-torsor over $A_{\overline{k}}$. We can identify $T_{\overline{k}} \cong \BG_m^n$ for some $n \geq 0$. Note that we have an equivariant simple normal crossing compactification $T_{\overline{k}} \rightarrow \BBP^n$. Since this compactification is equivariant, we can construct the compactification
		\[
		\overline{S}:= \BBP^n\times_{T_{\overline{k}}}S.
		\]
		This yields a simple normal crossing compactifiction $S \rightarrow \overline{S}$. We consider the tame fundamental group $\pi_1^t(S_{\overline{k}}, e)$ with respect to this compactification.
	\end{definition} 
	The following fact is crucial to us:
	\begin{lemma}\label{LEM_charsabtame}
		Every etale isogeny $S' \rightarrow S$ extends to a tamely ramified cover
		\[
		\Tilde{S'} \rightarrow \overline{S}
		\]
		for a normal crossing compactification $\Tilde{S'}$ of $S'$. In particular, the morphism obtained in Lemma \ref{LEM_FundGroupSurjects} factors to give a morphism
		\[
		\pi_1^t(S_{\overline{k}}, e) \rightarrow \pi_1^{\text{char}}(S).
		\]
	\end{lemma}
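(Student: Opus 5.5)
We need to prove Lemma \ref{LEM_charsabtame}: every étale isogeny $S'\to S$ extends to a tamely ramified cover of $\overline{S}$, and hence the map $\SL\colon\pi_1(S_{\overline{k}},e)\to\pi_1^{\text{char}}(S)$ of Lemma \ref{LEM_FundGroupSurjects} factors through $\pi_1^t(S_{\overline{k}},e)$. The plan is to work over $\overline{k}$ throughout, using Remark \ref{REM_InvBaseChangeFundamGrp} to ignore the base field. We first handle a single étale isogeny and then deduce the factorization from the fact that $\pi_1^{\text{char}}(S)$ is a limit of the Galois groups $S(k_n)$ of the Lang torsors.

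\emph{Step 1: reduce to prime-power isogenies and split off the torus part.} Let $q\colon S'\to S$ be an étale isogeny with kernel $\Gamma$, a finite étale group. Then $S'$ is again semiabelian, an extension of an abelian variety $A'$ by a torus $T'$. The map $q$ restricts to an isogeny $T'\to T$ with kernel $\Gamma_T=\Gamma\cap T'$, and induces an isogeny $A'\to A$ with kernel $\Gamma/\Gamma_T$. This gives the factorization
\[
S'\to S'/\Gamma_T\to S,
\]
in which the second map is the pullback along $S\to A$ of the abelian isogeny $A'\to A$. The second map is finite étale over the proper variety $A$, so it extends to a finite étale cover of $\overline{S}$ by pullback, with no ramification at all. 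It therefore suffices to treat $S'\to S'/\Gamma_T$, an isogeny of tori-torsors over the same abelian variety $A'$.

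\emph{Step 2: the torus case.} Over $\overline{k}$, identify $T'\cong\BG_m^n$. The isogeny $T'\to T$ is étale, so its kernel has order prime to $p$. Choose $N$ prime to $p$ with $\Gamma_T\subseteq T'[N]$. Then it is enough to show that $[N]\colon\BG_m^n\to\BG_m^n$, i.e. $x_i\mapsto x_i^N$, extends to a tamely ramified cover of $\BBP^n$ along the boundary divisor. This is Kummer theory: locally near the boundary the boundary divisor is $x_i=0$ or $x_i^{-1}=0$, and adjoining $N$-th roots of local equations of a simple normal crossing divisor with $p\nmid N$ is tame by Abhyankar's lemma. The map $[N]$ is equivariant for the torus action. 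We therefore form $\BBP^n\times_{T'}S'\to\BBP^n\times_{T}S$ using the contracted product, which matches how $\overline{S}$ was defined, and since the contracted product is built locally over $A$ as a product, tameness along the boundary is preserved. Intermediate quotients $S'\to S'/\Gamma_T$ are dominated by $[N]$, and a cover dominated by a tame cover is tame, so they are tame too.

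\emph{Step 3: the factorization.} Each Lang torsor $\SL_n\colon S\to S$ is an étale isogeny with kernel $S(k_n)$ by Proposition \ref{PROP_LangTorsorTorsor}, so Steps 1 and 2 show it is tame. Hence each $\SL_{n*}\colon\pi_1(S_{\overline{k}},e)\to S(k_n)$ factors through the quotient $\pi_1^t(S_{\overline{k}},e)$. These factorizations are compatible with the traces, so passing to the limit gives the claimed map $\pi_1^t(S_{\overline{k}},e)\to\pi_1^{\text{char}}(S)$.

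The main obstacle is Step 1: making precise that $\overline{S}$ is compatible with the contracted-product construction for $S'$, and choosing a compatible normal crossing compactification $\Tilde{S'}$ whose boundary maps onto the boundary of $\overline{S}$. If that bookkeeping becomes awkward, I would instead verify tameness directly. Tameness can be checked at the generic points of the boundary divisors, where the local picture is a product of $A$-local data with $\BG_m$, and the cover is a Kummer cover of degree prime to $p$ twisted by an étale cover.
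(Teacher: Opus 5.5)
Your proposal is correct and follows essentially the same route as the paper, which likewise splits the isogeny into an abelian part that extends to an \'etale cover of $\overline{S}$ and a toric part of degree prime to $p$ that is tame, and then applies this to the Lang torsors before passing to the limit. One correction: $|\Gamma_T|$ is prime to $p$ because a torus over $\overline{k}$ has no $p$-torsion, not because $T'\to T$ is \'etale. That same fact makes $S'\to S'/\Gamma_T$ tame directly, since its inertia groups are subgroups of $\Gamma_T$, so the detour through $[N]$ is optional.
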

	\begin{proof}
		The exact sequence appearing in the definition of the normal crossing compactification is functorial. In particular, we can diagonalize the map between the tori to obtain a map between compactifications. This map is automatically tame, because it can be factored into an abelian part, which is smooth, and a toric part, which of degree coprime to $p$ and etale over $S$.  In particular, the Lang torsor is tamely ramified and we obtain the desired morphism of compactifications.
	\end{proof}
	We prove that the map from the tame fundamental group is an isomorphism for semiabelian varieties $S$ based on a well-known argument.
	\begin{proposition}\label{PROP_fundcompsemiab}
	The above map gives a canonical isomorphism
	\[
	\pi_1^{t}(S_{\overline{k}}, e) \rightarrow \pi_1^{\text{char}}(S).
	\]
	\end{proposition}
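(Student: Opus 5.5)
We already have the map $\phi\colon \pi_1^t(S_{\overline{k}}, e) \to \pi_1^{\text{char}}(S)$ from Lemma \ref{LEM_charsabtame}. It is surjective, because the map of Lemma \ref{LEM_FundGroupSurjects} from the full fundamental group is surjective and factors through $\pi_1^t$. The whole problem is injectivity. Since both groups are profinite, I would prove it by showing that every connected finite tame Galois cover of $S_{\overline{k}}$ is dominated by one of the Lang torsors $\SL_n$. Then every open normal subgroup of $\pi_1^t(S_{\overline{k}},e)$ contains $\ker\phi$, and this forces $\ker\phi$ to be trivial.

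\textbf{Step 1: tame covers are isogenies.} Let $Y \to S_{\overline{k}}$ be a connected finite tame Galois cover, pointed over $e$. I would use the standard fact (Serre–Lang for abelian varieties, extended to semiabelian varieties via the tame fundamental group of $\BG_m$, e.g. as in Orgogozo / Gabber–Loeser) that $Y$ carries a unique group structure making $Y \to S_{\overline{k}}$ an isogeny of connected commutative groups with $e$ the identity. I would prove this by dévissage along
\[
0 \to T_{\overline{k}} \to S_{\overline{k}} \to A_{\overline{k}} \to 0 .
\]
\begin{enumerate}
\item Restrict the cover to a fibre $T_{\overline{k}} \cong \BG_m^n$. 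Tameness along the boundary of the compactification $\BBP^n$ gives that $\pi_1^t(\BG_m^n) = \prod_{\ell' \neq p}\BZ_{\ell'}(1)^n$ is abelian. Hence the restriction is a Kummer cover, i.e. a prime-to-$p$ isogeny of tori.
\item Over $A_{\overline{k}}$, use the homotopy exact sequence for the fibration $S_{\overline{k}} \to A_{\overline{k}}$ together with Serre–Lang for $A_{\overline{k}}$.
\item Combine the two to conclude that $\pi_1^t(S_{\overline{k}})$ is abelian and that every quotient of it is realised by an isogeny.
\end{enumerate}
This step is the main obstacle, since it relies on tameness and on the exactness of the tame homotopy sequence. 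If that sequence is unavailable, I would fall back on the direct argument that $\mu^*Y \cong Y\boxtimes Y$ over $S\times S$. This isomorphism holds by rigidity of finite étale covers of a product of connected varieties (Künneth for $\pi_1^t$ of products). The group law on $Y$ is then constructed as in the proof of Serre–Lang.

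\textbf{Step 2: isogenies are dominated by Lang torsors.} Let $q\colon Y \to S_{\overline{k}}$ be an isogeny with kernel $\Gamma$. Both $Y$ and $q$ are defined over some finite extension $k_m$. Choose $n$ divisible by $m$ such that Frobenius $\text{Fr}_{k_n}$ acts trivially on $\Gamma(\overline{k})$. Apply Proposition \ref{PROP_DescentFundamentalGroup} to $0 \to \Gamma \to Y \to S \to 0$ over $k_n$; this is legitimate by Remark \ref{REM_InvBaseChangeFundamGrp}. It gives the exact sequence
\[
0 \to \pi_1^{\text{char}}(Y) \to \pi_1^{\text{char}}(S) \to \Gamma(\overline{k}) \to 0 .
\]
The quotient map here, precomposed with $\phi$, is exactly the monodromy of the Galois cover $Y \to S_{\overline{k}}$. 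To check this, compare the Lang torsors of $Y$ and $S$ using the functoriality in Lemma \ref{LEM_FundGroupSurjects}, together with the surjectivity of $\SL$ for $Y$.

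It follows that the open subgroup of $\pi_1^t$ corresponding to $Y$ contains $\ker\phi$. Since such subgroups form a basis of neighbourhoods of the identity, $\ker \phi = 0$, and $\phi$ is an isomorphism. Canonicity holds because $\phi$ was defined without any choices beyond the compactification, and the tame fundamental group is independent of the normal crossing compactification in this situation.
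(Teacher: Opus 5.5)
Your proposal is essentially the paper's proof. Surjectivity comes from Lemma~\ref{LEM_FundGroupSurjects}, and injectivity comes from two facts: every tame cover of $S_{\overline{k}}$ is an isogeny (your fallback via the K\"unneth formula for $\pi_1^t$ is exactly the paper's appeal to Orgogozo), and every isogeny is dominated by a Lang torsor (the paper cites Serre, while your derivation from Proposition~\ref{PROP_DescentFundamentalGroup} works once you note that the open subgroup attached to $Y$ has index $|\Gamma(\overline{k})|$, which forces it to be the full preimage of $\ker(\pi_1^{\text{char}}(S)\to\Gamma(\overline{k}))$).

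Treat that fallback as the main argument, because the d\'evissage alone does not make $\pi_1^t(S_{\overline{k}})$ abelian (an extension of abelian groups need not be abelian). Also phrase it as the existence of a pointed lift $Y\times Y\to Y$ of the multiplication, rather than $\mu^*Y\cong Y\boxtimes Y$, which read literally compares covers of different degrees.
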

	\begin{proof}
		The multiplicativity of the tame fundamental group
		\[
		\pi_1^t(S_{\overline{k}}\times S_{\overline{k}}, e) = \pi_1^t(S_{\overline{k}}, e)\times \pi_1^t(S_{\overline{k}}, e).
		\]
		implies that any tame cover of $S_{\overline{k}}$ can be equipped with a group structure (see \cite[Thm.~5.1]{orgogozoFundamentalGroup}). In particular, every tame cover can be represented by an isogeny. An isogeny can always be dominated by the Lang torsor (see \cite[Ch.~VI,~§1,~6]{Serre_AlgebraicGroupsClassFields}). In particular, the map
		\[
		\pi_1^t(S_{\overline{k}}, e) \rightarrow \pi_1^{\text{char}}(S)
		\]
		is injective by Proposition \ref{PROP_FundamentalGroupExact}. Since it is surjective by Lemma \ref{LEM_FundGroupSurjects}, we obtain the isomorphism. 
	\end{proof}
	The character sheaves considered in \cite{Kr_mer_2015} and \cite{GabberLoeserTore}, are precisely the one-dimensional representations of the tame fundamental group of $G$. This means the character theory presented here agrees with their theory. We note a certain consequence of this theorem (see \cite{virk2014eulerpoincarecharacteristics} and \cite{IllusieEulerPoincare}).
	\begin{proposition}\label{PROP_eulerpoincaresav}
		Let $k'/k$ be an extension and $K \in \derCat{c}{S_{k'}}{\Qbarl}$. We have
		\[
		\chi(S, K) = \chi_?(S, K_\chi).
		\]
		for all $\chi \in \cvar{S}$.
	\end{proposition}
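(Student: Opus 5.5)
The identity says that the Euler characteristic of a complex on a semiabelian variety does not change under twisting by a character sheaf. Both sides are read after base change to $\overline{k}$, for either choice of supports. The plan is to reduce it to the fact that $\SL_\chi$ is a rank-one lisse sheaf that is tamely ramified at infinity. We then invoke the theorem of Deligne--Illusie, in the form recorded by \cite{IllusieEulerPoincare} and \cite{virk2014eulerpoincarecharacteristics}. For a constructible complex $K$ on $X$ and a lisse sheaf $\SF$ of rank $r$ that is tamely ramified along the boundary of a normal crossing compactification, this gives $\chi_c(X, K\otimes\SF) = r\,\chi_c(X, K)$.

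\textbf{Step 1: tameness of $\SL_\chi$.} By definition, $\SL_\chi$ is attached to the representation $\chi\circ\SL$ of $\pi_1(S_{\overline{k}}, e)$. By Lemma \ref{LEM_charsabtame}, the map $\SL\colon\pi_1(S_{\overline{k}},e)\to\pi_1^{\text{char}}(S)$ factors through $\pi_1^t(S_{\overline{k}}, e)$, taken with respect to the normal crossing compactification $\overline{S}$. Proposition \ref{PROP_fundcompsemiab} identifies $\pi_1^t(S_{\overline{k}},e)$ with $\pi_1^{\text{char}}(S)$. Hence $\SL_\chi$ is a rank-one lisse sheaf on $S_{\overline{k}}$ that is tamely ramified along $\overline{S}\setminus S$. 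This holds for every $\chi\in\cvar{S}$, including those of infinite order: such a $\chi$ lands in $\mathcal{O}_K^*$ for a finite extension $K/\BQ_\ell$, so it is a continuous $\ell$-adic character of the tame group. The Deligne--Illusie statement applies to such sheaves, either directly or after the usual reduction modulo the maximal ideal together with additivity of Euler characteristics.

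\textbf{Step 2: compact supports.} Apply the tame Euler--Poincaré formula to $K_{\overline{k}}$ and $\SF=\SL_\chi$ to get $\chi_c(S, K_\chi) = \chi_c(S,K)$. It is worth noting that Step 1 is really needed here. Without tameness, wild ramification at the boundary would contribute Swan conductor terms, so the tame compactification constructed above is exactly what makes the formula hold.

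\textbf{Step 3: ordinary cohomology.} We pass from $\chi_c$ to $\chi$ by Verdier duality. First, $H^i(S, K_\chi)$ is dual to $H^{-i}_c(S, D(K)\otimes\SL_{\chi^{-1}})$, using $D(K\otimes\SL_\chi)=D(K)\otimes\SL_\chi^\vee$ and $\SL_\chi^\vee=\SL_{\chi^{-1}}$. This gives $\chi(S,K_\chi)=\chi_c(S, D(K)_{\chi^{-1}})$. By Step 2 this equals $\chi_c(S, D(K))=\chi(S,K)$. Finally, by Laumon's theorem $\chi_c=\chi$ for constructible complexes, so every version of the Euler characteristic in the statement agrees.

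The main obstacle is Step 1, in particular for characters of infinite order. What one has to make sure of is that the tame Euler--Poincaré theorem is used with the right compactification and for $\ell$-adic, not merely finite-order, tame characters. If direct applicability is doubtful, the fallback is to approximate $\chi$ modulo $\lambda^n$ by finite-order characters, which correspond to Kummer-type isogenies of order prime to $p$. For each of these, tameness is exactly Lemma \ref{LEM_charsabtame}, and Euler characteristics can be computed modulo $\lambda$.
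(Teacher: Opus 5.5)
Your proposal is correct and follows the paper's route. The paper gives no separate proof of this proposition: it presents it as a consequence of the tame comparison (Lemma \ref{LEM_charsabtame} and Proposition \ref{PROP_fundcompsemiab}) together with the Deligne--Illusie/Virk invariance of Euler characteristics under tame rank-one twists. That is exactly your Steps 1--2, and your Step 3 (Verdier duality plus Laumon's $\chi_c=\chi$) supplies the other choice of supports, which the paper leaves implicit.

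One small slip in your fallback remark: the finite-order reductions of $\chi$ need not have order prime to $p$, since \'etale $p$-isogenies of the abelian part can occur. They are nevertheless tame by Lemma \ref{LEM_charsabtame}, and tameness is all you actually use.
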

	We also make use of the cohomology of a semiabelian variety.
	\begin{proposition}\label{PROP_SemiAbCohomology}
		Let $d = d_a + d_t$ be the dimension of $S$. The cohomology ring $H^*(S, \Qbarl)$ is given by
		\begin{align*}
		H^n(S, \BZ_\ell) &= \wedge^n H^1(S, \BZ_\ell) \\
		H^1(S, \BZ_\ell) &\cong \BZ_\ell^{2d_a + d_t}.
		\end{align*}
	\end{proposition}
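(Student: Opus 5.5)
The plan is to compute over $\overline{k}$, that is, for $S_{\overline{k}}$, using the Leray spectral sequence of the $T_{\overline{k}}$-torsor $p\colon S_{\overline{k}} \rightarrow A_{\overline{k}}$ from the exact sequence $0 \rightarrow T_{\overline{k}} \rightarrow S_{\overline{k}} \rightarrow A_{\overline{k}} \rightarrow 0$. I would first recall the two extreme cases, which are classical and hold with $\BZ_\ell$-coefficients. For the abelian variety, $H^n(A_{\overline{k}}, \BZ_\ell) = \wedge^n H^1(A_{\overline{k}}, \BZ_\ell)$ with $H^1$ free of rank $2d_a$, dual to the Tate module. For the torus, $H^n(T_{\overline{k}}, \BZ_\ell) = \wedge^n H^1$ with $H^1 \cong \BZ_\ell(-1)^{d_t}$; this follows from Künneth and $H^*(\BG_m)$.

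The first step is to identify the higher direct images. Locally in the étale topology on $A_{\overline{k}}$, the torsor is trivial, so $R^q p_*\BZ_\ell$ is lisse with fibre $H^q(T_{\overline{k}}, \BZ_\ell)$. The monodromy acts through translations by $T_{\overline{k}}$, which are homotopic to the identity (a connected group acts trivially on its own cohomology). Hence $R^q p_*\BZ_\ell$ is the constant sheaf $\wedge^q H^1(T_{\overline{k}}, \BZ_\ell)$. The $E_2$-page is therefore
\[
E_2^{a,b} = H^a(A_{\overline{k}}, \BZ_\ell)\otimes \wedge^b H^1(T_{\overline{k}}, \BZ_\ell),
\]
which is a free $\BZ_\ell$-module. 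It is multiplicative and generated as an algebra by $E_2^{1,0}$ and $E_2^{0,1}$.

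The second step, which I expect to be the main obstacle, is degeneration. Since the spectral sequence is multiplicative and $E_2^{\ast,0}$ consists of permanent cycles, it suffices to show that the transgression $d_2\colon E_2^{0,1} \rightarrow E_2^{2,0}$ vanishes. After choosing $T_{\overline{k}} \cong \BG_m^{d_t}$, the torsor becomes a product of $\BG_m$-torsors, i.e. line bundles $L_1,\ldots,L_{d_t}$ on $A_{\overline{k}}$. The differential sends the $i$-th generator to $c_1(L_i) \in H^2(A_{\overline{k}}, \BZ_\ell(1))$, up to sign. The line bundles $L_i$ are obtained by pushing out a commutative extension of $A$ by $\BG_m$, so they lie in $\text{Pic}^0(A_{\overline{k}})$ (Weil–Barsotti). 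Their $\ell$-adic Chern classes vanish, because $c_1$ factors through the Néron–Severi group. I would make this precise by comparing with the Kummer sequence: a class in $\text{Pic}^0$ is $\ell^n$-divisible for every $n$, so its image in $H^2(A_{\overline{k}}, \BZ_\ell(1))$ is infinitely divisible in a finitely generated $\BZ_\ell$-module, hence zero. Thus $d_2 = 0$ on generators, so by the Leibniz rule $d_2 = 0$ everywhere. The same argument applies to all $d_r$ with $r \geq 2$, since the $E_r$-page remains generated in bidegrees $(1,0)$ and $(0,1)$. The spectral sequence degenerates at $E_2$.

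The final step is to read off the result. There is a short exact sequence $0 \rightarrow H^1(A_{\overline{k}}) \rightarrow H^1(S_{\overline{k}}) \rightarrow H^1(T_{\overline{k}}) \rightarrow 0$ of free modules, so $H^1(S_{\overline{k}}, \BZ_\ell)$ is free of rank $2d_a + d_t$. This is also consistent with $H^1 = \text{Hom}(\pi_1^t(S_{\overline{k}}), \BZ_\ell)$ via Proposition \ref{PROP_fundcompsemiab}. Since the cohomology ring is graded-commutative and $H^1$ is free, cup product gives a ring map $\wedge^* H^1(S_{\overline{k}}, \BZ_\ell) \rightarrow H^*(S_{\overline{k}}, \BZ_\ell)$, using that odd-degree classes square to zero in the torsion-free setting. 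This map is compatible with the Leray filtration on the target and the induced filtration on the source. On associated graded pieces it becomes the isomorphism $\wedge^*H^1(A_{\overline{k}})\otimes\wedge^*H^1(T_{\overline{k}}) \cong E_2 = E_\infty$. All graded pieces are free, so the map is an isomorphism, which gives $H^n = \wedge^n H^1$.
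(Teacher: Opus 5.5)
Your proof is correct, but it takes a genuinely different route from the paper. The paper gives no argument of its own: it cites Milne's notes on abelian varieties (Lem.~15.2). There the exterior-algebra structure comes from the Hopf-algebra structure that the group law puts on cohomology (degree-one classes are primitive), combined with a dimension count that matches $\dim H^1$ against the top non-vanishing degree. As written, that source covers only $d_t = 0$. To transport it to a semiabelian variety one would still have to supply two inputs: the rank of $H^1$, and the vanishing of $H^n(S_{\overline{k}},\mathbb{Z}_\ell)$ for $n > 2d_a + d_t$, which no longer follows from Poincar\'e duality on a proper variety.

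Your Leray spectral sequence for the torsor $S_{\overline{k}} \to A_{\overline{k}}$ provides exactly these inputs. It reduces everything to the two classical cases, abelian varieties and split tori. The only new ingredient is the vanishing of the transgression, and you obtain it correctly: the pushed-out $\mathbb{G}_m$-torsors lie in $\mathrm{Pic}^0$ by Weil--Barsotti, and divisible classes die in the finitely generated $\mathbb{Z}_\ell$-module $H^2(A_{\overline{k}},\mathbb{Z}_\ell(1))$.

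What your route buys:
\begin{itemize}
\item Integral coefficients are handled directly: degeneration makes every $H^n$ free, which is what you need for $x\cup x = 0$ when $\ell = 2$.
\item You get the extension $0 \to H^1(A_{\overline{k}}) \to H^1(S_{\overline{k}}) \to H^1(T_{\overline{k}}) \to 0$, i.e.\ the weight filtration.
\item You get the degree range used in the remark following the proposition.
\end{itemize}

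You are also right to compute on $S_{\overline{k}}$. Over $k$ itself the Galois cohomology of $k$ contributes and $H^1(S,\mathbb{Z}_\ell)$ has rank one, so the statement is only meaningful geometrically.
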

	\begin{proof}
		See \cite[Lem.~15.2]{milneAbelianVarieties}.
	\end{proof}
	\begin{remark}
		The following corollary of Lemma \ref{PROP_SemiAbCohomology} will be used repeatedly. The cohomology complex $H^*(S, \Qbarl)$ satisfies
		\[
		H^n(S, \Qbarl) = \begin{cases}
			\neq 0 & \text{ if } 0 \leq n \leq 2d_a + d_t \\
			0 & \text{ else } 
		\end{cases}
		\]
		Poincare duality implies
		\[
		H^n_c(S, \Qbarl) = \begin{cases}
			\neq 0 & \text{ if } d_t \leq n \leq 2d_a + 2d_t\\
			0 & \text{ else } 
		\end{cases}
		\]
	\end{remark}
	\subsubsection{Thin topology} We go on to define the first topology on the space of characters of a semiabelian variety, the \textit{thin topology}. Such subsets have been considered before, for example in \cite{EsnaultKerzQuasiLinear}, \cite{Kr_mer_2015}, \cite{GabberLoeserTore}, \cite{KowalskiTannaka}. In \cite{EsnaultKerzQuasiLinear}, these subsets are called quasi-linear, in \cite{Kr_mer_2015} they are called thin  and in \cite{GabberLoeserTore} and \cite{KowalskiTannaka} they are called finite unions of translated algebraic cotori. We have chosen the terminology thin because neither finite union of tac nor quasi-linear apply in our setting, especially once we generalize them to arbitrary groups.
	\begin{definition}\label{DEF_charsemiabthin}
		Let $S$ be a semiabelian variety over a finite field $k$. A \textit{translated algebraic cotorus (tac)} is a subset $\Delta \subseteq \cvar{S}$ such that there is a group quotient quotient $\pi\colon S_{\overline{k}}\rightarrow S'$ with geometrically connected fibers and a character $\chi \in \cvar{S}$ with
		\[
		\Delta = \chi\cdot \pi^*(\cvar{S}).
		\]
		We say a subset is \textit{thin} if it is a finite union of tacs. The \textit{thin topology} is the topology on $\cvar{S}$ where a subset is closed if it is thin.
	\end{definition}
	The following follows from an argument similar to the proof of Lemma \ref{LEM_CharactersFindSubgroups}.
	\begin{proposition}\label{PROP_IntersectingTacs}
		Suppose we are given (connected closed) subgroups $S_1, S_2\subseteq S$ with associated quotients $\pi_i\colon S \rightarrow S/S_i$  and characters $\chi_1, \chi_2 \in \cvar{S}$. We define the quotient maps $\pi\colon S \rightarrow S/(S_1 + S_2)$. Put $\Delta_i := \chi_i\cdot \pi_i^*(\cvar{S_i})$. All characters $\chi \in \Delta_1\cap\Delta_2$ satisfy
		\[
		\Delta_1\cap \Delta_2 = \chi\cdot \pi^*(\cvar{S/(S_1 + S_2)}).
		\]
		In particular, $\Delta_1\cap\Delta_2$ is a tac.
	\end{proposition}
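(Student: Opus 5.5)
The plan is to reduce to a statement about untranslated subgroups of characters and then use the exactness results for $\cvar{-}$, in the same spirit as the proof of Lemma \ref{LEM_CharactersFindSubgroups}. I read $\Delta_i$ as $\chi_i\cdot\pi_i^*(\cvar{S/S_i})$, which is the tac attached to the quotient $\pi_i$.

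\textbf{Reduction.} Fix $\chi\in\Delta_1\cap\Delta_2$. Since $\pi_i^*(\cvar{S/S_i})$ is a subgroup, $\Delta_i=\chi\cdot\pi_i^*(\cvar{S/S_i})$ for $i=1,2$. Translating by $\chi^{-1}$, it suffices to prove
\[
\pi_1^*(\cvar{S/S_1})\cap\pi_2^*(\cvar{S/S_2})=\pi^*(\cvar{S/(S_1+S_2)}).
\]
The inclusion $\supseteq$ is formal. The quotient $\pi$ factors as $S\to S/S_i\to S/(S_1+S_2)$, so by functoriality (Proposition \ref{PROP_CharFunc}) every $\pi^*(\psi)$ lies in both images.

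\textbf{Reverse inclusion.} By Proposition \ref{PROP_CharExact} applied to $0\to S_i\to S\to S/S_i\to 0$, the image $\pi_i^*(\cvar{S/S_i})$ is exactly the kernel of the restriction $\cvar{S}\to\cvar{S_i}$. Applied to $0\to S_1+S_2\to S\to S/(S_1+S_2)\to 0$, the image $\pi^*(\cvar{S/(S_1+S_2)})$ is the kernel of restriction to $S_1+S_2$. So I must show: a character $\psi$ with trivial restrictions to $S_1$ and to $S_2$ has trivial restriction to $S_1+S_2$.

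To do this, consider the addition map $a\colon S_1\times S_2\to S_1+S_2$. It is surjective with kernel $S_1\cap S_2$, embedded antidiagonally. By the Künneth formula (Proposition \ref{PROP_CharKunneth}), $a^*(\psi|_{S_1+S_2})=(\psi|_{S_1},\psi|_{S_2})=1$. It therefore remains to know that $a^*$ is injective on $\cvar{S_1+S_2}$.

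\textbf{Main obstacle.} The injectivity of $a^*$ is where the connectedness of $S_1\cap S_2$ enters. I would factor $a$ through the quotient by $(S_1\cap S_2)^0$ and treat the two pieces separately.

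\emph{Connected part.} For the quotient $S_1\times S_2\to (S_1\times S_2)/(S_1\cap S_2)^0$, Proposition \ref{PROP_CharExact} gives injectivity of the pullback on characters.

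\emph{Finite part.} The remaining map is an isogeny with kernel $\Gamma=\pi_0(S_1\cap S_2)$. By Proposition \ref{PROP_CharDescent}, the kernel of its pullback on characters is $\widehat{\Gamma}(\overline{k})$.

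Consequently the displayed identity, and hence the stated equality $\Delta_1\cap\Delta_2=\chi\cdot\pi^*(\cvar{S/(S_1+S_2)})$, follows whenever $\Gamma$ is trivial, that is, when $S_1\cap S_2$ is connected. This is the hypothesis I would make explicit in carrying out the proof. In general the same argument only shows that the two sides differ by the finite group $\widehat{\Gamma}(\overline{k})$, so the intersection is a finite union of translates of $\pi^*(\cvar{S/(S_1+S_2)})$. In particular it is still thin in that case, which is the property used downstream.

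Finally, in the connected case the intersection is a translate of $\pi^*(\cvar{S'})$ for the quotient $S\to S'=S/(S_1+S_2)$, whose fibers are connected. By Definition \ref{DEF_charsemiabthin} it is therefore a tac.
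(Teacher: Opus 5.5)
Your argument is correct. The caveat you flag is a defect of the statement, not of your proof: as written, Proposition~\ref{PROP_IntersectingTacs} fails whenever $S_1\cap S_2$ is geometrically disconnected. You also read $\pi_i^*(\mathscr{C}(S_i))$ correctly as $\pi_i^*(\mathscr{C}(S/S_i))$.

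Here is a concrete counterexample. Take $p$ odd, $S=\mathbb{G}_m^2$, $S_1$ the diagonal, $S_2=\{(t,t^{-1})\}$, and $\chi_1=\chi_2=1$.
\begin{itemize}
\item Since $S_1+S_2=S$, the statement predicts $\Delta_1\cap\Delta_2=\{1\}$.
\item Write $\psi=(\psi_1,\psi_2)$ via Proposition~\ref{PROP_CharKunneth}. The restrictions of $\psi$ to $S_1$ and $S_2$ are $\psi_1\psi_2$ and $\psi_1\psi_2^{-1}$.
\item Hence $\Delta_1\cap\Delta_2=\{(1,1),(\epsilon,\epsilon)\}$, where $\epsilon$ is the character of order $2$ of $\mathbb{G}_m$.
\end{itemize}
This two-point set is not a tac, since a tac is either a single point or infinite. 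The ``for all $\chi$'' formula also gives two different answers at the two points. This is exactly your $\widehat{\Gamma}(\overline{k})$ with $\Gamma=S_1\cap S_2\cong\mu_2$.

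The paper gives no separate proof; it only points to the diagram chase in Lemma~\ref{LEM_CharactersFindSubgroups}. Run for the intersection, that chase writes $\psi=\pi_2^*\psi'$. It then needs the restriction of $\psi'$ to $(S_1+S_2)/S_2\cong S_1/(S_1\cap S_2)$ to be trivial because its pullback to $S_1$ is trivial. That requires $\mathscr{C}(S_1/(S_1\cap S_2))\to\mathscr{C}(S_1)$ to be injective, but by Proposition~\ref{PROP_CharDescent} its kernel is $\widehat{\pi_0(S_1\cap S_2)}(\overline{k})$.

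So your route (K\"unneth, plus factoring the addition map into a connected quotient and an isogeny) and the paper's route run into the same isogeny. Yours makes the obstruction explicit, whereas the paper's sketch passes over it.

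Your corrected statement is the right one:
\begin{itemize}
\item When $S_1\cap S_2$ is connected, the equality holds.
\item In general, $\Delta_1\cap\Delta_2$ is a union of exactly $|\pi_0(S_1\cap S_2)(\overline{k})|$ translates of the tac $\pi^*\mathscr{C}(S/(S_1+S_2))$. Getting ``exactly'' also uses the surjectivity of $\mathscr{C}(S)\to\mathscr{C}(S_1+S_2)$ from Proposition~\ref{PROP_CharExact}.
\end{itemize}
This weaker form is enough for the later uses. Closure of thin sets under intersection only needs the intersection to be thin. In Proposition~\ref{PROP_irredcomptequal} it also suffices that $\Delta\cap\Delta'$ be thin, because Proposition~\ref{PROP_irredcompinctacs} is stated for proper thin subsets of a tac.
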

	\begin{proposition}
		Let $\Delta_i \subseteq \cvar{S}$ be thin subsets indexed by a set $i \in I$. The subset $\bigcap_{i \in I} \Delta_i$ is a thin subset. In particular, the thin sets form a topology on $\cvar{S}$.
	\end{proposition}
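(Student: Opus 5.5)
Since arbitrary unions of closed sets need not be closed, the statement to prove is that an arbitrary intersection of thin subsets is thin. Once this is known, the thin sets form the closed sets of a topology. Indeed, $\emptyset$ (the empty union of tacs) and $\cvar{S}$ (the tac attached to the trivial quotient $S\to 0$ with $\chi = 1$) are thin, and a finite union of finite unions of tacs is again a finite union of tacs. So the content is in the intersections, and the plan is to reduce them to a descending chain condition.

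\emph{Step 1: finite intersections.} Let $\Delta = \bigcup_a \Delta_a$ and $\Delta' = \bigcup_b \Delta'_b$ be finite unions of tacs. Distributing gives
\[
\Delta\cap\Delta' = \bigcup_{a,b} \Delta_a\cap\Delta'_b.
\]
By Proposition \ref{PROP_IntersectingTacs}, each $\Delta_a\cap\Delta'_b$ is either empty or a tac. Hence $\Delta\cap\Delta'$ is thin, and by induction every finite intersection of thin sets is thin.

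\emph{Step 2: descending chain condition.} I will show that every strictly descending chain of thin subsets is finite. To a tac $\chi\cdot\pi^*(\cvar{S/H})$ attach its dimension $\dim S/H$. This is well defined because, by Corollary \ref{COR_TACSDetermineGroupUniquely}, a tac determines its subgroup $H$. Suppose $\Delta_1 \subsetneq \Delta_2$ are tacs with subgroups $H_1, H_2$. By Proposition \ref{PROP_IntersectingTacs}, $\Delta_1 = \Delta_1\cap\Delta_2$ is the tac of $H_1 + H_2$. Corollary \ref{COR_TACSDetermineGroupUniquely} then gives $H_1 = H_1+H_2$, i.e. $H_2\subseteq H_1$. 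If $H_1 = H_2$, the two tacs would be cosets of the same subgroup $\pi^*\cvar{S/H_1}$, and one coset contained in another forces equality; so $H_2\subsetneq H_1$. Since both are connected, $\dim H_2 < \dim H_1$, and therefore strict inclusion of tacs strictly lowers the dimension.

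Now write a thin set in irredundant form, as a finite union of tacs none contained in another, and assign to it the multiset of dimensions of its components. If $\Delta' \subsetneq \Delta$ are thin, then each tac of $\Delta'$ is contained in $\Delta$. Such a tac is then contained in a single tac of $\Delta$; this is the irreducibility step. Granting it, the standard argument for noetherian spaces applies: the multisets decrease strictly in the multiset order on $\mathbb{N}$, which is well-founded. So every strictly descending chain of thin sets terminates.

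\emph{Step 3: arbitrary intersections.} Let $\{\Delta_i\}_{i\in I}$ be any family of thin sets. By Step 2, the set of finite intersections $\Delta_{i_1}\cap\dots\cap\Delta_{i_r}$ has a minimal element $\Delta_0$. Minimality gives $\Delta_0\cap\Delta_i = \Delta_0$ for every $i$, so $\Delta_0\subseteq \Delta_i$ for all $i$. Hence $\bigcap_i \Delta_i = \Delta_0$, which is thin by Step 1.

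\emph{Main obstacle: irreducibility of a tac.} The key claim is that a tac $\Delta_0$ contained in a finite union $\bigcup_b \Delta'_b$ of tacs lies in one of them. Intersecting, $\Delta_0 = \bigcup_b (\Delta_0\cap\Delta'_b)$, and each $\Delta_0\cap\Delta'_b$ is a tac by Proposition \ref{PROP_IntersectingTacs}. After translating by a character, this says that the group $\cvar{S/H}$ is a finite union of cosets of subgroups $\pi^*\cvar{S/H'_b}$ with $H\subseteq H'_b$. I would first try to show that if every $H'_b \ne H$, then each such subgroup has infinite index in $\cvar{S/H}$. The exact sequence of Proposition \ref{PROP_CharExact} gives
\[
\cvar{S/H}/\pi^*\cvar{S/H'_b}\cong\cvar{H'_b/H},
\]
and $\cvar{H'_b/H}$ is infinite for a nonzero connected semiabelian group, since it surjects onto $\widehat{(H'_b/H)}(k_n)$ for all $n$ and these orders are unbounded. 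Neumann's lemma then applies: a group covered by finitely many cosets of subgroups has one of those subgroups of finite index. This contradiction forces some $H'_b = H$, i.e. $\Delta_0\subseteq\Delta'_b$.
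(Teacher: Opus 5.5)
Your proof is correct in substance, but it is organized differently from the paper's.

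\textbf{The paper's route.} The paper argues by induction on $\dim S$. If some $\Delta_{i_0}$ is proper, it is a finite union of proper tacs $T_a=\chi_a\cdot\pi_a^*\cvar{S_a}$ with $\dim S_a<\dim S$. Then
\[
\bigcap_i\Delta_i=\bigcup_a\bigcap_i(T_a\cap\Delta_i).
\]
By Proposition \ref{PROP_IntersectingTacs}, each $T_a\cap\Delta_i$ is a thin subset of $T_a$, and $T_a$ is identified with $\cvar{S_a}$ via $\nu\mapsto\chi_a\pi_a^*(\nu)$. The induction hypothesis for $S_a$ then finishes the proof. This uses only the intersection property, never irreducibility of tacs.

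\textbf{Your route.} You instead prove the descending chain condition and take a minimal finite intersection. This forces you to prove that a tac is irreducible, which is where the real work of your proposal lies. In return you obtain at once the Noetherian property and Proposition \ref{PROP_TACSIrredThin}. The paper states these afterwards and justifies irreducibility by point counting. Your Neumann-lemma argument is purely group-theoretic and is a clean alternative to those estimates.

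\textbf{Small corrections.}
\begin{itemize}
\item $\cvar{S}$ is the tac of the identity quotient $S\to S$ (trivial subgroup $H=0$). The quotient $S\to 0$ gives a single point $\{\chi\}$.
\item $\cvar{H'_b/H}$ contains, rather than surjects onto, the groups $\widehat{(H'_b/H)}(k_n)$. This still shows it is infinite.
\item Proposition \ref{PROP_IntersectingTacs} as stated holds only up to finitely many translates when $S_1\cap S_2$ is disconnected. For $p\neq 2$, the subgroups of characters of $\mathbb{G}_m^2$ trivial on the diagonal, respectively on the antidiagonal, meet in $\{1,(\epsilon,\epsilon)\}$ with $\epsilon$ quadratic. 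Yet $S_1+S_2=\mathbb{G}_m^2$, so the proposition predicts $\{1\}$.
\end{itemize}

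Your argument survives this last point:
\begin{itemize}
\item In Step 1 you only need $\Delta_a\cap\Delta'_b$ to be thin.
\item In Step 2 you get $H_2\subseteq H_1$ directly: translate both tacs by a common character and compare the subgroups $\pi^*\cvar{S/H_i}$.
\item In the Neumann step, $\Delta_0\cap\Delta'_b$ is still a finite union of cosets of $\pi^*\cvar{S/(H+H'_b)}$, which is all the lemma requires.
\end{itemize}
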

	\begin{proof} We argue by induction on the dimension of $S$. If $S$ has dimension zero, there is nothing to prove. Otherwise, let $i \in I$ be such that $\Delta_i$ is a proper thin subset. The subset $\Delta_i$ is a finite union of tacs, say $\Delta_1, \ldots, \Delta_n$. We can assume $\Delta_i = \Delta_1$. The claim follows from the induction hypothesis and Proposition \ref{PROP_IntersectingTacs}.
\end{proof}
\begin{lemma}\label{LEM_ContinuousSABThin}
	Let $\pi\colon S \rightarrow S'$ a group morphism. The map
	\[
	\pi^*\colon \cvar{S'} \rightarrow \cvar{S}
	\]
	is continuous in the thin topology. In particular, the group $\cvar{S}$ is a topological group with respect to the thin topology.
\end{lemma}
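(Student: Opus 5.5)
To prove Lemma \ref{LEM_ContinuousSABThin}, it suffices to show that the preimage of a tac under $\pi^*$ is either empty or a tac. Preimages commute with finite unions, so this gives continuity for all thin subsets. Fix a tac $\Delta = \chi\cdot p^*(\cvar{S''})$ in $\cvar{S}$, where $p\colon S_{\overline{k}}\to S''$ is a quotient with connected kernel $H$. By Remark \ref{REM_InvBaseChangeFundamGrp} we may work after a finite extension of $k$ and assume $H$ is defined over $k$.

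First consider the subgroup case $\chi = 1$. A character $\psi\in\cvar{S'}$ satisfies $\pi^*\psi\in p^*\cvar{S/H}$ if and only if $\pi^*\psi$ is trivial on $H$, by Proposition \ref{PROP_CharExact}. This holds if and only if $\psi$ is trivial on the image $\pi(H)$, which is a connected closed subgroup of $S'$. Again by Proposition \ref{PROP_CharExact}, applied to $0\to \pi(H)\to S'\to S'/\pi(H)\to 0$, the set of such $\psi$ is exactly $q^*\cvar{S'/\pi(H)}$, where $q\colon S'\to S'/\pi(H)$ is the quotient. Since $\pi(H)$ is connected, $q$ has connected fibers, so the preimage is a tac.

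To justify the step that $\pi^*\psi$ is trivial on $H$ exactly when $\psi$ is trivial on $\pi(H)$, factor $\pi|_H$ as $H\to\pi(H)\hookrightarrow S'$. The surjection $H\to\pi(H)$ induces a surjection $\pi_1^{\text{char}}(H)\to\pi_1^{\text{char}}(\pi(H))$ by Proposition \ref{PROP_FundamentalGroupExact}, applied to the exact sequence with possibly disconnected kernel together with Corollary \ref{COR_ImageFundGroup}. The restriction maps on characters are compatible with this factorization. Hence a character of $\pi(H)$ pulls back to the trivial character of $H$ only if it is trivial.

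For a general $\chi$, the preimage of $\Delta$ is $\{\psi : \pi^*\psi\in\chi\cdot p^*\cvar{S/H}\}$. If this set is nonempty, pick $\psi_0$ in it. Then the set is the coset $\psi_0\cdot(\pi^*)^{-1}(p^*\cvar{S/H})$, which is a tac by the subgroup case.

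For the second claim, the group operations must be continuous. Inversion is $[-1]^*$, which is continuous by the first part. For multiplication, the thin topology on $\cvar{S}$ need not make multiplication jointly continuous for the product topology. The statement presumably means that translation and inversion are continuous, or that multiplication is continuous for the thin topology on $\cvar{S\times S}=\cvar{S}\times\cvar{S}$ given by Proposition \ref{PROP_CharKunneth}. In the latter reading, multiplication is $m^*$ for $m\colon S\times S\to S$, which is continuous by the first part.

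The main obstacle is the identification of characters trivial on $H$ with those factoring through $\pi(H)$. This rests on the surjectivity argument for $\pi_1^{\text{char}}$ above and on descending $\pi(H)$ to a subgroup defined over a finite field, which Remark \ref{REM_InvBaseChangeFundamGrp} handles.
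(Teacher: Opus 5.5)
Your reduction to tacs and the coset step for general $\chi$ are fine. The subgroup case, however, rests on a false claim.

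\textbf{The gap.} Put $K:=\ker(\pi|_H)$. When $K$ is disconnected, the map $\pi_1^{\text{char}}(H)\to\pi_1^{\text{char}}(\pi(H))$ is \emph{not} surjective. Factor $\pi|_H$ as $H\to H/K^0\to\pi(H)$. The second map is an isogeny with kernel $\pi_0(K)$. Proposition~\ref{PROP_DescentFundamentalGroup} shows that it induces an \emph{injection} on $\pi_1^{\text{char}}$, with cokernel $\pi_0(K)(\overline{k})$.

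Your cited tools do not close this. Corollary~\ref{COR_ImageFundGroup} only identifies $\pi_1^{\text{char}}(K)$ with $\pi_1^{\text{char}}(K^0)$, which concerns the left end of the sequence, not this cokernel. Proposition~\ref{PROP_FundamentalGroupExact} needs a connected kernel, since Corollary~\ref{COR_ExactSequencePoints} has the obstruction term $\pi_0(K_{\overline{k}})_{\text{Fr}}$.

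Dually, by Proposition~\ref{PROP_CharDescent} the restriction $\cvar{\pi(H)}\to\cvar{H}$ has the finite kernel $\widehat{\pi_0(K)}(\overline{k})$. So a nontrivial character of $\pi(H)$ can pull back to the trivial character of $H$. Your intermediate claim that the preimage of a tac is empty or a tac is therefore false. For example, take $\pi=[2]\colon\mathbb{G}_m\to\mathbb{G}_m$ with $p$ odd, and the tac $\{1\}$ coming from $\mathbb{G}_m\to 0$. Its preimage is $\{1,\epsilon\}$, with $\epsilon$ the quadratic character, and this is not a tac.

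\textbf{The repair.} The preimage $(\pi^*)^{-1}(p^*\cvar{S/H})$ is the preimage of the finite group $\widehat{\pi_0(K)}(\overline{k})$ under the restriction $\cvar{S'}\to\cvar{\pi(H)}$. By Proposition~\ref{PROP_CharExact}, this restriction is surjective with kernel $q^*\cvar{S'/\pi(H)}$. Hence the preimage is a finite union of cosets of $q^*\cvar{S'/\pi(H)}$, i.e.\ a thin set. Your coset argument then shows that the preimage of every tac is thin, which is all that continuity requires.

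\textbf{The group-structure part.} Here you have the wrong map. $m^*\colon\cvar{S}\to\cvar{S\times S}$ is the diagonal embedding $\chi\mapsto(\chi,\chi)$; compare $m^*\SL_\chi=\SL_\chi\boxtimes\SL_\chi$. Multiplication of characters is instead $\delta^*(\chi_1,\chi_2)=\chi_1\chi_2$ for the diagonal $\delta\colon S\to S\times S$. That is exactly the map the paper's proof uses, together with inversion.

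Your caveat is correct: this gives continuity of multiplication only for the thin topology on $\cvar{S\times S}$, not for the product topology on $\cvar{S}\times\cvar{S}$. That is implicitly the reading the paper adopts. Note also that the paper's proof addresses only this second part and does not argue the continuity of $\pi^*$ at all. So your corrected argument would supply the missing half.
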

We only prove that the group $\cvar{S}$ is topological.
\begin{proof}
	Note that it is a topological group because the diagonal map $S \rightarrow S\times S$ induces the addition map on character groups and the inversion induces the inversion map. 
\end{proof}

\begin{proposition}
	Let $\pi\colon S \rightarrow S'$ be a surjective map. The induced map
	\[
	\pi^*\colon \cvar{S'} \rightarrow \cvar{S}
	\]
	is closed.
\end{proposition}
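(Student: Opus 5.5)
Since the thin topology has the finite unions of tacs as its closed sets, and $\pi^*$ commutes with finite unions, it suffices to show that $\pi^*$ sends a tac in $\cvar{S'}$ to a tac in $\cvar{S}$. Concretely, let $\Delta' = \chi'\cdot p'^*(\cvar{S''})$, where $p'\colon S'_{\overline{k}} \rightarrow S''$ is a quotient with geometrically connected fibers and $\chi' \in \cvar{S'}$. Then
\[
\pi^*(\Delta') = \pi^*(\chi')\cdot (p'\circ\pi)^*(\cvar{S''})
\]
by the contravariant functoriality of Proposition \ref{PROP_CharFunc}. So the whole task is to check that the composite $p := p'\circ\pi\colon S_{\overline{k}} \rightarrow S''$ is a quotient with geometrically connected fibers. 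Because $\pi$ is surjective, so is $p$.

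The kernel of $p$ need not be connected, so I would pass to its neutral component. Write $H := \ker(p)$ and $H^0$ for its connected component, and factor
\[
p\colon S_{\overline{k}} \xrightarrow{q} S_{\overline{k}}/H^0 \xrightarrow{r} S''.
\]
Here $q$ has connected fibers, and $r$ is an isogeny with finite kernel $\Gamma = H/H^0$. Proposition \ref{PROP_CharDescent} (after a finite extension of the base field, which is harmless by Remark \ref{REM_InvBaseChangeFundamGrp}) shows that $r^*\colon \cvar{S''} \rightarrow \cvar{S_{\overline{k}}/H^0}$ is surjective. Hence
\[
p^*(\cvar{S''}) = q^*(\cvar{S_{\overline{k}}/H^0}),
\]
and therefore $\pi^*(\Delta') = \pi^*(\chi')\cdot q^*(\cvar{S_{\overline{k}}/H^0})$, which is a tac by Definition \ref{DEF_charsemiabthin}. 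The quotient $S_{\overline{k}}/H^0$ is again semiabelian, as a quotient of a semiabelian variety, so the definition applies.

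The main obstacle is the identification of $p^*(\cvar{S''})$ with $q^*(\cvar{S_{\overline{k}}/H^0})$. This depends on the surjectivity of pullback along isogenies from Proposition \ref{PROP_CharDescent}, together with the minor bookkeeping of working over $\overline{k}$ versus a finite field of definition. Everything else is formal functoriality.
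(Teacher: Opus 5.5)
Your proof is correct and follows the paper's argument. Like the paper, you show that the image of a tac is a tac: factor the composite $\pi'\circ\pi$ as a quotient with connected fibers followed by an isogeny, and use Proposition \ref{PROP_CharDescent} to get surjectivity of pullback along that isogeny. The only difference is organisational. The paper first reduces to the case where $\pi$ is an isogeny and dismisses the connected-fiber case as ``similar'', whereas you handle an arbitrary surjective $\pi$ in one step, which is slightly cleaner.
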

\begin{proof} We can factor $\pi$ into an isogeny and a surjective map with connected fibers. We only consider the case when $\pi$ is an isogeny, the other case can be treated by a similar argument.
	
	 Suppose $\pi$ is an isogeny. Let $\chi \in \cvar{S'}$ and $\pi'\colon S'_{\overline{k}} \rightarrow S''$ a surjective group morphism with connected fibers. We can factor $\pi'$ into a square over a finite extension $k'/k$
	 \[
	\begin{tikzcd}
		S_{k'} \arrow[d, "\pi''"] \arrow[r, "\pi"] & S'_{k'} \arrow[d, "\pi'"] \\
		S''' \arrow[r, "\pi'''"]              & S''                 
	\end{tikzcd}
	 \]
	such that $\pi'''$ is an isogeny and $\pi''$ has connected fibers. In particular, the map $(\pi''')^*$ is surjective by Proposition \ref{PROP_CharDescent}. We have
	\[
	\pi^*(\chi\cdot \pi'^*(\cvar{S''})) = \pi^*(\chi)\cdot \pi''^*(\cvar{S'''}).
	\]
	Hence $\pi^*$ is closed.
\end{proof}
\begin{proposition}\label{PROP_TACSIrredThin}
	A subset $\Delta \subseteq\cvar{S}$ is irreducible in the thin topology if and only if it is a tac.
\end{proposition}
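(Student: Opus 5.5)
The proof has two directions. The first shows that a tac is irreducible in the thin topology. The second shows that an irreducible thin-closed subset is a single tac.

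For the first direction, let $\Delta = \chi\cdot\pi^*(\cvar{S'})$ be a tac. Translation by $\chi$ is a homeomorphism in the thin topology, because $\cvar{S}$ is a topological group by Lemma \ref{LEM_ContinuousSABThin}. So we may assume $\chi = 1$. Suppose $\Delta = \Delta_1\cup\ldots\cup\Delta_n$ with each $\Delta_i$ closed in $\Delta$, hence thin. Intersecting each tac appearing in $\Delta_i$ with $\Delta$, Proposition \ref{PROP_IntersectingTacs} writes $\Delta$ as a finite union of tacs $\Delta'_j \subseteq \Delta$. It suffices to show that some $\Delta'_j$ equals $\Delta$.

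Each $\Delta'_j$ has the form $\chi_j\cdot\pi_j^*(\cvar{S/S_j})$ with $S_j \supseteq \ker\pi$, where $\ker\pi$ is taken connected and the statement is read over a finite extension of $k$. So $\Delta'_j$ is the preimage of a tac in $\cvar{S'}$, and we reduce to the following claim: $\cvar{S'}$ is not a finite union of proper tacs. We may take $S'$ of positive dimension, since otherwise the claim is trivial.

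To prove the claim, use the identification $\cvar{S'} = \text{Hom}_{\text{cont}}(\pi_1^t(S'_{\overline{k}}), \Qbarl^*)$ from Proposition \ref{PROP_fundcompsemiab}. The tame fundamental group $\pi_1^t$ is a quotient of $\hat{\BZ}^{2d_a+d_t}$ by Proposition \ref{PROP_SemiAbCohomology}. A proper tac $\chi_j\cdot\pi_j^*\cvar{S'/S'_j}$ with $\dim S'_j > 0$ is a coset of a subgroup whose restriction to $\cvar{S'_j}$ is trivial. I would exhibit a single character lying outside all the finitely many proper tacs. One option is a counting argument on the finite-order characters that factor through $S'(k_n)$. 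A proper tac meets $\widehat{S'}(k_n)$ in a proportion that tends to $0$ as $n$ grows, whereas $\cvar{S'}$ contains all of $\widehat{S'}(k_n)$. Concretely, the arithmetic characters of $\chi_j\cdot\pi_j^*\cvar{S'/S'_j}$ inside $\widehat{S'}(k_n)$ number at most about $|S'/S'_j(k_n)| \ll |k_n|^{\dim S'-\dim S'_j}$, compared with $|S'(k_n)| \gg |k_n|^{\dim S'}$. This uses Lang–Weil style point counts, together with Lemma \ref{LEM_DescentCharAri} and Theorem \ref{THM_DescendingTACS} to handle the translates. The translating characters $\chi_j$ need care, since a translate may contain no arithmetic characters at all. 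An alternative is to use $\ell$-adic characters of infinite order: the set $\text{Hom}(\BZ_\ell^{r},1+\ell\BZ_\ell) \cong \BZ_\ell^r$ with $r \geq 1$ is not a finite union of cosets of proper closed subgroups of positive codimension, by a Baire category or measure argument. I expect this claim to be the main obstacle, and I would try the measure/Baire argument first.

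For the second direction, let $\Delta$ be irreducible and thin. Write $\Delta = \Delta_1\cup\ldots\cup\Delta_n$ as a union of tacs; each tac is closed by definition. Irreducibility forces $\Delta = \Delta_i$ for some $i$, so $\Delta$ is a tac.
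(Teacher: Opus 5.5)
Your proof is correct. Your first option is the paper's own proof, which consists only of the remark that trivial point counts (Lemma \ref{LEM_ClosedDimensionTAC}, Lemma \ref{LEM_ClosedEstimation1}) prevent finitely many proper tacs from covering a tac.

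Your normalization to $\chi=1$ actually repairs that sketch. A tac $\chi\cdot\pi^*\cvar{S'}$ with $\chi|_{\ker\pi}$ of infinite order contains no arithmetic characters at all, so counting points inside it proves nothing. After you normalize, the translates $\chi_j$ you worried about cause no trouble:
\begin{itemize}
\item a covering tac that misses $\widehat{S'}(k_n)$ contributes nothing to the count;
\item a covering tac that meets $\widehat{S'}(k_n)$ does so in a coset of $\pi_j^*\bigl(\widehat{S'/S'_j}(k_n)\bigr)$, once $n$ is divisible by the degree of a field of definition of the $S'_j$. This uses surjectivity of the trace and Lang's theorem.
\end{itemize}
Your exponent $\dim S'-\dim S'_j$ is the correct one. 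Lemma \ref{LEM_ClosedEstimation1}, read with the Krull codimension $2d_a+d_t$, overstates the decay on the abelian part, though only positivity of the decay matters here.

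Your preferred route is genuinely different. Work on $V=\text{Hom}_{\text{cont}}(\pi_1^{\text{char}}(S'),1+\ell\BZ_\ell)\cong\BZ_\ell^r$, using $1+4\BZ_2$ if $\ell=2$. A proper tac meets $V$ either in the empty set or in a coset of the characters that kill the image of $\pi_1^{\text{char}}(S'_j)$. By Proposition \ref{PROP_FundamentalGroupExact} and Lemma \ref{LEM_abprofinexact}, this is a closed subgroup of rank $r-r_j<r$, where $r_j\geq 1$. Such a coset is closed with empty interior, so finitely many of them cannot cover $V$.

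The two routes buy different things. The $\ell$-adic argument avoids point counts, Lang's theorem, and descent of the $S'_j$, and it does not use finiteness of $k$. The counting argument stays within the arithmetic framework of $\widehat{S'}(k_n)$, which the paper reuses for the character-codimension bounds in its equidistribution theorems.
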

	\begin{proof}
	 By trivial bounds on the number of points of a tac, we see that a finite union of proper tacs can not cover a tac (see Lemma \ref{LEM_ClosedDimensionTAC} and Lemma \ref{LEM_ClosedEstimation1}).
	\end{proof}
\begin{proposition}
 The thin topology is Noetherian. In particular, thin subsets admit unique decompositions into tacs.
\end{proposition}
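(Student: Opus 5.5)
To prove the statement, I will show that every descending chain of thin subsets of $\cvar{S}$ stabilizes. Uniqueness of the decomposition then follows formally: in a Noetherian space every closed subset is a finite union of irreducible closed subsets, and after discarding redundant components this union is unique. By Proposition \ref{PROP_TACSIrredThin} the irreducible closed subsets are exactly the tacs, so thin subsets decompose uniquely into tacs.

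\textbf{Attaching a numerical invariant to a tac.} For a tac $\Delta = \chi\cdot\pi^*(\cvar{S'})$ with $\pi\colon S_{\overline{k}}\rightarrow S'$ having geometrically connected fibers, I set $\dim\Delta := \dim S'$. This is well defined, because Corollary \ref{COR_TACSDetermineGroupUniquely} shows that the kernel of $\pi$, and hence $S'$, is determined by $\Delta$. The key monotonicity claim is: if $\Delta'\subsetneq\Delta$ are tacs, then $\dim\Delta' < \dim\Delta$. To prove it, I translate so that both tacs contain a common character $\chi\in\Delta'$, using the translation-invariance in Proposition \ref{PROP_IntersectingTacs}. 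The inclusion $\pi'^*\cvar{S/H'}\subseteq\pi^*\cvar{S/H}$ then gives $H\subseteq H'$ by Lemma \ref{LEM_CharactersFindSubgroups}. Moreover $H\neq H'$, since otherwise the two tacs would coincide. Because the subgroups are connected, $H\subsetneq H'$ forces $\dim H < \dim H'$, so $\dim S/H' < \dim S/H$.

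\textbf{Noetherian induction.} I argue by induction on $m$, the maximal dimension of a tac occurring in the thin subsets of the chain; this is bounded by $\dim S$. For a thin set $Z$, let $n_j(Z)$ be the number of maximal tacs of dimension $j$ in some irredundant decomposition of $Z$. For now I use any irredundant decomposition; uniqueness is not yet available. Given $Z_1\supseteq Z_2\supseteq\cdots$, I compare the tuples $(n_m(Z_i), n_{m-1}(Z_i),\ldots)$ in lexicographic order and show they weakly decrease. The point is that each tac component $\Delta'$ of $Z_{i+1}$ lies in $Z_i$. Since $\Delta'$ is irreducible (Proposition \ref{PROP_TACSIrredThin}, or directly by the point-count argument cited there), it lies in a single component $\Delta$ of $Z_i$. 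By the monotonicity claim, either $\Delta'=\Delta$ or $\dim\Delta'<\dim\Delta$.

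\textbf{Concluding the induction.} First I get injectivity of the top-dimensional components: two distinct top-dimensional components of $Z_{i+1}$ cannot both lie in the same component of $Z_i$, because each would equal it. Hence $n_m$ is non-increasing. Once $n_m$ stabilizes, the top-dimensional components must stabilize as well. The remaining parts of the chain then lie in lower-dimensional tacs, and the inductive hypothesis applies to them. A simpler alternative is the standard argument: the tuple of counts $(n_{\dim S},\ldots,n_0)$ takes values in a well-ordered set under the lexicographic order, and it strictly decreases whenever $Z_{i+1}\neq Z_i$. This requires checking that a proper inclusion either removes a component of some top dimension or replaces it by finitely many strictly smaller ones, without increasing the higher counts.

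\textbf{Main obstacle.} The hardest step is the irreducibility input, namely that a tac is not a finite union of proper subtacs. This is exactly what Proposition \ref{PROP_TACSIrredThin} supplies via the point counts of Lemma \ref{LEM_ClosedEstimation1}, and I will invoke it there.
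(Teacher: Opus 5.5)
Your argument is correct, but it takes a different route from the paper.

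\textbf{The paper's route.} The paper argues by induction on $\dim S$, ``as before'', i.e.\ as in the preceding proof that arbitrary intersections of thin sets are thin. Once some member of a descending chain is proper, it is covered by finitely many proper tacs $\chi_j\pi_j^*(\cvar{S'_j})$ with $\dim S'_j<\dim S$. By Proposition \ref{PROP_IntersectingTacs}, intersecting the chain with each such tac and translating by $\chi_j^{-1}$ gives a descending chain of thin subsets of $\cvar{S'_j}$, which stabilizes by induction. On this route Noetherianity needs only the intersection property of tacs and the identification of a tac with the character group of a smaller semiabelian variety. Irreducibility of tacs (Proposition \ref{PROP_TACSIrredThin}, resting on point counts) is used only for uniqueness of the decomposition.

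\textbf{Your route.} You replace the recursion through lower-dimensional character groups with a global invariant. The dimension of the quotient is strictly monotone under proper inclusion of tacs by Lemma \ref{LEM_CharactersFindSubgroups}, and you combine this with a lexicographic count of components. This avoids checking that thin subsets of $\pi^*\cvar{S'}$ are pullbacks of thin subsets of $\cvar{S'}$. The price is that irreducibility of tacs becomes essential for Noetherianity itself, since it is what places each component of $Z_{i+1}$ inside a single component of $Z_i$. You correctly flag this as the key input.

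\textbf{Smaller remarks.}

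\emph{The step you leave as ``requires checking''.} Suppose the components of dimension $>d$ of $Z_{i+1}$ and $Z_i$ agree. A $d$-dimensional component of $Z_{i+1}$ lying strictly inside a higher-dimensional component of $Z_i$ would then lie strictly inside a component of $Z_{i+1}$, contradicting irredundancy. So the counts drop strictly in lexicographic order whenever $Z_{i+1}\neq Z_i$. The same observation makes your first sketch (induction on $m$) go through.

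\emph{Uniqueness.} Uniqueness of irredundant decompositions follows from irreducibility of tacs alone, without Noetherianity, since thin sets are by definition finite unions of tacs.

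\emph{Direction of Lemma \ref{LEM_CharactersFindSubgroups}.} The lemma is printed with the inclusion reversed. The direction you use, $\pi'^*\cvar{S/H'}\subseteq\pi^*\cvar{S/H}\Rightarrow H\subseteq H'$, is the correct one.
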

\begin{proof}
	Induction on the dimension of $S$ as before.
\end{proof}

	\subsubsection{Standard topology}\label{SEC_charsemiabstand}
	
	We introduce the \textit{standard topology} The Tannakian categories are defined by localizations along closed subsets. This topology is also well-known and has appeared in many other contexts, expecially when considering Mellin transforms on semiabelian varieties. It is analogous to the Zariski topology on the character variety of a complex torus.
	
	Let $S$ be a semiabelian variety over a finite field $k$. A character $\chi \in \mathscr{C}(S)$ can be factored into a pro-$\ell$ part $\chi_\ell \in \cvar{S}$, whose monodromy is pro-$\ell$, and a coprime to $\ell$-part $\chi_{{\ell'}} \in \cvar{S}$, whose monodromy is coprime to $\ell$, such that
	\[
	\chi = \chi_\ell\chi_{\text{f}}.
	\]
	We call the group of characters with pro-$\ell$ monodromy by $\mathscr{C}^\ell(S)$ and the group of characters with mondoromy coprime to $\ell$ by $\mathscr{C}^{\text{f}}(S)$. We have constructed a decomposition
	\[
	\mathscr{C}(S) = \mathscr{C}^{\text{f}}(S)\times \mathscr{C}^{\ell}(S).
	\]
	This leads to a decomposition
	\[
	\mathscr{C}(S) = \bigsqcup_{\chi \in \mathscr{C}^{\text{f}}(S)} \chi\cdot \mathscr{C}^{\ell}(S).
	\]
	The group of pro-$\ell$-characters is identified with the group (the notation is introduced in \ref{SUBSEC_CharGroup})
	\[
	 \mathscr{C}^{\ell}(S) =  \mathscr{C}(\pi_1^{\ell, \text{char}}).
	\]
	We define the Iwasawa algebra
	\[
	R_{S, \text{int}} := \BZ_\ell[[\pi_1^{\ell, \text{char}}]].
	\]
	We form the tensor product
	\[
	R_S := R_{S, \text{int}}\otimes_{\BZ_\ell} \Qbarl.
	\]
	It is well-known that the closed points of $\text{Spec}(R_S)$ identify with the set $\mathscr{C}(\mathscr{C}(\pi_1^{\ell, \text{char}}))$. We topologize $\mathscr{C}^{\ell}(S)$ by equipping it with the Zariski topology coming from the Iwasawa algebra and the above isomorphism. We then equip $\cvar{S}$ with the coproduct topology. The resulting topology is called the \textit{standard topology}. Note that it has countably many connected components. 
	\begin{remark}	
		A thin subset is closed in the standard topology.
	\end{remark}
	We begin by proving that the maps are continuous. 
	\begin{lemma}\label{LEM_ContinuousSABStandard}
		Let $\pi\colon S \rightarrow S'$ a group morphism. The pullback map
		\[
		\pi^*\colon \mathscr{C}(S') \rightarrow \mathscr{C}(S)
		\]
		is continuous in the standard topology. In particular, the group $\cvar{S}$ is a topological group.
	\end{lemma}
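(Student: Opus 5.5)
The plan is to reduce continuity of $\pi^*$ to a statement about continuous ring homomorphisms between Iwasawa algebras, using that the standard topology is defined componentwise. A morphism $\pi\colon S\to S'$ induces a continuous homomorphism $\pi_*\colon \pi_1^{\text{char}}(S)\to\pi_1^{\text{char}}(S')$ by Proposition \ref{PROP_FundamentalGrpFunctorial}. It is compatible with the decomposition of each group into its pro-$\ell$ part and its prime-to-$\ell$ part, since this decomposition is canonical for profinite abelian groups. Consequently $\pi^*$ respects the decomposition $\mathscr{C}(S)=\mathscr{C}^{\text{f}}(S)\times\mathscr{C}^{\ell}(S)$. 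It sends the coset $\chi\cdot\mathscr{C}^{\ell}(S')$ into the coset $\pi^*(\chi)\cdot\mathscr{C}^{\ell}(S)$ via $\chi\chi_\ell\mapsto \pi^*(\chi)\pi^*(\chi_\ell)$. Since both sides carry the coproduct topology, it suffices to show two things: that $\pi^*\colon\mathscr{C}^{\ell}(S')\to\mathscr{C}^{\ell}(S)$ is continuous for the Zariski topologies, and that translation by a fixed character is a homeomorphism of each component.

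\textbf{Step 1 (the map on pro-$\ell$ parts).} The homomorphism $\pi_*\colon\pi_1^{\ell,\text{char}}(S)\to\pi_1^{\ell,\text{char}}(S')$ induces a continuous ring homomorphism of Iwasawa algebras $R_{S,\text{int}}\to R_{S',\text{int}}$, and hence a morphism $\text{Spec}(R_{S'})\to\text{Spec}(R_S)$. On closed points this morphism is exactly $\chi_\ell\mapsto\chi_\ell\circ\pi_*=\pi^*(\chi_\ell)$, because a closed point is a continuous character of $\pi_1^{\ell,\text{char}}$ and the ring map is induced by group elements. A morphism of schemes is Zariski continuous, and so is its restriction to closed points. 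This settles continuity of $\pi^*$ on $\mathscr{C}^\ell$.

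\textbf{Step 2 (translations).} Multiplication by a fixed character $\psi\in\mathscr{C}^\ell(S)$ is induced by the ring automorphism of $R_S$ sending $g\mapsto\psi(g)g$ on group-likes. This map is well defined on $R_{S,\text{int}}\otimes\Qbarl$ because $\psi$ takes values in $1+\mathfrak{m}$ of a finite extension and therefore extends continuously. Hence translations are homeomorphisms. A prime-to-$\ell$ translation only permutes the components, so Steps 1 and 2 together give continuity of $\pi^*$ on all of $\mathscr{C}(S')$.

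\textbf{Step 3 (topological group).} I follow the proof of Lemma \ref{LEM_ContinuousSABThin}. The multiplication on $\mathscr{C}(S)$ is $\Delta^*$ for the diagonal $S\to S\times S$, composed with the Künneth identification $\mathscr{C}(S\times S)=\mathscr{C}(S)\times\mathscr{C}(S)$ of Proposition \ref{PROP_CharKunneth}. Inversion is the pullback along $-1\colon S\to S$.

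The main obstacle is the caveat, as with Zariski topologies on varieties, that the standard topology on $\mathscr{C}(S\times S)$ is not the product topology. Instead it comes from $R_{S\times S}=R_S\hat\otimes R_S$, whose topology is finer than the product topology. So $\Delta^*$ is continuous out of $\mathscr{C}(S\times S)$, which gives "topological group" in the algebraic-group sense: multiplication is continuous on the product space with its standard topology. Separately, multiplication is continuous in each variable, by the translation argument of Step 2. I would state the conclusion in this sense, matching how the thin case was treated.
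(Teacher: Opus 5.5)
Your proposal is correct and follows essentially the paper's route. The paper justifies continuity in one line: $\pi^*$ is, componentwise, induced by a ring morphism of the Iwasawa algebras $R_S\to R_{S'}$. The group statement is obtained as in the thin case, by pulling back along the diagonal $S\to S\times S$ and along inversion.

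Your caveat is accurate and worth keeping. Already for $S=\BG_m$ the standard topology on $\mathscr{C}^\ell(S)$ is the cofinite topology, so multiplication is not continuous for the product topology on $\mathscr{C}(S)\times\mathscr{C}(S)$. Hence ``topological group'' can only hold in the sense you state: continuity out of $\mathscr{C}(S\times S)$ with its own standard topology, together with translations being homeomorphisms.
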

	To evaluate the dimension of the components of a thin subset, we need the following lemma.
	\begin{lemma}\label{LEM_semiabclosedimmersion}
		Let $\pi\colon S \rightarrow S'$ be a surjective morphism. The induced map
		\[
		\pi^*\colon \cvar{S'} \rightarrow \cvar{S}
		\] 
		is a closed immersion.
	\end{lemma}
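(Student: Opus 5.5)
The plan is to factor $\pi$ into a quotient with connected fibers followed by an isogeny, and to handle the two pieces separately. Write $S'' := S/(\ker\pi)^0$, so that $\pi = q\circ p$ with $p\colon S\to S''$ surjective with connected kernel and $q\colon S''\to S'$ an isogeny with finite kernel $\Gamma$. By functoriality (Proposition \ref{PROP_CharFunc}) we get $\pi^* = p^*\circ q^*$. Proposition \ref{PROP_CharDescent} shows that $q^*\colon \cvar{S'}\to\cvar{S''}$ is surjective with finite kernel $\widehat{\Gamma}(\overline{k})$. Hence the image of $\pi^*$ equals $p^*(\cvar{S''})$, and the whole content of the claim lies in the connected-fiber map $p^*$.

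\textbf{Step 1: the case $\pi = p$ with connected fibers.} Here Proposition \ref{PROP_FundamentalGroupExact} gives a surjection $\pi_1^{\text{char}}(S)\to\pi_1^{\text{char}}(S'')$. It restricts to a surjection of pro-$\ell$ parts $\pi_1^{\ell,\text{char}}(S)\to\pi_1^{\ell,\text{char}}(S'')$, and also to a surjection of the prime-to-$\ell$ parts.

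\begin{itemize}
\item The surjection of pro-$\ell$ groups induces a surjection of Iwasawa algebras $R_{S,\text{int}}\to R_{S'',\text{int}}$, hence a surjection $R_S\to R_{S''}$ after tensoring with $\Qbarl$. Therefore $\text{Spec}(R_{S''})\to\text{Spec}(R_S)$ is a closed immersion of schemes.
\item On closed points this map is exactly $p^*\colon \mathscr{C}^\ell(S'')\to\mathscr{C}^\ell(S)$, using the identification of closed points with $\mathscr{C}^\ell$ recalled above. So $p^*$ is injective on pro-$\ell$ characters, and it is a homeomorphism onto a Zariski closed subset.
\item For the full character group, use $\cvar{S''}=\bigsqcup_{\chi_f}\chi_f\cdot\mathscr{C}^\ell(S'')$. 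Since $p^*$ is injective and multiplicative, it sends the component indexed by $\chi_f$ into the component indexed by $p^*\chi_f$, and it is a translated closed immersion there.
\end{itemize}

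Distinct $\chi_f$ map to distinct components. Moreover, the coproduct topology makes a subset closed as soon as it is closed in each component. So $p^*$ is a closed immersion for the standard topology.

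\textbf{Step 2: passing to general surjective $\pi$.} This is the main obstacle. When $\ell$ divides $|\Gamma(\overline{k})|$, the map $\pi^*$ is not injective, because $\ker\pi^*\supseteq\widehat{\Gamma}(\overline{k})$. So the statement must be read as saying that $\pi^*$ identifies $\cvar{S'}/\widehat{\Gamma}(\overline{k})$ with a closed subspace of $\cvar{S}$, namely $p^*(\cvar{S''})$. To prove this I would use three facts.

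\begin{itemize}
\item By Step 1, the image $p^*(\cvar{S''})$ is closed.
\item The map $q^*$ is continuous by Lemma \ref{LEM_ContinuousSABStandard}.
\item On each pro-$\ell$ component, $q^*$ corresponds to the finite injective ring map $R_{S'}\to R_{S''}$. This ring map comes from the open inclusion $\pi_1^{\ell,\text{char}}(S'')\hookrightarrow\pi_1^{\ell,\text{char}}(S')$ of finite index, which follows from Proposition \ref{PROP_DescentFundamentalGroup}. A finite morphism is closed, so $q^*$ is a closed surjective quotient map with finite fibers.
\end{itemize}

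Composing, $\pi^*$ is closed, and it factors through the quotient by its finite kernel as a homeomorphism onto the closed subset $p^*(\cvar{S''})$. That is the closed-immersion statement in the only sense compatible with Proposition \ref{PROP_CharDescent}.

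The delicate point I expect to check carefully is the matching of components in the coproduct decomposition under $q^*$. A prime-to-$\ell$ character of $S'$ may pull back to a character with a different $\ell$-part splitting only when the kernel is mixed. I would handle this by splitting $\Gamma$ into its $\ell$-primary and prime-to-$\ell$ parts and treating the two resulting isogenies separately.
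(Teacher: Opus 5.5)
Your Step~1 is the paper's proof. The paper likewise identifies $\pi^*$ componentwise with the map on spectra induced by a surjection of Iwasawa algebras, and gets that surjection from $\pi_1^{\text{char}}(S)\to\pi_1^{\text{char}}(S')$ being onto. You add the bookkeeping for the components indexed by $\mathscr{C}^{\text{f}}$, which the paper leaves implicit.

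Step~2 goes beyond the paper, and your diagnosis there is right. Surjectivity on fundamental groups only holds when $\ker\pi$ is connected, since Proposition \ref{PROP_FundamentalGroupExact} is stated for connected groups. For an isogeny, Proposition \ref{PROP_DescentFundamentalGroup} gives a cokernel $\Gamma(\overline{k})$ instead. So the paper's argument, and the statement read literally, only cover quotients with connected fibers. That is the case actually used, for tacs in Lemma \ref{LEM_ClosedDimensionTAC}. Your reading for a disconnected kernel is correct: $\pi^*$ is a closed map inducing a homeomorphism from $\cvar{S'}/\widehat{\Gamma}(\overline{k})$ onto the closed set $p^*(\cvar{S''})$.

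Three small corrections:
\begin{enumerate}
\item $\ker\pi^*=\widehat{\Gamma}(\overline{k})$ is nontrivial as soon as $\Gamma(\overline{k})\neq 0$, not only when $\ell$ divides its order. When $\ell\nmid|\Gamma(\overline{k})|$, the identifications happen among the components indexed by $\mathscr{C}^{\text{f}}$.
\item The ring map induced by $\pi_1^{\ell,\text{char}}(S'')\hookrightarrow\pi_1^{\ell,\text{char}}(S')$ goes $R_{S''}\to R_{S'}$, not the other way. It is still finite and injective, so your conclusion stands.
\item The ``matching of components'' you worry about is automatic. Homomorphisms of profinite abelian groups respect the primary decomposition (Lemma \ref{LEM_abprofinexact}), so $q^*(\chi_{\text{f}}\cdot\mathscr{C}^{\ell}(S'))\subseteq q^*(\chi_{\text{f}})\cdot\mathscr{C}^{\ell}(S'')$ and no splitting of $\Gamma$ is needed. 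For closedness of $q^*$ in the coproduct topology, you only need each target component to receive finitely many source components, which holds because $\ker q^*$ is finite.
\end{enumerate}
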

	\begin{proof}[] This can be identified, componentwise, with a map on spectra induced by a morphism of rings. The morpshim $\pi_1^{\text{char}}(S) \rightarrow \pi_1^{\text{char}}(S')$ is surjective, which implies that the ring morphism $R_S \rightarrow R_{S'}$ is surjective. 	
	\end{proof}
	\begin{proof} The map is induced by a ring morphism of the localized Iwasawa algebra.
	\end{proof}
	\begin{definition}
		The dimension of a closed subset is defined to be the Krull dimension. 
	\end{definition}
	\begin{remark}
		The Krull dimension of a countable union of irreducible subsets is the supremum over the irreducible components. 
	\end{remark}
	\begin{lemma}\label{LEM_IsogPreseversDimensionSAB}
		Let $q\colon S \rightarrow S'$ be an isogeny. A closed subset $\Delta\subseteq \cvar{S'}$ satisfies
		\[
		\text{dim}(q^*(\Delta)) = \text{dim}(\Delta).
		\]  
	\end{lemma}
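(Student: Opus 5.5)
Since $q^*$ is a group morphism, we first reduce to the fibre over a single coprime-to-$\ell$ component, and then compare Iwasawa algebras along the induced map of pro-$\ell$ fundamental groups.

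\textbf{Reduction to pro-$\ell$ components.} By Proposition \ref{PROP_CharDescent}, the exact sequence $0 \to \Gamma \to S \to S' \to 0$ with $\Gamma = \ker(q)$ finite gives an exact sequence
\[
0 \rightarrow \widehat{\Gamma}(\overline{k}) \rightarrow \cvar{S'} \rightarrow \cvar{S} \rightarrow 0.
\]
Hence $q^*$ is surjective with finite kernel. It respects the decomposition $\cvar{-} = \mathscr{C}^{\text{f}}(-)\times\mathscr{C}^{\ell}(-)$, because it is induced by the continuous map $q_*\colon \pi_1^{\text{char}}(S)\to\pi_1^{\text{char}}(S')$ and therefore sends pro-$\ell$ characters to pro-$\ell$ characters and coprime-to-$\ell$ characters to coprime-to-$\ell$ characters.

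The closed subset $\Delta$ is a union of its intersections with the components $\chi\cdot\mathscr{C}^\ell(S')$. Since the Krull dimension of such a union is the supremum over components, it suffices to treat a single component. After translating by $\chi$ (translation is a homeomorphism by Lemma \ref{LEM_ContinuousSABStandard}), we may assume $\Delta \subseteq \mathscr{C}^\ell(S')$. The map $q^*$ then carries $\Delta$ into $\mathscr{C}^\ell(S)$.

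\textbf{Ring-theoretic comparison.} By Proposition \ref{PROP_DescentFundamentalGroup}, the map $q_*\colon\pi_1^{\text{char}}(S)\to\pi_1^{\text{char}}(S')$ is injective with finite cokernel $\Gamma(\overline{k})$. Passing to maximal pro-$\ell$ quotients gives a map $\pi_1^{\ell,\text{char}}(S)\to\pi_1^{\ell,\text{char}}(S')$. Both groups are of the form $\BZ_\ell^{r}$ (from Proposition \ref{PROP_fundcompsemiab}), and the map has finite kernel and cokernel, so it is injective with open image. It induces a ring map $\phi\colon R_S\to R_{S'}$, and $R_{S'}$ is a finite free $R_S$-module, with basis given by coset representatives of the finite index subgroup.

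Thus $\phi$ is a finite ring map, and on closed points the map $\text{Spec}(R_{S'})\to\text{Spec}(R_S)$ is exactly the restriction $q^*$ on pro-$\ell$ characters. Finite morphisms preserve the dimension of closed subsets by going-up and incomparability: for a closed $V(I)\subseteq\text{Spec}(R_{S'})$, the extension $R_S/\phi^{-1}(I)\hookrightarrow R_{S'}/I$ is integral, so both rings have the same Krull dimension. Moreover, the image of $V(I)$ is the closed set $V(\phi^{-1}(I))$, which identifies $q^*(\Delta)$ with this closed set.

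\textbf{Main obstacle.} The key step is making rigorous that the Krull dimension computed in the standard topology, that is on the set of closed points $\mathscr{C}(\pi_1^{\ell})$ with its Zariski topology, agrees with the Krull dimension of the corresponding rings. This holds if $R_S$ is Jacobson-like on the relevant locus. I would handle it by noting that $R_S = \BZ_\ell[[\BZ_\ell^r]]\otimes\Qbarl$ is a regular ring whose maximal ideals with residue field finite over $\BQ_\ell$ are dense in every closed subset. I would cite or prove this via the identification with the rigid open unit polydisc, where the Nullstellensatz holds for affinoids. With density in hand, dimension computed on points equals ring dimension, and the argument above concludes.
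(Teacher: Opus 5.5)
Your proposal is correct and follows the same route as the paper. The paper's one-line proof observes that $q^*$ is, component by component, the map on spectra induced by a finite ring map $R_S \to R_{S'}$, and that finite maps preserve dimension. Your version supplies what the paper leaves implicit: the reduction to a single pro-$\ell$ component, the fact that $R_{S'}$ is finite free over $R_S$ because $\pi_1^{\ell,\text{char}}(S)$ has open image, and the Jacobson-type fact needed to identify Krull dimension on the space of characters with the ring-theoretic dimension.
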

	\begin{proof}
		The map $q^*$ is a countable disjoint union of maps on spectra induced by a finite morphism of commutative rings. Hence it preserves dimension.
	\end{proof}
	
	\begin{lemma}\label{LEM_ClosedDimensionTAC}
	Let $S$ be a semiabelian variety of dimension $d = d_a + d_t$. The dimension of any irreducible component of $\cvar{S}$ (in the standard topology) is $2d_a + d_t$. In particular, for a quotient $\pi \colon S \rightarrow S'$ and a character $\chi \in \cvar{S}$, the dimension of any irreducible component of $\chi\cdot\pi^*(\cvar{S'})$ is $2d'_a + d'_t$, where $d' = d'_a + d'_t$ is the dimension of $S'$.  
\end{lemma}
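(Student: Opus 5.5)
The plan is to compute the Krull dimension of a single component $\chi\cdot\mathscr{C}^\ell(S)$ directly from the Iwasawa algebra, and then reduce the statement about tacs to this computation.

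\textbf{Components of $\cvar{S}$.} By construction, $\cvar{S}$ is the disjoint union of the translates $\chi\cdot\mathscr{C}^\ell(S)$ with $\chi\in\mathscr{C}^{\text{f}}(S)$. By Lemma \ref{LEM_ContinuousSABStandard}, translation is a homeomorphism, so it suffices to treat $\mathscr{C}^\ell(S)$, which is the set of closed points of $\text{Spec}(R_S)$.

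\textbf{Identifying $R_S$.} By Proposition \ref{PROP_fundcompsemiab}, $\pi_1^{\text{char}}(S)\cong\pi_1^t(S_{\overline{k}},e)$. Its maximal pro-$\ell$ quotient is the $\ell$-adic Tate module $T_\ell(S)\cong\BZ_\ell^{2d_a+d_t}$. This is dual to $H^1(S,\BZ_\ell)$, whose rank is computed in Proposition \ref{PROP_SemiAbCohomology}. Hence
\[
R_{S,\text{int}}\cong \BZ_\ell[[x_1,\ldots,x_{r}]],\qquad r=2d_a+d_t.
\]
The characters are continuous homomorphisms into $\Qbarl^*$, and these correspond to points $x_i\mapsto\chi(\gamma_i)-1$ lying in the open unit ball. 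So $\mathscr{C}^\ell(S)$ is the rigid-analytic open polydisc of dimension $r$ over $\Qbarl$, and its closed subsets are the zero loci of ideals of $R_S$.

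\textbf{Irreducibility and dimension.} Since $R_{S,\text{int}}$ is a domain, $R_S$ is a domain and $\mathscr{C}^\ell(S)$ is irreducible. Its Krull dimension equals $\dim R_{S,\text{int}}[1/\ell]=(r+1)-1=r$. This uses the standard fact that inverting $\ell$ in the local ring $\BZ_\ell[[x]]$ drops the dimension by one, and that maximal ideals of $R_S$ have height $r$ (Weierstrass preparation, or Tate-algebra arguments on closed polydiscs of radius $<1$). Base change to $\Qbarl$ is integral, so it does not change the dimension. The main obstacle is this step: one has to check carefully that the Krull dimension of the space of closed points, with the topology induced from $\text{Spec}(R_S)$, really equals $r$. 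I would argue via chains of primes $(x_1,\ldots,x_i)$ to get $\geq r$, and via Noether normalization for the quotients $R_S/\mathfrak p$ to get $\leq r$.

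\textbf{Tacs.} Let $\pi\colon S\to S'$ be a quotient. The map $\pi^*$ is a closed immersion by Lemma \ref{LEM_semiabclosedimmersion}, and on each component it is induced by a surjective ring map $R_S\to R_{S'}$. Hence each irreducible component of $\pi^*(\cvar{S'})$ is homeomorphic to a component of $\cvar{S'}$, which has dimension $2d'_a+d'_t$ by the first part. Translating by $\chi$ is a homeomorphism and permutes components, so the components of $\chi\cdot\pi^*(\cvar{S'})$ also have dimension $2d'_a+d'_t$.
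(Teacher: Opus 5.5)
Your proposal is correct and follows the paper's route: identify the pro-$\ell$ part of $\pi_1^{\text{char}}(S)$ with $\BZ_\ell^{2d_a+d_t}$, so that $R_{S,\text{int}}$ is a power series ring in $2d_a+d_t$ variables, and then deduce the tac statement from the closed-immersion Lemma \ref{LEM_semiabclosedimmersion}. The paper simply cites Gabber--Loeser for the dimension count you sketch by hand; the step you flag (passing from $\text{Spec}(R_S)$ to its closed points, including irreducibility) rests on the Jacobson property of $\BZ_\ell[[x_1,\ldots,x_r]][1/\ell]$ and its integral extensions, not merely on $R_S$ being a domain.
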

\begin{proof}
	Proposition \ref{PROP_SemiAbCohomology} implies
	\begin{align*}
		\pi_1^{\text{char}}(S, e) &= \pi_1^{t}(S_{\overline{k}}, e)\\
		&= \text{Hom}(H^1(S, \BZ_\ell), \BZ_\ell) \\
		&\cong \BZ_\ell^{2d_a + d_t}.
	\end{align*}
	In particular, the ring $R_{S, \text{int}}$ can be identified with a power series ring over $\BZ_\ell$ in $2d_a + d_t$ variables. Hence $R_{S}$ has dimension $2d_a + d_t$ (see \cite[~{}3.1]{GabberLoeserTore}). The second statement follows from Lemma \ref{LEM_semiabclosedimmersion}. 
\end{proof}
The following theorem is used at a crucial point.
\begin{proposition}\label{PROP_irredcompinctacs}
	Let $\Delta \subseteq\cvar{S}$ be a tac and $\Delta' \subset \Delta$ a proper thin subset. Any irreducible component of $\Delta'$ is properly contained in an irreducible component of $\Delta$. 
\end{proposition}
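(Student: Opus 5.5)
We reduce the statement to a comparison of tacs inside a single component of the standard topology, and then compare dimensions. Write $\Delta = \chi\cdot\pi^*(\cvar{S'})$ for a quotient $\pi\colon S\to S'$ with connected fibers. Write $\Delta' = \Delta_1\cup\dots\cup\Delta_n$ with $\Delta_i = \chi_i\cdot\pi_i^*(\cvar{S_i'})$ tacs, using the unique decomposition into tacs (the thin topology is Noetherian). Since $\Delta'\subseteq \Delta$ and tacs are irreducible in the thin topology (Proposition \ref{PROP_TACSIrredThin}), each $\Delta_i\subseteq\Delta$. Proposition \ref{PROP_IntersectingTacs} then gives $\Delta_i = \Delta_i\cap\Delta = \chi_i\cdot \pi''^*(\cvar{S/(\ker\pi+\ker\pi_i)})$. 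So we may assume $\ker\pi\subseteq \ker\pi_i$, i.e.\ $\pi_i$ factors through $\pi$. Properness of $\Delta'$ forces $\Delta_i\neq\Delta$. By Corollary \ref{COR_TACSDetermineGroupUniquely} (after translating, $\chi_i\in\Delta$), this gives $\ker\pi\subsetneq\ker\pi_i$, so $\dim S_i' < \dim S'$.

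Next we compare dimensions. By Lemma \ref{LEM_ClosedDimensionTAC}, every irreducible component of $\Delta$ has dimension $2d'_a+d'_t$, and every irreducible component of $\Delta_i$ has dimension $2d'_{i,a}+d'_{i,t}$. The map $S'\to S_i'$ is surjective with connected kernel of positive dimension. Its kernel has a nontrivial toric or abelian part, so $H^1$ has strictly smaller rank: $2d'_{i,a}+d'_{i,t} < 2d'_a+d'_t$. Every irreducible component $C$ of $\Delta'$ is an irreducible component of some $\Delta_i$, hence $\dim C<\dim$ of every component of $\Delta$.

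It remains to show that $C$ lies in some irreducible component of $\Delta$. Here we use that the standard topology is a coproduct over $\mathscr{C}^{\text{f}}(S)$ of the Zariski topologies on $\operatorname{Spec} R_S$. The irreducible set $C$ lies in one piece $\eta\cdot\mathscr{C}^\ell(S)$. Now $\Delta\cap \eta\mathscr{C}^\ell(S)$ is a translate of the image of $\cvar{S'}$ in that piece, which is a closed immersion by Lemma \ref{LEM_semiabclosedimmersion}. Because the ambient is Noetherian, this intersection is a finite union of irreducible components. An irreducible closed $C$ contained in that finite union lies in one of them, say $D$. Since $\dim C<\dim D$, the inclusion $C\subseteq D$ is proper.

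The main obstacle is the dimension bookkeeping in the second step. It relies on Lemma \ref{LEM_ClosedDimensionTAC} together with the claim that a proper inclusion of tacs strictly drops $2d_a+d_t$; this follows from Proposition \ref{PROP_SemiAbCohomology} applied to the connected kernel, which is itself semiabelian. A second point needing care is that the irreducible components of $\Delta'$ in the standard topology may split the thin components further across the countably many pieces. This is harmless, since the dimension comparison holds piecewise.
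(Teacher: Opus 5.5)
Your proof is correct and follows essentially the paper's route. Like the paper, you reduce to the tacs making up $\Delta'$ and then use Lemma~\ref{LEM_ClosedDimensionTAC} to see that a proper sub-tac has irreducible components of strictly smaller dimension than those of $\Delta$. You also spell out two steps the paper leaves implicit: why a proper sub-tac strictly lowers $2d_a+d_t$ (via Proposition~\ref{PROP_IntersectingTacs} and Corollary~\ref{COR_TACSDetermineGroupUniquely}), and why each component of $\Delta'$ lies inside some component of $\Delta$.
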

\begin{proof}
	By passing to an irreducible component of $\Delta'$ in the thin topology, we can assume $\Delta'$ is a tac. Note that the dimension of each irreducible components $\Delta'$ or $\Delta$ has the same dimension as $\Delta$ or $\Delta'$ by Lemma \ref{LEM_ClosedDimensionTAC}. Thus any component of $\Delta'$ has strictly lower dimension than any component of $\Delta$.
\end{proof}
\begin{proposition}\label{PROP_irredcomptequal}
	Let $\Delta, \Delta' \subseteq \cvar{S}$ be tacs. Suppose there exists a closed irreducible subset $Z \subseteq\cvar{S}$ such that $Z$ is an irreducible component of $\Delta$ and $\Delta'$. Then $\Delta = \Delta'$.
\end{proposition}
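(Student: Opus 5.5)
Write $\Delta = \chi\cdot\pi^*(\cvar{S/H})$ and $\Delta' = \chi'\cdot\pi'^*(\cvar{S/H'})$ for connected subgroups $H, H'$ (after a finite extension of $k$, which changes nothing by Remark \ref{REM_InvBaseChangeFundamGrp}). The plan is to show first that $H = H'$, and then that $\chi/\chi' \in \pi^*(\cvar{S/H})$. Together these give $\Delta = \Delta'$ by Corollary \ref{COR_TACSDetermineGroupUniquely}.

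Since translation by $\chi^{-1}$ is a homeomorphism in the standard topology (Lemma \ref{LEM_ContinuousSABStandard}), we may assume $\chi = 1$. Then $\Delta = \pi^*(\cvar{S/H})$ is a subgroup, and it is closed by Lemma \ref{LEM_semiabclosedimmersion}. The component $Z$ lies in $\Delta \cap \Delta'$. By Proposition \ref{PROP_IntersectingTacs}, this intersection is the tac $\chi_0\cdot\rho^*(\cvar{S/(H+H')})$ for any $\chi_0 \in Z$, where $\rho\colon S \rightarrow S/(H+H')$ is the quotient.

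\textbf{Dimension step.} The set $Z$ is an irreducible component of $\Delta$, so by Lemma \ref{LEM_ClosedDimensionTAC} it has dimension $2d_a(S/H) + d_t(S/H)$. It is also contained in the tac $\Delta\cap\Delta'$, whose components have dimension $2d_a(S/(H+H')) + d_t(S/(H+H'))$. If $H \subsetneq H+H'$, then $\Delta\cap\Delta'$ is a proper thin subset of $\Delta$, by Lemma \ref{LEM_CharactersFindSubgroups} together with Corollary \ref{COR_TACSDetermineGroupUniquely}. Proposition \ref{PROP_irredcompinctacs} then says every component of $\Delta\cap\Delta'$ is properly contained in a component of $\Delta$, and hence has strictly smaller dimension. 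Now $Z$ is irreducible and closed inside the Noetherian components of $\Delta\cap\Delta'$, so it lies in one of them. This gives $\dim Z < \dim Z$, a contradiction. Hence $H + H' = H$, and by symmetry $H + H' = H'$, so $H = H'$.

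\textbf{Translation step.} With $H = H'$, the two cosets $\Delta$ and $\Delta'$ of the same subgroup $\pi^*(\cvar{S/H})$ meet, since $Z \neq \emptyset$ lies in both. Two cosets of a subgroup that intersect are equal, so $\Delta = \Delta'$.

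The main obstacle is the dimension step. It requires knowing that $Z$, being irreducible, lies in a single standard-topology component of the tac $\Delta\cap\Delta'$. This holds because a tac is a countable disjoint union of irreducible Noetherian spectra: $\cvar{S}$ is a coproduct over $\mathscr{C}^{\text{f}}(S)$, and an irreducible set cannot meet two open-closed pieces. It also requires the inequality $\dim(\text{component of }\Delta\cap\Delta') < \dim Z$ whenever $\Delta\cap\Delta' \subsetneq \Delta$. If that strictness from Proposition \ref{PROP_irredcompinctacs} is in doubt, the fallback is to compare the ranks $2d_a + d_t$ directly. The rank strictly decreases when passing from $S/H$ to the proper quotient $S/(H+H')$ with positive-dimensional kernel, since the character group of that kernel is nonzero by Corollary \ref{COR_ImageFundGroup}.
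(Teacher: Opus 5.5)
Your proof is correct and follows essentially the paper's route: the paper also uses Proposition \ref{PROP_IntersectingTacs} to see that $\Delta\cap\Delta'$ is a tac inside $\Delta$ containing the common component $Z$, invokes Proposition \ref{PROP_irredcompinctacs} to force $\Delta\cap\Delta'=\Delta$ (so $\Delta\subseteq\Delta'$), and concludes by symmetry. Your split into ``$H=H'$'' plus the coset argument only unpacks the step $\Delta\cap\Delta'=\Delta$, and your observation that the irreducible set $Z$ lies in a single open-closed piece, hence in one component of $\Delta\cap\Delta'$, supplies a detail the paper leaves implicit.
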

\begin{proof}
	The subset $\Delta\cap\Delta'$ is a tac by Proposition \ref{PROP_IntersectingTacs} and we have $\Delta\cap\Delta' \subseteq \Delta$. The sets $\Delta\cap\Delta'$ and $\Delta$ have an irreducible component in common. Thus Proposition \ref{PROP_irredcompinctacs} implies 
	\[
	\Delta\cap\Delta' = \Delta.
	\]
	This means $\Delta \subseteq \Delta'$. By symmetry, we obtain $\Delta = \Delta'$. 
\end{proof}
	\subsection{Unipotent groups}

	We introduce the Fourier transform on a general unipotent commutative groups $U$. We prove that all characters on a unipotent group are arithmetic and that they are described by the points of an algebraic group, the dual group.
	
	\subsubsection{Characters} The identification of characters of a unipotent group with the points of the dual group rests on the following lemma:
	\begin{lemma}\label{LEM_charuniprop}
		The group $\pi_1^{\text{char}}(U)$ is a pro-$p$ group. In particular, every character $\psi \in \cvar{U}$ has finite order.
	\end{lemma}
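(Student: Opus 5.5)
The plan is to show that each finite group $U(k_n)$ is a $p$-group. Then $\pi_1^{\text{char}}(U) = \varprojlim_n U(k_n)$ is a projective limit of finite $p$-groups, hence a pro-$p$ group.

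\textbf{Step 1: $U$ is killed by a power of $p$.} A commutative unipotent group over a field of characteristic $p$ admits a composition series with successive quotients isomorphic to forms of $\mathbb{G}_a$ or finite unipotent groups. Each such quotient is killed by $p$. So if the series has length $r$, then $U$ is killed by $p^r$. The same holds for the abstract group $U(\overline{k})$. In particular, every element of $U(k_n) \subseteq U(\overline{k})$ has $p$-power order. Since $U(k_n)$ is finite, it is a finite $p$-group.

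Alternatively, Step 1 can be done by induction using Corollary \ref{COR_ExactSequencePoints}. Take a short exact sequence $0 \to U' \to U \to U'' \to 0$ with $U', U''$ of smaller dimension, or with $U'' $ a form of $\mathbb{G}_a$. Then $U(k_n)$ is an extension of a subgroup of $U''(k_n)$ by $U'(k_n)$. The base case $\mathbb{G}_a(k_n) = k_n$ is elementary abelian of exponent $p$.

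\textbf{Step 2: pass to the limit.} The transition maps are the traces $\text{Tr}_{k_m/k_n}$. The limit $\pi_1^{\text{char}}(U)$ is a closed subgroup of $\prod_n U(k_n)$. A closed subgroup of a product of finite $p$-groups is pro-$p$: every open normal subgroup has $p$-power index, because the finite quotients are subquotients of finite products of $p$-groups.

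\textbf{Step 3: finite order of characters.} A character $\psi$ is a continuous homomorphism to $K^*$ for some finite extension $K/\mathbb{Q}_\ell$. Since $\ell \neq p$, the target $K^*$ contains an open subgroup $1 + \mathfrak{m}^N$ that is pro-$\ell$. The preimage of this subgroup is open, so the image of $\psi$ is a compact subgroup. Its intersection with $1 + \mathfrak{m}^N$ is a pro-$\ell$ quotient of a pro-$p$ group, hence trivial. So the image of $\psi$ is a finite group of order a power of $p$, and $\psi$ has finite order. By Lemma \ref{LEM_DescentCharAri}, $\psi$ is therefore arithmetic.

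The only real input is the structure fact in Step 1, that a commutative unipotent group over $k$ has $p$-power exponent. I expect this to be the main obstacle, since it must be quoted from the structure theory of unipotent groups (Serre, or Demazure--Gabriel). The remaining steps are formal.
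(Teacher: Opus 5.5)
Your proposal is correct and matches the paper, whose entire proof is the remark that $p^n$ annihilates $U$ for some $n$; your Steps 2 and 3 only make explicit the passage to the pro-$p$ limit and the finite-order consequence (using $\ell \neq p$), which the paper leaves implicit. One harmless imprecision in Step 1: a finite unipotent subquotient is killed by a power of $p$ but not necessarily by $p$ itself, or else you can choose the series so that its quotients are subgroups of $\mathbb{G}_a$; either way, the conclusion that $U$ is killed by a power of $p$ stands.
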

	\begin{proof}
		There is $n \geq 1$ such that $p^n$ annihilates $U$.
	\end{proof}
	
	We begin by introducing the dual group.
	\begin{definition}
		The \textit{dual group $\dualUni$ of $U$} is the connected unipotent algebraic group $U^*$ over $k$ (see \cite[]{BegueriDualityCorpsLocal}) such that for all perfect $k$-schemes $S$
		\[
		U^*(S) = \varinjlim_{n} \text{Ext}^1_{}(U\times S, p^{-n}\BZ_p/\BZ_p)
		\]
		The algebraic group $U^*$ is unique up to unique isomorphism and $U \mapsto U^*$ is an involutive, contravariant functor. 
	\end{definition} 
	The following is a reformulation of the construction in \cite{SaibiFourier} in a convenient form.
	\begin{lemma}\label{LEM_UnipotentDual}
	Let $n \geq 1$. We choose a faithful character $\psi_0\colon \BQ_p/\BZ_p \rightarrow \Qbarl^*$.  The character $\psi_0$ induces a functorial isomorphism
	\[
	U^*(k_n) = \widehat{U}(k_n)
	\]
	for all $n \geq 1$.  There exists a lisse, $\ell$-adic sheaf $\SL_{\psi_0}$ on $U\times U^*$ which can be described as follows. For all $\psi \in \widehat{U}(k_n)$ there is a canonical isomorphism
	\[
	\SL_{\psi_0}|_{U\times\{\psi\}} = \SL_\psi.
	\]
	\end{lemma}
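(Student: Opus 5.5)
The plan is to obtain both statements from Serre's theory of isogenies, comparing extensions of $U$ by finite constant groups with the Lang torsor. By Lemma \ref{LEM_charuniprop}, $U$ is killed by some $p^N$, so every character $U(k_n)\to\Qbarl^*$ takes values in $\mu_{p^N}$. Composing with the chosen faithful $\psi_0$ therefore gives a bijection
\[
\text{Hom}(U(k_n), p^{-N}\BZ_p/\BZ_p)\cong \widehat{U}(k_n).
\]
It thus suffices to construct a functorial isomorphism
\[
U^*(k_n)=\varinjlim_m \text{Ext}^1(U_{k_n}, p^{-m}\BZ_p/\BZ_p)\cong \text{Hom}(U(k_n), \BQ_p/\BZ_p).
\]

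\textbf{Step 1: the map from extensions to characters.} Let $0\to \Gamma\to E\to U_{k_n}\to 0$ be an extension with $\Gamma=p^{-m}\BZ_p/\BZ_p$ constant. The exact sequence of Corollary \ref{COR_ExactSequencePoints} gives a connecting map
\[
\delta_E\colon U(k_n)\to \Gamma_{\text{Fr}_{k_n}}=\Gamma,
\]
using that Frobenius acts trivially on a constant group. Concretely, $\delta_E(u)=\text{Fr}(e)-e$ for any lift $e\in E(\overline{k})$ of $u$. The assignment $E\mapsto\delta_E$ is additive in $E$ under Baer sum.

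\textbf{Step 2: the inverse map.} Given a homomorphism $\phi\colon U(k_n)\to\Gamma$, push out the Lang isogeny $0\to U(k_n)\to U\xrightarrow{\text{Fr}-1}U\to 0$ of Proposition \ref{PROP_LangTorsorTorsor} along $\phi$. Unwinding the connecting map shows $\delta_{\phi_*(\text{Lang})}=\pm\phi$. Conversely, every extension of $U$ by a finite constant group is obtained this way, since any isogeny is dominated by the Lang torsor (Serre, Ch.~VI, §1, 6, as already used in Proposition \ref{PROP_fundcompsemiab}). Hence the two constructions are mutually inverse. Functoriality in $U$ is formal. Compatibility with $m\mid n$ should follow from the commutative diagram of Corollary \ref{COR_ExactSequencePoints}: base change of extensions corresponds to precomposition of $\delta$ with $\text{Tr}_{k_m/k_n}$, which matches the definition of the fundamental group.

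\textbf{Step 3: the sheaf $\SL_{\psi_0}$.} Since $U^*$ is killed by $p^N$, the tautological element of $U^*(U^*)$ is represented, after passing to the perfection of $U\times U^*$, by an extension of $U\times U^*$ by $p^{-N}\BZ_p/\BZ_p$ over $U^*$. This is the universal extension coming from Bégueri's construction. Its underlying $p^{-N}\BZ_p/\BZ_p$-torsor over $U\times U^*$ is a finite étale cover. Pushing it out along $\psi_0$ gives a rank-one lisse sheaf. Perfection is a universal homeomorphism, so the étale site is unchanged and the sheaf descends to $U\times U^*$.

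Restricting to $U\times\{\psi\}$ for $\psi\in U^*(k_n)$ yields the torsor of the extension $E_\psi$ classified by $\psi$. By Step 2, $E_\psi$ is the pushout of the Lang torsor $\SL_n$ along the corresponding character. Its $\psi_0$-pushout is therefore exactly the sheaf attached to $\pi_1(U_{k_n})\xrightarrow{\SL_{n*}}U(k_n)\xrightarrow{\psi}\Qbarl^*$, namely $\SL_\psi$.

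\textbf{Main obstacle.} I expect the main difficulty to be the existence and representability of the universal extension on the perfect site. This includes checking that the colimit defining $U^*$ stabilizes at level $p^N$, so that a single finite torsor suffices. I would cite Bégueri for this rather than reprove it. A secondary nuisance is the sign convention in the connecting map, which can be absorbed into the choice of $\psi_0$.
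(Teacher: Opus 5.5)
Your argument is correct and is essentially the construction the paper relies on. The paper gives no proof of its own and only cites Sa\"ibi, whose construction rests on the same Serre--B\'egueri duality: $U^*(k_n)$ is identified with $\text{Hom}(U(k_n),\mathbb{Q}_p/\mathbb{Z}_p)$ via the Lang isogeny, and the universal extension over $U\times U^*$ is pushed out along $\psi_0$. In Step 2 you can make ``dominated by the Lang torsor'' explicit: for $f\colon E\to U$ with constant kernel, $\alpha(f(e)):=\text{Fr}(e)-e$ is a well-defined homomorphism with $f\circ\alpha$ the Lang map and $\alpha|_{U(k_n)}=\delta_E$, so $E$ is the pushout of the Lang extension along $\delta_E$.
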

	\begin{remark}
		We regard the choice of $\psi_0$ as fixed throughout and surpress it. We write $\SL$ for the universal sheaf constructed in the above Lemma. Lemma \ref{LEM_UnipotentDual} allows us to identify the closed points of $U^*$ with the arithmetic characters of $U$. This justifies writing $\psi$ for a point on $U^*$. Moreover, we denote the dual group by $\widehat{U}$ from now on. 
	\end{remark}
	\subsubsection{Fourier transform} \begin{definition}
		Let $X$ be a scheme over $k$. The \textit{Fourier transform} is defined to be the functor
		\begin{align*}
			\text{FT}_?\colon \derCat{c}{U\times X}{\Qbarl} \rightarrow \derCat{c}{\dualUni\times X}{\Qbarl},~K\mapsto \pi_{\dualUni?}(\pi_{U}^*K\otimes \SL[d]).
		\end{align*}
		We write $\text{FT} := \text{FT}_!$.
	\end{definition}
	We list the basic properties of the Fourier transform (see \cite{SaibiFourier}).
	\begin{theorem}\label{THM_FTBasic}
		Let $U$ be a unipotent group with dimension $d$. Consider $K \in \derCat{c}{U\times X}{\Qbarl}$.
		\begin{enumerate}\item The natural morphism
			\[
			\text{FT}_!(K) \rightarrow \text{FT}_*(K).
			\]
			is an isomorphism. 
			\item We have
			\[
			\text{inv}^*D(\text{FT}(K)) = \text{FT}(\text{inv}^*D(K))(d).
 			\]
 			\item We have
 			\[
 			K = \text{inv}^*(\text{FT}(\text{FT}(M)))(-d).
 			\]
 			\item The Fourier transform is a $t$-exact equivalence of categories (with respect to the perverse $t$-structure).
 			\item For each closed point $\psi \in \widehat{U}$, we have
 			\[
 			m_\psi^*(\text{FT}(K)) = \text{FT}(K_\psi).
 			\]
 			where $m_\psi(x) := x + \psi$ is the additive shift.
 		\end{enumerate}
	\end{theorem}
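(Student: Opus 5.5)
The theorem just stated collects the standard properties of the Fourier--Deligne transform on a commutative unipotent group, following Saibi. My plan is to reduce every item to two facts about the universal sheaf $\SL$ on $U\times\dualUni$ from Lemma \ref{LEM_UnipotentDual}. The first is multiplicativity in each variable: $(m\times 1)^*\SL = \SL\boxtimes_{\dualUni}\SL$, and symmetrically in the $\dualUni$-variable. This follows because $\SL$ restricted to $U\times\{\psi\}$ is $\SL_\psi$, and the latter satisfies $m^*\SL_\psi=\SL_\psi\boxtimes\SL_\psi$ by Proposition \ref{PROP_CharKunneth}. The second is orthogonality. By Proposition \ref{PROP_VanishingCohomology} applied fibrewise together with proper base change, $\pi_{\dualUni!}(\SL)$ is supported on the zero section $\{0\}\times X$ and equals $\Qbarl[-2d](-d)$ there; this uses $H^*_c(U,\Qbarl)=\Qbarl[-2d](-d)$ for a connected unipotent group. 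The same argument gives the analogous statement for $\pi_{\dualUni*}(\SL)=\Qbarl$ supported at $0$.

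I would prove (1) and (3) first. For (3), compute $\text{FT}\circ\text{FT}$ by base change and the projection formula. This writes it as a pushforward from $U\times\dualUni\times U$ of $K\boxtimes(\text{kernel})$, where multiplicativity in the $\dualUni$-variable turns the kernel into $(\text{id}\times m)^*\SL$ evaluated at $x+y$. Integrating out $\dualUni$ with orthogonality localizes onto the antidiagonal $y=-x$, which gives $\text{inv}^*K(-d)$ after tracking the shift $[2d]$ and the twist.

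For (1), consider the cone of $\text{FT}_!(K)\to\text{FT}_*(K)$. The same orthogonality argument, run with $*$-pushforward, shows $\text{FT}_*\circ\text{FT}_!\cong\text{inv}^*(-d)$ as well. Equivalently, the kernel argument shows the cone becomes zero after applying a further transform, and (3) inverts that transform. Alternatively, and more cleanly, I would reduce to the case where $U$ is an iterated extension of forms of $\BG_a$ and use Laumon's argument that $\SL$ extends cleanly along a suitable compactification. I expect this step, the identification of $!$ and $*$, to be the main obstacle, since the rest is formal manipulation of kernels. The first approach I would try is the purely formal one: show that $\text{FT}_*\circ\text{FT}_!$ and $\text{FT}_!\circ\text{FT}_!$ are both $\text{inv}^*(-d)$ compatibly with the forget-supports map, and then apply a 2-out-of-3 argument.

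The remaining items should then be formal. (2) follows from Verdier duality, which exchanges $\pi_!$ and $\pi_*$, combined with (1), the identity $D(\SL)=\SL^{-1}=(\text{inv}\times1)^*\SL$, and the fact that $\pi_U$ is smooth of relative dimension $d$; this produces the twist $(d)$. (5) follows from multiplicativity of $\SL$ in the $\dualUni$-variable, $(1\times m_\psi)^*\SL=\SL\otimes\pi_U^*\SL_\psi$, together with base change. For (4), note that $\text{FT}_!$ has perverse amplitude $\le 0$, because $\pi_!$ of a relative-dimension-$d$ map has perverse amplitude $\le d$ by Artin's bound on $\pi_!$, combined with the shift $[d]$. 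Likewise $\text{FT}_*$ has amplitude $\ge 0$, because $\pi_*$ has amplitude $\ge -d$ by affineness of the fibres. By (1), $\text{FT}$ is therefore $t$-exact. By (3) it is an equivalence with quasi-inverse $\text{inv}^*\text{FT}(d)$.
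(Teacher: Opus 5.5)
The paper does not prove this statement. It records it as the standard package from \cite{SaibiFourier}. So the question is whether your sketch stands on its own, and it does not establish (1). Your (2) and (4) both depend on (1).

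\emph{The $*$-orthogonality is wrong.} $\pi_{\dualUni *}$ does not commute with restriction to the fibre $U\times\{0\}$, so its stalk at $0$ is not $H^*(U,\Qbarl)$. Apply Verdier duality to the $!$-statement, using $D\SL\cong\SL^{-1}(2d)[4d]$ on $U\times\dualUni$. This gives $\pi_{\dualUni *}\SL\cong\delta_0(-d)[-2d]$, the same as $\pi_{\dualUni !}\SL$. With your version, the two kernels differ by $(-d)[-2d]$, so no isomorphism compatible with forget-supports could exist.

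\emph{The 2-out-of-3 argument is circular.} More seriously, $\text{FT}_*\circ\text{FT}_!$ cannot be computed by \quotationMark{the same orthogonality argument}. After proper base change and the projection formula, you face a $*$-pushforward along $\dualUni$ applied after a $!$-pushforward along $U$, on $U\times\dualUni\times U$. To integrate out $\dualUni$ first, you must interchange these two pushforwards. No base change theorem provides this, and the interchange is a statement of the same nature as (1). Only $\text{FT}_!\circ\text{FT}_!$ and its Verdier dual $\text{FT}_*\circ\text{FT}_*$ reduce formally to kernel computations.

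\emph{The fallback is not a reduction either.} A connected unipotent group need not be isogenous to a product of $\mathbb{G}_a$'s. For example, every homomorphism $W_2\to\mathbb{G}_a^2$ kills the one-dimensional subgroup $pW_2$. Laumon's cleanness argument for the bilinear Artin--Schreier kernel on $\mathbb{A}^n\times\mathbb{A}^n$ says nothing directly about the Artin--Schreier--Witt kernel on $W_2\times\widehat{W_2}$. Your sketch gives no mechanism for passing through non-split extensions. This is precisely the input the paper takes from Saibi.

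\emph{Item (4).} Independently of (1), your bounds are on the wrong side. For $K$ perverse, it is $\pi_U^*K[d]\otimes\SL$ that is perverse, so $\pi_U^*K\otimes\SL$ sits in perverse degree $d$. The fibre-dimension bounds then give only $\text{FT}_!(K)\in{}^pD^{\le d}$ and $\text{FT}_*(K)\in{}^pD^{\ge -d}$, which is useless here. What you need is Artin vanishing for the affine projection $\pi_{\dualUni}$. It gives $\text{FT}_!(K)\in{}^pD^{\ge 0}$ and $\text{FT}_*(K)\in{}^pD^{\le 0}$, which together with (1) yield perversity.

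\emph{Item (3).} Your computation integrates out $\dualUni$, so it needs orthogonality in the other variable. You need $\SL|_{\{x\}\times\dualUni}$ to be a non-trivial character sheaf for $x\neq 0$. You also need the identification of the dual of $\dualUni$ with $U$ to be compatible with $\SL$. This is the perfectness of Serre duality. Lemma \ref{LEM_UnipotentDual} does not supply it, since it only describes the restrictions of $\SL$ to $U\times\{\psi\}$.
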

	The Fourier transform is functorial:
	\begin{proposition}\label{PROP_ftfuncgroup}
		Let $U$ and $U'$ be unipotent groups of dimension $d$ and $d'$ respectively. Let $\pi\colon U \rightarrow U'$ be a group morphism. We consider $K \in \derCat{c}{U\times X}{\Qbarl}$.  We have
			\begin{align*}
			\text{FT}(\pi_!(K)) &= \widehat{\pi}^*(\text{FT}(K))[d - d']\\
			\text{FT}(\pi_*(K)) &= \widehat{\pi}^!(\text{FT}(K))(d' - d)[d' - d]
			\end{align*}
	\end{proposition}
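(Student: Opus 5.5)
The plan is to prove the $\pi_!$ formula by base change and the projection formula, and then deduce the $\pi_*$ formula from it by duality. I also flag below that my shift bookkeeping gives $[d'-d]$ rather than the stated $[d-d']$, and I have not resolved this discrepancy.

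\textbf{Setup.} Write $\SL$ and $\SL'$ for the universal sheaves on $U\times\widehat{U}$ and $U'\times\widehat{U'}$ from Lemma \ref{LEM_UnipotentDual}. The dual morphism $\widehat{\pi}\colon \widehat{U'}\rightarrow\widehat{U}$ is characterised on points by $\widehat{\pi}(\psi)=\psi\circ\pi$. The first step is the compatibility
\[
(\pi\times \text{id}_{\widehat{U'}})^*\SL' \cong (\text{id}_U\times\widehat{\pi})^*\SL
\]
on $U\times\widehat{U'}$. I would check this on fibres over each $\psi\in\widehat{U'}$, using $\pi^*\SL_\psi=\SL_{\psi\circ\pi}$ (Proposition \ref{PROP_CharFunc}), and globalise via the functoriality of the Ext-description of $U^*$ in the definition of the dual group.

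\textbf{The $\pi_!$ formula.} I would proceed in three moves.
\begin{enumerate}
\item Apply proper base change along $\pi\times\text{id}\colon U\times\widehat{U'}\times X\rightarrow U'\times\widehat{U'}\times X$, then the projection formula together with the compatibility above. This gives
\[
\pi_{U'}^*(\pi_!K)\otimes\SL'\cong(\pi\times\text{id})_!\bigl(\pi_U^*K\otimes(\text{id}\times\widehat{\pi})^*\SL\bigr).
\]
\item Push forward to $\widehat{U'}\times X$.
\item Apply proper base change once more, now for the Cartesian square with top arrow $U\times\widehat{U'}\rightarrow U\times\widehat{U}$ lying over $\widehat{\pi}$. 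This identifies the pushforward with $\widehat{\pi}^*\pi_{\widehat{U}!}(\pi_U^*K\otimes\SL)$.
\end{enumerate}
The only remaining issue is the shift.

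\textbf{The shift: main obstacle.} By definition $\text{FT}(\pi_!K)$ carries the shift $[d']$, while $\text{FT}(K)$ carries $[d]$. Taken literally, this bookkeeping yields $\widehat{\pi}^*\text{FT}(K)[d'-d]$. A sanity check with $U'=0$ points the same way: there $\widehat{\pi}$ is the inclusion of the origin, and the stalk of $\text{FT}(K)$ at $0$ is $R\Gamma_c(U,K)[d]$. So matching the stated shift $[d-d']$ is exactly the step I expect to be delicate. I would pin down the normalisation of $\SL$ and of the Fourier transform used in \cite{SaibiFourier}, in particular whether the kernel there is normalised with $[d]$ on the source or on $\widehat{U'}$, and recompute the shift in that normalisation. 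Until that check is done, the derivation supports $[d'-d]$ rather than $[d-d']$.

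\textbf{The $\pi_*$ formula.} I would obtain this by duality rather than recomputing. Use $D\pi_!=\pi_*D$, $D\widehat{\pi}^*=\widehat{\pi}^!D$, and Theorem \ref{THM_FTBasic}(2), which gives $\text{inv}^*D\,\text{FT}=\text{FT}\,\text{inv}^*D\,(d)$ on both $U$ and $U'$. Applying $D$ to the $\pi_!$ formula for $\text{inv}^*DK$, the shifts invert and the Tate twists $(d)$ and $(d')$ combine to $(d'-d)$. Since $\text{inv}$ commutes with $\pi$ and $\widehat{\pi}$, this gives the second displayed identity, with the same caveat on the shift convention.
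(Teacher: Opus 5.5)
The paper states this proposition without proof (it is quoted as a standard property, in the setting of \cite{SaibiFourier}). Your base change plus projection formula argument for the first line is the standard one and is correct. However, you should not leave the shift open.

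The normalisation is fixed by the paper's own definition $\text{FT}(K)=\pi_{\widehat{U}!}(\pi_U^*K\otimes\SL[d])$ with $d=\dim U$. Moreover, any normalisation making $\text{FT}$ $t$-exact (Theorem \ref{THM_FTBasic}(4)) must shift by $[\dim U]$, so no convention can turn $[d'-d]$ into $[d-d']$. Your test case $U'=0$ is decisive, not merely suggestive: the printed formula would give $H^*_c(U,K)\cong H^*_c(U,K)[2d]$. The rest of the paper agrees with you. Corollary \ref{COR_ftconv} comes from the multiplication $U\times U\rightarrow U$ (source of dimension $2d$, target of dimension $d$) and carries the shift $[-d]$. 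Lemma \ref{LEM_fmstalks} gives $i_0^*\text{FT}(K)\cong H^*_c(U,K)[d]$. So the first line of the statement has $d$ and $d'$ interchanged, and what your argument proves is $\text{FT}(\pi_!K)\cong\widehat{\pi}^*\text{FT}(K)[d'-d]$.

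The genuine error is in your last step. The Tate twist does not depend on any shift convention, and it does not come out as $(d'-d)$. Write the duality of Theorem \ref{THM_FTBasic}(2) as $D\,\text{FT}(K)\cong\text{inv}^*\text{FT}(DK)(d)$ for $K$ on $U$, and likewise with $(d')$ on $U'$ (where exactly $\text{inv}^*$ sits is harmless, as you note). Then one gets
\[ \text{FT}(\pi_*K)\cong\text{inv}^*D\,\text{FT}(\pi_!DK)(-d')\cong\text{inv}^*\widehat{\pi}^!D\,\text{FT}(DK)(-d')[d-d']\cong\widehat{\pi}^!\text{FT}(K)(d-d')[d-d']. \]
Both the twist and the shift are the negatives of the printed ones, which is again the swap $d\leftrightarrow d'$.

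For $U'=0$ this reads $H^*(U,K)\cong i_0^!\text{FT}(K)(d)[d]$. That agrees with Proposition \ref{PROP_GysinUnipotent} and Lemma \ref{LEM_fmstalks}, which give $i_0^!\text{FT}(K)\cong H^*(U,K)(-d)[-d]$. The printed $(d'-d)[d'-d]$ would instead give $H^*(U,K)(-2d)[-2d]$. The second line of Corollary \ref{COR_ftconv}, with its $(d)[d]$, is likewise consistent only with the corrected form.

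So your claim that duality recovers the second displayed identity is a bookkeeping slip, not a convention issue. The correct conclusion of your otherwise sound argument is the proposition with $d$ and $d'$ exchanged in both lines.
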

	If we apply this theorem to the multiplication map, then we obtain the following corollary (see Section \ref{SUBSEC_Convolution})
	\begin{corollary}\label{COR_ftconv}
		Let $K, L \in \derCat{c}{U}{\Qbarl}.$ We have isomorphisms 
		\begin{align*}
			\text{FT}(K*_!L) &= \text{FT}(K)\otimes\text{FT}(L)[-d] \\
			\text{FT}(K*_*L) &= \text{Hom}(D(\text{FT}(K)), \text{FT}(L))(d)[d]
		\end{align*}
	\end{corollary}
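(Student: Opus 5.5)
We deduce both isomorphisms from Proposition \ref{PROP_ftfuncgroup} applied to the multiplication map $m\colon U\times U \rightarrow U$, together with a Künneth formula for the Fourier transform on the product group $U\times U$. Write $K\boxtimes L$ for the exterior product on $U\times U$, so that by definition $K*_!L = m_!(K\boxtimes L)$ and $K*_*L = m_*(K\boxtimes L)$. Under the canonical identification $\widehat{U\times U} = \widehat U\times\widehat U$, the dual morphism $\widehat m\colon \widehat U\rightarrow \widehat U\times\widehat U$ is the diagonal $\Delta$. This holds because a character $\psi$ pulls back along $m$ to $(\psi,\psi)$, i.e. $m^*\SL_\psi = \SL_\psi\boxtimes\SL_\psi$; at the level of the universal sheaf $\SL$ it is the identity $(m\times \mathrm{id})^*\SL = \pi_{13}^*\SL\otimes\pi_{23}^*\SL$, which comes from the bilinearity of the pairing defining $U^*$.

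The first step is the Künneth formula
\[
\text{FT}_{U\times U}(K\boxtimes L) = \text{FT}(K)\boxtimes\text{FT}(L).
\]
The universal sheaf on $(U\times U)\times(\widehat U\times\widehat U)$ is the exterior product of the universal sheaves for each factor. The shift $[2d]$ splits as $[d][d]$, so the formula follows from the Künneth formula for $!$-pushforward.

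For the first isomorphism, apply Proposition \ref{PROP_ftfuncgroup} with $\pi=m$. Here the source has dimension $2d$ and the target has dimension $d$, so
\[
\text{FT}(K*_!L) = \Delta^*(\text{FT}(K)\boxtimes\text{FT}(L))[2d-d] = \text{FT}(K)\otimes\text{FT}(L)[d].
\]
This has shift $[d]$, not the stated $[-d]$. The shift depends on the normalisation convention in Proposition \ref{PROP_ftfuncgroup}; with the roles of $d$ and $d'$ as written there, the exponent is $d-d' = 2d-d = d$. I would recheck the sign carefully against a rank-one computation. Take $K=L=\delta_0$: then $\text{FT}(\delta_0)=\Qbarl[d]$, and $\delta_0*\delta_0=\delta_0$, so we need $\Qbarl[d] = \Qbarl[d]\otimes\Qbarl[d][s]$, which forces $s=-d$. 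This test case pins down the correct shift, and I expect the normalisation in Proposition \ref{PROP_ftfuncgroup} to be read accordingly. So the bookkeeping of shifts, more than any conceptual point, is the main thing to get right.

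For the second isomorphism, use the $*$-version of Proposition \ref{PROP_ftfuncgroup}:
\[
\text{FT}(K*_*L) = \Delta^!(\text{FT}(K)\boxtimes\text{FT}(L))
\]
up to the appropriate Tate twist and shift. Then use the standard identity $\Delta^!(A\boxtimes B) = \mathcal{H}om(D A, B)$, which follows from $D(A\boxtimes B)=DA\boxtimes DB$ together with $\Delta^!=D\Delta^*D$ and $D(DA\otimes DB)=\mathcal{H}om(DA, B)$. This gives
\[
\text{FT}(K*_*L) = \text{Hom}(D(\text{FT}(K)),\text{FT}(L))
\]
twisted by $(d)[d]$. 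The twist and shift are again fixed by the $\delta_0$ test case combined with Theorem \ref{THM_FTBasic}(2). The main obstacle is identifying $\widehat m$ with the diagonal compatibly with the universal sheaf from Lemma \ref{LEM_UnipotentDual}. For this I would check the identity fiberwise at closed points using Theorem \ref{THM_FTBasic}(5) and Lemma \ref{LEM_UnipotentDual}, and then conclude, since both sides are lisse rank-one sheaves agreeing on all fibers compatibly with the universal construction.
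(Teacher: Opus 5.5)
Your proof is the paper's own argument. The paper simply applies Proposition \ref{PROP_ftfuncgroup} to the multiplication map $m$, with $\widehat{m}$ the diagonal and $\Delta^!(A\boxtimes B)=\mathcal{H}om(DA,B)$ for the $*$-convolution. Your diagnosis of the shift is also right: as printed, Proposition \ref{PROP_ftfuncgroup} has its signs reversed (base change and the projection formula give $\text{FT}(\pi_!K)=\widehat{\pi}^*\text{FT}(K)[d'-d]$ and $\text{FT}(\pi_*K)=\widehat{\pi}^!\text{FT}(K)(d-d')[d-d']$, consistent with Lemma \ref{LEM_fmstalks} for $\pi\colon U\rightarrow 0$), and substituting $(2d,d)$ for $(d,d')$ gives exactly the stated $[-d]$ and $(d)[d]$, matching your $\delta_0$ normalisation.
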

	We also require the functoriality of the Fourier transform in the base space. 
	\begin{proposition}\label{PROP_ftfuncbase}
	Let $\pi\colon X \rightarrow Y$ be a morphism. Let $K \in \derCat{c}{U\times X}{\Qbarl}$ and $F$ a lisse sheaf on $X$. We have
		\begin{align*}
			\text{FT}(\pi_*(K)) &= \pi_*(\text{FT}(K)) \\
			\text{FT}(\pi_!(K)) &= \pi_!(\text{FT}(K)) \\
			\text{FT}(K\otimes\pi_{X}^*(F)) &=\text{FT}(K)\otimes \pi_{X}^*F.
		\end{align*}
	\end{proposition}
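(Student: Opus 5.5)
The statement to prove is Proposition \ref{PROP_ftfuncbase}: the Fourier transform commutes with $\pi_*$ and $\pi_!$ along a morphism $\pi\colon X\to Y$ of base schemes, and with twisting by a lisse sheaf pulled back from the base. The plan is to unwind the definition $\text{FT}(K)=\pi_{\dualUni!}(\pi_U^*K\otimes\SL[d])$ on $U\times\dualUni\times X$. In each case we commute the relevant base-change or projection functors past the kernel $\SL$, which is pulled back from $U\times\dualUni$ and so is independent of $X$ and $Y$.

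To fix notation, write $p_X\colon U\times\dualUni\times X\to U\times X$ and $q_X\colon U\times\dualUni\times X\to\dualUni\times X$ for the projections. Write $\SL_X$ for the pullback of $\SL$ to $U\times\dualUni\times X$, and similarly $p_Y,q_Y,\SL_Y$. The morphism $\pi$ induces $\pi$-maps $\mathrm{id}\times\pi$ on all three products, compatible with the projections, and $(\mathrm{id}\times\pi)^*\SL_Y=\SL_X$.

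For $\pi_!$ the argument is formal and runs through four steps.
\begin{enumerate}
\item Proper base change for the Cartesian square formed by $p_X,p_Y$ and the two maps $\mathrm{id}\times\pi$ gives $p_Y^*\pi_!K=\pi_!p_X^*K$.
\item The projection formula for the map $\pi$ on $U\times\dualUni\times X$ gives $\pi_!(p_X^*K)\otimes\SL_Y=\pi_!(p_X^*K\otimes\pi^*\SL_Y)=\pi_!(p_X^*K\otimes\SL_X)$.
\item Since $q_Y\circ\pi=\pi\circ q_X$, we get $q_{Y!}\pi_!=\pi_!q_{X!}$.
\item Combining the three steps and shifting by $[d]$ gives $\text{FT}(\pi_!K)=\pi_!\text{FT}(K)$.
\end{enumerate}

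The twist statement is similar. First, $p_X^*(K\otimes\pi_X^*F)=p_X^*K\otimes q_X^*\pi_X^*F$, because both composites of projections to $X$ agree. The projection formula for $q_{X!}$ then pulls $q_X^*\pi_X^*F$ out, giving $\text{FT}(K)\otimes\pi_X^*F$. This step needs no lisseness; lisseness of $F$ is only relevant for the $*$-variants.

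The $\pi_*$ statement is the main obstacle. Proper base change and the projection formula are not available for $*$-pushforwards in general. The plan is to use Theorem \ref{THM_FTBasic}(1) to write $\text{FT}=\text{FT}_*$, so that the transform becomes $q_{Y*}(p_Y^*\pi_*K\otimes\SL_Y)[d]$. Since $p_Y$ is smooth, smooth base change gives $p_Y^*\pi_*=\pi_*p_X^*$. To move $\otimes\SL_Y$ inside $\pi_*$, we use that $\SL_Y$ is lisse: tensoring with a lisse sheaf is $\mathcal{H}om(\SL_Y^\vee,-)$, and $\pi^*$ is left adjoint to $\pi_*$, so
\[
\mathcal{H}om(\SL_Y^\vee,\pi_*L)=\pi_*\mathcal{H}om(\pi^*\SL_Y^\vee,L)=\pi_*(L\otimes\SL_X).
\]
Finally $q_{Y*}\pi_*=\pi_*q_{X*}$, and applying $\text{FT}_*=\text{FT}_!$ once more on $X$ gives $\text{FT}(\pi_*K)=\pi_*\text{FT}(K)$.

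As an alternative for the $*$-case, we can deduce it from the $!$-case by duality, using Theorem \ref{THM_FTBasic}(2) together with $D\pi_!=\pi_*D$ and the fact that $\text{inv}^*$ commutes with base maps. The only care needed there is tracking Tate twists, which cancel.
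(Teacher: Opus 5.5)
Your proposal is correct and is the standard argument the paper has in mind. The paper only remarks that the twist formula follows from the projection formula and treats the two base-change formulas as routine, which is exactly what your proper base change and projection formula computation for $\pi_!$ and for the twist carries out. Your treatment of $\pi_*$ fills in the step the paper leaves implicit, and it is sound: you rewrite $\text{FT}$ as $\text{FT}_*$ via Theorem \ref{THM_FTBasic}(1), apply smooth base change along $U\times\dualUni\times Y\to U\times Y$, and use the projection formula for the lisse kernel $\SL_Y$ (or, alternatively, you argue by duality with the Tate twists cancelling).
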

	\begin{proof}
		The third formula can be verified from the projection formula using the standard argument for proving identities involving Fourier transforms.
	\end{proof}
	We describe the character variety of a unipotent group in terms of its dual.
	\begin{lemma}
		There is a bijection
		\[
		U(\overline{k}) = \cvar{U}.
		\]
		which only depends on our implicit choice of character $\psi_0$. This bijection is functorial in $U$. 
	\end{lemma}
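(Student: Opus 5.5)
The plan is to reduce everything to arithmetic characters and then pass to the limit over finite fields, using Lemma \ref{LEM_UnipotentDual} at each finite level. (I read the left-hand side of the statement as the $\overline{k}$-points of the dual group $\dualUni = U^*$, which is what Lemma \ref{LEM_UnipotentDual} and the preceding examples suggest.)

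\emph{Step 1: characters are arithmetic.} By Lemma \ref{LEM_charuniprop}, $\pi_1^{\text{char}}(U)$ is pro-$p$, so every $\chi\in\cvar{U}$ has finite order. By Lemma \ref{LEM_DescentCharAri}, $\chi$ therefore factors through some projection $\pi_1^{\text{char}}(U)\to U(k_n)$. Conversely, every arithmetic character of $U(k_n)$ composed with this projection lies in $\cvar{U}$. Hence
\[
\cvar{U}=\varinjlim_n \widehat{U}(k_n),
\]
where for $m\mid n$ the transition map $\widehat{U}(k_m)\to\widehat{U}(k_n)$ is $\psi\mapsto\psi\circ\text{Tr}_{k_n/k_m}$. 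These transition maps are injective because the trace is surjective (Lemma \ref{LEM_TraceIsSurjectiveConnected}). So $\cvar{U}$ is the increasing union of the groups $\widehat{U}(k_n)$.

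\emph{Step 2: identify each level with $U^*$.} Lemma \ref{LEM_UnipotentDual} gives isomorphisms $U^*(k_n)\cong\widehat{U}(k_n)$, functorial in $U$, determined by $\psi_0$. Since $U^*(\overline{k})=\varinjlim_n U^*(k_n)$ along the natural inclusions, it suffices to check one compatibility. For $m\mid n$, the square formed by the inclusion $U^*(k_m)\subseteq U^*(k_n)$ on one side and $\psi\mapsto\psi\circ\text{Tr}_{k_n/k_m}$ on the other must commute.

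\emph{Step 3: the compatibility, which I expect to be the main obstacle.} I would verify it on trace functions using the universal sheaf $\SL$ on $U\times U^*$. Take $\psi\in U^*(k_m)$ and regard it as a $k_n$-point. By Lemma \ref{LEM_UnipotentDual}, $\SL|_{U\times\{\psi\}}=\SL_\psi$. The character of $U(k_n)$ attached to the $k_n$-point $\psi$ is read off from the Frobenius traces of this sheaf at points $x\in U(k_n)$. By the trace formula in the definition of the arithmetic character sheaf, $t_{\SL_\psi}(x)=\psi(\text{Tr}_{k_n/k_m}(x))$. A character of the finite group $U(k_n)$ is determined by these values, so the character attached to $\psi$ at level $n$ is $\psi\circ\text{Tr}_{k_n/k_m}$, as required.

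The subtle point is to make sure the identification in Lemma \ref{LEM_UnipotentDual} at level $n$ is indeed normalized by the trace function of $\SL$ restricted to fibres over $k_n$-points, rather than by some other convention. This is where I would go back to Saïbi's construction if needed.

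\emph{Step 4: functoriality.} Passing to the colimit yields the bijection $U^*(\overline{k})\cong\cvar{U}$, which depends only on $\psi_0$. Functoriality in $U$ follows from the functoriality of the finite-level isomorphisms in Lemma \ref{LEM_UnipotentDual}, together with Proposition \ref{PROP_FundamentalGrpFunctorial} and Proposition \ref{PROP_CharFunc}.
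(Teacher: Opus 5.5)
Your proposal is correct and follows the same route as the paper. The paper simply notes that all characters of $U$ are arithmetic and concludes $\cvar{U}=\varinjlim_n \widehat{U}(k_n)=\widehat{U}(\overline{k})$, which also confirms that reading the left-hand side as the $\overline{k}$-points of the dual group is the intended meaning. Your Steps~3 and~4 make explicit what the paper leaves implicit: that the trace-pullback transition maps agree with the inclusions $U^*(k_m)\subseteq U^*(k_n)$ via the universal sheaf, and that the bijection is functorial in $U$.
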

	\begin{proof}
		We have proven that all characters are arithmetic. Thus 
		\[
		\cvar{U} = \varinjlim_{n} \widehat{U}(k_n) = \widehat{U}(\overline{k}).
		\]
	\end{proof}
	\subsubsection{Topology}
	\begin{definition}\label{DEF_charuniclosed}
		We say a set $Z \subseteq \cvar{U}$ is closed in the thin and the standard topology if and only if it is the set of closed points of a closed subset $Z' \subseteq \widehat{U}_{\overline{k}}$. We define the \textit{dimension} of a closed subset to be its Krull dimension.
	\end{definition}
	\begin{remark}
		The dimension of a closed subset $Z \subseteq \cvar{U}$ agrees with the Krull dimension of the associated closed subsets $Z' \subseteq\widehat{U}$.
	\end{remark}
	\begin{lemma}\label{LEM_ContinuousUnipotent}
		Let $\pi\colon U \rightarrow U'$ be a group morphism. The induced map
		\[
		\pi^*\colon \cvar{U'} \rightarrow \cvar{U}
		\]
		is a continuous. In particular, $\cvar{U}$ is a topological group. 
	\end{lemma}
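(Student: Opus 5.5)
The natural approach is to identify $\pi^*$ with a morphism of varieties and check that closed subsets pull back to closed subsets. By the lemma identifying $\cvar{U}$ with the $\overline{k}$-points of the dual group (via Lemma \ref{LEM_UnipotentDual}), a closed subset $Z\subseteq\cvar{U}$ is exactly the set of closed points of a closed subscheme $Z'\subseteq\widehat{U}_{\overline{k}}$ (Definition \ref{DEF_charuniclosed}). So it suffices to show two things. First, the group morphism $\pi\colon U\to U'$ induces a morphism of algebraic groups $\widehat{\pi}\colon \widehat{U'}\to\widehat{U}$. Second, on $\overline{k}$-points this morphism agrees with $\pi^*$ under the identification $\cvar{U}=\widehat{U}(\overline{k})$.

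For the first point, we use the contravariant functoriality of the Serre–Bégueri dual $U\mapsto U^*$. A morphism $\pi$ gives, for every perfect $k$-scheme $S$, a map $\mathrm{Ext}^1(U'\times S, p^{-n}\BZ_p/\BZ_p)\to \mathrm{Ext}^1(U\times S,p^{-n}\BZ_p/\BZ_p)$ by pullback. This map is natural in $S$ and compatible with the colimit in $n$, so Yoneda gives a homomorphism $\widehat{\pi}\colon \widehat{U'}\to\widehat{U}$ of algebraic groups over $k$ (at least on perfections, which have the same closed points and Zariski topology).

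For the second point, compatibility with $\pi^*$ follows from the functoriality asserted in Lemma \ref{LEM_UnipotentDual}: the isomorphism $U^*(k_n)=\widehat{U}(k_n)$ is functorial in $U$, and passing to the colimit over $n$ gives the claim on $\overline{k}$-points. This identification of the abstract pullback with the algebraic map $\widehat{\pi}$ is the only step needing care. I expect it to be the main obstacle if one insists on checking it directly with the Lang torsor: an extension of $U$ by $\BZ/p^n$ corresponds to the character obtained by pushing the Lang torsor, and pullback along $\pi$ commutes with this construction because $\pi$ commutes with Frobenius.

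With these in hand, if $Z=Z'(\overline{k})$ for a closed $Z'\subseteq\widehat{U}_{\overline{k}}$, then
\[
(\pi^*)^{-1}(Z)=\widehat{\pi}^{-1}(Z')(\overline{k}),
\]
and $\widehat{\pi}^{-1}(Z')$ is closed in $\widehat{U'}_{\overline{k}}$. Hence $\pi^*$ is continuous. The same topology is used for "thin" and "standard", so both cases are covered at once.

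For the topological group statement, apply the continuity just proved to group morphisms, as in Lemma \ref{LEM_ContinuousSABThin}. The diagonal $U\to U\times U$ induces multiplication $\cvar{U}\times\cvar{U}\to\cvar{U}$, using $\cvar{U\times U}=\cvar{U}\times\cvar{U}$ from Proposition \ref{PROP_CharKunneth} and $\widehat{U\times U}=\widehat{U}\times\widehat{U}$. The inversion of $U$ induces inversion on $\cvar{U}$. One caveat: the topology on $\cvar{U}\times\cvar{U}$ must be the Zariski topology of the product $\widehat{U}\times\widehat{U}$, not the product topology. This is consistent with the convention for $\cvar{S\times U}$ mentioned in the introduction, and is the standard sense in which an algebraic group's points form a "topological group".
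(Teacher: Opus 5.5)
Your proposal is correct and follows the paper's argument. The paper's one-line proof says exactly that $\pi^*$ is the map on closed points induced by the dual morphism $\widehat{U'}\to\widehat{U}$ of affine schemes, hence continuous for the Zariski topology; you have spelled out the Bégueri functoriality and the compatibility with Lemma \ref{LEM_UnipotentDual} that it leaves implicit.

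Your caveat on the topological-group clause is correct and worth keeping: one must give $\cvar{U}\times\cvar{U}$ the Zariski topology of $\widehat{U}\times\widehat{U}$, not the product topology. One small correction, though. What supports this reading is Definition \ref{DEF_charuniclosed} applied to the unipotent group $U\times U$, not the introduction's convention for $\cvar{S\times U}$. That convention is the genuine product topology.
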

	\begin{proof}
		The morphism $\pi^*$ can be described by a morphism of spectra induced by a morphism of rings. 
	\end{proof}
	\begin{lemma}
		Let $\pi\colon U \rightarrow U'$ be a surjective morphism. The map
		\[
		\pi^*\colon \cvar{U'} \rightarrow \cvar{U}
		\]
		is closed.
	\end{lemma}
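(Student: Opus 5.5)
The plan is to reduce everything to algebraic geometry on the dual groups, using the identification of $\cvar{U}$ with the closed points of $\widehat{U}_{\overline{k}}$ and of $\cvar{U'}$ with those of $\widehat{U'}_{\overline{k}}$. Under this identification the map $\pi^*\colon \cvar{U'} \rightarrow \cvar{U}$ is the map on closed points induced by the dual morphism $\widehat{\pi}\colon \widehat{U'} \rightarrow \widehat{U}$. This uses the functoriality of the bijection $\widehat{U}(\overline{k}) = \cvar{U}$ noted above, together with the functoriality of $U \mapsto U^*$. A closed subset $Z \subseteq \cvar{U'}$ is, by Definition \ref{DEF_charuniclosed}, the set of closed points of a closed subset $Z' \subseteq \widehat{U'}_{\overline{k}}$. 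We must therefore show that the set-theoretic image $\widehat{\pi}(Z')$ is closed in $\widehat{U}_{\overline{k}}$, and that its closed points are exactly $\pi^*(Z)$.

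The key input is that $\widehat{\pi}$ is a closed immersion when $\pi$ is surjective. The exact sequence $0 \rightarrow \ker\pi \rightarrow U \rightarrow U' \rightarrow 0$ dualizes, by the exactness of the duality $U \mapsto U^*$ (Bégueri's duality is exact on commutative unipotent perfect groups), to an exact sequence $0 \rightarrow \widehat{U'} \rightarrow \widehat{U} \rightarrow (\ker\pi)^* \rightarrow 0$. Here $\ker\pi$ may be disconnected, so its dual should be understood as a possibly disconnected group, or we first pass to $\ker\pi^0$ and handle the finite part separately. In either case $\widehat{\pi}$ is injective on $\overline{k}$-points, which is already visible from Proposition \ref{PROP_CharExact}. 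Moreover, $\widehat{\pi}$ identifies $\widehat{U'}$ with a closed subgroup of $\widehat{U}$, at least up to a purely inseparable isogeny, which does not affect the underlying topological spaces. A homomorphism of algebraic groups has closed image and is a finite morphism onto that image when it has finite kernel. Hence $\widehat{\pi}$ is finite, and in particular closed.

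Given this, $\widehat{\pi}(Z')$ is closed in $\widehat{U}_{\overline{k}}$. Since finite morphisms send closed points to closed points and are surjective onto their image, the closed points of $\widehat{\pi}(Z')$ are exactly the images of the closed points of $Z'$, namely $\pi^*(Z)$. This proves that $\pi^*(Z)$ is closed, and closedness in the thin topology is the same statement, since the two topologies coincide on $\cvar{U}$.

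I expect the main obstacle to be justifying that the bijection $\cvar{U} = \widehat{U}(\overline{k})$ is compatible with the dual morphism $\widehat{\pi}$ as a morphism of schemes, rather than merely of groups of points. Concretely, one needs that $\pi^*$ on arithmetic characters $\widehat{U'}(k_n) \rightarrow \widehat{U}(k_n)$ agrees with $\widehat{\pi}$ on $k_n$-points under Lemma \ref{LEM_UnipotentDual}. I would check this via the universal sheaf: $(\pi\times 1)^*\SL_{U'} \cong (1\times\widehat{\pi})^*\SL_U$ on $U\times\widehat{U'}$. Restricted to fibers, this gives the compatibility through the trace-function description of character sheaves. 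Failing that, one can avoid the issue by noting that any group homomorphism on $\overline{k}$-points coming from a morphism of algebraic groups has constructible, hence closed-subgroup-translate-stable, images. Even then, the functoriality of the Ext-description of $U^*$ is needed to identify $\pi^*$ with such a morphism.
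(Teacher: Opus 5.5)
Your strategy is sound: identify $\pi^*$ with the dual morphism $\widehat{\pi}$ on $\overline{k}$-points, then show $\widehat{\pi}$ is finite. One step as written is false, but it is easy to repair.

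\textbf{The error.} When $\ker\pi$ is disconnected, $\widehat{\pi}$ is neither injective on $\overline{k}$-points nor a closed immersion. Proposition \ref{PROP_CharExact} gives injectivity of $\pi^*$ only when the kernel is connected. For an isogeny with kernel $\Gamma$, Proposition \ref{PROP_CharDescent} says the opposite: $\pi^*$ is surjective with kernel $\widehat{\Gamma}(\overline{k})$. A concrete case is $x\mapsto x^p-x$ on $\mathbb{G}_a$. With $\psi_a(x)=\psi(\mathrm{Tr}(ax))$ for a fixed nontrivial $\psi$ on $\mathbb{F}_p$, one gets $\pi^*\psi_a=\psi_{a^{1/p}-a}$, whose kernel is $a\in\mathbb{F}_p$.

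\textbf{The repair.} Later you only use that $\ker\widehat{\pi}$ is finite, and that is true. Factor $\pi$ as $U\rightarrow U/(\ker\pi)^0\rightarrow U'$.
\begin{itemize}
\item The dual of the first map is injective, by Proposition \ref{PROP_CharExact}.
\item The dual of the second map, an isogeny, has finite kernel, by Proposition \ref{PROP_CharDescent}.
\end{itemize}
So $\widehat{\pi}$ has finite kernel and is therefore finite onto its closed image. Your closed-point argument then goes through verbatim. Replace ``injective / closed immersion'' by ``finite kernel'' and the proof is complete.

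\textbf{Comparison with the paper.} The paper factors $\pi$ into an isogeny and a map with connected fibres, and discusses only the isogeny. There $\widehat{\pi}$ is surjective, hence flat, hence open. The paper then invokes the claim that an open morphism of topological groups is closed. That principle fails in general (think of $\mathbb{R}^2\rightarrow\mathbb{R}$). It works here only because the kernel is finite: then $\widehat{\pi}^{-1}(\widehat{\pi}(Z))$ is a finite union of translates of $Z$, and openness makes $\widehat{\pi}$ a quotient map. The connected-fibre case, where $\widehat{\pi}$ is essentially a closed immersion, is left implicit. Your argument replaces flatness and openness with finiteness of a homomorphism with finite kernel. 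This gives closedness directly and handles both factors at once.

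The compatibility of $\pi^*$ with $\widehat{\pi}$, which you flag as the main obstacle, is assumed without comment in the paper too. It is the functoriality of $\cvar{U}=\widehat{U}(\overline{k})$ underlying Lemma \ref{LEM_ContinuousUnipotent}. So your check via the universal sheaf is extra care, not a missing ingredient.
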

	\begin{proof}
		We can factor the morphism into an isogeny and a surjective morphism with closed fibers. For an isogeny, the map $\pi^*\colon \widehat{U'} \rightarrow\widehat{U}$ is surjective, hence flat. Thus it is open. An open morphism of topological groups is closed. 
	\end{proof}
	\begin{lemma}\label{LEM_TopologyIsogenyDimensionUNI}
		Let $q\colon U \rightarrow U'$ be an isogeny and $\Delta\subseteq\cvar{U'}$ a closed subset. We have
		\[
		\text{dim}(q^*(\Delta)) = \text{dim}(\Delta).
		\]
	\end{lemma}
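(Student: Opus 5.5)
The plan is to realize $q^*$ as the set-level shadow of a finite surjective morphism of algebraic varieties over $\overline{k}$ and then use that such morphisms preserve dimension. By Definition \ref{DEF_charuniclosed}, closed subsets of $\cvar{U}$ and $\cvar{U'}$ are the closed points of closed subsets of $\widehat{U}_{\overline{k}}$ and $\widehat{U'}_{\overline{k}}$, and dimension means Krull dimension of those closed subschemes. By the functoriality of the dual group (Lemma \ref{LEM_UnipotentDual} and the bijection $\cvar{U} = \widehat{U}(\overline{k})$), $q^*$ is the map on $\overline{k}$-points induced by the dual morphism $\widehat{q}\colon \widehat{U'} \rightarrow \widehat{U}$. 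So the statement reduces to
\[
\text{dim}(\widehat{q}(Z')) = \text{dim}(Z')
\]
for every closed $Z' \subseteq \widehat{U'}_{\overline{k}}$.

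The key step is to show that $\widehat{q}$ is an isogeny, hence finite and surjective. First, $\widehat{q}$ is surjective on $\overline{k}$-points: by Proposition \ref{PROP_CharDescent}, applied to $0 \rightarrow \ker q \rightarrow U \rightarrow U' \rightarrow 0$, the map $q^*\colon \cvar{U'} \rightarrow \cvar{U}$ is surjective with finite kernel $\widehat{\ker q}(\overline{k})$. Thus $\widehat{q}$ is a homomorphism of connected unipotent groups of the same dimension whose kernel has finitely many $\overline{k}$-points. A homomorphism of algebraic groups with finite kernel is quasi-finite, and since it is surjective it is finite: a quasi-finite surjective homomorphism of groups is an isogeny, and an isogeny is finite because it is a torsor under a finite group scheme over its image. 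Alternatively, one can invoke Bégueri duality directly, since it sends isogenies to isogenies, being exact and involutive.

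With finiteness in hand, the conclusion is standard. A finite morphism is closed and has finite fibers, so for an irreducible closed $Z'$ the image $\widehat{q}(Z')$ is irreducible and closed. The restriction $Z' \rightarrow \widehat{q}(Z')$ is finite and surjective, hence $\text{dim}(\widehat{q}(Z')) = \text{dim}(Z')$, either by going-up for integral extensions or because finite morphisms of varieties have equal-dimensional source and image. Taking the maximum over the finitely many irreducible components gives the claim.

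The main obstacle is rigorously identifying $q^*$ with $\widehat{q}$ on points and checking finiteness rather than mere quasi-finiteness. If a direct argument is awkward, I would fall back on the fact that the quasi-finite homomorphism $\widehat{q}$ is a surjective map between varieties of equal dimension with finite fibers. Upper semicontinuity of fiber dimension then already yields $\text{dim}(\widehat{q}(Z')) = \text{dim}(Z')$ for closed $Z'$, using that the image is constructible and that its closure has the same dimension. This uses only Chevalley's theorem and does not need the finiteness argument.
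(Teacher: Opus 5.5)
Your proposal is correct and is essentially the paper's argument: the paper's proof is the single observation that the dual morphism $\widehat{U'} \rightarrow \widehat{U}$ inducing $q^*$ is finite, so it preserves dimension. You add the justification the paper omits: $q^*$ is surjective with finite kernel by Proposition \ref{PROP_CharDescent}, so the dual morphism is an isogeny and hence finite, and finite surjective morphisms preserve the dimension of closed subsets.
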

	\begin{proof}
		The morphism of schemes $q^*\colon \widehat{U'} \rightarrow\widehat{U}$ is finite.
	\end{proof}
	
	\subsection{Character variety}\label{charvar_SUBSEC} The goal of this section is to introduce the \textit{thin} and the \textit{standard} topology on the character variety. The standard topology behaves much like the topology of a Noetherian scheme, with the exception that there can be countably many irreducible components. The dimension can be defined in terms of the standard topology. 
	
	\subsubsection{Topology on Products} We first construct the standard and the thin topology for certain products. 
	\begin{definition}
		Let $G = S\times U$ be a connected commutative algebraic group over a finite field $k$. The \textit{thin} and the \textit{standard} topology in $\cvar{G}$ are given by the product topologies along the isomorphism
		\[
		\cvar{G} = \cvar{S}\times\cvar{U}.
		\]
		We define the \textit{dimension} of a closed subset $Z \subseteq \cvar{G}$ to be the Krull dimension in the standard topology. Unless stated otherwise, a closed subset always means a closed set in the standard topology.
	\end{definition}
	This definition has as an immediate advantage.
	\begin{lemma}
		Let $G = S\times U$ and $G' = S'\times U'$ be connected commutative algebraic groups over a finite field $k$. Let $\pi\colon G \rightarrow G'$ be a group morphism. The induced map
		\[
		\pi^*\colon \cvar{G'} \rightarrow \cvar{G} 
		\]
		is a continuous map in the thin and the standard topology.
	\end{lemma}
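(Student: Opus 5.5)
The plan is to use the Künneth decomposition and then check continuity one factor at a time. By Proposition \ref{PROP_CharKunneth} we have $\cvar{G} = \cvar{S}\times\cvar{U}$ and $\cvar{G'} = \cvar{S'}\times\cvar{U'}$, and both topologies on these groups are by definition the product topologies.

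\textbf{Step 1: decompose $\pi$ into blocks.} Write $\pi = \begin{pmatrix} \alpha & \beta \\ \gamma & \delta\end{pmatrix}$ with
\[
\alpha\colon S\rightarrow S', \quad \beta\colon U \rightarrow S', \quad \gamma\colon S\rightarrow U', \quad \delta\colon U \rightarrow U'.
\]
Two structural facts simplify this. First, any morphism from a unipotent group to a semiabelian variety is trivial, so $\beta = 0$. Second, the morphism $\gamma\colon S \rightarrow U'$ has image that is both unipotent and a quotient of a semiabelian variety. A quotient of a semiabelian variety is semiabelian, so the image is trivial and $\gamma = 0$. Hence $\pi = \alpha\times\delta$.

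\textbf{Step 2: reduce to the factors.} By functoriality (Proposition \ref{PROP_CharFunc}) and the compatibility of the Künneth isomorphism with products of morphisms, we get $\pi^* = \alpha^*\times\delta^*$ under the identifications above. A product of continuous maps is continuous for the product topologies. So it suffices to know two things:
\begin{itemize}
\item $\alpha^*$ is continuous for the thin topology (Lemma \ref{LEM_ContinuousSABThin}) and for the standard topology (Lemma \ref{LEM_ContinuousSABStandard});
\item $\delta^*$ is continuous, with the thin and standard topologies on $\cvar{U}$ coinciding (Lemma \ref{LEM_ContinuousUnipotent}).
\end{itemize}

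\textbf{Main obstacle.} The step needing the most care is the vanishing of the cross terms $\beta$ and $\gamma$, which rests on the structure theory of commutative algebraic groups.
- For $\beta$: the image of $U$ in $S'$ is a connected unipotent subgroup of a semiabelian variety, hence trivial.
- For $\gamma$: its image is a connected subgroup of $U'$ that is simultaneously a quotient of $S$, hence trivial.

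If one prefers to avoid these arguments, there is a fallback. Any morphism still induces $\pi^*$ on $\cvar{S}\times\cvar{U}$ via $\pi^*(\chi,\psi) = (\alpha^*\chi\cdot\ldots, \ldots)$, and continuity of each component map, together with the fact that $\cvar{G}$ is a topological group (multiplication is continuous by the diagonal argument of Lemma \ref{LEM_ContinuousSABThin}), gives continuity in either case.
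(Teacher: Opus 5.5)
Your route is the paper's: split $\pi$ into blocks, reduce to a diagonal morphism $\alpha\times\delta$, and apply Lemmas \ref{LEM_ContinuousSABThin}, \ref{LEM_ContinuousSABStandard} and \ref{LEM_ContinuousUnipotent} factor by factor. Your argument that $\gamma\colon S\to U'$ vanishes is fine: $S$ is smooth, and its image is a semiabelian quotient that is also unipotent.

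\textbf{The gap is the claim $\beta=0$.} In this paper an algebraic group need not be smooth. Theorem \ref{THM_AppendixStructureTheorem} only produces a smooth unipotent factor after an isogeny, and the proof of Proposition \ref{PROP_GysinUnipotent} explicitly passes from $U$ to $U^{\text{red}}$. For non-reduced $U$, a morphism $U\to S'$ can be nonzero. For example, take $U=\alpha_p$ (or $\mathbb{G}_a\times\alpha_p$) and $S'=E$ a supersingular elliptic curve, with $\beta$ the inclusion of $\alpha_p\cong\ker(F_E)$. Your justification ``a connected unipotent subgroup of a semiabelian variety is trivial'' holds only for smooth subgroups. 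This is why the paper's proof writes $\pi$ as a product only ``up to a radiciel morphism $U\to S'$'' and then invokes Proposition \ref{PROP_DescentFundamentalGroup}.

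\textbf{The repair is short.} The image of $\beta$ is a connected unipotent subgroup scheme of $S'$. Its reduced subgroup is smooth, connected and unipotent, hence trivial. So the image is infinitesimal, and $\beta$ induces the trivial map $U(k_n)\to S'(k_n)$ for every $n$. Since $\pi_1^{\text{char}}$ and its functoriality (Proposition \ref{PROP_FundamentalGrpFunctorial}) only see the groups of $k_n$-points, you get $\pi_*=\alpha_*\times\delta_*$ under Proposition \ref{PROP_KunnethFundamental}. Hence $\pi^*=\alpha^*\times\delta^*$, and your Step 2 goes through unchanged.

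\textbf{Drop the fallback paragraph.} It is incomplete as written. It would also need two things you do not have:
\begin{itemize}
\item continuity of $\beta^*\colon\cvar{S'}\to\cvar{U}$, which none of the cited lemmas provide;
\item continuity of multiplication from the \emph{product} topology, which the diagonal argument of Lemma \ref{LEM_ContinuousSABThin} does not give, since that argument uses the thin topology of $\cvar{S\times S}$ itself, which is finer than the product topology.
\end{itemize}
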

	\begin{proof}
		The group morphism $\pi$ can be written as a product of maps $S \rightarrow S'$ and $U \rightarrow U'$ up to a radiciel morphism $U\rightarrow S'$. By \ref{PROP_DescentFundamentalGroup}, we can assume the morphism is diagonal. Lemma \ref{LEM_ContinuousSABStandard}, Lemma \ref{LEM_ContinuousSABThin}, and Lemma \ref{LEM_ContinuousUnipotent} say the compoonents of this map are continuous.
	\end{proof}
	
	\subsubsection{Topology} We can describe the connected components of the standard topology.
	\begin{lemma}\label{LEM_TopologyIrred}
		Let $G = S\times U$. The character variety splits, in the standard topology, into a disjoint union
		\[
		\cvar{G} = \bigsqcup_{\chi \in \mathscr{C}^{\text{fin}(S)}} \chi\cdot (\mathscr{C}^\ell(S)\times\cvar{U}).
		\]
		The product $\mathscr{C}^\ell(S)\times\cvar{U}$ is a Noetherian topological space. A closed subset $\Delta \subseteq \cvar{G}$ is irreducible if and only if there is a character $\chi \in \mathscr{C}^f(S)$, a closed, irreducible subset $\Delta_S \subseteq \mathscr{C}^\ell(S)$ and an irreducible subset $Z_U \subseteq \cvar{U}$ such that
		\[
		\Delta = \chi\cdot(\Delta_S\times Z_U).
		\]
		In particular, the dimension of $Z$ is
		\[
		\text{dim}(Z) = \text{dim}(\Delta_S)  + \text{dim}(Z_U).
		\]
	\end{lemma}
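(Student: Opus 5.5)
The plan is to reduce everything to the description of the two factor topologies and then use the elementary topology of products. First, by Proposition \ref{PROP_CharKunneth} we have $\cvar{G} = \cvar{S}\times\cvar{U}$, and by construction the standard topology on $\cvar{S}$ is the coproduct topology on
\[
\cvar{S} = \bigsqcup_{\chi\in\mathscr{C}^{\text{f}}(S)} \chi\cdot\mathscr{C}^\ell(S).
\]
Taking the product with $\cvar{U}$ commutes with coproducts of topological spaces, so $\cvar{G}$ is the disjoint union of the open and closed pieces $\chi\cdot(\mathscr{C}^\ell(S)\times\cvar{U})$. Translation by $\chi$ is a homeomorphism, since $\cvar{G}$ is a topological group by the continuity lemmas for products. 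This reduces everything to the single piece $\mathscr{C}^\ell(S)\times\cvar{U}$.

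Next I will show that this piece is Noetherian. The factor $\mathscr{C}^\ell(S)$ carries the Zariski topology of the closed points of $\text{Spec}(R_S)$, where $R_S$ is the Noetherian ring $\BZ_\ell[[\BZ_\ell^{2d_a+d_t}]]\otimes\Qbarl$ (Lemma \ref{LEM_ClosedDimensionTAC}). The factor $\cvar{U}$ carries the Zariski topology on the closed points of the variety $\widehat{U}_{\overline{k}}$. I will not treat the product topology naively, since Zariski topologies of products are not product topologies. Instead I will read the definition as saying that closed subsets of the piece are closed subsets of the max-spectrum of $R_S\otimes_{\Qbarl}\mathcal{O}(\widehat{U}_{\overline{k}})$, or of the corresponding fibre product of schemes. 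This is the analogue of the Zariski topology on $\text{Spec}(R_S)\times\widehat{U}_{\overline{k}}$ restricted to pairs of closed points. The ring is Noetherian by Hilbert's basis theorem applied to a finitely generated algebra over $R_S$, and $\widehat{U}_{\overline{k}}$ is affine. So the space is Noetherian, and irreducible closed subsets correspond to prime ideals.

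For the characterization of irreducible closed sets, the "if" direction is standard. A product of irreducible closed subsets of two spaces, each described as the closed points of an integral scheme over the algebraically closed field $\Qbarl$ (respectively the separably closed field $\overline{k}$ for $\widehat{U}$), is irreducible. For the "only if" direction I want to use that $\widehat{U}_{\overline{k}}$ is a connected unipotent group, hence isomorphic as a variety to an affine space after perfection. This puts the situation in the setting where irreducible closed sets of the product topology in the sense above are exactly products $\Delta_S\times Z_U$.

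The main obstacle is precisely this step, because genuinely Zariski-irreducible subsets of a product need not be products (the diagonal, for instance). Here the two factors live over different coefficient worlds: $\mathscr{C}^\ell(S)$ over $\Qbarl$, and $\cvar{U}$ via $\overline{k}$-points, which is a countable discrete-type parametrization by finite-order characters. For this reason I expect the paper's intended meaning to be the literal product topology of the two Noetherian spaces. In that topology the closed sets are finite unions of products of closed sets, which is a Noetherian topology. An irreducible closed set $Z$ there must be a single product $Z_1\times Z_2$, because $Z$ is covered by finitely many products and one of them contains it. Both factors are then irreducible, since a decomposition of either factor splits the product. I will argue in this product sense, which makes the characterization formal.

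Finally, the dimension formula $\dim(\Delta_S\times Z_U)=\dim\Delta_S+\dim Z_U$ follows by concatenating chains. Given chains $\Delta_0\subsetneq\cdots\subsetneq\Delta_a=\Delta_S$ and $Z_0\subsetneq\cdots\subsetneq Z_b=Z_U$, the products $\Delta_i\times Z_0$ followed by $\Delta_S\times Z_j$ form a chain of length $a+b$, which gives the lower bound. For the upper bound, any strict chain of irreducible closed products must strictly increase in at least one coordinate at each step, so its length is at most $a+b$.
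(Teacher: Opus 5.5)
Your final argument, in the fourth and fifth paragraphs, is essentially the paper's. The paper's whole proof is the remark that a product of two irreducible Noetherian spaces is irreducible and Noetherian, taken in the literal product topology, which is how the standard topology on $\cvar{S\times U}$ is \emph{defined}. Your single-rectangle argument for the \emph{only if} direction and your chain-splitting proof of the dimension formula make explicit what the paper leaves implicit.

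There are two corrections.

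First, delete the detour in your second and third paragraphs rather than merely abandoning it. $R_S$ has characteristic $0$, while $\widehat{U}_{\overline{k}}$ lives over $\overline{k}$ of characteristic $p$, so $R_S\otimes_{\overline{\mathbb{Q}}_\ell}\mathcal{O}(\widehat{U}_{\overline{k}})$ does not exist. The \emph{only if} claimed there would also fail for a genuine Zariski topology on a product, as you note yourself. In the product topology, the \emph{if} direction is the elementary slice argument, not a statement about integral schemes.

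Second, the sentence ``the closed sets are finite unions of products of closed sets'' is not formal. A priori, closed sets of a product topology are arbitrary intersections of such unions, and reducing to finite unions is exactly the Noetherianity of the product that the paper quotes. To make this self-contained, work with Noetherian spaces $X,Y$, an open $W\subseteq X\times Y$, and slices $W_x=\{y : (x,y)\in W\}$.
\begin{itemize}
\item For a closed $F\subseteq X$, pick $x_0\in F$ whose slice $W_{x_0}$ is maximal among the slices over $F$.
\item The set $O=\{x : W_x\supseteq W_{x_0}\}$ is open, because $W_{x_0}$ is quasi-compact, and $O\times W_{x_0}\subseteq W$.
\item Every slice over $F\cap O$ equals $W_{x_0}$.
\item Noetherian induction on $F$, recursing on the proper closed subset $F\setminus O$, then writes $W$ as a finite union of open rectangles contained in $W$.
\end{itemize}
Taking complements gives your description of the closed sets. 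Since open rectangles are quasi-compact, every open set is quasi-compact, which is Noetherianity.
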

	\begin{proof}
		The product of two irreducible Noetherian topological spaces is irreducible and Noetherian.
	\end{proof}
	We can now describe all closed subsets as countable unions of irreducible subsets as in Lemma \ref{LEM_TopologyIrred} and then determine the dimension as a maximum over all irreducible subset. Note that the decomposition in this Lemma is unique, hence we obtain irreducible components of character sets. 
	\begin{proposition}
		Let $G = S\times U$. The character variety $\cvar{G}$ in the standard topology is a countable disjoint union of irreducible Noetherian topological spaces. In particular, each closed subset admits a unique irreducible decomposition. 
	\end{proposition}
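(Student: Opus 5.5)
The plan is to use Lemma \ref{LEM_TopologyIrred} as the structural input and reduce to statements about Noetherian spaces on each piece. By that lemma, $\cvar{G} = \cvar{S}\times\cvar{U}$ decomposes in the standard topology as the disjoint union over $\chi \in \mathscr{C}^{\text{f}}(S)$ of the translates $\chi\cdot(\mathscr{C}^\ell(S)\times\cvar{U})$. The index set $\mathscr{C}^{\text{f}}(S)$ is countable: these characters have finite order, so by Lemma \ref{LEM_DescentCharAri} they are arithmetic, and each $\widehat{S}(k_n)$ is finite. Each translate is homeomorphic to $\mathscr{C}^\ell(S)\times\cvar{U}$, because multiplication by a character is a homeomorphism; this uses Lemma \ref{LEM_ContinuousSABStandard} and the fact that $\cvar{G}$ is a topological group.

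Next I would show that $\mathscr{C}^\ell(S)\times\cvar{U}$ is irreducible and Noetherian.
\begin{enumerate}
\item $\mathscr{C}^\ell(S)$ is the set of closed points of $\mathrm{Spec}(R_S)$ with the induced Zariski topology. By Lemma \ref{LEM_ClosedDimensionTAC}, $R_{S,\text{int}}$ is a power series ring over $\BZ_\ell$, so $R_S$ is a Noetherian domain.
\item Hence $\mathrm{Spec}(R_S)$, and its subspace of closed points, is Noetherian and irreducible. Here I use that closed points are dense in closed subsets, since $R_S$ is Jacobson by the results recalled from \cite[3.1]{GabberLoeserTore}.
\item Likewise $\cvar{U}$ is the set of closed points of the connected smooth group $\widehat{U}_{\overline{k}}$, which is irreducible and Noetherian.
\item The product topology in the definition of the standard topology is the product of the two Noetherian topologies, so Noetherianity of the product is elementary: a descending chain of closed sets in a product is controlled by finitely many basic closed pieces.
\item Irreducibility of the product is the fact quoted in the proof of Lemma \ref{LEM_TopologyIrred}.
\end{enumerate}

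For the unique irreducible decomposition, take a closed $\Delta\subseteq\cvar{G}$. Its intersection $\Delta_\chi$ with each component $\chi\cdot(\mathscr{C}^\ell(S)\times\cvar{U})$ is closed in a Noetherian space. So $\Delta_\chi$ has a unique decomposition into finitely many irreducible components, irredundant and maximal. An irreducible closed subset of a disjoint union lies in a single piece, since the pieces are open and closed. The irreducible components of $\Delta$ are therefore exactly the union over $\chi$ of the components of the $\Delta_\chi$. This gives existence, and uniqueness follows piecewise from the Noetherian case. The decomposition is countable, with finitely many components per piece.

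The main obstacle I expect is justifying that the product topology on $\mathscr{C}^\ell(S)\times\cvar{U}$ is Noetherian and that the product is irreducible. The product topology is not the Zariski topology of a product scheme, so I would argue directly. Given a closed set $C$ in the product and points $(a,b)\notin C$, choose basic open neighbourhoods. Any closed set is an intersection of finite unions of products $A_i\times B_i$ of closed sets. Using Noetherianity of each factor, every closed set is a finite union $\bigcup A_i\times B_i$. A descending chain of such sets then stabilises by induction on the factors' Noetherian length. Irreducibility follows since $A\times B$ is irreducible when $A$ and $B$ are.
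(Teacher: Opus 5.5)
Your proposal is correct and follows the paper's route. Lemma \ref{LEM_TopologyIrred} splits $\cvar{G}$ into countably many clopen translates of $\mathscr{C}^\ell(S)\times\cvar{U}$. Each translate is irreducible and Noetherian as a product of two irreducible Noetherian spaces, and the irreducible decomposition of a closed set is then obtained piece by piece.

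Two small points in the extra detail you supply:

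\begin{enumerate}
\item Noetherianity of $R_S = R_{S,\text{int}}\otimes_{\BZ_\ell}\Qbarl$ is not a formal consequence of $R_{S,\text{int}}$ being a power series ring, since this base change is not of finite type. It is an input from \cite{GabberLoeserTore}, which the paper also uses implicitly.
\item In your product argument the order is reversed. You must first prove the descending chain condition for finite unions of closed rectangles $A\times B$, for instance via well-foundedness of pairs of irreducible closed sets. Only then can you conclude that an arbitrary intersection of such unions reduces to a finite one and is again a finite union.
\end{enumerate}
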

	
	For the thin topology, we obtain similar statements.
	\begin{proposition}\label{PROP_TopologyProductStructure}
		Let $G = S\times U$. The character variety $\cvar{G}$ is an irreducible Noetherian space in the thin topology. In particular, each thin closed subset $\Delta \subseteq \cvar{G}$ can be uniquely written as finite a union of sets of the form $\Delta_S\times Z_U$, where $\Delta_S \subseteq \cvar{S}$ is a tac and $Z \subseteq \cvar{U}$ is the set of closed points of an irreducible closed subset of $\widehat{U}$. 
	\end{proposition}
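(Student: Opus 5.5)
The plan is to assemble the statement from its two factors: the thin topology on $\cvar{S}$, already shown to be Noetherian with tacs as irreducible closed sets, and the closed subsets of $\cvar{U}$, which are the closed points of closed subsets of the Noetherian scheme $\widehat{U}_{\overline{k}}$.

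\emph{Irreducibility.} First I show that $\cvar{G}=\cvar{S}\times\cvar{U}$ is irreducible. The space $\cvar{S}$ is itself a tac (take the trivial quotient), so it is irreducible by Proposition \ref{PROP_TACSIrredThin}. The space $\cvar{U}$ is the set of closed points of the irreducible scheme $\widehat{U}_{\overline{k}}$, and the dual group of a connected unipotent group is connected, hence irreducible. Since closed points are dense in a variety, $\cvar{U}$ with its induced topology is irreducible. The standard fact that a product of irreducible spaces is irreducible in the product topology then gives irreducibility of $\cvar{G}$.

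\emph{Noetherianity.} Next I argue that a product of two Noetherian spaces is Noetherian in the product topology. By definition, the closed sets of the product topology are arbitrary intersections of finite unions of rectangles $A\times B$ with $A,B$ closed. I would do an induction on the pair of lengths of maximal chains of irreducible closed sets in the two factors, which is well-founded by Noetherianity. An alternative is to show directly that a descending chain of finite unions of rectangles stabilizes: refine each rectangle into irreducible pieces $\Delta_S\times Z_U$, and note that a strict inclusion $\Delta_S\times Z_U\subsetneq\Delta'_S\times Z'_U$ forces a strict drop in one factor. One also has to check that arbitrary intersections of finite unions of rectangles are again finite unions of rectangles. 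This follows from the descending chain condition on finite intersections together with the identity $(A\times B)\cap(A'\times B')=(A\cap A')\times(B\cap B')$.

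\emph{Decomposition.} Decompose each rectangle $A\times B$ using the irreducible decomposition of $A$ into tacs (the thin topology is Noetherian) and of $B$ into irreducible closed subsets. The products of the pieces are irreducible by the same product fact. Conversely, any irreducible closed set is contained in a single such product, and this forces it to equal one. Uniqueness is then the usual uniqueness of irredundant irreducible decompositions in a Noetherian space.

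The main obstacle I expect is verifying that the product topology's closed sets really are finite unions of rectangles, that is, the Noetherian induction for the product. The remaining steps are formal.
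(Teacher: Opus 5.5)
Your proposal is correct and follows the same route as the paper, whose proof consists only of citing that a product of two Noetherian (resp.\ irreducible) spaces is Noetherian (resp.\ irreducible); you supply proofs of these two facts and then read off the decomposition into irreducible rectangles $\Delta_S\times Z_U$. One small caution: Noetherianity alone does not justify your first suggested induction on lengths of chains of irreducible closed sets, since Noetherian spaces can have infinite Krull dimension. That induction is harmless here because both factors are finite-dimensional, and your alternative argument avoids the issue; that alternative uses descending chains of finite unions of irreducible rectangles and is finished off, e.g., by a multiset-ordering argument.
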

	\begin{proof} The product of two Noetherian spaces is Noetherian by CITE. It is irreducible by CITE.	
	\end{proof}
	\begin{remark}
		The Krull dimension of a countable union of irreducible subsets is the supremum over the irreducible components. 
	\end{remark}
	We can estimate the number of points of a thin subset based on its dimension.
	\begin{lemma}\label{LEM_ClosedEstimation1}
		Let $G = S\times U$ be a connected commutative algebraic group over a finite field $k$. Let $\Delta \subset \cvar{G}$ be a thin subset. Suppose $\Delta$ has codimension $d$. Then
		\[
		\frac{|\Delta\cap\widehat{G}(k_n)|}{|\widehat{G}(k_n)|} \ll_\Delta |k|^{-nd}.
		\]
	\end{lemma}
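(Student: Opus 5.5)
The estimate is additive over irreducible components, so I first reduce to a single component. By Proposition \ref{PROP_TopologyProductStructure}, the thin set $\Delta$ is a finite union of sets $\Delta_S\times Z_U$, where $\Delta_S\subseteq\cvar{S}$ is a tac and $Z_U$ is the set of closed points of an irreducible closed subset of $\widehat{U}_{\overline{k}}$. The number of pieces depends only on $\Delta$. Each piece has codimension at least $d$, since the codimension of $\Delta$ is the minimum over its components. It therefore suffices to prove the bound for one product $\Delta_S\times Z_U$ of codimension $\geq d$.

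Next, Proposition \ref{PROP_CharKunneth} gives $\widehat{G}(k_n)=\widehat{S}(k_n)\times\widehat{U}(k_n)$, so the ratio factors as
\[
\frac{|\Delta_S\cap\widehat{S}(k_n)|}{|\widehat{S}(k_n)|}\cdot\frac{|Z_U\cap\widehat{U}(k_n)|}{|\widehat{U}(k_n)|}.
\]
By Lemma \ref{LEM_TopologyIrred} the dimension is additive, so the codimension splits as $c_S+c_U$. I bound each factor separately.

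\emph{Unipotent factor.} Lemma \ref{LEM_UnipotentDual} gives $\widehat{U}(k_n)=\widehat{U}(k_n)$ as the $k_n$-points of the connected dual group, which is unipotent of dimension $\dim U$. Hence $|\widehat{U}(k_n)|=|k|^{n\dim U}$, because a connected unipotent group is an iterated extension of forms of $\BG_a$ (Lang). The subset $Z_U\cap\widehat{U}(k_n)$ consists of the $k_n$-rational points of the closed subvariety $Z'\subseteq\widehat{U}_{\overline{k}}$. Only its Frobenius-stable part matters: $Z'\cap \text{Fr}^n(Z')$ is defined over $k_n$ and has dimension $\leq\dim Z'$. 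The Lang–Weil trivial bound then gives $|Z'(k_n)|\ll_{Z'}|k|^{n\dim Z'}$, with the implied constant depending on the degree and number of components, which are uniform in $n$ since there are only finitely many Frobenius translates. This yields the factor $|k|^{-nc_U}$.

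\emph{Semiabelian factor.} This is the step I expect to be the main obstacle. Write $\Delta_S=\chi\cdot\pi^*\cvar{S'}$ with $\pi\colon S_{\overline{k}}\to S'$ having connected fibers. If $\Delta_S\cap\widehat{S}(k_n)$ is empty there is nothing to prove. Otherwise, pick $\chi_0$ in it and compare $\Delta_S\cap\text{Fr}^{n*}\Delta_S$, which by Proposition \ref{PROP_IntersectingTacs} is a tac through $\chi_0$. This tac is Frobenius$^n$-stable, so Theorem \ref{THM_DescendingTACS} descends it to $\chi'\cdot\pi^*\cvar{S''}$ with $S\to S''$ defined over $k_n$ and $\chi'$ arithmetic. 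Its dimension is at most that of $\Delta_S$. The $k_n$-points of this tac are then a coset of $\pi^*\widehat{S''}(k_n)$, so
\[
|\Delta_S\cap\widehat{S}(k_n)|\leq|S''(k_n)|.
\]

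Weil bounds give $|S(k_n)|\asymp|k|^{n\dim S}$ and $|S''(k_n)|\ll|k|^{n\dim S''}$, with constants depending only on the dimension and $H^1$ rank, hence uniform. The remaining issue is translating the codimension in the standard topology, $(2d_a+d_t)-(2d''_a+d''_t)$ by Lemma \ref{LEM_ClosedDimensionTAC}, into the point-count exponent $\dim S-\dim S''=(d_a+d_t)-(d''_a+d''_t)$. These differ by the abelian parts. I would handle this by bounding $\dim S-\dim S''$ from below by the appropriate fraction of the codimension. If that fails to give exactly $|k|^{-nd}$, the honest statement is the bound with exponent given by the group-dimension codimension, and I would check the normalization of "codimension" there. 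Finally, only finitely many distinct subgroups $S''$ arise, since the Frobenius orbit of $\pi$ is finite, so the constants are uniform in $n$. Multiplying the two factors and summing over the components finishes the proof.
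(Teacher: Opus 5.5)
Your strategy is the paper's: decompose $\Delta$ into finitely many products $\Delta_S\times Z_U$, apply trivial point counts to each factor, and use Lemma \ref{LEM_ClosedDimensionTAC} for the dimension of a tac. The step you flagged at the end is exactly where this argument breaks, and it cannot be repaired. With the paper's normalization (Krull dimension in the standard topology, $\dim\cvar{S}=2d_a+d_t$), the statement is false whenever the kernel of a tac's quotient has a nontrivial abelian part.

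Here is a counterexample. Take $S=A$ an abelian variety of dimension $g\geq 1$ and $\Delta=\{1\}\times\cvar{U}$, where $\{1\}$ is the tac attached to $A\to 0$. By Lemma \ref{LEM_ClosedDimensionTAC}, $\Delta$ has codimension $2g$. However,
\[
\frac{|\Delta\cap\widehat{G}(k_n)|}{|\widehat{G}(k_n)|}=\frac{1}{|A(k_n)|}\geq\bigl(|k|^{n/2}+1\bigr)^{-2g},
\]
which is not $\ll|k|^{-2gn}$. The paper's one-line proof silently identifies the Krull dimension $2d'_a+d'_t$ of a tac with the point-count exponent $d'_a+d'_t$. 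Your last paragraph correctly detects this mismatch. Your proposed fallback, bounding $\dim S-\dim S''$ below by a fraction of the codimension, can recover at most half of the claimed exponent.

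Here is what your argument does prove. Suppose $\Delta_S$ comes from $S\to S'$ whose kernel has abelian and toric dimensions $c_a,c_t$, and $Z_U$ has codimension $c_U$. Then the ratio for $\Delta_S\times Z_U$ is $\ll|k|^{-n(c_a+c_t+c_U)}$, while its codimension is $2c_a+c_t+c_U$.

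Consequently, in general you get $\ll_\Delta|k|^{-n\lceil d/2\rceil}$. You get the stated exponent $d$ when $c_a=0$ for every component, for example when $S$ is a torus. This still shows that proper thin subsets have character codimension $\geq 1$, which is all that Theorems \ref{THM_Equi1} and \ref{THM_Equi2} use. Nevertheless, the lemma and Proposition \ref{PROP_thinsubsetsestimate} need to be restated with this exponent.

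Two smaller repairs are needed in your argument:
\begin{itemize}
\item $Z'\cap\text{Fr}^n(Z')$ and $\Delta_S\cap\text{Fr}^{n*}\Delta_S$ are not $\text{Fr}^n$-stable in general. Intersect over the full $\text{Fr}^n$-orbit instead.
\item That orbit is finite. This holds because $Z'$ and $\ker\pi$ are defined over a finite extension, and every translate of $\Delta_S$ contains the $\text{Fr}^n$-fixed character $\chi_0$. So the translates are cosets through $\chi_0$ of finitely many subgroups, and Proposition \ref{PROP_IntersectingTacs} and Theorem \ref{THM_DescendingTACS} then apply as you intended.
\end{itemize}
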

	\begin{proof}
		A thin subset is a finite union of sets of the form $Z\times \Delta$ for a tac $\Delta\subseteq\mathscr{C}(S)$ and a closed subset $Z \subseteq \widehat{U}$. For $Z$, we can appeal to trivial bounds based on the dimension. For $\Delta$, we can again appeal to trivial bounds and the computation of the dimension given by Lemma \ref{LEM_ClosedDimensionTAC}.
	\end{proof}
	
	\subsubsection{Isogenies}  We record the following crucial corollary.
	\begin{corollary}\label{COR_TopologyIsogenyPre}
		Let $G = S\times U$ be a connected commutative algebraic group over a finite field $k$. For an isogeny $q\colon G\rightarrow G'$, the induced map
		\[
		q^*\colon \mathscr{C}(G') \rightarrow \mathscr{C}(G)
		\]
		is a closed map in the thin and the standard topology.
	\end{corollary}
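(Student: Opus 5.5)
The plan is to reduce to the two factors and use the closedness results already established there. First I would make precise what an isogeny $q\colon G \to G'$ from $G = S\times U$ looks like. The target $G'$ is again connected, and its semiabelian and unipotent parts are images of those of $G$. Over a perfect field $k$ a connected commutative group is, up to isogeny, a product of a semiabelian variety and a unipotent group. So I expect $G' \cong S'\times U'$ with $q$ the product of isogenies $q_S\colon S\to S'$ and $q_U\colon U\to U'$. The one possible correction is a radiciel (purely inseparable) map $U \to S'$, which is exactly the situation treated in the proof of continuity for products.

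To dispose of that correction I would use Proposition \ref{PROP_DescentFundamentalGroup} and Proposition \ref{PROP_CharDescent}. These show that composing with a radiciel isogeny, whose kernel is an infinitesimal finite group with trivial $\Gamma(\overline{k})$, induces a bijection on character groups. The same propositions show that a group isomorphism induces a homeomorphism in both topologies, using continuity in both directions. Hence I may assume $q = q_S\times q_U$ is diagonal. Under the Künneth identification of Proposition \ref{PROP_CharKunneth}, this gives $q^* = q_S^*\times q_U^*$.

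Next I would check each factor separately. For $S$, the proposition proved for a surjective $\pi\colon S\to S'$ shows that $q_S^*$ is closed in the thin topology. For the standard topology I would use the same componentwise argument as in Lemma \ref{LEM_IsogPreseversDimensionSAB}. There $q_S^*$ is a countable disjoint union of maps on spectra induced by finite ring maps of localized Iwasawa algebras, and finite morphisms are closed. A closed subset meets only countably many components, each mapped closedly, and the components of the target form a disjoint open cover, so the union of the images stays closed. For $U$, the lemma on surjective unipotent morphisms gives closedness of $q_U^*$, which covers both topologies since they coincide on $\cvar{U}$.

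The main obstacle is that a product of closed maps need not be closed. I would get around this by using the group structure. The map $q^*$ is a surjective continuous homomorphism of topological groups with finite kernel $\widehat{\Gamma}(\overline{k})$, by Proposition \ref{PROP_CharDescent}. For a closed $Z\subseteq \cvar{G'}$, I would write
\[
(q^*)^{-1}(q^*(Z)) = \bigcup_{\gamma\in \widehat{\Gamma}(\overline{k})} \gamma Z,
\]
a finite union of translates of $Z$, which is closed. It then remains to show that $q^*$ is a quotient map. This follows if each factor map is a closed surjection, hence a quotient map, and the product of the two is still a quotient map. For the thin topology I would check this directly on basic closed sets $\Delta_S\times Z_U$: their images $q_S^*(\Delta_S)\times q_U^*(Z_U)$ are closed, and finite unions of such sets are preserved by $q^*$, which gives the thin case immediately. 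For the standard topology I would argue component by component on the Noetherian pieces from Lemma \ref{LEM_TopologyIrred}, applying the same finite-morphism argument to the product spectrum $\text{Spec}(R_S)\times\widehat{U}$, on which $q^*$ is finite.
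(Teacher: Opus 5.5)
Your thin-topology argument is the paper's: every thin closed set is a finite union of rectangles $\Delta_S\times Z_U$ (Proposition~\ref{PROP_TopologyProductStructure}), and $q^*=q_S^*\times q_U^*$ sends each rectangle to a product of closed sets. That alone proves closedness, so the saturation/quotient-map detour is unnecessary there.

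The gap is in the standard topology. Your last step asks for a finite morphism on ``$\mathrm{Spec}(R_S)\times\widehat{U}$'', but no such scheme exists. $R_S$ is a $\overline{\mathbb{Q}}_\ell$-algebra and $\widehat{U}$ is a $k$-scheme, so their only common base is $\mathrm{Spec}\,\mathbb{Z}$, and the fibre product over it is empty. The standard topology on $\cvar{S\times U}$ is by definition the product topology of the two factors, so ``$q^*$ is finite'' has no meaning there. Deducing closedness of $q_S^*\times q_U^*$ from closedness of the factors is exactly the obstacle you flagged yourself. The quotient-map route does not repair this. You assert that a product of two quotient maps is again a quotient map, which fails in general for the same reason, so the identity $(q^*)^{-1}(q^*(Z))=\bigcup_\gamma \gamma Z$ only relocates the difficulty.

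The fix is to run your thin-case rectangle argument in the standard topology as well, which is what the paper does. By Lemma~\ref{LEM_TopologyIrred}, each component $C'=\chi'\cdot(\mathscr{C}^{\ell}(S')\times\cvar{U'})$ is Noetherian in the product topology. Hence a closed $Z\subseteq\cvar{G'}$ meets $C'$ in a finite union of rectangles $(\chi'\Delta)\times Z_U$ with both factors closed. Each image $q_S^*(\chi'\Delta)\times q_U^*(Z_U)$ is closed by your factorwise results.

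Since there are countably many components, you also need local finiteness. The kernel of $q^*$ is finite by Proposition~\ref{PROP_CharDescent}, so $\mathscr{C}^{\mathrm{f}}(S')\to\mathscr{C}^{\mathrm{f}}(S)$ has finite fibres. Therefore each (open) component of $\cvar{G}$ contains the images of only finitely many components of $\cvar{G'}$. It follows that $q^*(Z)$ meets each component in a finite union of closed sets, and so $q^*(Z)$ is closed. The same finiteness is the missing justification in your componentwise argument for $\cvar{S}$ alone: ``the components form a disjoint open cover'' gives closedness of a countable union only if each component meets finitely many of the pieces.

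A minor point: you should take $G'=S'\times U'$ as part of the hypothesis, since the topology on $\cvar{G'}$ has only been defined for products at this stage. You cannot deduce it from the structure theorem, which only gives a product up to isogeny. A quotient of $S\times U$ by a diagonally embedded finite subgroup need not split.
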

	\begin{proof}
		The isogeny is a product of isogenies. The irreducible subsets in the closed and the thin topology are therefore mapped to closed subsets. The irreducible decompositions constructed above imply that all closed sets are mapped to closed sets.
	\end{proof}
	\subsubsection{General case} We define the topology in the general case.
	\begin{definition}\label{DEF_topology}
		Let $G$ be a connected commutative algebraic group. There is an isogeny $q\colon G \rightarrow S\times U$. We define the standard topology and the thin topology on $\cvar{G}$ by declaring the map
		\[
		q^*\colon \cvar{G'} \rightarrow \cvar{G}
		\]
		to be a quotient map. The \textit{dimension} of a closed subset $\Delta \subseteq \cvar{G}$ is the Krull dimension in the standard topology.
	\end{definition}
	\begin{lemma}
		This definition does not depend on $q$.
	\end{lemma}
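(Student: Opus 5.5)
Given two isogenies $q_1\colon G \rightarrow S_1\times U_1$ and $q_2\colon G \rightarrow S_2\times U_2$, the plan is to compare both with a third isogeny that dominates them. Let $\Gamma := \ker(q_1) + \ker(q_2)$. This is a finite subgroup scheme of $G$, and the quotient $G' := G/\Gamma$ is again a connected commutative algebraic group. Both $q_1$ and $q_2$ then factor through isogenies $r_i\colon S_i\times U_i \rightarrow G'$.

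The first step is to check that $G'$ is isogenous-compatible with a product in the required sense, namely that $r_i$ is an isogeny between a product $S_i \times U_i$ and $G'$. This step needs care. If $G'$ happens to be a product $S'\times U'$, then we can apply Corollary \ref{COR_TopologyIsogenyPre} directly to $r_i$. If it is not, we compose further. By the structure theorem, Theorem \ref{THM_AppendixStructureTheorem}, there is an isogeny $q'\colon G' \rightarrow S'\times U'$. The isogenies $p_i := q'\circ r_i\colon S_i\times U_i \rightarrow S'\times U'$ are then isogenies between products, and $q'\circ q_1 = q'\circ q_2 =: q_3$ is an isogeny $G \rightarrow S'\times U'$.

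It therefore suffices to show that $q_1$ and $q_3$ induce the same topology, and symmetrically for $q_2$ and $q_3$. We have $q_3^* = q_1^*\circ p_1^*$. By Corollary \ref{COR_TopologyIsogenyPre}, the map $p_1^*$ is continuous and closed. By Proposition \ref{PROP_CharDescent}, it is surjective, because $p_1$ is an isogeny. A continuous, closed, surjective map is a quotient map, so $p_1^*$ is a quotient map in both the thin and the standard topology. A composition of quotient maps is a quotient map, and a set-theoretic surjection $q_1^*$ has a unique quotient topology. Hence the topology making $q_3^* = q_1^*\circ p_1^*$ a quotient map coincides with the one making $q_1^*$ a quotient map: a set $Z\subseteq\cvar{G}$ has closed preimage under $q_3^*$ if and only if $(q_1^*)^{-1}(Z)$ is closed, because $(q_3^*)^{-1}(Z) = (p_1^*)^{-1}((q_1^*)^{-1}(Z))$ and $p_1^*$ is a closed surjection.

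The main obstacle is the first step. We need to make sure that the factorization through $G'$ exists over $k$, and we need to apply Corollary \ref{COR_TopologyIsogenyPre} to $p_i$ even though $p_i$ need not be diagonal with respect to the product decompositions. For the non-diagonal case, I would argue as in the proof of continuity for products: a morphism $S\times U \rightarrow S'\times U'$ has no nontrivial component $S \rightarrow U'$, and the component $U\rightarrow S'$ is radiciel, so up to an isogeny it can be removed using Proposition \ref{PROP_DescentFundamentalGroup}. This reduces $p_i^*$ to a diagonal product of isogenies, for which closedness is Corollary \ref{COR_TopologyIsogenyPre}.

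Finally, the dimension is intrinsic once the standard topology is, so its independence of $q$ follows immediately.
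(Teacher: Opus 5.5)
Your proof is correct and follows the paper's own argument: you construct a third isogeny $p_1\circ q_1 = p_2\circ q_2$ to a product that factors through both, and use that each $p_i^*$ is a continuous closed surjection (continuity from the lemma on morphisms between products, closedness from Corollary \ref{COR_TopologyIsogenyPre}, surjectivity from Proposition \ref{PROP_CharDescent}), so the two quotient topologies coincide. One slip to fix: where you wrote $q'\circ q_1 = q'\circ q_2$, you mean $q'\circ r_1\circ q_1 = q'\circ r_2\circ q_2$.
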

	\begin{proof}
		Given two such isogenies, we can always find a third isogeny to a product dominated by the two. Thus it is sufficient if we prove that an isogeny between algebraic groups of the form $S\times U$ induces a quotient map. We have proven this in Corollary
	\end{proof}
	The definition implies the favourable properties we established for products:
	\begin{proposition}\label{PROP_stdtop}
		The topological space $\cvar{G}$ is a countable union of irreducible Noetherian spaces in the standard topology. 
	\end{proposition}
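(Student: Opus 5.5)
The plan is to transport the known structure for products along the defining quotient map. Fix an isogeny $q\colon G \rightarrow S\times U$ as in Definition \ref{DEF_topology}, with finite kernel $\Gamma$. By Proposition \ref{PROP_CharDescent}, the map $q^*\colon \cvar{S\times U} \rightarrow \cvar{G}$ is a surjective group morphism with finite kernel $\widehat{\Gamma}(\overline{k})$. By definition, it is a quotient map in the standard topology. Corollary \ref{COR_TopologyIsogenyPre} says that pulling back along isogenies of products is closed. I would use this to show that $q^*$ is also a closed map, so that the closed subsets of $\cvar{G}$ are exactly the images $q^*(Z)$ of closed $Z\subseteq\cvar{S\times U}$.

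To get closedness, I would compare $q$ with a product isogeny. The structure theorem (Theorem \ref{THM_AppendixStructureTheorem}) should give an isogeny $q'\colon S\times U \rightarrow G$ with $q\circ q'$ equal to an isogeny of $S\times U$ to itself, for example multiplication by a suitable integer. Alternatively, one can choose $q'$ so that the composite is a product isogeny. Then $q'^*\circ q^* = (q\circ q')^*$. Let $Z\subseteq \cvar{S\times U}$ be closed and saturated for $\ker q^*$, which is finite, so the saturation $Z\cdot\ker(q^*)$ of any closed set is again a finite union of translates and is closed. The set $q^*(Z)$ is closed in the quotient topology exactly when its preimage $Z\cdot \ker(q^*)$ is closed. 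That preimage is closed by the finite-translate argument, since translation is continuous by Lemma \ref{LEM_TopologyIrred} and the topological group structure. So $q^*$ is closed, and the closed sets of $\cvar{G}$ are images of closed sets.

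For the decomposition, Lemma \ref{LEM_TopologyIrred} writes
\[
\cvar{S\times U} = \bigsqcup_{\chi\in\mathscr{C}^{\text{f}}(S)} \chi\cdot(\mathscr{C}^\ell(S)\times\cvar{U}),
\]
which is a countable disjoint union of irreducible Noetherian open and closed pieces $C_\chi$. The image $q^*(C_\chi)$ of an irreducible set under a continuous map is irreducible. It is closed because $q^*$ is closed. It is Noetherian because a descending chain of closed subsets of $q^*(C_\chi)$ pulls back to a descending chain of closed subsets of the Noetherian set $(q^*)^{-1}(q^*(C_\chi))$. That preimage is a finite union of translates $\gamma C_\chi$, each homeomorphic to $C_\chi$. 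The chain stabilises there, and surjectivity of $q^*$ pushes the stabilisation down. There are countably many $\chi$, so $\cvar{G}=\bigcup_\chi q^*(C_\chi)$ is a countable union of irreducible Noetherian spaces. I would also note that each piece is open, since its complement is the image of the closed union of the other saturated pieces. Because the kernel is finite, distinct pieces either coincide or are disjoint, so the union can even be taken disjoint.

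I expect the main obstacle to be the closedness and saturation step. The statement ``$q^*$ is a closed quotient map'' is only announced later, as Proposition \ref{PROP_CharIsogenyClosed}. I need the finite-kernel translation argument to hold in the standard topology on the product, which requires translation by elements of $\widehat{\Gamma}(\overline{k})$ to be a homeomorphism. Lemma \ref{LEM_ContinuousSABStandard} and Lemma \ref{LEM_ContinuousUnipotent} give exactly this, since $\cvar{S\times U}$ is a topological group.
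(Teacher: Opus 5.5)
Your argument is correct. It is essentially the transport along $q^*$ that the paper has in mind: the paper states Proposition~\ref{PROP_stdtop} without proof, as a consequence of the product case and Definition~\ref{DEF_topology}.

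The one genuine difference is how you get closedness of $q^*$. The paper gets it from Corollary~\ref{COR_TopologyIsogenyPre} and Proposition~\ref{PROP_CharIsogenyClosed}. That route reduces an arbitrary isogeny ``up to isogeny'' to a product of isogenies of the factors and then maps irreducible closed subsets. You instead use only two facts: $\ker q^* = \widehat{\Gamma}(\overline{k})$ is finite, and translations are homeomorphisms of $\cvar{S\times U}$. Hence the saturation $Z\cdot\ker(q^*)$ of a closed set is a finite union of closed translates. Your route is more self-contained and needs no product decomposition of the isogeny. It also gives several things for free. Translations by $\ker q^*$ permute the pieces $C_\chi$, so the images $q^*(C_\chi)$ are open and pairwise equal or disjoint. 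They are Noetherian because their preimages are finite unions of pieces.

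Two clean-ups are worth making.
\begin{itemize}
\item Drop the paragraph constructing $q'$ with $q\circ q'$ equal to ``multiplication by a suitable integer''; it is never used. As stated it can also fail on the unipotent factor when $p$ divides the degree of $q$, since $[p]$ is the zero map on $\mathbb{G}_a$.
\item Cite continuity of translations from Lemma~\ref{LEM_ContinuousSABStandard} and Lemma~\ref{LEM_ContinuousUnipotent}, not from Lemma~\ref{LEM_TopologyIrred}. In the product topology, translation by $(\gamma_S,\gamma_U)$ is simply a product of homeomorphisms.
\end{itemize}
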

	\begin{proposition}\label{PROP_thintop}
		The topological space $\cvar{G}$ is an irreducible Noetherian space in the thin topology.
	\end{proposition}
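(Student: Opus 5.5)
The statement is Proposition \ref{PROP_thintop}: for a general $G$, the space $\cvar{G}$ is irreducible and Noetherian in the thin topology. The plan is to transfer the corresponding statement for products along the quotient map of Definition \ref{DEF_topology}.

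First, fix an isogeny $q\colon G\rightarrow S\times U$ from the structure theorem, Theorem \ref{THM_AppendixStructureTheorem}. By Definition \ref{DEF_topology}, the map $q^*\colon \cvar{S\times U}\rightarrow\cvar{G}$ is a quotient map in the thin topology. It is surjective by Proposition \ref{PROP_CharDescent}, since the kernel of $q$ is finite. By Proposition \ref{PROP_TopologyProductStructure}, the source $\cvar{S\times U}$ is an irreducible Noetherian space in the thin topology.

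Second, deduce irreducibility. Suppose $\cvar{G}=A\cup B$ with $A,B$ closed. Then $(q^*)^{-1}(A)$ and $(q^*)^{-1}(B)$ are closed, because $q^*$ is continuous, and they cover the source because $q^*$ is surjective. Irreducibility of the source gives, say, $(q^*)^{-1}(A)=\cvar{S\times U}$, and surjectivity then gives $A=\cvar{G}$. Non-emptiness is clear.

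Third, deduce the Noetherian property. Take a descending chain $Z_1\supseteq Z_2\supseteq\cdots$ of closed subsets of $\cvar{G}$. Their preimages form a descending chain of closed subsets of the Noetherian space $\cvar{S\times U}$, so the preimages stabilize. Since $q^*$ is surjective, $Z_i=q^*((q^*)^{-1}(Z_i))$, so the chain $(Z_i)$ stabilizes as well. This step is routine once the setup is in place.

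The only delicate point is the well-definedness of the topology itself, which the paper settles by the lemma showing the definition does not depend on $q$, together with Corollary \ref{COR_TopologyIsogenyPre}. Granting that lemma, the argument reduces entirely to the facts that $q^*$ is continuous and surjective and that the source is irreducible and Noetherian. The same argument, applied componentwise, also gives Proposition \ref{PROP_stdtop}.
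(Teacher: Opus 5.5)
Your proposal is correct and is the same argument the paper intends: the paper states Proposition \ref{PROP_thintop} without a separate proof, as a direct consequence of Definition \ref{DEF_topology} and Proposition \ref{PROP_TopologyProductStructure}. In other words, $\cvar{G}$ is the image of the irreducible Noetherian space $\cvar{S\times U}$ under the continuous surjection $q^*$, which is surjective by Proposition \ref{PROP_CharDescent}; you simply spell out the routine facts that continuous surjections preserve irreducibility and the Noetherian property.
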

	\begin{proposition}\label{PROP_CharIsogenyClosed}
		Let $\pi\colon G \rightarrow G'$ be a surjective morphism. The map
		\[
		\pi^*\colon \cvar{G'} \rightarrow \cvar{G}
		\]
		is closed in the thin and the standard topology.
	\end{proposition}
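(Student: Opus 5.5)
The plan is to reduce to the product case via the defining isogenies. First choose isogenies onto products. Choose an isogeny $q'\colon G'\rightarrow S'\times U'$, which exists by Theorem \ref{THM_AppendixStructureTheorem}. Then form the composite $q'\circ\pi\colon G\rightarrow S'\times U'$; this is again surjective. Since $q'^*$ is a quotient map (Definition \ref{DEF_topology}) and surjective (Proposition \ref{PROP_CharDescent}), a subset $\Delta\subseteq\cvar{G'}$ is closed if and only if $q'^{*-1}(\Delta)$ is closed, and $\Delta = q'^*(q'^{*-1}(\Delta))$. Hence $\pi^*(\Delta) = (q'\circ\pi)^*(q'^{*-1}(\Delta))$. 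It therefore suffices to treat the case where $G'=S'\times U'$ is a product.

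Next I reduce $G$ to a product. Choose an isogeny $q\colon G\rightarrow S\times U$ using the structure theorem, and try to factor through it. The kernel of $q'\circ\pi$ is not directly compatible with $q$, so instead I pass to the pushout. After enlarging $q$ (dominating by a further isogeny, as in the independence-of-$q$ lemma), I arrange a surjection $\rho\colon S\times U\rightarrow S''\times U''$ and an isogeny $S'\times U'\rightarrow S''\times U''$ compatible with $q'\circ\pi$. If needed, I first replace the source by a finite cover that maps to both. Then $\pi^*$ factors as a composition of three maps: the pullback along an isogeny between products, which is closed by Corollary \ref{COR_TopologyIsogenyPre}; the pullback $\rho^*$ along a surjection of products; and $q^*$, which is closed because it is a quotient map that is surjective and saturated. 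The saturation holds because, by Proposition \ref{PROP_CharDescent}, the fibres of $q^*$ are cosets of the finite group $\widehat{\Gamma}(\overline{k})$, and translation is a homeomorphism since the character group is a topological group.

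The key case is therefore a surjection $\rho\colon S\times U\rightarrow S''\times U''$ between products. Up to a radiciel cross term $U\rightarrow S''$ (which induces the trivial map on characters by Proposition \ref{PROP_DescentFundamentalGroup}, as in the continuity lemma), $\rho$ is diagonal, $\rho=\rho_S\times\rho_U$. For the thin topology, closed sets are finite unions of products $\Delta_S\times Z_U$ (Proposition \ref{PROP_TopologyProductStructure}), and $\rho^*(\Delta_S\times Z_U) = \rho_S^*(\Delta_S)\times\rho_U^*(Z_U)$. Each factor is closed by the semiabelian closedness proposition and the unipotent closedness lemma. For the standard topology, I apply the same argument to the irreducible pieces $\chi\cdot(\Delta_S\times Z_U)$ of Lemma \ref{LEM_TopologyIrred}. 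Here $\rho_S^*$ is a closed immersion componentwise by Lemma \ref{LEM_semiabclosedimmersion}, after splitting $\rho_S$ into a connected-fibre part and an isogeny.

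The main obstacle is the passage from irreducible pieces to arbitrary closed sets in the standard topology. A closed set is a possibly countable union of irreducible pieces, one set of pieces in each component $\chi\cdot(\mathscr{C}^\ell(S)\times\cvar{U})$. I must check that the images stay locally finite, meaning each target component meets only finitely many image pieces. For the connected-fibre part of $\rho_S$ this is automatic, since $\rho^*$ is injective and maps components into components. For the isogeny part, the map on components is finite-to-one because of the finite kernel in Proposition \ref{PROP_CharDescent}. Each target component therefore receives images of only finitely many source components, and hence of only finitely many irreducible pieces, so the union is closed.
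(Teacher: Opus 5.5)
Your proposal is correct and is essentially an expanded version of the paper's one-line proof (``up to isogeny, we can write the morphism as a product''). You reduce via isogenies to a diagonal surjection of products $S\times U$, argue factorwise, and check local finiteness across the countably many components of the standard topology more carefully than the paper does. One small correction: the target-side isogeny $r\colon S'\times U'\rightarrow S''\times U''$ is not literally a factor of $(q'\circ\pi)^*$. As in your first reduction, it enters through $(q'\circ\pi)^*(\Delta)=q^*\rho^*\bigl((r^*)^{-1}(\Delta)\bigr)$ for closed $\Delta\subseteq\cvar{S'\times U'}$, so what you need there is continuity and surjectivity of $r^*$, not closedness. Also, the product corner $S''\times U''$ can always be arranged by composing the pushout with a further isogeny from Theorem \ref{THM_AppendixStructureTheorem}.
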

	\begin{proof}
		Up to isogeny, we can write the morphism as a product.
	\end{proof}
	\begin{proposition}\label{PROP_IsogenyPreservesDimension}
		Let $q\colon G \rightarrow G'$ be an isogeny. The morphism
		\[
		q^*\colon \cvar{G'} \rightarrow \cvar{G}
		\]
		preserves dimension, i.e. we have 
		\[
		\text{dim}(q^*(\Delta)) = \text{dim}(\Delta)
		\]
		for all closed subsets $\Delta\subseteq \cvar{G'}$.
	\end{proposition}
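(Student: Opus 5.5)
The plan is to reduce to the product case, where the dimension statements are already available. By the structure theorem (Theorem \ref{THM_AppendixStructureTheorem}) we may choose isogenies $q_G\colon G \rightarrow S\times U$ and $q_{G'}\colon G' \rightarrow S'\times U'$. The topology and dimension on $\cvar{G}$ and $\cvar{G'}$ are defined through these (Definition \ref{DEF_topology}). After replacing $q_G$ by a suitable isogeny that factors through $q$, we may form a commutative square of isogenies
\[
\begin{tikzcd}
G \arrow[r, "q"] \arrow[d, "q_G"'] & G' \arrow[d, "q_{G'}"] \\
S\times U \arrow[r, "\tilde q"] & S'\times U'
\end{tikzcd}
\]
in which $\tilde q$ is again an isogeny. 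Concretely, we compose $q_G$ with a further isogeny of $S\times U$, for instance multiplication by a suitable integer, chosen so that $q_{G'}\circ q$ factors through it. Independence of the choice of $q$ (the lemma after Definition \ref{DEF_topology}) makes this replacement harmless. Each pullback map is surjective with finite kernel by Proposition \ref{PROP_CharDescent}, and closed in both topologies by Proposition \ref{PROP_CharIsogenyClosed}.

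Next we treat the product isogeny $\tilde q$. Using Proposition \ref{PROP_DescentFundamentalGroup} as in the continuity lemma for products, we may assume $\tilde q$ is diagonal, $\tilde q = q_S\times q_U$, up to passing through a further isogeny. The irreducible closed subsets of $\cvar{S'\times U'}$ have the form $\chi\cdot(\Delta_S\times Z_U)$ by Lemma \ref{LEM_TopologyIrred}, with dimension $\dim\Delta_S + \dim Z_U$. The map $\tilde q^*$ sends such a set to $q_S^*(\chi)\cdot(q_S^*\Delta_S\times q_U^*Z_U)$. Lemma \ref{LEM_IsogPreseversDimensionSAB} gives $\dim q_S^*\Delta_S=\dim\Delta_S$, and Lemma \ref{LEM_TopologyIsogenyDimensionUNI} gives the corresponding equality for $q_U$. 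Since the dimension of a closed set is the supremum over its irreducible components, and $\tilde q^*$ maps components onto finite unions of closed sets of the same dimension, we get $\dim\tilde q^*(\Delta)=\dim\Delta$ for the product isogeny.

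Finally, we transfer the result to $G$ and $G'$. For $\Delta\subseteq\cvar{G'}$ closed, the definition of dimension gives $\dim\Delta = \dim q_{G'}^{*-1}(\Delta)$ computed in $\cvar{S'\times U'}$. I expect this to be the main obstacle: we must verify that the Krull dimension of a closed set in a quotient topology by a finite-kernel surjection equals the Krull dimension of its preimage. My first attempt would be to use that the kernel is a finite group $F$ acting by translations, which are homeomorphisms. The preimage is then $F$-stable, its irreducible components are permuted by $F$, and chains of irreducible closed sets in $\Delta$ lift to chains in the preimage and conversely push forward, because images of irreducible closed sets are irreducible and closed by closedness of the quotient map. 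Granting this, commutativity of the square gives
\[
\dim q^*(\Delta) = \dim q_G^{*-1}(q^*\Delta) \geq \dim \tilde q^*(q_{G'}^{*-1}\Delta) = \dim q_{G'}^{*-1}\Delta = \dim\Delta .
\]
The two preimage sets actually coincide up to the finite-kernel ambiguity, so the inequality is in fact an equality, which proves the proposition.
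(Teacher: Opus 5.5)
Your proposal is correct and follows essentially the paper's route. You reduce to a diagonal isogeny $q_S\times q_U$ of products, compute dimensions componentwise with Lemma \ref{LEM_TopologyIrred}, and apply Lemma \ref{LEM_IsogPreseversDimensionSAB} and Lemma \ref{LEM_TopologyIsogenyDimensionUNI}; your finite-quotient comparison of Krull dimensions makes explicit the passage through Definition \ref{DEF_topology} that the paper hides in ``up to isogeny''.

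One slip: the square is obtained most simply by taking $q_G := q_{G'}\circ q$ and $\tilde q=\mathrm{id}$. Your construction does not work as stated: composing $q_G$ with a further isogeny only enlarges $\ker q_G$, which must lie in $\ker(q_{G'}\circ q)$, and multiplication by an integer divisible by $p$ is not an isogeny on $U$. With $q_G := q_{G'}\circ q$ the product step is in fact unnecessary, since $(q_G^*)^{-1}(q^*\Delta)$ is a finite union of translates of $(q_{G'}^*)^{-1}(\Delta)$.
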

	\begin{proof}
		We have
		\[
		\text{dim}(q^*(\Delta)) \leq \text{dim}(\Delta).
		\]
		Up to isogeny, we can write the morphism as a diagonal morphism of products of the form $S\times U$. We can write the dimension component wise by Lemma \ref{LEM_TopologyIrred}. Lemma \ref{LEM_IsogPreseversDimensionSAB} and Lemma \ref{LEM_TopologyIsogenyDimensionUNI} imply the required equalities.
	\end{proof}
	We prove quantitative bounds on the number of rational points in a thin subset based on its dimension.
	\begin{proposition} \label{PROP_thinsubsetsestimate}
		Let $G$ be a connected commutative algebraic group over a finite field $k$ and $\Delta$ a thin subset of codimension $d$. We have
		\[
		\frac{|\Delta\cap\widehat{G}(k_n)|}{|\widehat{G}(k_n)|} \ll_\Delta |k|^{-nd}.
		\]
	\end{proposition}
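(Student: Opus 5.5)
The plan is to reduce to the product case $S\times U$, where the estimate is Lemma \ref{LEM_ClosedEstimation1}, by means of an isogeny $q\colon G\rightarrow S\times U$, which exists by Theorem \ref{THM_AppendixStructureTheorem}. By Definition \ref{DEF_topology}, the map $q^*\colon \cvar{S\times U}\rightarrow\cvar{G}$ is a quotient map in the thin topology. By Proposition \ref{PROP_CharIsogenyClosed} it is closed, and by Proposition \ref{PROP_CharDescent} it is surjective. Hence $\Delta' := (q^*)^{-1}(\Delta)$ is a thin subset of $\cvar{S\times U}$ with $q^*(\Delta') = \Delta$. Proposition \ref{PROP_IsogenyPreservesDimension} gives $\dim\Delta' = \dim\Delta$. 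Since $\dim G = \dim(S\times U)$, the two ambient character varieties also have the same dimension, so $\Delta'$ has the same codimension $d$ as $\Delta$. Lemma \ref{LEM_ClosedEstimation1} then yields
\[
|\Delta'\cap \widehat{S\times U}(k_n)| \ll_{\Delta'} |k|^{-nd}\,|\widehat{S\times U}(k_n)|.
\]

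Next, compare the point counts on the two sides. The groups $G(k_n)$ and $(S\times U)(k_n)$ have the same order because isogenous groups over finite fields have equally many rational points; alternatively, apply Corollary \ref{COR_ExactSequencePoints} to $0\rightarrow\ker q\rightarrow G\rightarrow S\times U\rightarrow 0$ and observe that the kernel and the cokernel of $G(k_n)\rightarrow (S\times U)(k_n)$ are bounded by $|\ker q(\overline{k})|$. Either way, $|\widehat{G}(k_n)|$ and $|\widehat{S\times U}(k_n)|$ agree up to a constant depending only on $q$.

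It then suffices to bound $|\Delta\cap\widehat{G}(k_n)|$ by a constant times $|\Delta'\cap\widehat{S\times U}(k_{n'})|$ for some $n'$ comparable to $n$. A character $\chi\in\Delta\cap\widehat{G}(k_n)$ is Frobenius-fixed, so $\text{Fr}^{n*}\chi=\chi$. Choose a preimage $\chi'\in\Delta'$. Then $\text{Fr}^{n*}\chi'/\chi'$ lies in the finite kernel $\widehat{\ker q}(\overline{k})$ of $q^*$, whose order $N$ is independent of $n$. I will show that $\chi'$ is fixed by $\text{Fr}^{nN}$, or at least by $\text{Fr}^{nN'}$ for a fixed $N'$. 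The tool is the cocycle argument of Proposition \ref{PROP_GroupCohomologyCharacterGroup}: the vanishing of $H^1$ lets one adjust $\chi'$ within its fiber so that it descends to a bounded extension $k_{nN'}$. Since the fibers of $q^*$ have size $N$, the map $\chi'\mapsto\chi$ is at most $N$-to-one.

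This step is the main obstacle, because passing to $k_{nN'}$ inflates the count $|\widehat{S\times U}(k_{nN'})|$ to about $|k|^{nN'\dim G}$, which ruins the bound. To avoid this, I will use that $\Delta'$ is a finite union of pieces $\Delta_S\times Z_U$, as in Proposition \ref{PROP_TopologyProductStructure}, and count directly on the image side. By Theorem \ref{THM_DescendingTACS}, each Frobenius-stable piece is an arithmetic tac, say $\chi_0\cdot\pi^*\cvar{G/H}$, times the points of a variety. Its $k_n$-rational characters form a union of at most $N$ cosets of $\widehat{(G/H)}(k_n)$. That set has size $\asymp |k|^{n\dim(G/H)}$, plus a trivial Lang–Weil bound for the unipotent factor. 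Pieces that are not Frobenius-stable have their rational points inside the intersection of the piece with its Frobenius translate, which is a smaller thin set, and I will handle those by induction on dimension. Dividing by $|\widehat{G}(k_n)|\asymp |k|^{n\dim G}$ then gives the saving $|k|^{-nd}$.
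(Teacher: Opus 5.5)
\textbf{Verdict: there is a genuine gap.} Your first two paragraphs are fine. But the step that transfers the count from $\cvar{G}$ to $\cvar{S\times U}$ is never established, and the obstacle you hit comes from the direction of your isogeny.

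With $q\colon G\to S\times U$, pullback maps $\widehat{S\times U}(k_n)$ into $\widehat{G}(k_n)$ with image of index $|(\ker q)(k_n)|$. So a character of $G(k_n)$ that is nontrivial on $(\ker q)(k_n)$ has no $k_n$-rational lift at all. Your workaround does not get around this, for three reasons.

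First, Theorem \ref{THM_DescendingTACS} only concerns sets of the form $\chi\cdot\pi^*\cvar{G/H}$. The image in $\cvar{G}$ of a piece $\Delta_S\times Z_U$ is not of that form when $Z_U$ is a general subvariety of $\widehat{U}$.

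Second, on the source side, where ``arithmetic tac times points of a variety'' does make sense, the characters you must count are the lifts. These are not $k_n$-rational, so you are back at the original problem.

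Third, $k_n$-rational means $\text{Fr}^n$-fixed, not $\text{Fr}$-fixed. For example, take $T=\text{Res}_{k_2/k}\mathbb{G}_m$, whose two geometric factors are swapped by $\text{Fr}$. The tac $P=\{1\}\times\cvar{\mathbb{G}_m}$ meets $\text{Fr}^*P$ only in $\{1\}$, yet it contains $|k|^{2m}-1$ characters rational over $k_{2m}$. With $\text{Fr}^n$ in place of $\text{Fr}$, your dichotomy depends on $n$. The induction on dimension would then have to produce a constant independent of $n$, which is the entire content of $\ll_\Delta$ and is not addressed.

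\textbf{The fix, which is the paper's proof.} Take the isogeny the other way, $q\colon S\times U\to G$ with kernel $\Gamma$. Such an isogeny exists by the same structure theory as Theorem \ref{THM_AppendixStructureTheorem}, e.g.\ $A\times L\to G$ coming from $G=L\cdot A$ with $L\cap A$ finite. Then:
\begin{itemize}
\item $q^*\colon\cvar{G}\to\cvar{S\times U}$ maps $\widehat{G}(k_n)$ into $\widehat{S\times U}(k_n)$, because $q$ is defined over $k$.
\item The kernel of this map of finite groups is the character group of the cokernel of $(S\times U)(k_n)\to G(k_n)$. That cokernel has order at most $|\Gamma(\overline{k})|$ by Corollary \ref{COR_ExactSequencePoints}, so the fibres have at most $|\Gamma(\overline{k})|$ elements.
\item $q^*(\Delta)$ is thin by Proposition \ref{PROP_CharIsogenyClosed} and has the same dimension by Proposition \ref{PROP_IsogenyPreservesDimension}.
\item $|\widehat{G}(k_n)|=|\widehat{S\times U}(k_n)|$.
\end{itemize}
Lemma \ref{LEM_ClosedEstimation1} then finishes the argument.

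Your cocycle idea can also rescue your original direction, if you use it to locate lifts rather than to enlarge the field. By Proposition \ref{PROP_GroupCohomologyCharacterGroup} over $k_n$, the $\chi'$ with $\text{Fr}^{n*}\chi'/\chi'=\gamma$ form one coset of $\widehat{S\times U}(k_n)$. So all lifts lie in at most $|\widehat{\ker q}(\overline{k})|$ such cosets. You would then need Lemma \ref{LEM_ClosedEstimation1} uniformly over the translates of $\Delta'$.

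\textbf{A caveat that applies to the paper as well.} Your last step equates the exponent $\dim H$ produced by counting with the codimension $d$. But $d$ is a Krull codimension, and by Lemma \ref{LEM_ClosedDimensionTAC} an abelian part contributes $2d_a$ to it. For an elliptic curve $A$ and $\Delta=\{1\}$, one has $d=2$ while the ratio is $1/|A(k_n)|\asymp|k|^{-n}$. Trivial counting yields the group-theoretic codimension as exponent, and this normalization issue is already present in Lemma \ref{LEM_ClosedEstimation1}.
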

	\begin{proof}
		We have an isogeny $q\colon S\times U \rightarrow G$ with kernel $\Gamma$. We put 
		\[
		\Delta' = q^*(\Delta).
		\]
		The map $q^*$ is a closed map in the thin topology, hence $\Delta'$ is a thin subset. Moreover, the dimension of $q^*(\Delta)$ agrees with the dimension of $\Delta$ by Proposition \ref{PROP_IsogenyPreservesDimension}. We have the map
		\[
		q^*\colon \Delta \cap \widehat{G}(k_n) \rightarrow q^*(\Delta)\cap \widehat{S\times U}(k_n). 
		\]
		Note that this map has fibers of order at most $|\Gamma|$ by Corollary \ref{COR_ExactSequencePoints}. Thus the estimate follows from Lemma \ref{LEM_ClosedEstimation1}.
	\end{proof}
	
	\subsection{Fourier-Mellin transforms} During the course of our arguments, it will be convenient to have a shorthand notation for certain functors which appear repeatedly. For this reason, we introduce \textit{Fourier-Mellin transforms.} 
	\begin{definition}
		We consider a map $\pi\colon G \rightarrow U$ whose restriction to the maximal connected unipotent subgroup of $G$ induces an isomorphism. This map is unique, because it is the quotient by the maximal semiabelian subgroup. For each $\chi \in \cvar{G}$, we define the \textit{Fourier-Mellin transform at $\chi$ with or without supports} to be the functor
		\[
		\text{FM}_{\chi, ?} := \text{FT}(\pi_?(K_\chi))
		\]
		for each $K \in \derCat{c}{G}{\Qbarl}$. Let $\Delta \subseteq \cvar{G}$ be a closed subset. We define the subset
		\[
		\dualUni_{\chi, \Delta} := (i^*)^{-1}(\Delta^c).
		\]
		This is an open subset inside $\cvar{U} = \dualUni(\overline{k})$. Then we define the \textit{restricted Fourier-Mellin transform with or without supports} to be the functor
		\[
		\text{FM}_{\chi, ?} := \text{FT}(\pi_?(K_\chi))|_{\dualUni_{\chi, \Delta}}.
		\]
	\end{definition}
	\begin{remark}
		In most cases, we surpress $\pi$ and do not mention it. Almost all statements have no dependency on the particular choice of $\pi$.
	\end{remark}
	We record variuos formulas for the Fourier-Mellin transform that are used repeatedly. For each complex, the Fourier-Mellin transform is the Fourier transform of a relative  Mellin coefficient. For a product, we can write the Fourier-Mellin transform as a relative Mellin coefficient {of} a Fourier transform. The functors $\text{FM}_{\chi, ?}$ are the relative Mellin-coefficient evaluated at a character $\chi \in \cvar{S}$ of one complex. 
	\begin{lemma}\label{LEM_fmprojformula}
		Let $G = S\times U$ be a connected commutative algebraic group over a finite field $k$ and $\chi \in \cvar{G}$. We can write
		\[
		\text{FM}_{\chi, \Delta, ?}(K) = \pi_{\widehat{U}?}(\text{FT}(K)_\chi)|_{\dualUni_{\chi, \Delta}}
		\]
		for all $K \in \derCat{c}{G}{\Qbarl}$.
	\end{lemma}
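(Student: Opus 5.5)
The plan is to unwind both sides and reduce everything to the Künneth/projection formalism already recorded. For $G = S\times U$, the map $\pi\colon G \rightarrow U$ is the projection $\pi_U$. By Proposition \ref{PROP_CharKunneth} we write $\chi = (\chi_S, \psi)$ with $\chi_S \in \cvar{S}$ and $\psi \in \cvar{U}$, so that $\SL_\chi = \SL_{\chi_S}\boxtimes\SL_\psi$. I read $\text{FT}(K)$ as the relative Fourier transform of $K$, viewed as a complex on $U\times S$ (i.e. taking $X = S$), which is a complex on $\widehat{U}\times S$. I read $\text{FT}(K)_\chi$ as the twist by the $S$-component $\chi_S$, shifted along $\widehat{U}$ by $\psi$. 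The restriction to $\dualUni_{\chi,\Delta}$ is the same open restriction on both sides, so it plays no role. It therefore suffices to prove, for all $K$, an isomorphism
\[
\text{FT}(\pi_{U?}(K_\chi)) \cong \pi_{\widehat{U}?}(\text{FT}(K)_\chi)
\]
on $\widehat{U}$.

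\textbf{Step 1: separate the two factors of $\chi$.} Since $K_\chi = (K_{\chi_S})_{\psi}$, Theorem \ref{THM_FTBasic}(5), applied relatively over the base $X = S$, gives
\[
\text{FT}(K_\chi) = m_\psi^*\,\text{FT}(K_{\chi_S}).
\]
This is the sense in which the unipotent part of the character becomes a translation on $\widehat{U}$. Because $m_\psi^*$ commutes with $\pi_{\widehat{U}?}$ (by proper or smooth base change along the automorphism $m_\psi$), this reduces the claim to the case $\chi = (\chi_S, 1)$.

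\textbf{Step 2: commute $\text{FT}$ with the pushforward along $S$.} Apply Proposition \ref{PROP_ftfuncbase} to the structure map $S \rightarrow \text{Spec}(k)$, which gives
\[
\text{FT}(\pi_{U?}(K_{\chi_S})) = \pi_{\widehat{U}?}(\text{FT}(K_{\chi_S})).
\]
Then use the third formula of Proposition \ref{PROP_ftfuncbase}, with the lisse sheaf $F = \SL_{\chi_S}$ on the base $S$:
\[
\text{FT}(K\otimes\pi_S^*\SL_{\chi_S}) = \text{FT}(K)\otimes\pi_S^*\SL_{\chi_S} = \text{FT}(K)_{\chi_S}.
\]
Combining these with Step 1 yields the claimed formula.

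\textbf{Main obstacle.} The main care point is Step 1: Theorem \ref{THM_FTBasic}(5) is stated absolutely and for closed points. I would verify it relatively directly from $m^*\SL = \SL\boxtimes\SL$ on $U\times\widehat{U}$ together with the projection formula. This is the same standard argument as in the proof of Proposition \ref{PROP_ftfuncbase}, and it works over any base. It also requires extending the statement to all characters $\psi \in \cvar{U}$, which is harmless because every such character is arithmetic by Lemma \ref{LEM_charuniprop}, so one may pass to a finite extension $k_n$ over which $\psi$ is rational. One must also keep track of the choice of supports: Theorem \ref{THM_FTBasic}(1) makes $\text{FT}_!=\text{FT}_*$, so only the pushforward along $S$ carries the support decoration $?$, and it is the same on both sides.
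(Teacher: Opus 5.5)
Your proposal is correct and is essentially the paper's argument: the paper's proof is a one-line appeal to the base-functoriality formulas of Proposition \ref{PROP_ftfuncbase} (pushforward along $S$, and twisting by the lisse sheaf $\SL_{\chi_S}$ on the base) together with Proposition \ref{PROP_ftfuncgroup}. The paper leaves two points implicit that you make explicit: you treat the unipotent component $\psi$ with the relative shift formula of Theorem \ref{THM_FTBasic}(5), and you read $\text{FT}(K)_\chi$ as the $\chi_S$-twist composed with translation by $\psi$, which is the reading under which the identity holds literally.
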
 
	\begin{proof}
		This follows from the formulas noted in Proposition \ref{PROP_ftfuncbase} and Proposition \ref{PROP_ftfuncgroup}.
	\end{proof}
	We also note the behaviour under duality.
	\begin{lemma}\label{LEM_fmdual}
		We have
		\begin{align*}
		\text{inv}^*D(\FMSH{K}) &= \FMST{\text{inv}^*D(K)} \\
		\text{inv}^*D(\FMST{K})	&= \FMSH{\text{inv}^*D(K)}.
		\end{align*}
		for all $K \in \derCat{c}{G}{}$ and $\chi \in \cvar{G}$. 
	\end{lemma}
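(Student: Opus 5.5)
We prove the duality formula for the Fourier-Mellin transform by splitting it into the duality for the relative Mellin coefficient $\pi_?(K_\chi)$ and the duality for the Fourier transform from Theorem \ref{THM_FTBasic}(2). Write $\pi\colon G \rightarrow U$ for the quotient by the maximal semiabelian subgroup $S_0$, and let $d$ and $e$ be the dimensions of $U$ and $S_0$. It suffices to prove the first identity: the second follows by applying the first to $\text{inv}^*D(K)$ and using biduality $D\circ D = \text{id}$ together with $\text{inv}^*\text{inv}^* = \text{id}$. We also have to check that restriction to $\dualUni_{\chi,\Delta}$ commutes with $\text{inv}^*D$. Restriction to an open subset commutes with $D$, and $\text{inv}$ preserves $\dualUni_{\chi,\Delta}$ once $\chi$ is read appropriately (see below), so we may work with the unrestricted transforms.

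\textbf{Step 1: duality for the Mellin coefficient.} By Verdier duality, $D\pi_!(K_\chi) = \pi_*(D(K_\chi))$. Since $\SL_\chi$ is a rank one lisse sheaf, $D(K_\chi) = D(K)\otimes\SL_\chi^{\vee} = D(K)_{\chi^{-1}}$. Next we commute with the inversion. Because $\pi$ is a group morphism, $\text{inv}_U\circ\pi = \pi\circ\text{inv}_G$, and $\text{inv}$ is an isomorphism, so $\text{inv}^*\pi_* = \pi_*\text{inv}^*$. Proposition \ref{PROP_CharFunc} applied to the inversion gives $\text{inv}^*\SL_{\chi^{-1}} = \SL_{\chi}$. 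Combining these,
\[
\text{inv}^*D(\pi_!(K_\chi)) = \pi_*\big((\text{inv}^*D(K))_\chi\big).
\]

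\textbf{Step 2: duality for the Fourier transform.} Apply Theorem \ref{THM_FTBasic}(2), in its relative form over $\dualUni$. Together with Theorem \ref{THM_FTBasic}(1), which identifies $\text{FT}_!$ with $\text{FT}_*$, it gives
\[
\text{inv}^*D(\text{FT}(L)) = \text{FT}(\text{inv}^*D(L))(d)
\]
for $L = \pi_!(K_\chi)$. Substituting Step 1 yields
\[
\text{inv}^*D(\FMSH{K}) = \text{FT}\big(\pi_*((\text{inv}^*D K)_\chi)\big) = \FMST{\text{inv}^*D(K)},
\]
up to the Tate twist. The twist is presumably absorbed by the paper's normalizations, or it is implicitly ignored. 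Shifts need the same care: the duality $D$ on $G$ versus $U$ introduces $[2e](e)$-type discrepancies only when relative dualizing complexes enter, and here $\pi_*$ and $\pi_!$ are exchanged directly by Verdier duality, so no shift appears.

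\textbf{Main obstacle.} The delicate point is the bookkeeping of the restriction locus $\dualUni_{\chi,\Delta}$. It is defined via the pullback $i^*$ of characters along the inclusion of the unipotent part, and it depends on $\chi$. Under $\text{inv}$ on $\dualUni$, a point $\psi$ corresponds to the character $\psi^{-1}$, and the twisted character changes accordingly. So we must check that $\text{inv}(\dualUni_{\chi,\Delta})$ is the locus attached to the pair occurring on the right-hand side, that is, that $\Delta$ is handled compatibly with inversion. Our plan is to verify this directly from the definition; if needed, we would use that $\text{inv}$ acts as a homeomorphism on $\cvar{G}$, which is continuous by functoriality (Lemma \ref{LEM_ContinuousUnipotent} and its analogues), to transport $\Delta$ and so match the two restriction loci.
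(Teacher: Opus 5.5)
Your Step 1 is correct, and so is the reduction of the second identity to the first. Steps 1--2 are the paper's own argument: the paper only cites $D(\SL_\chi)=\SL_{\chi^{-1}}$, $\text{inv}^*\SL_\chi=\SL_{\chi^{-1}}$ and Theorem \ref{THM_FTBasic}.

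The genuine gap is the step you postpone, and it cannot be closed the way you plan. The reading of $\dualUni_{\chi,\Delta}$ forced by Lemma \ref{LEM_fmstalks} is $\{\psi : \chi\cdot\pi^*(\psi)\notin\Delta\}$. With it, the left side $\text{inv}^*D(\FMSH{K})$ lives on $\text{inv}^{-1}(\dualUni_{\chi,\Delta})=\dualUni_{\chi^{-1},\Delta^{-1}}$, while the right side lives on $\dualUni_{\chi,\Delta}$. These opens coincide only when $\Delta\cap\chi\cdot\pi^*(\cvar{U})$ is stable under $\nu\mapsto\chi^{2}\nu^{-1}$. So there is nothing to verify "from the definition" in general. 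Transporting $\Delta$ by inversion lands you on the pair $(\chi^{-1},\Delta^{-1})$, not on the pair occurring on the right.

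The source of the problem is Theorem \ref{THM_FTBasic}(2), which as printed carries one $\text{inv}^*$ too many. Computing directly, $D\,\text{FT}(L)=\pi_{\dualUni *}\big(\pi_U^*D(L)\otimes\SL^\vee[d]\big)(d)$ and $\SL^\vee=(\text{inv}\times 1)^*\SL$. Hence $D\,\text{FT}(L)=\text{FT}(\text{inv}^*D(L))(d)$. As a check, take $U=\BA^1$ with $p$ odd and $L=\delta_a$, $a\neq 0$. The printed formula would equate sheaves whose trace functions are, up to the twist factor, $\psi\mapsto\psi(a)$ and $\psi\mapsto\psi(-a)$, and these differ whenever $\psi(2a)\neq 1$.

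Feeding your Step 1 into the corrected duality gives
\[
D\big(\text{FM}_{\chi,!}(K)\big)=\text{FM}_{\chi,*}\big(\text{inv}^*D(K)\big)(d),
\qquad\text{equivalently}\qquad
\text{inv}^*D\big(\text{FM}_{\chi,!}(K)\big)=\text{FM}_{\chi^{-1},*}\big(D(K)\big)(d).
\]
In the first form, both sides restrict to the same open $\dualUni_{\chi,\Delta}$. Since $D$ commutes with restriction to opens, the restricted identities, and the $*$-version, follow at once, and your obstacle disappears. The second form is what your idea of transporting $\Delta$ by inversion actually proves.

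The twist $(d)$ is genuinely present, because $\text{FT}$ is normalized by the shift $[d]$ only. No convention in the paper absorbs it, so it should be kept; it is harmless for the later uses (perversity, lisseness, isomorphism criteria). In short, the lemma as printed is off by an inversion on $\dualUni$ and a Tate twist. Your argument, like the paper's, establishes the corrected form above, not the printed one.
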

	\begin{proof}
		The dual of $\SL_\chi$ is $\SL_{\chi^{-1}}$ and $\text{inv}^*(\SL_\chi) = \SL_{\chi^{-1}}$ by Proposition \ref{PROP_CharFunc}. We can combine these computations with Theorem \ref{THM_FTBasic} to obtain the above formulae.
	\end{proof}
	We also note the behaviour under proper smooth pushforward.
	\begin{lemma}\label{LEM_fmproperpush}
		Let $\pi\colon G \rightarrow G'$ be a proper smooth morphism and $K \in \derCat{c}{G}{\Qbarl}$. We have
		\[
		\text{FM}_{\chi, ?}(\pi_?(K)) = \text{FM}_{\pi^*(\chi), ?}(K)
		\]
		for all $\chi \in \cvar{G'}$. For a closed subset $\Delta\subseteq\cvar{G}$ and a character $\chi \in \cvar{G'}$ with $\pi^*(\chi) \notin \Delta$, we have
		\[
		\text{FM}_{\pi^*(\chi), \Delta, ?}(\pi_{?}(K)) = \text{FM}_{\chi, (\pi^*)^{-1}(\Delta)}(K).
		\]
	\end{lemma}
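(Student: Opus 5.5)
Write $\pi_U\colon G\to U$ and $\pi_{U'}\colon G'\to U'$ for the quotients by the maximal semiabelian subgroups. The plan is to unwind the definition $\text{FM}_{\chi,?}(K)=\text{FT}(\pi_{U'?}(K_\chi))$ and push everything through two functorialities: that of pushforward, and that of the Fourier transform in the group variable (Proposition \ref{PROP_ftfuncgroup}).

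\textbf{Step 1 (projection formula).} Since $\pi$ is a group morphism, Proposition \ref{PROP_CharFunc} gives $\pi^*(\SL_\chi)=\SL_{\pi^*(\chi)}$. The projection formula then yields
\[
\pi_?(K)_\chi=\pi_?\bigl(K\otimes\pi^*\SL_\chi\bigr)=\pi_?(K_{\pi^*(\chi)}).
\]
Because $\pi$ is proper, we have $\pi_!=\pi_*$, so the choice of support on $\pi$ plays no role.

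\textbf{Step 2 (compatibility of the quotients).} A morphism of groups sends the maximal semiabelian subgroup of $G$ into that of $G'$. Hence there is an induced morphism $\pi_u\colon U\to U'$ with $\pi_{U'}\circ\pi=\pi_u\circ\pi_U$. Properness and smoothness of $\pi$ force its kernel to be an extension of a finite group by a semiabelian (proper, smooth, connected) group. Therefore $\pi_u$ is an isogeny. Up to replacing the chosen maps by their composites with this isogeny, as permitted by the remark on suppressing $\pi$, we may assume $\pi_u$ is an isomorphism of the relevant unipotent quotients. Then
\[
\pi_{U'?}\pi_?(K_{\pi^*\chi})=(\pi_u)_?\,\pi_{U?}(K_{\pi^*\chi}),
\]
and applying $\text{FT}$ together with Proposition \ref{PROP_ftfuncgroup} for the isomorphism $\pi_u$ (no shift, since $d=d'$) gives
\[
\text{FM}_{\chi,?}(\pi_?K)=\text{FM}_{\pi^*(\chi),?}(K),
\]
with $\widehat{\pi_u}$ identifying the two dual groups.

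\textbf{Step 3 (restricted version).} It remains to compare the open loci. We have $\dualUni_{\pi^*\chi,\Delta}=(i^*)^{-1}(\Delta^c)$, where $i^*$ sends $\psi\in\cvar{U}$ to $\pi^*(\chi)\cdot\pi_U^*(\psi)$, or the analogous character. Since $\pi^*(\chi\cdot\pi_{U'}^*\psi')=\pi^*\chi\cdot\pi_U^*(\pi_u^*\psi')$, the condition $\pi^*(\chi\cdot\pi_{U'}^*\psi')\notin\Delta$ is exactly $\chi\cdot\pi_{U'}^*\psi'\notin(\pi^*)^{-1}(\Delta)$. This gives
\[
\widehat{\pi_u}^{-1}\bigl(\dualUni_{\pi^*\chi,\Delta}\bigr)=\dualUni'_{\chi,(\pi^*)^{-1}(\Delta)},
\]
and restricting the isomorphism from Step 2 to this open set gives the second formula. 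The closedness of $(\pi^*)^{-1}(\Delta)$ follows from continuity of $\pi^*$.

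\textbf{Main obstacle.} The delicate point is Step 2. One must check that $\pi$ induces an isomorphism, or at least an isogeny that can be absorbed, on unipotent quotients, and that the identification of dual groups via $\widehat{\pi_u}$ is compatible with the suppressed choice of $\pi_U$. If $\pi_u$ is only an isogeny, I would instead use Proposition \ref{PROP_ftfuncgroup} and read the equality after pulling back along $\widehat{\pi_u}$.
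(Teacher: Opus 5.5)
Your proposal is correct and follows the paper's proof: the paper also takes the unipotent quotient map of $G$ to be the composite of $\pi$ with the chosen map $G'\to U'$, which still induces an isogeny on the maximal unipotent subgroup because $\pi$ is surjective with proper kernel, and then concludes by the projection formula $\pi_?(K)_\chi=\pi_?(K_{\pi^*(\chi)})$. Your Step 3 compares the open loci via $\pi^*(\chi\cdot\pi_{U'}^*\psi')=\pi^*(\chi)\cdot\pi_U^*(\pi_u^*\psi')$, which the paper leaves implicit, and it correctly reads the second formula in its only type-consistent form, namely $\text{FM}$ of $\pi_?(K)$ at $\chi$ restricted along $(\pi^*)^{-1}(\Delta)$.
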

	\begin{proof}
		Let $\pi'\colon G' \rightarrow U$ with connected kernel which induces an isogeny from the maximal connected unipotent subgroup of $G'$ onto $U$. Then $\pi\pi'\colon G \rightarrow U$ also satisfies this property. We obtain the formulae from Proposition \ref{PROP_ProjectionFormula2}.
	\end{proof}
	We can also compute the stalks of the Fourier-Mellin transforms.
	\begin{lemma}\label{LEM_fmstalks}
		Let $K \in \derCat{c}{G}{\Qbarl}$, $\chi \in \cvar{G}$, and $\psi \in \widehat{U}$ a closed point. We have
		\begin{align*}
			i_\psi^*(\FMSH{K}) &= H^*_c(G, M_{\chi\pi^*(\psi)})[d]\\
			i_\psi^!(\FMST{K}) &= H^*(G, M_{\chi\pi^*(\psi)})(-d)[-d]
		\end{align*}
	\end{lemma}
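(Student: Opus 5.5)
The plan is to compute both formulas directly from the definition $\text{FM}_{\chi,?}(K)=\text{FT}_?(\pi_?(K_\chi))$, where $\text{FT}_?(L)=\pi_{\dualUni ?}(\pi_U^*L\otimes\SL[d])$ on $U\times\dualUni$, and $d=\dim U$. The compactly supported formula comes from proper base change. The formula without supports is then deduced from it by duality, to avoid computing $i^!$ of a lisse sheaf along $U\times\{\psi\}\subseteq U\times\dualUni$ by hand. In the statement, $M$ should be read as $K$.

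\emph{Step 1: the stalk of $\FMSH{K}$.} Write $i_\psi\colon U\times\{\psi\}\to U\times\dualUni$. Proper base change for $\pi_{\dualUni!}$ gives
\[
i_\psi^*\,\text{FT}_!(L)=R\Gamma_c\bigl(U,\,L\otimes\SL|_{U\times\{\psi\}}\bigr)[d]=R\Gamma_c(U,L\otimes\SL_\psi)[d],
\]
using Lemma \ref{LEM_UnipotentDual}. We apply this with $L=\pi_!(K_\chi)$. The projection formula then gives
\[
\pi_!(K_\chi)\otimes\SL_\psi=\pi_!\bigl(K_\chi\otimes\pi^*\SL_\psi\bigr)=\pi_!\bigl(K_{\chi\pi^*(\psi)}\bigr).
\]
Here we use $\pi^*\SL_\psi=\SL_{\pi^*\psi}$ (Proposition \ref{PROP_CharFunc}) and $\SL_\chi\otimes\SL_{\chi'}=\SL_{\chi\chi'}$. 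The latter identity is obtained by pulling back $\SL_{(\chi,\chi')}=\SL_\chi\boxtimes\SL_{\chi'}$ (Proposition \ref{PROP_CharKunneth}) along the diagonal, since the diagonal induces multiplication on $\cvar{G}$. Finally, $R\Gamma_c(U,\pi_!(-))=R\Gamma_c(G,-)$, which yields $i_\psi^*\FMSH{K}=H^*_c(G,K_{\chi\pi^*(\psi)})[d]$.

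\emph{Step 2: the costalk of $\FMST{K}$.} I would use Lemma \ref{LEM_fmdual} in the form
\[
\FMST{K}=D\,\text{inv}^*\,\FMSH{\text{inv}^*DK}.
\]
Since $i^!D=Di^*$, this gives
\[
i_\psi^!\FMST{K}=D\bigl(i_{-\psi}^*\FMSH{\text{inv}^*DK}\bigr).
\]
By Step 1, the right-hand side is $D\bigl(H^*_c(G,(\text{inv}^*DK)_{\chi\pi^*(-\psi)})[d]\bigr)$. Now pull back along $\text{inv}$ (an automorphism of $G$), using $\text{inv}^*\SL_\chi=\SL_{\chi^{-1}}$. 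The inner complex becomes $H^*_c\bigl(G,D(K)\otimes\SL_{(\chi\pi^*\psi)^{-1}}\bigr)=H^*_c\bigl(G,D(K_{\chi\pi^*\psi})\bigr)$, because $D(\SL_\chi)=\SL_{\chi^{-1}}$. Verdier duality on $G$ identifies $D R\Gamma_c(G,D(-))$ with $R\Gamma(G,-)$. The outer shift $[d]$ becomes $[-d]$ under $D$. The remaining Tate twist $(-d)$ comes from the twist $(d)$ in Theorem \ref{THM_FTBasic}(2), which enters Lemma \ref{LEM_fmdual} implicitly.

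\emph{Alternative for Step 2, and the main obstacle.} One could instead use $\text{FT}_!\cong\text{FT}_*$ (Theorem \ref{THM_FTBasic}(1)) together with base change $i_\psi^!\pi_{\dualUni*}=\pi_*i^!$. This route requires computing $i^!(\pi_U^*L\otimes\SL)=i^*(\cdots)(-d)[-2d]$, which holds because $\SL$ is lisse and $\pi_U^*L$ is pulled back from $U$, so the inclusion is a transversal codimension-$d$ smooth embedding for it. That gives $H^*(G,K_{\chi\pi^*\psi})(-d)[-d]$ directly. The only delicate point in either route is the bookkeeping of shifts and Tate twists, and checking that the sign of $\psi$ comes out right. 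I would carry out the duality route first and cross-check the twist against the transversal-$i^!$ computation.
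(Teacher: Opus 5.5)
Your proposal is correct in substance (reading $M$ as $K$, as you do). It differs from the paper's proof mainly in packaging.

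The paper first translates to the origin. Since $i_\psi=m_\psi\circ i_0$ with $m_\psi$ an automorphism, the shift formula $m_\psi^*\text{FT}(L)=\text{FT}(L_\psi)$ of Theorem~\ref{THM_FTBasic}(5) identifies the stalk and costalk at $\psi$ of $\text{FT}_?(\pi_?(K_\chi))$ with those at $0$ of $\text{FT}_?(\pi_?(K_\chi)_\psi)=\text{FT}_?(\pi_?(K_{\chi\pi^*(\psi)}))$. These are then read off from the pushforward formulas for $\text{FT}$.

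You skip the translation and apply base change directly at $\psi$. For the $!$-stalk you use proper base change. For the $*$-costalk you use either duality, or $i_\psi^!\pi_{\dualUni *}=\pi_* i^!$ together with $i^!(\pi_U^*L\otimes\SL[d])\cong L_\psi(-d)[-d]$, which follows from smoothness of $\pi_U$ and lisseness of $\SL$. The paper's version is two lines given its black boxes and treats both supports symmetrically. Yours is self-contained and actually determines the shifts and twists. That is a real advantage here, because the paper's auxiliary formulas are not reliably normalized.

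This bites in your duality route, which as written does not land at $\psi$.
\begin{itemize}
\item Taking Lemma~\ref{LEM_fmdual} literally, you arrive at $D\bigl(i_{-\psi}^*\,\text{FM}_{\chi,!}(\text{inv}^*DK)\bigr)$.
\item Pulling $(\text{inv}^*DK)_{\chi\pi^*(-\psi)}$ back along $\text{inv}$ gives $DK\otimes\SL_{\chi^{-1}\pi^*(\psi)}=D(K_{\chi\pi^*(-\psi)})$, not $D(K_{\chi\pi^*(\psi)})$.
\item Carried through, this yields $H^*(G,K_{\chi\pi^*(-\psi)})[-d]$: the costalk formula at $-\psi$, and without the twist.
\end{itemize}

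The source of the error is Lemma~\ref{LEM_fmdual} itself, which has a spurious $\text{inv}^*$ on the $\dualUni$-side and drops a Tate twist. The correct identity follows from three facts: $D\pi_!=\pi_*D$, $D(\pi_U^*L\otimes\SL[d])=\pi_U^*DL\otimes\SL^\vee(d)[d]$, and $\SL^\vee\cong(\text{inv}\times\text{id})^*\SL$. Together they give $D\,\text{FM}_{\chi,!}(K')\cong\text{FM}_{\chi,*}(\text{inv}^*DK')(d)$, with no $\text{inv}^*$ on $\dualUni$. Applying this with $K'=\text{inv}^*DK$, your computation gives exactly $i_\psi^!\,\text{FM}_{\chi,*}(K)\cong H^*(G,K_{\chi\pi^*(\psi)})(-d)[-d]$, with the twist appearing explicitly rather than ``implicitly''.

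Your alternative transversal $i^!$ computation is correct as stated, and is the cleaner of your two options.
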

	\begin{proof}
		This follows from the shift formula in Theorem \ref{THM_FTBasic} and the formula for pushforwards in Proposition \ref{PROP_ftfuncbase}.
	\end{proof}

%% file: VanishingTheorems.tex
	\section{Vanishing theorems}
	\subsection{Definition of $\Delta$-unramified} The goal of this section is to introduce the notion of a $\Delta$-unramified sheaf for a closed subset $\Delta \subseteq \mathscr{C}(G)$. One of the core insights of this article is that one can give a convenient definition of an unramified character for a perverse sheaf. 
	
	Proposition \ref{PROP_GysinUnramifiedCriterion} will give a more concrete and usable characterization than the definition just below. Proposition \ref{PROP_GysinUnramifiedCriterion} shows that the unramified characters defined here are precisely those that can give fiber functors on Tannakian categories.
	\begin{definition}\label{DEF_unramifiedhcar}
		 Let $\pi\colon G \rightarrow U$ be a morphism, whose restriction to the maximal unipotent connected subgroup of $G$ induces an isogeny. Let $M \in \Perv{}{G}$. We say $M$ is \textit{unramified} at a character $\chi \in \mathscr{C}(G)$ if:
		\begin{enumerate}[]\item The natural map
			\[
			\text{FT}(\pi_{!}(M_\chi)) \rightarrow \text{FT}(\pi_{*}(M_\chi))
			\]
			is an isomorphism at $0$.
			\item The complexes $\text{FT}(\pi_{?}({M}_\chi))$ are perverse in a neighborhood of zero.
			\item The complexes $\text{FT}(\pi_{?}(M_\chi))$ are lisse in a neighborhood of zero.
		\end{enumerate}
		Let $\Delta \subseteq \mathscr{C}(G)$ be a subset of $\mathscr{C}(G)$. We say $M$ is \textit{$\Delta$-unramified} if all $\chi \notin \Delta$ are unramified for $M$.
	\end{definition}
	We should get the following right out of the way:
	\begin{lemma}
		The above definition does not depend on $\pi$.
	\end{lemma}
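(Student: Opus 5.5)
Given two admissible maps $\pi_1,\pi_2\colon G\to U_1,U_2$, I would first reduce to the case where one dominates the other. Each $\pi_i$ kills a connected subgroup $S_i$ (the maximal semiabelian subgroup up to isogeny), and on the maximal connected unipotent subgroup $U_G\subseteq G$ both restrictions are isogenies. Set $\pi_3\colon G\to U_3:=G/(\ker\pi_1+\ker\pi_2)^0$ composed with a suitable isogeny, or more simply produce a unipotent group $U'$ with isogenies $q_i\colon U_i\to U'$ such that $q_1\pi_1=q_2\pi_2$ up to an isogeny. Then it suffices to compare $\pi$ with $q\circ\pi$ for an isogeny $q\colon U\to U'$. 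The existence of such a common target comes from the fact that $\ker\pi_1$ and $\ker\pi_2$ agree up to finite groups: both contain the maximal semiabelian subgroup with finite index, because their restrictions to $U_G$ are isogenies.

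In the dominated case, let $q\colon U\to U'$ be an isogeny. Since $q$ is finite, $q_!=q_*$, so $(q\pi)_?(M_\chi)=q_*\pi_?(M_\chi)$. By Proposition \ref{PROP_ftfuncgroup} with $d=d'$, we get $\text{FT}(q_*\pi_?(M_\chi))=\widehat q^{*}\text{FT}(\pi_?(M_\chi))$, and also $=\widehat q^{!}\text{FT}(\pi_?(M_\chi))$. Here the dual isogeny $\widehat q\colon\widehat{U'}\to\widehat U$ is finite étale up to a radicial part, hence $\widehat q^*=\widehat q^!$ up to Tate twist, and both are $t$-exact. Moreover, $\widehat q$ maps $0$ to $0$, and these identifications are compatible with the forget-supports map. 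So lissity near $0$, perversity near $0$, and the isomorphism property at $0$ transfer from $\pi$ to $q\pi$. For the converse, note that pullback along a finite surjective map that is étale (or radicial) near $0$ detects these properties on the neighborhood $\widehat q(V)$ of $0$. The map $\widehat q$ is flat surjective, hence open by the argument in the proof of the closedness lemma for unipotent characters.

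The main obstacle I expect is the reduction step. The kernel of an admissible $\pi$ need not be connected, and the two maps may differ by a nontrivial radicial or inseparable isogeny. If $\widehat q$ is purely inseparable, I would invoke topological invariance of the étale site, so that $\widehat q^*$ is an equivalence. In general, I would factor $\widehat q$ into a separable part, which is étale, and a purely inseparable part. Each of the three conditions is local on $\widehat U$ around $0$ and is preserved and reflected by both kinds of maps.
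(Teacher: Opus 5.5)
Your proposal is correct and follows the same route as the paper. You produce a common unipotent target with isogenies $q_i$ such that $q_1\pi_1=q_2\pi_2$, then use Proposition \ref{PROP_ftfuncgroup} to identify $\text{FT}((q\pi)_?(M_\chi))$ with $\widehat{q}^{*}\text{FT}(\pi_?(M_\chi))$ compatibly with the forget-supports map. You also fill in two details the paper leaves implicit: both kernels contain the maximal semiabelian subgroup with finite index, so $G/(\ker\pi_1+\ker\pi_2)$ serves as the common target, and pullback along the finite flat dual isogeny fixing $0$ both preserves and reflects the three conditions near $0$.
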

	\begin{proof}
		If we are given two maps $\pi_i\colon G \rightarrow U_i$ for $i = 1, 2$ as in the above definition, we can find a unipotent group $U$ with isogenies $q_i\colon U_i \rightarrow U$ such that $q_1\pi_1 = q_2\pi_2.$
		 For any perverse sheaf $M \in \Perv{}{G}$ and character $\chi \in \cvar{G}$ we have a functorial isomorphisms
		\[
		\text{FT}((q_i\pi_1)_?(M_\chi)) = q_i^{*}\text{FT}((\pi_i)_?(M_\chi))
		\]
		by Proposition \ref{PROP_ftfuncgroup}. Therefore, the sheaf $M$ does not ramify at $\chi$ \quotationMark{with respect to} $\pi_1$ if and only if it does not ramify with respect to $q_1\pi_1$ if and only if it does not ramifiy with respect to $\pi_2$.
	\end{proof}
	\begin{definition}
		Let $M \in \Perv{}{G}$. The sheaf $M$ is called \textit{weakly unramified} at a character $\chi \in \cvar{G}$ if the following two conditions are satisfied.
		\begin{enumerate}[]\item The natural map
			\[
			H^*_c(G, M_\chi) \rightarrow H^*(G, M)
			\]
			is an isomorphism.
			\item The complexes $H^*_?(G, M_\chi)$ are concentrated in degree zero.
		\end{enumerate}
	\end{definition}
	The following two examples attempt to highlight the difference and the similarities between the definition of an unramified character and the definition of a weakly unramified character. Once we have developed more theory, we also discuss Example \ref{EXP_GysinWUnrVSUnr}, which discusses the case $G = \BA^1$ in detail. 
	\begin{example}
		Let $U$ be a unipotent group over a finite field and $M \in \Perv{}{U}$ a perverse sheaf on $U$. A character $\psi \in \mathscr{C}(U)$ is unramified for $M$ if and only if $\text{FT}(M)$ is lisse at $\psi$.
	\end{example}
	\begin{example}
		Let $S$ be a semiabelian group over a finite field $k$ and $M \in \Perv{}{S}$ a perverse sheaf. The sheaf $M$ is unramified at a character $\chi \in \mathscr{C}(S)$ if and only if it is weakly unramified, i.e. the forget supports map
		\[
		H^*_c(S, M_\chi) \rightarrow H^*(S, M_\chi)
		\]
		is an isomomorphism, and $H^*_?(S, M_\chi)$ are concentrated in degree zero.
	\end{example}
	
	The above lemma has as as immediate consequence the following proposition, which will turn out to be crucial when we want to reduce the vanishing theorems to a product.
	\begin{proposition}\label{PROP_RamifiedFiniteDescent}
	 Let $q\colon G\rightarrow G'$ be an isogeny. Let $M \in \Perv{}{G}$ be a perverse sheaf on $G$. A character $\chi \in \mathscr{C}(G')$ is unramified for $q_*M$ if and only if $q^*(\chi)$ is unramified for $M$. 
	\end{proposition}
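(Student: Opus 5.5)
The plan is to compare the two Fourier–Mellin transforms directly, so that the three conditions in Definition \ref{DEF_unramifiedhcar} for $q_*M$ at $\chi$ match those for $M$ at $q^*(\chi)$ one by one. First note that $q_*M$ is perverse: $q$ is finite, so $q_* = q_!$ is $t$-exact.

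\textbf{Step 1: compatible unipotent quotients.} Fix $\pi'\colon G' \rightarrow U'$ as in Definition \ref{DEF_unramifiedhcar}. The composite $\pi := \pi' \circ q\colon G \rightarrow U'$ restricts to an isogeny on the maximal connected unipotent subgroup of $G$. Indeed, $q$ maps that subgroup isogenously onto the maximal connected unipotent subgroup of $G'$, since an isogeny preserves the unipotent/semiabelian dévissage up to finite kernels. So $\pi$ is admissible for testing ramification of $M$, and by the preceding Lemma the choice of $\pi$ does not matter.

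\textbf{Step 2: projection formula.} For $\chi \in \cvar{G'}$ we have $q^*(\SL_\chi) = \SL_{q^*(\chi)}$ by Proposition \ref{PROP_CharFunc}. The projection formula for the finite map $q$ then gives
\[
q_?(M_{q^*(\chi)}) = q_?(M\otimes q^*\SL_\chi) = (q_?M)_\chi .
\]
Applying $\pi'_?$ yields
\[
\pi_?(M_{q^*(\chi)}) = \pi'_?((q_*M)_\chi)
\]
for both choices of supports. These isomorphisms are compatible with the forget-supports maps because $q_! = q_*$ canonically. Applying $\text{FT}$ then identifies the natural map $\text{FT}(\pi_!(M_{q^*\chi})) \rightarrow \text{FT}(\pi_*(M_{q^*\chi}))$ with the corresponding map for $q_*M$ at $\chi$, as complexes on the same space $\widehat{U'}$.

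\textbf{Step 3: compare conditions.} Conditions (1)–(3) of Definition \ref{DEF_unramifiedhcar} are statements about these complexes on $\widehat{U'}$ near $0$. Under the identification of Step 2 they therefore transfer verbatim, which proves the equivalence.

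The only genuine point to check is Step 1, that $\pi'\circ q$ is an admissible choice. This amounts to showing that the maximal connected unipotent subgroup of $G$ maps onto that of $G'$ with finite kernel. I would verify it from the structure theorem (Theorem \ref{THM_AppendixStructureTheorem}): the image of a connected unipotent subgroup is connected unipotent, and the dimensions agree because $\ker q$ is finite. Everything else is formal.
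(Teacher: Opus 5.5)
Your proposal is correct and follows essentially the same route as the paper: compose an admissible unipotent quotient $G'\rightarrow U'$ with $q$, then use the projection formula $q_?(M_{q^*(\chi)}) = (q_*M)_\chi$ (with $q_! = q_*$) to identify the relevant Fourier transforms on $\widehat{U'}$, after which the three conditions transfer verbatim. Your Step 1 is more careful than the paper, which only asserts that the composite is admissible; note that finiteness of $\ker q$ gives only one dimension inequality, and the reverse one follows because the identity component of the preimage of the maximal connected unipotent subgroup of $G'$ is isogenous to a unipotent group, hence unipotent.
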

	\begin{proof}
		Let $\pi \colon G' \rightarrow U$ be a surjective morphism to a unipotent group which induces an isogeny from the maximal unipotent connected subgroup of $G'$ onto $U$. Then $q\pi \colon G \rightarrow U$ also satisfies this condition. The projection formula for lisse sheaves gives an isomorphism
		\[
		\text{FT}((\pi q)_?(M_{\pi^*\chi})) = \text{FT}(\pi_?((q_*M)_{\chi})).
		\]
	\end{proof}
	We can give a criterion in terms of Fourier-Mellin transforms.
	\begin{proposition}\label{PROP_fmcritunr}
		Let $M \in \Perv{}{G}$ and $\Delta\subseteq\cvar{G}$ a closed subset. Then $M$ is $\Delta$-unramified if and only if
		\begin{enumerate}[]\item The natural map
			\[
			\FMSHunr{M} \rightarrow \FMSTunr{M}
			\]
			is an isomorphism for all $\chi \notin \Delta$.
			\item The complexes $\FMQunr{M}$ are lisse for all $\chi \notin \Delta$.
			\item The complexes $\FMQunr{M}$ are perverse for all $\chi \notin \Delta$.
		\end{enumerate}
	\end{proposition}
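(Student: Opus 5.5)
The plan is to unwind Definition \ref{DEF_unramifiedhcar} at each $\chi \notin \Delta$, using the shift formula of Theorem \ref{THM_FTBasic}(5). This lets us move the base point $0$ of $\widehat{U}$ to an arbitrary closed point $\psi$. Fix $\pi\colon G \rightarrow U$ as in the definition. For $\psi \in \cvar{U}$ we have
\[
m_\psi^*\,\text{FT}(\pi_?(M_\chi)) = \text{FT}(\pi_?(M_\chi)_\psi) = \text{FT}(\pi_?(M_{\chi\pi^*(\psi)})),
\]
where the second equality is Proposition \ref{PROP_ProjectionFormula2}, applicable because $\pi$ has connected kernel (the maximal semiabelian subgroup). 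So the germ of $\FMQunr{M}$ at $\psi$ is the translate of the germ at $0$ of $\text{FT}(\pi_?(M_{\chi\pi^*\psi}))$. By definition, $\psi \in \dualUni_{\chi,\Delta}$ exactly when $\chi\pi^*(\psi) \notin \Delta$. I read $\FMQunr{M}$ in the statement as the restricted transform $\FMQ{M}$ on $\dualUni_{\chi,\Delta}$; otherwise the equivalence would fail at points $\psi$ with $\chi\pi^*\psi\in\Delta$.

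For the forward direction, assume $M$ is $\Delta$-unramified, and let $\chi \notin \Delta$ and $\psi \in \dualUni_{\chi,\Delta}$. Since $\chi\pi^*\psi \notin \Delta$, the three conditions of Definition \ref{DEF_unramifiedhcar} hold at $0$ for $\chi\pi^*\psi$. Translating by $m_\psi$, we find three things at $\psi$: the forget-supports map of $\FMQunr{M}$ is an isomorphism on stalks at $\psi$ (indeed on a neighborhood), and both complexes are perverse and lisse on a neighborhood of $\psi$. Isomorphism, perversity and lisseness are all local on $\widehat{U}_{\overline k}$, so it suffices to check them on a cover by open neighborhoods of closed points. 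For the isomorphism this works because a map of constructible complexes that is an isomorphism on stalks at all closed points is an isomorphism, since closed points are dense in every constructible stratum. This gives (1)–(3) on $\dualUni_{\chi,\Delta}$.

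For the converse, take $\chi \notin \Delta$ and $\psi=0$. Then $0 \in \dualUni_{\chi,\Delta}$, and this set is open by the continuity of $\pi^*$ (Lemma \ref{LEM_ContinuousUnipotent} and the product description of the topology). Restricting (1)–(3) to this open neighborhood of $0$ gives the three conditions of Definition \ref{DEF_unramifiedhcar} at $\chi$.

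The main obstacle I expect is the bookkeeping between closed points of $\widehat{U}_{\overline{k}}$, arithmetic characters, and the arithmetic-versus-geometric setting. The shift formula of Theorem \ref{THM_FTBasic}(5) is stated for closed points, so for $\psi$ defined only over $k_n$ one should base change to $k_n$; lisseness, perversity and isomorphism can all be checked after this extension. A second point needing care is verifying that $\dualUni_{\chi,\Delta}$ is genuinely open in $\widehat{U}_{\overline k}$ and not just in the induced topology on closed points. This follows from Definition \ref{DEF_charuniclosed}, since closed subsets of $\cvar{U}$ are by definition sets of closed points of closed subschemes.
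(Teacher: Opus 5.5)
Your proof is correct and is essentially the paper's argument. The paper's one-line proof is an appeal to the shift formula of Theorem \ref{THM_FTBasic}; combined with the projection formula as you do, it transports the conditions at $0$ for $\chi\pi^*(\psi)$ to the conditions at $\psi$ for $\chi$. Your reading of the statement as concerning the restricted transforms $\FMQ{M}$ on $\dualUni_{\chi,\Delta}$ is the right one: with the unrestricted transforms the forward implication fails, and the restricted version is how the proposition is used later, e.g.\ in Proposition \ref{PROP_GysinUnramifiedCriterion} and Theorem \ref{THM_isoconv}.
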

	\begin{proof}
		This follows from the formula for shifting a Fourier transform  (See Theorem \ref{THM_FTBasic}).
	\end{proof}
	\subsection{Canonical map and Fourier transform} Currently, it is not even clear why the above definition implies that an unramified character is weakly unramified. For this purpose, we introduce the canonical map. Let $x \in X$ be a point. We denote the inclusion of the point $x$ by $i_x\colon x \rightarrow X$. For each complex $K \in \derCat{c}{X}{\Qbarl}$, we introduce the \textit{canonical map}
	\[
	i^*_xK\otimes i_x^!\Qbarl \rightarrow i_x^!K.
	\]
	The canonical map, applied to a Fourier transform, is the forget supports map. Moreover, the canonical map is an isomorphism for lisse complexes. These two facts are what make it so useful for us. The following proposition describes the canonical map applied to a Fourier transform: 
	\begin{proposition}\label{PROP_GysinUnipotent}
		Let $U$ be a unipotent group of dimension $d$, $K \in \derCat{c}{U}{\Qbarl}$, and $\psi \in \widehat{U}$. The canonical map
		\[
		i_\psi^*\text{FT}(K)\otimes i_\psi^!\Qbarl \rightarrow i_\psi^!\text{FT}(K)
		\]
		is canonically isomorphic to the forget supports map
		\[
		H^*_c(U, K_\psi)(-d)[-d] \rightarrow H^*(U, K_\psi)(-d)[-d]
		\]
		for each closed point $\psi \in \widehat{U}.$
	\end{proposition}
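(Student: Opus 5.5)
We reduce both sides to the stalks and costalks of $\text{FT}(K)$ at $\psi$, identify them with cohomology of $K_\psi$, and then check that the canonical map corresponds to forget supports. Write $\text{FT}(K) = \pi_{\dualUni!}(\pi_U^*K\otimes\SL[d])$ and use the isomorphism $\text{FT}_! \cong \text{FT}_*$ of Theorem \ref{THM_FTBasic}(1).

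\textbf{Step 1: the stalk.} By proper base change along $i_\psi\colon\{\psi\}\to\dualUni$,
\[
i_\psi^*\text{FT}(K) = H^*_c(U, K\otimes\SL|_{U\times\{\psi\}})[d] = H^*_c(U, K_\psi)[d],
\]
using $\SL|_{U\times\{\psi\}} = \SL_\psi$ from Lemma \ref{LEM_UnipotentDual}.

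\textbf{Step 2: the costalk.} Writing $\text{FT}(K) = \pi_{\dualUni*}(\pi_U^*K\otimes\SL[d])$ and using $i_\psi^!\pi_{\dualUni*} = \pi_*\,i_{U\times\psi}^!$, we get
\[
i_\psi^!\text{FT}(K) = H^*\bigl(U, i^!_{U\times\psi}(\pi_U^*K\otimes\SL)\bigr)[d].
\]
Since $\pi_U^*K\otimes\SL$ is smooth pullback tensored with a lisse sheaf in the $\dualUni$-direction, and $\dualUni$ is smooth of dimension $d$, we have $i^!_{U\times\psi}(\cdot) = i^*_{U\times\psi}(\cdot)(-d)[-2d]$. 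Hence $i_\psi^!\text{FT}(K) = H^*(U,K_\psi)(-d)[-d]$. Similarly $i_\psi^!\Qbarl = \Qbarl(-d)[-2d]$, so the source of the canonical map is $H^*_c(U,K_\psi)(-d)[-d]$.

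\textbf{Step 3: the canonical map is forget supports.} This is the main obstacle. We use the base change square for $\pi_{\dualUni}$ along $i_\psi$ and the natural transformations $\pi_!\to\pi_*$ and $i^*(-)\otimes i^!\Qbarl\to i^!(-)$. The canonical map for $\text{FT}_!(K)$ factors as
\[
i_\psi^*\pi_!(F)\otimes i_\psi^!\Qbarl \cong \pi_!(i^*F\otimes i^!\Qbarl) \to \pi_!(i^!F) \to \pi_*(i^!F) \cong i_\psi^!\pi_*F,
\]
where the first isomorphism is base change together with the projection formula, and $F = \pi_U^*K\otimes\SL[d]$.

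The middle map $i^*F\otimes i^!\Qbarl\to i^!F$ is the canonical map on $U\times\dualUni$ for the closed immersion $U\times\psi$. It is an isomorphism because $F$ is locally a pullback from $U$ tensored with a lisse sheaf, i.e.\ "lisse in the $\dualUni$-direction"; we would verify this by reducing via smooth base change to the case of a product with a smooth factor. After these identifications, what remains is $\pi_!\to\pi_*$ applied to $K_\psi(-d)[-d]$ on $U$, which is exactly the forget supports map.

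Finally, we must check that the composite agrees with the map through $\text{FT}_!\cong\text{FT}_*$. This is a compatibility of base change morphisms with the forget supports transformation, which we expect to follow from the standard naturality of $\pi_!\to\pi_*$ with respect to base change (as in SGA4, XVII).
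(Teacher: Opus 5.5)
Your proposal is correct and is essentially the paper's proof. Your Step 3 factorization is exactly Proposition \ref{PROP_GysinDiagram}; the base-change compatibility you defer to SGA4-style naturality is what the paper checks in the appendix via Lemmas \ref{LEM_Diagram1} and \ref{LEM_Diagram2}, and the forget-supports step is invertible by Theorem \ref{THM_FTBasic}(1). Your ``lisse in the $\widehat{U}$-direction'' argument for the middle map is the paper's observation that $\pi_U^*K\otimes\SL$ is universally locally acyclic over $\widehat{U}$, so Proposition \ref{PROP_GysinIsomorphismULA} applies, which makes your separate stalk and costalk computations in Steps 1 and 2 redundant once $i_\psi^!\Qbarl=\Qbarl(-d)[-2d]$ is used.
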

	Before we begin proving this proposition, let us look at an example where we can compute the canonical map.
	\begin{example}\label{EXP_GysinWUnrVSUnr}
		Consider $U = \BA^1$ over a finite field $k$. Let $M$ be a perverse sheaf on $U$ such that the Fourier transform $\text{FT}(M)$ has no punctual sections. Put $$F := H^{-1}(\text{FT}(M))_{\overline{\eta}}.$$ Consider a character $\psi \in \widehat{U}$. By Proposition \ref{PROP_GysinUnipotent}, the forget supports map
		\[
		H^*_c(U, M_\psi) \rightarrow H^*(U,M_\psi)
		\]
		is equal to a shifted twist of the canonical map. Because $\BA^1$ is smooth of dimension 1 and $\text{FT}(M)$ has no punctual sections, we see that the canonical map identifies with the natural map
		\[
		F^I(-1)[-1] \rightarrow F_I(-1)[-1]
		\] 
		from the invariants of $F$ at $\psi$ to the coinvariants. We see that the condition of being weakly unramified is strictly weaker than the condition of being unramified. For $\BA^1$, being weakly unramified corresponds to the Gysin map being an isomorphism. This is strictly weaker than the condition of being unramified, which requires the sheaf to be lisse.
	\end{example}
	The proof of the above Proposition is based on the following diagram. 
	\begin{proposition}\label{PROP_GysinDiagram}
	Let $\pi\colon X \rightarrow Y$ be a smooth morphism of smooth schemes. We consider $K \in \derCat{c}{X}{\Qbarl}$ and $y \in Y$. We have the sequence of morphisms
	\begin{align*}
		H^*_c(X_y, i_y^* (K) \otimes i_y^!\Qbarl)  &= i_y^*\pi_!(K)\otimes i_y^!\Qbarl \\ &\xrightarrow{(1)} i_y^!\pi_!(K)
		\\ &\xrightarrow{(2)} i_y^!\pi_*(K)
		\\ &=   \pi_*i_y^! (K)
		\\ &=   H^*(X_y, i_y^! K)
	\end{align*}
	This morphism coincides with the natural forget support map composed with the canonical map
	\[
	H^*_c(X_y, i_{y}^*(K)\otimes i_y^! \Qbarl) \rightarrow  H^*(X_y, i_y^!(K) ).
	\]
	The map $(1)$ is the canonical map applied to $\pi_!K$. The map $(2)$ is the natural forget supports map.
\end{proposition}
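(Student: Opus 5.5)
The plan is to build the composite arrow step by step from the standard six-functor compatibilities, and then compare the result with the composite of forget-supports and the canonical map on the fibre $X_y$. Throughout, write $i_y\colon y\to Y$ and $i'_y\colon X_y\to X$ for the inclusions, and $\pi_y\colon X_y\to y$ for the restriction of $\pi$.

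\textbf{Identifications at the two ends.}
\begin{enumerate}
\item At the source, use proper base change $i_y^*\pi_!K=\pi_{y!}i'^*_yK$ together with the projection formula. Since $i_y^!\Qbarl$ is a complex on a point, this gives $\pi_{y!}(i'^*_yK)\otimes i_y^!\Qbarl=\pi_{y!}\bigl(i'^*_yK\otimes \pi_y^*i_y^!\Qbarl\bigr)$.
\item Because $\pi$ is smooth, smooth base change for $!$-pullbacks gives $\pi_y^*i_y^!\Qbarl=i'^!_y\Qbarl$. This is the meaning of $i_y^!\Qbarl$ on $X_y$ in the statement.
\item At the target, use the dual base change $i_y^!\pi_*=\pi_{y*}i'^!_y$, obtained by adjunction from $\pi^*i_{y*}$ and $i'_{y*}\pi_y^*$.
\end{enumerate}

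\textbf{Comparing the two composites.} Three standard facts are needed.
\begin{enumerate}
\item The canonical map $i_y^*L\otimes i_y^!\Qbarl\to i_y^!L$ is natural in $L$.
\item The forget supports map $\pi_!\to\pi_*$ is compatible with both base changes. That is, $i_y^!$ applied to $\pi_!K\to\pi_*K$ matches $\pi_{y!}\to\pi_{y*}$ after applying $i'^!_y$, and $i'^!_y$ commutes with $\pi_{y!}$ via the base change $i_y^!\pi_!=\pi_{y!}i'^!_y$.
\item The canonical map is compatible with smooth pullback and proper pushforward.
\end{enumerate}
Given these, I would factor $(2)\circ(1)$ as the canonical map for $L=\pi_!K$, then base change, then forget supports. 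I would then rewrite the first two factors using the square relating the canonical map for $\pi_!K$ at $y$ with $\pi_{y!}$ applied to the canonical map for $K$ along $i'_y$. The result is
\[
\pi_{y!}\bigl(i'^*_yK\otimes i'^!_y\Qbarl\bigr)\to\pi_{y!}i'^!_yK\to\pi_{y*}i'^!_yK,
\]
and by naturality of forget supports this equals forget supports followed by $\pi_{y*}$ applied to the canonical map. These are the same map, which is the claim.

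\textbf{Main obstacle.} The main work is commuting the square in step (3), i.e.\ that the canonical map commutes with $\pi_!$ and base change. I would verify it by unwinding the canonical map as adjoint to
\[
i_{y!}\bigl(i_y^*L\otimes i_y^!\Qbarl\bigr)=L\otimes i_{y!}i_y^!\Qbarl\to L,
\]
where the equality is the projection formula and the arrow comes from the counit $i_{y!}i_y^!\Qbarl\to\Qbarl$. Then:
\begin{enumerate}
\item Apply $\pi_!$ and the projection formula $\pi_!(K\otimes\pi^*F)=\pi_!K\otimes F$.
\item Use that, for smooth $\pi$, the counit $i'_{y!}i'^!_y\Qbarl\to\Qbarl$ is $\pi^*$ of the counit $i_{y!}i_y^!\Qbarl\to\Qbarl$. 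This holds because smooth base change identifies $\pi^*i_y^!$ with $i'^!_y\pi^*$ compatibly with adjunctions.
\end{enumerate}
Everything is then a diagram chase of adjunction units and counits, with no further input.
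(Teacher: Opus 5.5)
\paragraph{Gap: a non-invertible base change.} The step that fails is the second of your ``three standard facts'' and the factorization built on it, namely canonical map for $\pi_!K$, then base change, then forget supports. You treat $i_y^!\pi_!\cong\pi_{y!}i'^!_y$ as an isomorphism. For smooth non-proper $\pi$ it is not. Take $Y$ a smooth curve, $y$ a closed point, and $\pi=j\colon Y\setminus\{y\}\hookrightarrow Y$, which is smooth. With $K=\Qbarl$ the fibre is empty, so $\pi_{y!}i'^!_yK=0$. On the other hand, the triangle $i_{y*}i_y^!j_!\Qbarl\to j_!\Qbarl\to j_*\Qbarl\xrightarrow{+}$ gives $i_y^!j_!\Qbarl\cong i_y^*j_*\Qbarl[-1]\neq 0$.

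In general there is only the base change morphism $\beta\colon\pi_{y!}i'^!_yK\to i_y^!\pi_!K$, adjoint to $\pi_!$ of the counit $i'_{y!}i'^!_yK\to K$. It points the wrong way for your factorization. Likewise, the claim that $i_y^!(\pi_!K\to\pi_*K)$ ``matches'' $\pi_{y!}\to\pi_{y*}$ after $i'^!_y$ has no meaning until you say how the sources are compared.

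\paragraph{Repair within your framework.} Your adjunction computation in the ``main obstacle'' part never needs $\beta$ to be invertible. Read correctly, it shows that after the source identification the canonical map $(1)$ equals $\beta\circ\pi_{y!}(c_K)$, where $c_K\colon i'^*_yK\otimes i'^!_y\Qbarl\to i'^!_yK$ is the canonical map on $X$.

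What is then missing is a separate compatibility. You need that $\beta$, followed by $i_y^!(\pi_!K\to\pi_*K)$ and the isomorphism $i_y^!\pi_*\cong\pi_{y*}i'^!_y$, is the forget supports map $\pi_{y!}i'^!_yK\to\pi_{y*}i'^!_yK$. For non-proper $\pi$ this is not a formality. One proves it by choosing a compactification $\pi=\overline{\pi}\circ u$, writing $\pi_!=\overline{\pi}_*u_!$, and reducing to the open immersion $u$. Naturality of forget supports then finishes the argument as you say. This is exactly where the paper uses the compactification in Lemma~\ref{LEM_Diagram2}.

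\paragraph{Comparison with the paper.} The paper organizes the argument the other way around. It first proves compatibility of the canonical map with $f_*$, landing in $f'_*g'^!$, where base change is always invertible (Lemma~\ref{LEM_Diagram1}, by a diagram chase after reducing to $g$ proper). It obtains the $f_!$ version by applying this to $u_!K$, and connects the two diagrams by forget supports. Smoothness is used for the base change $f'^*g^!\Qbarl\cong g'^!f^*\Qbarl$, which is your end identification $\pi_y^*i_y^!\Qbarl\cong i'^!_y\Qbarl$.

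Your direct use of the adjunction $i_{y!}\dashv i_y^!$ and the projection formula for $\pi_!$ is more economical on the $!$-side. The cost is that it routes the argument through $\pi_{y!}i'^!_y$, and hence through the non-invertible $\beta$.
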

This diagram is verified in the appendix, see Section \ref{SUBSEC_CanonicalMap}.
	We evaluate the required term via the following proposition.	
	\begin{proposition}\label{PROP_GysinIsomorphismULA}
		Let $\pi\colon X \rightarrow Y$ be a morphism. Let $K \in \derCat{c}{Y}{\Qbarl}$ be a complex on $X$ which is universally locally acyclic over $Y$ and $L \in \derCat{c}{X}{\Qbarl}$. The canonical map
		\[
		i_y^*(K)\otimes i_y^!(L) \rightarrow i_y^!(K\otimes L)
		\]
		is an isomorphism for all $y \in Y$. 
	\end{proposition}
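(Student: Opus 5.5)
The idea is to reduce the statement to the basic compatibility of universally locally acyclic complexes with $*$-pushforward along arbitrary morphisms of the base. I read the statement with $\pi\colon X \rightarrow Y$, $K \in \derCat{c}{X}{\Qbarl}$ universally locally acyclic over $Y$, $L \in \derCat{c}{Y}{\Qbarl}$, and $K\otimes L$ meaning $K\otimes\pi^*L$. The points $y$ are taken on $Y$ (closed points of $Y$ in the first instance), and $i_y$ also denotes the base-changed inclusion $X_y \rightarrow X$.

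First I fix the canonical map. Let $j\colon V = Y\setminus\{y\} \rightarrow Y$ be the open complement, with $j$ also denoting its base change to $X$. The map comes from the localization triangle
\[
i_{y*}i_y^!F \rightarrow F \rightarrow j_*j^*F \rightarrow
\]
applied to $F = \pi^*L$ and to $F = K\otimes\pi^*L$. Tensoring the first triangle with $K$ and comparing with the second gives a morphism of triangles:
\[
K\otimes i_{y*}i_y^!\pi^*L \rightarrow i_{y*}i_y^!(K\otimes\pi^*L), \qquad K\otimes j_*j^*\pi^*L \rightarrow j_*j^*(K\otimes \pi^*L).
\]
Here the middle arrow is the identity. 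By the projection formula for the closed immersion, the first term is $i_{y*}(i_y^*K\otimes i_y^!\pi^*L)$. Using smooth base change is not available in general, so I will instead identify $i_y^!\pi^*L$ with $\pi_y^* i_y^! L$ on the fibre via the same triangle argument. One checks that this first arrow is $i_{y*}$ of the canonical map, by unwinding adjunctions; this is routine and parallels the verification in Section \ref{SUBSEC_CanonicalMap}.

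By the five lemma it therefore suffices to prove that the natural map
\[
K\otimes \pi^*j_*M \rightarrow j_*(j^*K\otimes \pi_V^*M)
\]
is an isomorphism for $M = j^*L$. This is the key step and also the main obstacle. For proper maps it is just the projection formula, but $j$ is an open immersion, and this is exactly where universal local acyclicity enters. I would invoke the standard consequence of ULA from Deligne's ``Théorèmes de finitude'' in SGA $4\tfrac12$ (equivalently, Lu--Zheng's characterization of ULA objects): if $K$ is ULA over $Y$, then for every morphism $h\colon Y' \rightarrow Y$ and every $M \in \derCat{c}{Y'}{}$, the natural map
\[
K\otimes \pi^*h_*M \rightarrow h'_*(h'^*K\otimes\pi'^*M)
\]
is an isomorphism. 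If I want to avoid citing the general form, I would prove it directly as follows. Both sides are compatible with distinguished triangles in $M$, so one dévisses $M$ to sheaves of the form $g_*N$ with $N$ lisse on a regular locally closed piece of $V$. After a stratification and alterations, one reduces to $M$ lisse and to $h$ the complement of a normal crossings divisor. There the statement is the local acyclicity computation on strictly henselian neighbourhoods: the stalk of the right-hand side at $\bar x$ is $R\Gamma$ of $K$ restricted to the Milnor fibre $X_{(\bar x)}\times_{Y_{(\bar y)}}(\text{punctured neighbourhood})$, and ULA identifies this with $K_{\bar x}\otimes (h_*M)_{\bar y}$.

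Finally, for non-closed points $y$ I would reduce to the closed case. One replaces $Y$ by the localization at $y$ or by the closure $\overline{\{y\}}$ intersected with a suitable open subset, noting that ULA is stable under base change. Alternatively, one writes $i_y$ as a limit of closed immersions of locally closed subschemes and uses that both sides commute with the relevant limits of constructible complexes.
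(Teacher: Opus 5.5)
There is a genuine gap in your reduction step. You take the localization triangle \emph{on $X$} for $F=\pi^*L$. Write $i\colon X_y\to X$ and $j_X\colon X_V\to X$ for the base changes. Your comparison of triangles then has first map $i_*(i^*K\otimes i^!\pi^*L)\to i_*i^!(K\otimes\pi^*L)$ and third map
\[
K\otimes j_{X*}\pi_V^*M\longrightarrow j_{X*}(j_X^*K\otimes\pi_V^*M),\qquad M=j^*L.
\]
This is not the map $K\otimes\pi^*j_*M\to j_{X*}(j_X^*K\otimes\pi_V^*M)$ that universal local acyclicity controls. The two differ by the base change morphism $\pi^*j_*M\to j_{X*}\pi_V^*M$. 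Your proposed bridge, identifying $i^!\pi^*L$ with $\pi_y^*i_y^!L$ ``via the same triangle argument'', is exactly the claim that this morphism is an isomorphism. That claim is false for non-smooth $\pi$, with no reference to $K$ at all. Take $\pi\colon\mathbb{A}^2\to\mathbb{A}^1$, $(x,z)\mapsto xz$, $y=0$, $L=\overline{\mathbb{Q}}_\ell$. The stalk of $i^!\overline{\mathbb{Q}}_\ell$ at the node of $\{xz=0\}$ is computed from the cohomology of the complement of two transversal branches in the strict henselization. It has rank $2$ in degree $2$ and rank $1$ in degree $3$, whereas $\pi_y^*i_y^!\overline{\mathbb{Q}}_\ell=\overline{\mathbb{Q}}_\ell(-1)[-2]$. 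So, as written, the five-lemma step does not reduce to the ULA projection formula you then invoke.

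The repair is to use the triangle on the base. Apply $K\otimes\pi^*(-)$ to $i_{y*}i_y^!L\to L\to j_*j^*L\to$ and compare with $i_*i^!(K\otimes\pi^*L)\to K\otimes\pi^*L\to j_{X*}j_X^*(K\otimes\pi^*L)\to$.
\begin{itemize}
\item Base change for the closed immersion $i_y$, together with the projection formula, identifies the first term with $i_*(i^*K\otimes\pi_y^*i_y^!L)$.
\item The third map is now exactly the ULA projection map, hence an isomorphism.
\item The first map is forced, because there are no nonzero maps from a complex supported on $X_y$ into $j_{X*}(-)$ or its shifts. It is therefore $i_*$ of $i^*K\otimes\pi_y^*i_y^!L\to i^!(K\otimes\pi^*L)$, and it is an isomorphism.
\end{itemize}
This is the form in which the statement is true and in which the paper takes it from \cite[Prop.~2.3]{Lu2022}. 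Here $i_y^!(L)$ must be read as $\pi_y^*i_y^!L$, i.e.\ the paper's canonical map precomposed with the base change map $\pi_y^*i_y^!L\to i^!\pi^*L$. In every use in the paper ($U\times\widehat{U}\to\widehat{U}$, the identity, $\mathbb{A}^N_Y\to Y$) the morphism $\pi$ is smooth, so $\pi_y^*i_y^!\cong i^!\pi^*$ and nothing is lost.

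With this repair your route is a legitimate alternative to the paper's bare citation. It reduces the $!$-restriction statement to the ULA projection formula for $j_*$, which remains a cited input; your alteration sketch of that formula is not a proof and is not needed. The same argument also works for any closed immersion $Z\hookrightarrow Y$. So non-closed $y$ are handled by taking $Z=\overline{\{y\}}$ and restricting to its generic point, without any limit argument.
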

	\begin{proof}
		This is proven in \cite[Prop.~2.3]{Lu2022}.
	\end{proof}
	The following corollary could be proven by noting that the formation of the canonical map commutes with etale base change. It also follows from the Proposition \ref{PROP_GysinIsomorphismULA}.
	\begin{corollary}\label{COR_gysinisolisse}
		Let $K \in \derCat{c}{X}{\Qbarl}$ be lisse and $x \in X$. The canonical map
		\[
		i_x^*(K)\otimes i^!_x\Qbarl \rightarrow i_x^!(K)
		\]
		is an isomorphism. 
	\end{corollary}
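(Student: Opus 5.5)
The statement to prove is Corollary \ref{COR_gysinisolisse}: for lisse $K$ on $X$ and a point $x$, the canonical map $i_x^*(K)\otimes i_x^!\Qbarl \rightarrow i_x^!(K)$ is an isomorphism. I would deduce it directly from Proposition \ref{PROP_GysinIsomorphismULA}.

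In that proposition take $\pi = \text{id}\colon X \rightarrow X$, let the locally acyclic complex be the given lisse $K$, and take $L = \Qbarl$. Then $K\otimes L = K$, and the canonical map of Proposition \ref{PROP_GysinIsomorphismULA} becomes exactly $i_x^*(K)\otimes i_x^!\Qbarl \rightarrow i_x^!(K)$. It remains to check that $K$ is universally locally acyclic over $X$ relative to the identity. A lisse complex has locally constant cohomology sheaves with finitely generated stalks. After any base change $X' \rightarrow X$, its pullback is again lisse. For the identity map, local acyclicity reduces to the following: for every geometric point $\overline{x}$ and every generization $\overline{t}$ in the strict henselization, the restriction map from $R\Gamma(X_{(\overline{x})}, K)$ to $R\Gamma(X_{(\overline{x})}\times_{X_{(\overline{x})}}\overline{t}, K)=K_{\overline{t}}$ is an isomorphism. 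Since $K$ is constant on the strictly henselian local scheme, both sides equal $K_{\overline{x}}$ and the specialization map is an isomorphism. This holds universally because lisseness is stable under base change.

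As an alternative, which the paper mentions, I would check that the formation of the canonical map commutes with étale base change. The question is étale local around $x$, so one reduces to $K$ a constant complex $V\otimes\Qbarl$ (or a successive extension of such), by dévissage on the truncation triangles and the five lemma. For $K = \Qbarl\otimes V$ with $V$ a finite-dimensional vector space, the map is the identity of $V\otimes i_x^!\Qbarl$. In $\ell$-adic coefficients the local system is only locally constant after passing to a lattice mod $\ell^n$, so this route needs the usual limit argument.

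The only real point to verify is compatibility of conventions: the canonical map in Proposition \ref{PROP_GysinIsomorphismULA} must be the same morphism as the one defined at the start of this subsection. Both are adjoint to $i_{x!}(i_x^*K\otimes i_x^!\Qbarl) \rightarrow K\otimes i_{x!}i_x^!\Qbarl \rightarrow K$, built from the projection formula and the counit. So I expect no obstacle beyond the bookkeeping of the universal local acyclicity of lisse complexes.
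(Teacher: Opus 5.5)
Your proposal is correct and follows the paper's proof. The paper also applies Proposition \ref{PROP_GysinIsomorphismULA} with $\pi=\mathrm{id}$ and $L=\Qbarl$, using that a complex is universally locally acyclic over the identity exactly when it is lisse. Your specialization-map check of that fact fills in what the paper only asserts, and your étale-local alternative is the one the paper mentions in passing.
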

	\begin{proof}
		A complex $K$ is universally locally acyclic with respect to the identity if and only if it is lisse. We can apply Proposition \ref{PROP_GysinIsomorphismULA} to obtain the claim.
	\end{proof}
	\begin{proof}[Proof (Prop. \ref{PROP_GysinUnipotent}).]
		By Proposition \ref{PROP_ftfuncgroup}, we can replace $U$ by $U^{\text{red}}$. Hence we can assume $U$ is smooth. The Fourier transform is given by
		\[
		\text{FT}(K) = \pi_{\widehat{U}!} (\pi_{U}^* (K)\otimes \SL[d]).
		\]
		We apply Proposition \ref{PROP_GysinDiagram} to compute the canonical map for the Fourier transform at a fixed closed point $\psi \in \widehat{U}$ with $\pi = \pi_{\widehat{U}}$. Note that the map
		\[
		\pi_{\widehat{U}!} (\pi_{U}^* (K)\otimes \SL[d])\rightarrow  \pi_{\widehat{U}*} (\pi_{U}^* (K)\otimes \SL[d])
		\]
		is an isomorphism by Theorem \ref{THM_FTBasic}. Hence the map (2) in Proposition \ref{PROP_GysinDiagram} is an isomorphism. Thus we obtain an isomorphism of the canonical map applied to the Fourier transform with the composite
		\[
		H^*_c(U, i_\psi^*(\pi_{U}^* (K)\otimes \SL[d])\otimes i_\psi^!\Qbarl[d]) \rightarrow H^*(U, i_\psi^!(\pi_{U}^* (K)\otimes \SL[d]))
		\]
		of the forget supports map and the canonical map applied to the complex
		\[
		\pi_{U}^* (K)\otimes \SL[d].
		\]
		We prove that the canonical map applied to this complex is an isomorphism. Note that $\pi_U^* K$ is universally locally acyclic over $\widehat{U}$ by \cite[Cor.~9.3.4]{LeiFu}. Thus $\pi_U^*K\otimes\SL$ is universally locally acyclic. Proposition \ref{PROP_GysinIsomorphismULA} shows that the canonical map
		\[
		i_\psi^*(\pi_U^*(K)\otimes \SL)\otimes i_\psi^!\Qbarl \rightarrow i_\psi^!(\pi_U^*(K)\otimes \SL)
		\]
		is an isomorphism. Therefore, the canonical map applied to the Fourier transform is isomorphic to the forget supports map
		\[
		H^*_c(U, i_\psi^*(K)\otimes i_\psi^!\Qbarl[d]) \rightarrow H^*(U, i_\psi^*(K)\otimes i_\psi^!\Qbarl[d]).
		\]
		We evaluate $i_\psi^!(\Qbarl)$ from the smoothness of $U$. We have
		\[
		i_\psi^!\Qbarl = D(i_\psi^*D(\Qbarl)) = \Qbarl(-d)[-2d] 
		\]
		because $\widehat{U}$ is smooth of dimension $d$. Therefore, the canonical map applied to the Fourier transform is isomorphic to
		\[
		H^*_c(U, K_\psi)(-d)[-d] \rightarrow H^*(U, K_\psi)(-d)[-d].
		\]
	\end{proof}

	This allows us to evaluate the canonical map on the Fourier-Mellin transforms.
	\begin{proposition}\label{PROP_GysinGeneralGroup}
		Let $G$ be a group, whose maximal connected unipotent subgroup has dimension $d$. Let $K \in \derCat{c}{G}{\Qbarl}$ be a complex. The canonical map
		\[ 
		i_0^*\FMSHunr{K}\otimes i_0^!\Qbarl \rightarrow i_0^!\FMSTunr{K}
		\]
		is canonically isomorphic to the natural map
		\[
		H^*_c(G, K_\chi)(-d)[-d] \rightarrow H^*(G, K_\chi)(-d)[-d].
		\]
	\end{proposition}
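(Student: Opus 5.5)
The natural plan is to reduce to Proposition \ref{PROP_GysinUnipotent} by writing the Fourier-Mellin transform as a Fourier transform on $U$. Choose $\pi\colon G \rightarrow U$ as in the definition of the Fourier-Mellin transform, so that $\dim U = d$ and
\[
\FMSHunr{K} = \text{FT}(\pi_!(K_\chi)), \qquad \FMSTunr{K} = \text{FT}(\pi_*(K_\chi)).
\]
By Theorem \ref{THM_FTBasic}(1), the natural map $\text{FT}_! \rightarrow \text{FT}_*$ is an isomorphism. The same argument that gives this isomorphism shows that the forget supports map $\pi_!(K_\chi)\rightarrow \pi_*(K_\chi)$ induces a map $\text{FT}(\pi_!(K_\chi)) \rightarrow \text{FT}(\pi_*(K_\chi))$. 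The canonical map in the statement is then the canonical map for $\text{FT}(\pi_!(K_\chi))$ composed with $i_0^!$ of this map. This composite is what must be identified.

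First, I apply Proposition \ref{PROP_GysinUnipotent} to the complex $\pi_!(K_\chi)\in\derCat{c}{U}{\Qbarl}$ at $\psi = 0$. This identifies the canonical map $i_0^*\text{FT}(\pi_!K_\chi)\otimes i_0^!\Qbarl \rightarrow i_0^!\text{FT}(\pi_!K_\chi)$ with the forget supports map
\[
H^*_c(U,\pi_!K_\chi)(-d)[-d]\rightarrow H^*(U,\pi_!K_\chi)(-d)[-d].
\]
Next, the map $i_0^!\text{FT}(\pi_!K_\chi)\rightarrow i_0^!\text{FT}(\pi_*K_\chi)$ must be identified with $H^*(U,\pi_!K_\chi)\rightarrow H^*(U,\pi_*K_\chi)$, up to the same twist and shift. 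This is the stalk computation $i_\psi^!\text{FT}(L) = H^*(U, L_\psi)(-d)[-d]$, natural in $L$; it appears in the proof of Proposition \ref{PROP_GysinUnipotent} and in Lemma \ref{LEM_fmstalks}. Composing the two identifications yields
\[
H^*_c(U,\pi_!K_\chi)\rightarrow H^*(U,\pi_!K_\chi)\rightarrow H^*(U,\pi_*K_\chi).
\]
The first term equals $H^*_c(G,K_\chi)$ and the last equals $H^*(G,K_\chi)$. The composite is the forget supports map for $G$, because forget supports is compatible with composition of pushforwards: $(\pi_U)_!\pi_! \rightarrow (\pi_U)_*\pi_! \rightarrow (\pi_U)_*\pi_*$ equals $(\pi_U\pi)_!\rightarrow(\pi_U\pi)_*$.

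The main obstacle is compatibility: I must check that the identification in Proposition \ref{PROP_GysinUnipotent} is natural in $K$, so that it intertwines with the map induced by $\pi_!\rightarrow\pi_*$. I would argue this by going back to Proposition \ref{PROP_GysinDiagram}. All maps there come from natural transformations of the six functors: the canonical map, base change, and forget supports. They are functorial in the complex on $U$, so the square involving $\pi_!K_\chi \rightarrow \pi_*K_\chi$ commutes. A secondary point is reducing to $U$ smooth, which is needed for $i_0^!\Qbarl=\Qbarl(-d)[-2d]$; this reduction is handled exactly as before via Proposition \ref{PROP_ftfuncgroup}. Finally, the result is independent of the choice of $\pi$ by the same isogeny argument used to show the definition of unramified does not depend on $\pi$.
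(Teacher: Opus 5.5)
Your proposal is correct and matches the paper's proof. The paper applies Proposition \ref{PROP_GysinUnipotent} to both $\pi_!(K_\chi)$ and $\pi_*(K_\chi)$ and identifies the resulting commutative square (canonical maps along one direction, forget-supports maps along the other) with the square of forget-supports maps $H^*_c(U,\pi_?K_\chi)\to H^*(U,\pi_?K_\chi)$; it then reads off the diagonal via the Leray spectral sequence. That diagonal is exactly your composite $H^*_c(U,\pi_!K_\chi)\to H^*(U,\pi_!K_\chi)\to H^*(U,\pi_*K_\chi)$, and your explicit check that the identification is natural in the complex is what the paper uses implicitly when it identifies the two squares.
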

	\begin{proof}		
		The Fourier-Mellin transform is by definition
		\[
		\FMQunr{K} = \text{FT}(\pi_{?}(K_\chi))
		\]
		for $\chi \in \cvar{G}$. Therefore, Proposition \ref{PROP_GysinUnipotent} identifies the diagram
		\[
		\begin{tikzcd}
			i_0^*\FMSHunr{K}\otimes i_0^!\Qbarl \arrow[r] \arrow[d] & i_0^!\FMSHunr{K} \arrow[d] \\
			i_0^*\FMSTunr{K}\otimes i_0^!\Qbarl \arrow[r]           & i_0^!\FMSTunr{K}          
		\end{tikzcd}
		\]
		with the diagram
		\[
		\begin{tikzcd}
			{H^*_c(U, \pi_!(K_\chi))(-d)[-d]} \arrow[r] \arrow[d] \arrow[r] & {H^*(U, \pi_!(K_\chi))(-d)[-d]} \arrow[d] \\
			{H^*_c(U, \pi_*(K_\chi))(-d)[-d]} \arrow[r]                       & {H^*(U, \pi_*(K_\chi))(-d)[-d]}          
		\end{tikzcd}
		\]
		This identifies the diagonal map in the above diagram, the canonical map, with the diagonal map in the bottom diagram. The Leray spectral sequence identifies the diagonal map in the bottom diagram with a shifted twist of the forget supports map. 
	\end{proof}

	\begin{corollary}\label{COR_GysinUnrImpliesWunr}
		Let $M \in \Perv{}{G}$, and $\chi \in \cvar{G}$ a character. If $M$ is unramified at $\chi$, then $M$ is weakly unramified at $\chi$.
	\end{corollary}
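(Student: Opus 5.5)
The plan is to read off both conditions of weak unramifiedness from the stalk and costalk of the Fourier--Mellin transforms at $0$, using Proposition \ref{PROP_GysinGeneralGroup} to turn the forget supports map into the canonical map. Throughout, $d$ is the dimension of the maximal connected unipotent subgroup, so $\widehat{U}$ is smooth of dimension $d$. First, by the definition of unramified (equivalently Proposition \ref{PROP_fmcritunr}, after shrinking to a neighborhood of $0$), the complexes $\FMSHunr{M}$ and $\FMSTunr{M}$ are lisse and perverse near $0$. Moreover, the forget supports map $\FMSHunr{M}\rightarrow\FMSTunr{M}$ is an isomorphism at $0$, hence in a neighborhood of $0$, since both sides are lisse there.

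\textbf{Condition (1).} Consider the commutative square formed by the canonical maps for $\FMSHunr{M}$ and $\FMSTunr{M}$ at $0$, with vertical arrows induced by forget supports, as in the proof of Proposition \ref{PROP_GysinGeneralGroup}.
\begin{itemize}[label={}]
\item The vertical arrows are isomorphisms by the previous step.
\item The horizontal canonical maps are isomorphisms by Corollary \ref{COR_gysinisolisse}, because both complexes are lisse near $0$ and the canonical map is compatible with restriction to an open neighborhood.
\end{itemize}
Hence the diagonal map $i_0^*\FMSHunr{M}\otimes i_0^!\Qbarl\rightarrow i_0^!\FMSTunr{M}$ is an isomorphism. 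Proposition \ref{PROP_GysinGeneralGroup} identifies it with $H^*_c(G,M_\chi)(-d)[-d]\rightarrow H^*(G,M_\chi)(-d)[-d]$, so the forget supports map is an isomorphism. (The definition contains a typo, $H^*(G,M)$ instead of $H^*(G,M_\chi)$; I will prove the evident intended statement.)

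\textbf{Condition (2), degree concentration.} Near $0$, a lisse perverse complex on the smooth $d$-dimensional $\widehat{U}$ is a lisse sheaf placed in degree $-d$. Hence $i_0^*\FMSHunr{M}$ is concentrated in degree $-d$. Lemma \ref{LEM_fmstalks} gives $i_0^*\FMSHunr{M}=H^*_c(G,M_\chi)[d]$, so $H^*_c(G,M_\chi)$ is concentrated in degree $0$. The same holds for $H^*(G,M_\chi)$, either by condition (1) or directly: $i_0^!$ of a lisse sheaf in degree $-d$ sits in degree $d$, and this matches the second formula of Lemma \ref{LEM_fmstalks}.

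\textbf{Main obstacle.} The only delicate point is bookkeeping. One must check that the identifications in Proposition \ref{PROP_GysinGeneralGroup} and Lemma \ref{LEM_fmstalks} are compatible with the vertical forget supports arrows, so that "diagonal map is an isomorphism" genuinely transfers to the forget supports map on cohomology. One must also get the shifts and twists in $i_0^!\Qbarl=\Qbarl(-d)[-2d]$ right. I expect these checks to be routine given the appendix verification of Proposition \ref{PROP_GysinDiagram}.
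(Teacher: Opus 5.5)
Your proof is correct and takes the paper's route. The paper also combines three ingredients: Corollary \ref{COR_gysinisolisse} (the canonical map is an isomorphism for lisse complexes), the fact that a lisse perverse complex on the smooth $\widehat{U}$ has stalk at $0$ in degree $-d$, and the identification from Proposition \ref{PROP_GysinGeneralGroup}. Your explicit square with the forget-supports vertical arrows spells out why the mixed map $i_0^*\FMSHunr{M}\otimes i_0^!\Qbarl\rightarrow i_0^!\FMSTunr{M}$ is an isomorphism, a step the paper compresses into a single citation.
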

	\begin{proof}
		Suppose $M$ is unramified at $\chi \in \cvar{G}$. The canonical map
		\[
		i_0^*\FMSHunr{M}\otimes i_0^!\Qbarl \rightarrow i_0^!\FMST{M}
		\]
		is an isomorphism by Proposition \ref{COR_gysinisolisse}. Moreover, the stalk $i_0^*\FMSHunr{M}$ is the stalk of a perverse lisse sheaf on a smooth scheme. Hence it is concentrated in degree $-d$. The claim follows from Proposition \ref{PROP_GysinGeneralGroup}.
	\end{proof}
	Based on this description of the canonical map, we can obtain another criterion for a set of characters to be unramified.
	\begin{proposition}\label{PROP_GysinUnramifiedCriterion}
	Let $M \in \Perv{}{G}$ and $\Delta \subseteq \mathscr{C}(G)$ a closed subset. Consider the following conditions.
	\begin{enumerate}
		\item The natural map
		\[
		H^*_c(G, M_\chi) \rightarrow H^*(G, M_\chi)
		\]
		is an isomorphism for all $\chi \notin \Delta$.
		\item The complexes $H^*_?(G, M_\chi)$ are concentrated in degree zero for all $\chi \notin \Delta$. 
		\item[(3)] There exists a choice of support $?$ with
		\[
		\text{dim}(H^0_?(G, M_\chi)) = \text{dim}(H^0_?(G, M_{\chi'}))
		\]	
		for all $\chi, \chi' \notin \Delta$  
		\item[(3')] There exists a choice of support $?$ such that 
		\[
		\text{dim}(H^0_?(G, M_\chi)) = \text{dim}(M)
		\]
		for all $\chi \notin \Delta$. The dimension on the right refers to the Tannakian dimension.
	 \end{enumerate}
	If $M$ is $\Delta$-unramified, then (1) - (3') above are satisfied. If $M$ is geometrically semisimple or $G$ is affine, the following are equivalent:
\begin{enumerate}
		\item[(a)] The perverse sheaf $M$ is $\Delta$-unramified.
		\item[(b)] The conditions (1) - (3) are true.
		\item[(c)] The conditions (1) - (3') are true.
	\end{enumerate}
	
\end{proposition}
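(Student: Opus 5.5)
The plan is to reduce everything to statements about the Fourier--Mellin transforms $\FMQunr{M}$ near the origin of $\dualUni$. The tools are the canonical map computation (Proposition \ref{PROP_GysinGeneralGroup}) and the stalk formula (Lemma \ref{LEM_fmstalks}). By Proposition \ref{PROP_RamifiedFiniteDescent}, and since an isogeny $q$ preserves perversity, semisimplicity and the conditions (1)--(3') under $q^*$ on characters, I may assume $G = S\times U$ throughout.

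\emph{Forward direction.} Suppose $M$ is $\Delta$-unramified. Conditions (1) and (2) are Corollary \ref{COR_GysinUnrImpliesWunr}. For (3), write $\chi=(\chi_S,\psi)$ and use Lemma \ref{LEM_fmprojformula}. The number $\dim H^0_c(G,M_\chi)$ is then the rank of the lisse perverse complex $\FMSHunr{M}$ at $\psi$, and by (2) it equals $\pm$ the Euler characteristic of $M_\chi$. The argument then has two steps.
\begin{enumerate}
\item Varying $\psi$: the rank is locally constant on the open set $\dualUni_{\chi_S,\Delta}$ where the transform is lisse. This set is a dense open in an irreducible scheme, hence connected, so the rank is constant there.
\item Varying $\chi_S$: the Euler characteristic of $\pi_{\dualUni !}(\text{FT}(M))_{\chi_S}$ is independent of $\chi_S$ by Proposition \ref{PROP_eulerpoincaresav}, applied fibrewise over $\dualUni$.
\end{enumerate}
Together these give (3). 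Taking the common value and comparing it with the value at a generic character gives the Tannakian dimension, which is (3').

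\emph{Converse.} Assume (1)--(3), and that $M$ is geometrically semisimple or $G$ is affine. Fix $\chi\notin\Delta$ and set $F:=\FMSHunr{M}$ on a neighbourhood $V$ of $0$ in $\dualUni$; I will take $V$ inside $\dualUni_{\chi,\Delta}$. The argument runs in four steps.
\begin{enumerate}
\item By Lemma \ref{LEM_fmstalks} and (2), every stalk of $F$ on $V$ is concentrated in degree $-d$, and by (3) all these stalks have the same rank $r$. So $F|_V=\mathcal F[d]$ for a constructible sheaf $\mathcal F$ with all stalks of rank $r$.
\item Using (1) and Proposition \ref{PROP_GysinGeneralGroup} at every point of $V$, the canonical map $i_\psi^*F\otimes i_\psi^!\Qbarl\to i_\psi^!\FMSTunr{M}$ is an isomorphism. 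The forget-supports map $F\to\FMSTunr{M}$ is then an isomorphism on stalks after duality, hence an isomorphism on $V$.
\item Next I show that $\mathcal F[d]$ is perverse with no punctual sections. If $M$ is semisimple, this follows from the decomposition theorem and Artin vanishing (via the $!=*$ identification from step 2). If $G$ is affine, Artin vanishing applies directly to $\pi_?$ of the affine map.
\item Let $j\colon W\to V$ be a dense open on which $\mathcal F$ is lisse, and put $L=j^*\mathcal F$. Then $\mathcal F\hookrightarrow j_*L$, and $\mathcal F_\psi\subseteq L^{I_\psi}$ has full rank $r$. So $L$ has trivial local inertia action in codimension one, and Zariski--Nagata purity extends $L$ to a lisse sheaf on $V$ equal to $\mathcal F$.
\end{enumerate}
This gives conditions (1)--(3) of Proposition \ref{PROP_fmcritunr}, hence $\Delta$-unramifiedness. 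The equivalence (b)$\Leftrightarrow$(c) follows since (3') implies (3), and (3) together with (a) gives (3') by the forward direction.

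The step I expect to be the main obstacle is step 4 of the converse: deducing lisseness from constant stalk rank together with the absence of punctual sections. This is exactly where the semisimplicity or affineness hypothesis is needed. The first approach I would try is the inclusion $\mathcal F_\psi\subseteq L^{I_\psi}$ combined with a comparison of generic rank and stalk rank along divisors, followed by purity of the branch locus. If that fails, I would instead use intermediate-extension arguments: $\mathcal F[d]$ is perverse, so it contains $j_{!*}L[d]$, and equality of ranks should force $j_{!*}L[d]=\mathcal F[d]$ to be lisse.
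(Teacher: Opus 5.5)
Your forward direction is fine and is essentially the paper's argument; the paper compares Euler characteristics at the generic point of $\widehat{U}$ instead of intersecting the dense opens $\widehat{U}_{\chi_S,\Delta}$. The gap is in step 2 of the converse.

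\textbf{Why step 2 fails as written.} Write $F'=\FMSTunr{M}$ and let $\phi\colon F\to F'$ be the forget-supports map. What (1) and Proposition \ref{PROP_GysinGeneralGroup} actually give you is that the \emph{composite} canonical map $c_\psi\colon i_\psi^*F\otimes i_\psi^!\Qbarl\to i_\psi^!F'$ is an isomorphism. By naturality, $c_\psi=i_\psi^!(\phi)\circ c_{F,\psi}$, where $c_{F,\psi}$ is the canonical map of $F$ itself. You can conclude that $i_\psi^!(\phi)$ is an isomorphism, and hence by duality that $\phi$ is, only once $c_{F,\psi}$ is known to be an isomorphism. That requires $F$ to be lisse (Corollary \ref{COR_gysinisolisse}). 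As Example \ref{EXP_GysinWUnrVSUnr} illustrates, for non-lisse $F$ the map $c_{F,\psi}$ looks like $F^{I}\to F_{I}$ and need not be invertible. So step 2 has to come after step 4.

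For $G$ affine this is only a reordering. Artin vanishing plus stalks in degree $-d$ give perversity, then lisseness, then the dual statement via Lemma \ref{LEM_fmdual}, then the canonical map; this reordered argument is exactly the paper's proof. In the semisimple, non-affine case, however, your step 3 explicitly rests on step 2, so the argument is circular. Perversity needs step 2, step 2 needs lisseness, and lisseness (your step 4) needs the absence of punctual sections, which you derive from perversity.

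\textbf{How the paper breaks the circle.} The missing ingredient is hard Lefschetz, which your outline never uses. For $G=A\times T\times U$, the decomposition theorem for the proper projection gives $\pi_{T\times U*}(M_\chi)=\bigoplus_i M^i_{(\chi_T,\psi)}[-i]$ with $M^{-i}\cong M^i(i)$. Hence $H^*_?(G,M_\chi)=\bigoplus_i H^*_?(T\times U,M^i_{(\chi_T,\psi)})[-i]$. Condition (2) together with the symmetry $i\leftrightarrow -i$ forces the summands with $i\neq 0$ to have vanishing Fourier coefficients off $\Delta$. The Fourier--Mellin transforms of $M$ are therefore those of the perverse sheaf $M^0$ on the affine group $T\times U$, where the affine argument applies. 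The decomposition theorem and Artin vanishing alone are not enough: they only give $F=\bigoplus_i F_i[-i]$ with each $F_i$ of perverse amplitude $\ge 0$, and the stalk condition kills only the summands with $i<0$.

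\textbf{A repair that keeps your ordering.} You can get the absence of punctual sections directly from (1). Suppose $\mathcal F_0\subseteq\mathcal F$ is supported in dimension $e<d$. Take a closed point $\psi$ of the smooth locus of its support where $\mathcal F_0$ is lisse. There $i_\psi^*\mathcal F_0[d]\otimes i_\psi^!\Qbarl$ sits in degree $d$, while $i_\psi^!(\mathcal F_0[d])$ sits in degree $2e-d<d$. So $c_{\mathcal F_0[d],\psi}=0$, and by naturality $c_\psi$ kills $(\mathcal F_0)_\psi$. Since $c_\psi$ is an isomorphism and $i_\psi^*$ is exact, $\mathcal F_0=0$. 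Then, in your notation, $\mathcal F\hookrightarrow j_*L$. Your step 4 now gives lisseness; the injectivity of specialization maps used in Lemma \ref{LEM_PerverseLisseCrit} does this more simply and avoids Zariski--Nagata. Only after that does step 2 go through.

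This route does not use the semisimplicity or affineness hypothesis at all. It also shows that your diagnosis of the main obstacle is off. Step 4 is routine once punctual sections are excluded; the real difficulty is getting perversity, or the absence of punctual sections, without presupposing step 2.
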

\begin{remark}
	The equivalence between  (3) and (3') above follow from the construction of a Tannakian category and the vanishing theorems. This proposition justifies the name unramified because the unramified characters are precisely those that \textit{can} give fiber functors. We prove the equivalence between (3) and (3') in Proposition soandso. 
\end{remark}
\begin{proof}
	We begin by reducing to a product $A\times T\times U$. By Theorem \ref{THM_AppendixStructureTheorem}, there is an isogeny $q\colon G \rightarrow A\times T \times U$. Since Fourier coefficients can be computed after pushforward along an isogeny, the conditions (1) - (3) and the condition of being $\Delta$-unramified are equivalent for $M$ and $\Delta$ and for $q_*M$ and $(q^*)^{-1}(\Delta)$. This follows from Proposition \ref{PROP_ProjectionFormula2}, Proposition \ref{PROP_CharDescent}, and Proposition \ref{PROP_RamifiedFiniteDescent}. Hence we can assume $G = A\times T\times U$. 
	
	Suppose $M$ is $\Delta$-unramified. Conditions (1) and (2) follow from Corollary \ref{COR_GysinUnrImpliesWunr}. Let $\eta$ be the generic point of $\widehat{U}$. We are going to prove the formula
	\[
	\chi(S, i_\eta^*(\text{FT}(M))) = \text{dim}(H^0_?(G, M_\chi))
	\]
	for all $\chi \notin \Delta$. This implies (3) because the left side of this equation does not depend on the character.
	
	Let $\chi \notin \Delta$. Generic basechange implies
	\[
	\chi(i_\eta^*(\FMQ{M})) = \chi(S, i_\eta^*(\text{FT}(M_\chi))
	\]
	by Lemma \ref{LEM_fmprojformula}. We can write $\chi = (\chi', \psi) \in \cvar{S}\times\cvar{U}$ by Proposition \ref{PROP_CharKunneth}. Because the Fourier transform admits a formula for twists by characters, we have an isomorphism 
	\[
	 i_\eta^*(\text{FT}(M_\chi)) =  i_\eta^*(\text{FT}(M)_{\chi'}).
	\]
	by Theorem \ref{THM_FTBasic} and Proposition \ref{PROP_ftfuncbase}. The Euler-Poincare characteristic does not depend on the character $\chi'$, hence we obtain
	\[
	\chi(i_\eta^*(\FMQ{M})) = \chi(S, i_\eta^*(\text{FT}(M)))
	\]
	by Proposition \ref{PROP_eulerpoincaresav}. Note that the complexes $\FMQ{M}$ are lisse. Therefore, the Euler-Poincare characteristic of the stalk and the costalk of a closed point agree with the Euler-Poincare characteristic of the stalk at the generic point. We can compute the stalks and the costalks of the Fourier-Mellin transform by Proposition \ref{LEM_fmstalks} to obtain
	\[
	\chi(S, i_\eta^*(\text{FT}(M))) = \chi_?(G, M_\chi).
	\]
	The cohomology group $H^*_?(G, M_\chi)$ is concentrated in degree zero, so
	\[
	\chi_?(G, M_\chi) = \text{dim}(H^0_?(G, M_\chi)).
	\]
	Thus
	\[
	\chi(S, i_\eta^*(\text{FT}(M))) = \text{dim}(H^0_?(G, M_\chi))
	\]
	for all $\chi \notin \Delta$. The left hand side of this equation does not depend on $\chi$, so we obtain (3).
	
	Suppose conditions (1) - (3) are satisfied and $M$ is geometrically semisimple. We are going to reduce to the affine case by writing the Fourier coefficient of $M$ as the Fourier coefficient of the relative Fourier coefficient $\pi_{T\times U*}(M_\chi)$ and applying the decomposition theorem and hard Lefschetz theorem to $\pi_{T\times U*}(M_\chi)$. This will permit us to prove that only the zeroth perverse cohomology sheaf of the relative Fourier coefficient of $\pi_{T\times U*}(M_\chi)$ can contribute to the Fourier coefficient of $M$, allowing us to reduce to the affine case.
	
	Let $\chi = (\chi_A, \chi_T, \psi) \notin \Delta$. The projection formula from Proposition \ref{PROP_ProjectionFormula2} and the decomposition theorem allow us to write the relative Fourier coefficient as
	\[
	\pi_{T\times U*}(M_{\chi}) = \bigoplus_{i \in \BZ} M^i_{(\chi_T, \psi)}[-i]
	\]
	for semisimple, perverse sheaves $M^i \in \Perv{}{T\times U}$.  By condition (2), we have a decomposition
	\[
	H^0_?(G, M_\chi) = H^*_?\bigg(T\times U, ~\bigoplus_{i \in \BZ} (M^i)_{(\chi_T,\psi)}[-i]\bigg) = \bigoplus_{i \in \BZ}  H^{*}_?(T\times U, (M^i)_{(\chi_T, \psi)})[-i].
	\]
	Note that we have
	\[
	M^i \cong M^{-i}(-i)
	\] 
	by the hard Lefschetz theorem for all $i \geq 0$.  Therefore, condition (2) forces
	\[
	H^*(T\times U, (M^i)_{(\chi_A, \psi)}) = 0
	\]
	for all $\chi \notin \Delta$ and $i \neq 0$. This implies
	\[
	\text{FM}_{\chi', (\pi_{T\times U}^*)^{-1}(\Delta), ?}(M^i) = 0
	\]
	for all $\chi'  \in \cvar{T\times U}$ with $\pi_{T\times U}^*(\chi') \notin \Delta$ and $i \neq 0$, because we can compute the stalks and costalks of the Fourier-Mellin transforms by Lemma \ref{LEM_fmstalks}. Condition (1) is unaffected by proper pushforward, therefore Proposition \ref{LEM_fmproperpush}, Proposition \ref{PROP_fmcritunr}, and the above vanishing allows us to reduce to $G$ affine.
	
	Suppose $G = T\times U$ and (1) - (3) are satisfied. We can assume $U$ is You are systematically pushing everything through:
	
	smooth by passing to $U^{\text{red}}$. Let $\chi \notin \Delta$, we recall the definition of the Fourier-Mellin transform
	\[
	\text{FM}_{\chi, \Delta, !}(M) = \text{FT}(\pi_{{U}!} (M_\chi))|_{\widehat{U}_{\chi, \Delta}}.
	\]
	Artin's vanishing theorem implies this complex is concentrated in perverse degrees $\geq 0$. On the other hand, note that we can compute the stalks by proper base change to be concentrated in degree $-d$. Hence this complex is concentrated in perverse degrees $\leq 0$. Therefore, the complex $\text{FM}_{\chi, \Delta, !}(M)$ is perverse. By condition (3), the stalks of the perverse sheaf $\text{FM}_{\chi, \Delta, !}(M)$ have the same dimension at all closed points. Therefore, Lemma \ref{LEM_PerverseLisseCrit} implies that the perverse sheaf $\text{FM}_{\chi, \Delta, !}(M)$ is lisse. Poincare duality and Lemma \ref{LEM_fmdual} allow us to conclude $\text{FM}_{\chi, \Delta, *}(M)$ is perverse and lisse as well. The canonical map 
	\[
	i_\psi^*\text{FM}_{\chi, \Delta, !}(M) \otimes i_\psi^!\Qbarl \rightarrow i_\psi^!\text{FM}_{\chi, \Delta, *}(M)
	\]
	computes the forget supports map by Proposition \ref{PROP_GysinGeneralGroup} at all closed points. By assumption (1) it is an isomorphism. Since the canonical map is an isomorphism for lisse complexes by Corollary \ref{COR_gysinisolisse}, this implies the map
	\[
	\text{FM}_{\chi, \Delta, !}(M) \rightarrow \text{FM}_{\chi, \Delta, *}(M)
	\]
	is an isomorphism.
\end{proof}

We record a corollary of the proof of Proposition \ref{PROP_GysinUnramifiedCriterion}.
\begin{corollary}\label{COR_unramdimformula}
	Let $q\colon G \rightarrow S\times U$ be an isogeny. Let $M \in \Perv{}{G}$ be a $\Delta$-unramified perverse sheaf for a closed subset $\Delta \subseteq \cvar{G}$ and $\eta$ the generic point of $\widehat{U}$. We have
	\[
	\text{dim}(H^0_?(G, M_\chi)) = \chi(S, i_\eta^* \text{FT}(q_*M)).
	\]
	for all $\chi \notin \Delta$.
\end{corollary}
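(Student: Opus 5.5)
The plan is to reduce to the proof of Proposition \ref{PROP_GysinUnramifiedCriterion}, transporting everything along the isogeny $q$. First, by Proposition \ref{PROP_ProjectionFormula2} and Proposition \ref{PROP_CharDescent}, the twisted cohomology of $M$ on $G$ can be computed on $S\times U$. Concretely, for $\chi \in \cvar{S\times U}$ we have
\[
H^*_?(G, M_{q^*\chi}) = H^*_?(S\times U, (q_*M)_\chi),
\]
and every character of $G$ has this form. By Proposition \ref{PROP_RamifiedFiniteDescent}, $q_*M$ is $(q^*)^{-1}(\Delta)$-unramified. It therefore suffices to prove the formula for $G = S\times U$ and $q = \mathrm{id}$, with $\Delta$ replaced by $(q^*)^{-1}(\Delta)$.

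Now let $G = S\times U$ and $\chi = (\chi', \psi) \notin \Delta$. I would repeat the Euler characteristic computation from the proof of Proposition \ref{PROP_GysinUnramifiedCriterion}, in four steps.
\begin{enumerate}
\item By Lemma \ref{LEM_fmprojformula}, $\FMQ{M} = \pi_{\widehat{U}?}(\text{FT}(M)_\chi)$ restricted to $\widehat{U}_{\chi,\Delta}$.
\item By generic base change for $\pi_{\widehat{U}}\colon S\times\widehat{U}\to \widehat{U}$, the generic stalk of this complex is $H^*_?(S_{\overline{\eta}}, i_\eta^*\text{FT}(M_\chi))$.
\item The shift formula of Theorem \ref{THM_FTBasic}, applied along the generic point (the translate by $\psi$ is still generic), identifies this with the cohomology of $i_\eta^*\text{FT}(M)$ twisted by $\chi'$.
\item Proposition \ref{PROP_eulerpoincaresav} removes the twist by $\chi'$, giving
\[
\chi(i_\eta^*\FMQ{M}) = \chi(S, i_\eta^*\text{FT}(M)).
\]
\end{enumerate}

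Next, $\Delta$-unramifiedness of $M$ gives, via Proposition \ref{PROP_fmcritunr}, that $\FMQ{M}$ is lisse near $0$. So the Euler characteristic of its stalk at the closed point $0$ equals the generic one. Lemma \ref{LEM_fmstalks} computes that stalk as $H^*_c(G, M_\chi)[d]$, and the costalk similarly for the other support. Corollary \ref{COR_GysinUnrImpliesWunr} says $M$ is weakly unramified at $\chi$, so $H^*_?(G,M_\chi)$ is concentrated in degree zero. Hence its Euler characteristic is $\dim H^0_?(G, M_\chi)$, up to the sign $(-1)^d$ coming from the shift $[d]$. I will track this sign carefully and check it against the normalization $\text{FT}(K) = \pi_{\widehat U !}(\pi_U^*K\otimes\SL[d])$: the same shift $[d]$ enters $\text{FT}(M)$, so the signs cancel.

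The main obstacle is the bookkeeping in the middle step. One must make sure that generic base change applies uniformly, that the generic stalk of $\text{FT}(M_\chi)$ really is a $\chi'$-twist of the generic stalk of $\text{FT}(M)$, and that the Euler characteristic is insensitive both to this twist and to the restriction to the open set $\widehat{U}_{\chi,\Delta}$. For the last point, $\widehat{U}_{\chi,\Delta}$ contains $0$ and is open, so it contains $\eta$ provided it is nonempty, which holds because $\chi\notin\Delta$. I would verify the twist identity first over finite extensions, where $\psi$ is arithmetic, and then pass to $\overline{k}$.
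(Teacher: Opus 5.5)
Your route is the paper's. The corollary is recorded there as a by-product of the proof of Proposition \ref{PROP_GysinUnramifiedCriterion}, and that proof does exactly what you propose:
\begin{enumerate}
\item it reduces along $q$ via Propositions \ref{PROP_ProjectionFormula2}, \ref{PROP_CharDescent} and \ref{PROP_RamifiedFiniteDescent};
\item it computes the generic Euler characteristic via Lemma \ref{LEM_fmprojformula}, generic base change, the twist/shift formula and Proposition \ref{PROP_eulerpoincaresav};
\item it passes to the stalk at $0$ by lisseness near $0$ and Lemma \ref{LEM_fmstalks};
\item it concludes from concentration in degree $0$, which is Corollary \ref{COR_GysinUnrImpliesWunr}.
\end{enumerate}

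The one thing to correct is your sign check: the shifts do \emph{not} cancel. Put $d=\dim U$ and $F:=\FMSHunr{M}$. The stalk $F_0=H^*_c(G,M_\chi)[d]$ and the generic stalk $F_{\overline{\eta}}$ are stalks of the same complex. The $[d]$ built into $\text{FT}$ already sits inside $i_\eta^*\text{FT}(M)$ on the generic side, so it cannot be used a second time to absorb the $[d]$ from Lemma \ref{LEM_fmstalks}. Equating Euler characteristics of the lisse complex $F$ therefore gives
\[
\dim H^0_c(G,M_\chi)=(-1)^{d}\,\chi\bigl(S,i_\eta^*\text{FT}(M)\bigr).
\]
The same sign appears without supports. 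There the costalk $H^*(G,M_\chi)(-d)[-d]$ of the lisse complex $\FMSTunr{M}$ corresponds to the stalk $H^*(G,M_\chi)[d]$.

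A quick check: take $G=U=\BA^1$ and $M=\delta_0$. Then $\text{FT}(M)=\Qbarl[1]$ is lisse and perverse, so $M$ is unramified at every character. Yet $\dim H^0_c(\BA^1,(\delta_0)_\psi)=1$, while $\chi(i_\eta^*\text{FT}(M))=-1$.

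The paper's own computation silently drops the same $[d]$ when it writes $\chi(S,i_\eta^*\text{FT}(M))=\chi_?(G,M_\chi)$. So this is a normalization problem in the statement, not in your strategy. The formula should be read either with the restriction $i_\eta^*\text{FT}(q_*M)[-\dim U]$, which is perverse on the generic fiber, or with the factor $(-1)^{\dim U}$. With that correction your argument goes through as written.
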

\subsection{Stratifying Fourier Coefficients} The goal of this section is to stratify Fourier coefficients for semiabelian varieties. This is a crucial element in our stratification and vanishing theorems. Note that it is not possible to stratify Fourier coefficients for general groups, because character sheaves for unipotent groups have wild ramification. Similar stratifications were obtained in \cite[Ch.~2.4]{KowalskiTannaka}. We give the following example showcasing the general behavior of Fourier coefficients of complexes on semiabelian varieties in families.
\begin{example}\label{EXP_LissMell2}
	Let $n \geq 1$ and consider a non-trivial additive character $\psi$ on $k$. Define the group morphisms $i_1,\ldots, i_n\colon \BG_m \rightarrow \BG_m^m$ given by inclusion in the $i$'th coordinate. For example, $i_1(x) = (x, 1, \ldots, 1)$. We form the hypergeometric sheaf
	\[
	\SH:= i_{1!}\SL_\psi*_!i_{2!}\SL_\psi*_!\cdots *_! i_{n!}\SL_\psi.
	\]
	Define $$\pi\colon \BG_m^n \rightarrow \BG_m, (x_1, \ldots, x_n) \mapsto x_1x_2\cdots x_{n}$$ For a character $\chi = (\chi_1, \ldots, \chi_n) \in \mathscr{C}(\BG_m^n)$, the  \quotationMark{relative Fourier coefficient} is given by
	\[
	\pi_!(\SH_\chi) = \text{Hyp}(!, \psi; \chi_1,\chi_2, \ldots, \chi_n; \emptyset).
	\]
	The sheaf on the right is defined in \cite[{}8.2]{KatzESDE}. It is proven in \cite[Thm.~8.4.1]{KatzESDE}, that the Fourier coefficients are lisse. This is the type of behaviour we would like to generalize to arbitrary complexes. Note that the lisseness holds for all characters and no stratification of the character variety is required.
\end{example}
We recall the notion of a stratification and the notion of a tame family (see \cite[{}1.1,~1.3.1]{Orgogozo_2019}).
\begin{definition} 
	A \textit{stratification} of $X$ is a finite partition of $X$ into locally closed subsets. That is, we are given a finite set $\mathscr{X} = \{X_i\}$ of locally closed subsets $X_i \subseteq X$ such that
	\[
	X = \bigsqcup_{i = 1}^n X_i.
	\]
	Let $K \in \derCat{c}{X}{\Qbarl}$ be a complex. The complex $K$ is \textit{lisse} if the cohomology sheaves $H^i(K)$ are lisse for all $i \in \BZ$. The complex $K$ is called \textit{constructible along a stratification $\mathscr{X}$} if $K|_{X_i}$ is lisse for all strata $X_i \in \mathscr{X}$. \footnote{Note that any constructible $\Qbarl$-sheaf has a torsion-free integral $\CO$-model. This allows us to reduce to the case of sheaves with finite coefficients as in \cite{Orgogozo_2019}. The same principle applies to tame ramification.} 
\end{definition}
We now define a tamely ramified sheaf in our setting. The definition of a tamely ramified sheaf can also be given for an $\ell$-adic sheaf. Tame ramification will allow us to control the ramification locus of Fourier coefficients.
\begin{definition}
	A sheaf complex $K \in \derCat{c}{X}{\Qbarl}$ is called \textit{tamely ramified} (see also \cite[{}1.3.1]{Orgogozo_2019}) if the following holds: Let $Y$ be the strict henselization of a local ring of $X$. Consider a point $y \in Y$ with separable closure $\overline{y} \rightarrow y$. The action of $\pi_1(y, \overline{y})$ on $H^i(K)_{{y}}$ admits no $p$-part for all $i \in \BZ$. In other words, the image of $\pi_1(y, \overline{y})$ inside $\text{GL}(H^i(K)_y)$ is a profinite group, which can be written as a limit of finite groups of order coprime to $p$ for all $i \in \BZ$.
 \end{definition}
 The Fourier coefficients are naturally a family of complexes.
 \begin{definition}
 	A \textit{family of complexes} is a tuple $\SF = (K_\lambda)_\lambda$ of $K_\lambda \in \derCat{c}{X}{\Qbarl}$ indexed by a set $\lamdba \in I$. A family is called \textit{constructible along a stratification $\mathscr{X}$} if $K_\lambda$ is constructible along $\mathscr{X}$ for all $\lambda \in I$.  The family is called \textit{tame} if $K_\lamdba$ is tame for all $\lamdba \in I$.
 \end{definition}
We recall a general stratification theorem due to \cite{Orgogozo_2019}. Note that the family of Fourier coefficients is tame by Lemma \ref{LEM_charsabtame}. This is what will make the following theorem applicable to the formation of Fourier coefficients over a base.
\begin{theorem}\label{THM_StratificationOrgogozosTheorem}
	Let $\pi\colon X \rightarrow Y$ a morphism. Let $j\colon X \rightarrow \overline{X}$ be an open immersion such that $\pi$ factors through a proper map $\overline{\pi}\colon \overline{X} \rightarrow Y$. Let $\SF = (K_\lambda)_\lambda$ be a family of complexes on $X$. Suppose there is a stratification $\mathscr{X}$ of $X$ such that $(K_\lambda)_\lambda$ and $(D(K_\lambda))_\lambda$ are constructible along $\mathscr{X}$. Moreover, suppose there is an alteration $\alpha\colon \overline{X}' \rightarrow \overline{X}$   such that $(\alpha^*j_!(K_\lambda))_\lambda$ and $(\alpha^*j_!(D(K_\lambda)))_\lambda$ are tame families. There is a dense open subset $U \subseteq Y$ such that the complexes $\pi_?(K_\lambda)$ are lisse over $U$. Moreover, the formation of $\pi_?(K_\lambda)$ commutes with base change $Y' \rightarrow U$.
\end{theorem}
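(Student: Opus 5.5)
The proof reduces to Orgogozo's uniform constructibility theorem \cite{Orgogozo_2019}, applied once to the family $(j_!K_\lambda)_\lambda$ for compact supports and once, via duality, to $(j_!D(K_\lambda))_\lambda$ for ordinary pushforward. All the work is in checking his hypotheses and in taking care that every stratification we use is independent of $\lambda$.

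\textbf{Reductions.} As in the footnote to the definition of constructibility, we choose for each $K_\lambda$ a torsion-free integral model. We then argue with the reductions modulo $\ell^n$; lisseness and base change for $\Qbarl$-complexes follow from the statements for all these torsion families simultaneously. Since lisseness over a dense open can be checked one irreducible component at a time, we may replace $Y$ by $Y_{\text{red}}$. After further shrinking, we may assume $Y$ is irreducible, and even regular.

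\textbf{Compact supports.} We have $\pi_! K_\lambda = \overline{\pi}_* j_! K_\lambda$, with $\overline{\pi}$ proper. Add the closed set $\overline{X}\setminus X$ as a stratum to $\mathscr{X}$. The resulting stratification $\overline{\mathscr{X}}$ of $\overline{X}$ is independent of $\lambda$, and every $j_!K_\lambda$ is constructible along it. By hypothesis, the pullbacks $\alpha^*j_!K_\lambda$ along the fixed alteration $\alpha$ form a tame family.

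Orgogozo's theorem \cite[1.3.1 and the main theorem]{Orgogozo_2019} applies to such data: a family on $\overline{X}$ that is constructible along a fixed stratification and becomes tame after a fixed alteration. It produces a stratification $\mathscr{Y}$ of $Y$, independent of $\lambda$, along which every $R\overline{\pi}_*(j_!K_\lambda)$ is constructible.

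Let $U$ be the open dense stratum of $\mathscr{Y}$. Then every $\pi_!K_\lambda$ is lisse over $U$. Formation of $\pi_!$ commutes with arbitrary base change $Y' \to U$ by proper base change, so no further argument is needed in this case.

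\textbf{Ordinary pushforward.} By Verdier duality, $\pi_*K_\lambda = D_Y\,\pi_!\,D(K_\lambda)$. The family $(D(K_\lambda))_\lambda$ satisfies the same hypotheses: it is constructible along $\mathscr{X}$, and $\alpha^*j_!D(K_\lambda)$ is tame. So the previous step gives a dense open $U'$ over which every $\pi_!D(K_\lambda)$ is lisse. On the regular scheme $U\cap U'$, the dual of a lisse complex is lisse, up to a twist and shift. Hence every $\pi_*K_\lambda$ is lisse there.

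\textbf{Main obstacle.} The difficult point is compatibility of $\pi_*$ with \emph{arbitrary} base change $Y'\to U$. Duality does not commute with $f^*$ when $Y'$ is singular, so the duality trick gives lisseness but not base change.

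For this step I would invoke the second half of Orgogozo's theorem, the uniform version of Deligne's generic base change theorem. It is proved from the same alteration-and-tameness input by reducing to a normal crossings situation, where $Rj_*$ of a tame local system is computed by the tame Kummer theory of the boundary. That computation is visibly uniform in $\lambda$ and compatible with base change. It yields a dense open over which the formation of $\pi_*K_\lambda$ commutes with all base changes, for every $\lambda$ at once. We intersect it with the opens found above to obtain $U$.

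The one point that needs checking is that the tameness hypothesis on $\alpha^*j_!$, rather than on $\alpha^*Rj_*$, is exactly the input Orgogozo requires. I expect it is, since his tameness condition is phrased on $\overline{X}'$ for extensions by zero.
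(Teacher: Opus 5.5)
Your proposal is correct and is essentially the paper's argument. Both proofs reduce to Orgogozo's uniform results: the stratification of \cite[{}1.4.10]{Orgogozo_2019} feeds his uniform generic base change theorem \cite[Thm.~5.1]{Orgogozo_2019}, one shrinks so that $U^{\mathrm{red}}$ is smooth, and Verdier duality and proper base change do the rest. The only difference is the direction of the duality step: the paper gets lisseness and base change for $\pi_*$ directly from Thm.~5.1 and dualizes only to obtain lisseness of $\pi_!$, so your separate constructibility argument for $\pi_!$ becomes redundant once you invoke the uniform generic base change for $\pi_*$ anyway.
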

\begin{proof}
	From \cite[{}1.4.10]{Orgogozo_2019}, we obtain the stratification required to apply \cite[Thm.~5.1]{Orgogozo_2019}. We can shrink $U$ so that $U^{\text{red}}$ is smooth. We can take the dual to obtain the lisseness of $\pi_{X!}(K_\lambda)$. 
\end{proof}
We prepare the application of Orgogozo's theorem by constructing the tame families, that will yield the Fourier coefficients.
\begin{lemma}\label{LEM_StratificationPreparation}
	Consider the compactification $j\colon S \rightarrow \overline{S}$. Let  $K \in \derCat{c}{S\times X}{\Qbarl}$. There exists a finite morphism $\alpha\colon X' \rightarrow \overline{S}\times X$ and a stratification $\mathscr{X}$ of $S\times X$ such that the family $(K_\chi)_{\chi \in \mathscr{C}(S)}$ is construcible along $\mathscr{X}$ and $(\alpha^*j_!(K_\chi))_{\chi \in \mathscr{C}(S)}$ is a tame family.
\end{lemma}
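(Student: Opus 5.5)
The idea is to exploit the structure of the character sheaves: every $\SL_\chi$ with $\chi \in \cvar{S}$ is a rank one lisse sheaf on $S$ which is tamely ramified along the boundary of the simple normal crossing compactification $j\colon S \rightarrow \overline{S}$. This holds by Lemma \ref{LEM_charsabtame} and Proposition \ref{PROP_fundcompsemiab}, since $\pi_1^{\text{char}}(S) = \pi_1^t(S_{\overline{k}})$. Twisting by $\pi_S^*\SL_\chi$ therefore changes neither the stratification along which a complex is lisse nor the wild part of its ramification. So it suffices to produce one stratification and one finite morphism that work for $K$ itself, and then check that they work uniformly for all twists $K_\chi$.

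\textbf{Stratification.} First choose a stratification $\mathscr{X}$ of $S\times X$ such that $K$ is constructible along $\mathscr{X}$; after refining, we may take the strata to be regular (for instance with smooth reductions). Since $\pi_S^*\SL_\chi$ is lisse on all of $S\times X$ and $H^i(K_\chi) = H^i(K)\otimes\pi_S^*\SL_\chi$, each $K_\chi$ is again lisse on every stratum. This gives the constructibility of the whole family. The same argument applies to $D(K_\chi) = D(K)_{\chi^{-1}}$ if one also wants the dual family, as needed in Theorem \ref{THM_StratificationOrgogozosTheorem}.

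\textbf{Alteration.} For tameness, use the fact that on each stratum $X_i$ the finitely many lisse sheaves $H^q(K)|_{X_i}$ become tame after pullback along a suitable finite cover. Take a finite (possibly after normalization) morphism $\alpha\colon X' \rightarrow \overline{S}\times X$ which kills the wild monodromy of $j_!K$ at every point of every stratum of a stratification of $\overline{S}\times X$ refining $j(\mathscr{X})$ together with the boundary. I would get it as follows: each $H^q(K)|_{X_i}$ is trivialized mod $\ell$ by a finite étale cover of $X_i$, the integral $\CO$-models from the footnote reduce to finite coefficients, and one then takes the normalization of $\overline{S}\times X$ in the compositum of these covers. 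This is essentially the construction in \cite[1.3, 1.4]{Orgogozo_2019}, and I would quote it. Then $\alpha^*j_!(K)$ is tame.

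For the twists, note that $\alpha^*j_!(K_\chi) = \alpha^*j_!(K)\otimes \alpha^*\overline{\SL_\chi}$ locally at each point, where $\SL_\chi$ has only tame monodromy at the boundary. Hence the image of the inertia of a strictly henselian point acting on $H^i(\alpha^*j_!K_\chi)_y$ is contained in the product of a prime-to-$p$ image and a tame character, so it is still prime to $p$. This shows that the family $(\alpha^*j_!(K_\chi))_\chi$ is tame.

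\textbf{Main obstacle.} The hard part is the uniformity in the last step: the tensor product of a tamely ramified complex with a tamely ramified rank one sheaf must have pro-prime-to-$p$ inertia image at \emph{every} strict henselization point of $X'$, including points on the boundary $\overline{S}\setminus S$. Points on the boundary are exactly where the tameness of $\SL_\chi$ from Lemma \ref{LEM_charsabtame} is used. I would handle this by noting that the pullback of the tame Kummer-type covers of $\overline{S}$ along $\alpha$ remains tame, and that a subgroup of a product of two pro-prime-to-$p$ images has no $p$-part.
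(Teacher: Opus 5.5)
Your proposal is correct and follows the paper's proof. Both fix one stratification and one finite cover $\alpha$ making $\alpha^*j_!K$ tame, then observe that twisting by the lisse, tamely ramified rank-one sheaves $\SL_\chi$ preserves both constructibility along the strata and tameness; your product-of-images argument is exactly the content of \cite[5.2.5]{Orgogozo_2019} as the paper uses it. For $\alpha$, the paper simply quotes \cite[Prop.~1.6.7]{Orgogozo_2019}, and you should rely on that citation rather than your compositum sketch, because finite covers of lower-dimensional strata do not directly give a normalization of $\overline{S}\times X$.
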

\begin{proof}
	By \cite[Prop.~1.6.7]{Orgogozo_2019}, there exists a finite cover $\alpha\colon X' \rightarrow \overline{S}\times X$ such that $\alpha^* j_!K$ is tamely ramified. For each $\chi \in \mathscr{C}(S)$, the complex $\alpha^* j_!(K_\chi)$ is tamely ramified by \cite[{}5.2.5]{Orgogozo_2019}, because the sheaves $\SL_\chi$ are tamely ramified by \ref{LEM_charsabtame}. There exists a stratification $\mathscr{X}$ of $S\times X$ such that $K$ is constructible along this stratification. In particular, the complexes $K_\chi$ are constructible along this stratification by \cite[{}5.2.5]{Orgogozo_2019}, because $\SL_\chi$ is a local system on $S$.
\end{proof} 
We now apply Lemma \ref{LEM_StratificationPreparation} to Theorem \ref{THM_StratificationOrgogozosTheorem} to obtain lisse Fourier coefficients.
\begin{theorem}\label{THM_StratificationFourier}
	Let $K \in \derCat{c}{S\times X}{\Qbarl}$.
	There exists a dense open subset $U \subseteq X$ such that the formation of the Fourier coefficient $\pi_{X?}(K_\chi)$ commutes with base-change $X' \rightarrow U$ for all $\chi \in \cvar{S}$. Moreover, the complexes $\pi_{X*}(K_\chi)$ and $\pi_{X!}(K_\chi)$ are lisse over $U$ for all $\chi \in \mathscr{C}(S)$.
\end{theorem}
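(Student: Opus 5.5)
The plan is to apply Theorem \ref{THM_StratificationOrgogozosTheorem} to the projection $\pi_X\colon S\times X \rightarrow X$, the open immersion $j\times\text{id}\colon S\times X \rightarrow \overline{S}\times X$ into the normal crossing compactification, and the family $\SF = (K_\chi)_{\chi \in \cvar{S}}$. The projection $\overline{S}\times X \rightarrow X$ is proper because $\overline{S}$ is proper over $\overline{k}$. After replacing $k$ by a finite extension, which is harmless by Remark \ref{REM_InvBaseChangeFundamGrp}, we may assume $\overline{S}$ is defined over $k$. Since the statement only concerns lisseness and base change, we may even work after base change to $\overline{k}$, which is how Orgogozo's results are formulated.

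First, Lemma \ref{LEM_StratificationPreparation} applied to $K$ gives a stratification $\mathscr{X}$ of $S\times X$ along which every $K_\chi$ is constructible. It also gives a finite morphism $\alpha\colon X' \rightarrow \overline{S}\times X$ such that $(\alpha^*j_!(K_\chi))_\chi$ is a tame family. A finite surjective morphism from the cover of \cite[Prop.~1.6.7]{Orgogozo_2019} is in particular an alteration, possibly after restricting to the components dominating the components of $\overline{S}\times X$.

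Second, the theorem also needs the dual family to satisfy the same hypotheses. For this, observe that
\[
D(K_\chi) = D(K)\otimes\pi_S^*\SL_{\chi^{-1}} = D(K)_{\chi^{-1}},
\]
because $\SL_\chi$ is a rank one local system whose dual is $\SL_{\chi^{-1}}$. As $\chi$ runs over $\cvar{S}$, so does $\chi^{-1}$. So the dual family is the family of twists of the single complex $D(K)$. Applying Lemma \ref{LEM_StratificationPreparation} to $D(K)$ gives a second stratification and a second finite cover. Take a common refinement of the two stratifications; constructibility along a stratification is preserved under refinement. Take a common alteration dominating both covers, for instance a component of the normalized fibre product. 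Tameness pulls back along the further map, since pullback only shrinks inertia images, so both families are tame on it.

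Third, Theorem \ref{THM_StratificationOrgogozosTheorem} now yields a dense open $U\subseteq X$ on which all $\pi_{X?}(K_\chi)$ are lisse and commute with base change $X'\rightarrow U$, uniformly in $\chi$. The lisseness of $\pi_{X!}$ follows by duality, as noted in that theorem's proof, using $D\pi_{X*}(K_\chi) = \pi_{X!}(D(K)_{\chi^{-1}})$ up to shift and twist.

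The step I expect to be the main obstacle is the uniformity in $\chi$. The open $U$ must not depend on $\chi$, and the characters range over an infinite, non-arithmetic set. Everything rests on the fact that one alteration and one stratification work for the whole family: twisting by the tame local systems $\SL_\chi$ (Lemma \ref{LEM_charsabtame}) changes neither the constructibility stratification nor tameness, by \cite[5.2.5]{Orgogozo_2019}. Orgogozo's theorem is precisely uniform over such tame families with bounded ramification. I would check carefully that his boundedness hypothesis is covered by the single stratification and alteration, independent of $\chi$.
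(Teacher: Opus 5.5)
Your proposal is correct and follows the paper's proof: you apply Lemma \ref{LEM_StratificationPreparation} to both $K$ and $D(K)$ and feed the result into Theorem \ref{THM_StratificationOrgogozosTheorem}. The details you add are exactly what the paper's one-line argument leaves implicit: the identification $D(K_\chi) = D(K)_{\chi^{-1}}$, the common refinement of the two stratifications, and a common alteration dominating both finite covers, along which tameness is preserved.
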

\begin{proof}
	We apply Lemma \ref{LEM_StratificationPreparation} to $K$ and $D(K)$. By Theorem \ref{THM_StratificationOrgogozosTheorem}, there exists a dense open subset $U \subseteq X$ such that the formation of $\pi_{X?}((K)_\chi)$ commutes with basechange and is lisse over $U$ for all $\chi \in \mathscr{C}(S)$.
\end{proof}
The following corollary is useful for the stratification theorems.
\begin{corollary}\label{COR_StratificationStrats}
	Let $K \in \derCat{c}{X\times S}{\Qbarl}$. There exists a stratification by closed subset
	\[
	X = Z_{0} \supseteq Z_1  \supseteq \ldots \supseteq Z_{n - 1} \supseteq Z_n = \emptyset
	\]
	such that 
	\begin{enumerate}
		\item The subset $Z_i$ has codimension $\geq i$.
		\item The complex $\pi_{X!}(M_\chi)$ is lisse on $Z_{i - 1} - Z_i$ for all $1 \leq i \leq n$ and $\chi \in \cvar{S}$
	\end{enumerate}
\end{corollary}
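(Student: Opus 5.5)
We argue by noetherian induction on $X$, applying Theorem \ref{THM_StratificationFourier} repeatedly to closed subschemes. First, by Theorem \ref{THM_StratificationFourier} applied to $K$ (note $K \in \derCat{c}{X\times S}{\Qbarl}$, which we regard on $S\times X$ via the swap), there is a dense open $U_0 \subseteq X$ such that $\pi_{X!}(K_\chi)$ is lisse over $U_0$ and its formation commutes with arbitrary base change $X' \rightarrow U_0$, simultaneously for all $\chi \in \cvar{S}$. The crucial feature is that $U_0$ is independent of $\chi$; this uniformity is exactly what lets a single stratification serve every character.

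Next we define the chain. Set $Z_0 = X$ and let $W_1 := X - U_0$, a closed subset of codimension $\geq 1$. Give $W_1$ its reduced structure and apply Theorem \ref{THM_StratificationFourier} to $K|_{S\times W_1}$. This yields a dense open $U_1 \subseteq W_1$ over which $\pi_{W_1!}((K|_{S\times W_1})_\chi)$ is lisse for all $\chi$. Proper base change identifies this complex with $\pi_{X!}(K_\chi)|_{W_1}$, so the restriction of $\pi_{X!}(K_\chi)$ to $U_1$ is lisse. We iterate: $W_{j+1} := W_j - U_j$. Since $X$ is noetherian the process stops, and each $W_{j+1}$ is nowhere dense in $W_j$, so $\operatorname{codim} W_{j} \geq j$. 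The sets $W_j$ are closed in $X$ and form a decreasing chain terminating in $\emptyset$, so we take $Z_i := W_i$. On each difference $Z_{i-1} - Z_i = U_{i-1}$, the complex $\pi_{X!}(K_\chi)$ restricts to a lisse complex for every $\chi$, which gives (2), and (1) holds by construction.

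Some bookkeeping points need care. The codimension claim requires that a nowhere dense closed subset of $W_j$ has codimension strictly larger than $W_j$ in $X$, component by component. Since $X$ is of finite type over a field, codimension of irreducible closed subsets is additive along chains, and this is easy. Also, the statement writes $M_\chi$ where $K_\chi$ is meant, and we read it as $K_\chi$. If one also wants the version for $\pi_{X*}$, proper base change is not available; there we would apply the argument to $D(K)$ and use duality over the lisse locus, or simply cite the base change compatibility from Theorem \ref{THM_StratificationFourier} applied on $X$.

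The main obstacle is the compatibility of the Fourier coefficient on a stratum with its restriction, uniformly in $\chi$. For $!$-pushforward this is just proper base change, so the only genuine input is the $\chi$-uniform open set supplied by Orgogozo's theorem via Theorem \ref{THM_StratificationFourier}. Everything else is noetherian induction.
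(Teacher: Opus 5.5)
Your proof is correct and is essentially the paper's argument: take the $\chi$-uniform dense open set from Theorem \ref{THM_StratificationFourier}, pass to its nowhere dense closed complement, and use proper base change for $\pi_{X!}$ to recurse. The paper phrases this as induction on $\dim X$ and you phrase it as an iteration, which amounts to the same thing. Your side remark about $\pi_{X*}$ is not needed for the statement, and the base-change compatibility in Theorem \ref{THM_StratificationFourier} only holds over the open set $U$, so it could not be invoked on the closed strata anyway.
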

\begin{proof}
	We argue by induction on the dimension of $X$. If the dimension is zero, there is nothing to prove.  By Theorem \ref{THM_StratificationFourier}, there exists a nowhere dense closed subset $Z_0 \subseteq X$ such that $\pi_{X?}(M_\chi)$ are lisse for all $\chi \in \cvar{S}$. By induction and proper base change, there is a stratification of $Z_0$ such that $\pi_{X!}(M_\chi)$ is lisse along the strata.
\end{proof}
\subsection{Semiabelian Varieties} We recall the vanishing theorem for an abelian variety $A$.  
\begin{theorem}[{\cite[Vanishing theorem]{Weissauer16}}]\label{THM_VanishingAbelian}
	Let $M \in \Perv{}{A}$ be a perverse sheaf on $A$. There exists a  proper thin subset $\Delta \subset \mathscr{C}(A)$ such that the cohomology complex $H^*(A, K_\chi)$ is concentrated in degree zero.
\end{theorem}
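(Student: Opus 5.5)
The statement is cited from \cite[Vanishing theorem]{Weissauer16}, so the plan is to reproduce the standard argument via Krämer--Weissauer's Tannakian formalism. We read ``$K$'' as $M$ and interpret the conclusion for all $\chi \notin \Delta$. The first reduction is to the case where $M$ is geometrically simple. Suppose the statement holds for the simple constituents $M_1,\dots,M_r$ of $M_{\overline{k}}$, with proper thin exceptional sets $\Delta_1,\dots,\Delta_r$. Put $\Delta=\bigcup_i\Delta_i$; it is proper thin because $\cvar{A}$ is irreducible in the thin topology. Then the long exact cohomology sequences attached to the Jordan--Hölder filtration give $H^n(A,M_\chi)=0$ for $n\neq 0$ and all $\chi\notin\Delta$.

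\emph{Negligible case.} Let $M$ be simple. If $M$ is negligible, i.e. $\chi(A,M)=0$, then by \cite{Kr_mer_2015} it is of the form $M\cong \pi^*N\otimes\SL_{\chi_0}[d]$ for a quotient $\pi\colon A\to A'$ with connected fibers of positive dimension $d$. The projection formula (Proposition \ref{PROP_ProjectionFormula1}) then shows that $H^*(A,M_\chi)=0$ unless $\chi\in\chi_0^{-1}\pi^*\cvar{A'}$. That set is a proper tac, and we take it as $\Delta$.

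\emph{Non-negligible case.} Here we use the generic vanishing argument of Krämer--Weissauer. First, by the decomposition theorem and hard Lefschetz, $H^*(A,M_\chi)=\bigoplus_i H^i(A,M_\chi)[-i]$ with $\dim H^i=\dim H^{-i}$, since $M_\chi$ is again simple and pure up to twist. So it suffices to show that $H^i(A,M_\chi)=0$ for $i>0$ and $\chi$ outside a proper thin set. Next, consider the convolution powers $M^{*n}$. Modulo negligible objects, $M$ generates a Tannakian category whose fiber functor is $\omega(M)=\bigoplus_i H^i(A,M)$, with a super-grading coming from degree parity. In this category $M$ has a finite integer dimension $\chi(A,M)$, and the Künneth formula gives $H^*(A,(M^{*n})_\chi)=H^*(A,M_\chi)^{\otimes n}$.

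\emph{Main obstacle.} The key step is to produce a proper thin set outside of which $H^1$ vanishes. Suppose that for a Zariski-dense (non-thin) set of $\chi$ we had $H^i(A,M_\chi)\neq 0$ for some $i>0$. The Künneth formula then gives classes of degree $ni$ in $H^*(A,(M^{*n})_\chi)$. The perverse constituents of $M^{*n}$ split into a non-negligible part $P_n$ and negligible pieces. The non-negligible part is controlled by representation theory: it is a direct sum of irreducible representations of a reductive group. Its total cohomology dimension is bounded, uniformly in $\chi$, in terms of the Euler characteristic, because its cohomological degrees are bounded by $\dim A$ and its Euler characteristic equals its rank. The negligible pieces lie in finitely many proper tacs, by the classification in \cite{Kr_mer_2015} applied to each constituent of $M^{*n}$, together with the finite generation of the Tannakian category. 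Taking $n$ large, outside the union of these tacs the degree-$ni$ classes would have to live in $P_n[j]$ with $|j|\leq \dim A$. For $ni>\dim A$ this is impossible, and it gives the contradiction.

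\emph{Where the real difficulty lies.} The hardest point is making the finitely many tacs independent of $n$. One way is to argue through the Tannakian group: negligible constituents of $M^{*n}$ arise only from shifted pieces, and via the hard Lefschetz $\mathfrak{sl}_2$-action their characters are governed by finitely many tacs attached to $M$. Failing that, I would argue dimension-theoretically, using the Noetherian property of the thin topology (Proposition \ref{PROP_thintop}), to stabilize the exceptional sets.
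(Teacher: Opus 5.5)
The paper does not prove this statement; it imports it from \cite{Weissauer16}. Several parts of your proposal are fine: the reduction to geometrically simple $M$, the negligible case, and the K\"unneth identity $H^*(A,(M^{*n})_\chi)=H^*(A,M_\chi)^{\otimes n}$. The non-negligible case, however, has a genuine gap, located exactly where you draw the contradiction.

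By the decomposition theorem, $M^{*n}\cong\bigoplus_j {}^{p}H^{j}(M^{*n})[-j]$ with $|j|\le (n-1)\dim A$. A priori, non-negligible simple summands can occur in any of these perverse degrees. A class in degree $ni$ can then come from $H^{ni-j}(A,Q_\chi)$ for a non-negligible summand $Q$ of ${}^{p}H^{j}(M^{*n})$ with $j\ge ni-\dim A$. So your claim that the non-negligible part $P_n$ has cohomology only in $[-\dim A,\dim A]$ amounts to saying that ${}^{p}H^{j}(M^{*n})$ is negligible for all $j\neq 0$, i.e.\ that convolution is perverse modulo negligible sheaves. You neither state nor prove this, and it is essentially as deep as the result you want. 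In this paper, for instance, perversity of the localized convolution (Theorem \ref{THM_isoconv}) holds only for unramified sheaves. Their existence is Theorem \ref{THM_VanishingTHM}, which rests on the very statement you are proving. Unless you supply an independent argument for this input, the proof is circular.

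The justification you do give does not work. Knowing that the cohomology lies in $[-\dim A,\dim A]$ and that the Euler characteristic equals the rank does not bound $\sum_i\dim H^i(A,(P_n)_\chi)$: $\dim H^{-1}=\dim H^{1}$ can be arbitrarily large. Such a bound is also not what the argument needs; only the degree bound matters.

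Granting the missing input, the rest is easier than you suggest, and your ``hardest point'' disappears, because no uniformity in $n$ is needed. Fix $n=\dim A+1$. Let $\Delta$ be the union of the proper tacs, supplied by the classification, outside which the finitely many negligible simple summands of $M^{*n}$ have vanishing twisted cohomology. For $\chi\notin\Delta$, $H^*(A,(M^{*n})_\chi)$ lives in $[-\dim A,\dim A]$, while $H^i(A,M_\chi)^{\otimes n}$ sits in degree $ni$. Hence $H^i(A,M_\chi)=0$ for every $i\neq 0$, of either sign.

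This also makes your hard Lefschetz step unnecessary, which is just as well, since it is unjustified as written. For $\chi$ of infinite order, $M_\chi$ is in general not defined over any finite field, so ``pure up to twist'' has no meaning for it.

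Finally, $\bigoplus_i H^i(A,-)$ is not a fiber functor on the quotient by negligible objects. It does not kill, for example, the constant sheaf $\overline{\mathbb{Q}}_\ell$ on a positive-dimensional abelian subvariety $B$, shifted by $\dim B$. Your argument only ever uses the K\"unneth formula for the actual complexes $M^{*n}$, so this claim should be dropped.
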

We deduce the relative version: 
\begin{theorem}\label{THM_VanishingAbelianRel}
 Let $M \in \Perv{}{A\times X}$. There exists a proper thin subset $\Delta \subset \mathscr{C}(A)$ such that $\pi_{X*}(M_\chi)$ is perverse for $\chi \notin \Delta$. 
\end{theorem}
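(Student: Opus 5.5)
Since $A$ is proper, $\pi_{X!}=\pi_{X*}$ on $A\times X$, so only one pushforward needs to be treated. The plan is to reduce to finitely many absolute vanishing statements on $A$, one over each stratum of $X$ supplied by Theorem \ref{THM_StratificationFourier}, and to use noetherian induction on $X$.

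First I apply Theorem \ref{THM_StratificationFourier} to $M$, with $S=A$. This gives a dense open $U\subseteq X$, which I may shrink so that it is smooth, irreducible of dimension $d_U$, and such that $M|_{A\times U}$ is lisse along $U$ in a suitable stratified sense. On $U$ the complexes $\pi_{X*}(M_\chi)$ are lisse and their formation commutes with base change, for all $\chi\in\cvar{A}$. Perversity of a lisse complex on smooth $U$ amounts to concentration in the single degree $-d_U$. By base change, the stalk at a geometric point $u$ is $H^*(A_{\overline{k}},(i_u^*M)_\chi)$. After shrinking $U$ I may assume that $i_u^*M[-d_U]$ is perverse on $A$ for every $u\in U$; this holds for perverse $M$ over a dense open, by generic ULA/lisseness along the base. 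Theorem \ref{THM_VanishingAbelian} then gives a proper thin $\Delta_u$ outside which $H^*(A,(i_u^*M[-d_U])_\chi)$ sits in degree $0$.

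The difficulty is that $\Delta_u$ depends on $u$, and the union of the $\Delta_u$ need not be thin. To handle this I use lisseness and base change once more. Since $\pi_{X*}(M_\chi)|_U$ is lisse, the degrees in which its stalks are nonzero are constant on each connected component of $U$. So it suffices to take a single closed point $u_0$ in each of the finitely many connected components and set $\Delta_U=\bigcup \Delta_{u_0}$. For $\chi\notin\Delta_U$ the stalk at $u_0$ is concentrated in degree $-d_U$, hence so is every stalk on that component. Therefore $\pi_{X*}(M_\chi)|_U$ is perverse, and in fact a shifted lisse sheaf.

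Next I handle the complement $Z=X\setminus U$ by noetherian induction, using Corollary \ref{COR_StratificationStrats} to obtain a finite stratification $X=\bigsqcup X_\alpha$ into smooth locally closed strata. On each stratum $\pi_{X*}(M_\chi)$ is lisse for all $\chi$ and its formation commutes with base change. Perversity on $X$ is not detected stratumwise from $i^*$ alone; it requires the support condition ${}^pH^j=0$ for $j>0$ on stalks and the cosupport condition via $i^!$. For the support condition, on a stratum of dimension $d_\alpha$ I need the stalks of $\pi_{X*}(M_\chi)$ to lie in degrees $\le -d_\alpha$. By proper base change these stalks are $H^*(A,(M|_{A\times u})_\chi)$, and $M|_{A\times u}$ lies in perverse degrees $\le -d_\alpha$ generically on the stratum. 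So, excluding a thin set $\Delta_\alpha$ obtained from Theorem \ref{THM_VanishingAbelian} applied to the perverse cohomology sheaves of $i_u^*M$ at one point per component, the stalk cohomology lies in degrees $\le -d_\alpha$; the $H^j$ with $j<0$ on $A$ are killed off $\Delta_\alpha$. For the cosupport condition I use $D\pi_{X*}(M_\chi)=\pi_{X*}((DM)_{\chi^{-1}})$, together with the observation that inversion maps thin sets to thin sets. Applying the same argument to $DM$ gives a thin set, and I take its inverse image under $\chi\mapsto\chi^{-1}$. Then $\Delta=\bigcup_\alpha(\Delta_\alpha\cup\Delta'^{-1}_\alpha)$ is a finite union of proper thin sets, hence proper thin, since the thin topology on $\cvar{A}$ is irreducible.

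I expect the main obstacle to be the step ensuring that restrictions $i_u^*M$ to points of a stratum have the expected perverse amplitude uniformly. This should follow from choosing the stratification so that $M$ is, along each stratum, locally acyclic relative to the base, using generic local acyclicity (Deligne) and refining the Orgogozo stratification. Once that is in place, each perverse cohomology sheaf of $i_u^*M$ is perverse on $A$ and Weissauer's theorem applies pointwise. Constancy along components is then provided by the lisse and base-change output of Theorem \ref{THM_StratificationFourier}.
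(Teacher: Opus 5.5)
Your argument is correct. It uses the same ingredients as the paper's proof: generic lisseness of the Fourier coefficients (Theorem \ref{THM_StratificationFourier}), generic control of the perverse amplitude of fibers (Lemma \ref{LEM_PervRelativePerversity}), and Weissauer's absolute theorem at one closed test point per connected component. It also uses proper base change and Verdier duality, with $\chi\mapsto\chi^{-1}$, for the cosupport half.

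The difference is in how you handle the complement of the good open set.

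\textbf{The paper's route.} It argues by induction on $\dim X$:
\begin{enumerate}
\item It reduces to $X$ affine, and via Noether normalization to affine space.
\item It shrinks $U$ so that $D=X\setminus U$ is a Cartier divisor.
\item It uses Artin vanishing to place $i^*M$ in perverse amplitude $[-1,0]$.
\item It applies the induction hypothesis to the perverse cohomology sheaves of $i^*M$ on $A\times D$.
\item It glues the ${}^pD^{\le 0}$ condition from $U$ and $D$ via $i^*\pi_{X*}=\pi_{D*}i^*$.
\end{enumerate}

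\textbf{Your route.} You unroll this recursion into a single finite stratification along which every $\pi_{X*}(M_\chi)$ is lisse. You then verify the stalkwise support condition stratum by stratum.

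\textbf{What each buys.} Your approach avoids Noether normalization, the Cartier divisor reduction and Artin vanishing. It also shows explicitly which finitely many test points and perverse sheaves produce $\Delta$. The paper's approach is shorter, because the induction hypothesis packages the boundary. It also only needs lisseness over one dense open, whereas you need it on all strata (supplied by Corollary \ref{COR_StratificationStrats}).

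\textbf{The step you flagged as the main obstacle is easier than you expect.} For any irreducible locally closed $Y\subseteq X$, right $t$-exactness of restriction gives $i_Y^*M\in{}^pD^{\le 0}(A\times Y)$. Lemma \ref{LEM_PervRelativePerversity} then provides a dense open of $Y$ on which $i_u^*M\in{}^pD^{\le -\dim Y}$. Noetherian induction refines the stratification of Corollary \ref{COR_StratificationStrats} accordingly, and refinement preserves lisseness. So no local-acyclicity refinement of Orgogozo's stratification is needed.
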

\begin{proof}We argue by induction on the dimension of $X$. If the dimension of $X$ is zero we can apply Theorem \ref{THM_VanishingAbelian}. In the general case, we remark that we can assume $X$ is affine by passing to an open covering of $X$. Noether normalization allows us to assume $X = \BA^{d + 1}$ for some $d \geq 0$.
	
	By Theorem \ref{THM_StratificationFourier} and Lemma \ref{LEM_PervRelativePerversity}, there exist a dense open subset $U \subseteq X$ such that $\pi_{X*}(M_\chi)$ is lisse over $U$ and $i_x^*(M)$ is concentrated in perverse degree $-d$ for all points $x \in U$. We can shrink $U$ so that the complement $D := X - U$ is a Cartier divisor. We denote by $i\colon D \rightarrow X$ the immersion of the reduced closed subscheme $D \subseteq X$.
	
	Artin's vanishing theorem implies $i^*(M)$ has perverse amplitude $[-1, 0]$. By the induction hypothesis, we can find a proper thin subset $\Delta_1 \subset \cvar{A}$ such that $\pi_{D*}((i^*M)_\chi)$ has perverse amplitude $[-1, 0]$ for all $\chi \notin \Delta_1$. 
	
	Let $x \in U$ be a closed point. By Theorem \ref{THM_VanishingAbelian}, there exists a closed subset $\Delta_2 \subset \cvar{A}$ such that $H^*(A, i_x^*(M_\chi))$ is concentrated in degree $-d$ for all $\chi \notin \Delta_2$. The subset $U$ is connected, therefore $\pi_{X*}(M_\chi)$ is lisse and concentrated in degree $-d$ over $U$. Since $U$ is smooth, proper base change implies $\pi_{X*}(M_\chi)$ has perverse amplitude $\leq 0$ for all $\chi\notin\Delta_1\cup\Delta_2$. 
	
	We can apply Poincare duality to the above statement to find a proper thin subset $\Delta_3 \subset \cvar{A}$ such that $\pi_{X*}(M_\chi)$ has perverse amplitude $\geq 0$ for all $\chi \notin \Delta_3$. For all $\chi\notin \Delta_1\cup\Delta_2\cup\Delta_3$, the complex $\pi_{X*}(M_\chi)$ is perverse. A finite union of proper thin subsets is proper and thin by Proposition \ref{PROP_thintop}.
\end{proof}
We recall the relative vanishing theorems for a torus $T$.
\begin{theorem}[{\cite[Thm.~2.11]{KowalskiTannaka}}]\label{THM_VnshTorus}
	Let $M \in \Perv{}{T\times X}$. There exists a proper thin subset $\Delta \subset \cvar{T}$ such that the natural morphism
	\[
	\pi_{X!} (M_\chi) \rightarrow \pi_{X*}(M_\chi)
	\]
	is an isomorphism. In particular, the complexes $\pi_{X!} (M_\chi)$ and $\pi_{X*}(M_\chi)$ are perverse. 
\end{theorem}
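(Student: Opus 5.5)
Since $T$ is affine, Artin's vanishing theorem controls half of the statement. The projection $\pi_X\colon T\times X\rightarrow X$ is affine and smooth, so $\pi_{X*}(M_\chi)$ has perverse amplitude $\leq 0$ and $\pi_{X!}(M_\chi)$ has perverse amplitude $\geq 0$, for \emph{every} $\chi\in\cvar{T}$. It therefore suffices to find a proper thin subset $\Delta\subset\cvar{T}$ such that the cone $C_\chi$ of the forget supports map
\[
\pi_{X!}(M_\chi)\rightarrow \pi_{X*}(M_\chi)
\]
vanishes for all $\chi\notin\Delta$. Once the map is an isomorphism, both sides have amplitude $\leq 0$ and $\geq 0$, and so both are perverse. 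This is the strategy of Gabber--Loeser and of \cite[Thm.~2.11]{KowalskiTannaka}, which I would follow.

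To control $C_\chi$, I would choose a smooth projective toric compactification $j\colon T\rightarrow \overline{T}$ and write $\overline{\pi}\colon \overline{T}\times X\rightarrow X$. Then $\pi_{X!}=\overline{\pi}_*j_!$ and $\pi_{X*}=\overline{\pi}_*j_*$, so
\[
C_\chi=\overline{\pi}_*\,\mathrm{cone}\big(j_!M_\chi\rightarrow j_*M_\chi\big).
\]
The inner cone is supported on the boundary $(\overline{T}-T)\times X$. Filtering the boundary by its torus orbits $O_\sigma=T/T_\sigma$ reduces the vanishing of $C_\chi$ to the vanishing of the restrictions $i_\sigma^*j_*(M_\chi)$ to each stratum $O_\sigma\times X$. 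There are only finitely many strata.

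Next I would make these restrictions computable. By Lemma \ref{LEM_StratificationPreparation}, together with Orgogozo's results, after a finite modification of the boundary the complex $j_*M$ becomes tamely ramified along the boundary, and it is constructible along a finite stratification. Since $\SL_\chi$ is tame (Lemma \ref{LEM_charsabtame}), $i_\sigma^*j_*(M_\chi)$ is computed by the invariants of the tame local monodromy of the subtorus $T_\sigma$ acting on nearby cycles of $M$, twisted by $\chi|_{T_\sigma}$. Tame inertia along $\sigma$ is abelian, topologically a quotient of $\pi_1^t(T_{\sigma,\overline{k}})$. On the finitely many constructible nearby-cycle sheaves only finitely many generalized eigencharacters $\chi_{\sigma,1},\ldots,\chi_{\sigma,r_\sigma}\in\cvar{T_\sigma}$ occur, uniformly over the strata of $X$. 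The invariants vanish unless $\chi|_{T_\sigma}^{-1}$ is one of these eigencharacters. The set of $\chi$ with $\chi|_{T_\sigma}=\chi_{\sigma,i}^{-1}$ is a tac of the form $\chi'\cdot\pi^*\cvar{T/T_\sigma}$, which is proper because $\dim T_\sigma>0$. I would take $\Delta$ to be the union of these tacs over all $\sigma$ and all $i$; it is a proper thin subset by Proposition \ref{PROP_thintop}.

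The main obstacle is the uniformity in the base $X$. One must ensure that the eigencharacters of local monodromy along each boundary stratum form a finite set over all of $X$, not just generically, and that tameness holds after a modification that is compatible with the toric boundary. I would first try induction on $\dim X$. Generic lisseness (Theorem \ref{THM_StratificationFourier}) handles a dense open $U\subseteq X$. The complement $X-U$ is handled by the induction hypothesis applied to $i^*M$, using that its perverse cohomology sheaves have amplitude $[-1,0]$ by Artin vanishing, as in the proof of Theorem \ref{THM_VanishingAbelianRel}. The proper thin sets obtained at each step are then united.
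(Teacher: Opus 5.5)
The paper does not prove this statement; it imports it from \cite{KowalskiTannaka}. Your reduction is correct: by Artin vanishing it suffices to kill the cone of $\pi_{X!}(M_\chi)\to\pi_{X*}(M_\chi)$, and this cone is $\overline{\pi}_*i_*i^*j_*(M_\chi)$. The gap is your central claim that, for each orbit $O_\sigma$, $i_\sigma^*j_*(M_\chi)$ vanishes unless $\chi|_{T_\sigma}$ is one of finitely many characters of $T_\sigma$.

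For $\dim T_\sigma\geq 2$ this claim is false. Take $X$ a point, $T=\mathbb{G}_m^2\subset\overline{T}=\mathbb{P}^1\times\mathbb{P}^1$, and $M=\Qbarl_C[1]$ with $C=\{(t,t)\}$ the diagonal. The closure of $C$ misses the open orbits of the four boundary divisors. It meets the boundary only at the fixed points $(0,0)$ and $(\infty,\infty)$, where $T_\sigma=T$. At $(0,0)$ the stalk of $j_*(M_\chi)$ is the stalk at $0$ of $j_*\SL_{\chi_1\chi_2}[1]$, which is non-zero for every $\chi$ in the infinite set $\{\chi_1\chi_2=1\}$. Your recipe therefore yields at most finitely many points. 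The true bad locus, however, contains the codimension-one tac $\{\chi_1\chi_2=1\}$, since there $H^*_c(\mathbb{G}_m,\SL_{\chi_1\chi_2})\to H^*(\mathbb{G}_m,\SL_{\chi_1\chi_2})$ is not an isomorphism.

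The finite-eigencharacter description, as a Koszul complex for commuting tame local monodromies, is valid only near points where $M$ is lisse and tame off the boundary. In general the closure of the singular locus of $M$ meets the boundary, and "nearby cycles" over the $r$-dimensional base of a chart $U_\sigma\cong\mathbb{A}^r\times O_\sigma$ are not a well-behaved constructible object. Lemma \ref{LEM_StratificationPreparation} does not supply such a normal form either: it makes $\alpha^*j_!K$ tame after a finite ramified cover $\alpha$, and $\alpha^*$ does not commute with $j_*$ on $\overline{T}\times X$.

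Choosing the fan adapted to $M$ (here, adding the diagonal ray) repairs this particular example. What you actually need is that, at every stratum, the set of bad $\chi|_{T_\sigma}$ lies in a proper thin subset of $\cvar{T_\sigma}$, uniformly over $X$. That is the real substance of the cited theorem, and your sketch gives no argument for it.

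Your plan for uniformity in $X$ also fails as stated. In Theorem \ref{THM_VanishingAbelianRel} induction on $\dim X$ works because $A$ is proper, so $\pi_{X*}=\pi_{X!}$ commutes with restriction to $D$. For a torus, neither $\pi_{X*}$ nor $j_*$ commutes with restriction to $Z=X-U$. For instance, take $M=\Qbarl_\Gamma[1]$ with $\Gamma=\{tx=1\}\subset\mathbb{G}_m\times\mathbb{A}^1$ and $Z=\{0\}$: then $i_Z^*M=0$, but $i_Z^*\pi_{X*}(M)\neq 0$. So the theorem for $i^*M$ on $T\times Z$ says nothing about the cone along $Z$.

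Uniformity in $X$ is not really the obstacle in codimension one anyway. Along each $O_\rho\times X$ the one-variable nearby cycles are constructible on the whole stratum, so their tame eigencharacters form a finite set, and the wild part contributes no inertia cohomology. The genuine difficulty sits at the deeper strata, as described above.
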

We can apply the stratification theorem and put these two relative vanishing theorems together to obtain the following relative vanishing theorem for a semiabelian variety $S$.
\begin{theorem}\label{THM_VanSemiAb}
	Let $M \in \Perv{}{S\times X}$ be a perverse sheaf on $S$. There exists a dense open subset $U \subseteq X$ and a proper thin subset $\Delta \subset \cvar{S}$ such that
	\begin{enumerate}
		\item The complexes $\pi_{X?}(M_\chi)$ are lisse over $U$ and their formation commutes with arbitrary base change $X' \rightarrow U$ for all $\chi \in \mathscr{C}(S)$.
		\item The forget supports map
		\[
		\pi_{X!}(M_\chi) \rightarrow \pi_{X*}(M_\chi)
		\]
		is an isomorphism for all $\chi \notin \Delta$.
		\item The complexes $\pi_{X?}(M_\chi)$ are perverse for all $\chi \notin\Delta.$
	\end{enumerate}
\end{theorem}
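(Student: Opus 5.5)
Part (1) is exactly Theorem \ref{THM_StratificationFourier} applied to $M$: it gives a dense open $U \subseteq X$ over which $\pi_{X?}(M_\chi)$ is lisse and commutes with base change, for all $\chi \in \cvar{S}$ simultaneously. The work is in (2) and (3). I would use the extension
\[
0 \rightarrow T \rightarrow S \xrightarrow{p} A \rightarrow 0
\]
and factor $\pi_X = \pi'_X \circ (p\times \text{id}_X)$, so that fibrewise over $A\times X$ the problem becomes the relative torus case. The twist must be split compatibly. Every $\chi \in \cvar{S}$ restricts to $\chi|_T \in \cvar{T}$ (Proposition \ref{PROP_CharExact}). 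The sheaf $S\times X \rightarrow A\times X$ is a $T$-torsor, and after an étale (or finite) base change on $A$ that trivializes it, $\SL_\chi$ becomes $\SL_{\chi|_T}\boxtimes(\text{something from } A)$. Working étale locally on $A\times X$, I can then apply Theorem \ref{THM_VnshTorus} with base $A\times X$ (or an étale cover of it). This produces a proper thin $\Delta_T \subset \cvar{T}$ such that for $\chi|_T \notin \Delta_T$ the map $(p\times\text{id})_!(M_\chi) \rightarrow (p\times\text{id})_*(M_\chi)$ is an isomorphism with perverse target $N_\chi$ on $A\times X$.

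The thin set and the perversity survive étale descent, since these are local properties. The obstruction is that $N_\chi$ depends on $\chi$ and not only through an $A$-twist. Writing $\chi=\chi_0\cdot p^*\chi_A$ with $\chi_0$ ranging over lifts, Proposition \ref{PROP_ProjectionFormula2} gives $N_{\chi_0 p^*\chi_A}=(N_{\chi_0})_{\chi_A}$. Hence for each fixed $\chi_0$ I can apply Theorem \ref{THM_VanishingAbelianRel} to $N_{\chi_0}$ and its dual, together with the Poincaré-duality argument from that proof, to get a thin $\Delta_A(\chi_0)$ outside of which $\pi'_{X*}((N_{\chi_0})_{\chi_A})$ is perverse.

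\textbf{Main obstacle.} Uniformity in $\chi_0 \in \cvar{T}$ is the hard step. I would first try to avoid it by working generically over $U$ using (1): over $U$ the complexes are lisse and commute with base change. So perversity over $U$ reduces to the absolute statement at a single closed point $x$. At $x$, the absolute vanishing theorem for semiabelian varieties follows by combining the torus and abelian vanishing theorems exactly as in Krämer's thin-set argument. This yields a proper thin $\Delta_x$ with $H^*_?(S_x,M_\chi)$ concentrated in one degree, and the forget supports map an isomorphism, for $\chi\notin\Delta_x$.

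Off $U$, I would follow the induction on $\dim X$ used in Theorem \ref{THM_VanishingAbelianRel}:
\begin{enumerate}
\item shrink so that $D=X-U$ is Cartier;
\item apply the induction hypothesis to $i^*M$ and $i^!M$, which have amplitude $[-1,0]$ and $[0,1]$ by Artin vanishing;
\item glue amplitude bounds using the triangles $j_!j^* \rightarrow \text{id} \rightarrow i_*i^*$.
\end{enumerate}
This gives perverse amplitude $\le 0$ for $\pi_{X!}$ and, by duality, $\ge 0$ for $\pi_{X*}$.

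The isomorphism (2) then follows by checking that the cone of $\pi_{X!}(M_\chi)\rightarrow\pi_{X*}(M_\chi)$ vanishes. On $U$ this is pointwise by base change and the absolute statement. On $D$ it comes from the inductive statement applied to $i^*M$ and $i^!M$, where I expect to need the torus input of Theorem \ref{THM_VnshTorus} to control compact versus non-compact support along the toric directions. The final $\Delta$ is the finite union of the thin sets produced, which is thin and proper by Proposition \ref{PROP_thintop}.
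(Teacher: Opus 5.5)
There is a genuine gap, and it is exactly the obstacle you flag. Your fallback relocates it rather than avoiding it.

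First, (2). It already follows from your first paragraph, because $A\times X\to X$ is proper. Pushing forward $(p\times\mathrm{id})_!(M_\chi)\cong(p\times\mathrm{id})_*(M_\chi)$, which holds for $\chi|_T\notin\Delta_T$, gives $\pi_{X!}(M_\chi)\cong\pi_{X*}(M_\chi)$. The cone argument over $D$ that you propose instead does not work. Let $C_\chi$ be the cone; then $j^*C_\chi=0$. The composite $\pi_{D!}((i^*M)_\chi)=i^*\pi_{X!}(M_\chi)\to i^*\pi_{X*}(M_\chi)\to\pi_{D*}((i^*M)_\chi)$ is the forget-supports map on $D$. So the induction only tells you that $i^*C_\chi=0$ if and only if the base change map $i^*\pi_{X*}(M_\chi)\to\pi_{D*}((i^*M)_\chi)$ is an isomorphism, and nothing in your argument supplies that. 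Amplitude bounds plus an isomorphism over $U$ never kill a cone supported on $D$; think of $j_!F\to j_*F$.

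The real gap is (3). Your gluing over $D$ works once you know $\pi_{X!}(M_\chi)|_U$ is perverse for generic $\chi$. But you get that from the absolute vanishing theorem for $S$ at closed points of $U$, which is the case $X=\mathrm{pt}$ of the statement itself. Proving it by ``combining the torus and abelian theorems'' meets precisely your uniformity problem, since $p_*(M_\chi)$ depends on $\chi$ not only through twists by $\cvar{A}$.

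The missing idea is that over a finite field the extension $0\to T\to S\to A\to 0$ splits up to isogeny. Theorem \ref{THM_AppendixStructureTheorem} gives an isogeny $q\colon S\to A\times T$. Your ``finite base change on $A$ that trivializes the torsor'' is a shadow of this, but it must be used globally as a group isogeny, not as a local trivialization. The reduction goes as follows.
\begin{enumerate}
\item Since $q$ is finite, $q_*M$ is perverse, and $\pi_{X?}(M_{q^*\chi'})=\pi_{X?}((q_*M)_{\chi'})$ by the projection formula.
\item Every character of $S$ has the form $q^*\chi'$ (Proposition \ref{PROP_CharDescent}), and $q^*$ preserves proper thin subsets. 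So you may assume $S=A\times T$.
\item Then $\cvar{S}=\cvar{A}\times\cvar{T}$ with exterior-product character sheaves (Proposition \ref{PROP_CharKunneth}). Twisting by $\chi_A$ commutes with pushforward along $T$, and twisting by $\chi_T$ commutes with pushforward along $A$. The uniformity issue disappears, and no induction on $\dim X$ is needed.
\item Theorem \ref{THM_VnshTorus} over the base $A\times X$ gives a proper thin $\Delta_T$: for $\chi_T\notin\Delta_T$ and every $\chi_A$, the forget-supports map for the pushforward along $T$ is an isomorphism. Pushing along the proper map $A\times X\to X$ gives (2).
\item Theorem \ref{THM_VanishingAbelianRel} over the base $T\times X$ gives a proper thin $\Delta_A$: for $\chi_A\notin\Delta_A$ and every $\chi_T$, the pushforward of $M_\chi$ along $A$ is perverse.
\item Artin vanishing for the affine map $T\times X\to X$ then puts $\pi_{X!}(M_\chi)$ in perverse degrees $\geq 0$ and $\pi_{X*}(M_\chi)$ in perverse degrees $\leq 0$. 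Combined with (2), this gives (3) outside the proper thin set of $\chi$ with $\chi_A\in\Delta_A$ or $\chi_T\in\Delta_T$.
\end{enumerate}
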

\begin{proof}
	We have already proven conclusion (1) in Theorem \ref{THM_StratificationFourier}. .
	
	There is an isogeny $q\colon S \rightarrow A\times T$ by Theorem \ref{THM_AppendixStructureTheorem}. Suppose there is a closed proper subset $\Delta' \subseteq \mathscr{C}(A\times T)$ such that (2) and (3) are satisfied for $q_*M$. The projection formula, i.e. Proposition \ref{PROP_ProjectionFormula2} implies all $\chi\notin q^*(\Delta')$ satisfy (2) and (3) for $M$. Because $q^*$ is closed and preserves proper thin subsets by Proposition \ref{PROP_IsogenyPreservesDimension}, we can assume $G = A\times T$. 
	
	We have a square
	\[
	\begin{tikzcd}
		A\times T\times X \arrow[r, "\pi_T"] \arrow[d, "\pi_A"'] & T \times X\arrow[d, "\pi_X"] \\
		A \times X\arrow[r, "\pi_X"]                   & X.            
	\end{tikzcd}
	\]
	Our goal is to prove (2) and (3) at once by applying the relative vanishing theorem to hold for $M$ with respect to $\pi_A$ and $\pi_T$. We then go around the square \quotationMark{both ways}. The relative vanishing theorem for $\pi_T$ makes the forget support map an isomorphism. The relative vanishing theorem for $\pi_A$ makes the pushforward to $T\times X$ perverse. We know by requiring the vanishing theorems for $A$ to hold that the natural map $\pi_{X!}(M_\chi)\rightarrow \pi_{X*}(M_\chi)$ is an isomorphism on $X$. Moreover, we know that the relative Fourier coefficient on $X$ is $\pi_{X?}(\pi_{T*}(M_\chi))$ hence it is the pushforward of a perverse sheaf under an affine map by the vanishing theorem for $\pi_T$. Hence it must satisfy condition (1) and (2) by Artin's vanishing theorem.
	
	By Theorem \ref{THM_VanishingAbelianRel}, there exists a proper thin subset $\Delta_A \subset \mathscr{C}(A)$ such that the pushforward $\pi_{T*}(M_\chi)$ is perverse for each $\chi \notin \Delta_A$. In particular, for each $\chi' \in \mathscr{C}(T)$, the pushforward $\pi_{T*}(M_{(\chi',\chi)}) = (\pi_{T*}(M_{\chi}))_{\chi'}$ is perverse by the projection formula from Proposition \ref{PROP_ProjectionFormula2}. Artin's vanishing theorem  implies $\pi_{X!}(M_{(\chi, \chi')})$ has perverse amplitude $\geq 0$ and $\pi_{X*}(M_{(\chi, \chi')})$ has perverse amplitude $\leq 0$ for all $\chi \notin \Delta_A$ and $\chi' \in \mathscr{C}(T)$.
	
	By Theorem \ref{THM_VnshTorus}, there exists a proper thin subset $\Delta_T \subset \mathscr{C}(T)$, such that 
	\[
	\pi_{A!}(M_\chi) \rightarrow \pi_{A*}(M_\chi)
	\]
	is an isomorphism for all $\chi \notin \Delta_T$. By the projection formula, the map 
	\[
	\pi_{A!}(M_{(\chi, \chi')}) \rightarrow \pi_{A*}(M_{(\chi, \chi')})
	\]
	is an isomorphism for all $\chi' \in \mathscr{C}(A)$ and $\chi \notin \Delta_T$. Therefore, the map
	\[
	\pi_{*}\pi_{A!}(M_{(\chi, \chi')})) \rightarrow \pi_*\pi_{A*}(M_{(\chi, \chi')}))
	\]
	is an isomorphism. Since $A$ is proper, the Leray spectral sequence identifies this map with the natural map
	\[
	\pi_{X!}(M_\chi) \rightarrow \pi_{X*}(M_\chi)
	\]
	This map is an isomorphism for all $\chi \notin \Delta_T$. Therefore, we have (2) and (3) for all characters $$(\chi, \chi') \notin \Delta_T\times\mathscr{C}(A)\cup \mathscr{C}(T)\times\Delta_A.$$
\end{proof}
We obtain the following relative variant for group morphisms from smooth descent. This is based on the vanishing theorem \cite[Thm.~2.1]{Kr_mer_2015}
\begin{corollary}\label{COR_VanishingRelGroups}
	We consider a surjective group morphism $\pi\colon G \rightarrow G'$ with kernel $S$. Denote by $i\colon S \rightarrow G$ the inclusion. Let $M \in \Perv{}{G}$. There is a proper thin subset $\Delta \subset \cvar{S}$ and a dense open subset $U \subseteq G$	Let $X$ be a point. In this case, the stratification can be identified with a stratification by thin subsets inside $\cvar{G}$ with the following properties.
	\begin{enumerate}
		\item The complexes $\pi_{?}(M_\chi)$ are lisse over $U$ for all $\chi \in \mathscr{C}(G)$ and their formation commutes with arbitrary base change $X \rightarrow U$. 
		\item The forget supports map
		\[
		\pi_!(M_\chi) \rightarrow \pi_*(M_\chi)
		\]
		is an isomorphism for all $\chi \in \mathscr{C}(G)$ such that $i^*(\chi) \notin \Delta$.
		\item The complex $\pi_{?}(M_\chi)$ is perverse for all $\chi \in \mathscr{C}(G)$ such that $i^*(\chi)\ \notin \Delta$.
	\end{enumerate}
\end{corollary}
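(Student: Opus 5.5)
The plan is to reduce Corollary \ref{COR_VanishingRelGroups} to Theorem \ref{THM_VanSemiAb} by smooth (étale-local) descent along $\pi$. The statement concerns $\pi_?(M_\chi)$ on $G'$, with $\Delta\subset\cvar{S}$ controlling $i^*(\chi)$.

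\textbf{Step 1: trivialize $\pi$ after a smooth base change.} The morphism $\pi\colon G\rightarrow G'$ is an $S$-torsor. I would use the cartesian square of Proposition \ref{PROP_ProjectionFormula1}:
\[
\begin{tikzcd}
G\times S \arrow[r, "m"] \arrow[d, "\pi_G"'] & G \arrow[d, "\pi"] \\
G \arrow[r, "\pi"] & G'
\end{tikzcd}
\]
Here $G\times S\cong G\times_{G'}G$ via $(g,s)\mapsto(g,g+s)$. The map $\pi$ is smooth and surjective. By smooth base change, $\pi^*\pi_?(M_\chi)=\pi_{G?}(m^*(M_\chi))$. Proposition \ref{PROP_CharKunneth} gives $m^*\SL_\chi=\SL_\chi\boxtimes\SL_{i^*\chi}$, so
\[
\pi^*\pi_?(M_\chi)=\bigl(\pi_{G?}((m^*M)_{i^*\chi})\bigr)_\chi .
\]
Twisting by the lisse rank-one sheaf $\SL_\chi$ on the base does not affect lisseness, perversity (up to a shift, since $\pi^*[\dim S]$ is $t$-exact for smooth $\pi$), or whether the forget-supports map is an isomorphism.

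\textbf{Step 2: apply the relative theorem.} Set $X:=G$ and $N:=m^*M[\dim S]\in\Perv{}{S\times X}$, which is perverse because $m$ is smooth of relative dimension $\dim S$. Theorem \ref{THM_VanSemiAb} applied to $N$ gives a dense open $U_0\subseteq G$ and a proper thin $\Delta\subset\cvar{S}$ with three properties:
\begin{enumerate}
\item $\pi_{X?}(N_{\chi'})$ is lisse on $U_0$ for all $\chi'$, and its formation commutes with base change.
\item The forget-supports map is an isomorphism for $\chi'\notin\Delta$.
\item Perversity holds for $\chi'\notin\Delta$.
\end{enumerate}
Take $\chi'=i^*\chi$.

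\textbf{Step 3: descend.} Being an isomorphism and being perverse (after the shift) descend along the smooth surjection $\pi$, since $\pi^*$ is conservative and $\pi^*[\dim S]$ is $t$-exact and detects perversity. This yields (2) and (3) for all $\chi$ with $i^*\chi\notin\Delta$.

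For (1), take $U:=\pi(U_0)$, which is open because $\pi$ is flat. On $\pi^{-1}(U)$ the pullback is lisse only over $U_0$, so there is an issue here. I would fix it using $G$-translation equivariance: $\pi_{G?}(m^*M)$ is computed fibrewise, so $U_0$ can be chosen stable under $S$-translations. Concretely, replace $U_0$ by $\bigcap_{s}(U_0-s)$; by noetherianity, finitely many translates suffice over $\overline{k}$, and a Frobenius-stable choice exists. Then $U_0=\pi^{-1}(U)$. Lisseness descends along smooth surjections, since local constancy of cohomology sheaves can be checked after a smooth cover. Commutation with base change $X\rightarrow U$ descends similarly, because base change maps are compatible with the smooth base change isomorphism.

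The \textbf{main obstacle} is exactly this step: making the open set from Theorem \ref{THM_VanSemiAb} saturated for $\pi$. If the translation argument is awkward, I would instead take $U$ to be the interior of the set of points of $G'$ over which the fibre of $\pi^{-1}(U_0)$ is all of $S$. This set is constructible and dense because $U_0$ is dense, and I would then shrink it to an open dense subset.
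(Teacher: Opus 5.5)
Steps 1--3 are the paper's own argument. You apply Theorem \ref{THM_VanSemiAb} to $m^*M[\dim S]$ over $X=G$, then descend (2) and (3) along the smooth surjection $\pi$ using smooth base change, conservativity of $\pi^*$, and $t$-exactness of $\pi^*[\dim S]$. That part is fine.

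\textbf{The gap is in your treatment of (1).} The step you flag as the main obstacle would fail as written. Your first fix, $\bigcap_{s}(U_0-s)=\{g : g+S\subseteq U_0\}$, equals $\pi^{-1}\bigl(G'\setminus\pi(G\setminus U_0)\bigr)$. Your alternative is the same set $G'\setminus\pi(G\setminus U_0)$. This can be empty even though $U_0$ is dense, so the claim ``dense because $U_0$ is dense'' is false.

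For a concrete example, take $G=\BG_m^2\rightarrow G'=\BG_m$ to be the second projection, so $S=\BG_m\times\{1\}$, and let $U_0=\{(a,b)\mid a\neq 1\}$. Every fibre $\BG_m\times\{b\}$ contains $(1,b)\notin U_0$. Hence $\pi(G\setminus U_0)=G'$, and no intersection of translates, finite or infinite, gives a nonempty $S$-stable open. Since Theorem \ref{THM_VanSemiAb} gives you no control over the shape of $U_0$, you cannot exclude this.

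\textbf{No saturation is needed.} Set $U:=\pi(U_0)$; it is open because $\pi$ is flat. The restriction $\pi|_{U_0}\colon U_0\rightarrow U$ is itself smooth and surjective.
\begin{itemize}
\item \emph{Lisseness.} You already observed that lisseness descends along smooth surjections. Since $(\pi|_{U_0})^*\bigl(\pi_?(M_\chi)|_U\bigr)\cong\pi_{G?}(m^*(M_\chi))|_{U_0}$ is lisse, $\pi_?(M_\chi)$ is lisse on $U$.
\item \emph{Base change.} Given $h\colon X'\rightarrow U$, pull back along the smooth surjection $X'':=X'\times_U U_0\rightarrow X'$. By smooth base change, the pulled-back base change morphism becomes the base change morphism for $\pi_{G?}(m^*(M_\chi))$ along $X''\rightarrow U_0$. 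That morphism is an isomorphism by Theorem \ref{THM_VanSemiAb}(1), after twisting by the lisse sheaf $\SL_\chi$. Conservativity of pullback along a surjection then finishes.
\end{itemize}
This is exactly the paper's argument with $U:=\pi(U')$.

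If you want an $S$-stable open anyway, take the union $\bigcup_s(U_0+s)=\pi^{-1}(\pi(U_0))$, justified by the translation invariance of $\pi^*\pi_?(M_\chi)$, not the intersection.
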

\begin{proof}
	We have a Cartesian square
	\[
	\begin{tikzcd}
		S\times G \arrow[r, "\pi_{G}"] \arrow[d, "m"] & G \arrow[d, "\pi"] \\
		G \arrow[r, "\pi"]                            & G'.                
	\end{tikzcd}
	\]
	Note that, by the Künneth formula,
	\[
	m^*(\SL_\chi) = \SL_{i^*(\chi)}\boxtimes \SL_{\chi}.
	\]
	Let $d$ be the dimension of $S$. We apply Theorem \ref{THM_VanSemiAb} to $m^*M[d]$. Then (2) and (3) follow from smooth base change, proper base change, and $t$-exactness of $\pi^*(-)[d]$. Let $U' \subseteq G$ be a dense open where the Fourier coeffcients of $m^*M[d]$ are lisse and commute with base change. This implies that the Fourier coefficients of $M$ are lisse over $U := \pi(U')$. This set is open, because $\pi$ is open. By smooth base change, we can verify the basechange property over $U$ by proving the base change property over $U'$. 
\end{proof}

\subsection{Vanishing theorem} We now prove the vanishing theorem by taking a Fourier transform and applying the relative vanishing theorem. 
\begin{theorem}\label{THM_VanishingTHM}
	Let $M \in \Perv{}{G_{\overline{k}}}$. There exists a proper thin subset $\Delta \subset \cvar{G}$ such that $M$ is $\Delta$-unramified.
\end{theorem}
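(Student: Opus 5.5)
First, reduce to a product. By Theorem \ref{THM_AppendixStructureTheorem} there is an isogeny $q\colon G \rightarrow S\times U$. Proposition \ref{PROP_RamifiedFiniteDescent} says $\chi\in\cvar{S\times U}$ is unramified for $q_*M$ if and only if $q^*(\chi)$ is unramified for $M$. Also, $q^*$ is surjective (Proposition \ref{PROP_CharDescent}), closed in the thin topology, and dimension preserving (Propositions \ref{PROP_CharIsogenyClosed}, \ref{PROP_IsogenyPreservesDimension}). So if $\Delta'\subset\cvar{S\times U}$ is a proper thin subset with $q_*M$ being $\Delta'$-unramified, then $q^*(\Delta')$ is proper and thin and works for $M$. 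Since $q_*M$ is perverse ($q$ is finite), we may assume $G = S\times U$ with $\pi = \pi_U$. We also pass to $U^{\text{red}}$ so that $\widehat{U}$ is smooth of dimension $d$.

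Next, use the Fourier transform to turn the problem into a relative vanishing problem over $X = \widehat{U}$. Put $N := \text{FT}(M)\in \Perv{}{S\times\widehat{U}}$ (relative Fourier transform in the $U$-variable); it is perverse by Theorem \ref{THM_FTBasic}(4) together with Proposition \ref{PROP_ftfuncbase}. By Lemma \ref{LEM_fmprojformula} and the shift formula of Theorem \ref{THM_FTBasic}(5), for $\chi = (\chi_S,\psi)$,
\[
\text{FT}(\pi_{U?}(M_\chi)) \cong m_\psi^*\,\pi_{\widehat{U}?}(N_{\chi_S}).
\]
Being unramified at $\chi$ therefore means that $\pi_{\widehat{U}?}(N_{\chi_S})$ is lisse and perverse near the point $\psi$ (equivalently $-\psi$ after the shift) and that the forget supports map is an isomorphism there.

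Now apply Theorem \ref{THM_VanSemiAb} to $N$ with $X=\widehat{U}$. This gives a dense open $V\subseteq \widehat{U}$ and a proper thin $\Delta_S\subset\cvar{S}$ such that, for $\chi_S\notin\Delta_S$, the complexes $\pi_{\widehat{U}?}(N_{\chi_S})$ are lisse over $V$ for all $\chi_S$, perverse, and the forget supports map is an isomorphism. Let $Z := \widehat{U}\setminus V$, transported by the sign/shift convention. Set
\[
\Delta := \Delta_S\times\cvar{U}\ \cup\ \cvar{S}\times Z .
\]
This is a finite union of products of tacs and closed subsets of $\widehat{U}$. It is therefore thin, and it is proper because $\Delta_S$ is proper and $Z$ is nowhere dense, so the irreducible space $\cvar{G}$ is not covered (Proposition \ref{PROP_TopologyProductStructure}). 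For $\chi=(\chi_S,\psi)\notin\Delta$, we have $\chi_S\notin\Delta_S$ and $\psi\in V$. The three conditions of Definition \ref{DEF_unramifiedhcar} then hold in the neighbourhood $V$ of $\psi$, so $M$ is $\Delta$-unramified.

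The main obstacle is bookkeeping rather than new mathematics. First, one must check that the relative Fourier transform $N$ is genuinely a perverse sheaf on $S\times\widehat{U}$ to which Theorem \ref{THM_VanSemiAb} applies. Second, one must check that the identification of $\text{FT}(\pi_{U?}(M_\chi))$ with a translate of $\pi_{\widehat{U}?}(N_{\chi_S})$ is compatible with the forget supports maps on both sides. I would verify the latter by writing both sides as pushforwards from $S\times U\times\widehat{U}$ and using Theorem \ref{THM_FTBasic}(1) to swap $!$ and $*$ in the $U$-direction. A further subtlety is that the statement is over $\overline{k}$ while Theorem \ref{THM_VanSemiAb} is stated over $k$. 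I would handle this by quasi-arithmeticity: apply the argument to each arithmetic irreducible subquotient defined over some $k'$, and glue using that unramifiedness is preserved under extensions. If gluing across extensions is problematic, I would instead apply Theorem \ref{THM_VanSemiAb} directly to a model of $M$ over a finite extension, noting that $\cvar{G}$ is insensitive to the base field by Remark \ref{REM_InvBaseChangeFundamGrp}.
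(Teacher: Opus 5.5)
Your proposal is correct and follows essentially the same route as the paper: reduce along an isogeny $q\colon G\to S\times U$ via Proposition \ref{PROP_RamifiedFiniteDescent}, apply Theorem \ref{THM_VanSemiAb} to $\text{FT}(M)$ over $\widehat{U}$, unwind the definitions using Lemma \ref{LEM_fmprojformula} and the shift formula, and take $\Delta=\Delta_S\times\cvar{U}\cup\cvar{S}\times Z$. Your treatment of the quasi-arithmetic issue is only a more explicit version of the paper's opening reduction to a geometrically irreducible, hence arithmetic, $M$: you pass to arithmetic constituents and use that the three unramifiedness conditions are stable under extensions.
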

\begin{proof} We can assume $M$ is geometrically irreducible, hence arithmetic. 
	By Theorem \ref{THM_AppendixStructureTheorem}, we have an isogeny $q\colon G \rightarrow S\times U$ from $G$ to a product of a semiabelian variety $S$ and a unipotent group $U$. Suppose there is a closed proper subset $\Delta\subseteq \cvar{S\times U}$ such that $q_*M$ is $\Delta$-unramified. The subset $q^*(\Delta)\subset \cvar{G}$ is a proper thin subset by Proposition \ref{PROP_IsogenyPreservesDimension}. By Proposition \ref{PROP_RamifiedFiniteDescent}, the perverse sheaf $M$ is $q^*(\Delta)$-unramifed. Thus we can assume $G = S\times U$. 
	
	We apply Theorem \ref{THM_VanSemiAb} to the projection $\pi\colon S\times \widehat{U} \rightarrow \widehat{U}$ and the perverse sheaf $\text{FT}(M).$ We obtain an open subset $V\subseteq \widehat{U}$ and a closed arithmetic proper subset $\Delta \subseteq \mathscr{C}(S)$. Note that the Fourier-Mellin transform $\FMQ{M}$ can be written as a Fourier coefficient of $\text{FT}(M)$ by Proposition \ref{LEM_fmprojformula}. Unwinding the definitions, we find that $M$ is unramified for all $(\chi, \psi) \in \cvar{S}\times\cvar{U}$ with $\chi \notin \Delta$ and $\psi \in V$. Since the complement $Z\subset\cvar{U}$ of $V$ is closed, the subset $\Delta\times \cvar{U}\cup \cvar{S}\times Z$ is a proper thin subset.
\end{proof}
\subsection{Stratification theorems} The relative vanishing theorem for semiabelian varieties also applies to give a general stratification theorem for exponential sums indexed by a scheme over a finite field. We begin by stating the stratification theorem. We then reduce it to a sheaf theoretic stratification theorem, which we prove by induction on the dimension of the base space.
\begin{theorem}\label{THM_stratificationbounds}
	Let $G = S\times U$ and $X$ of pure dimension $d$. Let $K \in \derCat{c}{G\times X}{\Qbarl}$ be a complex of weight $\leq 0$ and with perverse amplitude $\leq d$. There exists:
	\begin{enumerate}
		\item A stratification by closed subsets \[\widehat{U}\times X = Z_{0} \supseteq Z_1 \supseteq  \ldots \supseteq Z_{n - 1} \supseteq Z_n = \emptyset \]
		such that for all $0 \leq i \leq n$ the subset $Z_i$ has codimension $\geq i$. 
		\item A stratification by thin subsets 
		\[
		\cvar{S} = \Delta_{0} \supset \Delta_1 \supseteq \ldots \supseteq \Delta_{n - 1} \supseteq \Delta_n = \emptyset
		\]
		such that for all $0\leq j \leq n$ the subset $\Delta_j$ has codimension $\geq j$.
	\end{enumerate}
	Let $m \geq 1$ and $0\leq i, j\leq n - 1$.  For all $\chi \in \widehat{G}(k_m)$ and $x \in X(k_m)$ with $(\pi_U^*(\chi), x) \notin Z_{i + 1}$ and $\pi_S^*(\chi) \notin \Delta_{j + 1}$, we have
	\[
	\sum_{t \in G(k_m)}\chi(t)t_K(t, x)\ll_K |k_m|^{(i + j)/2}.
	\]
\end{theorem}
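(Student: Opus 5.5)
The plan is to turn the exponential sum into a trace of Frobenius on a stalk of a relative Fourier--Mellin transform, and then control the degrees and weights of that stalk by stratifying. By the Grothendieck--Lefschetz trace formula and Lemma \ref{LEM_fmprojformula},
\[
\sum_{t \in G(k_m)}\chi(t)t_K(t, x)
\]
is, up to sign, the trace of $\text{Fr}_{k_m}$ on the stalk at $(\psi, x)$ of $\pi_{\widehat{U}\times X!}(\text{FT}(K)_{\chi_S})$. Here we write $\chi = (\chi_S, \psi)$ via Proposition \ref{PROP_CharKunneth}, and $\text{FT}$ is the relative Fourier transform on $U\times X$ over the base $X$ from Proposition \ref{PROP_ftfuncbase}. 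The complex $L := \text{FT}(K)$ lives on $S\times(\widehat{U}\times X)$. Since the Fourier transform is $t$-exact (Theorem \ref{THM_FTBasic}) and preserves weights up to a fixed shift, $L$ has weight $\leq 0$ and perverse amplitude $\leq d$ on a base $Y := \widehat{U}\times X$ of dimension $d + \dim U$, after renormalising the shift. We are therefore reduced to a sheaf-theoretic statement for a complex $L$ on $S\times Y$. It suffices to produce the closed strata $Z_i \subseteq Y$ and thin strata $\Delta_j \subseteq \cvar{S}$ such that, for $y \notin Z_{i+1}$ and $\chi_S \notin \Delta_{j+1}$, the stalk of $\pi_{Y!}(L_{\chi_S})$ at $y$ is concentrated in degrees $\leq i + j$. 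Deligne's Weil II bound (weights $\leq$ degree) then gives the bound $|k_m|^{(i+j)/2}$. The implied constant is the total dimension of the stalk, which is bounded uniformly in $\chi_S$ by Proposition \ref{PROP_eulerpoincaresav} together with the uniform lisseness in Theorem \ref{THM_StratificationFourier}.

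I would prove the sheaf-theoretic statement by induction on $\dim Y$, and, inside it, by dévissage over the perverse cohomology sheaves of $L$. This dévissage uses the perverse truncation triangles and the long exact sequence on stalks, so I may assume $L = M[-a]$ with $M$ perverse and $a \leq d$ shifted appropriately. For a perverse $M$ on $S\times Y$, Theorem \ref{THM_VanSemiAb} yields a dense open $V \subseteq Y$ and a proper thin $\Delta \subset \cvar{S}$ with two properties. First, $\pi_{Y!}(M_{\chi_S})$ is lisse over $V$ and commutes with base change there, for every $\chi_S$. Second, it is perverse, hence lisse in a single degree $-\dim Y$ on $V$ when $V$ is smooth, for $\chi_S \notin \Delta$. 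On $V$ and outside $\Delta$ this gives the optimal degree. On the complement $Y - V$, which has codimension $\geq 1$, I apply proper base change and the induction hypothesis to $M|_{(Y-V)\times S}$. By Artin-type amplitude bounds this has perverse amplitude at most one higher relative to the smaller base, which costs exactly one step in $i$. The strata $Z_i$ are then assembled as in Corollary \ref{COR_StratificationStrats}.

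For characters inside $\Delta$ I would induct along the thin stratification. Each component tac of $\Delta$ has the form $\chi_0\cdot\pi^*\cvar{S'}$ for a quotient $\pi\colon S \rightarrow S'$ with connected fibres of dimension $e > 0$. On it, Proposition \ref{PROP_ProjectionFormula2} rewrites $\pi_{Y!}(M_{\chi_0\pi^*\chi'})$ as $\pi'_{Y!}((\pi_!M_{\chi_0})_{\chi'})$. The complex $\pi_!M_{\chi_0}$ has perverse amplitude enlarged by at most $e$, and since $\dim \cvar{S}$ drops by at least $e$ (Lemma \ref{LEM_ClosedDimensionTAC}), the loss is compatible with the codimension of $\Delta_j$. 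Running the same argument on $S'$ by induction on $\dim S$ produces the strata $\Delta_j$. The main obstacle will be this bookkeeping. It must show that the jump in perverse amplitude along a tac of codimension $c$ is at most $c$ and not larger. The same issue arises for the abelian part, where the dimension of $\cvar{A}$ is $2d_a$, so codimension and fibre dimension are not equal. I would first try to handle the abelian directions using the Hard Lefschetz symmetry of $\pi_*$ along abelian fibres to halve the amplitude loss. Failing that, I would weaken the indexing of $\Delta_j$ by codimension to match.
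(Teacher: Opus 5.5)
Your outline is correct, but you organise the middle step differently from the paper. The paper reduces the statement via Weil II to Theorem \ref{THM_stratcoh}. There it also Fourier-transforms in the $U$-direction, but then fixes one closed stratification of $\widehat{U}\times X$ with two properties, holding for every $\chi\in\cvar{S}$ at once: the Fourier coefficients of $M'=\mathrm{FT}(M)$ are lisse along it, and the fibres of $M'$ have controlled perverse amplitude (Theorem \ref{THM_StratificationFourier}, Corollary \ref{COR_StratificationStrats}, Proposition \ref{PROP_PervRelativePerversity}). It then picks finitely many test points meeting every connected component of every stratum. At those points it proves the bound only in the absolute case over a point, using Theorem \ref{THM_VanSemiAb} plus induction on $\dim S$ through the tacs. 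It takes the union of the finitely many thin stratifications so obtained and transports everything to the whole stratum by lisseness and base change. You never pass to points. You apply the relative vanishing theorem on the whole base and recurse on the closed complement $Y-V$ (restriction to a closed subset is right $t$-exact) and on each tac (relative projection formula), taking unions of the finitely many strata produced by the recursion. The paper's route makes the product form of the two stratifications automatic, needs the tac induction only over a point, and also gives the forget-supports isomorphism and lower bounds. Yours needs lisseness only on the generic open of each subproblem, with no test points and no transport along lower strata. The price is invoking Theorem \ref{THM_VanSemiAb} relatively at every stage, and you get only upper bounds, which is all the exponential sum requires.

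The obstacle you single out is not one, and your own remark that $\dim\cvar{S}$ drops by at least $e$ already settles it. Take a tac $\chi_0\pi^*\cvar{S'}$ with $\ker\pi$ of dimension $e=e_a+e_t$. By Lemma \ref{LEM_ClosedDimensionTAC} it has codimension $c=2e_a+e_t\geq e$, while $\pi_!$ raises perverse amplitude by at most $e$. Putting $\chi_0\pi^*(\Delta'_{j-c})$ into $\Delta_j$ then gives degrees $\leq e+i+(j-c)\leq i+j$. So the abelian directions only help, and neither Hard Lefschetz nor re-indexing is needed.

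Two details do need tightening. First, independence of the Euler--Poincar\'e characteristic (Proposition \ref{PROP_eulerpoincaresav}) controls the total dimension of a stalk only where the stalk sits in a single degree. For the uniform implied constant elsewhere, push the bound through the same terminating recursion (d\'evissage plus projection formula). Second, in the tac step choose $\chi_0$ of finite order, which is possible whenever the tac contains any character of finite order. Then $M_{\chi_0}$ and $\pi$ are defined over a finite field, so Theorem \ref{THM_VanSemiAb} applies to $\pi_!M_{\chi_0}$. Tacs containing no character of finite order never meet the sums and can simply be kept in $\Delta_1,\dots,\Delta_c$.
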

\begin{remark}
	The number of arithmetic characters rational over a finite field $k_n$ can be estimated by Proposition \ref{PROP_thinsubsetsestimate}.
\end{remark}
By Deligne's generalized Riemann hypothesis \cite{DeligneWeilII}, this statement can be reduced to the following relative stratification theorem for Fourier coefficients.
\begin{theorem}\label{THM_stratcoh}
	Let $G = S\times U$ and $X$ of pure dimension $d$. Let $K \in \derCat{c}{G_{\overline{k}}\times X_{\overline{k}}}{\Qbarl}$ have perverse amplitude $[a + d, b + d]$.\footnote{Recall our convention that complexes are quasi-arithmetic. For example, a geometric complex is quasi-arithmetic.} We consider a morphism $i$ and $\pi$  as above. There exists:
	\begin{enumerate}
		\item A stratification by closed subsets \[\widehat{U}\times X = Z_{0} \supseteq Z_1 \supseteq  \ldots \supseteq Z_{n - 1} \supseteq Z_n = \emptyset \]
		such that for all $1 \leq i \leq n$ the subset $Z_i$ has codimension $\geq i$.
		\item A stratification by thin subsets
		\[
		\cvar{S} = \Delta_0 \supset \Delta_1 \supseteq \ldots \supseteq \Delta_{m - 1} \supseteq \Delta_m = \emptyset
		\]
		such that for all $1\leq j \leq m$ the subset $\Delta_j$ has codimension $\geq j$.
	\end{enumerate}
	such that:
	\begin{enumerate}
		\item For all $\chi \in \cvar{G}$ and $x \in X$ with $(\pi_U^*(\chi), x) \notin Z_0$ and $\pi_S^*(\chi) \notin \Delta_0$, the natural map
		\[
		H^*_c(G, (i_x^*K)_\chi) \rightarrow H^*(G, (i_x^*K)_\chi)
		\]
		is an isomorphism.
		\item  Consider $0\leq i \leq n - 1$ and $0 \leq j \leq m - 1$. For all $\chi \in \cvar{G}$ and $x \in X$ with $(\pi_U^*(\chi), x) \notin Z_{i + 1}$ and $\pi_S^*(\chi) \notin \Delta_{j + 1}$ the cohomology complex $H^*_c(G, (i_x^*K)_\chi)$ has amplitude $[-j + a, i + j + b]$. 
	\end{enumerate}
\end{theorem}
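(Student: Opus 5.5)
The plan is to convert everything into a statement about a complex on $S\times \widehat{U}\times X$ and then run a double induction: on the dimension of $\widehat{U}\times X$ for the closed strata $Z_i$, and on the dimension of $S$ for the thin strata $\Delta_j$. First we replace $K$ by its relative Fourier transform in the unipotent variable, $L := \text{FT}(K) \in \derCat{c}{S\times\widehat{U}\times X}{\Qbarl}$. By Theorem \ref{THM_FTBasic}(4) and Proposition \ref{PROP_ftfuncbase}, $L$ again has perverse amplitude $[a+d, b+d]$ up to the fixed shift by $\text{dim}(U)$. By the stalk formulas of Lemma \ref{LEM_fmstalks} and Lemma \ref{LEM_fmprojformula}, we have
\[
H^*_c(G, (i_x^*K)_{(\chi_S,\psi)}) = i_{(\psi,x)}^*\,\pi_{\widehat{U}\times X!}(L_{\chi_S})
\]
up to shift, and similarly for $H^*$ with $i^!$. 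So it suffices to stratify $Y := \widehat{U}\times X$ by closed $Z_i$ and $\cvar{S}$ by thin $\Delta_j$, and to control the stalks and costalks of $\pi_{Y?}(L_{\chi})$ for $\chi\in\cvar{S}$.

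For the closed stratification we apply Corollary \ref{COR_StratificationStrats} to $L$ and to $D(L)$, together with Theorem \ref{THM_VanSemiAb}, and refine so that each stratum $Z_{i}-Z_{i+1}$ is smooth of codimension $i$. On each stratum $W := Z_i - Z_{i+1}$ we pass to $i_W^*L$. By the standard estimate for restriction to a locally closed subset of codimension $i$, this complex has perverse amplitude $[a+d-i, b+d]$; more precisely, stalks of a perverse sheaf along a codimension-$i$ smooth stratum lose at most $i$ in the upper range. We then apply Theorem \ref{THM_VanSemiAb} to each perverse cohomology sheaf of $i_W^*L$ over the base $W$. For $\chi$ outside a proper thin set this gives:
\begin{enumerate}
\item the forget supports map is an isomorphism, which yields claim (1) after comparing stalk and costalk via Corollary \ref{COR_gysinisolisse}, since the complexes are lisse on $W$;
\item $\pi_{W?}$ is perverse and lisse, which yields the amplitude $[a, i+b]$ for $j=0$.
\end{enumerate}

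The thin stratification is built by induction on $\text{dim}(S)$. The exceptional set $\Delta_1$ produced above is a finite union of tacs $\chi_k\cdot\pi_k^*\cvar{S_k'}$ with $\pi_k\colon S\to S_k'$ having connected fibres of positive dimension $e_k$. On such a tac, Proposition \ref{PROP_ProjectionFormula1} gives
\[
\pi_{Y?}(L_{\chi_k\pi_k^*\chi'}) = \pi'_{Y?}\big((\pi_{k?}L_{\chi_k})_{\chi'}\big).
\]
The complex $\pi_{k?}L_{\chi_k}$ on $S_k'\times Y$ has perverse amplitude widened by at most $e_k \ge$ the codimension of the tac: Artin vanishing on one side and the dimension of the fibres on the other, using Proposition \ref{PROP_SemiAbCohomology}. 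Applying the induction hypothesis on $S_k'$ produces strata $\Delta_{j}$ inside each tac, with codimension increasing by one at each step, via Proposition \ref{PROP_irredcompinctacs} and Proposition \ref{PROP_IsogenyPreservesDimension}. The amplitude accordingly degrades to $[a-j, i+j+b]$. The closed strata from the various tacs are intersected and refined into a common stratification of $Y$; this is possible because there are finitely many tacs at each stage.

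The main obstacle is the amplitude bookkeeping on a tac. The pushforward $\pi_{k?}$ along the fibres $S_k$ only widens the amplitude by $\text{dim} S_k$, whereas we need the widening to be bounded by the codimension of the tac in $\cvar{S}$, namely $2d_a(S_k)+d_t(S_k)$ by Lemma \ref{LEM_ClosedDimensionTAC}, which is at least $\text{dim} S_k$. So the estimate goes in the favourable direction, but it must be checked separately for the lower bound (using $!$ and Artin vanishing) and the upper bound (using $*$ and duality). For the costalk comparison in (1) to remain valid uniformly on every stratum, we must also ensure that the stratification chosen over the base is fine enough for $L$ and $D(L)$ simultaneously.
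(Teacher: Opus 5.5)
Your route is correct in outline, but it uses the vanishing theorem and the induction on $\dim S$ in a different place from the paper.

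\textbf{The paper's route.} The paper does both only over a point. It first proves the statement for a single complex on $S$, using Theorem \ref{THM_VanSemiAb} with $X$ a point, then the tac decomposition of the exceptional set and the projection formula. After passing to $M'=\text{FT}(M)$, it fixes once and for all a closed stratification of $Y=\widehat{U}\times X$ with three properties:
\begin{enumerate}
\item the fibres $i_y^*M'$ have the right perverse amplitude (Proposition \ref{PROP_PervRelativePerversity});
\item $\pi_{Y!}(M'_\chi)$ is lisse along the strata for \emph{every} $\chi\in\cvar{S}$ (Corollary \ref{COR_StratificationStrats});
\item base change holds on the open stratum.
\end{enumerate}
It then applies the absolute result at finitely many test points, one in each connected component of each stratum. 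It takes the union of the finitely many thin stratifications obtained, and spreads the conclusions to all other points by local constancy of the stalks.

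\textbf{Your route.} You apply the relative vanishing theorem over each stratum and run the tac induction over the base. This has two costs.
\begin{itemize}
\item You must re-stratify $Y$ for every pushforward $\pi_{k!}(L_{\chi_k})$ and then merge the stratifications.
\item You need $i_W^*$ to have perverse amplitude $[-i,0]$. That holds only after refining so that each stratum is locally cut out by $i$ equations. The paper only restricts to points, which are always cut out by a system of parameters.
\end{itemize}
You already use lisseness along strata uniformly in $\chi$, so one fibre per connected stratum determines all stalks there. Hence the relative vanishing theorem and the relative induction are not really needed. The paper's route is more economical and keeps the closed and thin stratifications independent of each other. Yours avoids choosing test points.

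\textbf{Two small corrections.}
\begin{itemize}
\item $H^*(G,(i_x^*K)_\chi)$ is the costalk $i_\psi^!$ in the $\widehat{U}$-direction of $\pi_{\widehat{U}*}(\text{FT}(i_x^*K)_{\chi})$, computed after base change along $x$. It is not $i^!_{(\psi,x)}$ of $\pi_{Y*}(L_\chi)$. So claim (1) follows, and is only needed, on the open stratum, where $\pi_{Y*}$ commutes with base change.
\item $\dim\ker\pi_k$ is at most, not at least, the codimension of the tac. You state this correctly later.
\end{itemize}
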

\begin{remark}
	The argument in Corollary \ref{COR_VanishingRelGroups} allows one to apply this stratification theorem to $G$-torsors. 
\end{remark}
\begin{proof} By the Leray spectral sequence, we can assume $K$ is a perverse sheaf $M$. Moreover, we can assume $M$ is geometrically irreducible. Hence $M$ is arithmetic. 
	
	Our first step is to consider the case when $G = S$ and $X$ is a point. In this case, Theorem \ref{THM_VanSemiAb} shows there exists a proper thin subset $\Delta_0 \subset \cvar{S}$ such that for all $\chi \notin \Delta_0$, the map
	\[
	H^*_c(G, M_\chi) \rightarrow H^*(G, M_\chi)
	\]
	is an isomorphism and $H^*_?(G, M_\chi)$ is concentrated in degree zero. In particular, we obtain (1) and the first stratum of (2). We prove (2) by induction on the dimension of $S$. The thin subset $\Delta_0$ admits an irreducible decomposition in the thin topology, by Proposition \ref{PROP_thintop},
	\[
	\Delta_0 = \Delta_{00}\cup\Delta_{01}\cup\ldots\cup \Delta_{0m}.
	\]
	For each $0 \leq j \leq m$, there is a character $\chi_i \in \cvar{S}$ and a quotient map $\pi_i\colon S_{\overline{k}}\rightarrow S_i$ with connected fibers such that
	\[
	\Delta_{0i} = \chi_i\cdot \pi_i^*(S_i).
	\]
	Consider a character $\chi \in \Delta_{0i}$ for some $0 \leq i \leq m$. There is a character $\chi' \in \cvar{S_i}$ such that $\chi = \chi_i\pi_i^*(\chi')$. In particular, the projection formula recorded in Proposition \ref{PROP_ProjectionFormula2} implies
	\[
	H^*_?(S, M_\chi) = H^*_?(S_i, (\pi_?(M))_{\chi'}).
	\]
	The induction hypothesis implies the existence of the required stratifications by standard bounds on the cohomological amplitude of $\pi_?$. 
	
	We turn to the general case. By Theorem \ref{THM_AppendixStructureTheorem}, there is an isogeny $q\colon G \rightarrow S\times U$. The projection formula in Proposition \ref{PROP_ProjectionFormula2} allows us to replace $M$ by $q_*(M)$ and assume $G = S\times U$. 
	
	We put
	\[
	M' := \text{FT}(M).
	\]
	We can stratify the perversity of the fibers of $M'$ over $\widehat{U}\times X$ by Corollary \ref{COR_StratificationStrats} and we can stratify the Fourier coefficients of $M'$ by Proposition \ref{PROP_PervRelativePerversity} and Theorem \ref{THM_StratificationFourier}. We take the union to obtain a stratification by closed subset
	\[\widehat{U}\times X = Z_{0} \supseteq Z_1 \supseteq  \ldots \supseteq Z_{n - 1} \supseteq Z_n = \emptyset \]
	such that:
	\begin{enumerate}
		\item The closed subset $Z_i$ has codimension $\geq i$.
		\item The Fourier coefficient $\pi_{X!}(M'_\chi)$ is lisse along this stratification for all $\chi \in \cvar{S}$ .
		\item The Fourier coefficient $\pi_{X*}(M'_\chi)$ is lisse on $X - Z_1$ for all $\chi \in \cvar{S}$.
		\item The formation of $\pi_{X?}(M'_\chi)$ commutes with basechange $X' \rightarrow X - Z_1$.
		\item The complex $i_x^*(M')$ has perverse amplitude $[a, i + b - 1]$ for all $0 \leq i \leq n$ and closed points $x \notin Z_i$.
	\end{enumerate}
	For all $1 \leq i \leq n$, we choose finite subsets of closed points $X_i \subseteq Z_{i + 1} - Z_{i}$ such that the map on connected components $X_i \rightarrow \pi_0(Z_{i + 1} - Z_i)$ is surjective. We will test the vanishing theorems for each stratum along these points. We can stratify $\cvar{S}$
	\[
	\cvar{S} = \Delta_0 \supset \Delta_1 \supseteq \ldots \supseteq \Delta_{n - 1} \supseteq \Delta_n = \emptyset
	\]
	so that the cohomology of the fibers at $X_i$ is controlled as follows:
	\begin{enumerate}
		\item For all $\chi \notin \Delta_1$ and $(\psi, x) \in X_1$, the natural amp
		\[
		H^*_c(G, (i_{(\psi, x)}^*(M'))_\chi) \rightarrow H^*(G, (i_{(\psi, x)}^*(M'))_\chi)
		\]
		is an isomorphism.
		\item For all $1 \leq i \leq n$, $0\leq j \leq m - 1$, $\chi \notin \Delta_{j + 1}$, and $(\psi, x)\in X_i$, the complexes $$H^*_c(G, (i_{(\psi, x)}^*(M'))_\chi)$$ have amplitude $[- j + a, i + j + b]$ 
	\end{enumerate}
	We spread the vanishing obtained from stratifying $\cvar{S}$ out to the other fibers from the lisseness of the Fourier coefficients along the stratifications constructed on $\widehat{U}\times X$. Let $\chi \in \cvar{S}$ and $(\psi, x) \notin Z_1$ be a closed point. The stalk of the natural morphism
	\[
	\pi_{X!}(M'_\chi)\rightarrow \pi_{X*}(M'_\chi)
	\]
	at $(\psi, x)$ is given by
	\[
	H^*_c(G, (i_{(\psi, x)}^*(M))_\chi) \rightarrow H^*(G, (i_{(\psi, x)}^*(M))_\chi)
	\]
	because the formation of $\pi_{X*}(M'_\chi)$ commutes with arbitrary base change. Since the complexes $\pi_{X?}(M_\chi)$ are lisse and their formation commutes with arbitrary basechange,  there exists a point $x_0 \in X_0$ such that this map is isomorphic to
	\[
	H^*_c(G, (i_{(\psi, x_0)}^*(M))_\chi) \rightarrow H^*(G, (i_{(\psi, x_0)}^*(M))_\chi)
	\]
	In particular, the stratum given by $Z_1$ and $\Delta_1$ satisfies the first conclusion of the theorem. Similarly, by proper base change the strata satisfy the second conclusion of the theorem with supports.
\end{proof}



%% file: Classification.tex
		\section{Classification of negligible sheaves}
		\subsection{Semiabelian varieties} We classify the geometrically irreducible negligible sheaves for semiabelian varieties $S$ over finite fields by the following theorem, which generalizes \cite[Main~Theorem]{Weissauer16} and \cite[Thm.~5.1.1]{GabberLoeserTore} over finite fields to constant semiabelian varieties over a base scheme. Our arguments draw in an essential way from the classification of characters from \cite{GabberLoeserTore}.
	\begin{theorem}\label{THM_Classification}
		Let $M \in \Perv{}{S}$ be geometrically irreducible with $\chi(S, M) = 0$. There is a semiabelian variety $S'$ over $k$, a surjective group morphism $\pi\colon S \rightarrow S'$ with connected fibers of dimension $d > 0$, $N \in \Perv{}{G}$ with $\chi(S', N) \neq 0$, and a character $\chi \in \widehat{S}(k)$ such that
		\[
		M = \pi_{}^*N\otimes \SL_\chi[d].
		\]
	\end{theorem}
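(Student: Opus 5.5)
The plan is to reduce to the geometric classification over $\overline{k}$ and then descend the quotient and the character to $k$ using Theorem \ref{THM_DescendingTACS}. First, by the Euler–Poincaré formula (Proposition \ref{PROP_eulerpoincaresav}), $\chi(S, M) = 0$ gives $\chi(S, M_\chi) = 0$ for all $\chi \in \cvar{S}$. Over $\overline{k}$, $M_{\overline{k}}$ is irreducible with vanishing Euler characteristic. For an abelian variety this is the Krämer–Weissauer classification \cite{Weissauer16}, and for a torus it is Gabber–Loeser \cite[Thm.~5.1.1]{GabberLoeserTore}. For a general $S$ we would run the Gabber–Loeser induction. We pass along the extension $0 \to T \to S \to A \to 0$ (after the isogeny of Theorem \ref{THM_AppendixStructureTheorem}, using Proposition \ref{PROP_CharDescent} to move characters through isogenies). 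The relative vanishing theorem, Theorem \ref{THM_VanSemiAb}, applied to $\pi_A$, then lets us compare $\chi(S,M_\chi)$ with the Euler characteristics of the perverse pushforwards $\pi_{A?}(M_\chi)$ for generic $\chi|_T$. From this we deduce that either $M$ is translation invariant in a positive-dimensional connected subgroup of $T$, or a generic perverse pushforward to $A$ is negligible, so that Weissauer's result applies.

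In either case the output over $\overline{k}$ is a connected subgroup $H_{\overline{k}} \subseteq S_{\overline{k}}$ of dimension $d>0$, a quotient $\pi\colon S_{\overline{k}} \to S_{\overline{k}}/H_{\overline{k}}$, a perverse $N$, and a character $\chi_0 \in \cvar{S}$ with $M_{\overline{k}} \cong \pi^*N \otimes \SL_{\chi_0}[d]$. Among all such presentations we choose one with $d$ maximal. Then $N$ is not itself negligible, since otherwise we could iterate and enlarge $H$; hence $\chi(S', N) \neq 0$.

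Next comes the characterization through the Fourier support. By Proposition \ref{PROP_ProjectionFormula1} and Proposition \ref{PROP_VanishingCohomology}, $H^*(S, M_\chi)$ vanishes unless $\chi\chi_0$ lies in $\pi^*\cvar{S'}$, in which case it equals a twist of $H^*(S', N_{\chi'}) \otimes H^*(H,\Qbarl)$. Since $\chi(S',N)\neq 0$, the twists $N_{\chi'}$ have nonzero cohomology for all $\chi'$. Hence the tac $\Delta := \overline{\chi_0}\cdot \pi^*\cvar{S'}$ is exactly the set of characters $\chi$ with $H^*(S,M_\chi)\neq 0$. This set is intrinsic to $M$.

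Since $M$ is defined over $k$, this set is Frobenius stable: $\text{Fr}_k^*(\Delta) = \Delta$. Theorem \ref{THM_DescendingTACS} then says $H_{\overline{k}}$ descends to a subgroup $H \subseteq S$ over $k$, and we may take the character $\chi \in \widehat{S}(k)$ arithmetic. We set $S' := S/H$ over $k$ and $N := (\pi_*(M_{\chi^{-1}}))[-d]$, appropriately normalized (using that $\pi^*[d]$ is fully faithful on perverse sheaves for smooth $\pi$ with connected fibers). This $N$ is then a perverse sheaf on $S'$ over $k$, and the isomorphism $M \cong \pi^*N \otimes \SL_\chi[d]$ holds over $k$ up to a Weil twist. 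That twist can be absorbed into $N$, because $\text{Hom}$ between geometrically irreducible sheaves is one-dimensional.

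The main obstacle is the geometric step for a genuinely semiabelian, non-split $S$ over a finite field: transferring the torus and abelian classifications through the extension. Gabber–Loeser use a specialization argument that is unavailable here. I would first try to replace it with the family vanishing of Theorem \ref{THM_VanSemiAb} together with the lisseness over a dense open in point (1), spreading the negligibility from a generic fiber.
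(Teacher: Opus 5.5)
Your descent step is sound and is essentially the paper's Lemma~\ref{LEM_UniquenessOfInvariantSubgroup}. The set of $\chi$ with $H^*(S,M_\chi)\neq 0$ is exactly the tac $\overline{\chi_0}\cdot\pi^*\cvar{S'}$, it is Frobenius stable, and Theorem~\ref{THM_DescendingTACS} descends both the subgroup and the character. Choosing $d$ maximal is equivalent to the paper's iteration.

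The gap is the geometric classification itself when $S$ is neither abelian nor a torus. That is the real content of the theorem, and you leave it as a plan. The dichotomy you state along $0\to T\to S\to A\to 0$ is not derived from anything. What Theorem~\ref{THM_VanSemiAb} and $\chi(S,M)=0$ actually give is that $P_{\chi_T}:=\pi_{A*}(M_{\chi_T})$ is perverse on $A$ with $\chi(A,P_{\chi_T})=0$ for generic $\chi_T$. Suppose you grant that all constituents of $P_{\chi_T}$ are then negligible. Weissauer's theorem still only describes those constituents, which are in general reducible and vary with $\chi_T$. Nothing in your sketch produces a single connected subgroup $H$ with $M\cong\pi^*N\otimes\SL_{\chi_0}[d]$.

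Spreading from a generic fibre via lisseness over a dense open is indeed how the paper replaces specialization; this is Lemma~\ref{LEM_ClassificationAbsoluteImpliesRelative}. But that lemma needs as input a character $\nu$ with $[\pi_{X!}(M_\nu)]=0$ in $K(X)$, i.e.\ vanishing Euler characteristic on every fibre over the base. The hypothesis $\chi(S,M)=0$ does not give this for $X=A$ or for $X=T$.

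The missing idea is how to manufacture such a $\nu$. The paper inducts on $d_t$, with Weissauer's theorem as the base case. By Lemma~\ref{LEM_ClassificationInductionStep1}, one of two things holds:
\begin{enumerate}
\item $M$ is already a twisted pullback along a quotient with one-dimensional connected fibres; or
\item the characters with nonvanishing cohomology lie in a closed subset of codimension $\geq 2$.
\end{enumerate}
In the second case one picks $\pi\colon S_{k'}\to\mathbb{G}_m$ with connected kernel $S''$ (Corollary~\ref{COR_AppendixMapToCircle}). The avoidance result, Proposition~\ref{PROP_ClassificationAvoidance}, gives $\nu$ such that $\nu\cdot\pi^*\cvar{\mathbb{G}_m}$ misses that subset. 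Then every Mellin coefficient of $\pi_*(M_\nu)$ vanishes, so $\pi_*(M_\nu)=0$ (Lemma~\ref{LEM_ClassificationInductionStep2}), which is precisely the $K$-group hypothesis. Since $S''$ has torus rank $d_t-1$, the induction hypothesis together with Lemmas~\ref{LEM_ClassificationAbsoluteImpliesRelative} and~\ref{LEM_classabsimplrelgrp} yields $M\cong\pi_1^*N_1\otimes\SL_{\chi_1}[d_1]$ with $d_1>0$.

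So the family one spreads over has a $\mathbb{G}_m$-quotient as its base, not $A$. Your decomposition has no analogue of the codimension-two dichotomy or the avoidance lemma, and without them the argument does not close.
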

	There is also the following relative variant.
	\begin{theorem}\label{THM_ClassCharactersRelative}
		 Let $M \in \Perv{}{S\times X}$ be a geometrically irreducible perverse sheaf, such that there is a character $ \nu \in \mathscr{C}(S)$ with\footnote{We have $[\pi_{X!}(M_\chi)] = [\pi_{X*}(M_\chi)]$ in the $K$-groug, hence the two conditions are equivalent.} $$[\pi_{X?}(M_\chi)] = 0$$ in the $K$-group of $X$. There is a quotient $\pi\colon S \rightarrow S'$ with connected fibers of dimension $d > 0$, a perverse sheaf $N \in \Perv{}{S'\times X}$, and a character $\chi \in \widehat{S}(k)$ such that
		\[
		M = \pi^*N\otimes\SL_\chi[d].
		\]
		Moreover, for each $\chi \in \cvar{S}$ we have $\pi_{X!}(M_\chi) \neq 0$ if  and only if $\chi \in \overline{\chi}\cdot\pi^*(\cvar{S'})$. Dually, we also have $\pi_{X*}(M_\chi) \neq 0$ if  and only if $\chi \in \overline{\chi}\cdot\pi^*(\cvar{S'})$
	\end{theorem}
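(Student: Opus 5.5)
Our plan is to reduce the relative statement to the absolute classification, Theorem \ref{THM_Classification}, applied to a suitable fiber of $M$ over $X$, and then to spread the resulting structure out over all of $X$. We first shrink $X$. Since $M$ is geometrically irreducible, its support is $S\times Z$ for an irreducible closed $Z\subseteq X$ (the image of the support is irreducible). After replacing $X$ by $Z$ we may assume $M$ has full support. By Theorem \ref{THM_StratificationFourier} together with relative perversity (Lemma \ref{LEM_PervRelativePerversity}), there is a dense open $V\subseteq X$ with three properties: $V$ is smooth; $M|_{S\times V}$ is of the form $M_V[\dim V]$ with fibers $i_x^*M[-\dim V]$ perverse on $S$; and $\pi_{X?}(M_\chi)$ is lisse on $V$ with formation commuting with base change, for every $\chi$. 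The hypothesis $[\pi_{X!}(M_\nu)]=0$ then gives $\chi(S, i_x^*M_\nu)=0$ for every closed $x\in V$. By Proposition \ref{PROP_eulerpoincaresav}, this means $\chi(S,P)=0$ for every perverse constituent $P$ of the fiber $i_x^*M[-\dim V]$. Note that the Euler characteristic vanishes on every constituent because the classes have signs, and for perverse sheaves on $S$ we have $\chi\geq 0$ by the generic vanishing theorem.

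Next we apply Theorem \ref{THM_Classification} to each geometrically irreducible constituent $P$ of such a fiber. This gives a quotient $\pi_P\colon S\to S_P$ with connected fibers of positive dimension and a character $\chi_P$ such that $P\cong \pi_P^*N_P\otimes\SL_{\chi_P}[d_P]$. The key step, which we expect to be the main obstacle, is to show that a single pair $(\pi,\chi)$ works uniformly over $V$, and then over all of $X$. We would proceed by using the Fourier support. For a constituent $P$ as above, we claim that $H^*_?(S,P_{\chi})\neq 0$ exactly when $\chi\in\chi_P^{-1}\pi_P^*\cvar{S_P}$. This follows from the projection formula (Proposition \ref{PROP_ProjectionFormula1}) together with the nonvanishing $\chi(S_P,N_P)\neq 0$, choosing $S_P$ maximal. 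By lisseness and base change, the set of $\chi$ with $\pi_{X!}(M_\chi)\neq0$ on $V$ is then a finite union of tacs. Because $M$ is irreducible and the generic fiber carries a Galois action permuting constituents, this set must be one tac; Proposition \ref{PROP_irredcomptequal} and Corollary \ref{COR_TACSDetermineGroupUniquely} pin down the subgroup $H=\ker\pi$ uniquely. Since this tac is Frobenius stable, Theorem \ref{THM_DescendingTACS} descends $H$ to $k$ and lets us choose $\chi\in\widehat{S}(k)$.

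With $\pi$ and $\chi$ fixed, we show that $M_{\chi^{-1}}$ is $H$-invariant, i.e. that $m^*M_{\chi^{-1}}\cong \pi_{2}^*M_{\chi^{-1}}$ on $H\times S\times X$. On the fibers over $V$ this holds by the classification. On $S\times V$, invariance of a lisse-along-$H$ irreducible perverse sheaf is equivalent to descent along the smooth morphism $\pi\times\mathrm{id}$. Indeed, set $N_V:={}^p\!\mathcal{H}^{-d}(\pi_{*}M_{\chi^{-1}})$; the adjunction map $\pi^*N_V[d]\to M_{\chi^{-1}}$ is an isomorphism on fibers, hence an isomorphism. Then $M$ is the intermediate extension from $S\times V$ (by irreducibility), and $\pi^*[d]$ commutes with intermediate extension since $\pi$ is smooth. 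So setting $N:=j_{!*}N_V$ gives $M\cong\pi^*N\otimes\SL_\chi[d]$ on all of $S\times X$. Finally, the ``moreover'' part follows from Proposition \ref{PROP_ProjectionFormula1}. The pushforward $\pi_{X?}(M_{\chi'})$ vanishes unless $\chi'\chi$ is trivial on $H$, i.e. unless $\chi'\in\overline{\chi}\cdot\pi^*\cvar{S'}$. When it is in that tac, it equals $\pi'_{X?}(N_{\chi''})\otimes H^*_?(H)$, and this is nonzero because $N$ restricted to a generic fiber has nonzero Euler characteristic against every character (again by Proposition \ref{PROP_eulerpoincaresav}). The statement with $*$-supports follows by duality.
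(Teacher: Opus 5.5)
\textbf{There is a genuine gap, at the step you yourself flag as the main obstacle.}

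\emph{The single-tac claim.} Everything in the paper is over finite fields. So you can apply Theorem \ref{THM_Classification} and the support lemma (Theorem \ref{THM_SupportLemma}) only to constituents of the closed fibers $i_x^*M[-\dim V]$. Those constituents are not permuted by any Galois group: restricting to a closed point may split $M$ into pieces whose Fourier supports are, a priori, several different tacs. The Clifford-type transitivity you invoke holds on the geometric generic fiber over $\overline{k(X)}$. But there you do not know that an irreducible constituent has a single tac as Fourier support, because that is the classification over a non-finite field, which is not available here. So ``this set must be one tac'' is unproven, and as stated it is essentially the theorem itself.

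\emph{The fiberwise counit.} The next step has a similar problem. A closed fiber is a priori only an iterated extension of objects $\pi^*(-)\otimes\SL_\chi[d]$. The essential image of $\pi^*[d]$ is stable under subquotients but not under extensions. For example, the Kummer extension of $\Qbarl$ by $\Qbarl(1)$ on $\BG_m$, shifted by one, is not pulled back from a point. So your counit is not known to be an isomorphism on fibers. Also, $\pervCoh{-d}{\pi_*(-)}$ commutes with $i_x^*$ only after further shrinking, since $\pi$ is not proper.

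\textbf{The missing idea: let the irreducibility of $M$ on all of $S\times X$ do the work, and get uniqueness of the tac as an output rather than an input.} This is how the paper argues.
\begin{enumerate}
\item Suppose the Fourier support of the closed fibers over the good open set is a finite union $\bigcup_i\chi_i\pi_i^*\cvar{S'_i}$. Take a closed fiber and a quotient constituent of it.
\item Right exactness of top perverse cohomology and proper base change give $\pervCoh{d_i-c}{i_x^*\pi_{i!}(M_{\overline{\chi}_i})}\neq 0$ for some $i$, where $c=\dim X$ (this is Lemma \ref{LEM_ClassificationFourierSupp}(3)).
\item Generic compatibility of perverse cohomology with $i_x^*[-c]$ (Lemma \ref{LEM_PervRelativePerversity}) upgrades this to $\pervCoh{d_i}{\pi_{i!}(M_{\overline{\chi}_i})}\neq 0$ on $S'_i\times X$.
\item Lemma \ref{LEM_pullbackcrit} then produces a nonzero map $M\to\pi_i^*N\otimes\SL_{\chi_i}[d_i]$. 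It is an isomorphism because $M$ is irreducible.
\item The projection formula (Proposition \ref{PROP_ProjectionFormula1}) now forces the Fourier support to be that single tac.
\end{enumerate}
After this, your Frobenius descent via Theorem \ref{THM_DescendingTACS} and your treatment of the \emph{moreover} part go through.

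The same fix works in your $\pi_*$ formulation. The counit $\pi^*\pervCoh{-d}{\pi_*(M_{\chi^{-1}})}[d]\to M_{\chi^{-1}}$ is nonzero as soon as $\pervCoh{-d}{\pi_*(M_{\chi^{-1}})}\neq 0$. A nonzero map onto an irreducible perverse sheaf is surjective, and the essential image is closed under quotients. So you only need to establish that nonvanishing, not a fiberwise isomorphism.
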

	The relative version is \cite[Thm~5.1.2]{GabberLoeserTore} for the semiabelian case. We then deduce the following relative variant.
	\begin{corollary}\label{COR_ClassificationRelativeGroup}
		Let $\pi\colon G \rightarrow G'$ a surjective group morphism. We assume the kernel of $\pi$ is a semiabelian variety $S$. Let $M \in \Perv{}{G\times X}$ be a geometrically irreducible perverse sheaf. Suppose there is $\nu \in \mathscr{C}(G)$ with
		\[
		[\pi_{?}(M_\nu)] = 0 
		\]
		in the $K$-group of $G'\times X$. There is a semiabelian subvariety $i\colon S' \rightarrow S$ of dimension $d > 0$ with quotient $\pi'\colon G \rightarrow G''$, a perverse sheaf $N \in \Perv{}{G''\times X}$, and a character $\chi \in \widehat{G}(k)$ such that
		\[
		N = \pi^*(N)\otimes\SL_\chi[d].
		\]
		Moreover, for each $\chi \in \cvar{G}$ we have $\pi_{!}(M_\chi) \neq 0$ if and only if $i^*(\chi) \in \overline{\chi}\cdot{\pi'}^*(\cvar{S'})$. Dually, for each $\chi \in \cvar{G}$ we have $\pi_{*}(M_\chi) \neq 0$ if and only if $i^*(\chi) \in \overline{\chi}\cdot{\pi'}^*(\cvar{S'})$.
	\end{corollary}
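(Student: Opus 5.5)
We deduce Corollary \ref{COR_ClassificationRelativeGroup} from Theorem \ref{THM_ClassCharactersRelative} by smooth base change along the Cartesian square already used in the proof of Corollary \ref{COR_VanishingRelGroups}:
\[
\begin{tikzcd}
	S\times G \arrow[r, "\pi_{G}"] \arrow[d, "m"] & G \arrow[d, "\pi"] \\
	G \arrow[r, "\pi"] & G'.
\end{tikzcd}
\]
The plan has three steps: trivialize the $S$-torsor $G\to G'$ after a smooth base change, apply the classification of Theorem \ref{THM_ClassCharactersRelative} with base $G\times X$, and descend the resulting decomposition back to $G$.

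\emph{Trivialization.} Put $d_S:=\dim S$ and $\widetilde M := m^*M[d_S]\in \Perv{}{S\times G\times X}$, which is perverse since $m$ is smooth. By Proposition \ref{PROP_CharKunneth} we have $m^*\SL_\nu=\SL_{i^*\nu}\boxtimes\SL_\nu$. Smooth base change therefore gives
\[
\pi_G^*\pi_?(M_\nu)[d_S]\cong \pi_{G\times X,?}\big(\widetilde M_{i^*\nu}\big)\otimes\SL_\nu .
\]
Pullback along the smooth surjection $\pi_G$ and twisting by a rank one lisse sheaf both induce injective maps on $K$-groups after passing to perverse constituents, so the hypothesis gives $[\pi_{G\times X,?}(\widetilde M_{i^*\nu})]=0$.

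\emph{Irreducibility.} The sheaf $\widetilde M$ need not be geometrically irreducible. However, $m$ is smooth with connected fibers, so $m^*[d_S]$ is fully faithful on perverse sheaves \cite[4.2.5]{BBD} and preserves irreducibility. Hence $\widetilde M$ is geometrically irreducible.

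\emph{Classification and descent.} Theorem \ref{THM_ClassCharactersRelative}, applied with base $G\times X$, gives a quotient $S\to S/S'$ with connected fibers of dimension $d>0$, a perverse sheaf $\widetilde N$, and $\chi_0\in\widehat S(k)$ with $\widetilde M=\mathrm{pr}^*\widetilde N\otimes\SL_{\chi_0}[d]$. Here $S'\subseteq S$ is the connected kernel. Set $G'':=G/S'$ and $\pi'\colon G\to G''$. We must then descend this decomposition back to $G$ and obtain $M=\pi'^*N\otimes\SL_\chi[d]$ with $\chi\in\widehat G(k)$.

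The invariance of $M$ under $S'$-translation up to the character twist comes from restricting $\widetilde M$ along $S'\times G\to S\times G$. This shows that $M\otimes\SL_{\chi'}^{-1}$ is $S'$-equivariant for some extension $\chi'\in\cvar G$ of $\chi_0|_{S'}$; such an extension exists by Proposition \ref{PROP_CharExact}. It therefore descends along the $S'$-torsor $\pi'$ to some $N$, again by \cite[4.2.5]{BBD}.

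Next, the character $\chi'$ can be chosen Frobenius invariant up to $\pi'^*\cvar{G''}$. This uses Theorem \ref{THM_DescendingTACS} and the vanishing of $H^1$ in Proposition \ref{PROP_GroupCohomologyCharacterGroup}, and it yields $\chi\in\widehat G(k)$.

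\emph{Support statement.} We compute
\[
\pi_!(M_\eta)=\pi_!\big(\pi'^*N_{\eta\chi}\big)[d]\quad\text{(schematically)}.
\]
Proposition \ref{PROP_ProjectionFormula1} applied to $\pi'$ then shows that this vanishes unless $i^*(\eta\chi^{-1})$ is trivial on $S'$, i.e. unless $i^*(\eta)$ lies in the indicated tac. Conversely, nonvanishing on the tac follows from the corresponding nonvanishing statement in Theorem \ref{THM_ClassCharactersRelative} for $\widetilde M$, transported through smooth base change. The statement for $\pi_*$ follows dually via $D$ and Lemma \ref{LEM_fmdual}-type duality.

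\emph{Main obstacle.} The hardest step is the descent: showing that the decomposition found on $S\times G\times X$ is already of the form $m^*(\cdots)$. This amounts to checking the cocycle condition for the equivariance isomorphism, and it is here that the irreducibility of $M$ (Schur's lemma for the scalars) and the arithmeticity of the character must be combined.
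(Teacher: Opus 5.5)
Your argument is correct. Up to the application of Theorem~\ref{THM_ClassCharactersRelative} over the base $G\times X$, it coincides with the paper's proof (Lemma~\ref{LEM_classabsimplrelgrp}); the two routes diverge at the descent to $G''=G/S'$.

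\textbf{Your descent.} You keep the full isomorphism $m^*M[d_S]\cong\mathrm{pr}^*\widetilde N\otimes\SL_{\chi_0}[d]$, restrict it to $S'\times G\times X$ to get naive $S'$-equivariance of $M\otimes\SL_{\chi'}^{-1}$, and descend along the $S'$-torsor $\pi'$. The cocycle condition you flag as the main obstacle is automatic and needs neither Schur's lemma nor arithmeticity. Normalize the isomorphism to be the identity over $\{e\}\times G\times X$. The two composites over $S'\times S'\times G\times X$ then differ by an automorphism of the pullback of $M\otimes\SL_{\chi'}^{-1}$ along the projection to $G\times X$. By \cite[Prop.~4.2.5]{BBD} this automorphism is pulled back from an automorphism of $M\otimes\SL_{\chi'}^{-1}$, which is the identity because it is so at $(e,e)$.

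\textbf{The paper's descent.} The paper avoids equivariance entirely. From the decomposition it only extracts, by proper and smooth base change, that the top perverse cohomology of $\pi'_!$ of a suitable arithmetic twist $M_\eta$ is nonzero. It then concludes via Lemma~\ref{LEM_UniquenessOfInvariantSubgroup}, whose engine is the pullback criterion Lemma~\ref{LEM_pullbackcrit} (\cite[Cor.~4.2.6.2]{BBD}): irreducibility of $M$ forces $M_\eta\cong\pi'^*N[d]$. This needs only the non-vanishing of one perverse cohomology sheaf, so it is shorter and free of coherence checks. Your route, by contrast, uses irreducibility only to invoke the classification.

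\textbf{Arithmeticity.} The paper simply extends $\chi_0|_{S'(k)}$ from the finite subgroup $S'(k)\subseteq G(k)$ to $G(k)$. This is more elementary than your appeal to Theorem~\ref{THM_DescendingTACS} and Proposition~\ref{PROP_GroupCohomologyCharacterGroup}, though both work.

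\textbf{What you check that the paper leaves implicit.}
\begin{itemize}
\item Geometric irreducibility of $m^*M[d_S]$. For this you need stability of the essential image under subquotients \cite[4.2.6]{BBD}, not only full faithfulness.
\item The support statement. Here you should add that the tac $\overline{\chi}_0\cdot q^*\cvar{S/S'}$ (with $q\colon S\to S/S'$) supplied by the theorem equals $\iota^*(\overline{\chi})\cdot q^*\cvar{S/S'}$ (with $\iota\colon S\to G$). This holds because $\chi$ and $\chi_0$ agree on $S'$.
\end{itemize}

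\textbf{Small slips.} The base-change identity is for $\pi^*\pi_?(M_\nu)$, i.e.\ pullback along $\pi$ rather than $\pi_G$. No injectivity on $K$-groups is needed there: exact functors induce homomorphisms on $K$-groups, and twisting by $\SL_\nu^{-1}$ undoes the twist.
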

	We give an overview of the argument. The argument proceeds by induction of the rank of the maximal torus in $S$. The induction start is given to us by (an arithmetic extension of) the classfication given in \cite{Weissauer16}. We begin by proving that the absolute version implies the relative version in Lemma \ref{LEM_ClassificationAbsoluteImpliesRelative}. We then deduce the relative variant for group morphisms in Lemma \ref{LEM_classabsimplrelgrp}. We can now begin the induction following \cite{GabberLoeserTore}. We distinguish two cases in Lemma \ref{LEM_ClassificationInductionStep1}. The first case is treated by the case distinction and the second case is then treated in more detail in Lemma \ref{LEM_ClassificationInductionStep2}.

	\subsubsection{Abelian varieties} We begin by discussing a minor extension of the classification in the abelian case. We then introduce a descent lemma, which permits us to deduce the arithmetic classification stated here from the geometric version in \cite{Weissauer16}.
	\begin{theorem}[{\cite[Main Theorem]{Weissauer16}}]\label{THM_ClassificationNegligibleAbelian}
		Let $M \in \Perv{}{A}$ be a geometrically irreducible perverse sheaf with $\chi(M) = 0$. There is a quotient $\pi\colon A \rightarrow A'$ with connected fibers of dimension $d$, a perverse sheaf $N \in \derCat{c}{A'}{\Qbarl}$ with $\chi(A, N) \neq 0$, and a character $\chi \in \widehat{A}(k)$ such that
		\[
		M =  \pi^* N\otimes\SL_\chi[d]
		\]
	\end{theorem}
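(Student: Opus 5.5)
The plan is to reduce to the geometric classification of \cite{Weissauer16} over $\overline{k}$. The descent of the quotient and of the character to $k$ will then follow from Theorem \ref{THM_DescendingTACS}, once we know that the relevant tac is Frobenius stable. Apply the geometric classification to $M_{\overline{k}}$, which is irreducible because $M$ is geometrically irreducible. This gives a quotient $\pi\colon A_{\overline{k}} \rightarrow A'$ with connected kernel $H_{\overline{k}}$ of dimension $d$, a perverse sheaf $N$ on $A'$ with $\chi(A', N) \neq 0$, and a character $\chi_0 \in \cvar{A}$ such that
\[
M_{\overline{k}} = \pi^*N\otimes\SL_{\chi_0}[d].
\]

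First I will show that $\Delta := \chi_0^{-1}\cdot\pi^*(\cvar{A'})$ is intrinsically attached to $M$. Let $\nu \in \cvar{A}$. By the projection formula (Proposition \ref{PROP_ProjectionFormula1}), $\pi_*(M_\nu) = N\otimes\pi_*(\SL_{\chi_0\nu})[d]$. This vanishes unless $\chi_0\nu$ is trivial on the kernel, that is, unless $\nu \in \Delta$. If $\nu \in \Delta$, write $\chi_0\nu = \pi^*(\nu')$. Then
\[
H^*(A, M_\nu) = H^*(A', N_{\nu'})\otimes H^*(H_{\overline{k}}, \Qbarl)[d].
\]
This is nonzero, since $\chi(A', N_{\nu'}) = \chi(A', N) \neq 0$ by Proposition \ref{PROP_eulerpoincaresav}. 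Hence
\[
\Delta = \{\nu \in \cvar{A} ~|~ H^*(A, M_\nu) \neq 0\}.
\]
Since $M$ is defined over $k$, we have $\text{Fr}_k^*M \cong M$. Together with $\text{Fr}_k^*\SL_\nu = \SL_{\text{Fr}_k^*\nu}$, this gives $H^*(A, M_{\text{Fr}^*_k\nu}) \cong H^*(A, M_\nu)$, so $\text{Fr}_k^*(\Delta) = \Delta$.

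Theorem \ref{THM_DescendingTACS} now shows that $H_{\overline{k}}$ descends to a subgroup $H \subseteq A$ over $k$, so $\pi$ descends to $\pi\colon A \rightarrow A' := A/H$ over $k$. It also gives an arithmetic character $\chi'' \in \widehat{A}(k)$ with $\Delta = \chi''\cdot\pi^*(\cvar{A'})$. Set $\chi := (\chi'')^{-1} \in \widehat{A}(k)$. Then $\chi_0 \in \chi\cdot\pi^*(\cvar{A'})$, say $\chi_0 = \chi\,\pi^*(\rho)$. Replacing $N$ by $N_\rho$ gives $M_{\overline{k}} = \pi^*N\otimes\SL_\chi[d]$ with the same Euler characteristic, again by Proposition \ref{PROP_eulerpoincaresav}.

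It remains to descend $N$. The main obstacle is to do this canonically, so that the comparison isomorphism is also defined over $k$. Put $M' := M_{\chi^{-1}}$, a perverse sheaf on $A$ over $k$ with $M'_{\overline{k}} = \pi^*N[d]$. The kernel $H$ is an abelian variety, so $\pi_*M' = N\otimes H^*(H,\Qbarl)[d]$ geometrically. I define
\[
N_k := \pervCoh{d}{\pi_*M'},
\]
which is a perverse sheaf on $A'$ over $k$, and I expect $(N_k)_{\overline{k}} = N$ because $H^{0}(H,\Qbarl) = \Qbarl$. The counit $\pi^*\pi_*M' \rightarrow M'$ is defined over $k$. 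Composing it with the appropriate truncation map yields a morphism $\pi^*N_k[d] \rightarrow M'$ over $k$, which is an isomorphism after base change to $\overline{k}$ and hence an isomorphism. This gives $M = \pi^*N_k\otimes\SL_\chi[d]$ with $\chi(A', N_k) = \chi(A', N) \neq 0$. The one check requiring care is that this truncated counit really restricts to the identity on the degree zero cohomology of the fibres.
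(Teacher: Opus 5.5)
Your proposal follows the paper's route: apply the geometric classification of \cite{Weissauer16} to $M_{\overline{k}}$, show that the tac $\Delta$ of characters with $H^*(A,M_\nu)\neq 0$ is Frobenius-stable, descend it with Theorem \ref{THM_DescendingTACS}, and then descend $N$ via the adjunction for $\pi^*[d]$, which is what Lemma \ref{LEM_pullbackcrit} packages in the paper; your Euler-characteristic argument for $\chi(A',N_k)\neq 0$ is a more direct substitute for the paper's appeal to Theorem \ref{THM_VanSemiAb}. One indexing fix: since $M'_{\overline{k}}=\pi^*N[d]$, the copy of $N$ coming from $H^0(H,\Qbarl)$ is ${}^pH^{-d}(\pi_*M')$, and this is the degree that matches your counit-plus-truncation map ${}^pH^{-d}(\pi_*M')[d]={}^p\tau_{\le -d}\pi_*M'\to\pi_*M'$; if you keep ${}^pH^{d}$ (the $H^{2d}$ part, as in the paper's Lemma \ref{LEM_pullbackcrit}), you must instead use the unit $M'\to\pi^!\pi_!M'$.
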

	We recall a well-known theorem on perverse sheaves which will play a crucial role.
	\begin{lemma}\label{LEM_pullbackcrit}
		Let $\pi\colon X \rightarrow Y$ be a map with geometrically connected fibers of dimension $d$. Let $M \in \Perv{}{X}$ be an irreducible perverse sheaf such that
		\[
		\pervCoh{d}{\pi_!(M)} \neq 0.
		\]
		Then $M = \pi^*N[d]$ for a perverse sheaf $N \in \Perv{}{Y}$. 
	\end{lemma}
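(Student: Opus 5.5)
We will assume, as in every application in this paper (quotient maps $\pi\colon S\rightarrow S'$ of groups and their products with $X$), that $\pi$ is smooth of relative dimension $d$. The plan is to turn the nonvanishing of the top perverse cohomology into a nonzero morphism from $M$ to a pulled-back perverse sheaf, by adjunction. Then we use that $\pi^*[d]$ preserves irreducibility when the fibers are connected.

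\emph{Step 1 (amplitude).} Since the fibers of $\pi$ have dimension $d$, the functor $\pi_!$ sends $\Perv{}{X}$ into ${}^pD^{\leq d}(Y)$ \cite[4.2.4]{BBD}. Hence $\pervCoh{d}{\pi_!(M)}$ is the top perverse cohomology sheaf of $\pi_!(M)$. Truncation gives a natural morphism
\[
\pi_!(M) \rightarrow \pervCoh{d}{\pi_!(M)}[-d],
\]
and it is nonzero because the target is nonzero. Perverse sheaves have finite length, so the nonzero sheaf $\pervCoh{d}{\pi_!(M)}$ has an irreducible quotient $N'$. Composing with this quotient, we obtain a nonzero morphism $\pi_!(M) \rightarrow N'[-d]$.

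\emph{Step 2 (adjunction).} Since $\pi$ is smooth, $\pi^! = \pi^*(d)[2d]$. The adjunction $\pi_!\dashv\pi^!$ therefore converts the morphism of Step 1 into a nonzero morphism
\[
M \rightarrow \pi^!N'[-d] = \pi^*N'[d](d).
\]
Put $N := N'(d)$. The complex $\pi^*N[d]$ is perverse, because $\pi^*[d]$ is $t$-exact for smooth $\pi$ of relative dimension $d$.

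\emph{Step 3 (irreducibility).} Since $\pi$ is smooth with geometrically connected fibers, $\pi^*[d]$ is fully faithful on perverse sheaves and sends irreducible perverse sheaves to irreducible perverse sheaves \cite[Prop.~4.2.5, 4.2.6.2]{BBD}. Hence $\pi^*N[d]$ is irreducible. We now have a nonzero morphism between two irreducible perverse sheaves, so it is an isomorphism, and $M\cong \pi^*N[d]$.

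I expect Step 3 to be the main point: irreducibility of $\pi^*N[d]$ genuinely uses both smoothness and the connectedness of the geometric fibers. If one works over $k$ rather than $\overline{k}$, one should check that BBD 4.2.6.2 applies. It does, since over $k$ full faithfulness still gives $\mathrm{End}(\pi^*N[d]) = \mathrm{End}(N)$, and subobjects of $\pi^*N[d]$ descend by \cite[4.2.6.2]{BBD}.
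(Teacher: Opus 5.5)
Your proof is correct and is essentially the paper's argument: the paper simply cites \cite[Cor.~4.2.6.2]{BBD}, and your steps (top perverse truncation, the adjunction $\pi_!\dashv\pi^!$ with $\pi^!=\pi^*(d)[2d]$, then full faithfulness of $\pi^*[d]$ and preservation of irreducibility from BBD \S 4.2) are the standard unpacking of that citation. You are also right to add that $\pi$ is smooth, which BBD needs and the paper's statement tacitly assumes; it holds in every application in the paper, since each $\pi$ there is a quotient by a smooth connected subgroup.
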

	\begin{proof}
		See \cite[Cor.~4.2.6.2]{BBD}.
	\end{proof}
	The following lemma descends the tac by proving that it is unique. 
	\begin{lemma}\label{LEM_UniquenessOfInvariantSubgroup}
		Let $S$ be a semiabelian variety over a finite field $k$ and $M \in \Perv{}{S}$ a perverse sheaf. 
		Suppose there is a quotient $\pi\colon S_{\overline{k}} \rightarrow S'_{\overline{k}}$ with connected fibers of dimension $d$, a perverse sheaf $N\in\Perv{}{S'_{\overline{k}}}$ with $\chi(S'_{\overline{k}}, N) \neq 0$, and a character $\chi \in \mathscr{C}(S)$ such that
		\[
		M = \pi^*N\otimes\SL_{\chi}[d].
		\]
		The map $\pi$ descends to a quotient $\pi\colon S \rightarrow S'$ over $k$. There are $\chi' \in \widehat{S}(k)$ and  $N' \in \Perv{}{S'}$ with $\chi(S', N') \neq 0$ and
		\[
		M  = \pi^*N'\otimes\SL_{\chi'}
		\]
	\end{lemma}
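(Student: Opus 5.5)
The plan is to characterize the tac $\Delta := \chi\cdot\pi^*(\cvar{S'_{\overline{k}}})$ intrinsically in terms of $M$, conclude that it is Frobenius invariant, and then apply Theorem \ref{THM_DescendingTACS}. Concretely, I would show that $\Delta$ is exactly the set of characters $\nu \in \cvar{S}$ with $H^*_c(S_{\overline{k}}, M_\nu) \neq 0$, or at least its closure in the thin topology. Since $M$ is defined over $k$, we have $\text{Fr}_k^*(M) \cong M$. Hence $H^*_c(S, M_\nu) \cong H^*_c(S, M_{\text{Fr}_k^*\nu})$, up to the Frobenius-twisted identification $\text{Fr}_k^*(\SL_\nu) = \SL_{\text{Fr}^*_k\nu}$ (Proposition \ref{PROP_CharFunc}). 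So the nonvanishing locus, and therefore $\Delta$, is Frobenius stable.

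To compute the nonvanishing locus, I would use the projection formula of Proposition \ref{PROP_ProjectionFormula1}. For $\nu \in \cvar{S}$ we have
\[
H^*_c(S, M_\nu) = H^*_c\big(S', N\otimes \pi_!(\SL_{\chi\nu})\big)[d].
\]
If $\chi\nu$ is nontrivial on the kernel of $\pi$, the pushforward vanishes, so $H^*_c(S, M_\nu)=0$. If instead $\chi\nu = \pi^*(\nu')$, then $\pi_!(\SL_{\chi\nu}) = \pi_!(\Qbarl)_{\nu'}$. Its cohomology sheaves are constant, given by $H^n_c$ of the connected kernel, by Propositions \ref{PROP_ProjectionFormula1} and \ref{PROP_ProjectionFormula2}. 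Its Euler characteristic pairing with $N$ gives
\[
\chi(S, M_\nu) = \chi(\text{kernel})\cdot \chi(S', N_{\nu'})
\]
up to sign. That is not enough when the kernel has a toric part. Instead I would argue directly: the top compactly supported cohomology $H^{2\dim}_c$ of the kernel is one-dimensional, and the Leray spectral sequence shows $H^*_c(S, M_\nu) \neq 0$ whenever $H^*_c(S', N_{\nu'}) \neq 0$. The latter holds for all $\nu'$ by Proposition \ref{PROP_eulerpoincaresav} together with $\chi(S', N)\neq 0$. So the nonvanishing locus is exactly $\Delta$, with one caveat on a convention: the relevant tac is $\chi^{-1}\pi^*\cvar{S'}$, depending on sign conventions. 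This step, namely nonvanishing on the whole tac and not merely generically, is the main obstacle. I expect the Euler characteristic nonvanishing from Proposition \ref{PROP_eulerpoincaresav} to settle it cleanly: $\chi_c(S', N_{\nu'}) = \chi(S',N)\neq 0$ forces $H^*_c(S',N_{\nu'})\neq 0$. I would also check that the spectral sequence cannot kill everything, using the lowest-degree term argument of Lemma \ref{LEM_pullbackcrit} type.

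With $\text{Fr}_k^*(\Delta)=\Delta$ established, Theorem \ref{THM_DescendingTACS} shows that the kernel of $\pi$ descends to a subgroup $H\subseteq S$ over $k$, and that there is an arithmetic $\chi' \in \widehat{S}(k)$ with $\Delta = \chi'\cdot\pi^*\cvar{S/H}$. Set $S' := S/H$. By Corollary \ref{COR_TACSDetermineGroupUniquely}, $\chi/\chi' = \pi^*(\mu)$ for some $\mu\in\cvar{S'}$. Then $M = \pi^*(N\otimes \SL_\mu)\otimes \SL_{\chi'}[d]$, and replacing $N$ by $N\otimes\SL_\mu$ preserves $\chi(S',-)\neq 0$ by Proposition \ref{PROP_eulerpoincaresav}.

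It remains to descend $N$. Since $M\otimes\SL_{\chi'^{-1}}$ is defined over $k$ and geometrically irreducible, and it equals $\pi^*N[d]$, we get $\pervCoh{d}{\pi_!(M_{\chi'^{-1}})} = N\otimes \pervCoh{d}{}$ of the top term. This recovers $N$ (possibly up to a Tate twist) as a perverse sheaf over $k$, namely $N' := \pervCoh{d}{\pi_!(M_{\chi'^{-1}})}(d)$. Then Lemma \ref{LEM_pullbackcrit} applied over $k$ gives $M_{\chi'^{-1}} = \pi^*N'[d]$, which yields $M = \pi^*N'\otimes\SL_{\chi'}[d]$ as claimed.
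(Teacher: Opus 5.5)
Your proposal is correct and follows the paper's proof. Both identify the tac as exactly the non-vanishing locus of $H^*_?(S_{\overline{k}}, M_\nu)$, using the projection formula of Proposition \ref{PROP_ProjectionFormula1} and $\chi(S',N)\neq 0$. Both then deduce Frobenius invariance, apply Theorem \ref{THM_DescendingTACS}, and recover $N'$ from $\pervCoh{d}{\pi_!(M_{\chi'^{-1}})}$ via Lemma \ref{LEM_pullbackcrit}.

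Your write-up is more careful than the paper's in three places. The top-corner spectral sequence argument gives non-vanishing on the whole tac, not just generically. Your identification $N'_{\overline{k}}\cong N\otimes\SL_\mu$ yields $\chi(S',N')\neq 0$ directly from Proposition \ref{PROP_eulerpoincaresav}, replacing the paper's contradiction argument through Theorem \ref{THM_VanSemiAb}. That same identification also shows that the geometric irreducibility you borrow for Lemma \ref{LEM_pullbackcrit} is unnecessary, because the adjunction map $M_{\chi'^{-1}}\to\pi^*N'[d]$ is already an isomorphism after base change to $\overline{k}$.
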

	\begin{proof} We begin by defining the set
		\[
		\Delta := \chi\cdot \pi^*\mathscr{C}(S_{\overline{k}}')
		\]
		Suppose $\nu \in \mathscr{C}(S)$. Then we have 
		\[
		H^*(S, M_\nu) \neq 0
		\]
		if and only if $\nu \in \Delta$ by the projection formula recorded in Proposition \ref{PROP_ProjectionFormula1} and the non-vanishing of the Euler-Poincare characteristic of $N$. Therefore, the subset $\Delta$ only depends on the non-vanishing of certain cohomology complexes. This condition is Frobenius equivariant. In particular, Theorem \ref{THM_DescendingTACS} implies that the set $\chi\cdot \pi^*\mathscr{C}(S'_{\overline{k}})$ is of the form $\chi'\cdot \pi^*\mathscr{C}(S')$. In particular, the quotient $\pi$ descends to $k$ and there is $\chi' \in \widehat{S}(k)$ with $\chi' \in \chi\cdot\pi^*(\cvar{S'_{\overline{k}}})$. In particular, we obtain
		\[
		\pervCoh{d}{\pi_{!}(M_{\overline{\chi}})} \neq 0
		\]
		from the projection formula recorded in Proposition \ref{PROP_ProjectionFormula1}. We can construct the perverse sheaf $N'$ from Lemma \ref{LEM_pullbackcrit}. If the Euler-Poincare characteristic of $N'$ would vanish, then Theorem \ref{THM_VanSemiAb} and the projection formula would imply that the characters with non-vanishing cohomology are contained in a proper thin subset inside $\Delta$. 
	\end{proof}

	\subsubsection{Absolute implies relative} We can now prove that the absolute version implies the relative version of the classification theorem.
	\begin{lemma}\label{LEM_ClassificationAbsoluteImpliesRelative}
		Let $k$ be a finite field and $S$ a semiabelian variety over $k$. Suppose Theorem \ref{THM_Classification} is true for perverse sheaves on $S_{k'}$ for all finite extensions $k'/k$. Then Theorem \ref{THM_ClassCharactersRelative} is true for all finite type schemes $X$ over $k$ and all perverse sheaves on $S\times X$.
	\end{lemma}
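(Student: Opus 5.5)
The plan is to reduce to a single closed point of $X$, apply Theorem \ref{THM_Classification} to the fibre there, and then spread the resulting description over all of $X$ using Lemma \ref{LEM_pullbackcrit}.

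\textbf{Shrinking the base.} Since $M$ is geometrically irreducible, it is the intermediate extension of an irreducible perverse local system $L[\dim Y]$ on a smooth, geometrically irreducible, locally closed $Y \subseteq S\times X$. The hypothesis $[\pi_{X!}(M_\nu)] = 0$ is stable under restriction to locally closed pieces of $X$, because $K$-group classes restrict. We may therefore replace $X$ by the closure $X_0$ of the image of $Y$ and then shrink $X_0$ to a dense open $V$. On $V$ we want three properties:
\begin{enumerate}
\item $M|_{S\times V}$, suitably shifted, is fibrewise perverse, by Lemma \ref{LEM_PervRelativePerversity};
\item the fibres $i_x^*M$ are geometrically irreducible for $x \in V$ in a dense set of closed points; this holds after a finite extension, by generic irreducibility of the restriction of an irreducible local system to generic fibres;
\item Theorem \ref{THM_StratificationFourier} applies, so that $\pi_{X?}(M_\chi)$ is lisse and commutes with base change over $V$.
\end{enumerate}
Over $V$ the class $[\pi_{X!}(M_\nu)]$ is the class of a lisse complex. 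Its vanishing gives, by base change, $\chi(S, i_x^*M_\nu) = 0$ for a closed point $x$. Proposition \ref{PROP_eulerpoincaresav} then gives $\chi(S, i_x^*M) = 0$.

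\textbf{Applying the absolute theorem.} Apply Theorem \ref{THM_Classification} over $k(x)$ to a geometrically irreducible constituent of the fibre. This gives a quotient $\pi\colon S\to S'$ with connected fibres of dimension $d>0$ and a character $\chi$ with $i_x^*M \cong \pi^*N_x\otimes\SL_\chi[d]$ up to shift. Both $\pi$ and $\chi$ descend to $k$ by Lemma \ref{LEM_UniquenessOfInvariantSubgroup}, after possibly passing to $k'$ and descending afterwards with Theorem \ref{THM_DescendingTACS}.

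\textbf{Spreading out and recovering $M$.} By base change, ${}^pH^{d}$ of $(\pi\times\mathrm{id})_!(M_{\chi^{-1}})$ is nonzero at $x$. Since $M$ is irreducible on $S\times X$, we get $\,{}^pH^{d}((\pi\times \mathrm{id}_X)_!(M_{\chi^{-1}}))\neq 0$ globally. Lemma \ref{LEM_pullbackcrit} applies to $\pi\times\mathrm{id}_X$, whose fibres are geometrically connected of dimension $d$, and to the irreducible sheaf $M_{\chi^{-1}}$. It yields $M_{\chi^{-1}} = (\pi\times\mathrm{id})^*N[d]$, that is, $M=\pi^*N\otimes\SL_\chi[d]$.

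\textbf{Support statement.} By the projection formula (Proposition \ref{PROP_ProjectionFormula1}), $\pi_{X!}(M_\nu)=0$ when $\nu\notin\chi\cdot\pi^*\cvar{S'}$. For $\nu=\chi\pi^*(\chi')$ we get $\pi_{X!}(M_\nu)\cong\pi'_{X!}(N_{\chi'})\otimes H^*_c(\ker\pi)$.

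It remains to show $\pi'_{X!}(N_{\chi'})\neq0$ for every $\chi'$. The generic fibre of $N$ has nonzero Euler characteristic, for otherwise we could iterate and enlarge the kernel; we choose $\pi$ maximal, that is, with $\dim$ maximal among all such quotients. Proposition \ref{PROP_eulerpoincaresav} applied over a closed point of $V$ then gives nonvanishing for every $\chi'$.

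The statement for $\pi_{X*}$ follows by Verdier duality, since $D(M)$ has the same shape with $\chi^{-1}$.

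\textbf{Main obstacle.} The hardest step is the passage from the fibre to the global statement. Irreducibility of fibres may fail, so the quotient obtained at $x$ must be shown independent of the constituent chosen. I would try to handle this using the uniqueness in Corollary \ref{COR_TACSDetermineGroupUniquely}: the Fourier support $\{\nu : H^*(S,i_x^*M_\nu)\neq0\}$ is locally constant in $x$ by lisseness, so it determines a single tac for the whole family.
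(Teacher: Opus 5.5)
Your overall route (classify a closed fibre with Theorem \ref{THM_Classification}, globalize via Lemma \ref{LEM_pullbackcrit} for $\pi\times\mathrm{id}_X$, read off the support from Proposition \ref{PROP_ProjectionFormula1}) is the paper's, but the globalization step fails as written.

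Base change commutes with $(\pi\times\mathrm{id}_X)_!$, not with perverse cohomology. Put $c=\dim X$ and $K=(\pi\times\mathrm{id}_X)_!(M_{\chi^{-1}})$. What you know at $x$ is ${}^pH^{d-c}(i_x^*K)\neq 0$. Right $t$-exactness of $i_x^*$ only gives ${}^pH^{d}(K)=0\Rightarrow{}^pH^{d}(i_x^*K)=0$, which says nothing when $c>0$, and irreducibility of $M$ does not help. You need $i_x^*\,{}^pH^{d}(K)[-c]\cong{}^pH^{d-c}(i_x^*K)$. Lemma \ref{LEM_PervRelativePerversity} gives this only for $x$ in a dense open set that depends on $K$, i.e.\ on $(\pi,\chi)$. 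But you produced $(\pi,\chi)$ from $x$, and there are countably many tacs, so you cannot intersect all these opens.

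The paper breaks this circularity first. Over the open $U$ of Theorem \ref{THM_StratificationFourier}, all $\pi_{X!}(M_\nu)$ are lisse and commute with base change. Hence $\mathrm{FSupp}(i_x^*M)$ is the same for every closed $x\in U$. By the absolute theorem (Lemma \ref{LEM_ClassificationFourierSupp}) it is a finite union of tacs $\chi_i\pi_i^*\mathscr{C}(S_i')$, independent of $x$. Lemma \ref{LEM_PervRelativePerversity} is then applied to the finitely many $K_i=(\pi_i\times\mathrm{id}_X)_!(M_{\overline{\chi}_i})$, giving an open $V$, and only then is $x\in U\cap V$ chosen. Your last paragraph has the right seed (local constancy of the Fourier support), but you use it for a different purpose, and the ``single tac'' is only known a posteriori.

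Your property (2) is false. Take $M=\overline{\mathbb{Q}}_\ell[\dim S]\boxtimes F[1]$ on $S\times\mathbb{G}_m$, with $F$ geometrically irreducible lisse of rank $\geq 2$ (e.g.\ a Kloosterman sheaf). It is geometrically irreducible, satisfies the hypothesis (as $\chi(S,\overline{\mathbb{Q}}_\ell)=0$), and has reducible fibres over every extension. So you must handle reducible fibres, which needs two things you do not supply:
\begin{itemize}
\item Every constituent of the perverse sheaf $i_x^*M[-c]$ has Euler characteristic $0$. This holds because perverse sheaves on $S$ have $\chi\geq 0$ by Theorem \ref{THM_VanSemiAb}.
\item The constituent $Q$ you classify must be a \emph{quotient} of the fibre. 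Then ${}^pH^{d}(\pi_!((i_x^*M[-c])_{\chi^{-1}}))\to{}^pH^{d}(\pi_!(Q_{\chi^{-1}}))$ is surjective, since $\pi_!$ has perverse amplitude $\leq d$. For a sub-constituent the connecting map can kill this group. This is Lemma \ref{LEM_ClassificationFourierSupp}(3).
\end{itemize}

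Lemma \ref{LEM_UniquenessOfInvariantSubgroup} applied to a fibre descends $\pi,\chi$ only to $k(x)$. Descent to $k$ needs $\mathrm{Fr}_k$-invariance of the tac before invoking Theorem \ref{THM_DescendingTACS}. The paper gets this from constancy of the tac on $U$ together with $U(k_n)\neq\emptyset$ for all large $n$. Alternatively, you can use your support statement, since $\{\nu:\pi_{X!}(M_\nu)\neq 0\}$ is $\mathrm{Fr}_k$-stable.

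Finally, if $(\pi,\chi)$ comes from a quotient constituent $\pi^*N_x\otimes\mathscr{L}_\chi[d]$ with $\chi(S',N_x)\neq 0$, positivity already gives $\chi(S',i_x^*N[-c])\neq 0$. So the ``maximal $\pi$'' device, which would require the relative theorem on $S'$, is unnecessary.
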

	To prove this lemma, we will utilize certain properties of the Fourier support. The Fourier support will be introduced later (see Definition \ref{DEF_fsupp}). The first point of the following Lemma will be proven in full generality in Theorem \ref{THM_SupportLemma}.
	\begin{lemma}\label{LEM_ClassificationFourierSupp}
		Let $S$ be a semiabelian variety over a finite field $k$ such that Theorem \ref{THM_Classification} holds for perverse sheaves on $S_{k'}$ for all finite extensions $k'/k$. Let $M \in \Perv{}{S}$ be a perverse sheaf. Let $k'/k$ be an extension such that all composition factors of $M_{k'}$ are geometrically irreducible. Let $M_1, \ldots, M_n \in \Perv{}{S}$ be the geometrically irreducible factors of $M_{k'}$ in a composition series of $M$.
		\begin{enumerate}
			\item The Fourier support satisfies
			\[
			\text{FSupp}_!(M) = \bigcup_{i = 1}^n \text{FSupp}_!(M_i).
			\]
			\item Let $1 \leq i \leq n$. The Fourier support $\text{FSupp}_!(M_i)$ is a tac. Let $\pi\colon S_{k'} \rightarrow S'$ be a quotient  with connected fibers of dimension $d$ and $\chi \in \widehat{S}(k')$. There is $N \in \Perv{}{S'}$ with $\chi(S', N) \neq 0$ and
			\[
			M_i = \pi^*N\otimes \SL_\chi
			\]
			if and only if the Fourier support is given by
			\[
			\text{FSupp}_!(M_i) = \chi\cdot\pi^*(\cvar{S'}).
			\]
			In particular, the Fourier support is not empty.
			\item Suppose $\text{FSupp}_!(M) \neq \cvar{S}$. There is a quotient $\pi\colon S_{k'} \rightarrow S'$ with connected fibers of dimension $d > 0$ and a character $\chi \in \cvar{S}$ such that  
			\[
			\pervCoh{d}{\pi_!(M_\chi)} \neq 0.
			\]
			We can choose the quotient and the character so that there is $1 \leq i \leq n$ with $\chi_i\cdot\cvar{S'} = \text{FSupp}_!(M_i)$.
		\end{enumerate} 
	\end{lemma}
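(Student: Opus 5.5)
I would prove the three points in the order (2), (1), (3), because (2) feeds both of the others. Here $\text{FSupp}_!(M)$ is the closure, in the standard topology, of the set of $\nu$ with $H^*_c(S, M_\nu) \neq 0$.

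\emph{Point (2).} Fix a geometrically irreducible factor $M_i$. If $\chi(S, M_i) \neq 0$, Proposition \ref{PROP_eulerpoincaresav} gives $H^*_c(S, (M_i)_\nu) \neq 0$ for every $\nu$. Hence $\text{FSupp}_!(M_i) = \cvar{S}$, which is the tac attached to the trivial quotient ($d = 0$, $N = M_i$).

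If $\chi(S, M_i) = 0$, Theorem \ref{THM_Classification} over $k'$ gives $M_i = \pi^*N \otimes \SL_\chi[d]$ with $\chi(S', N) \neq 0$. Proposition \ref{PROP_ProjectionFormula1} then computes $H^*_c(S, (M_i)_\nu)$:
\begin{itemize}
\item it vanishes unless $\nu\chi$ is trivial on the fibres, i.e. unless $\nu \in \chi^{-1}\cdot\pi^*(\cvar{S'})$;
\item otherwise it equals $H^*_c(S', N_{\nu'}) \otimes H^*_c(\ker \pi)$, which is nonzero by Proposition \ref{PROP_eulerpoincaresav} since $\chi(S',N) \neq 0$.
\end{itemize}
So the nonvanishing locus is exactly one tac. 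A tac is already closed, so it equals $\text{FSupp}_!(M_i)$, and in particular it is nonempty. (Up to the sign convention on $\chi$, which I would fix once.)

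For the converse ``if'' direction, suppose $\text{FSupp}_!(M_i) = \chi\cdot\pi^*(\cvar{S'})$. By the computation above, $M_i$ also has a presentation whose support is some tac $\chi_1\cdot\pi_1^*(\cvar{S_1'})$. Corollary \ref{COR_TACSDetermineGroupUniquely} forces $\pi_1 = \pi$ and $\chi_1/\chi \in \pi^*\cvar{S'}$. The twist can be absorbed into $N$ via $\pi^*(\SL_{\chi''}) = \SL_{\pi^*\chi''}$ (Proposition \ref{PROP_CharFunc}).

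\emph{Point (1).} The containment $\text{FSupp}_!(M) \subseteq \bigcup_i \text{FSupp}_!(M_i)$ follows from the long exact sequences of $H^*_c$ along the composition series. The reverse containment is the real difficulty, since cancellation in the long exact sequences could a priori kill cohomology on some tac.

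I would argue by an Euler characteristic and genericity count. Fix $i$ and let $\Delta_i = \text{FSupp}_!(M_i)$ with quotient $\pi_i$. Restrict to characters $\nu \in \Delta_i$ that avoid every other $\text{FSupp}_!(M_j)$ not containing $\Delta_i$. This is possible because a finite union of proper tacs does not cover a tac (Proposition \ref{PROP_TACSIrredThin}). Push forward along $\pi_i$ using Proposition \ref{PROP_ProjectionFormula2}, which reduces to $S'_i$ with $\nu$ generic. On $S'_i$, Theorem \ref{THM_VanSemiAb} applied to the factors makes each cohomology group concentrated in the expected degree after removing a further proper thin set.

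Then the Euler characteristic is additive: $\chi(S', -)$ of the pushed-forward factors with support containing $\Delta_i$ is a sum of terms of a single sign, one of which is nonzero. So the cohomology of $M_\nu$ is nonzero for a dense set of $\nu \in \Delta_i$. Taking closures gives the equality. I expect this sign and degree control to be the main obstacle: one must make sure that factors with strictly larger support tacs contribute with the same sign after twisting and pushforward.

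\emph{Point (3).} If $\text{FSupp}_!(M) \neq \cvar{S}$, then by (1) and (2) no factor has full support. Pick a factor $M_i$ whose tac $\Delta_i = \chi_i\cdot\pi^*\cvar{S'}$ is maximal, so $d > 0$. For generic $\chi \in \Delta_i$ as above, we get $\pervCoh{d}{\pi_!((M_i)_\chi)} \neq 0$. Indeed, $M_i = \pi^*N\otimes\SL_{\chi_i}[d]$ and $\pi_!$ of the twisted constant sheaf on the connected fibres has top cohomology in degree $2d$.

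For the other factors, $\pi_!((M_j)_\chi)$ is supported in perverse degrees $\le d$. For generic $\chi$ they either vanish or contribute in degree $d$ with the same Euler-characteristic sign. Hence in the long exact sequence of $\pervCoh{d}{}$, which is right exact in top degree, the contribution of $M_i$ survives. This gives $\pervCoh{d}{\pi_!(M_\chi)} \neq 0$ for this choice of quotient and character.
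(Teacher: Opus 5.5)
Your argument for (2) is essentially the paper's: the classification, the projection formula of Proposition \ref{PROP_ProjectionFormula1}, and uniqueness of the tac via Corollary \ref{COR_TACSDetermineGroupUniquely}. The inclusion $\subseteq$ in (1) is also fine. The reverse inclusion in (1), however, has a genuine gap, because Euler characteristics cannot detect proper tacs.

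If a factor $M_j$ has proper Fourier support, then $\chi(S,M_j)=0$; otherwise Proposition \ref{PROP_eulerpoincaresav} would give nonvanishing cohomology at every character. By the same proposition, $\chi_c(S,(M_j)_\nu)=0$ for \emph{every} $\nu$, in particular on $\Delta_i$. Pushing forward along $\pi_i$ does not help. The suitably twisted pushforward of a factor with tac $\Delta_i$ is $N_j\otimes\pi_{i!}\overline{\mathbb{Q}}_\ell[d]$, whose Euler characteristic is $\pm\chi(S'_i,N_j)\,\chi_c(\ker\pi_i)=0$, since a positive-dimensional semiabelian variety has $\chi_c=0$. So your ``sum of terms of a single sign, one of which is nonzero'' is a sum of zeros. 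The cohomology along $\Delta_i$ is spread over several degrees (Corollary \ref{COR_ClassificationWeaklyUnramifiedChars}) with alternating sum $0$. The vanishing theorem on $S'_i$ cannot concentrate it, because these pushforwards are not perverse.

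Your count does handle the component $\cvar{S}$ itself. If some factor has $\chi(S,M_j)\neq 0$, then $\chi(S,M)=\sum_j\chi(S,M_j)>0$, using $\chi(S,-)\ge 0$ on perverse sheaves (Theorem \ref{THM_VanSemiAb}).

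The paper's route (Theorem \ref{THM_SupportLemma}) uses a single cohomological degree instead:
\begin{enumerate}
\item Only irreducible components of maximal tacs matter (Propositions \ref{PROP_irredcompinctacs} and \ref{PROP_irredcomptequal}).
\item For such a component, take the \emph{last} factor $M_j$ of the composition series whose support contains it.
\item At generic $\nu$ in the component, the later factors have no cohomology. Every contributing factor has $H^{d+1}_c=0$ and $H^d_c\neq 0$ for the same $d$.
\item $H^d_c$ is right exact along the filtration, so $H^d_c(S,M_\nu)\twoheadrightarrow H^d_c(S,(M_j)_\nu)\neq 0$.
\end{enumerate}

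Point (3) fails for the same reason. Right exactness of ${}^p\mathcal{H}^d(\pi_!(-))$ gives surjectivity only onto \emph{quotients} of $M_{k'}$. The top-degree contribution of a subfactor can be killed by a connecting map.

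For example, on $\mathbb{G}_m$ take $\pi$ to be the structure map, so $d=1$. Let $\mathscr{U}$ be the unipotent rank-two local system, $M=\mathscr{U}[1]$, and $M_1=\overline{\mathbb{Q}}_\ell[1]\subset M$. Both factors have the same maximal tac $\{1\}$, yet $H^2_c(\mathbb{G}_m,\overline{\mathbb{Q}}_\ell)\to H^2_c(\mathbb{G}_m,\mathscr{U})$ is zero.

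Choosing $\chi$ generic is also vacuous here. We have $\pi_!(M_{\chi\pi^*\chi'})\cong\pi_!(M_\chi)_{\chi'}$, and twisting is $t$-exact, so nonvanishing does not depend on $\chi$ within the tac. Nor do Euler-characteristic signs prevent perverse sheaves from cancelling in a long exact sequence.

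The paper simply takes $M_i=M_n$, the top quotient of $M_{k'}$. Its tac is proper; your Euler count for the full component suffices to show this. Moreover ${}^p\mathcal{H}^d(\pi_!((M_n)_{\chi_n^{-1}}))=N\otimes H^{2d}_c(\ker\pi)\neq 0$. Since $\pi_!$ has perverse amplitude $\le d$, the map ${}^p\mathcal{H}^d(\pi_!(M_{\chi_n^{-1}}))\to{}^p\mathcal{H}^d(\pi_!((M_n)_{\chi_n^{-1}}))$ is surjective, which gives (3).
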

	\begin{proof}
		The first point is going to be proven in Theorem \ref{THM_SupportLemma}. 
		
		The second point follows from the classification of sheaves with $\chi(S', M) = 0$. The classification allows one to compute the support of a sheaf with vanishing Euler-Poincare characteristic from the projection formula (as in Lemma \ref{LEM_UniquenessOfInvariantSubgroup}).
		
		For the third point, note that the support is a finite union of tacs. In particular, the Fourier support of each $M_i$ is a proper tac. There exists $1 \leq i \leq n$ such that there is a surjection $M_{\overline{k}} \rightarrow M_i$. Note that $\pervCoh{d}{\pi_!((M_{i})_{\overline{\chi}})} \neq 0$ by the projection formula in Proposition \ref{PROP_ProjectionFormula1} and the computation of cohomology in Proposition \ref{PROP_SemiAbCohomology} The bound on the cohomological amplitude of $\pi_!$ implies that the map
		\[
		\pervCoh{d}{\pi_!((M_{k'})_{\overline{\chi}})}  \rightarrow \pervCoh{d}{\pi_!((M_{i})_{\overline{\chi}})} 
		\]
		is surjective. Thus $\pervCoh{d}{\pi_!(M_{\overline{\chi}})}\neq 0$. 
	\end{proof}
	We can now turn to proving the induction step.
	\begin{proof}[Proof~({Lemma~\ref{LEM_ClassificationAbsoluteImpliesRelative}})]
		
		Let $Y \subseteq X$ be the schematic image of the (reduced) support of $M$ in $X$. The scheme $Y$ is irreducible. The formation of the schematic image commutes with field extensions, hence $Y$ is geometrically irreducible. We can base change $X$ to $Y$ and assume $X$ is geometrically irreducible. Let $\eta$ be the generic point of $X$ and $c$ the dimension of $X$.
		
		We have
		\[
		[\pi_{X*}(M_\nu)] = [\pi_{X!}(M_\nu)]
		\]
		by \cite{virk2014eulerpoincarecharacteristics}. Therefore
		\[
		[\pi_{X!}(M_\nu)]  = 0.
		\]

		Suppose there exists a character $\nu \in \cvar{S}$ with
		\[
		[\pi_{X!}(M_\nu)] = 0.
		\]
		For all $\chi \in \cvar{S}$,  Proposition \ref{PROP_eulerpoincaresav} and proper base change imply that the stalks of $\pi_{X!}(M_\chi)$ have Euler-Poincare characteristic zero. If $\pi_{X!}(M_\chi)$ is a perverse sheaf, this implies $\pi_{X!}(M_\chi)$ vanishes. By Theorem \ref{THM_VanSemiAb}, there exists a proper thin subset $\Delta \subset \cvar{S}$ such that
		\[
		\pi_{X!}(M_\chi) = 0
		\]
		for all $\chi \notin \Delta$.

		For each point $x \in X$, we consider the Fourier support
		\[
		\text{FSupp}(i_x^*M) := \{\chi \in \cvar{S}~|~ \pi_{Y!}(M_\chi)_x \neq 0\}^{\text{cl}}.
		\]
		By Theorem \ref{THM_StratificationFourier}, there exists a dense open subset $U \subseteq X$ such that $\pi_{X?}(M_\chi)$ is lisse over $U$ for all $\chi \in \cvar{S}$ and the formation of $\pi_{X?}(M_\chi)$ commutes with base change. By Lemma \ref{LEM_PervRelativePerversity}, if we shrink $U$ we can assume $i_x^*M[-c]$ is perverse for all closed points $x\in U$. For a closed point $x \in U$ we have
		\[
		\text{FSupp}(i_x^*M) = \text{FSupp}(i_\eta^*(M)).
		\]
		Note that
		\[
		\text{FSupp}(i_x^*M) \subseteq \Delta
		\]
		for all $x \in X$. By Lemma \ref{LEM_ClassificationFourierSupp}, the set $\text{FSupp}(i_\eta^*M)$ is a finite union of tacs. Thus there is a finite extension $k'/k,$ quotients $\pi_i\colon S_{k'} \rightarrow S'_i$ with connected fibers of dimension $d_i$, and characters $\chi_1, \ldots, \chi_n \in \widehat{S}(k')$ such that
		\[
		\text{FSupp}(i_\eta^*M ) = \bigcup_{i = 1}^n \chi_i\cdot \pi_i^*(\cvar{S'_i}) .
		\]
		Consider the complexes 
		\[
		K_i := \pi_{i!}(M_{\overline{\chi}_i}).
		\]
		There exists a dense open subset $V \subseteq X$ such that for a closed point $x \in V$ and  $1\leq i \leq n$, we have
		\[
		i_x^*\big(\pervCoh{d_i}{K_i}\big) = \pervCoh{d_i - c}{i_x^*(K_i)}
		\]
		by Lemma \ref{LEM_PervRelativePerversity}. Let $x \in V\cap U$ be a closed point. By Lemma \ref{LEM_ClassificationFourierSupp}, there exists $1\leq i \leq n$ such that
		\[
		\pervCoh{d_i - c}{i_x^*(K_i)} \neq 0.
		\]
		Thus
		\[
		\pervCoh{d_i}{K_i} \neq 0.
		\]
		In particular, there exists a non-zero perverse sheaf $N \in \Perv{}{S'_i\times X}$ and a surjective morphism by \cite[Cor.~4.2.6.2]{BBD}
		\[
		M_{k'} \rightarrow \pi_i^*N\otimes\SL_{\chi_i}.
		\]
		Since $M_{k'}$ is irreducible, this map is an isomorphism. We have
		\[
		\text{FSupp}(i_\eta^*M) \subseteq \chi_i\cdot\pi_i^*(\cvar{S'_i})
		\]
		by the projection formula in Proposition \ref{PROP_ProjectionFormula1}. Hence
		\[
		\text{FSupp}(i_\eta^*M)  = \chi_i\cdot\pi_i^*(\cvar{S'_i})
		\]
		The geometric irreducibility of $X$ implies that there is $N \gg 1$ such that $U(k_n) \neq \emptyset$ for all $n \geq N$. In particular, $\text{FSupp}(i_\eta^*(M))$ is invariant under $\text{Fr}_{k_n}$ for all $n\geq N$. Hence it is invarianted under $\text{Fr}_k$. By Theorem \ref{THM_DescendingTACS}, the tac $\text{FSupp}(i_\eta^*M)$ is defined over $k$. In particular, we can find a perverse sheaf $N \in \Perv{}{S'\times X} $ such that 
		\[
		M = \pi_i^*(N)\otimes\SL_{\overline{\chi}_i}
		\]
		by \ref{LEM_pullbackcrit}.  
		
		We now go on to prove the statement on the support.  Let $x \in U$. Then we have
		\[
		\text{FSupp}_!(i_x^*(N)) = \cvar{S'}.
		\]
		for all $x \in U$. In particular, we have $\chi(S, i_x^*(N')) \neq 0$ for all $x \in U$ by Lemma \ref{LEM_ClassificationFourierSupp}. The formation of $\pi_{X*}(N'_\chi)$ commutes with base change over $U$ for all $\chi \in \cvar{S'}$. This means we obtain $i_x^*(\pi_{X*}(N_\chi)) \neq 0$ for all $x \in U$ because the stalk of a point $x \in U$ of $\pi_{X*}(M_\chi)$ has Euler-Poincare characteristic equal to the Euler-Poincare characteristic of $i_x^*(M)$ by Proposition \ref{PROP_eulerpoincaresav}. To prove the vanishing we can apply Proposition \ref{PROP_ProjectionFormula1}.
	\end{proof}
	\begin{lemma}\label{LEM_classabsimplrelgrp}
		Let $S$ be a semiabelian variety over a finite field $k$. Suppose Theorem \ref{THM_ClassCharactersRelative} is true for $S$. Corollary \ref{COR_ClassificationRelativeGroup} is true for all surjective group morphisms $G \rightarrow G'$ with kernel isomorphic to $S$.
	\end{lemma}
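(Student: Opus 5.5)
The plan is to reduce Corollary \ref{COR_ClassificationRelativeGroup} for $\pi\colon G\rightarrow G'$ with kernel $S$ to Theorem \ref{THM_ClassCharactersRelative} over the base $G'\times X$. We do this by smooth base change along the Cartesian square used in the proof of Corollary \ref{COR_VanishingRelGroups}:
\[
\begin{tikzcd}
S\times G\times X \arrow[r, "\pi_{G}"] \arrow[d, "m"] & G\times X \arrow[d, "\pi"] \\
G\times X \arrow[r, "\pi"] & G'\times X.
\end{tikzcd}
\]
Let $e$ be the dimension of $G$ and put $\tilde M := m^*M[\dim S]$. This is a perverse sheaf on $S\times(G\times X)$, since $m$ is smooth of relative dimension $\dim S$. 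By the Künneth formula $m^*\SL_\nu = \SL_{i^*\nu}\boxtimes\SL_\nu$, so
\[
\pi_{G\times X?}(\tilde M_{i^*\nu}) = \pi^*\pi_?(M_\nu)[\dim S]
\]
up to a twist by $\SL_\nu$ on $G$. The class of the left side therefore vanishes in the $K$-group of $G\times X$, because $\pi^*$ preserves vanishing of classes.

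The first obstacle is that $\tilde M$ need not be geometrically irreducible. I would first check that it is. Since $m$ is smooth with connected fibers, $m^*[\dim S]$ is fully faithful on perverse sheaves (\cite[Prop.~4.2.5]{BBD}). I expect this, together with the isomorphism $S\times G\cong G\times_{G'}G$, to give irreducibility of $\tilde M$. If it fails, I would instead apply Theorem \ref{THM_ClassCharactersRelative} to a geometrically irreducible constituent of $\tilde M$. Such a constituent still has the required vanishing class, because Theorem \ref{THM_VanSemiAb} together with the Euler characteristic argument of Lemma \ref{LEM_ClassificationAbsoluteImpliesRelative} forces the Fourier coefficients of every constituent to vanish generically.

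Theorem \ref{THM_ClassCharactersRelative} then gives a quotient $p\colon S\rightarrow S/S'$ with $\dim S' = d>0$, a perverse sheaf $\tilde N$ on $(S/S')\times G\times X$, and $\chi_0\in\widehat S(k)$ with $\tilde M = p^*\tilde N\otimes\SL_{\chi_0}[d]$. Lift $\chi_0$ to $\chi\in\widehat G(k)$ using Proposition \ref{PROP_CharExact} and Proposition \ref{PROP_GroupCohomologyCharacterGroup}, where $H^1=0$ ensures the lift is arithmetic. Set $G'' := G/S'$ with quotient $\pi'$. Then $M_{\chi^{-1}}$ has pullback invariant under translation by $S'$, i.e. $\pervCoh{d}{\pi'_!(M_{\chi^{-1}})}\neq0$. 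This nonvanishing is checked after pullback along $m$ using the projection formula of Proposition \ref{PROP_ProjectionFormula1}. Lemma \ref{LEM_pullbackcrit} then yields $N\in\Perv{}{G''\times X}$ with $M = \pi'^*N\otimes\SL_\chi[d]$, and arithmeticity descends as in Lemma \ref{LEM_UniquenessOfInvariantSubgroup}.

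For the support statement, proper and smooth base change identify $m$-pullbacks: $\pi_!(M_\nu)\neq0$ if and only if $\pi_{G\times X!}(\tilde M_{i^*\nu})\neq0$. By the support part of Theorem \ref{THM_ClassCharactersRelative}, the latter holds exactly when $i^*\nu\in\overline{\chi}\cdot p^*\cvar{S/S'}$, which is the claimed condition. The $*$-version follows dually via Verdier duality.

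The main obstacle is the descent from the $S$-equivariant-looking classification of $\tilde M$ on $S\times G\times X$ back to $M$ on $G\times X$. I would handle it with the pullback criterion Lemma \ref{LEM_pullbackcrit} applied to $\pi'$, rather than trying to descend $\tilde N$ directly.
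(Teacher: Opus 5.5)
Your proposal is correct and follows essentially the same route as the paper: pull $M$ back along the action map $m$ in the Cartesian square $S\times G\rightarrow G$ over $G'$, apply Theorem \ref{THM_ClassCharactersRelative} to $m^*M[\dim S]$, lift the character to $\widehat{G}(k)$, check ${}^pH^d(\pi'_!(M_{\chi^{-1}}))\neq 0$ by base change along $S'\times G\rightarrow G$, and conclude with the pullback criterion, which the paper invokes through Lemma \ref{LEM_UniquenessOfInvariantSubgroup}. Your worry about irreducibility is settled by \cite[Prop.~4.2.5]{BBD}: the essential image of $m^*[\dim S]$ is stable under subquotients, so $m^*M[\dim S]$ is geometrically irreducible and the fallback to constituents is not needed.
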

	\begin{proof}
		We have a cartesian square
		\[
		\begin{tikzcd}
			S\times G \arrow[r, "\pi_{G}"] \arrow[d, "m"] & G \arrow[d, "\pi"] \\
			G \arrow[r, "\pi"]                            & G'.            
		\end{tikzcd}
		\]
		Let $d$ be the dimension of $S$. Note that we have $m^*(\SL_\nu) = \SL_{i^*(\nu)}\boxtimes\SL_{\nu}$ by Proposition \ref{PROP_CharKunneth}. Smooth base change implies
		\[
		[(\pi_{G*}(m^* (M_{i^*(\nu)})[d]))_\nu]= [\pi^*\pi_*(M_\nu)[d]] = 0.
		\]
		By Theorem \ref{THM_ClassCharactersRelative}, there is a non-trivial semiabelian subvariety $S' \subseteq S$ of dimension $d$ and a character $\chi \in \widehat{S'}(k)$ with the following property. Denote by $\pi'\colon G \rightarrow G'$ the quotient, then we can find another cartesian square
		\[
		\begin{tikzcd}
			S'\times G \arrow[r, "\pi'_{G}"] \arrow[d, "m"] & G \arrow[d, "\pi'"] \\
			G \arrow[r, "\pi'"]                            & G/S'.               
		\end{tikzcd}
		\]
		Define the lisse sheaf $\SL:= \SL_\chi\boxtimes \Qbarl$ on $S'\times G$. Then we have
		\[
		\pervCoh{d'}{\pi'_{G!}(m^*(M)\otimes \SL)[d]} \neq 0
		\]
		by proper base change. There exists a character $\chi' \in \widehat{G}(k)$ which restricts to $\chi$ on $S'(k)$ because the map $S'(k) \rightarrow G(k)$ is injective. The $t$-exactness of $\pi^*(-)[d]$ and smooth base change as above imply
		\[
		\pervCoh{d'}{\pi'_*(M_{\chi''})} \neq 0.
		\]
		The theorem follows from \ref{LEM_UniquenessOfInvariantSubgroup}.
	\end{proof}
	\subsubsection{Induction step}
	We turn to classifying the negligible sheaves on a semiabelian variety. We follow the argument in Gabber. We begin with the induction step:
	\begin{lemma}[{\cite[Lem.~5.2.1.]{GabberLoeserTore}}]\label{LEM_ClassificationInductionStep1}
		Let $M \in \Perv{}{S}$ an irreducible perverse sheaf with Euler-Poincare characteristic $\chi(S, M) = 0$. Exactly one of the following two statements is true\footnote{XOR}:
		\begin{enumerate}
			\item There is a quotient $\pi\colon S \rightarrow S'$ with connected fibers of dimension one, a character $\chi \in \widehat{S}(k)$, and a perverse sheaf $N \in \Perv{}{S'}$ with $\chi(S', N) \neq 0$ and 
			\[
			M = \pi^*N\otimes\SL_\chi[1].
			\]
			\item There is a closed subset $\Delta \subset \mathscr{C}(S)$ of codimension $\geq 2$, such that 
			\[
			H^*(S, M_\chi) = H^*_c(S, M_\chi) = 0
			\]
			for all $\chi \notin \Delta$. 
		\end{enumerate}
	\end{lemma}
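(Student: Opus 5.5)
The plan is to study the Fourier support of $M$, that is, the closed set of characters $\chi$ with $H^*_?(S, M_\chi) \neq 0$. The argument should show that this set either has codimension $\geq 2$, giving alternative (2), or contains a codimension-one tac, from which the pullback structure in (1) is extracted.

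\emph{Confining the support.} By Theorem \ref{THM_VanSemiAb} (with $X$ a point) there is a proper thin subset $\Delta_0 \subset \cvar{S}$ such that for $\chi \notin \Delta_0$ the forget supports map is an isomorphism and $H^*_?(S, M_\chi)$ is concentrated in degree $0$. For such $\chi$, Proposition \ref{PROP_eulerpoincaresav} gives $\dim H^0_?(S, M_\chi) = \chi(S, M) = 0$, so $H^*_?(S, M_\chi) = 0$. Hence the Fourier support lies in $\Delta_0$, a finite union of tacs. By Lemma \ref{LEM_ClosedDimensionTAC}, a tac $\chi_i \cdot \pi_i^*(\cvar{S_i'})$ has codimension $2d_a'' + d_t''$, where $d_a''$ and $d_t''$ are the abelian and toric dimensions of the connected kernel of $\pi_i$. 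So it has codimension one only when that kernel is a one-dimensional torus. Discard from $\Delta_0$ every tac of codimension $\geq 2$.
\begin{itemize}
\item If no codimension-one tac $\Delta_{0i}$ meets the support in a set whose closure is all of $\Delta_{0i}$, the support has codimension $\geq 2$ and (2) holds.
\item Otherwise fix such a tac $\Delta_{0i} = \chi_i \cdot \pi^*(\cvar{S'})$, where $\pi \colon S \to S'$ has kernel $\mathbb{G}_m$.
\end{itemize}
I would first pass to a finite extension $k'/k$ so that $\pi$ and $\chi_i$ are defined.

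\emph{Producing the sheaf $N$.} For $\nu \in \cvar{S'}$, Proposition \ref{PROP_ProjectionFormula2} gives
\[
H^*_?(S, M_{\chi_i\pi^*\nu}) = H^*_?\bigl(S', \pi_?(M_{\chi_i})_\nu\bigr).
\]
Since $\pi$ is affine and smooth of relative dimension one, $\pi_!(M_{\chi_i})$ has perverse amplitude $[0,1]$ and $\pi_*(M_{\chi_i})$ has amplitude $[-1,0]$. Apply the vanishing theorem on $S'$ to the perverse sheaves ${}^pH^0$ and ${}^pH^1$ of $\pi_!(M_{\chi_i})$. For $\nu$ outside a proper thin subset of $\cvar{S'}$, the cohomology above is then the two-term complex built from $H^0(S', {}^pH^0(\pi_! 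M_{\chi_i})_\nu)$ and $H^0(S', {}^pH^1(\pi_! M_{\chi_i})_\nu)[-1]$.

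The dual computation with $\pi_*$ places it in degrees $\{-1, 0\}$. Because the support contains a Zariski-dense part of $\Delta_{0i}$, these complexes are nonzero for generic $\nu$. Comparing the two amplitude windows through the forget supports map, which is generically an isomorphism here, should force ${}^pH^1(\pi_!(M_{\chi_i})) \neq 0$. Lemma \ref{LEM_pullbackcrit} then gives $M_{k'} \cong \pi^*N \otimes \SL_{\chi_i}[1]$. This amplitude comparison is the step I expect to be the main obstacle: one must rule out that only ${}^pH^0$ contributes. My first attempt would run Poincaré duality alongside the amplitude bounds, as in the proof of Theorem \ref{THM_VanSemiAb}. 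Failing that, I would use the Iwasawa-module (Mellin) description of the standard topology: a perfect complex over the regular ring $R_S$ whose support has a codimension-one component must have cohomology in the extreme degree there.

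\emph{Descent and exclusivity.} Once $N$ is found, irreducibility of $M$ makes the surjection an isomorphism. Lemma \ref{LEM_UniquenessOfInvariantSubgroup} descends $\pi$ and the character to $k$ and lets us choose $N$ with $\chi(S', N) \neq 0$, so (1) holds.

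For exclusivity, suppose (1) holds. By Proposition \ref{PROP_ProjectionFormula1} and Proposition \ref{PROP_SemiAbCohomology}, $H^*(S, M_\chi) \neq 0$ exactly on $\chi \cdot \pi^*(\cvar{S'})$, using $\chi(S', N) \neq 0$. This is a tac of codimension one by Lemma \ref{LEM_ClosedDimensionTAC}, since the fibre is $\mathbb{G}_m$ by the codimension count above. A codimension-one set cannot lie in a closed set of codimension $\geq 2$, so (2) fails.
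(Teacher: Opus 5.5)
There is a genuine gap, at the step you flag as the main obstacle. Your outline matches the paper's: confine the Fourier support to a finite union of tacs, isolate a codimension-one tac with $\BG_m$-kernel, push forward along $\pi$, conclude with Lemma \ref{LEM_pullbackcrit}, and descend. The problem is the forget-supports comparison.
\begin{itemize}
\item The map $H^*_c(S, M_{\chi_i\pi^*\nu}) \to H^*(S, M_{\chi_i\pi^*\nu})$ is not generically an isomorphism along $\Delta_{0i}$. Theorem \ref{THM_VanSemiAb} only gives this off $\Delta_0$.
\item In precisely the situation you are aiming for, $M_{\chi_i} \cong \pi^*N[1]$ with $\chi(S',N)\neq 0$, the projection formula shows the opposite. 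For $\nu$ off a proper thin subset, $H^*_c$ is nonzero in degrees $0$ and $1$, while $H^*$ is nonzero in degrees $-1$ and $0$.
\item Even if such an isomorphism held, it would only place the common complex in degree $0$. That is entirely compatible with $\pervCoh{1}{\pi_!(M_{\chi_i})} = 0$, so amplitude bookkeeping cannot force this perverse cohomology sheaf to be nonzero. Poincaré duality does not help either, and the Iwasawa-module fallback is not needed.
\end{itemize}

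The missing idea is to use $\chi(S,M)=0$ a second time, on $S'$.
\begin{itemize}
\item By Proposition \ref{PROP_eulerpoincaresav}, $\chi(S', \pi_!(M_{\chi_i})) = \chi(S, M) = 0$.
\item If $\pervCoh{1}{\pi_!(M_{\chi_i})}$ vanished, $\pi_!(M_{\chi_i})$ would be a perverse sheaf on $S'$ with Euler characteristic zero.
\item Your own confinement argument, run on $S'$, would then kill $H^*_c(S', \pi_!(M_{\chi_i})_\nu)$ for $\nu$ off a proper thin subset $\Delta' \subset \cvar{S'}$.
\item By Proposition \ref{PROP_ProjectionFormula2}, the Fourier coefficients of $M$ would vanish on $\Delta_{0i}$ outside $\chi_i\cdot\pi^*(\Delta')$.
\item The irreducible components of $\chi_i\cdot\pi^*(\Delta')$ are strictly smaller than those of $\Delta_{0i}$ (Proposition \ref{PROP_irredcompinctacs}). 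This contradicts the choice of $\Delta_{0i}$.
\end{itemize}
This is exactly the paper's argument, phrased there as ``$\pi_*(M_\chi)$ cannot be perverse''. The same reasoning shows $\chi(S',N) \neq 0$ for the sheaf $N$ you obtain. You need this, because Lemma \ref{LEM_UniquenessOfInvariantSubgroup} takes it as a hypothesis rather than providing it.

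Two smaller points.
\begin{itemize}
\item Tacs have countably many irreducible components in the standard topology. Your case split should therefore ask whether the support contains a component of a codimension-one tac, not whether it is dense in the whole tac. The argument above needs only one component.
\item For exclusivity, you cannot get the $\BG_m$-fibre from ``the codimension count above''. Statement (1) as written allows an elliptic-curve kernel, whose tac has codimension $2$, and then both alternatives hold (e.g.\ $S = E$, $M = \SL_\chi[1]$). So (1) must be read with a one-dimensional torus as kernel, which is what the other direction produces. The paper's proof makes the same tacit assumption.
\end{itemize}
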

	\begin{proof} 
		If (1) is true, then the projection formula in Proposition \ref{PROP_ProjectionFormula1} and Theorem \ref{THM_VanSemiAb} implies 
		\[
		H^*(S, M_{\overline{\chi}\pi_{}^*{(\chi})}) = H^*_c(S, M_{\overline{\chi}\pi^*(\chi)}) \neq 0
		\]
		for all $\chi' \in \mathscr{C}(S')$ outside a proper thin subset. In particular, there is a tac $\Delta \subset \mathscr{C}(S)$ of codimension $1$ such that the Fourier coefficients of $M$ for all characters outside a thin subset inside $\Delta$ do not vanish. This contradicts (2). Therefore, if (1) is true, then (2) is false.
		
		Suppose (2) is false. Theorem \ref{THM_VanSemiAb} implies that there is a finite union of tacs $\Delta \subset \mathscr{C}(S)$ with
		\[
		H^*(S, M_\chi) = H^*_c(S, M_\chi) = 0
		\]
		for all $\chi \notin \Delta$. Consider the set
		\begin{align*}
			Z &:= \{\chi \in \Delta : H^*(S, M_\chi) \neq 0\}.
		\end{align*}
		Note that this subset is the support of a perfect complex on $\mathscr{C}(S)$, so $Z$ is a closed subset inside $\mathscr{C}(S)$. Since (2) is false and $\Delta$ contains the support of the Fourier coefficients of $M$, the subset $Z$ can not be contained in a finite union of tacs of codimension $\geq 2$. We have the inclusion
		\[
		Z \subseteq \Delta.
		\]
		Therefore, $\Delta$ has an irreducible component $\Delta_0 \subseteq \Delta$ of codimension $1$ with 
		\begin{equation}\label{EQN_ZDelta}
		\Delta_0 \subseteq Z.
		\end{equation}		
		We can write the irreducible component of the tac of codimension $1$ as
		\[
 		\Delta_0 = \chi \cdot\pi_{S'}^*(\mathscr{C}^{\ell}(S')).
		\]
		for a quotient $S_{k'} \rightarrow S'$ with geometrically connected fibers of dimension $1$ and a character $\chi \in \mathscr{C}(S_k')$ over a finite extension $k'/k$. Consider
		\[
		M' := \pi_*(M_\chi).
		\]
		Suppose $M'$ is perverse. The vanishing theorems imply
		\[
		H^*(S', M'_\chi) = 0
		\]
		for all $\chi \in \mathscr{C}(S')$ outside a proper thin subset because $\chi(S', M')= 0$. This contradicts the inclusion (\ref{EQN_ZDelta}) by the projection formula in Proposition \ref{PROP_ProjectionFormula2}.
	\end{proof}
	Then induction step is based on the following theorem on existence of tac's avoiding certain bad loci.
	\begin{proposition}[{\cite[Lem.~5.2.5.]{GabberLoeserTore}}]\label{PROP_ClassificationAvoidance}
		Let $k$ be a finite field and $S$ a semiabelian variety over $k$ of dimension $d = d_a + d_t$. Let $\Delta \subset \mathscr{C}(S)$ be a closed subset of dimension $r < d$. Let $i\colon S' \rightarrow S$ be the inclusion of a closed, connected subgroup of dimension $d' = d'_a + d'_t$. For all but countably many $\chi \in \mathscr{C}(S')$, the intersection 
		\[
		Z\cap  (i^*)^{-1}(\chi)
		\]
		has dimension $$\leq \max(r - d'_t - 2d'_a, 0).$$
	\end{proposition}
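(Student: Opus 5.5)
Throughout, $Z$ denotes the closed subset $\Delta$ of the statement. The plan is to reduce to a statement about the fibres of the surjective restriction map $i^*\colon \cvar{S} \rightarrow \cvar{S'}$ (Proposition \ref{PROP_CharExact}), and then argue componentwise with the Iwasawa-algebra description of the standard topology.

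\textbf{Reduction to finitely many irreducible closed subsets.} By Proposition \ref{PROP_stdtop}, $Z$ decomposes as a countable union of irreducible closed subsets, each lying in a single component $\chi_f\cdot \mathscr{C}^\ell(S)$. Since a countable union of countable sets is countable, it suffices to treat one irreducible $Z$. Translating by the finite-order part $\chi_f$, I may assume $Z \subseteq \mathscr{C}^\ell(S)$. Only the components of $\cvar{S'}$ meeting $i^*(Z)$ matter, and these are countably many.

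\textbf{Structure of the fibre map.} Write $\Lambda = \pi_1^{\ell,\text{char}}(S) \cong \BZ_\ell^{2d_a+d_t}$ and $\Lambda' = \pi_1^{\ell,\text{char}}(S')\cong \BZ_\ell^{2d'_a+d'_t}$. These are identified via Lemma \ref{LEM_ClosedDimensionTAC}, and the inclusion $S' \rightarrow S$ induces $i_*\colon\Lambda' \rightarrow \Lambda$. By Proposition \ref{PROP_DescentFundamentalGroup}, $i_*$ has finite kernel, hence is injective on these torsion-free groups, and its image has finite index in a saturated sublattice. Thus $i^*\colon \mathscr{C}^\ell(S) \rightarrow \mathscr{C}^\ell(S')$ is induced by a finite ring map $R_{S'} \rightarrow R_S$ onto a power-series subring in $e := 2d'_a+d'_t$ variables, with $R_S$ finite and flat over a power series ring in all variables. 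Up to finitely many components, this is a smooth projection of relative dimension $N-e$, where $N = 2d_a + d_t$. In particular, every nonempty fibre $(i^*)^{-1}(\chi)$ is equidimensional of dimension $N-e$.

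\textbf{Fibre dimension bound.} Apply Chevalley/semicontinuity of fibre dimension to the morphism $Z \rightarrow \overline{i^*(Z)}$ of (rigid-generic) spectra of the Noetherian rings $R_{S'}/\mathfrak a \rightarrow R_S/\mathfrak b$. The locus of closed points $\chi$ where $\dim (Z\cap (i^*)^{-1}(\chi)) > \max(r-e,0)$ is then a proper closed subset $W$ of $\overline{i^*(Z)}$ of dimension $< \min(r, e)$. This holds because generic fibre dimension is $r - \dim \overline{i^*(Z)}$, and the jump locus has dimension at most $\dim Z$ minus the jumped fibre dimension.

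\textbf{Main obstacle.} The key step is converting \quotationMark{proper closed subset} into \quotationMark{countable}. Closed subsets of positive dimension in $\text{Spec}(R_{S'})$ have uncountably many closed points, so naive counting fails. The intended route, as in \cite[Lem.~5.2.5]{GabberLoeserTore}, is to iterate. First, apply the bound to $Z_W := Z\cap (i^*)^{-1}(W)$, which has smaller image dimension. Second, use the inequality $\dim(Z\cap(i^*)^{-1}(\chi)) \le \dim Z_W - \dim W$ on a dense open of $W$. Third, induct on $\dim W$ until the bad locus is zero-dimensional, that is, a countable set of closed points in countably many components.

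I expect the hard part to be making the dimension theory of these localized Iwasawa algebras rigorous. This requires Jacobson-type behaviour, so that closed points control dimension, together with the upper semicontinuity of fibre dimension. Here I would cite \cite[~{}3.1]{GabberLoeserTore} rather than reprove it.
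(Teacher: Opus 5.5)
\textbf{There is a genuine gap, and it cannot be closed.} It sits exactly at the step you flag as the main obstacle. With $Z=\Delta$, the statement as written is false whenever $\min(r,e)\ge 2$ and $N>e$, where $N=2d_a+d_t$ and $e=2d'_a+d'_t$.

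Here is a counterexample. Take $S=\mathbb{G}_m^3$ and $S'=\mathbb{G}_m^2\times\{1\}$, so that $i^*(\chi_1,\chi_2,\chi_3)=(\chi_1,\chi_2)$. Let $Z=\pi^*\mathscr{C}(\mathbb{G}_m^2)$ for $\pi(x_1,x_2,x_3)=(x_2,x_3)$, i.e.\ $Z=\{(1,\chi_2,\chi_3)\}$.
\begin{itemize}
\item $Z$ is a tac, hence closed, and has dimension $r=2<3=d$ by Lemma \ref{LEM_ClosedDimensionTAC}.
\item The claimed bound is $\max(r-d'_t-2d'_a,0)=0$.
\item For every $\chi=(1,\chi_2)$, the intersection $Z\cap(i^*)^{-1}(\chi)=\{(1,\chi_2,\chi_3)\}_{\chi_3}$ is a translate of $\mathscr{C}(\mathbb{G}_m)$, so it has dimension $1$.
\item These $\chi$ are uncountably many. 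Already the characters $1\mapsto 1+a$, $a\in\ell\mathbb{Z}_\ell$, of the pro-$\ell$ quotient $\mathbb{Z}_\ell$ of $\pi_1^{\text{char}}(\mathbb{G}_m)$ give uncountably many.
\end{itemize}

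Both of your last two steps break on this example. Here $W=\{\chi_1=1\}=\overline{i^*(Z)}$. So your third step's claim that $W$ is a proper subset of $\overline{i^*(Z)}$ is false: it silently assumed $\dim\overline{i^*(Z)}\ge\min(r,e)$. Only the bound $\dim W\le\min(r,e)-1$ survives, and here that bound equals $1$. Your iteration is then circular. Every point of $W$ has a jumped fibre by definition, so passing to $Z_W$ only stratifies by fibre dimension. The bound $\dim Z_W-\dim W$ (here $=1$) is not $\le\max(r-e,0)$, so the bad locus never shrinks below $W$.

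\textbf{What your setup does prove.} Once the dimension theory for these non-finite-type maps is made rigorous, your first steps give the following: the bad set lies in a countable union, over irreducible components $Z_j$, of closed subsets of $\mathscr{C}(S')$ of dimension $\le\min(r,e)-1$. So ``all but countably many'' holds only when $\min(r,e)\le 1$.

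Two corrections to your setup:
\begin{itemize}
\item $R_{S'}\to R_S$ is not finite. By Proposition \ref{PROP_FundamentalGroupExact} and Lemma \ref{LEM_abprofinexact}, rather than Proposition \ref{PROP_DescentFundamentalGroup}, the pro-$\ell$ lattice of $S'$ is a direct summand. Hence $i^*$ is literally the projection onto $e$ of the $N$ power-series variables.
\item Chevalley's theorem and semicontinuity of fibre dimension need a substitute, such as Weierstrass preparation or a rigid-analytic argument.
\end{itemize}

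\textbf{What the paper does and what it actually needs.} The paper gives no proof of its own: it cites \cite[Lem.~5.2.5]{GabberLoeserTore} together with the dimension count of Lemma \ref{LEM_ClosedDimensionTAC}. Its only use of the statement is Lemma \ref{LEM_ClassificationInductionStep2}, where $r\le N-2$ and $e=N-1$. There it needs an \emph{empty} intersection, which a bound $\le 0$ would not give anyway.

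The argument to aim for is this. Each $\overline{i^*(Z_j)}$ has dimension $\le r<e$, so it is a proper closed subset of its component. A component of $\mathscr{C}(S')$, which is an open $\ell$-adic polydisc, is not a countable union of proper closed subsets. To see this, induct on coordinates: a nonzero power series with integral coefficients has finitely many zeros in the open disc by Weierstrass preparation, while $\ell\mathbb{Z}_\ell$ is uncountable.
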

	This remains true in our context, since the connected components of the character varieties satisfy the same geometric properties as the character variety considered in \cite{GabberLoeserTore}, up to computing the dimension. We have computed the dimension in Lemma \ref{LEM_ClosedDimensionTAC}.
	\begin{lemma}\label{LEM_ClassificationInductionStep2}
		Let $S$ be a semiabelian variety over an arithmetic field $k$ of dimension $d = d_a + d_t$ with $d_t > 0$. Let $M \in \derCat{c}{S}{\Qbarl}$ be an irreducible perverse sheaf on $S$ with $\chi(S, M) \neq 0$. Suppose there is a closed subset $\Delta\subset \mathscr{C}(S')$ of codimension $\geq 2$ such that 
		\[
		H^*(S, M_\chi) = H^*_c(S, M_\chi) =0
		\] 
		for all $\chi \notin \Delta$. There exists a finite extension $k'/k$ and a quotient $$\pi\colon S_{k'} \rightarrow \BG_m$$ with connected fibers and a character $\nu \in \mathscr{C}(S)$ such that $$\pi_{*}(M_\nu) = 0.$$
	\end{lemma}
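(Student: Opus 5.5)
The plan is to follow the argument of \cite[Lem.~5.2.6]{GabberLoeserTore}. The approach combines the avoidance Proposition \ref{PROP_ClassificationAvoidance} with the relative vanishing theorem (Corollary \ref{COR_VanishingRelGroups}) for a one-dimensional quotient, and then upgrades generic vanishing of the relative Fourier coefficient to vanishing of the whole complex by Artin vanishing and the Euler characteristic. (The hypothesis should presumably read $\chi(S,M)=0$ and $\Delta\subset\cvar{S}$; this is forced by the vanishing of $H^*(S,M_\chi)$ off $\Delta$ and Proposition \ref{PROP_eulerpoincaresav}.)

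\emph{Step 1: choose the quotient.} Since $d_t>0$, after a finite extension $k'/k$ the torus $T\subseteq S$ is split. I will choose a quotient $\pi\colon S_{k'}\to\BG_m$ with connected fibers; such a $\pi$ exists by extending a character of $T$ to $S$ up to isogeny. I then pass to the connected kernel $S'=\ker\pi$, which has $d'_t=d_t-1$ and $d'_a=d_a$. Dually, $\pi^*\cvar{\BG_m}$ is a one-dimensional tac, and the fibers of the restriction $\cvar{S}\to\cvar{S'}$ are translates of it.

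\emph{Step 2: avoidance.} Take $r=\dim\Delta\le \dim\cvar{S}-2=2d_a+d_t-2$. I will apply Proposition \ref{PROP_ClassificationAvoidance} not to $S'$ but to the subgroup $\BG_m\hookrightarrow S$ given by a one-dimensional subtorus; equivalently, I use the version for the fibres of the restriction to $S'$. For all but countably many $\chi'\in\cvar{S'}$, the intersection of $\Delta$ with the fibre $\{\chi : \chi|_{S'}=\chi'\}$ has dimension $\le\max(r-(2d'_a+d'_t),0)=\max(r-2d_a-d_t+1,0)=0$. On the other hand, $\cvar{S'}$ is uncountable. Hence I can pick $\nu\in\cvar{S}$ whose restriction $\chi'=\nu|_{S'}$ is generic: it is off the countable bad set and also off the proper thin subset of $\cvar{S'}$ supplied by Corollary \ref{COR_VanishingRelGroups} for $\pi\colon S\to\BG_m$ with kernel $S'$. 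Then $\Delta\cap\nu\cdot\pi^*\cvar{\BG_m}$ is finite.

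\emph{Step 3: upgrade to vanishing.} By Corollary \ref{COR_VanishingRelGroups}, $K:=\pi_!(M_\nu)\to\pi_*(M_\nu)$ is an isomorphism and $K$ is perverse on $\BG_m$. By the projection formula (Proposition \ref{PROP_ProjectionFormula2}), $H^*(\BG_m,K_\mu)=H^*(S,M_{\nu\pi^*\mu})$ for all $\mu\in\cvar{\BG_m}$, and this vanishes for all but finitely many $\mu$ by Step 2. A perverse sheaf on $\BG_m$ whose Mellin coefficients vanish for all but finitely many characters has $\chi(\BG_m,K)=0$. By the classification on $\BG_m$ (the torus case of Theorem \ref{THM_Classification}, or \cite{GabberLoeserTore} directly), every irreducible constituent of $K$ is then a Kummer sheaf $\SL_\mu[1]$. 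Each such constituent has nonzero Mellin coefficients on a whole tac of dimension $0$, i.e.\ only at the single character $\mu^{-1}$. This is consistent with the finitely many exceptions, so it does not yet give $K=0$.

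\emph{Main obstacle.} The expected difficulty is exactly this residual Kummer part. To remove it I will use the freedom in choosing $\nu$: replace $\nu$ by $\nu\pi^*\mu$ for $\mu$ avoiding the finitely many characters, which are countably many over all choices. Equivalently, I aim for $\Delta\cap\nu\cdot\pi^*\cvar{\BG_m}=\emptyset$ rather than merely finite. This emptiness should follow because a countable union of finite sets cannot sweep out the fibre direction when $\dim\Delta\le\dim\cvar{S}-2$. With emptiness, $K$ is perverse with all Mellin coefficients zero, so its classification constituents would have nonempty Fourier support, a contradiction. Hence $K=\pi_*(M_\nu)=0$. If emptiness is not available directly, I would instead argue by semisimplicity (after making $M$ pure) that $K$ is a sum of Kummer sheaves, whose Fourier supports form a finite set, and then translate $\nu$ to move off them.
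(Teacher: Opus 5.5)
\textbf{There is a genuine gap: you never produce a character $\nu$ with $\pi_*(M_\nu)=0$.}

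Your setup is correct. You take a quotient $\pi\colon S_{k'}\to\BG_m$ with connected kernel $S'$, and let $i\colon S'\to S$ be the inclusion. You are also right that the hypotheses should read $\chi(S,M)=0$ and $\Delta\subset\cvar{S}$. And you correctly notice that Proposition \ref{PROP_ClassificationAvoidance}, read literally with the $\max(\cdot,0)$, only makes $\Delta\cap\nu\cdot\pi^*\cvar{\BG_m}$ finite.

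The fixes you propose do not get past this. Replacing $\nu$ by $\nu\pi^*(\mu)$ does not change the coset $\nu\cdot\pi^*\cvar{\BG_m}$, so it does not change the finite set $\Delta\cap\nu\cdot\pi^*\cvar{\BG_m}$. Equivalently, $\pi_*(M_{\nu\pi^*(\mu)})=\pi_*(M_\nu)_\mu$ by Proposition \ref{PROP_ProjectionFormula2}, and this vanishes if and only if $\pi_*(M_\nu)$ does. So the residual Kummer constituents cannot be translated away, and your semisimplicity fallback has the same defect.

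The only freedom that matters is the restriction $\chi'=i^*(\nu)\in\cvar{S'}$, since the coset is exactly the fibre $(i^*)^{-1}(\chi')$. What you need is a $\chi'\notin i^*(\Delta)$, i.e.\ $i^*(\Delta)\neq\cvar{S'}$. This is a dimension statement about the image of $\Delta$: one has $\dim\Delta\le\dim\cvar{S'}-1$, and $\cvar{S'}$ is not exhausted by the images of the countably many irreducible components of $\Delta$. That is precisely the content of the Gabber--Loeser avoidance lemma in its correct form: a generic fibre meets $\Delta$ in dimension $\le r-\dim\cvar{S'}$, hence not at all when this is negative. Your remark that ``a countable union of finite sets cannot sweep out the fibre direction'' does not supply this. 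The smallness must be proved for the image in $\cvar{S'}$, not along the fibres.

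The paper's proof runs along your lines, but it takes the empty intersection directly from Proposition \ref{PROP_ClassificationAvoidance}, i.e.\ from \cite[Lem.~5.2.5]{GabberLoeserTore}. Then every Mellin coefficient of $\pi_*(M_\nu)$ vanishes, and \cite[Prop.~3.4.5]{GabberLoeserTore} gives $\pi_*(M_\nu)=0$ immediately. Once emptiness is available, your detour through Corollary \ref{COR_VanishingRelGroups}, perversity of $\pi_*(M_\nu)$ and the classification on $\BG_m$ is unnecessary.

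Without emptiness, your argument only shows that $\pi_*(M_\nu)$ is an iterated extension of finitely many Kummer sheaves. Its class in the $K$-group of $\BG_m$ is then nonzero, so this does not prove the lemma. It also does not feed into the later application of Theorem \ref{THM_ClassCharactersRelative}.
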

	\begin{proof}
		By Corollary \ref{COR_AppendixMapToCircle}, there exists a finite extension $k'/k$ and a surjective map with connected fibers
		\[
		\pi\colon S_{k'} \rightarrow \BG_m.
		\]
		Let $S'$ be the kernel of this group morphism. By Proposition \ref{PROP_ClassificationAvoidance} and our assumption, there exists a character $\nu \in \mathscr{C}(S)$ such that $\chi\cdot \mathscr{C}(\BG_m)\cap \Delta$ is empty. Put
		\[
		M' := \pi_{*}(M_\nu).
		\]
		All Fourier coefficients of $M'$ vanish, therefore \cite[Prop.~3.4.5]{GabberLoeserTore} implies
		\[
		M' = 0.
		\]
	\end{proof}
	\begin{proof}[Proof (Theorem \ref{THM_Classification}).]
		We induct on the dimension $d_t$ of the maximal torus inside $S$. If the maximal torus is trivial, then $S$ is an abelian variety. This case is treated in Theorem \ref{THM_ClassificationNegligibleAbelian}. Suppose $S$ is a semiabelian variety such that $d_t > 0$ and we have proven Theorem \ref{THM_Classification} for all semiabelian varieties over finite extension of $k$ whose maximal torus has dimension $< d_t$. Let $M \in \Perv{}{S}$ be an irreducible perverse sheaf on $S$ with $\chi(S, M) = 0$. By Lemma \ref{LEM_ClassificationInductionStep1}, we can assume that the Fourier coefficients of $M$ vanish outside a closed subset of codimension $\geq 2$. By Lemma \ref{LEM_ClassificationInductionStep2}, there exists a finite extension $k'/k$, a quotient $\pi\colon S_{k'} \rightarrow \BG_m$ with connected kernel $S''$, and a character $\nu \in \mathscr{C}(S)$ such that
		\[
		\pi_*(M_\nu) = 0.
		\]
		By Lemma \ref{LEM_ClassificationAbsoluteImpliesRelative}, we can assume Theorem \ref{THM_ClassCharactersRelative} for the quotient morphism $\pi$. By Theorem \ref{THM_ClassCharactersRelative}, there exists a connected, non-trivial quotient $\pi_1\colon S_{k'} \rightarrow S'_1$ with connected fibers of dimension $d_1$, a character $\chi_1 \in \mathscr{C}(S)$, and a perverse sheaf $N_1 \in \Perv{}{S'}$ such that
		\[
		M = \pi^*N_1\otimes\SL_{\chi_1}[d_1].
		\]
		If $\chi(S', N_1) = 0$, then we can apply Theorem \ref{THM_Classification} to $N$ by the induction hypothesis to find a quotient $S'_1 \rightarrow S'_2$ with connected fibers, a perverse sheaf $N_2 \in \Perv{}{G}$, and a character $\chi_2 \in \mathscr{C}(S'_1)$ such that
		\[
		N_1 = \pi_2^*N_2\otimes\SL_{\chi_2}
		\]
		and
		\[
		\chi(S'_2, N_2) \neq 0.
		\]
		We put $\pi := \pi_2\circ\pi_1$ for the quotient, $N := N_2$ for the perverse sheaf, and $\chi := \chi_1\pi_1^*(\chi_2)$ for the character. This satisfies the assumption of Lemma \ref{LEM_UniquenessOfInvariantSubgroup}. 
	\end{proof}
	\subsection{General classification}
	We can now classify the negligible sheaves on an arbitrary connected commutative group. We only state the classification for a product of a semiabelian variety with a unipotent group.
	\begin{theorem}\label{THM_ClassificationFull}
		Let $G = S\times U$. Let $M \in \text{Perv}(G)$ be a geometrically irreducible perverse sheaf and put $M' := \text{FT}(M)$. Let $Z \subseteq \widehat{U}$ be the closure of the image of the support of $M'$ in $\widehat{U}$. There exists a tac $\Delta_S \subseteq \mathscr{C}(S)$ such that
		\begin{enumerate}
			\item We have
			\[
			\text{FSupp}_!(M) = \text{FSupp}_*(M) = \Delta_S\times Z.
			\]
			\item The tac $\Delta_S$ can be uniquely characterized by the following conditions. There exists a unique quotient $\pi\colon S \rightarrow S'$ with connected fibers of dimension $d$, a character $\chi \in \mathscr{C}(S)$, and a perverse sheaf $N \in \Perv{}{S\times U}$ such that
			\begin{enumerate}
				\item Let $\eta$ be the generic point of $\Delta_U$. The Euler-Poincare characteristic of $N' := \text{FT}(N)$ does not vanish on the generic fiber over $\Delta_U$, i.e.
				\[
				\chi(S', i_\eta^*N) \neq 0.
				\]
				\item We have
				\[
				M = \SL_{\chi}\otimes\pi^*{N}[d].
				\]
				where $d \in \BN$ is the relative dimension of $S \rightarrow S'$. 
				\item The tac is given by $\Delta_S = \chi\cdot \pi^*\mathscr{C}(S')$.
			\end{enumerate}
		\end{enumerate}
	\end{theorem}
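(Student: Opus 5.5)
The plan is to transport the problem through the Fourier transform in the unipotent variable and then apply the relative classification over the base $X=\widehat{U}$. Put $M' := \text{FT}(M) \in \Perv{}{S\times\widehat{U}}$; by Theorem \ref{THM_FTBasic} this is again geometrically irreducible, and by Lemma \ref{LEM_fmprojformula} together with Lemma \ref{LEM_fmstalks} we have, for $(\chi,\psi)\in\cvar{S}\times\cvar{U}$,
\[
H^*_c(G, M_{(\chi,\psi)}) = i_\psi^*\pi_{\widehat{U}!}(M'_\chi)[-d_U],\qquad H^*(G, M_{(\chi,\psi)}) = i_\psi^!\pi_{\widehat{U}*}(M'_\chi)(d_U)[d_U].
\]
Thus both Fourier supports of $M$ are described by the supports of the relative Fourier coefficients $\pi_{\widehat{U}?}(M'_\chi)$ as $\chi$ ranges over $\cvar{S}$. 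Let $Z$ be the closure of the image of $\text{Supp}(M')$ in $\widehat{U}$. Replacing $\widehat{U}$ by $Z$ (pushforward along the closed immersion), we view $M'$ as a geometrically irreducible perverse sheaf on $S\times Z$ with $Z$ irreducible, dominated by the support of $M'$.

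Next I split into two cases according to the generic Euler characteristic. Let $\eta$ be the generic point of $Z$. If $\chi(S, i_\eta^*M')\neq 0$, I take $\pi=\text{id}$, $\chi=1$, $N=M$ and $\Delta_S=\cvar{S}$. By Theorem \ref{THM_StratificationFourier} and Proposition \ref{PROP_eulerpoincaresav}, the complexes $\pi_{Z?}(M'_\chi)$ are lisse with nonzero stalks on a dense open of $Z$ for every $\chi$, so the Fourier support contains $\cvar{S}\times V$ for a dense open $V\subseteq Z$. Taking closures gives $\cvar{S}\times Z$, and the reverse inclusion is clear from the support of $M'$. If instead $\chi(S,i_\eta^*M')=0$, then the class $[\pi_{Z!}(M'_\nu)]$ vanishes generically. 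I would first restrict to a dense open $V\subseteq Z$ where this class is zero in the $K$-group, and then apply Theorem \ref{THM_ClassCharactersRelative} to $M'|_{S\times V}$. This gives a quotient $\pi\colon S\to S'$, a character $\chi\in\widehat{S}(k)$ and $N''$ on $S'\times V$ with $M'|_{S\times V}=\pi^*N''\otimes\SL_\chi[d]$. Because $M'$ is the intermediate extension of its restriction, this extends over $Z$ as $M'=\pi^*N''\otimes\SL_\chi[d]$ with $N''$ now the intermediate extension. Setting $N:=\text{FT}^{-1}(N'')$ and using Proposition \ref{PROP_ftfuncbase} gives (b). If $\chi(S',i_\eta^*N'')=0$, I iterate on $S'$, which has smaller dimension, and compose the quotients as in the proof of Theorem \ref{THM_Classification}; this yields (a). Then (c) and (1) follow from Proposition \ref{PROP_ProjectionFormula1}: the coefficients vanish off $\chi\cdot\pi^*\cvar{S'}$, and on that tac we are back in the nonvanishing case for $N''$. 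The same argument applied to $\pi_{Z*}$, or dually via Lemma \ref{LEM_fmdual}, gives equality of the $!$ and $*$ supports.

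For uniqueness, Corollary \ref{COR_TACSDetermineGroupUniquely} shows that the tac $\Delta_S$, which is determined by the Fourier support, determines both $\pi$ and $\chi$ modulo $\pi^*\cvar{S'}$. Rationality of $\pi$ and $\chi$ over $k$ follows as in Lemma \ref{LEM_UniquenessOfInvariantSubgroup} via Theorem \ref{THM_DescendingTACS}.

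I expect the main obstacle to be the closure statement. The identity $\text{FSupp}=\Delta_S\times Z$ requires that the coefficients be nonzero on a set dense in $\Delta_S\times Z$, not just generically. The costalk version $i_\psi^!$ is also delicate at points where the pushforward is not lisse. I would handle this by working only over the dense open of $Z$ where Theorem \ref{THM_StratificationFourier} holds and then taking closures, which suffices because the Fourier support is defined as a closure.
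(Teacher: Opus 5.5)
Your route is the same as the paper's. You pass to $M'=\text{FT}(M)$ and read both Fourier supports off the stalks and costalks of $\pi_{\widehat{U}?}(M'_\chi)$ via Lemma \ref{LEM_fmstalks}. You then split on the generic Euler characteristic: Theorem \ref{THM_StratificationFourier} with Proposition \ref{PROP_eulerpoincaresav} when it is nonzero, and Theorem \ref{THM_ClassCharactersRelative} with the projection formulas when it vanishes. Your additions are sound fill-ins of what the paper leaves implicit: extending the decomposition from $S\times V$ to $S\times Z$ by intermediate extension, comparing costalks with stalks on a smooth dense open of $Z$, and deducing uniqueness from Corollary \ref{COR_TACSDetermineGroupUniquely}.

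One step does not work as stated. A vanishing generic Euler characteristic does not make $[\pi_{V!}(M'_\nu)]$ zero in the $K$-group of any dense open $V$. For example, a lisse complex $F[1]\oplus F'[2]$, with $F,F'$ non-isomorphic irreducible local systems of equal rank, has stalk Euler characteristic $0$ but nonzero class. Your proposal gives no reason why $\pi_{V!}(M'_\nu)$ could not have this shape for a badly chosen $\nu$. You need to choose $\nu$ so that the coefficient actually vanishes over $V$, which is what the paper does (the mechanism is in the proof of Lemma \ref{LEM_ClassificationAbsoluteImpliesRelative}):
\begin{enumerate}
\item Take $\nu$ outside the proper thin subset that Theorem \ref{THM_VanSemiAb} gives for $M'|_{S\times V}$.
\item After shrinking $V$ to be smooth with lisse coefficients, $\pi_{V!}(M'_\nu)$ is a perverse lisse sheaf $F[\dim V]$.
\item Up to sign, the rank of $F$ is the stalk Euler characteristic $\chi(S,i_\psi^*M')=0$. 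Hence $\pi_{V!}(M'_\nu)=0$, and the hypothesis of Theorem \ref{THM_ClassCharactersRelative} holds trivially.
\end{enumerate}
With this choice of $\nu$ the rest of your argument goes through. Your iteration on $S'$ is harmless but unnecessary, since the proof of Theorem \ref{THM_ClassCharactersRelative} already shows that the Euler characteristic of $N''$ is nonzero on a dense open.
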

	\begin{proof}
		 For all $\psi \in \widehat{U}$ and $\chi \in \cvar{S}$, we have by Lemma \ref{LEM_fmstalks}
		\begin{align*}
			i_\psi^*(\pi_{\widehat{U}!}(M'_\chi)) = H^*_c(G, M_{(\chi, \psi)})\\
			i_\psi^!(\pi_{\widehat{U}*}(M'_\chi)) = H^*(G, M_{(\chi, \psi)})
	\end{align*}
		This yields the inclusion
		\[
		\text{FSupp}_?(M) \subseteq \cvar{S}\times \Delta_U.
		\]
		
		We distinguish two cases. Let $\eta$ be the generic point of $\Delta_U$ and suppose
		\[
		\chi(S, i_\eta^*M') \neq 0
		\]
		By Theorem \ref{THM_StratificationFourier}, there is a dense open subset $V \subseteq \Delta_U$ such that $\pi_{\widehat{U}?}(M'_\chi)$ commutes with base change and is lisse over $V$ for all $\chi \in \cvar{S}$. Let $x \in V$ be a closed point. Lisseness of the Fourier coefficient with $\chi = 1$ implies
		\[
		\chi(S_\eta, i_\eta^*(M'))  = \chi(S, i_x^*(M')).
		\]
		For all $\psi \in V$ and all $\chi := (\chi', 0) \in \mathscr{C}(S)\times\mathscr{C}(U)$, we have an  isomorphism by base change
		\[
		i_\psi^*(\pi_{\widehat{U}?}(M'_\chi)) = H^*_?(G,  H^*_?(G, M_{(\chi', \psi)})).
		\]
		This cohomology complex is non-zero because base change and Proposition \ref{PROP_eulerpoincaresav} implies
		\begin{align*}
			\chi(i_\psi^*\pi_{\widehat{U}?}(M'_\chi)) &= \chi(i_\psi^*\pi_{\widehat{U}?}(M'_\chi))\\
			& = \chi(i_\eta^*\pi_{\widehat{U}?}(M'_\chi))\\
			&= \chi(S_\eta, i_\eta^*(M'_\chi)) \\&= \chi(S, i_x^*(M'_\chi)) \\&= \chi(S, i_x^*(M')) \\&\neq 0.
		\end{align*}
		Thus we obtain
		\[
		\text{FSupp}_?(M) = \cvar{S}\times\Delta_U
		\]
		in this case.
		
		If 
		\[
		\chi(S, i_\eta^*M') = 0,
		\]
		then we can find a character $\chi \in \mathscr{C}(S)$ such that
		\[
		\pi_{\widehat{U?}}(M_\chi) = 0
		\]
		over $V$. Theorem \ref{THM_ClassCharactersRelative} implies the existence of $N$ as in the Lemma. The projection formula in Proposition \ref{PROP_ProjectionFormula2} and Proposition \ref{PROP_ProjectionFormula1} and the argument in the first case, applied to $N$, imply the statement on the support. 
	\end{proof}
	From the classification, we obtain the following strengthening of the vanishing theorems. The following corollary is maybe the only reason for our classification of negligible sheaves.
	\begin{corollary}\label{COR_ClassificationWeaklyUnramifiedChars}
		Let $G = S\times U$ and $M \in \Perv{}{G}$ a geometrically irreducible perverse sheaf on $G$. Let $\Delta_S\times Z \subseteq \cvar{G}$ be the Fourier support of $M$. Let $2d_a + d_t$ be the codimension of the closed subset $\Delta_S$ and $d_u$ be the codimension of $Z$. There exists a a proper thin subset $\Delta'_S \subset \Delta_S$ and a proper closed subset $Z' \subset Z$ such that
		\[
		H^n_c(G, M_\chi) = \begin{cases}
			\neq 0& -d_a - d_u \leq n \leq d_a + d_t - d_u \\
			0 & \text{ else} 
		\end{cases}
		\]
		and
		\[
		H^n(G, M_\chi) = \begin{cases}
			\neq 0 & -d_a - d_t + d_u \leq n \leq d_a + d_u\\
			0 & \text{ else} 
		\end{cases}
		\]
		for all $\chi \in \Delta_S\times Z$ with $\pi_S^*(\chi) \notin \Delta'_S$ and $\pi_U^*(\chi) \notin Z'$. 
	\end{corollary}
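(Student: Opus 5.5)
The plan is to reduce to the case where the Fourier support is the whole character variety, and then apply the vanishing theorem on the quotient.

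First I apply Theorem \ref{THM_ClassificationFull} to $M$. This gives a quotient $\pi\colon S\rightarrow S'$ with connected fibers of relative dimension $d$, a character $\chi_0\in\cvar{S}$, and $N\in\Perv{}{S'\times U}$ with
\[
M=\SL_{\chi_0}\otimes\pi^*N[d],\qquad \Delta_S=\chi_0\cdot\pi^*\cvar{S'},
\]
and the generic Euler characteristic of $\text{FT}(N)$ over $Z$ is non-zero.

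Write $\chi=(\chi_0\pi^*(\chi'),\psi)$ for $\chi\in\Delta_S\times Z$. The projection formula (Proposition \ref{PROP_ProjectionFormula1}) and the Künneth formula give
\[
H^*_?(G,M_\chi)=H^*_?(S'\times U,N_{(\chi',\psi)})\otimes H^*_?(\ker\pi,\Qbarl)[d].
\]
The kernel $\ker\pi$ is a semiabelian variety with invariants $(d_a,d_t)$, where $2d_a+d_t$ is the codimension of $\Delta_S$ by Lemma \ref{LEM_ClosedDimensionTAC}. By Proposition \ref{PROP_SemiAbCohomology} and the following Remark:
\begin{itemize}
\item $H^*(\ker\pi)[d]$ is non-zero exactly in degrees $[-d,\,d_a-d_t]$;
\item $H^*_c(\ker\pi)[d]$ is non-zero exactly in degrees $[-d_a,\,d_a+d_t]$.
\end{itemize}
It is therefore enough to show that outside a proper thin $\Delta'\subset\cvar{S'}$ and a proper closed $Z'\subset Z$, the complex $H^*_?(S'\times U,N_{(\chi',\psi)})$ is non-zero and concentrated in a single degree, and to find that degree. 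The endpoints then follow by adding ranges.

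To treat $N$, I pass to $\text{FT}(N)$ on $S'\times\widehat U$ and apply Theorem \ref{THM_VanSemiAb} to the projection to $\widehat U$, restricted over $Z$. Since $Z$ has codimension $d_u$, the restriction of $\text{FT}(N)$ to $S'\times Z$ is perverse up to the shift $[-d_u]$ over a dense open subset of $Z$; shrinking with Lemma \ref{LEM_PervRelativePerversity} and Theorem \ref{THM_StratificationFourier} makes this precise. Theorem \ref{THM_VanSemiAb} then gives a dense open $V\subseteq Z$ and a proper thin $\Delta'\subset\cvar{S'}$ such that, for $\chi'\notin\Delta'$, both $\pi_{\widehat U?}$ of the twist are lisse on $V$, commute with base change, and agree with each other.

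Lemma \ref{LEM_fmstalks} computes the stalk at $\psi\in V$ as the Fourier coefficient shifted by $\dim U$ in the $!$ case, and the costalk with the opposite shift in the $*$ case. The resulting single degree is controlled by $d_u$ with opposite signs for $!$ and $*$. Non-vanishing comes from the Euler characteristic: the stalk Euler characteristic equals the generic one by lisseness and Proposition \ref{PROP_eulerpoincaresav}, and this is non-zero by condition (a) of Theorem \ref{THM_ClassificationFull}. Set $Z'=Z\setminus V$ and $\Delta'_S=\chi_0\pi^*(\Delta')$; the latter is thin and proper in $\Delta_S$ since $\pi^*$ is a closed immersion.

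The main obstacle is the degree bookkeeping: matching the shift $[d]$, the $\dim U$ shift of the Fourier transform, and the codimension $d_u$ of $Z$ so that the ranges come out exactly as stated. This includes checking that the costalk $i^!_\psi$ on the singular locus of $Z$ does not introduce extra degrees. For that I would shrink $V$ into the smooth locus of $Z$, where $i^!$ of a lisse complex is just a shift and twist by Corollary \ref{COR_gysinisolisse}. If the signs do not match, the stated ranges would have to be adjusted accordingly.
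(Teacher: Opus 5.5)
Your route is the paper's: Theorem \ref{THM_ClassificationFull} writes $M$ as a twisted pullback, and the projection formula splits off $H^*_?(\ker\pi,\Qbarl)[d]$. Theorem \ref{THM_VanSemiAb} over a dense open $V\subseteq Z$ then puts the $S'\times U$ factor in a single degree, and the generic Euler characteristic gives non-vanishing. The only structural difference is the $*$-case. The paper runs the $!$-argument for $M^\vee=\text{inv}^*D(M)$, which has the same Fourier support, and dualizes. You instead compute costalks on the smooth locus of $Z$ via Corollary \ref{COR_gysinisolisse}. Both work.

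The gap is the degree bookkeeping you postpone, which is the whole content of the stated ranges, and it does not come out as printed. Put $e=\dim U$ and take $V$ smooth of dimension $c=e-d_u$.

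\begin{itemize}
\item Since $\text{FT}(N)$ is supported on $S'\times Z$, its restriction to $S'\times V$ is perverse with no shift, so your ``perverse up to $[-d_u]$'' is wrong. The shift appears only on fibers: $i_\psi^*\text{FT}(N)=Q[c]$ with $Q$ perverse on $S'$ (Lemma \ref{LEM_PervRelativePerversity}).
\item For $\chi'\notin\Delta'$ the complexes $\pi_{V?}(\text{FT}(N)_{\chi'})$ are lisse and perverse, hence of the form $F[c]$. So the stalk at $\psi$ sits in degree $-c$, and the costalk $F_\psi(-c)[-c]$ sits in degree $c$.
\item Lemma \ref{LEM_fmstalks} then puts $H^*_c(S'\times U,N_{(\chi',\psi)})$ in degree $-c+e=d_u$, and $H^*(S'\times U,N_{(\chi',\psi)})$ in degree $c-e=-d_u$.
\item $H^*_c(\ker\pi,\Qbarl)[d]$ lives in $[-d_a,d_a+d_t]$ and $H^*(\ker\pi,\Qbarl)[d]$ lives in $[-d_a-d_t,d_a]$. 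Your upper end $d_a-d_t$ is a slip, since $2d_a+d_t-d=d_a$.
\end{itemize}

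Combining these, $H^n_c(G,M_\chi)\neq 0$ exactly for $d_u-d_a\le n\le d_u+d_a+d_t$. Likewise $H^n(G,M_\chi)\neq 0$ exactly for $-d_u-d_a-d_t\le n\le d_a-d_u$.

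The sign of $d_u$ is opposite to the statement, which is in fact false whenever $d_u>0$. For $G=\mathbb{A}^1$ and $M=\Qbarl[1]$ one has $Z=\{0\}$ and $d_u=1$. But $H^*_c(\mathbb{A}^1,\Qbarl[1])=\Qbarl(-1)[-1]$ lives in degree $+1$, as Artin vanishing on the affine line requires, not in degree $-1$.

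The paper's proof reaches $-d_u$ only through a shift slip. It correctly places $i_\psi^*N$ in perverse degree $d_u-\dim U$. It then asserts that the fiber cohomology lies in degree $\dim U-d_u$, and removes the shift $[\dim U]$ in the wrong direction.

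So your hedge is justified. Completed, your argument proves the stated ranges with $d_u$ replaced by $-d_u$, and you should state and prove that version rather than leave the signs open.

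The later uses appear insensitive to this. Theorem \ref{THM_SupportLemma} only needs the extreme nonvanishing degree to depend on the support component alone. Lemma \ref{LEM_unramifiednegligible} only needs the cohomology not to be concentrated in degree $0$ when the Fourier support is proper.
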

	\begin{proof}
		After a finite extension of $k$, we can assume there exists a quotient $\pi\colon S \rightarrow S'$ and a character $\chi \in \widehat{S}(k)$ such that
		\[
		\Delta_S = \chi\cdot\pi_S^*(\cvar{S'}).
		\]
		Let $M' := \text{FT}(M)$. By Theorem \ref{THM_ClassificationFull}, there exists a perverse sheaf $N \in \Perv{}{S'\times \widehat{U}}$ such that $N$ has support concentrated over $Z$, we have $M' = \pi^*(N)\otimes\SL_\chi[d_a+d_t]$. Moreover, if $\eta$ is the generic point of $Z$, then $\chi(S', N_\eta) \neq 0$.
		
		Let $d$ be the dimension of $U$. By Theorem \ref{THM_StratificationFourier} and Proposition \ref{PROP_PervRelativePerversity}, there exists an open subset $V \subseteq Z$ such that the formation of the relative Fourier coefficient $\pi_{\widehat{U}?}(N_\chi)$ commutes with basechange $X \rightarrow V$, the relative Fourier coefficient is lisse over $V$, and for all points $x \in V$ the perverse sheaf $i_x^*(N)$ is concentrated in perverse degree $d_u - d$. 
		
		For all closed points $\psi \in V$, we have
		\[
		\chi(S, i_\psi^*(N)) = \chi(S, i_\eta^*(N)) \neq 0.
		\]
		The vanishing theorems and the liseness implies that there is a proper thin subset $\Delta' \subseteq \cvar{S'}$ such that
		\[
		H^*_c(S', i_\psi^*(N)_\chi)
		\]
		is non-zero precisely in degree $d - d_u$ and non-zero for all points $\psi \in V$ and all $\chi \notin \Delta'$. Proper base change implies
		\[
		H^*_c(S', i_\psi^*(N)_\chi) = H^*_c(S'\times U, \text{FT}(N)_{(\chi, \psi)})[d].
		\]
		Thus the cohomology group
		\[
		H^*_c(S'\times U, \text{FT}(N)_{(\chi, \psi)})
		\]
		is non-zero precisely in degree $-d_u$ for all $\chi \notin \Delta'$ and $\psi \in V$. We can now apply the projection formula from Proposition \ref{PROP_ProjectionFormula2} and Proposition \ref{PROP_ProjectionFormula1} to obtain
		\[
		H^*_c(S\times U, M_{(\pi^*(\chi), \psi)}) = H^*_c(S'\times U, \text{FT}(N)_{(\chi, \psi)})\otimes H^*_c(S'', \Qbarl)[d_a + d_t],
		\]
		where $S''$ is the kernel of $S \rightarrow S'$, for all $\chi \notin \Delta'$ and $\psi \in V$. We can finish the computation by Proposition \ref{PROP_SemiAbCohomology}.
		
		Poincare duality implies
		\[
		\text{FSupp}_*(M) = \text{FSupp}_!(\text{inv}^*D(M)).
		\]
		Since $\text{inv}^*D(M)$ is geometrically irreducible, Theorem \ref{THM_ClassificationFull} implies
		\[
		\text{FSupp}(M) = \text{FSupp}(\text{inv}^*D(M)).
		\]
		Thus we can derive the statement on cohomology without supports by applying the above argument to the dual $\text{inv}^*D(M)$. 
	\end{proof}


%% file: supports.tex
	\section{Supports} In this section, we study supports of perverse sheaves. We do this first in the absolute case over an arbitrary group. We introduce the Fourier support and establish its basic properties. These basic properties lay the foundation for the theory of $\Delta$-localization because they allow us to define a perverse $t$-structure and a duality theory on the localized categories.
	
	We then study supports in the relative setting. Here, we narrow our focus to tori and prove two propagation theorems. These propagation theorems are central to our construction of Tannakian categories because they will allow us to prove a certain criterion for a morphism to be an isomorphism.
	
	In this section, we begin deviating from the convention of the previous sections with respect to the base field. In particular, this section is written for groups over a finite field or the algebraic closure of a finite field $k$. Strictly speaking, we never consider an algebraic group over the algebraic closure of a finite field but only the base change of an algebraic group to $\overline{k}$ while implicitly remembering the group over $k$. This is so we obtain a well-defined notion of quasi-arithmetic complexes. 
	
	If the group $G$ is defined over the algebraic closure of a finite field, following our convention made at the beginning, the category $\derCat{c}{G}{}$ denotes the category of {quasi-arithmetic} complexes on $G$. Note that the irreducible constituents of a decomposition series of a perverse sheaf $M \in \Perv{}{G}$ descend to geometrically irreducible perverse sheaves defined over a finite field by definition of quasi-arithmeticity. This makes all the results of the previous section applicable to irreducible perverse sheaves. 
	\subsection{Fourier support} We now introduce the notion of Fourier support. The Fourier support plays a central role to $\Delta$-localization. The negligible complexes we introduce are defined by a condition on the Fourier support.
	\begin{definition}\label{DEF_fsupp}
		Let $K \in \derCat{c}{G}{\Qbarl}$ be a complex. We define the \textit{Fourier support} without supports, or the $*$-support, to be
		\[
		\text{FSupp}_*(K) := \{\chi \in \mathscr{C}(G)~|~ H^*(G, M_\chi) \neq 0\}^{\text{cl}}.
		\]
		We define the Fourier support with supports to be
		\[
		\text{FSupp}_!(K) := \{\chi \in \mathscr{C}(G)~|~ H^*_c(G, M_\chi) \neq 0\}^{\text{cl}}.
		\]
		We take the closure in the standard topology. We prove in Theorem \ref{THM_EqualitySupport} that these two supports agree. From then on, we will write $\text{FSupp}(K)$ to denote either support.
	\end{definition}
	We begin by recording a basic lemma on the Fourier support.
	\begin{lemma}\label{LEM_fsuppisogeny}
		Let $q\colon G \rightarrow G'$ be an isogeny and $K \in \derCat{c}{G}{}$. Then we have
		\[
		\text{FSupp}_?(K) = q^*(\text{FSupp}(q_*(K))).
		\]
		and
		\[
		(q^*)^{-1}(\text{FSupp}_?(K)) = \text{FSupp}(q_*(K)).
		\]
	\end{lemma}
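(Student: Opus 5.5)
The plan is to compare the twisted cohomologies on $G$ and on $G'$ pointwise via the projection formula, and then pass to closures using the topological properties of $q^*$.

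\textbf{Pointwise comparison.} Since $q$ is finite, $q_! = q_*$. Proposition \ref{PROP_CharDescent} says $q^*\colon \cvar{G'} \rightarrow \cvar{G}$ is surjective with finite kernel $\widehat{\Gamma}(\overline{k})$, where $\Gamma = \ker(q)$. Let $\chi' \in \cvar{G'}$. Then $q^*\SL_{\chi'} = \SL_{q^*\chi'}$ by Proposition \ref{PROP_CharFunc}, and the projection formula gives
\[
q_?(K_{q^*(\chi')}) = (q_?K)_{\chi'}.
\]
Taking cohomology on $G'$ yields
\[
H^*_?(G, K_{q^*(\chi')}) = H^*_?(G', (q_*K)_{\chi'})
\]
for both choices of supports. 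Consequently the non-vanishing set for $q_*K$ is the full preimage under $q^*$ of the non-vanishing set for $K$. Calling these sets $V'$ and $V$, and using surjectivity of $q^*$, we get
\[
V' = (q^*)^{-1}(V), \qquad q^*(V') = V.
\]

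\textbf{Passing to closures.} It remains to show that closure commutes with these operations.

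For the first identity, $q^*(\overline{V'}) = \overline{V}$, I would use two facts. The map $q^*$ is continuous: by Definition \ref{DEF_topology}, $\cvar{G}$ carries the quotient topology along an isogeny to $S\times U$, and $q$ composed with such an isogeny is again such an isogeny. The map $q^*$ is also closed, by Proposition \ref{PROP_CharIsogenyClosed}. Continuity gives $q^*(\overline{V'}) \subseteq \overline{q^*V'} = \overline{V}$. Closedness shows $q^*(\overline{V'})$ is a closed set containing $V$, which gives the reverse inclusion.

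For the second identity, $(q^*)^{-1}(\overline{V}) = \overline{V'}$, the inclusion $\supseteq$ follows from continuity. For $\subseteq$, I would use that $q^*$ is a quotient map with fibres the cosets of the finite group $\widehat{\Gamma}(\overline{k})$. The set $V'$ is saturated, and translation by elements of $\widehat{\Gamma}(\overline{k})$ is a homeomorphism of the topological group $\cvar{G'}$. So $\overline{V'}$ is saturated as well, and is therefore $(q^*)^{-1}$ of its image. That image is $q^*(\overline{V'}) = \overline{V}$ by the first identity. This proves
\[
(q^*)^{-1}(\overline{V}) = \overline{V'}.
\]

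\textbf{Main obstacle.} The hard part is the topological input: that $q^*$ is continuous and closed for an arbitrary isogeny $q$ between general groups, not only between products $S\times U$. I would reduce to the product case. Choose an isogeny $G' \rightarrow S\times U$; its composite with $q$ is an isogeny from $G$ to $S\times U$. Both topologies are then quotients of $\cvar{S\times U}$, and the quotient-map property transfers along this factorization. Since the topology on $\cvar{G'}$ is not literally of Zariski type, I would rely on Proposition \ref{PROP_CharIsogenyClosed} and on the topological group structure rather than on scheme-theoretic arguments.
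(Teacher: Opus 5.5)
Your proposal is correct and follows the paper's proof: the projection formula gives $H^*_?(G', (q_*K)_{\chi'}) = H^*_?(G, K_{q^*(\chi')})$ pointwise, and closures are then handled using that $q^*$ is surjective (Proposition \ref{PROP_CharDescent}) and closed (Proposition \ref{PROP_CharIsogenyClosed}). Your saturation argument for the inclusion $(q^*)^{-1}(\text{FSupp}_?(K)) \subseteq \text{FSupp}(q_*(K))$ fills in a step the paper's one-line proof leaves implicit. That argument uses that the fibres of $q^*$ are cosets of the finite group $\widehat{\Gamma}(\overline{k})$, which acts by homeomorphisms. The step is genuinely needed, because for a general closed continuous surjection the preimage of a closure can be strictly larger than the closure of the preimage.
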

	\begin{proof} For all $\chi \in \cvar{G'}$, we have
		\[
		H^*_?(G, q_*(K)_\chi) = H^*_?(G, K_{q^*(\chi)})
		\]
		by Proposition \ref{PROP_ProjectionFormula1}.The claim follows because $q^*$ is closed and surjective, i.e. Proposition  Proposition \ref{PROP_CharDescent} and Proposition \ref{PROP_CharIsogenyClosed}.		
	\end{proof}
	\subsubsection{Support lemma} We begin by proving the support lemma. For tori, the following theorem may also be found in the proof of \cite[Thm.~6.1.1]{GabberLoeserTore}
	\begin{theorem}\label{THM_SupportLemma}
		Let $K \in \derCat{c}{G}{\Qbarl}$. We have
		\[
		\text{FSupp}_? = \bigcup_{n \in \BZ} \text{FSupp}_?(\pervCoh{n}{K}).
		\]
		
		Let $M \in \text{Perv}(G)$ be a perverse sheaf with a fitration $$0 = M_0 \subseteq M_1 \subseteq \ldots \subseteq M_{n - 1} \subseteq M_n = M.$$ Then we have
		\[
		\text{FSupp}_?(M) = \bigcup_{i = 1}^n \text{FSupp}_?(M_i/M_{i - 1}).
		\]
	\end{theorem}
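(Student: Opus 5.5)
We first reduce to the case $G = S\times U$. By Theorem \ref{THM_AppendixStructureTheorem} there is an isogeny $q\colon G\to S\times U$. Pushforward $q_* = q_!$ is $t$-exact for the perverse $t$-structure because $q$ is finite. Hence $q_*$ commutes with $\pervCoh{n}{-}$ and carries filtrations with graded pieces $M_i/M_{i-1}$ to filtrations with graded pieces $q_*(M_i/M_{i-1})$. Lemma \ref{LEM_fsuppisogeny} gives $\text{FSupp}_?(K) = q^*(\text{FSupp}_?(q_*K))$, and $q^*$ commutes with finite unions. So both statements for $G$ follow from the statements on $S\times U$.

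The second statement implies the first. The perverse cohomology sheaves of $K$ furnish a "filtration in the derived category" with graded pieces $\pervCoh{n}{K}[-n]$. The same argument given below for perverse filtrations applies to the distinguished triangles $\tau_{\le n-1}K\to \tau_{\le n}K\to \pervCoh{n}{K}[-n]$. So I concentrate on the filtration statement, and by induction on $n$ it reduces to a short exact sequence $0\to M'\to M\to M''\to 0$.

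The inclusion $\text{FSupp}_?(M)\subseteq \text{FSupp}_?(M')\cup\text{FSupp}_?(M'')$ is immediate from the long exact sequence in $H^*_?(G,-_\chi)$: if both outer terms vanish at $\chi$, so does the middle one. Taking closures preserves this.

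The reverse inclusion is the main obstacle, since cancellation can occur in the long exact sequence. My plan is to pass to geometric composition factors. After base change to $\overline{k}$, which does not change $\cvar{G}$ or cohomology, write $M'$, $M''$ and $M$ with Jordan–Hölder factors $M_1,\dots,M_r$, which are geometrically irreducible and arithmetic. It then suffices to show
\[
\text{FSupp}_?(M)\supseteq \text{FSupp}_?(M_j)
\]
for each factor $M_j$, and in particular that $\text{FSupp}_?(M)=\bigcup_j\text{FSupp}_?(M_j)$. By Theorem \ref{THM_ClassificationFull}, each $\text{FSupp}_?(M_j)=\Delta_j\times Z_j$, with $\Delta_j$ a tac and $Z_j$ irreducible closed. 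By Corollary \ref{COR_ClassificationWeaklyUnramifiedChars}, on a dense open part of $\Delta_j\times Z_j$ the cohomology $H^*_?(G,(M_j)_\chi)$ is nonzero and lives in a precise degree range determined by the codimensions $(2d_a+d_t, d_u)$ of the support.

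Fix a maximal irreducible piece $\Delta_j\times Z_j$ among those supports, with its specific degree range. I would choose $\chi$ generic in it, outside all the other supports $\Delta_i\times Z_i$ that do not contain it. Note that $\Delta_i\times Z_i\supseteq \Delta_j\times Z_j$ with $i\neq j$ forces either equality or a strictly smaller codimension on some factor. For such $\chi$, the contributions to $H^*_?(G,M_\chi)$ come only from factors whose support contains $\Delta_j\times Z_j$.

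Among these, the factors with exactly the same support contribute in identical degree ranges, and larger supports contribute in different degrees. I then argue that there is an extremal degree, for instance the lowest degree $-d_a-d_u$ for $H_c$ in the corollary, in which only the factors with support exactly $\Delta_j\times Z_j$ contribute, and where the spectral sequence of the filtration cannot cancel them. A weight argument should help here if purity is available; otherwise I would fall back on an Euler characteristic computation relative to $\pi$, as in the proof of Lemma \ref{LEM_ClassificationFourierSupp}, using the projection formula Proposition \ref{PROP_ProjectionFormula1} to push forward along $S\to S'$ and reduce to the factor $S'\times U$. On $S'\times U$ the relevant sheaves have nonzero generic Euler characteristic, and Euler characteristics are additive, so the nonvanishing cannot cancel.

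I expect this non-cancellation step to be the crux. In the pushed-forward picture on $S'\times U$, one must check that the Euler characteristics of the same-support factors add with a common sign rather than cancel. My intended justification is that perverse sheaves with generic nonvanishing Euler characteristic on a semiabelian variety have Euler characteristic of a fixed sign, namely positive by the vanishing theorem, and the same holds over the generic point of $Z_j$. Granting that, additivity of Euler characteristics gives nonvanishing at generic $\chi$, and taking closures yields the reverse inclusion.
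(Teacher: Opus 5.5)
Your reduction to $S\times U$, the easy inclusion via the long exact sequence, and the set-up of the reverse inclusion all match the paper. That set-up consists of geometric composition factors, supports $\Delta_j\times Z_j$ from Theorem \ref{THM_ClassificationFull}, a maximal piece, a generic $\chi$, and the uniform degree range from Corollary \ref{COR_ClassificationWeaklyUnramifiedChars}. The gap is the step you call the crux: you never prove non-cancellation, and the Euler characteristic mechanism you fall back on cannot work.

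\begin{itemize}
\item By Proposition \ref{PROP_eulerpoincaresav}, the Euler characteristic of $H^*_c(S,M_\chi)$ does not depend on $\chi$. So it vanishes identically for every perverse sheaf whose Fourier support is a proper tac, which is exactly the case the theorem is about.
\item Pushing forward does not rescue this. Suppose $\text{FSupp}(M_i)=\chi_j\cdot\pi^*\cvar{S'}$ with $S''=\ker(\pi)$ of dimension $d>0$. Then Proposition \ref{PROP_ProjectionFormula1} gives $\pi_!\big((M_i)_{\chi_j^{-1}}\big)=N_i\otimes\pi_!(\Qbarl)[d]$. Its class in the $K$-group is $\pm e_c(S'')\,[N_i]$, where $e_c(S'')=\sum_n(-1)^n\dim H^n_c(S'',\Qbarl)=0$. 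The graded pieces of the pushed-forward filtration are these complexes, not the $N_i$, so additivity only yields $0=0$, and the positivity of $\chi(S',N_i)$ never enters.
\item For the statement about complexes it is worse, because the pieces $\pervCoh{n}{K}[-n]$ enter with sign $(-1)^n$. For example, $K=M\oplus M[1]$ has Euler characteristic $0$ at every $\chi$, yet $\text{FSupp}_?(K)=\text{FSupp}_?(M)$. So your claim that the perverse-filtration argument transfers verbatim to perverse truncations fails for this mechanism.
\item A weight argument is unavailable, since no purity is assumed.
\end{itemize}

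The missing step is elementary. Your extremal-degree remark was already the right idea; it only needs to be run through the long exact sequences rather than supplemented.

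Take generic $\chi$ in an irreducible component $Z$ of the union. Each factor $M_{n,i}/M_{n,i-1}$ either has vanishing Fourier coefficient at $\chi$, or has $Z$ as an irreducible component of its support. In the second case it has the same tac and the same degree range, by Proposition \ref{PROP_irredcomptequal} and Lemma \ref{LEM_CharactersFindSubgroups}, with a nonvanishing top degree $d_?$ depending only on $Z$.

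The paper then argues as follows.
\begin{itemize}
\item Take the lexicographically last factor $M_{m,i}/M_{m,i-1}$ whose support contains $Z$, and discard everything after it, since it vanishes at $\chi$.
\item Use the triangle $K''\to K'\to (M_{m,i}/M_{m,i-1})[-m]$. Every factor of $K''$ surviving at $\chi$ is some $(M_{n,j}/M_{n,j-1})[-n]$ with $n\le m$, so it has cohomology in degrees $\le d_?+n\le d_?+m$.
\item Hence $H^{d_?+m+1}_?(G,K''_\chi)=0$, so $H^{d_?+m}_?(G,K'_\chi)\to H^{d_?}_?(G,(M_{m,i}/M_{m,i-1})_\chi)\neq 0$ is surjective.
\item This gives $H^{d_?+m}_?(G,K_\chi)\neq 0$.
\end{itemize}
Dually, the lowest degree of the first such factor injects. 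This treats perverse filtrations and perverse truncations uniformly, with no Euler characteristics or weights.
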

	\begin{proof} There exists an isogeny $q\colon G \rightarrow S\times U$ by Theorem \ref{THM_AppendixStructureTheorem}. By Lemma \ref{LEM_fsuppisogeny} and Proposition \ref{PROP_CharDescent}, we can assume $G = S\times U$. 
		Note that the Fourier support is unchanged by base change to the algebraic closure of $k$, hence we can assume $k$ is algebraically closed. Let $K \in \derCat{c}{G}{\Qbarl}$ be a complex. For each $n \in \BZ$, we put $M_n := \pervCoh{n}{M}$ and we write down a decomposition series
		\[
		0 = M_{n, 0} \subset M_{n, 1} \subset\ldots \subset M_{n, i_n - 1}\subset M_{n, i_n} = M_n.
		\] 
		By Jordan-Hölder, it is sufficient if we prove
		\begin{equation*}
			\text{FSupp}_?(K) = \bigcup_{n \in \BZ}\bigcup_{i = 1}^{i_{n}} \text{FSupp}_?(M_{n, i}/M_{n, i - 1}).
		\end{equation*}
		
		We can prove statements by induction on the number of factors $M_{n, i}$. It is set up as follows. Let $n \in \BZ$ be a maximal integer such that $\pervCoh{n}{K} \neq 0$. Then we have a distinguished triangle
		\[
		\pvTRC{< n} K \rightarrow K \rightarrow \pervCoh{n}{K}[-n] \xrightarrow{+}.
		\]
		We have a surjective map
		\[
		\pervCoh{n}{K} \rightarrow M_{n, i_n}/M_{n,i_n -1}.
		\]
		We can obtain a map $K \rightarrow M_{n, i}$ which induces this surjection of perverse sheaves in the $n$'th perverse degree. If we shift the triangle associated to the cone of this morphism, then we find a complex $K' \in \derCat{c}{G}{\Qbarl}$ which fits into a triangle
		\[
		K' \rightarrow K \rightarrow M_{n, i_n}/M_{n, i_n - 1}.
		\]
		The complex $K'$ has "one decomposition factor" less than $K$, so we could now proceed with induction.
		
		We apply this to inclusion $\subseteq$ in Equation (1). Let $\chi \in \mathscr{C}(G)$ satisfy
		\[
		H^*(G, (M_{n, i})_\chi) = 0
		\]
		for all factors $M_{n, i}$. If there is only one factor $M_{n, i}$, then $K = M_{n, i}[-n]$. Thus the support of $K$ is contained in the support of $M_{n, i}$. Suppose there is more than one factor. We have a distinguished triangle
		\[
		H^*_?(G, K'_\chi) \rightarrow H^*_?(G, K_\chi) \rightarrow H^*_?\big(G, (M_{n, i_n}/M_{n, i_n - 1})_\chi\big) \xrightarrow{+}.
		\]
		We have
		\[
		H^*_?(G, K'_\chi) = H^*_?(G, (M_{n, i})_\chi) = 0
		\]
		by our induction hypothesis, so 
		\[
		H^*_?(G, K_\chi) = 0.
		\]
		
		Note that the set on the right of the above equation is closed. Let 
		\[
		Z \subset \bigcup_{n \in \BZ}\bigcup_{i = 1}^{i_n} \text{FSupp}_?(M_{n, i}/M_{n, i - 1})
		\]
		be an irreducible component. We begin by recording the consequences of Corollary \ref{COR_ClassificationWeaklyUnramifiedChars}. Note that $Z$ is of the form $\Delta_S\times Z'$ for an irreducible closed subset $\Delta_S \subseteq\cvar{S}$ and an irreducible closed subset $Z' \subseteq \cvar{U}$. The subset $\Delta_S$ is an irreducible component of a tac. Let $(n, i)$ be an index with $i \geq 1$ and $M_{n, i}/M_{n, i - 1}\neq 0$. We distinguish two cases.
		
		Suppose $Z$ is not an irreducible component of the Fourier support of $M_{n, i}/M_{n, i - 1}$. The intersection of $Z$ with the Fourier support of $M_{n, i}/M_{n, i - 1}$ is a proper closed subset in $Z$, because no irreducible component of the Fourier support of $M_{n, i}/M_{n, i - 1}$ can contain $Z$ (because $Z$ is an irreducible component). Thus we obtain
		\[
		H^*_?(G,(M_{n, i}/M_{n, i - 1})_\chi) = 0
		\]
		for a generic character $\chi \in Z$, i.e. all characters in a dense open subset.
		
		Suppose $Z$ is an irreducible component of the Fourier support. The tac $\Delta_S$ is uniquely characterized by $Z$ by Proposition \ref{PROP_irredcomptequal}. Thus, Corollary \ref{COR_ClassificationWeaklyUnramifiedChars} shows that there exists an integer $d_? \geq 0$, which only depends on $Z$ by Lemma \ref{LEM_CharactersFindSubgroups}, with
		\begin{align*}
			H^{d_?}_?(G, (M_{n, i}/M_{n, i - 1})_\chi) &\neq 0\\
			H^{d_? + 1}_?(G, (M_{n, i}/M_{n, i - 1})_\chi) &= 0
	\end{align*}
	for a generic character $\chi \in Z$. Note that this is true for a generic character by Proposition \ref{PROP_irredcompinctacs}, because this proposition says that no irreducible component of the thin subset arising from Corollary \ref{COR_ClassificationWeaklyUnramifiedChars} can be an irreducible component of $\Delta_S$.

We return to the argument. Note that $Z$ is an irreducible component of a support $\text{Supp}(M_{n, i}/M_{n, i - 1})$ of one of the perverse sheaves $M_{n, i}/M_{n, i - 1}$. Let $(m, i)$ be the maximal index in the lexicographic ordering such that $Z \subset \text{Supp}(M_{m, i}/M_{m, i - 1})$. We have a distinguished triangle
		\[
		\pvTRC{\leq m} K \rightarrow K \rightarrow \pvTRC{> m}K \xrightarrow{+}.
		\]
		By the inclusion proven before and the remark on vanishing above, we have
		\[
		H^*_?(G, (\pvTRC{> m}K)_\chi) = 0
		\]
		for a generic character $\chi \in Z$. Hence
		\[
		H^*_?(G, (\pvTRC{\leq m}K)_\chi) = H^*_?(G, K_\chi)
		\]
		for a  generic character $\chi \in Z$. We have a distinguished triangle
		\[
		K' \rightarrow \pvTRC{\leq m} K \rightarrow M_{m, i_m}/M_{m, i}[-m] \xrightarrow[]{+}.
		\]
		The inclusion already proven implies, just as before,
		\[
		H^*_?(G, K'_\chi) = H^*_?(G, K_\chi)
		\]
		for a generic character $\chi \in Z$. We have a distinguished triangle
		\[
		K'' \rightarrow K' \rightarrow M_{m, i}/M_{m, i - 1}[-m]
		\]
		By the consequences drawn from Corollary \ref{COR_ClassificationWeaklyUnramifiedChars} above, there is an integer $d_? \geq 0$ such that
		\begin{align*}
			&H^{d_? + 1}_?(G, K''_\chi) = 0\\
			&H^{d_?}_?(G, (M_{m, i}/M_{m, i - 1})_\chi)  \neq 0.
		\end{align*}
		for a generic character $\chi \in Z$. The above triangle implies
		\[
		H^{d_?}_?(G, K'_\chi) \neq 0.
		\]
		for a generic character $\chi \in Z$. We have 
		\[
		H^{d_?}_?(G, K_\chi) = H^{d_?}_?(G, (\pvTRC{\leq m}K)_\chi) =  H^{d_?}_?(G, K'_\chi) \neq 0
		\]
		for a generic character $\chi \in Z$. Thus
		\[
		Z \subseteq \text{FSupp}_?(K).
		\]
	\end{proof}
	This has the following important corollary: 
	\begin{theorem}\label{THM_EqualitySupport}
		Let $K \in \derCat{c}{G}{\Qbarl}$. The Fourier supports 
		\[
		\text{FSupp}_!(K) = \text{FSupp}_*(K)
		\]
		agree. 
	\end{theorem}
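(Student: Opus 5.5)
The plan is to reduce the statement to perverse sheaves via Theorem \ref{THM_SupportLemma}, and then to geometrically irreducible ones, where the classification already gives the equality. By Lemma \ref{LEM_fsuppisogeny}, together with Theorem \ref{THM_AppendixStructureTheorem} and the fact that $q^*$ is a closed surjection (Propositions \ref{PROP_CharDescent} and \ref{PROP_CharIsogenyClosed}), both supports of $K$ are pulled back from the corresponding supports of $q_*K$ on a product. We may therefore assume $G = S\times U$. The Fourier support does not change under base change to $\overline{k}$, so we may also pass to $\overline{k}$.

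Next, Theorem \ref{THM_SupportLemma} applies to either choice of support. It expresses both $\text{FSupp}_!(K)$ and $\text{FSupp}_*(K)$ as the union, over all $n$, of the corresponding supports of the composition factors of $\pervCoh{n}{K}$. The two unions run over the same finite list of factors. It therefore suffices to prove $\text{FSupp}_!(M) = \text{FSupp}_*(M)$ for an irreducible perverse sheaf $M$ on $G_{\overline{k}}$.

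By our quasi-arithmeticity convention, such an $M$ descends to a geometrically irreducible perverse sheaf over some finite extension $k'/k$. By Remark \ref{REM_InvBaseChangeFundamGrp}, replacing $k$ by $k'$ changes neither the character group nor the supports. For a geometrically irreducible $M$ on $S\times U$, Theorem \ref{THM_ClassificationFull}(1) states exactly that $\text{FSupp}_!(M) = \text{FSupp}_*(M) = \Delta_S\times Z$, and this finishes the argument.

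The main point needing care is that Theorem \ref{THM_ClassificationFull}(1) really covers the $*$-support. Its proof computes the $!$-side through $\pi_{\widehat{U}!}$ and the stalks from Lemma \ref{LEM_fmstalks}; the $*$-side goes through costalks, and there the argument is less explicit. If that step is in doubt, I would argue with duality as in the proof of Corollary \ref{COR_ClassificationWeaklyUnramifiedChars}. Lemma \ref{LEM_fmdual} and Poincaré duality give $\text{FSupp}_*(M) = \text{inv}^*\text{FSupp}_!(\text{inv}^*D(M))$, where $\text{inv}^*$ on the character side means $\chi\mapsto\chi^{-1}$.

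I would then check that $\text{inv}^*D(M)$ has $!$-support equal to $\text{inv}^*\text{FSupp}_!(M)$. To do this, dualize the decomposition $M = \SL_\chi\otimes\pi^*N[d]$ from Theorem \ref{THM_ClassificationFull}(2); this gives the analogous decomposition of $\text{inv}^*D(M)$ with $\chi$ replaced by $\chi^{-1}$. The tac $\Delta_S$ is then carried to its inverse. The set $Z$ is carried to $-Z$, using Theorem \ref{THM_FTBasic}(2). Applying the inversion once more yields $\text{FSupp}_*(M) = \text{FSupp}_!(M)$.
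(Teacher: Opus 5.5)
Your main argument is the paper's proof: reduce to $G=S\times U$ over $\overline{k}$, apply Theorem \ref{THM_SupportLemma} to both supports to pass to the composition factors of the $\pervCoh{n}{K}$, and conclude with Theorem \ref{THM_ClassificationFull}(1) for each geometrically irreducible factor. The duality fallback is unnecessary, and as written both of its inversions are spurious, though the two errors cancel: since $H^*(G,M_\nu)^\vee\cong H^*_c(G,D(M)\otimes\SL_{\nu^{-1}})\cong H^*_c(G,(\text{inv}^*D(M))_\nu)$, one gets $\text{FSupp}_*(M)=\text{FSupp}_!(\text{inv}^*D(M))$ with no extra inversion (exactly as in the proof of Corollary \ref{COR_ClassificationWeaklyUnramifiedChars}), while $\text{inv}^*D(\SL_\chi\otimes\pi^*N[d])\cong\SL_\chi\otimes\pi^*(\text{inv}^*D(N))[d](d)$ and $D(\text{FT}(K))\cong\text{FT}(\text{inv}^*D(K))(d)$ show that $\text{inv}^*D(M)$ has the same tac and the same $Z$ as $M$ (Theorem \ref{THM_FTBasic}(2) as printed has one $\text{inv}^*$ too many, which is where your $-Z$ came from).
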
 
	\begin{proof}
		Note that the Fourier support is unchanged by base change to the algebraic closure of $k$, hence we can assume $k$ is algebraically closed. We can assume $G = S\times U$ by the same argument as in Theorem \ref{THM_SupportLemma}. We filter the complex $K$ as in the proof of Theorem \ref{THM_SupportLemma}. Theorem \ref{THM_SupportLemma} says
		\[
		\text{FSupp}_?(K) = \bigcup_{n \in \BZ}\bigcup_{i = 1}^{i_n} \text{FSupp}_?(M_{n, i}/M_{n, i - 1}).
		\]
		By Theorem \ref{THM_ClassificationFull}, we have
		\[
		\text{FSupp}_!(M_{n, i}/M_{n, i - 1}) = \text{FSupp}_*(M_{n, i}/M_{n, i - 1}).
		\]
		for all indices $(n, i)$. Thus
		\[
		\text{FSupp}_!(K) = \text{FSupp}_*(K).
		\]
	\end{proof}
	\begin{corollary}\label{COR_SupportDuality}
		Let $K \in \derCat{c}{G}{}$. We have
		\[
		\text{FSupp}(K)= \text{FSupp}(\text{inv}^*D(K)).
		\]
	\end{corollary}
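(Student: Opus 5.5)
The argument combines Poincaré–Verdier duality with Theorem \ref{THM_EqualitySupport}. Put $K' := \text{inv}^*D(K)$. Since Theorem \ref{THM_EqualitySupport} gives $\text{FSupp}_!(L) = \text{FSupp}_*(L)$ for every $L \in \derCat{c}{G}{}$, it suffices to prove
\[
\text{FSupp}_*(K') = \text{FSupp}_!(K),
\]
or the same identity with the roles of $!$ and $*$ exchanged. Moreover, the closure operation is identical on both sides, so it is enough to match the sets of characters with non-vanishing cohomology before taking closures.

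\textbf{Step 1: duality pointwise.} Fix $\chi \in \cvar{G}$. By Verdier duality on $G$ (relative to $\text{Spec}(k)$) we have
\[
H^*(G, D(K_\chi)) \cong H^*_c(G, K_\chi)^\vee.
\]
As in the proof of Lemma \ref{LEM_fmdual}, the dual of $\SL_\chi$ is $\SL_{\chi^{-1}}$. Since $\SL_\chi$ is lisse of rank one, this gives
\[
D(K_\chi) = D(K)_{\chi^{-1}},
\]
and hence $H^*(G, D(K)_{\chi^{-1}}) \neq 0$ if and only if $H^*_c(G, K_\chi) \neq 0$.

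\textbf{Step 2: the inversion.} The inversion $\text{inv}$ is an automorphism of $G$. Using $\text{inv}^*\SL_\chi = \SL_{\chi^{-1}}$ from Proposition \ref{PROP_CharFunc}, we get
\[
H^*(G, (\text{inv}^*D(K))_\chi) = H^*(G, \text{inv}^*(D(K)_{\chi^{-1}})) = H^*(G, D(K)_{\chi^{-1}}).
\]
Combining this with Step 1, $H^*(G, K'_\chi) \neq 0$ exactly when $H^*_c(G, K_\chi) \neq 0$, for every $\chi$. Taking closures in the standard topology yields $\text{FSupp}_*(K') = \text{FSupp}_!(K)$.

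\textbf{Step 3: conclusion.} Applying Theorem \ref{THM_EqualitySupport} to both $K$ and $K'$ gives $\text{FSupp}(K) = \text{FSupp}(K')$.

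There is no real obstacle in this argument. The only point needing care is the twisting convention: one must check that, after composing with $\text{inv}^*$, the same character $\chi$ (and not $\chi^{-1}$) appears on both sides. This is exactly what the identity $\text{inv}^*\SL_\chi = \SL_{\chi^{-1}}$ provides, since pulling back along an automorphism does not change cohomology.
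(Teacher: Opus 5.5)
Your proof is correct and follows the same route as the paper, whose proof is a one-line appeal to Poincar\'e duality and Theorem \ref{THM_EqualitySupport}. You have made explicit the pointwise identity $H^*(G,(\text{inv}^*D(K))_\chi)\cong H^*_c(G,K_\chi)^\vee$, obtained from Verdier duality together with $D(\SL_\chi)=\SL_{\chi^{-1}}$ and $\text{inv}^*\SL_\chi=\SL_{\chi^{-1}}$; this identity gives $\text{FSupp}_*(\text{inv}^*D(K))=\text{FSupp}_!(K)$, and you then apply the equality of the two supports.
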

	\begin{proof}
		This follows from Poincare duality and Theorem \ref{THM_EqualitySupport}.
	\end{proof}
	\subsection{Propagation}  We go on to prove the propagation theorems over tori. The idea for these theorems is based on \cite[Thm.~1.2]{liumaximwangpropagation}. Our first relative propagation theorem is the following.
	\begin{theorem}\label{THM_Propagation1}
		Let $M \in \Perv{}{T\times X}$ and $\Delta \subseteq \cvar{T}$ a closed subset. Suppose
		\[
		\pervCoh{0}{\pi_{X!}(M_\chi)} = 0 
		\]
		for all $\chi \notin \Delta$. Each perverse subquotient $M'$ of $M_{\overline{k}}$ satisfies
		\[
		\pi_{X?}(M'_\chi) = 0
		\]
		for all $\chi \notin \Delta$. 
		
		Dually, suppose
		\[
		\pervCoh{0}{\pi_{X*}(M_\chi)} = 0 
		\]
		for all $\chi \notin \Delta$. Each perverse subquotient $M'$ of $M_{\overline{k}}$ satisfies
		\[
		\pi_{X?}(M'_\chi) = 0
		\]
		for all $\chi \notin \Delta$. 
	\end{theorem}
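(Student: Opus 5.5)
The plan is to reduce to geometrically irreducible subquotients and then combine three earlier inputs: the relative vanishing theorem for tori (Theorem \ref{THM_VnshTorus}), Artin vanishing, and the relative classification of negligible sheaves (Theorem \ref{THM_ClassCharactersRelative}). By duality (Lemma \ref{LEM_fmdual} together with Poincaré duality, which interchanges $\pi_{X!}$ and $\pi_{X*}$ and sends $\chi$ to $\chi^{-1}$; note $\Delta$ can be replaced by $\Delta^{-1}$, and closedness is preserved since inversion is a homeomorphism), it suffices to treat the case $\pervCoh{0}{\pi_{X!}(M_\chi)}=0$.

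First, since $T\times X\to X$ is affine, Artin's vanishing theorem shows that $\pi_{X!}$ is left $t$-exact up to the relevant shift: $\pi_{X!}(N_\chi)$ has perverse amplitude $\geq 0$ for every perverse $N$, while $\pi_{X*}(N_\chi)$ has amplitude $\leq 0$. Hence for a short exact sequence $0\to M_1\to M\to M_2\to 0$ the long exact sequence gives an injection $\pervCoh{0}{\pi_{X!}(M_{1,\chi})}\hookrightarrow \pervCoh{0}{\pi_{X!}(M_\chi)}$. So the hypothesis passes to perverse subobjects. To reach arbitrary subquotients, I will argue by induction on the length of $M_{\overline{k}}$, using a simple subobject $M_1$: show $\pi_{X?}(M_{1,\chi})=0$ for $\chi\notin\Delta$, then deduce $\pervCoh{0}{\pi_{X!}(M_{2,\chi})}=0$ from the long exact sequence (now $\pi_{X!}(M_{1,\chi})=0$, so $\pi_{X!}(M_\chi)\cong\pi_{X!}(M_{2,\chi})$), and recurse on $M_2$. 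This reduces everything to the claim: if $M$ is irreducible (over $\overline{k}$, quasi-arithmetic, hence descending to a geometrically irreducible sheaf over a finite extension, which is harmless since $\Delta$ is unaffected) and $\pervCoh{0}{\pi_{X!}(M_\chi)}=0$ for $\chi\notin\Delta$, then $\pi_{X?}(M_\chi)=0$ for $\chi\notin\Delta$.

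For irreducible $M$: by Theorem \ref{THM_VnshTorus} there is a proper thin $\Delta_0$ with $\pi_{X!}(M_\chi)\cong\pi_{X*}(M_\chi)$ perverse for $\chi\notin\Delta_0$. Since $\Delta$ is closed and the complement of $\Delta\cup\Delta_0$ is nonempty, it is useful to work as follows: for $\chi\notin\Delta\cup\Delta_0$ the complex $\pi_{X!}(M_\chi)$ is perverse with vanishing $\pervCoh{0}{}$, hence zero. Thus $[\pi_{X!}(M_\nu)]=0$ for some $\nu$, and Theorem \ref{THM_ClassCharactersRelative} applies: $M=\pi^*N\otimes\SL_{\chi_0}[d]$ for a quotient $\pi\colon T\to T'$ with connected fibres of dimension $d>0$, and $\pi_{X?}(M_\chi)\neq 0$ exactly on the tac $\Lambda:=\overline{\chi}_0\cdot\pi^*\cvar{T'}$. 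It remains to show $\Lambda\subseteq\Delta$. On $\Lambda$, the projection formula (Proposition \ref{PROP_ProjectionFormula1}) gives $\pi_{X!}(M_{\chi})=\pi'_{X!}(N_{\chi'})\otimes H^*_c(\ker\pi,\Qbarl)[d]$, and $H^*_c$ of a $d$-dimensional torus lives in degrees $[d,2d]$, so after the shift $[d]$ the top perverse piece is $\pervCoh{0}{}$ of $\pi'_{X!}(N_{\chi'})\otimes H^{2d}_c$ up to lower terms; more precisely, since $\pi'_{X!}(N_{\chi'})$ has amplitude $\geq 0$ by Artin vanishing, $\pervCoh{0}{\pi_{X!}(M_\chi)}=\pervCoh{0}{\pi'_{X!}(N_{\chi'})}\otimes H^d_c(\ker\pi)$ (the degree-$d$ part, which is $1$-dimensional). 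Applying Theorem \ref{THM_VnshTorus} to $N$ on $T'\times X$, $\pi'_{X!}(N_{\chi'})$ is perverse and nonzero for $\chi'$ outside a proper thin subset of $\cvar{T'}$ (nonzero by the support statement of the classification applied to $N$, whose Fourier support is all of $\cvar{T'}$). Hence $\pervCoh{0}{\pi_{X!}(M_\chi)}\neq0$ on a dense subset of $\Lambda$ (complement of a proper thin subset, dense by Proposition \ref{PROP_irredcompinctacs}), so this dense subset lies in $\Delta$, and closedness of $\Delta$ gives $\Lambda\subseteq\Delta$.

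The main obstacle I expect is the precise degree bookkeeping in the projection formula step: identifying $\pervCoh{0}{\pi_{X!}(M_\chi)}$ on $\Lambda$ with a nonzero perverse sheaf requires care with the Künneth decomposition over $\ker\pi$ (whose $H^*_c$ sits in degrees $[d,2d]$ as a tensor factor that is constant), and with ensuring $N$ itself is not negligible; I would handle the latter by first iterating the classification so that $N$ has nonzero generic Euler characteristic, as in the proof of Theorem \ref{THM_Classification}.
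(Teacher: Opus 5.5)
Your proposal is correct and uses the same ingredients as the paper's proof. You apply Theorem \ref{THM_VnshTorus} at a character outside $\Delta$ to force each constituent to be negligible, Theorem \ref{THM_ClassCharactersRelative} to identify its support with a tac, and the projection formula with Artin vanishing to see that the zeroth perverse cohomology is nonzero on a dense part of that tac. The only difference is bookkeeping: you peel off a simple subobject and induct on length, whereas the paper fixes a composition series and derives a contradiction from the minimal index $i$ for which a component of the tac $\Delta_i$ escapes $\Delta$.

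Your closing worry about $N$ is unnecessary. The ``if and only if'' support statement of Theorem \ref{THM_ClassCharactersRelative}, combined with the projection formula, already gives $\pi'_{X!}(N_{\chi'})\neq 0$ for every $\chi'\in\cvar{T'}$, so no iteration of the classification is needed.
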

	\begin{proof}
		The proofs of the two statements are dual to one another. We only give the proof for the first theorem. Note that the statement is unchanged by base change to the algebraic closure of $k$, hence we can assume $k$ is algebraically closed.
		
		We have a decomposition series
		\[
		0 = M_0 \subset M_1 \subset \ldots \subset M_n = M_{k'}
		\] 
		with $M_{i}/M_{i - 1}$ irreducible. By Theorem \ref{THM_VnshTorus}, there exists $\chi \notin \Delta$ such that $\pi_{X!}((M_i)_\chi)$ is perverse for $1 \leq i \leq n$. Hence $\pi_{X!}((M_i)_\chi) = 0$ for all $1 \leq i \leq n$.  Let $k$ be a finite field such that $M_i/M_{i - 1}$ descends to $k$ for all $1 \leq i \leq n$.
		
		By Theorem  \ref{THM_ClassCharactersRelative}, there are surjective morphisms $\pi_i\colon T \rightarrow T_i$ with connected fibers of dimension $d_i$, characters $\chi_i \in \widehat{T}(k)$, and perverse sheaves $N_i \in \Perv{}{S_i\times X}$ such that
		\[
		M_i/M_{i - 1} = \pi_i^*(N_i)\otimes\SL_{\chi_i}[d_i].
		\]
		We put $\Delta_i := \overline{\chi}_i\cdot \pi_i^*(\cvar{S_i})$. Moreover, we have 
		\[
		\pi_{X!}(M_\chi) \neq 0
		\]
		if and only if $\chi \in \Delta_i$. By Theorem \ref{THM_VanSemiAb} applied to $N_i$, there is a thin proper subset $\Delta'_i \subset \Delta_i$ such that $\pi_{X!}((N_i)_\chi)$ is perverse for all $1 \leq i \leq n$ and $\chi \in \cvar{T_i}$ with $\overline{\chi_i}\pi^*(\chi) \notin\Delta_i'$. In particular, the projection formula in Proposition \ref{PROP_ProjectionFormula1} and the cohomology computation in \ref{PROP_SemiAbCohomology} implies for all $\chi \in \Delta_i$ with $\chi \notin \Delta_i'$
		\[
		\pervCoh{n}{\pi_{X!}((M_i)_{\chi})} = \begin{cases}
			\neq 0 & \text{ if } 0\leq n \leq  d \\
			0 & \text{ else} .
		\end{cases}	
		\]
		Consider
		\[
		\Delta_0 := \bigcup_{i = 1}^n \Delta_i.
		\] 
		Suppose $\Delta_0 \not\subseteq \Delta$. There is an irreducible component of $\Delta_0$ which is not contained in $\Delta$. Let $\Delta'$ be such an irreducible component. Let $1 \leq i \leq n$ be the smallest integer such that $\Delta'$ is an irreducible component of $\Delta_i$. By Artin's vanishing theorem, we have an injective map
		\[
		\pervCoh{0}{\pi_{X!}((M_i)_\chi)} \hookrightarrow \pervCoh{0}{\pi_{X!}(M_\chi)}. 
		\]
		Hence
		\[
		\pervCoh{0}{\pi_{X!}((M_i)_\chi)} = 0
		\]
		for all $\chi \notin \Delta$. Suppose $\chi \in \Delta'$, $\chi \notin \Delta$, $\chi \notin \Delta_i'$, and $\chi \notin \Delta_j$ for any $1 \leq j \leq i - 1$. The evaluation of supports implies
		\[
		\pi_{X!}((M_i)_\chi) = 	\pi_{X!}((M_i/M_{i - 1})_\chi)
		\]
		because such a character does not lie in the support of $M_{j}/M_{j - 1}$ for $1 \leq j \leq i - 1$. Now we reach a contradiction because such a character exists: the character lies in $\Delta_i$ but not in $\Delta_i'$, hence it satisfies the above computation of the cohomology. In particular, the zeroth perverse cohomology sheaf does not vanish. 
	\end{proof}
	We can now utilize a spectral sequence and an induction to obtain another propagation theorem. 
	\begin{theorem}\label{THM_Propagation2}
		Let $M \in \Perv{}{T\times X}$ be a perverse sheaf. Suppose there is a closed subset $\Delta \subseteq \mathscr{C}(T)$ such that
		\[
		\pervCoh{1}{\pi_{X!}(M_\chi)} = 0 
		\]
		for all $\chi \notin \Delta$. Then 
		\[
		\pervCoh{n}{\pi_{X!}(M_\chi)} = 0 
		\]
		for all $n \geq 1$ and $\chi \notin \Delta$. 
		
		Dually, suppose 
		\[
		\pervCoh{-1}{\pi_{X*}(M_\chi)} = 0 
		\]
		for all $\chi \notin \Delta$. Then 
		\[
		\pervCoh{-n}{\pi_{X*}(M_\chi)} = 0 
		\]
		for all $n \geq 1$ and $\chi \notin \Delta$. 
	\end{theorem}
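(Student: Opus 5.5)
Since the introduction announces this result as proved ``by induction on the dimension of $T$ and the first propagation theorem'', I would follow that plan. I treat the $!$-statement; the $*$-statement follows by applying it to $\text{inv}^*D(M)$, using Lemma \ref{LEM_fmdual}-type duality $D\pi_{X*}(M_\chi) = \pi_{X!}((DM)_{\chi^{-1}})$ and the fact that $\chi\mapsto\chi^{-1}$ is a homeomorphism. After base change to $\overline{k}$, I may assume $k$ algebraically closed.

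By Artin's vanishing theorem (Theorem \ref{THM_VnshTorus} context: $\pi_X$ restricted to $T\times X\to X$ is affine), $\pi_{X!}(M_\chi)$ lies in perverse degrees $\geq 0$, and by the relative dimension bound it lies in degrees $\leq \dim T =: r$. So only $1\le n\le r$ matter. I induct on $r$. For $r=1$ there is nothing to prove. For $r>1$, choose (after finite extension) a one-dimensional subtorus $T_1 \subseteq T$ with quotient $\pi\colon T\to T' := T/T_1$, so that $T\cong T_1\times T'$ after choosing a splitting. Factor $\pi_{X!} = \pi'_{X!}\circ \pi_{T_1\text{-}!}$, where $\pi_{T_1}\colon T_1\times T'\times X\to T'\times X$. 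Write $\chi=(\chi_1,\chi')$. By Artin vanishing and $\dim T_1=1$, the complex $P_{\chi_1}:=\pi_{T_1!}(M_{\chi_1})$ has perverse cohomology only in degrees $0,1$. The Leray/perverse spectral sequence gives
\[
0\to \pervCoh{1}{\pi'_{X!}(\pervCoh{0}{P_{\chi_1}}_{\chi'})}\to \pervCoh{1}{\pi_{X!}(M_\chi)} \to \pervCoh{0}{\pi'_{X!}(\pervCoh{1}{P_{\chi_1}}_{\chi'})}\to 0
\]
(the edge terms being in the correct positions because $\pi'_{X!}$ of a perverse sheaf sits in degrees $\geq 0$). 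Hence for $\chi\notin\Delta$ both outer terms vanish.

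Now I apply the hypotheses fibrewise. Set $\Delta_{\chi_1}:=\{\chi' : (\chi_1,\chi')\in\Delta\}$, closed in $\cvar{T'}$. Vanishing of the right term for $\chi'\notin\Delta_{\chi_1}$ allows Theorem \ref{THM_Propagation1} applied to the perverse sheaf $\pervCoh{1}{P_{\chi_1}}$ on $T'\times X$: it yields $\pi'_{X!}(\pervCoh{1}{P_{\chi_1}}_{\chi'})=0$ for all $\chi'\notin\Delta_{\chi_1}$. Vanishing of the left term allows the induction hypothesis (dimension $r-1$) applied to $\pervCoh{0}{P_{\chi_1}}$: $\pervCoh{n}{\pi'_{X!}(\pervCoh{0}{P_{\chi_1}}_{\chi'})}=0$ for all $n\ge1$. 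Feeding both back into the spectral sequence, which now degenerates to $\pi_{X!}(M_\chi)=\pi'_{X!}(\pervCoh{0}{P_{\chi_1}}_{\chi'})$, gives $\pervCoh{n}{\pi_{X!}(M_\chi)}=0$ for all $n\ge1$ and $\chi\notin\Delta$.

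The main obstacle is the exactness of the displayed short exact sequence, i.e.\ correctly identifying $\pervCoh{1}{\pi_{X!}(M_\chi)}$ through the two-step pushforward; I would verify it from the triangle $\pervCoh{0}{P}\to P\to\pervCoh{1}{P}[-1]$, applying $\pi'_{X!}$ and using that $\pi'_{X!}$ of a perverse sheaf lies in perverse degrees $[0,r-1]$, so the long exact sequence of perverse cohomology in degree $1$ has vanishing neighbours ($\pervCoh{0}{\pi'_{X!}(\pervCoh{1}{P})[-1]}=0$ on the left, and the connecting term into $\pervCoh{2}$ does not obstruct surjectivity onto the $\pervCoh{0}$-part). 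A secondary point is checking that Theorem \ref{THM_Propagation1} and the induction hypothesis apply with a closed, not necessarily thin, exceptional set $\Delta_{\chi_1}$, which they do since both are stated for arbitrary closed subsets.
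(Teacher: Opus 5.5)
Your overall strategy is the paper's. You induct on $\dim T$ through a quotient with one-dimensional connected fibres, factor $\pi_{X!}=\pi'_{X!}\circ\pi_{T_1!}$, apply the induction hypothesis to $\pervCoh{0}{P_{\chi_1}}$, and apply Theorem \ref{THM_Propagation1} to $\pervCoh{1}{P_{\chi_1}}$.

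However, your displayed short exact sequence is false as stated, and your argument for its right exactness does not work. Apply $\pi'_{X!}((-)_{\chi'})$ to the triangle $\pervCoh{0}{P}\to P\to \pervCoh{1}{P}[-1]$. This only yields the four-term exact sequence
\[
0\to \pervCoh{1}{\pi'_{X!}({\pervCoh{0}{P}}_{\chi'})}\to \pervCoh{1}{\pi_{X!}(M_\chi)}\to \pervCoh{0}{\pi'_{X!}({\pervCoh{1}{P}}_{\chi'})}\xrightarrow{\ \delta\ } \pervCoh{2}{\pi'_{X!}({\pervCoh{0}{P}}_{\chi'})}.
\]
Here $\delta$ is the connecting morphism, i.e.\ the differential $d_2\colon E_2^{01}\to E_2^{20}$. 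Nothing you have established at that point forces $\delta$ or its target to vanish. So the hypothesis $\pervCoh{1}{\pi_{X!}(M_\chi)}=0$ gives only the vanishing of the left-hand term. The vanishing of $\pervCoh{0}{\pi'_{X!}({\pervCoh{1}{P_{\chi_1}}}_{\chi'})}$, which you need in order to invoke Theorem \ref{THM_Propagation1}, is not yet available when you use it.

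The repair is a matter of ordering, and it is exactly the order of the paper's proof.
\begin{enumerate}
\item Use the vanishing of the left-hand term for all $\chi'\notin\Delta_{\chi_1}$ to apply the induction hypothesis to $\pervCoh{0}{P_{\chi_1}}$. This kills $\pervCoh{n}{\pi'_{X!}({\pervCoh{0}{P_{\chi_1}}}_{\chi'})}$ for all $n\ge 1$, in particular the target of $\delta$. In the paper's notation this is $E_2^{p0}=0$ for $p\ge1$.
\item Only now does exactness force $\pervCoh{0}{\pi'_{X!}({\pervCoh{1}{P_{\chi_1}}}_{\chi'})}=0$ for all $\chi'\notin\Delta_{\chi_1}$, i.e.\ the paper's $E_2^{01}=0$.
\item Theorem \ref{THM_Propagation1} and your final degeneration step then go through as you wrote them.
\end{enumerate}
The remaining ingredients are fine: the perverse amplitude $[0,\dim T]$, the case $\dim T=1$, the closedness of $\Delta_{\chi_1}$, and the duality reduction.
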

	\begin{proof}
		We prove the statement by induction on the dimension of the torus $T$. If the dimension is $\leq 1$, the statement follows from bounds on the cohomological amplitude of the pushforward.
		
		Suppose the dimension of $T$ is $> 1$. We can basechange to an extension of $k$ and assume $T$ is split. We take a quotient
		\[
		\pi\colon T \rightarrow T'
		\]
		with connected fibers of dimension $1$.  Let $\chi \notin \Delta$, then we put
		\[
		\Delta' :=  \cvar{T'} - (\pi^*)^{-1}(\overline{\chi}\cdot \Delta)
		\] then we consider the spectral sequence
		\[
		E_2^{pq} := \pervCoh{p}{\pi_{X!}(\pervCoh{q}{\pi_!(M_\chi)_{\chi'}})} \Rightarrow \pervCoh{p + q}{\pi_{X!}(M_{\chi\pi^*(\chi')})}
		\]
		for all $\chi' \notin \Delta'$. The term $E^{10}_2$ has converged by Artin's vanishing theorem, hence we obtain
		\[
		E^{10}_2 = 0.
		\]
		for all $\chi' \notin\Delta'$.  By our induction hypothesis, this implies $E^{p0}_2 = 0$ for all $p \geq 1$ and all $\chi' \notin \Delta$.  
		\[
		E^{01}_2 = 0
		\]  
		by assumption. By Theorem \ref{THM_Propagation1}, this implies $E^{p1} = 0$ for all $p \geq 0$ and all $\chi' \notin \Delta$. We obtain the statement from the above spectral sequence with $\chi' = 1$. 
	\end{proof}
\begin{remark}
	The proofs of the two propagation theorems work over arbitrary base fields by the results of \cite{GabberLoeserTore}. We do not prove this here because we would have to deviate too far from our conventions.
\end{remark}

%% file: TannakianCategories.tex
	\section{Tannakian categories} In this section, we construct the Tannakian categories with fiber functors. The general strategy is to imitate the construction of localization outside closed subsets from \cite{GabberLoeserTore}. We begin by introducing the localized categories. We prove a criterion for isomorphism. This criterion for isomorphism is based on the propagation theorems. We then use it to prove that the localized convolution product preserves perversity. 
	
	The following deviation from the notation is important for this section: we would like to introduce geometric and arithmetic Tannakian categories. In particular, we have to construct localizations on the derived category of complexes on a group $G$ and on the category of quasi-arithmetic complexes on the group $G_{\overline{k}}$. For this section, we use $G$ to denote a connected commutative group over a finite field or the algebraic closure of a finite field, as in the previous section. 
\subsection{$\Delta$-localization} We introduce the localizations. We remark that, ultimately, we want to prove that a perverse sheaf is generically unramified for the localizations introduced in \cite{KowalskiTannaka}. For this, we want to compare the Tannakian categories constructed here to the categories constructed in \cite{KowalskiTannaka}. Note that once this is achieved, we also gain access to all the results proven in \cite{KowalskiTannaka}. We approach this problem by allowing for localizations along families of closed subsets. The localization introduced in \cite{KowalskiTannaka} then arises naturally as a special case of our construction. This, however, means that we have to incorporate families of closed subsets from the very beginning of our theory of localized categories. 
\begin{definition}
	A \textit{family of closed subsets $\mathbf{\Delta}$} is a set of closed subsets in $\cvar{G}$ such that for all $\Delta, \Delta' \in \mathbf{\Delta}$, we have $\Delta\cup\Delta' \in \mathbf{\Delta}$. We say $\chi \notin \mathbf{\Delta}$ if and only if $\chi \notin \Delta$ for all $\Delta \in \mathbf{\Delta}$. 
\end{definition}
\begin{example}
	Let $\Delta \subseteq \cvar{G}$ be a closed subset. Then $\mathbf{\Delta} := \{\Delta\}$ is a family of closed subsets. We write $\Delta$ for this family of closed subsets. 
\end{example}
\begin{example}
	Let $\Delta \subseteq \cvar{G}$ be a subset. Then we can define the family of proper thin subsets contained in $\Delta$
	\[
	\mathbf{\Delta} := \{\Delta'\subset\cvar{G} ~|~ \Delta' \text{ thin and } \Delta' \subseteq \Delta\}
	\]
	Then $\mathbf{\Delta}$ is a family of closed subsets by Proposition \ref{PROP_thintop}. We denote the family of closed subsets arising from this construction from $\Delta := \cvar{G}$ by $\mathbf{\Delta}_{\text{FFK}}$.
\end{example}
\begin{definition}
	Let $\mathbf{\Delta}$ be a family of closed subsets. The category $S_{\mathbf{\Delta}}$ is defined to be the full subcategory of all perverse sheaves $M \in \Perv{}{G}$ such that there is $\Delta \in \mathbf{\Delta}$ with
	\[
	\text{FSupp}(M) \subseteq \Delta.
	\]
\end{definition}
\begin{proposition}
	Let $\mathbf{\Delta}$ be a family of closed subsets. We have that the subcategory $S_{\mathbf{\Delta}} \subseteq \Perv{}{G}$ is Serre.
\end{proposition}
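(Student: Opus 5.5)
To show that $S_{\mathbf{\Delta}}$ is a Serre subcategory, I will check two things: it is closed under subobjects and quotients, and it is closed under extensions. Both should follow from Theorem \ref{THM_SupportLemma}, which says that the Fourier support of a perverse sheaf is the union of the Fourier supports of the graded pieces of any finite filtration. I will also use Theorem \ref{THM_EqualitySupport}, so that $\text{FSupp}$ is unambiguous, and the fact that $\mathbf{\Delta}$ is closed under finite unions.

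\textbf{Subobjects and quotients.} Let $0 \rightarrow M' \rightarrow M \rightarrow M'' \rightarrow 0$ be a short exact sequence in $\Perv{}{G}$ with $M \in S_{\mathbf{\Delta}}$. Choose $\Delta \in \mathbf{\Delta}$ with $\text{FSupp}(M) \subseteq \Delta$. The two-step filtration $0 \subseteq M' \subseteq M$ has graded pieces $M'$ and $M''$, so Theorem \ref{THM_SupportLemma} gives
\[
\text{FSupp}(M) = \text{FSupp}(M') \cup \text{FSupp}(M'').
\]
Hence both $\text{FSupp}(M')$ and $\text{FSupp}(M'')$ are contained in $\Delta$. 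So $M'$ and $M''$ lie in $S_{\mathbf{\Delta}}$, witnessed by the same $\Delta$.

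\textbf{Extensions.} Now suppose instead that $M' , M'' \in S_{\mathbf{\Delta}}$, witnessed by $\Delta', \Delta'' \in \mathbf{\Delta}$. The same equality gives $\text{FSupp}(M) \subseteq \Delta' \cup \Delta''$. The set $\Delta' \cup \Delta''$ belongs to $\mathbf{\Delta}$ by the definition of a family of closed subsets, so $M \in S_{\mathbf{\Delta}}$.

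\textbf{Remaining checks and the main point.} The zero object is in $S_{\mathbf{\Delta}}$ as long as $\mathbf{\Delta}$ is nonempty; its Fourier support is empty and hence contained in any member. Fullness holds by definition. The only real content is Theorem \ref{THM_SupportLemma}, specifically the inclusion $\supseteq$ for an extension, which rules out cancellation between $H^*(G, M'_\chi)$ and $H^*(G, M''_\chi)$ on the level of closures. That inclusion is already proven there via the classification (Corollary \ref{COR_ClassificationWeaklyUnramifiedChars}), so no step here should be an obstacle. I only need to make sure the statement is applied to the filtration $0 \subseteq M' \subseteq M$ rather than to a composition series, and the theorem is stated for arbitrary filtrations, so it applies directly.
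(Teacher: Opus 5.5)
Your proposal is correct and follows the paper's proof: both apply Theorem \ref{THM_SupportLemma} to the short exact sequence to get $\text{FSupp}(M) = \text{FSupp}(M')\cup\text{FSupp}(M'')$. Closure under subobjects and quotients follows directly, and closure under extensions follows because $\mathbf{\Delta}$ is closed under finite unions; you just spell out this union step and the zero object, which the paper leaves implicit.
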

\begin{proof}
	Let 
	\[
	0 \rightarrow M_1 \rightarrow M_2 \rightarrow M_3 \rightarrow 0
	\]
	be a short exact sequence of perverse sheaves $M_i \in \Perv{}{G}$. Theorem \ref{THM_SupportLemma} implies
	\[
	\text{FSupp}(M_2) = \text{FSupp}(M_1)\cup\text{FSupp}(M_3).
	\]
	Thus we obtain the claim. 
\end{proof}
\begin{definition}
	Ler $\mathbf{\Delta}$ be a family of closed subsets. We define the \textit{$\mathbf{\Delta}$-localized category of perverse sheaves} to be the Serre quotient 
	\[
	\text{Perv}_\Delta(G) := \text{Perv}(G)/S_\Delta.
	\]
	We define the full, saturated subcategory $N_\mathbf{\Delta} \subseteq \derCat{c}{G}{\Qbarl}$ to be the category where $K \in N_\mathbf{\Delta}$ if and only if $\pervCoh{n}{K} \in S_\mathbf{\Delta}$ for all $n \in \BZ$. We then define the \textit{$\mathbf{\Delta}$-localized derived category} to be the Verdier quotient
	\[
	\derCat{\mathbf{\Delta}}{G}{\Qbarl} := \derCat{c}{G}{\Qbarl}/N_\mathbf{\Delta}.
	\]
	It is a triangulated category with a $t$-structure (see \cite[Prop.~3.6.1]{GabberLoeserTore}). Loc. cit. implies that the heart of the $t$-structure is naturally equivalent to $P_\mathbf{\Delta}$. We write $\pervCoh{}{-}$ for the resulting cohomology functors. 
	
	Furthermore, we have that the quotient functor
	\[
	\derCat{c}{G}{\Qbarl} \rightarrow \derCat{\mathbf{\Delta}}{G}{\Qbarl}
	\]
	is $t$-exact. A complex $K \in N_\mathbf{\Delta}$ is called a \textit{$\mathbf{\Delta}$-negligible complex} or just \textit{negligible complex.}
	
	We also introduce a category of $\mathbf{\Delta}$-unramified perverse sheaves as follows. Let $M \in \Perv{\mathbf{\Delta}}{G}$. We say $M$ is \textit{$\mathbf{\Delta}$-unramified}, if for any lift\footnote{This means any $M' \in \Perv{}{G}$ such that $M' \cong M$ in $\Perv{\mathbf{\Delta}}{G}$.} $M'$ of $M$ to $\Perv{}{G}$ there exists a $\Delta \in \mathbf{\Delta}$ such that $M'$ is $\Delta$-unramified. 
	
	We say a perverse sheaf $M \in \Perv{\mathbf{\Delta}}{G}$ is \textit{strongly $\mathbf{\Delta}$-unramified} if all subquotients of $M_{\overline{k}}$ are $\mathbf{\Delta}$-unramified. We denote the category of $\mathbf{\Delta}$-unramified perverse sheaves by $\PervUnr{\mathbf{\Delta}}{G}$. Note that a perverse sheaf $M$ is strongly $\mathbf{\Delta}$-unramified if and only if all irreducible constituents in a decomposition series of $M_{\overline{k}}$ are $\mathbf{\Delta}$-unramified. The category $\PervUnr{\mathbf{\Delta}}{G}$ is abelian. 
\end{definition}
\begin{remark}
	It follows from Theorem \ref{THM_VanishingTHM} that any perverse sheaf is strongly $\mathbf{\Delta}_{\text{FFK}}$-unramified. 
\end{remark}
\begin{remark}
	Let $\mathbf{\Delta}$ be a family of closed subsets and $M \in \Perv{}{G}$ a perverse sheaf. If $M$ is pure or just $\iota$-pure, then $M$ is strongly $\mathbf{\Delta}$-unramified if and only if it is $\mathbf{\Delta}$ unramified. Indeed, when $M$ is $\iota$-pure, then $M$ is geometrically semisimple. 
\end{remark}
\begin{definition}
	Let $\mathbf{\Delta} \subseteq \cvar{G}$ be a family of closed subsets. We define the duality functor
	\[
	K^\vee := \text{inv}^*D(K)
	\] 
	for all $K \in \derCat{c}{G}{}$. By Corollary \ref{COR_SupportDuality}, the duality functor factors through the $\mathbf{\Delta}-$localization, hence we obtain a $t$-exact duality
	\[
	\derCat{\mathbf{\Delta}}{G}{\Qbarl}\rightarrow \derCat{\mathbf{\Delta}}{G}{\Qbarl},~K \mapsto K^\vee.
	\]
	The $t$-exactness implies that we also obtain an exact duality
	\[
	\Perv{\mathbf{\Delta}}{G} \rightarrow \Perv{\mathbf{\Delta}}{G}, ~ M \mapsto M^\vee.
	\]
\end{definition}
\begin{remark}
	Let $\Delta$ be a closed subset. Note that the functors $\FMQ{-}$ are well-defined because they vanish at all $\Delta$-negligible complexes. Moreover, note that the cohomology functors $H^*_?(G, -_\chi)$ are well-defined for all $\chi \notin \Delta$ because they vanish at all $\Delta$-negligible complexes. 
\end{remark}
Theorem \ref{THM_SupportLemma} now takes the following form.
\begin{theorem}\label{THM_DeltaLocNegligibleComplexes}
	Let $\Delta$ be a closed subset and $K \in \derCat{\Delta}{G}{\Qbarl}$ a complex. We have $K = 0$ if and only if  the following conditions are satisfied
	\begin{enumerate}
		\item We have $H^*_c(G, K_\chi) = 0$ for all $\chi \notin \Delta$.
		\item We have $H^*(G, K_\chi) = 0$ for all $\chi \notin \Delta$.
		\item We have $\pervCoh{n}{K} = 0$ for all $n \in \BZ$. 
		\item We have $\FMSH{M} = 0$ for all $\chi \notin \Delta$.
		\item We have $\FMST{M} = 0$ for all $\chi \notin \Delta$.
	\end{enumerate}
\end{theorem}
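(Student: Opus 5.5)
The plan is to show that each of the five conditions, taken individually, is equivalent to the single statement that a (any) lift $\tilde K \in \derCat{c}{G}{\Qbarl}$ of $K$ lies in $N_\Delta$. In particular the five conditions are equivalent to one another, and each is equivalent to $K = 0$.

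First note that an object of the Verdier quotient $\derCat{\Delta}{G}{\Qbarl}$ is zero if and only if a lift of it lies in the thick subcategory $N_\Delta$. The functors in (1), (2), (4), (5) are well defined on the quotient by the Remark preceding the statement, so I may compute them on any lift $\tilde K$. Likewise, by \cite[Prop.~3.6.1]{GabberLoeserTore} the perverse cohomology in the localized $t$-structure is the image of $\pervCoh{n}{\tilde K}$ under the $t$-exact quotient functor.

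Next I translate membership in $N_\Delta$ into a condition on Fourier supports. By definition, $\tilde K \in N_\Delta$ means $\text{FSupp}(\pervCoh{n}{\tilde K}) \subseteq \Delta$ for all $n$. By Theorem \ref{THM_SupportLemma} this is equivalent to
\[
\text{FSupp}(\tilde K) = \bigcup_n \text{FSupp}(\pervCoh{n}{\tilde K}) \subseteq \Delta .
\]
I would then handle the conditions as follows.

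Condition (3). The image of $\pervCoh{n}{\tilde K}$ vanishes in $\Perv{\Delta}{G}$ exactly when $\pervCoh{n}{\tilde K} \in S_\Delta$. So (3) is literally the definition of $\tilde K \in N_\Delta$.

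Conditions (1) and (2). I use that $\Delta$ is closed in the standard topology. If $H^*_c(G, \tilde K_\chi) = 0$ for all $\chi \notin \Delta$, then the non-vanishing locus is contained in $\Delta$, hence so is its closure $\text{FSupp}_!(\tilde K)$. Conversely, if $\text{FSupp}_!(\tilde K) \subseteq \Delta$, the cohomology vanishes off $\Delta$. Theorem \ref{THM_EqualitySupport} gives $\text{FSupp}_! = \text{FSupp}_*$, so (1) and (2) are each equivalent to $\text{FSupp}(\tilde K) \subseteq \Delta$.

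Conditions (4) and (5). I reduce these to (1) and (2) through Lemma \ref{LEM_fmstalks}. Let $\chi \notin \Delta$ and let $\psi$ be a closed point of $\dualUni_{\chi,\Delta}$, so that $\chi\pi^*(\psi) \notin \Delta$. The stalk of $\FMSH{\tilde K}$ at $\psi$ is $H^*_c(G, \tilde K_{\chi\pi^*(\psi)})[d]$, and the costalk of $\FMST{\tilde K}$ is the shifted twist of $H^*(G, \tilde K_{\chi\pi^*(\psi)})$.

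A constructible complex on a scheme of finite type vanishes if and only if all its stalks at closed points vanish, and dually if and only if all its costalks at closed points vanish. Hence (1) implies (4) and (2) implies (5). For the converses I take $\psi = 0$, which lies in $\dualUni_{\chi,\Delta}$ precisely because $\chi \notin \Delta$. This recovers $H^*_?(G, \tilde K_\chi) = 0$ for every $\chi \notin \Delta$.

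The only genuinely non-formal input is Theorem \ref{THM_SupportLemma} together with Theorem \ref{THM_EqualitySupport}, both already established. The step needing most care is the costalk criterion for (5): one must check that vanishing of all closed-point costalks forces vanishing of the complex. I would argue this by Verdier duality, using Lemma \ref{LEM_fmdual} to turn costalks of $\FMST{\tilde K}$ into stalks of $\text{FM}_{\chi^{-1},\,\text{inv}(\Delta),\,!}(\tilde K^\vee)$, together with the fact that $\text{FSupp}(\tilde K^\vee) = \text{FSupp}(\tilde K)$ by Corollary \ref{COR_SupportDuality}.
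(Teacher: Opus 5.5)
Your proposal is correct and follows the paper's proof. The paper derives the equivalence of (1)--(3) with $K=0$ from Theorem \ref{THM_SupportLemma}, with Theorem \ref{THM_EqualitySupport} and the closedness of $\Delta$ used implicitly, and then reduces (4) and (5) to (1) and (2) through Lemma \ref{LEM_fmstalks}, exactly as you do. The costalk step you single out needs no detour through Lemma \ref{LEM_fmdual}: since $i_x^!F = D(i_x^*DF)$, a constructible complex whose closed-point costalks all vanish has a vanishing Verdier dual, and hence vanishes itself.
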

\begin{proof}
	The equivalences between (1) - (3) follow from Theorem \ref{THM_SupportLemma}. Then (5) and (6) are equivalent to (2) and (3) by Lemma \ref{LEM_fmstalks}.
\end{proof}
\subsubsection{Criteria for isomorphism}
\begin{theorem}\label{THM_IsoCrits}
	Let $G$ be a connected commutative algebraic group over a finite field $k$ and $\Delta \subseteq \mathscr{C}(G)$ a a closed subset. Let $K, L \in \derCat{\Delta}{G}{\Qbarl}$ be complexes on $G$ and $K \rightarrow L$ a morphism. The following are equivalent: 
	\begin{enumerate}
		\item The morphism $K \rightarrow L$ is an isomorphism in $\derCat{\Delta}{G}{}$. 
		\item The morphism
		\[
		H^*_c(G, K_\chi) \rightarrow H^*_c(G, L_\chi)
		\]
		induces isomorphisms for all $\chi \notin \Delta$.
		\item The morphism
		\[
		H^*(G, K_\chi) \rightarrow H^*(G, L_\chi)
		\]
		induces isomorphisms for all $\chi \notin \Delta$. 
		\item The morphism
		\[
		\pervCoh{n}{K} \rightarrow \pervCoh{n}{L}
		\]
		is an isomorphism in $\Perv{\Delta}{G}$ for all $n \in \BZ.$
		\item The morphism
		\[
		\FMSH{K} \rightarrow \FMSH{L}
		\]
		is an isomorphism for all $\chi \notin \Delta$. 
		\item The morphism
		\[
		\FMST{K} \rightarrow \FMST{L}
		\]
		is an isomorphism for all $\chi \notin \Delta$.
	\end{enumerate}
\end{theorem}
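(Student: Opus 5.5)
The plan is to reduce every condition to a vanishing statement for the cone and then invoke Theorem \ref{THM_DeltaLocNegligibleComplexes}. Represent the morphism $f\colon K \rightarrow L$ in $\derCat{\Delta}{G}{}$ by a roof; since $N_\Delta$-isomorphisms induce isomorphisms on all the functors in (2)--(6), we may assume $f$ comes from a morphism in $\derCat{c}{G}{}$. Complete it to a distinguished triangle
\[
K \rightarrow L \rightarrow C \xrightarrow{+}
\]
in $\derCat{c}{G}{}$. Then (1) holds if and only if $C \cong 0$ in $\derCat{\Delta}{G}{}$, that is, if and only if $C \in N_\Delta$.

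First, the functors $H^*_c(G, (-)_\chi)$, $H^*(G, (-)_\chi)$, $\FMSH{-}$ and $\FMST{-}$ are triangulated. By the remark preceding Theorem \ref{THM_DeltaLocNegligibleComplexes}, they factor through $\derCat{\Delta}{G}{}$ for $\chi \notin \Delta$. Applying each of them to the triangle gives long exact sequences (respectively distinguished triangles of complexes on $\dualUni_{\chi,\Delta}$). For instance, (2) holds if and only if $H^*_c(G, C_\chi) = 0$ for all $\chi \notin \Delta$, and similarly (3), (5), (6) are equivalent to the vanishing of the corresponding invariant of $C$. By Theorem \ref{THM_DeltaLocNegligibleComplexes}, each of these vanishing conditions is equivalent to $C = 0$ in $\derCat{\Delta}{G}{}$. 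This gives (1)$\Leftrightarrow$(2)$\Leftrightarrow$(3)$\Leftrightarrow$(5)$\Leftrightarrow$(6).

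Concretely, the step behind $\text{FSupp}_!(C) \subseteq \Delta \Leftrightarrow \text{FSupp}_*(C)\subseteq\Delta$ is Theorem \ref{THM_EqualitySupport}. The step behind "$\text{FSupp}(C)\subseteq \Delta$ iff each $\pervCoh{n}{C}$ has support in $\Delta$" is Theorem \ref{THM_SupportLemma}. For (5) and (6) I use the stalk and costalk formulas of Lemma \ref{LEM_fmstalks}: a complex on $\dualUni_{\chi,\Delta}$ vanishes iff all its closed-point stalks (resp.\ costalks) vanish. As $\psi$ ranges over $\dualUni_{\chi,\Delta}$, the characters $\chi\pi^*(\psi)$ range over characters not in $\Delta$, and every character outside $\Delta$ is obtained this way for a suitable $\chi$. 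Thus (5) for all $\chi\notin\Delta$ is equivalent to (2).

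For (4), I use that the quotient functor $\derCat{c}{G}{} \rightarrow \derCat{\Delta}{G}{}$ is $t$-exact and that the heart is $\Perv{\Delta}{G}$ (\cite[Prop.~3.6.1]{GabberLoeserTore}). A morphism in a triangulated category with a bounded nondegenerate $t$-structure is an isomorphism iff it induces isomorphisms on all cohomology objects. The $t$-structure on $\derCat{\Delta}{G}{}$ is nondegenerate: an object with all $\pervCoh{n}{}$ zero in $\Perv{\Delta}{G}$ lies in $N_\Delta$ by definition. Boundedness is inherited from $\derCat{c}{G}{}$. Alternatively, use the long exact sequence of perverse cohomology of the triangle in $\Perv{\Delta}{G}$: all $\pervCoh{n}{f}$ are isomorphisms iff all $\pervCoh{n}{C}$ vanish in $\Perv{\Delta}{G}$, i.e.\ lie in $S_\Delta$, i.e.\ $C\in N_\Delta$.

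I expect the main obstacle to be the bookkeeping in the Fourier--Mellin conditions (5), (6). Only characters of the form $\chi\pi^*(\psi)$ with $\psi$ in the open set $\dualUni_{\chi,\Delta}$ are seen, and one must check that the collection over all $\chi\notin\Delta$ detects exactly $\text{FSupp}(C)\cap\Delta^c$. The closure in the definition of the Fourier support is harmless because $\Delta$ is closed. A secondary point is justifying that a morphism in the Verdier quotient can be replaced by a genuine morphism without changing (2)--(6); this follows since roofs involve $N_\Delta$-isomorphisms, which by the already-proven cone argument induce isomorphisms on all the listed functors.
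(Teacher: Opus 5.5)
Your proposal is correct and follows the paper's proof. That proof consists of the single step of applying Theorem~\ref{THM_DeltaLocNegligibleComplexes} to the cone of $K \rightarrow L$; your extra details (replacing a roof by a genuine morphism, nondegeneracy of the localized $t$-structure, and the stalk/costalk bookkeeping for the Fourier--Mellin transforms via Lemma~\ref{LEM_fmstalks}) only unpack what that theorem and its proof already use.
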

\begin{proof}
	Apply Theorem \ref{THM_DeltaLocNegligibleComplexes} to the cone of $K \rightarrow L$. 
\end{proof}

\subsubsection{Criteria from propagation}
We now utilize the propagation theorems to obtain improved criteria for nullity of perverse sheaves on affine groups.
\begin{theorem}\label{THM_PropagationFM1}
	Let $G = T\times U$ and $M \in \Perv{\Delta}{G}$. We consider a closed subset $\Delta \subseteq \cvar{G}$. Suppose
	\[
	\pervCoh{0}{\text{FM}_{\chi, \Delta, !}(M)} = 0 
	\]
	for all $\chi \notin \Delta$. Then $M = 0$. Dually, suppose
	\[
	\pervCoh{0}{\text{FM}_{\chi, \Delta,*}(M)} = 0 
	\]
	for all $\chi \notin \Delta$. Then $M = 0$. 
	
	In both cases, we obtain
	\[
	\text{FM}_{\chi, \Delta, ?}(M) = 0
	\]
	for all $\chi \notin \Delta$. 
\end{theorem}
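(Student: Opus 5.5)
We deduce the statement from the first relative propagation theorem, Theorem \ref{THM_Propagation1}, applied to the Fourier transform. Choose a lift $M \in \Perv{}{G}$ of the given object. Since $\text{FT}$ is a $t$-exact equivalence (Theorem \ref{THM_FTBasic}), the complex $M' := \text{FT}(M)$ is a perverse sheaf on $T\times\widehat{U}$. By Lemma \ref{LEM_fmprojformula}, for $\chi = (\chi_T, \psi) \in \cvar{T}\times\cvar{U}$ we have
\[
\text{FM}_{\chi, \Delta, ?}(M) = \pi_{\widehat{U}?}(M'_{\chi_T})|_{\widehat{U}_{\chi,\Delta}},
\]
and the translation formula in Theorem \ref{THM_FTBasic}(5) moves the $\psi$-component into a shift on $\widehat{U}$. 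The hypothesis therefore becomes a statement about the relative Fourier coefficients $\pervCoh{0}{\pi_{\widehat{U}!}(M'_{\chi_T})}$ on the open set of $\widehat{U}$ lying outside $\Delta$.

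The key step is to make this a single relative statement over a base $X$. The cleanest choice is $X$ equal to the open subscheme of $\widehat{U}$ over which the fibre of $\Delta$ is avoided, but $\Delta$ need not be a product. I would work with $X = \widehat{U}$ and with the sheaf $M'$ restricted to the open complement (in $T\times\widehat{U}$-character coordinates) of $\Delta$. I would also rerun the proof of Theorem \ref{THM_Propagation1}, replacing ``$\chi\notin\Delta$'' by the condition that the pair $(\chi_T,\psi)$ lies outside $\Delta$. That proof uses only the following facts. First, the classification of the irreducible constituents $M'_i$ of $M'$ via Theorem \ref{THM_ClassCharactersRelative}, which gives $\pi_{X!}((M'_i)_{\chi_T}) \ne 0$ exactly on a tac. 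Second, the generic perversity from Theorem \ref{THM_VanSemiAb}, which holds over a dense open of the base. Third, the injection of $\pervCoh{0}$ of a subobject into $\pervCoh{0}$ of $M'$, which follows from Artin vanishing because $\pi_{X!}$ is affine, so that $\pervCoh{0}{\pi_{X!}(-)}$ is left exact on the relevant objects. Each of these is local over the base and compatible with restriction to opens. The conclusion is then that every constituent $M'_i$ has $\pi_{\widehat{U}?}((M'_i)_{\chi_T})$ vanishing at every point $\psi$ with $(\chi_T,\psi)\notin\Delta$. Taking stalks and costalks with Lemma \ref{LEM_fmstalks}, we get $H^*_?(G,(M_i)_\chi)=0$ for all $\chi\notin\Delta$, where $M_i$ are the constituents of $M$ corresponding under $\text{FT}$.

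It follows that $\text{FSupp}(M_i)\subseteq\Delta$ for each constituent. By Theorem \ref{THM_SupportLemma}, $\text{FSupp}(M)\subseteq\Delta$, so $M\in S_\Delta$ and hence $M=0$ in $\Perv{\Delta}{G}$. The vanishing $\text{FM}_{\chi,\Delta,?}(M)=0$ for $\chi\notin\Delta$ then follows from Theorem \ref{THM_DeltaLocNegligibleComplexes}. The dual statement follows by applying the first to $M^\vee = \text{inv}^*D(M)$. This uses Lemma \ref{LEM_fmdual}, which exchanges $\text{FM}_{\chi,\Delta,!}$ and $\text{FM}_{\chi,\Delta,*}$ under duality, together with Corollary \ref{COR_SupportDuality}; alternatively, the dual half of Theorem \ref{THM_Propagation1} can be invoked directly.

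The main obstacle is the mismatch between the closed set $\Delta\subseteq\cvar{T}\times\cvar{U}$ and the hypothesis of Theorem \ref{THM_Propagation1}, which asks for a closed subset of $\cvar{T}$ uniformly over the base. To handle it I would first try to reduce to the case where the base is a single closed point $\psi$ together with a generic stratum. Theorem \ref{THM_StratificationFourier} makes the coefficients lisse on a dense open of $\widehat{U}$ and compatible with base change there, so the vanishing there can be tested fibrewise with the fibre $\Delta_\psi\subseteq\cvar{T}$. The remaining lower-dimensional strata of $\widehat{U}$ would then be handled by noetherian induction, using proper base change.
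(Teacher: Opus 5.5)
Your overall frame is right: apply Theorem \ref{THM_Propagation1} to $\text{FT}(M)$ over $\widehat{U}$, conclude with Theorem \ref{THM_DeltaLocNegligibleComplexes}, and get the dual half from $M^\vee$. But the step you flag as the main obstacle is the whole content of the proof, and you leave it unresolved. The missing observation is that $\Delta$ need not be a product, only locally contained in one around each $\psi$.

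Here is how the paper handles it. Write $\Delta=\bigcup_{i=1}^n\Delta_i\times Z_i$ with $\Delta_i\subseteq\cvar{T}$ and $Z_i\subseteq\cvar{U}$ closed; the product topology on $\cvar{T}\times\cvar{U}$ is what makes this available. For $\psi\in\cvar{U}$ put:
\begin{itemize}
\item $I_\psi=\{i:\psi\in Z_i\}$;
\item $\Delta_\psi=\bigcup_{i\in I_\psi}\Delta_i$, the fibre of $\Delta$ over $\psi$;
\item $V_\psi=\bigcap_{i\notin I_\psi}Z_i^c$, an open neighbourhood of $\psi$ in $\widehat{U}$.
\end{itemize}
A point $\psi'\in V_\psi$ lies only in those $Z_i$ with $i\in I_\psi$. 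Hence $(\chi_T,\psi')\notin\Delta$ for every $\chi_T\notin\Delta_\psi$. The hypothesis therefore gives $\pervCoh{0}{\pi_{\widehat{U}!}(\text{FT}(M)_{\chi_T})}|_{V_\psi}=0$ for all $\chi_T$ outside the single closed set $\Delta_\psi\subseteq\cvar{T}$. This is exactly the hypothesis of Theorem \ref{THM_Propagation1} with base $X=V_\psi$. Its conclusion, restricted to the point $\psi$, gives $H^*_c(G,M_\chi)=0$ for every $\chi\notin\Delta$ with $U$-component $\psi$.

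You came close with ``$X$ the open subscheme over which the fibre of $\Delta$ is avoided.'' Taking $X=V_\psi$ together with the fibre $\Delta_\psi$ is the fix. No rerun of the propagation proof and no stratification are needed.

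The route you actually propose does not close the gap. On a dense open $W\subseteq\widehat{U}$, the coefficients are lisse and commute with base change, and the fibres of $\text{FT}(M)$ are perverse up to shift. There, fibrewise testing against the absolute case can be made to work. The noetherian induction on the lower strata, however, has no usable induction hypothesis:
\begin{itemize}
\item the assumption is the vanishing of $\pervCoh{0}{\cdot}$ of a complex on an open of $\widehat{U}$, and restriction to a closed stratum $Z$ is not $t$-exact;
\item $i_Z^*\text{FT}(M)$ is no longer perverse up to shift;
\item proper base change transports stalks, not perverse cohomology sheaves.
\end{itemize}
So neither the hypothesis of Theorem \ref{THM_Propagation1} on $T\times Z$ nor a fibrewise vanishing of $H^0_c$ at non-generic points is inherited.

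Likewise, the proof of Theorem \ref{THM_Propagation1} cannot simply be rerun with a pair condition. Its first step needs, for a fixed $\chi_T$, the vanishing over the entire base, in order to get vanishing classes in the $K$-group of the base and invoke Theorem \ref{THM_ClassCharactersRelative}. A pair condition only gives this over the $\chi_T$-dependent open $\{\psi:(\chi_T,\psi)\notin\Delta\}$. You must first shrink the base to an open on which the exceptional set in $\cvar{T}$ is independent of the point, and that is precisely the role of $V_\psi$.
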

\begin{proof}
	We assume 
	\[
	\pervCoh{0}{\text{FM}_{\chi, \Delta, !}(M)} = 0
	\]
	for all $\chi \notin \Delta$. The dual version can be obtained from the same argument. 
	
	We write $$\Delta = \Delta_1\times Z_1\cup \ldots \cup \Delta_n\times Z_n$$ for closed subsets $\Delta_i \subseteq \cvar{T}$ and closed subsets $Z_i \subseteq \cvar{U}$. Let $\psi \in \cvar{U}$. Define $$I_\psi := \{i ~|~\psi \in Z_i,~ 1 \leq i \leq n\}.$$ 
	We put $$\Delta_\psi := \bigcup_{i \in I_\psi} \Delta_i.$$ 
	Note that it is sufficient if we can prove
	\[
	i_\psi^*(\pi_{\widehat{U}!}(\text{FM}(M)_\chi)) = 0
	\]
	for all $\chi \notin \Delta_\psi$ by Theorem \ref{THM_DeltaLocNegligibleComplexes}. We put $$V_\psi := \bigcap_{i \notin I_\psi} Z_i^c.$$ We have $\psi \in V_\psi$. Moreover, note that this is an open subset of $\cvar{U}$, hence it is the set of closed points of an open subset of $\widehat{U}$. We write the Fourier-Mellin transform as
	\[
	\FMSH{M} = \pi_{\widehat{U}!}(\text{FM}(M)_\chi).
	\]
	We have for all $\chi \notin \Delta_\psi$
	\[
	\pervCoh{0}{\pi_{\widehat{U}!}(\text{FM}(M)_\chi)}|_{V_\psi} = 0
	\] 
	by assumption. Theorem \ref{THM_Propagation1} implies
	\[
	\pi_{\widehat{U}!}(\text{FM}(M)_\chi)|_{V_\psi} = 0
	\]
	for all $\chi \notin \Delta_\psi$. 
\end{proof}
We can also utilize the second propagation theorem in this more general context. 
\begin{theorem}\label{THM_PropagationFM2}
	Let $G = T\times U$ and $M \in \Perv{}{G}$. We consider a closed subset $\Delta \subseteq \cvar{G}$. Suppose
	\[
	\pervCoh{1}{\text{FM}_{\chi, \Delta, !}(M)} = 0 
	\]
	for all $\chi \in \mathscr{C}(G)$. Then 
	\[
	\pervCoh{n}{\text{FM}_{\chi, \Delta, !}(M)} = 0 
	\]
	for all $\chi \in \cvar{G}$ and $n \geq 1.$ 
	
	Dually, suppose
	\[
	\pervCoh{-1}{\text{FM}_{\chi, \Delta, !}(M)} = 0 
	\]
	for all $\chi \in \mathscr{C}(G)$. Then 
	\[
	\pervCoh{-n}{\text{FM}_{\chi, \Delta, !}(M)} = 0 
	\]
	for all $\chi \in \cvar{G}$ and $n \geq 1.$ 
\end{theorem}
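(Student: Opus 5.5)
The plan is to reduce Theorem \ref{THM_PropagationFM2} to the relative propagation Theorem \ref{THM_Propagation2}, exactly as Theorem \ref{THM_PropagationFM1} was reduced to Theorem \ref{THM_Propagation1}. By Lemma \ref{LEM_fmprojformula}, for $G = T\times U$ and $\chi \in \cvar{T}$ we have
\[
\text{FM}_{\chi, \Delta, !}(M) = \pi_{\widehat{U}!}(\text{FT}(M)_\chi)|_{\dualUni_{\chi, \Delta}},
\]
where $\text{FT}(M) \in \Perv{}{T\times \widehat{U}}$ by the $t$-exactness in Theorem \ref{THM_FTBasic}(4). A character of $G$ is a pair $(\chi, \psi)$. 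By the shift formula in Theorem \ref{THM_FTBasic}(5), twisting by $\psi$ only translates the Fourier-Mellin transform on $\widehat{U}$. So it suffices to treat characters with trivial $U$-component, which ranges over all $\chi \in \cvar{T}$.

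Write $\Delta = \Delta_1\times Z_1\cup\cdots\cup\Delta_n\times Z_n$, with $\Delta_i \subseteq \cvar{T}$ and $Z_i \subseteq \cvar{U}$ closed. For a closed point $\psi$, form $I_\psi$, $\Delta_\psi$ and the open neighbourhood $V_\psi$ as in the proof of Theorem \ref{THM_PropagationFM1}. Perverse cohomology is local on $\widehat{U}$, and the sets $V_\psi$ cover the relevant open sets $\dualUni_{\chi,\Delta}$. Hence the hypothesis gives
\[
\pervCoh{1}{\pi_{\widehat{U}!}(\text{FT}(M)|_{T\times V_\psi}{}_{\chi})} = 0 \quad \text{for all } \chi \notin \Delta_\psi.
\]
Here we use that $\chi \notin \Delta_\psi$ exactly when $\psi$ lies in $\dualUni_{\chi, \Delta}$, and that over $V_\psi$ no other $Z_i$ intervenes. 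Apply Theorem \ref{THM_Propagation2} with $X = V_\psi$, the perverse sheaf $\text{FT}(M)|_{T\times V_\psi}$ and the closed subset $\Delta_\psi \subseteq \cvar{T}$. This yields the vanishing of $\pervCoh{n}{\cdot}$ for all $n\geq 1$ over $V_\psi$. Gluing over the cover $\{V_\psi\}$ gives the statement on $\dualUni_{\chi,\Delta}$. The dual statement follows in the same way from the dual half of Theorem \ref{THM_Propagation2}, or by applying Lemma \ref{LEM_fmdual} to $\text{inv}^*D(M)$.

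The main obstacle is the bookkeeping in the second step. The open set $\dualUni_{\chi,\Delta}$ varies with $\chi$, while Theorem \ref{THM_Propagation2} needs a fixed base $X$ and a single closed subset of $\cvar{T}$. Restricting to $V_\psi$ is meant to freeze the base, but the hypothesis over $V_\psi$ only holds for $\chi$ with $V_\psi \subseteq \dualUni_{\chi,\Delta}$. The relevant exceptional set is therefore $\bigcup_{i: Z_i\cap V_\psi\neq\emptyset}\Delta_i$ rather than $\Delta_\psi$. I would handle this by shrinking $V_\psi$ to $\bigcap_{i\notin I_\psi} Z_i^c$, which is exactly the choice already made, and checking that the two exceptional sets then agree. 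One must also make sure that the induction inside Theorem \ref{THM_Propagation2} tolerates the closed subset produced this way. If the statement as written, with "for all $\chi\in\cvar{G}$", is read literally, the hypothesis is global and the reduction is even simpler: apply Theorem \ref{THM_Propagation2} with $X=\widehat{U}$ and $\Delta=\emptyset$, then restrict.
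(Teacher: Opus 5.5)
Your main argument is correct and is the paper's proof: the paper only says to rerun the proof of Theorem \ref{THM_PropagationFM1} with Theorem \ref{THM_Propagation2} in place of Theorem \ref{THM_Propagation1}, and your check that $\{i : Z_i\cap V_\psi\neq\emptyset\} = I_\psi$ (so $V_\psi\subseteq \dualUni_{\chi,\Delta}$ whenever $\chi\notin\Delta_\psi$) is exactly the bookkeeping that makes this work.

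Your closing aside is wrong, however. Quantifying over all $\chi\in\cvar{G}$ does not make the hypothesis global, because $\text{FM}_{\chi,\Delta,!}$ is only defined on $\dualUni_{\chi,\Delta}$. By the translation formula, the condition for $(\chi_T,\psi_0)$ is just a translate of the condition for $(\chi_T,1)$, so you only get vanishing of $\pervCoh{1}{\pi_{\dualUni!}(\text{FT}(M)_{\chi_T})}$ away from $\{\psi : (\chi_T,\psi)\in\Delta\}$. You therefore cannot take $X=\dualUni$ and $\Delta=\emptyset$.
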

\begin{proof}
	The proof is just as in Theorem \ref{THM_PropagationFM1} but by appealing to Theorem \ref{THM_Propagation2}.
\end{proof}
This yields the following crucial criterion for isomorphism.
\begin{theorem}\label{THM_LocalizationTheCriterion}
	Let $K, L \in \derCat{c}{G}{\Qbarl}$ be two complexes and $K \rightarrow L$ a morphism. Let $\Delta \subseteq \mathscr{C}(G)$ be a closed subset. Suppose the forget supports map
		\[
		\FMSH{K}  \rightarrow \FMST{L}
		\]
		is an isomorphism for all $\chi \in \mathscr{C}(G)$. The morphism $K \rightarrow L$ is an isomorphism in $\derCat{\Delta}{G}{\Qbarl}$. Moreover, all arrows in the square
	\[
		\begin{tikzcd}
		\FMSH{K} \arrow[d] \arrow[r] & \FMSH{L} \arrow[d] \\
		\FMST{K} \arrow[r]           & \FMST{L}          
	\end{tikzcd}
	\]
	are isomorphisms.
\end{theorem}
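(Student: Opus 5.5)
The plan is to reduce everything to the affine case and then use the propagation theorems to turn the assumed isomorphism of Fourier--Mellin transforms into a vanishing statement for the cone. Write $f\colon K\rightarrow L$ for the given morphism and $C$ for its cone. The composite $\FMSH{K}\rightarrow\FMSH{L}\rightarrow\FMST{L}$ and the composite $\FMSH{K}\rightarrow\FMST{K}\rightarrow\FMST{L}$ agree by naturality of the forget supports map. Both compute the diagonal of the square in the statement, and this diagonal is an isomorphism by hypothesis.

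\textbf{Reduction.} I would first pass to an isogeny $q\colon G\rightarrow S\times U$ (Theorem \ref{THM_AppendixStructureTheorem}). Lemma \ref{LEM_fsuppisogeny} and the projection formula Proposition \ref{PROP_ProjectionFormula2} replace $K,L$ by $q_*K,q_*L$ and $\Delta$ by $(q^*)^{-1}(\Delta)$, so we may assume $G=S\times U$. In the case $G=T\times U$ I would argue directly, as follows. By Theorem \ref{THM_IsoCrits}, it suffices to prove that $\FMSH{K}\rightarrow\FMSH{L}$ is an isomorphism for all $\chi\notin\Delta$. Equivalently, $\FMSH{C}=0$ for all $\chi\notin\Delta$, which by Theorem \ref{THM_DeltaLocNegligibleComplexes} means that $C$ is $\Delta$-negligible.

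\textbf{Key step.} The diagonal being an isomorphism shows that the top arrow is split injective and the right arrow is split surjective, and likewise for the other route around the square. I would then take perverse cohomology of the (restricted) Fourier--Mellin transforms. Proposition \ref{PROP_ftfuncbase} and Lemma \ref{LEM_fmprojformula} write $\FMSH{-}$ as $\pi_{\widehat U !}$ of the Fourier transform twisted by $\chi$. I would then argue by induction on the perverse amplitude of $C$. Choose the top degree $n$ with $\pervCoh{n}{C}\neq 0$ in $\Perv{\Delta}{G}$. Artin vanishing bounds $\pi_{\widehat U!}$ below and $\pi_{\widehat U *}$ above. Combining these bounds with the split injectivity and surjectivity, I expect to show that the lowest possible perverse cohomology of the $!$-transform of $\pervCoh{n}{C}$ vanishes outside $\Delta$, namely $\pervCoh{0}{\FMSH{\pervCoh{n}{C}}}=0$. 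Theorem \ref{THM_PropagationFM1} then kills $\pervCoh{n}{C}$ in $\Perv{\Delta}{G}$. Where only degree one vanishing is available directly, Theorem \ref{THM_PropagationFM2} propagates it to all positive degrees first. Iterating this gives $C=0$ in $\derCat{\Delta}{G}{}$.

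\textbf{Conclusion and main obstacle.} Once $K\cong L$ in $\derCat{\Delta}{G}{}$, Theorem \ref{THM_IsoCrits} makes both horizontal arrows isomorphisms. The vertical arrows then factor the diagonal isomorphism through isomorphisms, so they are isomorphisms as well. The main obstacle is the key step: extracting perverse-degree vanishing for the cone from a mere forget-supports isomorphism. I would first try the case where $K$ and $L$ are perverse, as in the intended application $K=M*_!N$, $L=M*_*N$. The non-affine factor $A$ is the second difficulty, since Artin vanishing fails there. I would handle it by pushing forward along the proper map to $T\times U$ using Lemma \ref{LEM_fmproperpush}, with the decomposition theorem as in Proposition \ref{PROP_GysinUnramifiedCriterion}. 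I expect this to work only "morally" outside tori, matching the paper's own caveat.
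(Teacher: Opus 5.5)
Your outer framework is fine and matches the paper. Theorem \ref{THM_IsoCrits} reduces the claim to the horizontal arrows, and once $K\to L$ is an isomorphism in $\derCat{\Delta}{G}{}$, the vertical arrows follow from the diagonal. The gap is the key step, which you leave as an expectation. The mechanism you propose for it cannot deliver it.

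After passing to the cone, you know two things: $\FMSH{C}$ is a direct summand of $\FMSH{L}$, and $\FMST{C}[-1]$ is a direct summand of $\FMST{K}$. Artin vanishing isolates only two corners. For the $!$-transform it isolates the \emph{bottom} perverse degree $b$ of $C$, namely $\pervCoh{b}{\FMSH{C}}=\pervCoh{0}{\FMSH{\pervCoh{b}{C}}}$. For the $*$-transform it isolates the \emph{top} degree $n$, namely $\pervCoh{n}{\FMST{C}}=\pervCoh{0}{\FMST{\pervCoh{n}{C}}}$. Your target $\pervCoh{0}{\FMSH{\pervCoh{n}{C}}}$ is the mismatched corner. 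Differentials leave it, so $\FMSH{C}$ only sees a subobject of it. Even at the correct corners, you only learn that the term is a summand of $\pervCoh{b}{\FMSH{L}}$ or of $\pervCoh{n+1}{\FMST{K}}$. Neither need vanish, since $L$ may live below $b$ and $K$ above $n+1$. Nothing in your plan controls the perverse amplitude of $\FMSH{L}$ or $\FMST{K}$, and that control is the real content of the theorem.

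The missing idea is to compare two \emph{different} spectral sequences rather than work with the cone.
\begin{itemize}
\item The first is the $!$-spectral sequence of $K$, $E_2^{pq}=\pervCoh{p}{\FMSH{\pervCoh{q}{K}}}$, supported in $p\geq 0$ by Artin vanishing.
\item The second is the $*$-spectral sequence of $L$, $E'^{pq}_2=\pervCoh{p}{\FMST{\pervCoh{q}{L}}}$, supported in $p\leq 0$.
\item The diagonal map gives a morphism $E\to E'$ that is an isomorphism on abutments.
\end{itemize}
Suppose $E_2^{pq}=0$ for $p\geq1$ and $q<n$. Then $F^1\pervCoh{n+1}{\FMSH{K}}=E_2^{1n}$. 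This maps injectively into $\pervCoh{n+1}{\FMST{L}}$, but it lands in $F^1\pervCoh{n+1}{\FMST{L}}=0$, so $E_2^{1n}=0$. Theorem \ref{THM_PropagationFM2} then gives $E_2^{pn}=0$ for all $p\geq1$; this is exactly where degree-one propagation is used.

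By induction, $\FMSH{\pervCoh{q}{K}}$ is perverse, and dually $\FMST{\pervCoh{q}{L}}$ is perverse. Both sequences collapse, and the diagonal induces isomorphisms $\FMSH{\pervCoh{q}{K}}\to\FMST{\pervCoh{q}{L}}$ for every $q$. Only now are you in the perverse case $M\to N$. Left exactness of $\pervCoh{0}{\FMSH{-}}$ on perverse sheaves gives $\pervCoh{0}{\FMSH{\ker}}\hookrightarrow\pervCoh{0}{\FMSH{M}}\hookrightarrow\pervCoh{0}{\FMSH{N}}$ with zero composite, so the first term vanishes. Dually $\pervCoh{0}{\FMST{\text{coker}}}=0$, and Theorem \ref{THM_PropagationFM1} kills both the kernel and the cokernel.

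For the abelian factor, the decomposition theorem is neither needed nor available, since $K$ and $L$ are arbitrary and not pure. Both hypothesis and conclusion are statements about Fourier--Mellin transforms. Lemma \ref{LEM_fmproperpush} carries these along the proper map to the affine quotient, after twisting by the abelian component of $\chi$. So the statement holds for general $G$, not only morally outside tori.
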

\begin{proof} We can find a proper map $G \rightarrow G'$ from $G$ to an affine group $G'$ by Theorem \ref{THM_AppendixStructureTheorem}. By Lemma \ref{LEM_fmproperpush}, the conditions are invariant under pushforward to $G'$. Thus we can reduce to $G'$ and assume $G = T\times U$ is affine.
	
	We consider the spectral sequences
	\begin{align*}
	E^{pq}_2 :=	\pervCoh{p}{\FMSH{\pervCoh{q}{K}}} &\Rightarrow \pervCoh{p + q}{\FMSH{K}}\\
	E'^{pq}_2 := \pervCoh{p}{\FMST{\pervCoh{q}{L}}} &\Rightarrow \pervCoh{p + q}{\FMST{L}}
	\end{align*}
	for all $\chi \notin \Delta$. Note that the isomorphism from (1) is induced by a morphism of spectral sequences $E^{pq}_2 \rightarrow E'^{pq}_2$. We argue by induction on the integer $n \in \BZ$ with $$\pervCoh{p}{\text{FM}_{\chi, \Delta, !}(\pervCoh{q}{K})} = 0$$ for all $q < n$ and $p \geq 1$, for the vanishing $\pervCoh{p}{\text{FM}_{\chi, \Delta, !}(\pervCoh{q}{K})} = 0$ for all $p \geq 1$ and $q \leq n$. Note that such an integer exists, because the complexes are bounded.
	
	Suppose $n \in \BZ$ satisfies the above property. Let $\chi \in \cvar{G}$. We denote by $F^{\cdot}(\pervCoh{n + 1}{\FMSH{K}})$ and $F^{\cdot}(\pervCoh{n + 1}{\FMST{L}})$ the filtrations induced by the above spectral sequences. We have a commutative square
	\[
	\begin{tikzcd}
		F^{1}(\pervCoh{n + 1}{\FMSH{K}}) \arrow[r, hook] \arrow[d] & \pervCoh{n + 1}{\FMSH{K}} \arrow[d, "\sim"] \\
		F^1(\pervCoh{n + 1}{\FMST{L}}) \arrow[r, hook]             & \pervCoh{n + 1}{\FMST{L}}                  
	\end{tikzcd}
	\]
	By the induction hypothesis, we can evaluate these parts of the filtration to be
	\[
	\begin{tikzcd}
		E_2^{1n} \arrow[r, hook] \arrow[d]  & \pervCoh{n + 1}{\FMSH{K}} \arrow[d, "\sim"] \\
		0 \arrow[r, hook]                                   & \pervCoh{n + 1}{\FMST{L}}                  
	\end{tikzcd}
	\]
	Thus $E_2^{1n} = 0$. By Theorem \ref{THM_PropagationFM2}, we have $E^{pn}_2 = 0$ for all $p \geq 1$ and $\chi \notin \Delta$. Hence we have completed the induction step.
	
	Thus we have proven that $\text{FM}_{\chi, \Delta, !}(\pervCoh{q}{K})$ is perverse for all $\chi \notin \Delta$ and $q \in \BZ$. Note that we can apply the same argument to the dual $L^\vee \rightarrow K^\vee$. We obtain that $\text{FM}_{\chi, \Delta, *}(\pervCoh{q}{L})$ is perverse for all $q \in \BZ$.  This means that the above spectral sequences collapse and the natural map induces isomorphisms
	\[
	\text{FM}_{\chi, \Delta, !}(\pervCoh{q}{K}) \xrightarrow[]{} \text{FM}_{\chi, \Delta, *}(\pervCoh{q}{L}).
	\]
	
	Thus we can assume $K$ and $L$ are perverse by Theorem \ref{THM_IsoCrits}. Suppose $M, N \in \Perv{}{G}$ are perverse sheaves with a morphism $M \rightarrow N$ such that the forget supports morphism
	\[
	\text{FM}_{\chi, \Delta, !}(M) \rightarrow \text{FM}_{\chi,\Delta,*}(N)
	\]
	is an isomorphism for all $\chi \in \mathscr{C}(G)$. Let $\ker$ be the kernel of $M \rightarrow N$.	We have a commutative diagram for all $\chi \notin \Delta$
	\[
	\begin{tikzcd}[column sep=small]
		0 \arrow[r] & \pervCoh{0}{\text{FM}_{\chi, \Delta, !}(\ker)} \arrow[r, hook] & \pervCoh{0}{\text{FM}_{\chi, \Delta, !}(M)} \arrow[r, hook] \arrow[d, "\sim"] & \pervCoh{0}{\text{FM}_{\chi, \Delta, !}(N)} \arrow[ld] \\
		&                                                & \pervCoh{0}{\text{FM}_{\chi, \Delta, *}(N)}                                   &                                          
	\end{tikzcd}
	\]
	The upper row is exact by Artin's vanishing theo	Thus we have proven that $\text{FM}_{\chi, \Delta, !}(\pervCoh{q}{K})$ is perverse for all $\chi \notin \Delta$ and $q \in \BZ$. Note that we can apply the same argument to the dual $L^\vee \rightarrow K^\vee$. We obtain that $\text{FM}_{\chi, \Delta, *}(\pervCoh{q}{L})$ is perverse for all $q \in \BZ$.  This means that the above spectral sequences collapse and the natural map induces isomorphismsrem. Thus we obtain
	\[
	\pervCoh{0}{\text{FM}_{\chi, \Delta, !}(\ker)} = 0.
	\]
	for all $\chi \notin \Delta$. This implies $\ker = 0$ in the $\Delta$-localized category by Theorem \ref{THM_PropagationFM1}. Dually, we can prove that the cokernel of $M \rightarrow N$ is zero. Therefore, the map $M \rightarrow N$ is an isomorphism in the $\Delta$-localized category.  
	
	We return to the general case and prove that all arrows in the square are isomorphisms. Recall the commutative square
	\[
	\begin{tikzcd}
		\FMSH{K} \arrow[d] \arrow[r] & \FMSH{L} \arrow[d] \\
		\FMST{K} \arrow[r]           & \FMST{L}          
	\end{tikzcd}
	\]
	By Theorem \ref{THM_IsoCrits}, the arrows to the right are isomorphisms. Since the diagonal morphism is an isomorphism by assumption, all arrows in this commutative diagram are isomorphism. 
\end{proof}
The spectral sequence argument yields further conclusions. We do not require this conclusion, but it would imply the existence of a Tannakian category without an appeal to the decomposition theorem for affine groups. We simply record these consequences here.
\begin{corollary}
	Let $G = T\times U$. Suppose $K_1 \rightarrow K_2$ is a morphism of complexes $K_1, K_2 \in \derCat{c}{G}{\Qbarl}$. Let $\Delta \subseteq \mathscr{C}(G)$ be a closed subset and $K \rightarrow L$ an isomorphism. If the natural forget supports morphism
	\[
	\FMSH{K_1} \rightarrow \FMST{K_2}
	\]
	is an isomorphism for all $\chi \notin \Delta$, then we have
	\begin{align*}
	\FMQ{\pervCoh{n}{K_i}} &= \pervCoh{n}{\FMQ{K_i}} 
	\end{align*}
	for all $n \in \BZ$, $i = 1, 2$, and $\chi \notin \Delta$. Moreover, the arrows in the square
	\[
	\begin{tikzcd}
		\FMSH{K_1} \arrow[d] \arrow[r] & \FMSH{K_2} \arrow[d] \\
		\FMST{K_1} \arrow[r]           & \FMST{K_2}          
	\end{tikzcd}
	\]
	are isomorphisms for all $\chi \notin \Delta$.
\end{corollary}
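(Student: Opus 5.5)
The plan is to run the spectral sequence argument from the proof of Theorem \ref{THM_LocalizationTheCriterion} once for the source and once for the dual of the target, and then read off both conclusions. Since $G = T\times U$ is already affine, no reduction along a proper map is needed. For each $\chi \notin \Delta$ I consider the two spectral sequences
\[
E^{pq}_2 = \pervCoh{p}{\FMSH{\pervCoh{q}{K_1}}} \Rightarrow \pervCoh{p+q}{\FMSH{K_1}}, \qquad E'^{pq}_2 = \pervCoh{p}{\FMST{\pervCoh{q}{K_2}}} \Rightarrow \pervCoh{p+q}{\FMST{K_2}},
\]
together with the morphism between them induced by $K_1 \to K_2$ and the forget supports map. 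The morphism on abutments is an isomorphism by hypothesis.

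\emph{Step 1: the terms with $p \geq 1$ vanish in the first sequence.} I induct upward on $q$, starting below the bounded perverse range of $K_1$. Assume $E_2^{pq} = 0$ for all $p \geq 1$ and all $q \leq n$. By Artin vanishing, $E'^{pq}_2 = 0$ for $p \geq 1$, so $F^1$ of the target abutment vanishes. The inductive hypothesis identifies $F^1 \pervCoh{n+1}{\FMSH{K_1}}$ with $E_2^{1n}$. Because the abutment map is injective and commutes with the filtrations, $E_2^{1n}$ maps injectively into $F^1 = 0$, so $E_2^{1n}=0$. Theorem \ref{THM_PropagationFM2} then kills $E_2^{pn}$ for all $p \geq 1$. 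Artin vanishing already kills the terms with $p<0$, so each $\FMSH{\pervCoh{q}{K_1}}$ is perverse.

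\emph{Step 2: the terms with $p \leq -1$ vanish in the second sequence.} I apply the same argument to the dual morphism $K_2^\vee \to K_1^\vee$, using Lemma \ref{LEM_fmdual} to swap $!$ and $*$, and use the dual half of Theorem \ref{THM_PropagationFM2}. This gives that each $\FMST{\pervCoh{q}{K_2}}$ is perverse.

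\emph{Step 3: the commutation with perverse cohomology for the cross terms.} With Steps 1 and 2, both spectral sequences collapse, which gives $\FMSH{\pervCoh{n}{K_1}} = \pervCoh{n}{\FMSH{K_1}}$ and the analogous formula for $K_2$ with $*$. The remaining cross terms, namely $\FMST{\pervCoh{n}{K_1}}$ and $\FMSH{\pervCoh{n}{K_2}}$, must also be handled.
\begin{itemize}
\item First, the first part of Theorem \ref{THM_LocalizationTheCriterion} gives that $K_1 \to K_2$ is an isomorphism in $\derCat{\Delta}{G}{}$.
\item By Theorem \ref{THM_IsoCrits} it then induces isomorphisms $\pervCoh{n}{K_1}\cong\pervCoh{n}{K_2}$ in $\Perv{\Delta}{G}$.
\item It also induces isomorphisms on all $\FMQ{-}$, since the restricted Fourier–Mellin transforms factor through the localization.
\item Transporting the two collapsed statements across these isomorphisms gives the formula for both $K_i$ and both supports.
\end{itemize}

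\emph{Step 4: the square.} In the square, the horizontal arrows are isomorphisms by Theorem \ref{THM_IsoCrits}, and the diagonal is an isomorphism by hypothesis. Hence the vertical arrows are isomorphisms as well.

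I expect the main obstacle to be the bookkeeping in Step 1: checking that the morphism of spectral sequences respects the filtrations, and that $F^1$ of the abutment equals $E^{1n}_2$ under the inductive vanishing. In particular, the differentials into $E^{1n}$ come from $E^{-1,n+1}$, which vanishes by Artin, so $E^{1n}_2 = E^{1n}_\infty$; I plan to verify this carefully.
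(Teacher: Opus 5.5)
Your proposal is correct and is essentially the paper's argument. The corollary is extracted from the spectral-sequence induction in the proof of Theorem~\ref{THM_LocalizationTheCriterion} (Artin vanishing plus Theorem~\ref{THM_PropagationFM2}, then the same for the dual morphism), and the square is handled exactly as there via Theorem~\ref{THM_IsoCrits}. Your transport step for the cross terms $\text{FM}_{\chi,\Delta,*}$ of $K_1$ and $\text{FM}_{\chi,\Delta,!}$ of $K_2$ just makes explicit what the paper leaves implicit.

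One indexing slip: the inductive hypothesis should be vanishing of $E_2^{pq}$ for $p\ge 1$ and $q<n$, not $q\le n$. As you wrote it, $E_2^{1n}$ is already zero by hypothesis and the step is vacuous. With $q<n$, the $F^1$ of the degree-$(n+1)$ abutment is $E_2^{1n}=E_\infty^{1n}$, and the step genuinely proves something.
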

We also obtian the following corollary from the spectral sequence argument.
\begin{proposition}
	Let $G = T\times U$ and $\Delta \subseteq \cvar{G}$ a closed subset. Let $K \in \derCat{c}{G}{\Qbarl}$ and suppose at least one of the following conditions: 
	\begin{enumerate}[]\item The complex $K$ has perverse amplitude $\geq 0$ and $\FMSH{K}$ is perverse for all $\chi \notin \Delta$.
		\item The complex $K$ has perverse amplitude $\leq 0$ and $\FMST{K}$ is perverse for all $\chi \notin \Delta$.
	 \end{enumerate}
	 Then $K$ is perverse in $\derCat{\Delta}{G}{\Qbarl}$.
\end{proposition}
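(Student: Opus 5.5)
The approach is to run the spectral sequence argument from the proof of Theorem \ref{THM_LocalizationTheCriterion} in a one-sided form, and then conclude with Theorem \ref{THM_PropagationFM1}. By symmetry under the duality $K \mapsto K^\vee$ (Lemma \ref{LEM_fmdual} together with Corollary \ref{COR_SupportDuality}), case (2) reduces to case (1). So assume $K$ has perverse amplitude $\geq 0$ and that $\FMSH{K}$ is perverse for all $\chi\notin\Delta$. For each such $\chi$ consider the spectral sequence
\[
E^{pq}_2 = \pervCoh{p}{\FMSH{\pervCoh{q}{K}}} \Rightarrow \pervCoh{p+q}{\FMSH{K}} .
\]
Since $G = T\times U$ is affine, the functor $\text{FM}_{\chi,\Delta,!}$ is, by Artin's vanishing theorem, the Fourier transform of a $!$-pushforward along an affine morphism restricted to an open set. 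On perverse sheaves it is therefore concentrated in perverse degrees $p\geq 0$, exactly as used in the proof of Proposition \ref{PROP_GysinUnramifiedCriterion}. Hence $E^{pq}_2 = 0$ for $p<0$ and for $q<0$.

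The first step is to show $E^{p0}_2=0$ for $p\geq 1$. The term $E^{10}_2$ is untouched by differentials: it receives $d_2$ only from $E^{-1,1}_2=0$, and its outgoing differentials land in rows $q<0$. So it survives to $E_\infty$ and is a graded piece of $\pervCoh{1}{\FMSH{K}}$, which vanishes because $\FMSH{K}$ is perverse. Theorem \ref{THM_PropagationFM2}, applied to $\pervCoh{0}{K}$, then gives $E^{p0}_2=0$ for all $p\ge1$. Next I induct on $q$. Suppose $E^{pq'}_2=0$ for all $p\geq 1$ and all $q'<q$. Then $E^{1q}_2$ is the only possibly nonzero term of total degree $q+1$ in its filtration position, and it is not hit by any differential: sources would lie in columns $\leq -1$. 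Its outgoing differentials go to rows $q'<q$ in columns $\geq 3$, which vanish by the induction hypothesis. So $E^{1q}_2$ injects into $\pervCoh{q+1}{\FMSH{K}}=0$, and Theorem \ref{THM_PropagationFM2} again kills the whole row $E^{pq}_2$ for $p\ge 1$. Consequently every $\FMSH{\pervCoh{q}{K}}$ is perverse, the spectral sequence degenerates, and $\FMSH{\pervCoh{q}{K}}\cong \pervCoh{q}{\FMSH{K}}$, which is $0$ for $q\ne 0$.

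Finally, for $q\neq 0$ we have $\pervCoh{0}{\FMSH{\pervCoh{q}{K}}}=0$ for all $\chi\notin\Delta$. Theorem \ref{THM_PropagationFM1} then gives $\pervCoh{q}{K}=0$ in $\Perv{\Delta}{G}$, so $K$ is perverse in $\derCat{\Delta}{G}{\Qbarl}$.

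The main obstacle is bookkeeping the differentials in the induction so that $E^{1q}_2$ genuinely injects into the abutment. This needs all the lower rows to vanish in columns $\ge1$ and uses only the column bound $p\geq 0$ from Artin vanishing. A second point is that Theorem \ref{THM_PropagationFM2} is stated for all characters while we only control $\chi\notin\Delta$. I would handle this as in the proof of Theorem \ref{THM_LocalizationTheCriterion}, by reading that theorem for the restricted transforms at $\chi\notin\Delta$, and I expect this to work just as it does there.
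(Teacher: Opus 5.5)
Your proof is correct and is essentially the paper's argument. You use the same spectral sequence, the same Artin-vanishing bound $E^{pq}_2=0$ for $p<0$, and the same step where the surviving term $E^{10}_2$ together with Theorem~\ref{THM_PropagationFM2} kills row $0$. The only difference is in the rows $q\ge 1$: the paper's sketch uses Theorem~\ref{THM_PropagationFM1} there, since the corner term $E^{0q}_2$ survives and is a quotient of ${}^pH^q$ of the transform, which is $0$, so the whole row vanishes at once; you instead reuse Theorem~\ref{THM_PropagationFM2} via $E^{1q}_2$, as in the proof of Theorem~\ref{THM_LocalizationTheCriterion}. Your final appeal to Theorem~\ref{THM_PropagationFM1} could also be replaced by Theorem~\ref{THM_DeltaLocNegligibleComplexes}, because at that point you already know the full transforms vanish.
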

\begin{proof}
	We only consider (1), then (2) follows from duality. Consider the spectral sequence for all $\chi \notin \Delta$
	\[
	E^{pq}_2 :=	\pervCoh{p}{\FMSH{\pervCoh{q}{K}}} \Rightarrow \pervCoh{p + q}{\text{FM}_{\chi, \Delta, !}(K)}
	\]
	The term $E^{10}_2$ has converged, hence $E^{10}_2 = 0$. We obtain $E^{p0}_2 = 0$ for all $p \geq 1$ by Theorem \ref{THM_PropagationFM2}. Now we can prove $E^{pq}_2 = 0$ for all $p \geq 1, q \geq 0$ by Theorem \ref{THM_PropagationFM1} and induction on $p$. 
\end{proof}
\subsection{Convolution}\label{SUBSEC_Convolution} In this section we define the convolution functors and record the consequences of the above isomorphism theorems for these convolutions from our previously proven theorems. 
\begin{definition}
	Let  $\mathbf{\Delta} $ be a family of closed subsets. We denote by $m\colon G\times G\rightarrow G$ the multiplication map. Let $K, L \in \derCat{\mathbf{\Delta}}{G}{\Qbarl}$ be two complexes on $G$. We define the \textit{$!$-convolution}
	\[
	K*_!L := m_!(K\boxtimes L)
	\]
	and the \textit{$*$-convolution}
	\[
	K*_* L := m_*(K\boxtimes L).
	\]
	These functors are well-defined by the Künneth formula and Theorem \ref{THM_IsoCrits}.
\end{definition}
We now state the well-known properties of these sheaf convolutions. We begin with the most basic piece of information:
\begin{proposition}
	Let $\mathbf{\Delta} $ be a family of closed subsets. The $?$-convolution defines a bifunctor
	\[
	-*_?-\colon \derCat{\mathbf{\Delta} }{G}{\Qbarl}\times \derCat{\mathbf{\Delta} }{G}{\Qbarl} \rightarrow \derCat{\mathbf{\Delta} }{G}{\Qbarl}.
	\]
	The $?$-convolution is associative, commutative, and admits the unit $\delta_0$. The $?$-convolution equips the category of $\mathbf{\Delta}$-localized complexes with the structure of a tensor category.
\end{proposition}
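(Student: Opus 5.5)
The plan is to first construct the bifunctor on the unlocalized category $\derCat{c}{G}{\Qbarl}$ in the standard way, and then show that it descends to the Verdier quotient $\derCat{\mathbf{\Delta}}{G}{\Qbarl}$. On $\derCat{c}{G}{\Qbarl}$, the constructions $K\boxtimes L$, $m_!$ and $m_*$ are triangulated in each variable. Associativity comes from the identities $m\circ(m\times\text{id}) = m\circ(\text{id}\times m)$ together with the Künneth formula and proper (respectively smooth) base change for $m_!$ (respectively $m_*$). Commutativity comes from $m\circ\text{sw} = m$, since $G$ is commutative. The unit is $\delta_0$: we have $K\boxtimes\delta_0 = (\text{id}\times e)_*K$, and $m\circ(\text{id}\times e)=\text{id}$ is a closed immersion followed by the identity, so $m_?$ of it returns $K$. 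All these isomorphisms satisfy the pentagon and hexagon coherences, because they are all induced by canonical base change and Künneth isomorphisms. This is standard and I would only indicate it.

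The substantive step is descent: the convolution must send $N_{\mathbf{\Delta}}$ into $N_{\mathbf{\Delta}}$ in each variable and turn isomorphisms in the quotient into isomorphisms. I would use the Künneth formula together with Proposition \ref{PROP_CharKunneth}: since $m^*\SL_\chi = \SL_\chi\boxtimes\SL_\chi$, the projection formula gives
\[
H^*_?(G, (K*_?L)_\chi) = H^*_?(G, K_\chi)\otimes H^*_?(G, L_\chi).
\]
Here $*_!$ pairs with compactly supported cohomology and $*_*$ with ordinary cohomology. Now suppose $K\in N_{\mathbf{\Delta}}$, so that $\text{FSupp}(K)\subseteq\Delta$ for some $\Delta\in\mathbf{\Delta}$. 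Theorem \ref{THM_SupportLemma} together with the fact that $\mathbf{\Delta}$ is closed under finite unions shows that the Fourier support of a complex in $N_{\mathbf{\Delta}}$ is contained in a single member of the family. The displayed formula then gives $\text{FSupp}_?(K*_?L)\subseteq \Delta$. Theorem \ref{THM_EqualitySupport} says the $!$- and $*$-supports agree, so this holds for either convolution. By Theorem \ref{THM_SupportLemma} again, every perverse cohomology sheaf of $K*_?L$ has Fourier support inside $\Delta$, so $K*_?L\in N_{\mathbf{\Delta}}$.

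For a morphism $K\to K'$ whose cone lies in $N_{\mathbf{\Delta}}$, apply exactness of $-*_?L$. The cone of $K*_?L\to K'*_?L$ is the convolution of the cone with $L$, which lies in $N_{\mathbf{\Delta}}$, so the map is an isomorphism in the quotient. The universal property of the Verdier quotient then gives the bifunctor, and the symmetric monoidal structure transports because the quotient functor is triangulated and essentially surjective.

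I expect the main obstacle to be the family case. A priori an object of $N_{\mathbf{\Delta}}$ only has each $\pervCoh{n}{K}$ supported in some $\Delta_n\in\mathbf{\Delta}$. Boundedness makes only finitely many $n$ relevant, and the union property then yields one $\Delta$. I would check carefully that Theorem \ref{THM_SupportLemma} gives the needed equality $\text{FSupp}(K)=\bigcup_n\text{FSupp}(\pervCoh{n}{K})$, as the paper asserts. A secondary point is that the Künneth identity above must hold for every $\chi\in\cvar{G}$ and not only for arithmetic characters; this follows from Proposition \ref{PROP_CharKunneth} applied to $G\times G$.
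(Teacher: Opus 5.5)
Your proposal is correct and follows essentially the paper's own route. The paper gives no separate proof: it asserts that the convolutions are well defined on $\derCat{\mathbf{\Delta}}{G}{\Qbarl}$ by the K\"unneth formula $H^*_?(G,(K*_?L)_\chi)=H^*_?(G,K_\chi)\otimes H^*_?(G,L_\chi)$ together with the isomorphism criterion of Theorem \ref{THM_IsoCrits}, which is itself derived from the support lemma you invoke directly, and it treats associativity, commutativity and the unit $\delta_0$ as the standard properties on $\derCat{c}{G}{\Qbarl}$. Your explicit reduction of an object of $N_{\mathbf{\Delta}}$ to a single member of the family, using boundedness and closure of $\mathbf{\Delta}$ under finite unions, fills in a detail the paper leaves implicit.
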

We have internal Homs for the $!$-convolution:
\begin{proposition}
	Let $\mathbf{\Delta} $ be a family of closed subsets. Let $K, L \in \derCat{\mathbf{\Delta} }{G}{\Qbarl}$. We have an adjunction
	\begin{align*}
		\text{Hom}(K*_!L, M) = \text{Hom}(K, L^\vee*_*M).
	\end{align*}
	The natural morphism
	\[
	K*_!L \rightarrow K*_*L
	\]
	is naturally isomorphic to the natural morphism
	\[
	K*_! L \rightarrow \underline{\text{Hom}}(K, L^\vee),
	\]
	where the object on the right is the internal Hom object with respect to the $!$-convolution.
\end{proposition}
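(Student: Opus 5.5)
The plan is to prove both statements first in $\derCat{c}{G}{\Qbarl}$, and then push them down to the Verdier quotient $\derCat{\mathbf{\Delta}}{G}{\Qbarl}$.

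\emph{Adjunction before localization.} Using the adjunction $m_! \dashv m^!$ and the external Hom,
\[
\text{Hom}(K*_!L, M) = \text{Hom}(K\boxtimes L, m^!M) = \text{Hom}\big(K, \pi_{1*}\underline{\text{Hom}}(\pi_2^*L, m^!M)\big).
\]
Next I would rewrite the inner Hom as
\[
\underline{\text{Hom}}(\pi_2^*L, m^!M) = D\big(\pi_2^*L\otimes m^*DM\big) = D(\pi_2^*L)\otimes^! m^!M .
\]
I would then apply the shear automorphism $s(x,y) = (x+y,-y)$ of $G\times G$. It satisfies $m\circ s = \pi_1$ and $\pi_2\circ s = \text{inv}\circ\pi_2$, and it transports $\pi_1$ to the multiplication map. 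This identifies $\pi_{1*}\underline{\text{Hom}}(\pi_2^*L, m^!M)$ with $m_*\big(\text{inv}^*D(L)\boxtimes M\big) = L^\vee *_* M$. The identification only uses that $s$ is an isomorphism together with the Künneth formula $D(A\boxtimes B) = DA\boxtimes DB$. This gives the adjunction in $\derCat{c}{G}{\Qbarl}$, functorially in $K,L,M$.

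\emph{Descent to the localization.} Next I would check that each of the functors $-*_!L$, $L^\vee *_* -$ and $(-)^\vee$ preserves $N_{\mathbf{\Delta}}$. For the two convolutions I would use the Künneth formula
\[
H^*_?(G,(K*_?L)_\chi) = H^*_?(G,K_\chi)\otimes H^*_?(G,L_\chi),
\]
together with Theorem \ref{THM_DeltaLocNegligibleComplexes}. For $\mathbf{\Delta}$ a family, I would run this for each $\Delta\in\mathbf{\Delta}$ and use that the family is closed under finite unions. For the duality I would use Corollary \ref{COR_SupportDuality}.

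Once all three functors preserve $N_{\mathbf{\Delta}}$, they descend to triangulated functors on the quotient. The unit and counit of the adjunction are natural transformations between functors that preserve $N_{\mathbf{\Delta}}$, so they descend as well. The triangle identities continue to hold after descent, which gives the adjunction on $\derCat{\mathbf{\Delta}}{G}{\Qbarl}$. I expect this to be the main obstacle. Morphisms in the Verdier quotient are roofs, so I would argue through the universal property of the quotient rather than through Hom-sets directly, and check that the descended unit and counit are still compatible with the roof calculus.

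\emph{The natural map.} The adjunction shows that $L\mapsto L^\vee*_*M$, more precisely $\underline{\text{Hom}}(L,M) := L^\vee *_* M$, is the internal Hom for $*_!$. Applying this to $(K, L^\vee)$ and using $(L^\vee)^\vee = L$ gives $\underline{\text{Hom}}(K^\vee, L) = K*_*L$ up to relabelling. This is the intended reading of the statement, whose formula appears to contain a small typo.

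It then remains to show that the forget-supports map $m_!\to m_*$, applied to $K\boxtimes L$, corresponds under these identifications to the canonical map
\[
K*_!L \to \underline{\text{Hom}}(K^\vee, L),
\]
namely the adjoint of the evaluation $K^\vee *_! K *_! L\to L$. I would verify this on the level of adjunction morphisms: both maps are adjoint to the composite built from the unit $\delta_0\to K*_*K^\vee$ and the counit $m_!m^!\to\text{id}$. The comparison reduces to the compatibility of $m_!\to m_*$ with the shear isomorphism, which is a routine diagram chase.
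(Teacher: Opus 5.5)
The paper gives no proof of this proposition. It appears, without argument, among the ``well-known properties'' of the two convolutions, so there is no proof of the paper to compare against. Your proposal supplies the standard argument, and it is correct.

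\textbf{The statement.} Your reading of the second claim is right. The adjunction forces $\underline{\text{Hom}}(L,M)=L^\vee*_*M$, so the literal target is $\underline{\text{Hom}}(K,L^\vee)=K^\vee*_*L^\vee=(K*_!L)^\vee$. The intended object is therefore $\underline{\text{Hom}}(K^\vee,L)$ (equivalently $\underline{\text{Hom}}(L^\vee,K)$), which is $K*_*L$ by biduality.

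\textbf{The shear computation.} This is correct: $m\circ s=\pi_1$, $\pi_1\circ s=m$, $\pi_2\circ s=\text{inv}\circ\pi_2$, and $s^*D(\pi_2^*L\otimes m^*DM)\cong M\boxtimes L^\vee$.

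\textbf{The descent step.} The step you flag as the main obstacle is purely formal. The Verdier quotient functor $Q$ is a localization, and precomposition with $Q$ is fully faithful on functor categories (Gabriel--Zisman). So once $-*_!L$ and $L^\vee*_*-$ preserve $N_{\mathbf{\Delta}}$:
\begin{itemize}
\item the unit and counit descend uniquely, with $\bar\eta\,Q=Q\,\eta$ and $\bar\epsilon\,Q=Q\,\epsilon$;
\item the triangle identities follow by faithfulness.
\end{itemize}
No roof calculus is needed.

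\textbf{Preservation of $N_{\mathbf{\Delta}}$.} This is exactly what the paper invokes for the well-definedness of $*_?$ on the localized category. It needs only:
\begin{itemize}
\item the K\"unneth formula and monotonicity of closures;
\item Theorem~\ref{THM_SupportLemma}, to pass between $K$ and its perverse cohomology sheaves (together with closure of $\mathbf{\Delta}$ under finite unions);
\item Theorem~\ref{THM_EqualitySupport}, to compare the $!$- and $*$-supports.
\end{itemize}

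\textbf{The natural map.} Since the descended adjunction is compatible with $Q$, you can identify the forget-supports map with the canonical map entirely in $\text{D}^b_c(G)$ and then transport it by $Q$. That compatibility — computing the adjoint of $m_!\to m_*$ through the explicit counit built from $m_!m^!\to\text{id}$, the shear and K\"unneth duality — is the only step with genuine content. Calling it a routine diagram chase is fair, and it holds up to the usual sign conventions.
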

Let us also state a few formulas.
\begin{proposition}
	Let  $\mathbf{\Delta} $ be a family of closed subsets. Let $K, L \in \derCat{\mathbf{\Delta}}{G}{\Qbarl}$ and $\chi \in \cvar{G}$. Then we have 
	\begin{align*}
		(K*_!L)^\vee &= K^\vee*_*L^\vee \\
		(K*_*L)^\vee &= K^\vee*_!L^\vee
	\end{align*}
\end{proposition}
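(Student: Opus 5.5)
The statement to prove is the duality formula
\[
(K*_!L)^\vee = K^\vee*_*L^\vee, \qquad (K*_*L)^\vee = K^\vee*_!L^\vee,
\]
first at the level of $\derCat{c}{G}{\Qbarl}$, then passed to the localization $\derCat{\mathbf{\Delta}}{G}{\Qbarl}$. The plan is a formal computation with the six operations.

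\textbf{Unlocalized case.} Recall $K^\vee = \text{inv}^*D(K)$. Verdier duality exchanges proper and ordinary pushforward, $D\,m_! = m_*\,D$, and it is compatible with exterior products over a finite field, $D(K\boxtimes L) = D(K)\boxtimes D(L)$ (Künneth for $D$). Hence
\[
D(K*_!L) = D\,m_!(K\boxtimes L) = m_*\big(D(K)\boxtimes D(L)\big).
\]
Next we apply $\text{inv}^*$ and use that $G$ is commutative, so $\text{inv}\circ m = m\circ(\text{inv}\times\text{inv})$. Since $\text{inv}$ is an isomorphism, $\text{inv}^* = \text{inv}_* = \text{inv}_!$. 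This gives
\[
\text{inv}^*m_*\big(DK\boxtimes DL\big) = m_*(\text{inv}\times\text{inv})^*\big(DK\boxtimes DL\big) = m_*\big(K^\vee\boxtimes L^\vee\big) = K^\vee*_*L^\vee.
\]
The second formula follows by applying the first to $K^\vee, L^\vee$ and using biduality $K^{\vee\vee}=K$, which holds because $D^2=\text{id}$ and $\text{inv}^2=\text{id}$.

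\textbf{Passing to the localization.} All functors involved descend to $\derCat{\mathbf{\Delta}}{G}{\Qbarl}$. The duality descends by Corollary \ref{COR_SupportDuality}, and the convolutions descend as noted in their definition. The isomorphisms above are natural in $K$ and $L$, so they descend as well, because the quotient functor is a triangulated functor through which every term factors.

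\textbf{Main point of care.} The one nontrivial input is the Künneth compatibility $D(K\boxtimes L)=DK\boxtimes DL$ together with the identification $D\,m_! = m_*D$ over the base $k$. Both are standard, from SGA 4½ and \cite{BBD}. Everything else is base change along the isomorphism $\text{inv}\times\text{inv}$.
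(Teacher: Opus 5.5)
Your argument is correct and is exactly the standard computation the paper has in mind: the paper lists this proposition among the ``well-known properties'' of sheaf convolution and gives no proof, so the intended justification is precisely the following chain.
\begin{itemize}
\item $D\,m_! = m_*D$.
\item $D(K\boxtimes L)=DK\boxtimes DL$.
\item Base change along the isomorphism $\text{inv}\times\text{inv}$, using $\text{inv}\circ m = m\circ(\text{inv}\times\text{inv})$; this is where commutativity of $G$ enters.
\item Biduality of $(-)^\vee$.
\end{itemize}
Your descent of these natural isomorphisms to $\derCat{\mathbf{\Delta}}{G}{}$ is likewise what the paper implicitly uses. It goes through Corollary \ref{COR_SupportDuality} for the duality and the stated well-definedness of the localized convolutions $*_?$.
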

\begin{proposition}
	Let $G$ be a connected commutative algebraic group over a finite field $k$ and $\mathbf{\Delta}$ a family of closed subsets. For each character $\chi \in \cvar{G}$, the twisting functor 
	\[
	(-)_\chi\colon \derCat{\mathbf{\Delta}}{G}{\Qbarl} \rightarrow \derCat{\overline{\chi}\cdot\mathbf{\Delta} }{G}{\Qbarl},~K \mapsto K_\chi	
	\]
	is well-defined. It defines a tensor functor. In particular, there are isomorphisms
	\begin{align*}
	(K*_!L)_\chi &= K_\chi*_!L_\chi \\
	(K*_*L)_\chi &= K_\chi*_*L_\chi. 
	\end{align*}
\end{proposition}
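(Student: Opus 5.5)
The statement to prove is the last proposition: for $\chi \in \cvar{G}$ the twisting functor $(-)_\chi$ descends to a functor $\derCat{\mathbf{\Delta}}{G}{\Qbarl} \rightarrow \derCat{\overline{\chi}\cdot\mathbf{\Delta}}{G}{\Qbarl}$, it is a tensor functor, and $(K*_?L)_\chi = K_\chi*_?L_\chi$. I will first establish the two isomorphisms at the level of $\derCat{c}{G}{\Qbarl}$ and only then pass to the quotients.

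The key input for the convolution identity is Proposition \ref{PROP_CharKunneth}, applied to the pullback along the multiplication map $m\colon G\times G\rightarrow G$. Since $m^*$ of a character is the diagonal pair, it gives $m^*\SL_\chi \cong \SL_\chi\boxtimes\SL_\chi$.

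With this, the projection formula for the lisse sheaf $\SL_\chi$ (valid for both $m_!$ and $m_*$) yields
\[
(K*_?L)_\chi = m_?(K\boxtimes L)\otimes\SL_\chi \cong m_?\big((K\boxtimes L)\otimes m^*\SL_\chi\big) \cong m_?(K_\chi\boxtimes L_\chi) = K_\chi *_? L_\chi .
\]
These isomorphisms are natural in $K$ and $L$. They are compatible with the associativity and commutativity constraints because everything is built from the canonical Künneth and projection isomorphisms. For the unit, $(\delta_0)_\chi = \delta_0$, since $\SL_\chi$ is trivialized at $e$.

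Next, the twist must descend to the Verdier quotients. Twisting is $t$-exact for the perverse $t$-structure, because tensoring with a rank one local system is $t$-exact. Hence $\pervCoh{n}{K_\chi} = \pervCoh{n}{K}_\chi$, and it suffices to control Fourier supports of perverse sheaves. For every $\nu$ one has
\[
H^*_?(G, (M_\chi)_\nu) = H^*_?(G, M_{\chi\nu}),
\]
so $\text{FSupp}(M_\chi) = \overline{\chi}\cdot\text{FSupp}(M)$. Here one uses that translation by a character is a homeomorphism of $\cvar{G}$ in the standard topology, which holds because $\cvar{G}$ is a topological group (Lemma \ref{LEM_ContinuousSABStandard} and Definition \ref{DEF_topology}). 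Thus $N_{\mathbf{\Delta}}$ is sent into $N_{\overline{\chi}\cdot\mathbf{\Delta}}$. Here $\overline{\chi}\cdot\mathbf{\Delta}$ is again a family of closed subsets, since translation preserves closedness and commutes with unions. By the universal property of the Verdier quotient, the twist descends to a triangulated functor. It is an equivalence, with inverse twisting by $\overline{\chi}$.

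Finally, the convolution identities pass to the localized categories. Convolution on the localized categories is well-defined (by the Künneth formula and Theorem \ref{THM_IsoCrits}, as noted after its definition), and the quotient functor is a tensor functor. So the isomorphisms constructed on $\derCat{c}{G}{\Qbarl}$ map to the asserted ones in the quotient.

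The main obstacle is the bookkeeping of the translated family $\overline{\chi}\cdot\mathbf{\Delta}$: one has to check that it is again a family of closed subsets and that the sign convention ($\overline{\chi}$ versus $\chi$) matches the support computation. Once translation is known to be a homeomorphism, the rest is formal.
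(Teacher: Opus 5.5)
The paper states this proposition without proof, as one of the formal properties of convolution, so your argument can only be measured against the tools it uses elsewhere. Your argument is the natural one and uses those tools correctly:
\begin{itemize}
\item $m^*\SL_\chi\cong\SL_\chi\boxtimes\SL_\chi$, from Propositions \ref{PROP_CharFunc} and \ref{PROP_CharKunneth}, exactly as in Proposition \ref{PROP_VanishingCohomology};
\item the projection formula for $m_!$, and for $m_*$ because $\SL_\chi$ is lisse;
\item $t$-exactness of the twist;
\item $\text{FSupp}(M_\chi)=\overline{\chi}\cdot\text{FSupp}(M)$, using that translation is a homeomorphism.
\end{itemize}
The bookkeeping with $\overline{\chi}\cdot\mathbf{\Delta}$, which you single out as the main obstacle, is in fact harmless.

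The step that does need attention is one you take for granted: that $K\mapsto K_\chi$ is an endofunctor of $\derCat{c}{G}{\Qbarl}$ at all. You need this before the universal property of the Verdier quotient can be applied. In this paper $\derCat{c}{G}{\Qbarl}$ means complexes over $k$, or quasi-arithmetic complexes on $G_{\overline{k}}$.

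Suppose $\chi$ has infinite order. Lemma \ref{LEM_DescentCharAri}, applied over each $k_n$ via Remark \ref{REM_InvBaseChangeFundamGrp}, shows that no power of Frobenius fixes $\chi$. Since $\text{Fr}^*\SL_\chi\cong\SL_{\text{Fr}^*\chi}$, and a character sheaf determines its character, $\SL_\chi$ does not descend to any finite field. So twisting leaves the category. For instance, on a torus the shifted constant sheaf is sent to $\SL_\chi[\dim G]$, which is an irreducible perverse sheaf that is not arithmetic.

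As written, then, your argument (and indeed the statement) only makes sense for $\chi$ of finite order. For those, after base change to a field of definition of $\chi$, everything you wrote goes through verbatim. These are also the only characters the paper needs for fiber functors and Frobenius conjugacy classes.

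If you want the result for all $\chi\in\cvar{G}$, you must first enlarge the ambient category to one stable under twisting. You would then have to re-check that the support results behind the well-definedness of convolution on the quotient still hold there. Those results are Theorem \ref{THM_SupportLemma}, and through it Theorem \ref{THM_IsoCrits}, and the paper proves them only for quasi-arithmetic objects. The defect is already present in the statement, but a proof should locate it rather than assert the descent for every $\chi$.
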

We state the basic functoriality statement.
\begin{proposition}\label{PROP_tannakafunc}
	Let $\pi \colon G \rightarrow G'$ be a group morphism. We consider families of closed subsets $\mathbf{\Delta}  \subseteq \cvar{G}$ and $\mathbf{\Delta} ' \subseteq \cvar{G'}$ such that for each $\Delta_i \in \mathbf{\Delta} $ there exists $\Delta_j \in \mathbf{\Delta}'$ with  $\Delta')_j \subseteq (\pi^*)^{-1}({\Delta}_i)$ The functors
	\[
	\pi_?\colon \derCat{\mathbf{\Delta} }{G}{\Qbarl} \rightarrow \derCat{\mathbf{\Delta} '}{G'}{\Qbarl}
	\]
	are tensor functors. In particular, we have an isomorphism
	\[
	\pi_?(K*_?L) = \pi_?(K)*_?\pi_?(L).
	\]
	 for all $K, L \in \derCat{\Delta}{G}{\Qbarl}$ in the category $\derCat{\Delta'}{G'}{\Qbarl}$.
\end{proposition}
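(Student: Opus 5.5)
The plan has two parts. First I show that $\pi_?$ descends to a triangulated functor between the localized categories. Then I produce the tensor structure from the identity $m'\circ(\pi\times\pi)=\pi\circ m$, where $m\colon G\times G\to G$ and $m'\colon G'\times G'\to G'$ are the multiplications.

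\emph{Descent of $\pi_?$.} The functor $\pi_?\colon \derCat{c}{G}{\Qbarl}\to\derCat{c}{G'}{\Qbarl}$ is triangulated. By the universal property of the Verdier quotient, it suffices to show $\pi_?(N_{\mathbf{\Delta}})\subseteq N_{\mathbf{\Delta}'}$. Take $K\in N_{\mathbf{\Delta}}$. By Theorem \ref{THM_SupportLemma} there is $\Delta_i\in\mathbf{\Delta}$ with $\text{FSupp}(K)\subseteq\Delta_i$; since $\mathbf{\Delta}$ is closed under finite unions, one $\Delta_i$ covers all the perverse cohomology sheaves at once.

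Let $\chi'\in\cvar{G'}$. Proposition \ref{PROP_CharFunc} gives $\pi^*\SL_{\chi'}=\SL_{\pi^*\chi'}$. The projection formula for the lisse sheaf $\SL_{\chi'}$ then gives
\[
H^*_?(G',(\pi_?K)_{\chi'})=H^*_?(G,K_{\pi^*\chi'}).
\]
This vanishes whenever $\pi^*\chi'\notin\Delta_i$, i.e.\ whenever $\chi'\notin(\pi^*)^{-1}(\Delta_i)$. The set $(\pi^*)^{-1}(\Delta_i)$ is closed, because $\pi^*$ is continuous in the standard topology (via the continuity lemmas for products and isogenies). Hence
\[
\text{FSupp}(\pi_?K)\subseteq(\pi^*)^{-1}(\Delta_i).
\]

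The hypothesis linking $\mathbf{\Delta}$ and $\mathbf{\Delta}'$ has to supply some $\Delta'_j\in\mathbf{\Delta}'$ containing $(\pi^*)^{-1}(\Delta_i)$. I read the inclusion in the statement in this direction, since it is the one the argument needs. Applying Theorem \ref{THM_SupportLemma} once more, every $\pervCoh{n}{\pi_?K}$ has Fourier support in $\Delta'_j$, so $\pi_?K\in N_{\mathbf{\Delta}'}$. Consequently morphisms with negligible cone are sent to morphisms with negligible cone, and $\pi_?$ factors through the localizations.

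\emph{Tensor structure.} Because $\pi$ is a group morphism, $m'\circ(\pi\times\pi)=\pi\circ m$. Using the composition isomorphism $(fg)_?=f_?g_?$ and the Künneth formula $(\pi\times\pi)_?(K\boxtimes L)=\pi_?K\boxtimes\pi_?L$, valid for both choices of supports, I get natural isomorphisms already in $\derCat{c}{G'}{\Qbarl}$:
\[
\pi_?(K*_?L)=\pi_?m_?(K\boxtimes L)=m'_?(\pi\times\pi)_?(K\boxtimes L)=m'_?(\pi_?K\boxtimes\pi_?L)=\pi_?K*_?\pi_?L.
\]
These descend to the quotient because both sides are well defined there. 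For the unit, $\pi_?\delta_0=\delta_0$ since $\pi(0)=0$.

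Compatibility with the associativity and commutativity constraints follows because all of these isomorphisms are built from base change, composition and Künneth isomorphisms. The constraints of the convolution are built from the same ingredients, together with the identities $m\circ(m\times\mathrm{id})=m\circ(\mathrm{id}\times m)$ and $m\circ\sigma=m$, where $\sigma$ swaps the factors of $G\times G$. Each of these identities commutes with $\pi\times\pi$ (respectively $\pi\times\pi\times\pi$). This is routine coherence checking.

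I expect the only genuine obstacle to be the well-definedness step, namely controlling the closure in $\text{FSupp}(\pi_?K)$ and the perverse cohomology of $\pi_?K$. Continuity of $\pi^*$ handles the closure, and the support lemma (Theorem \ref{THM_SupportLemma}) handles the perverse cohomology.
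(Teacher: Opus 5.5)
Your proof is correct. The paper states this proposition without proof, and your argument is the one the surrounding text implicitly relies on: the projection formula $H^*_?(G',(\pi_?K)_{\chi'})=H^*_?(G,K_{\pi^*\chi'})$ together with Theorem~\ref{THM_SupportLemma} shows $\pi_?(N_{\mathbf{\Delta}})\subseteq N_{\mathbf{\Delta}'}$, and then $m'\circ(\pi\times\pi)=\pi\circ m$ with K\"unneth gives the tensor structure.

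You are right to read the hypothesis as $(\pi^*)^{-1}(\Delta_i)\subseteq\Delta'_j$. This is how the paper later applies it, to $\pi=\mathrm{id}$ with $\mathbf{\Delta}\subseteq\mathbf{\Delta}'$, and the literal inclusion already fails for $\pi=\mathrm{id}$ and $\mathbf{\Delta}'=\{\emptyset\}$.

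With that reading you can also drop the appeal to continuity of $\pi^*$, which the paper only states explicitly for morphisms between products $S\times U$. The non-vanishing locus already lies in the closed set $\Delta'_j$, so its closure does too.
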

\begin{definition}
	Let $\mathbf{\Delta}$ be a family of closed subsets. We say $\chi \notin \mathbf{\Delta}$ if $\chi \notin\Delta_i$ for all $\Delta_i \in \mathbf{\Delta}$.
\end{definition}
We can now prove the crucial:
\begin{proposition}
	Let $\mathbf{\Delta}$ a family of closed subsets. Let $\chi \notin \mathbf{\Delta} $. The functor
	\[
	\omega_{?, \chi}(-)\colon \derCat{\Delta}{G}{\Qbarl} \rightarrow D^b_{\text{coh}}(\Qbarl),~K \mapsto H^*_?(G, K_\chi)
	\]
	is a tensor functor with respect to the $?$-convolution. Moreover, the two variants are interchanged by duality, i.e. there are isomorphisms
	\begin{align*}
		H^*_c(G, K_\chi)^\vee = H^*(G, (K^\vee)_\chi) \\
		H^*(G, K_\chi)^\vee = H^*_c(G, (K^\vee)_\chi).
	\end{align*}
	for all $K \in \derCat{\Delta}{G}{\Qbarl}$. 
\end{proposition}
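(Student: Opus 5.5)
The first step is to check that $\omega_{?,\chi}$ is well defined on $\derCat{\Delta}{G}{\Qbarl}$; the rest follows from the Künneth formula and Poincaré duality for the twisted complexes. Since $\chi\notin\mathbf{\Delta}$, we have $\chi\notin\Delta$ for every $\Delta\in\mathbf{\Delta}$. Hence for any $\mathbf{\Delta}$-negligible complex $K$ there is some $\Delta\in\mathbf{\Delta}$ with $\text{FSupp}(\pervCoh{n}{K})\subseteq\Delta$ for all $n$; we use that $\mathbf{\Delta}$ is closed under finite unions and that $K$ is bounded. Theorem \ref{THM_SupportLemma} then gives $\text{FSupp}(K)\subseteq\Delta$, so $H^*_?(G,K_\chi)=0$ by the definition of the Fourier support together with Theorem \ref{THM_EqualitySupport}. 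Thus $H^*_?(G,(-)_\chi)$ kills $N_{\mathbf{\Delta}}$. Since it is a triangulated functor, it factors uniquely through the Verdier quotient, as in Theorem \ref{THM_IsoCrits}.

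Next comes the tensor structure. Proposition \ref{PROP_CharKunneth} and Proposition \ref{PROP_CharFunc} give $m^*\SL_\chi\cong\SL_\chi\boxtimes\SL_\chi$. By the projection formula,
\[
(K*_?L)_\chi = m_?(K\boxtimes L)\otimes\SL_\chi \cong m_?\big((K\boxtimes L)\otimes m^*\SL_\chi\big)\cong m_?(K_\chi\boxtimes L_\chi).
\]
For $? = *$ this step needs $\SL_\chi$ lisse, which holds. Taking $H^*_?(G,-)$, using $H^*_?(G,m_?(-))=H^*_?(G\times G,-)$, and applying the Künneth formula gives
\[
H^*_?(G,(K*_?L)_\chi)\cong H^*_?(G,K_\chi)\otimes H^*_?(G,L_\chi).
\]
For $?=!$ this is the usual compactly supported Künneth formula. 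For $?=*$ we use Künneth for ordinary cohomology over $\overline{k}$ with constructible coefficients, which is available over a separably closed field for finite type schemes, e.g.\ via duality from the $!$ case. For the unit, $H^*_?(G,(\delta_0)_\chi)=\Qbarl$ because $\SL_\chi$ is trivialized at $e$. We then check that these isomorphisms are compatible with the associativity and commutativity constraints. This is formal: the constraints on convolution come from those for $\boxtimes$ and for $m$, and the Künneth isomorphism respects them, with the usual Koszul sign in the commutativity constraint. I expect this compatibility check, including the sign, to be the only place that needs care. In the localized category all of these isomorphisms descend, since both sides are well-defined functors.

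For duality, Poincaré–Verdier duality gives $H^*_c(G,K_\chi)^\vee\cong H^*(G,D(K_\chi))$. We have $D(K_\chi)=D(K)\otimes\SL_{\chi^{-1}}$ since $\SL_\chi$ is lisse of rank one with dual $\SL_{\chi^{-1}}$. Pulling back along the automorphism $\text{inv}$, which does not change cohomology, and using $\text{inv}^*\SL_{\chi^{-1}}=\SL_\chi$ from Proposition \ref{PROP_CharFunc}, we get $H^*(G,D(K)_{\chi^{-1}})\cong H^*(G,(\text{inv}^*DK)_\chi)=H^*(G,(K^\vee)_\chi)$. The second isomorphism follows by the same argument, or by applying the first to $K^\vee$ and using biduality. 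Finally, $K^\vee$ is well defined on the localization by Corollary \ref{COR_SupportDuality}, so these isomorphisms hold in $\derCat{\mathbf{\Delta}}{G}{\Qbarl}$.
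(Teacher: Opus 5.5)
Your proposal is correct and is essentially the argument the paper intends: the paper's proof only says the proposition is a direct consequence of the properties already established. Those properties are the vanishing of $H^*_?(G,(-)_\chi)$ on $\mathbf{\Delta}$-negligible complexes, the projection formula with $m^*\SL_\chi\cong\SL_\chi\boxtimes\SL_\chi$ together with the Künneth formula, and Verdier duality combined with $\text{inv}^*\SL_{\chi^{-1}}\cong\SL_\chi$, which is exactly what you assemble, in more detail than the paper.
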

This is a direct consequence of all the stated properties so far. We now come to the main theorem of this section where we prove that the localized convolution product preserves pervesity for unramified perverse sheaves. Before we prove perversity of the convolution, we require a lemma on semisimple objects in derived categories.
\begin{lemma}\label{LEM_ConvolutionSectionSemisimple}
	Let $D$ be a triangulated category with a bounded $t$-structure. Let $K, L \in D$ be objects in $D$ such that there is a map $r\colon K \rightarrow L$ which admits a section $s\colon L \rightarrow K$. Suppose
	\[
	L \cong \bigoplus_{n \in \BZ}H^n(L)[-n].
	\]
	Then
	\[
	K \cong \bigoplus_{n \in \BZ}H^n(K)[-n].
	\]
	Moreover, $H^n(K)$ is a direct factor of $H^n(L)$ for all $n \in \BZ$.
	
	Furthermore, suppose we are given a commutative square
	\[
\begin{tikzcd}
	K \arrow[d, "\eta"] \arrow[r, "r"] & L \arrow[d, "\eta"] \arrow[r, "s"] & K \arrow[d, "\eta"'] \\
	{K[2]} \arrow[r, "r"]              & {L[2]} \arrow[r, "s"]              & {K[2]}              
\end{tikzcd}
	\]
	We assume the morphisms
	\[
	\eta^i\colon H^{-i}(L) \rightarrow H^i(L)
	\]
	induced by $\eta$ are isomorphisms. Then the morphisms
	\[
	\eta^i\colon H^{-i}(K) \rightarrow H^i(K)
	\]
	are isomorphisms.
\end{lemma}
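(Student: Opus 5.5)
The plan is to split the statement into its two parts and argue each directly in the triangulated category $D$ with bounded $t$-structure, using only the idempotent $e := s\circ r$ on $L$. Note that $r\circ s = \text{id}_K$ is the intended reading: $K$ is a retract of $L$. If instead $s\circ r = \text{id}_L$, one swaps the roles of $K$ and $L$ in what follows.

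For the first part, apply the cohomology functors $H^n$. They are additive, so $H^n(r)$ and $H^n(s)$ exhibit $H^n(K)$ as a direct summand of $H^n(L)$ in the heart; this already gives the last claim of the first paragraph. To show $K$ is the sum of its shifted cohomologies, construct a map
\[
\phi\colon \bigoplus_n H^n(K)[-n] \rightarrow K
\]
as the composite
\[
\bigoplus_n H^n(K)[-n] \xrightarrow{\oplus H^n(s)[-n]} \bigoplus_n H^n(L)[-n] \cong L \xrightarrow{r} K .
\]
Boundedness makes all sums finite. I will then check that $H^m(\phi)$ is the identity of $H^m(K)$ for every $m$. Write $\alpha$ for the chosen isomorphism $\bigoplus_n H^n(L)[-n]\cong L$ and normalize it so that $H^m(\alpha)=\text{id}$. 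Then
\[
H^m(\phi) = H^m(r)\circ H^m(s) = H^m(r\circ s) = \text{id}.
\]
Since the $t$-structure is bounded, a morphism inducing isomorphisms on all $H^m$ is an isomorphism: its cone has vanishing cohomology and is therefore $0$. The main care point in this part is that normalization of $\alpha$. Any isomorphism $L\cong \bigoplus H^n(L)[-n]$ can be corrected on each summand by the induced automorphism of $H^n(L)$, so the normalization is always possible.

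For the second part, I will use naturality. The commutative square gives $\eta\circ r = r[2]\circ\eta$ and $\eta\circ s = s[2]\circ\eta$. Applying $H^{-i}$ to the first and using $H^{-i}(X[2]) = H^{-i+2}(X)$, and iterating $i$ times (this is how $\eta^i$ is defined for a degree-$2$ map, $\eta^i$ being $H^{-i}$ of the $i$-fold composite $\eta[2(i-1)]\circ\cdots\circ\eta$), the two relations give
\[
\eta^i_K\circ H^{-i}(r)=H^i(r)\circ \eta^i_L, \qquad \eta^i_L\circ H^{-i}(s)=H^i(s)\circ\eta^i_K .
\]
Define a candidate inverse
\[
\theta := H^{-i}(r)\circ(\eta^i_L)^{-1}\circ H^i(s).
\]
Then
\[
\eta^i_K\circ\theta = H^i(r)\circ\eta^i_L\circ(\eta^i_L)^{-1}\circ H^i(s) = H^i(r\circ s)=\text{id},
\]
\[
\theta\circ\eta^i_K = H^{-i}(r)\circ(\eta^i_L)^{-1}\circ\eta^i_L\circ H^{-i}(s)=\text{id}.
\]
So $\eta^i_K$ is an isomorphism; this is simply the fact that a retract of an isomorphism is an isomorphism.

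The main obstacle is bookkeeping rather than substance. One must fix conventions for which composite is the identity and for how $\eta^i$ is built from iterates of $\eta$. Once those are pinned down, both claims reduce to additivity of $H^n$ together with the retract argument above.
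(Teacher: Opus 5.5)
Your argument is correct, and for the splitting claim it takes a different route from the paper.

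The paper inducts on the top degree $n$ with $H^n(K)\neq 0$. Applying truncations to $K\to L\to K$, the connecting morphism $H^n(K)[-n]\to(\tau^{\le n-1}K)[1]$ becomes a retract of the corresponding connecting morphism for $\tau^{\le n}L$. That morphism is zero because $L$ splits, so the truncation triangle of $K$ splits. The paper then recurses on $\tau^{\le n-1}K$, which is again a retract of the split object $\tau^{\le n-1}L$.

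You instead build a single comparison map $\bigoplus_n H^n(K)[-n]\to K$ through the splitting of $L$. You check that it induces the identity on every $H^m$ and conclude from conservativity of cohomology for a bounded $t$-structure. This buys you:
\begin{itemize}
\item no induction and no bookkeeping with connecting morphisms;
\item an explicit isomorphism that is compatible with cohomology.
\end{itemize}
The price is the normalization $H^m(\alpha)=\mathrm{id}$. It is genuinely needed: for an arbitrary automorphism $\beta$ of $H^m(L)$, the composite $H^m(s)\circ\beta\circ H^m(r)$ need not be invertible (swap two isomorphic summands). You correctly supply it, whereas the paper's argument never has to choose or adjust the splitting of $L$.

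For the Lefschetz part, both proofs are the same retract-of-an-isomorphism argument. Your explicit inverse is a cleaner version of the paper's ``injective, and surjective by the dual diagram''.

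One bookkeeping correction. In the statement $r\colon K\to L$ and $s\colon L\to K$. So the hypothesis you need is $s\circ r=\mathrm{id}_K$; this is what the paper uses (``the rows compose to the identity'') and what the application in Theorem \ref{THM_isoconv} provides. Your write-up reverses the directions of $r$ and $s$ throughout. With the statement's directions:
\begin{itemize}
\item $\phi$ should be $s\circ\alpha\circ\bigoplus_n H^n(r)[-n]$;
\item the idempotent is $r\circ s$ on $L$;
\item $\theta=H^{-i}(s)\circ(\eta^i_L)^{-1}\circ H^i(r)$.
\end{itemize}
Also drop your fallback remark. If $L$ were instead a retract of $K$, swapping the letters would not help, because the splitting hypothesis sits on $L$. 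The statement would then simply be false: take $L=0$. So the reading with $K$ a retract of $L$ is not just intended but forced.
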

\begin{proof}
	Let $n \in \BZ$ be the largest integer with $H^{n}(K)\neq 0$. We have a commutative diagram
	\[
\begin{tikzcd}
	\tau^{\leq n - 1}(K) \arrow[d] \arrow[r, "r"] & \tau^{\leq n - 1}(L) \arrow[d, "0"] \arrow[r, "s"] & \tau^{\leq n - 1}(K) \arrow[d] \\
	{H^{n}(K)[-n]} \arrow[r, "r"]                 & {H^{n}(L)[-n]} \arrow[r, "s"]                      & {H^{n}(K)[-n]}                
\end{tikzcd}
	\]
	where the rows compose to the identity. Hence
	\[
	\tau^{\leq n}(K) \rightarrow H^{n}(K)[-n]
	\] 
	is zero. We have a morphism of distinguished triangles
	\[
	\begin{tikzcd}
		\tau^{\leq n - 1}(K) \arrow[r, "0"] \arrow[d, equal] & H^{n}({K})[-n] \arrow[d, equal] \ar[r] & {\tau^{\leq n - 1}(K)[1]\oplus H^{n}(K)[-n]} \arrow[d, "\sim"] \arrow[r, "+"] & {} \\
		\tau^{\leq n - 1}(K) \arrow[r]                & H^{n}(K)[-n] \arrow[r] & {K[1]} \arrow[r, "+"]                                                                            & {}
	\end{tikzcd}
	\]
	We can proceed by induction because we have the truncated maps
	\[
	\tau^{\leq n - 1}K \rightarrow \tau^{\leq n- 1} L \rightarrow \tau^{\leq n - 1}K.
	\]
	Hence
	\[
	K = \bigoplus_{i \in \BZ}H^i(K)[-i].
	\]
	
	To prove the second claim, we note that we have a commutative square
	\[
	\begin{tikzcd}
		H^{-i}(K) \arrow[d, "\eta^i"] \arrow[r, "r"] & H^{-i}(L) \arrow[d, "\eta^i"] \\
		H^i(K) \arrow[r, "r"]                        & H^i(L)                       
	\end{tikzcd}
	\]
	The arrow on the right is an isomorphism hence $\eta^i\colon H^{-i}(K) \rightarrow H^{i}(K)$ is injective. By considering the dual diagaram, we see that this morphism is surjective. Hence it is an isomorphism. 
\end{proof}

\begin{theorem}\label{THM_isoconv}	Let $\mathbf{\Delta}$ be a famliy of closed subsets. Let $M, N \in \Perv{\mathbf{\Delta}}{G}$ be strongly $\mathbf{\Delta}$-unramified perverse sheaves on $G$. The morphism
	\[
	M*_!N \rightarrow M*_* N
	\] 
	is an isomorphism in $\derCat{\Delta}{G}{}$. The localized convolution product $M*_!N$ is perverse in $\derCat{\mathbf{\Delta}}{G}{\Qbarl}$ and strongly $\mathbf{\Delta}$-unramified.
\end{theorem}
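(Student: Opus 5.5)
We first fix, by the definition of strongly $\mathbf{\Delta}$-unramified and the fact that $\mathbf{\Delta}$ is closed under finite unions, a single $\Delta \in \mathbf{\Delta}$ together with lifts $M, N \in \Perv{}{G}$ such that $M$, $N$ and every irreducible constituent of $M_{\overline{k}}$, $N_{\overline{k}}$ are $\Delta$-unramified. It suffices to prove the three claims in $\derCat{\Delta}{G}{}$, since the localization at $\mathbf{\Delta}$ factors through it. We may base change to $\overline{k}$; the Fourier support and all the criteria (Theorem \ref{THM_IsoCrits}) are insensitive to this.

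\textbf{Step 1: the isomorphism.} We will verify the hypothesis of Theorem \ref{THM_LocalizationTheCriterion} for the forget supports map $M*_!N \rightarrow M*_*N$. By Lemma \ref{LEM_fmprojformula} and the Künneth formula (Proposition \ref{PROP_CharKunneth}), together with Corollary \ref{COR_ftconv} for the unipotent part, we get
\[
\FMSH{M*_!N} = \FMSH{M}\otimes\FMSH{N}[-d], \qquad \FMST{M*_*N} = \underline{\text{Hom}}\bigl(D(\FMSH{M^\vee}), \FMST{N}\bigr)(d)[d],
\]
where the second uses Lemma \ref{LEM_fmdual}. By Proposition \ref{PROP_fmcritunr}, all these restricted Fourier--Mellin transforms are lisse, perverse and forget-supports isomorphisms for $\chi \notin \Delta$. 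On the smooth scheme $\widehat{U}_{\chi,\Delta}$, for shifted lisse sheaves, the canonical map from the tensor product to the internal Hom of the dual is an isomorphism, so the diagonal map $\FMSH{M*_!N}\rightarrow\FMST{M*_*N}$ is an isomorphism. Theorem \ref{THM_LocalizationTheCriterion} then gives $M*_!N \cong M*_*N$ in $\derCat{\Delta}{G}{}$, and all four arrows of the square are isomorphisms. The main bookkeeping is to match shifts and Tate twists on the unipotent part, and to check that the composite really is the forget-supports map. The latter should follow from the naturality of the Künneth isomorphism.

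\textbf{Step 2: perversity.} As in the proof of Proposition \ref{PROP_GysinUnramifiedCriterion}, we reduce to $M, N$ pure, hence geometrically semisimple. First replace $M$, $N$ by their semisimplifications, which is harmless because of the Serre property and the support lemma, Theorem \ref{THM_SupportLemma}. Then apply the weight/Jordan--Hölder argument: each constituent is arithmetic and can be twisted to be pure. For pure $M$, $N$, the decomposition theorem gives $M*_!N \cong \bigoplus_n \pervCoh{n}{M*_!N}[-n]$, and relative hard Lefschetz identifies $\pervCoh{-n}{}$ with $\pervCoh{n}{}(n)$. Step 1 shows that $H^*_c(G,(M*_!N)_\chi)=H^0_c(G,M_\chi)\otimes H^0_c(G,N_\chi)$ sits in degree $0$ and equals $H^*(G,(M*_*N)_\chi)$. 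By the Leray-type argument of Proposition \ref{PROP_GysinUnramifiedCriterion}, the summands $\pervCoh{n}{M*_!N}$ with $n\neq 0$ then have $H^*_c(G,(-)_\chi)=0$ for all $\chi\notin\Delta$. By Theorem \ref{THM_DeltaLocNegligibleComplexes} these summands are $\Delta$-negligible, so $M*_!N$ is perverse in $\derCat{\Delta}{G}{}$.

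For the non-semisimple case we want to avoid using the decomposition theorem directly, so we rely on Step 1 instead. Step 1 shows that $M*_!N$ has perverse amplitude $\le 0$, since $m_!$ has relative dimension bounded by Artin vanishing in the affine reduction. It also shows that $M*_*N$ has amplitude $\ge 0$. Combining these via the isomorphism gives perversity. If the Artin argument fails for non-affine $G$, we first push forward along the proper map to an affine quotient, as in Theorem \ref{THM_LocalizationTheCriterion}; Lemma \ref{LEM_ConvolutionSectionSemisimple} handles the summands.

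\textbf{Step 3: unramifiedness.} Since $P := M*_!N$ is now perverse, Step 1 shows that $\FMQ{P}$ is a tensor product of lisse perverse sheaves up to shift. It is therefore lisse and perverse, and the forget supports map is an isomorphism, so $P$ is $\Delta$-unramified by Proposition \ref{PROP_fmcritunr}. For constituents $P'$ of $P_{\overline{k}}$ we use the semisimple case: $P'$ is a direct summand of $\bigoplus M_i*N_j$ for constituents $M_i$, $N_j$, via Lemma \ref{LEM_ConvolutionSectionSemisimple}. Direct summands of lisse perverse complexes with an isomorphic forget map inherit these properties.

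The main obstacle is Step 2 for non-semisimple sheaves, namely transferring perversity and strong unramifiedness through extensions. There I will try the support lemma first.
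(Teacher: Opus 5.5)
\textbf{Step 2 has a genuine gap.} Your Step 1 is essentially the paper's argument. There is one bookkeeping slip: the $*$-formula should read $\FMST{M*_*N}=\underline{\text{Hom}}(D(\FMST{M}),\FMST{N})(d)[d]$, which on lisse complexes becomes $\FMST{M}\otimes\FMST{N}[-d]$. Your dévissage to irreducible, hence pure-up-to-twist, $M$ and $N$ is also fine; it goes through the triangles $M'*_!N\to M*_!N\to M''*_!N$ and the extension-closedness of the heart.

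The gap is this. The decomposition theorem and relative hard Lefschetz apply to proper (projective) pushforwards of pure complexes. But $m\colon G\times G\to G$ is proper only when $G$ is an abelian variety, and $m_!(M\boxtimes N)$ is in general only mixed of weights $\le w$. Already for $M=N=\Qbarl[1]$ on $\BG_m$ one gets $\Qbarl[1]\oplus\Qbarl(-1)$, which is not pure. So you cannot assert $M*_!N\cong\bigoplus_n\pervCoh{n}{M*_!N}[-n]$ or $\pervCoh{-n}{M*_!N}\cong\pervCoh{n}{M*_!N}(n)$.

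The missing idea is to interpose an object to which these theorems do apply:
\begin{enumerate}
\item Choose $j\colon G\times G\hookrightarrow X$ with $\overline{m}\colon X\to G$ projective extending $m$, and set $K:=\overline{m}_*j_{!*}(M\boxtimes N)$. This is pure, geometrically semisimple, and satisfies hard Lefschetz.
\item The forget-supports map factors as $M*_!N=\overline{m}_*j_!(M\boxtimes N)\to K\to\overline{m}_*j_*(M\boxtimes N)=M*_*N$.
\item By your Step 1 the composite is invertible in $\derCat{\Delta}{G}{}$, so $M*_!N$ is a retract of $K$ there.
\end{enumerate}
That retract is exactly the input of Lemma \ref{LEM_ConvolutionSectionSemisimple}, which you cite without ever producing the semisimple object and the section. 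The lemma transfers the splitting and the Lefschetz isomorphisms to $M*_!N$ in the localized category. After that, your Leray-type argument kills $\pervCoh{n}{M*_!N}$ for $n\neq 0$.

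\textbf{Step 3 needs the same retract.} The lemma also makes $\pervCoh{0}{M*_!N}$ a direct factor of the semisimple $\pervCoh{0}{K}$. So every constituent is a direct summand of an object whose restricted Fourier--Mellin transforms are lisse and perverse with invertible forget-supports map. Your fallback for the non-semisimple case does not replace this, for three reasons:
\begin{enumerate}
\item The amplitudes are reversed. For affine $m$, Artin vanishing gives $m_!(M\boxtimes N)\in{}^pD^{\ge 0}$ and $m_*(M\boxtimes N)\in{}^pD^{\le 0}$.
\item With that correction, combining with Step 1 and the $t$-exactness of the quotient functor does show $M*_!N$ is perverse in $\derCat{\Delta}{G}{}$ when $G$ is affine. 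The paper alludes to this decomposition-theorem-free route for affine groups. But it yields no semisimplicity, so your direct-summand claim in Step 3 is unsupported.
\item For non-affine $G$, the reduction by proper pushforward $p$ to an affine quotient works for the isomorphism criterion of Theorem \ref{THM_LocalizationTheCriterion}, but not here. The pushforward $p_*$ neither preserves nor reflects perversity ($p_*M$ and $p_*N$ are no longer perverse), so nothing about the perversity of $M*_!N$ on $G$ follows.
\end{enumerate}
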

\begin{proof} By definition, we can assume there exists $\Delta \in \mathbf{\Delta}$ such that $M$ and $N$ are $\Delta$-unramified. Note that we are free to base change to the algebraic closure of $k$. By filtering the sheaves $M$ and $N$ by a decomposition series, we can assume $M$ and $N$ are irreducible. The formula for the convolution recorded in Corollary \ref{COR_ftconv} implies that the forget supports map
	\[
	\FMSH{M*_!N} \rightarrow \FMST{M*_*N}
	\]
	is isomorphic to
	\[
	\FMSH{M}\otimes\FMSH{N}[-d] \rightarrow \FMST{M}\otimes\FMST{N}[-d].
	\]
	for all $\chi \notin \Delta$. In particular, it is an isomorphism for all $\chi \notin \Delta$ because $M$ and $N$ are $\Delta$-unramified. Therefore, Theorem \ref{THM_LocalizationTheCriterion} implies
	\[
	M*_!N \rightarrow M*_*N 
	\]
	is an isomorphism in $\derCat{\Delta}{G}{}$. 
	
	We construct a convolution which is certainly semisimple as follows: let $$j\colon G\times G \rightarrow X$$ be an open immersion such that the multiplication factors through a proper map $\overline{m}\colon X \rightarrow G$. Then we form the complex
	\[
	K := \overline{m}_*(j_{!*}(M\boxtimes N)).
	\]
	By the decomposition theorem, the complex $K$ is geometrically semisimple. The composite
	\[
	M*_!N \rightarrow K \rightarrow M*_*N.
	\]
	is an isomorphism in $\derCat{\Delta}{G}{\Qbarl}$. In particular, the map
	\[
	M*_!N \rightarrow K
	\]
	admits a section in $\derCat{\Delta}{G}{\Qbarl}$. Lemma \ref{LEM_ConvolutionSectionSemisimple} shows that
	\[
	M*_!N \cong \bigoplus_{n \in \BZ} \pervCoh{n}{M*_!N}[-n]
	\]
	in $\derCat{\Delta}{G}{\Qbarl}$. This implies
	\[
	\pervCoh{m}{\FMSH{\pervCoh{n}{M*_!N}}} = 0
	\]
	for $m \neq -n$ and all $\chi \notin \Delta$. The hard Lefschetz theorem and Lemma \ref{LEM_ConvolutionSectionSemisimple} imply
	\[
	\pervCoh{-n}{M*_!N} = \pervCoh{n}{M*_!N}(n)
	\]
	Thus we obtain
	\[
	\pervCoh{n}{M*_!N} = 0.
	\]
	for all $n \neq 0$ by Theorem \ref{THM_DeltaLocNegligibleComplexes} in the $\Delta$-localization. Thus $M*_!N$ is perverse in the $\Delta$-localized category. Furthermore, Theorem \ref{THM_LocalizationTheCriterion} also yields that the morphism
	\[
	\FMSH{M*_!N} \rightarrow \FMST{M*_!N}
	\]
	is an isomorphism. We have
	\[
	\FMQ{M*_!N} = \FMSH{M}\otimes\FMSH{N}[-d]
	\]
	as above. Therefore, $\FMQ{M*_!N}$ is lisse and perverse. Since $M*_!N$ is geometrically semisimiple, all perverse subquotients of $M*_!N$ are  $\Delta$-unramified by Proposition \ref{PROP_fmcritunr}.
\end{proof}
\subsection{Localized Tannakian categories} The goal of this section is to state the existence theorem. We construct the structure of a Tannkian category on the category of $\mathbf{\Delta}$-unramified perverse sheaes. 
We state the existence theorem.
\begin{theorem}\label{THM_tannakaexistence}
	Let $\mathbf{\Delta}$ be a family of closed subsets such that $\cvar{G} \notin \mathbf{\Delta}$. The localized $!$-convolution factors through to a convolution product
	\[
	-*_{\text{loc}}-\colon \Perv{\mathbf{\Delta}\text{-unr}}{G}\times \Perv{\mathbf{\Delta}\text{-unr}}{G} \rightarrow \Perv{\mathbf{\Delta}\text{-unr}}{G}.
	\]
	This convolution product equips the category $\Perv{\mathbf{\Delta}\text{-unr}}{G}$ with the structure of a neutral rigid Tannakian category. For each $\chi \notin \mathbf{\Delta}$, the functor
	\[
	\omega_\chi(-)\colon \Perv{\mathbf{\Delta}\text{-unr}}{G} \rightarrow \Qbarl\text{-Vec}
	\]
	is a fiber functor. 
\end{theorem}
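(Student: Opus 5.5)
The plan is to verify the axioms of a neutral rigid Tannakian category directly, following the pattern of \cite[Sec.~3.9]{GabberLoeserTore}, using Theorem \ref{THM_isoconv} as the main input and the functors $\omega_\chi = H^0_c(G,(-)_\chi)$ as fiber functors.

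\textbf{Step 1: the tensor structure.} For $M,N\in\PervUnr{\mathbf{\Delta}}{G}$, Theorem \ref{THM_isoconv} shows that $M*_!N$ is perverse in $\derCat{\mathbf{\Delta}}{G}{}$ and strongly $\mathbf{\Delta}$-unramified. We therefore set $M*_{\text{loc}}N := \pervCoh{0}{M*_!N}$, which coincides with $M*_!N$ in the localization. The associativity, commutativity and unit constraints ($\delta_0$, which is unramified everywhere) are inherited from the tensor structure on $\derCat{\mathbf{\Delta}}{G}{}$. They restrict to the full subcategory because it is stable under $*_!$. Since the subcategory is abelian and contains $\delta_0$, we obtain a $\Qbarl$-linear symmetric monoidal abelian category. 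Bilinearity and exactness of $*_{\text{loc}}$ in each variable follow because $*_!$ is triangulated and preserves the heart on this subcategory.

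\textbf{Step 2: rigidity.} The dual is $M^\vee=\text{inv}^*D(M)$. It stays in the subcategory because the Fourier--Mellin transforms of $M^\vee$ are the duals of those of $M$ by Lemma \ref{LEM_fmdual}, and lisseness, perversity and the forget-supports isomorphism are preserved by duality and $\text{inv}^*$. The adjunction $\text{Hom}(K*_!L,M)=\text{Hom}(K,L^\vee*_*M)$ recorded above, combined with the isomorphism $L^\vee*_*M\cong L^\vee*_!M$ in the localization from Theorem \ref{THM_isoconv}, identifies internal Homs with $L^\vee*_{\text{loc}}M$. This gives evaluation and coevaluation maps satisfying the zig-zag identities. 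We also check $\text{End}(\delta_0)=\Qbarl$: the object $\delta_0$ is irreducible and not $\mathbf{\Delta}$-negligible, since $\text{FSupp}(\delta_0)=\cvar{G}\notin\mathbf{\Delta}$, which is exactly the hypothesis.

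\textbf{Step 3: fiber functors.} Fix $\chi\notin\mathbf{\Delta}$. Then $H^*_c(G,(-)_\chi)$ is well defined on the localization and is a tensor functor by the earlier proposition on $\omega_{?,\chi}$ together with Künneth. On a $\Delta$-unramified $M$ it is concentrated in degree $0$ by Corollary \ref{COR_GysinUnrImpliesWunr}. Because unramifiedness passes to the constituents, $\omega_\chi$ is exact on the subcategory, and it is symmetric monoidal via the Künneth isomorphism.

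\textbf{Step 4: faithfulness (the main obstacle).} We must show that $\omega_\chi(M)=0$ forces $M=0$ in $\Perv{\mathbf{\Delta}}{G}$. By exactness it suffices to treat $M$ irreducible. Choose $\Delta\in\mathbf{\Delta}$ such that $M$ is $\Delta$-unramified. By Proposition \ref{PROP_GysinUnramifiedCriterion}, $\dim H^0_c(G,M_{\chi'})$ is constant for $\chi'\notin\Delta$, so it vanishes for all such $\chi'$. Condition (1) of that proposition then gives vanishing with both supports outside $\Delta$, hence $\text{FSupp}(M)\subseteq\Delta$ and $M\in S_{\mathbf{\Delta}}$.

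The delicate point is that this constancy statement is only asserted in the geometrically semisimple or affine case. For the direction "unramified $\Rightarrow$ (3)", however, the proof of Proposition \ref{PROP_GysinUnramifiedCriterion} (Corollary \ref{COR_unramdimformula}) uses neither hypothesis: it expresses the dimension as a generic Euler characteristic of $\text{FT}(q_*M)$. So the argument should go through.

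With Steps 1--4 in place, Deligne's criterion that a rigid abelian tensor category over an algebraically closed field with an exact faithful tensor functor to vector spaces is neutral Tannakian finishes the proof.
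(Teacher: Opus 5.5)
\textbf{Gap: neutrality is only proved when some character lies outside every member of $\mathbf{\Delta}$.}

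Your Step 4 is the paper's own argument for faithfulness: constancy of $\dim H^0_c(G,M_{\chi'})$ off $\Delta$ via Proposition \ref{PROP_GysinUnramifiedCriterion}, then Theorem \ref{THM_DeltaLocNegligibleComplexes}. The implication ``$\Delta$-unramified $\Rightarrow$ (1)--(3$'$)'' is stated there with no semisimplicity or affineness hypothesis, so your detour through Corollary \ref{COR_unramdimformula} is unnecessary.

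The problem is your final sentence. You deduce neutrality from an exact faithful tensor functor $\omega_\chi$, and this needs a character $\chi$ lying outside \emph{every} member of $\mathbf{\Delta}$. The hypothesis $\cvar{G}\notin\mathbf{\Delta}$ does not supply one. For $\mathbf{\Delta}_{\text{FFK}}$, every singleton $\{\chi\}$ is a proper thin subset (a tac times a closed point), so no $\chi\notin\mathbf{\Delta}_{\text{FFK}}$ exists. In that case the fiber-functor clause of the theorem is vacuous and your Steps 3--4 construct no functor at all. Yet this is exactly the case used in Remark \ref{RMK_constructionproduct} to make $\Perv{\text{int}}{G}$ neutral Tannakian. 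This is why the paper proves only faithfulness and handles neutrality separately, by referring to \cite[Prop.~3.14]{KowalskiTannaka}.

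\textbf{How to close it.} Replace the final step with Deligne's intrinsic criterion. In characteristic zero, a rigid abelian tensor category with $\text{End}(\delta_0)=\Qbarl$ in which every object has dimension in $\mathbb{Z}_{\geq 0}$ is Tannakian. Over the algebraically closed field $\Qbarl$ it is then neutral.

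To get the dimension bound, take $M$ and choose $\Delta\in\mathbf{\Delta}$ such that $M$ is strongly $\Delta$-unramified. Since $\Delta\neq\cvar{G}$, there is some $\chi\notin\Delta$. Your Steps 1--4, applied to the singleton family $\{\Delta\}$, make $\omega_\chi$ a fiber functor on $\PervUnr{\{\Delta\}}{G}$. The comparison tensor functor $\PervUnr{\{\Delta\}}{G}\to\PervUnr{\mathbf{\Delta}}{G}$ preserves duals, evaluation and coevaluation, hence categorical dimension. Therefore $\dim M=\dim\omega_\chi(M)\in\mathbb{Z}_{\geq 0}$ in $\PervUnr{\mathbf{\Delta}}{G}$.
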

\begin{proof}
	We only prove that $\omega_\chi$ is faithful. Let $M \in \PervUnr{\mathbf{\Delta}}{G}$ be a perverse object and $\chi \notin \mathbf{\Delta}$ a character with
	\[
	\omega_\chi(M) = 0.
	\]
	We can assume $M$ is $\Delta$-unramified for a $\Delta \in \mathbf{\Delta}$. Proposition \ref{PROP_GysinUnramifiedCriterion} implies 
	\[
	H^*_c(G, M_\chi) = 0
	\]
	for all $\chi \notin \Delta$. Theorem \ref{THM_DeltaLocNegligibleComplexes} implies $M = 0$ in $\text{Perv}_{\mathbf{\Delta}}(G)$, hence the claim.
	
	To prove that the category is neutral in full generality, one can proceed as in \cite[Prop.~3.14]{KowalskiTannaka}.
\end{proof}
\subsection{Internal Tannakian categories} We now state the existence theorem for the internal Tannakian categories. These Tannakian categories are constructed using a certain equivalence.
\begin{definition}
	Let $\mathbf{\Delta}$ be a family of closed subsets. The category $\PervInt{\mathbf{\Delta}}{G}$ is defined to be the full subcategory of strongly $\mathbf{\Delta}$-unramified perverse sheaves in $\Perv{}{G}$ such that no subobject and no quotient has proper Fourier support.
\end{definition}
We require a technical lemma.
\begin{lemma}\label{LEM_unramifiednegligible}
	Let $\Delta \subset \cvar{G}$ be a closed subset. Let $M \in \Perv{}{G}$ be strongly $\Delta$-unramified. Suppose $\text{FSupp}(M)\neq \cvar{G}$. Then $\text{FSupp}(M) \subseteq \Delta$.
\end{lemma}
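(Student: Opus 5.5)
We prove the contrapositive in the following form: we show $\chi \in \Delta$ for every $\chi$ with $H^*_?(G, M_\chi) \neq 0$, and then close up using that $\Delta$ is closed. The key input is that $\Delta$-unramifiedness forces the Fourier coefficients outside $\Delta$ to have constant dimension. Since all irreducible constituents of $M_{\overline{k}}$ are $\Delta$-unramified, we first reduce to the irreducible case. By Theorem \ref{THM_SupportLemma}, $\text{FSupp}(M)$ is the union of the Fourier supports of the constituents $M_i$ of a decomposition series of $M_{\overline{k}}$. If $\text{FSupp}(M) \neq \cvar{G}$, then each $\text{FSupp}(M_i) \neq \cvar{G}$, and $\text{FSupp}(M) \subseteq \Delta$ follows once $\text{FSupp}(M_i)\subseteq \Delta$ for all $i$. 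So we may assume $M$ is geometrically irreducible, hence arithmetic and $\Delta$-unramified.

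Next, by Proposition \ref{PROP_GysinUnramifiedCriterion}, $\Delta$-unramifiedness gives conditions (1)--(3). For $\chi \notin \Delta$, the complex $H^*_?(G, M_\chi)$ is concentrated in degree zero, and its dimension $c := \dim H^0_?(G, M_\chi)$ is independent of $\chi \notin \Delta$. Corollary \ref{COR_unramdimformula} gives the explicit value $c = \chi(S, i_\eta^*\text{FT}(q_*M))$. There are now two cases.

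\textbf{Case $c \neq 0$.} Then every $\chi \notin \Delta$ lies in the non-vanishing locus, so $\cvar{G} - \Delta \subseteq \text{FSupp}(M)$. Since $\Delta$ is a proper closed subset (if $\Delta = \cvar{G}$ the claim is trivial), we need that $\cvar{G}-\Delta$ is dense enough that its closure, together with $\text{FSupp}(M)$ being closed, forces $\text{FSupp}(M)=\cvar{G}$. This is the main obstacle, because the standard topology is a countable disjoint union of irreducible Noetherian pieces, and $\Delta$ might contain whole components. To handle this we use the classification. By Theorem \ref{THM_ClassificationFull} (after reducing to $G=S\times U$ via Lemma \ref{LEM_fsuppisogeny}), $\text{FSupp}(M)=\Delta_S\times Z$ is a tac times an irreducible closed set. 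This set is either all of $\cvar{G}$ or is contained in a proper thin subset. In the latter case it cannot contain the complement of $\Delta$ unless $\Delta$ is huge; but a set containing $\cvar{G}-\Delta$ with $\Delta$ closed and $\text{FSupp}(M)$ thin proper gives $\cvar{G} = \Delta\cup \text{FSupp}(M)$. We then argue that in this situation $c\neq0$ is impossible unless $\Delta$ already covers the relevant components. In practice we would make this precise by passing to a connected component $\chi_0\cdot(\mathscr{C}^\ell(S)\times\cvar{U})$ not contained in $\Delta$, where irreducibility gives density of $\cvar{G}-\Delta$ and hence $\text{FSupp}(M)\supseteq$ that component, contradicting the thinness of $\Delta_S\times Z$ with respect to dimension (Lemma \ref{LEM_ClosedDimensionTAC}). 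The conclusion is that under $\text{FSupp}(M)\neq\cvar{G}$ we must have $c=0$.

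\textbf{Case $c = 0$.} Then $H^*_?(G, M_\chi)=0$ for all $\chi\notin\Delta$, so the non-vanishing locus lies in $\Delta$, and taking closures gives $\text{FSupp}(M)\subseteq\Delta$ since $\Delta$ is closed. This finishes the irreducible case, and by the reduction above also the general case.
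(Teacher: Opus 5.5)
Your proof is correct, but it takes a different route from the paper's.

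\textbf{The paper's route.} The paper uses only degree-zero concentration outside $\Delta$, i.e.\ weak unramifiedness via Corollary \ref{COR_GysinUnrImpliesWunr}. It plays this against Corollary \ref{COR_ClassificationWeaklyUnramifiedChars}: at a generic character of a proper Fourier support $\Delta_S\times Z$, the cohomology is nonzero in a range of degrees different from $\{0\}$. So the generic points of every component of $\text{FSupp}(M)$ must lie in $\Delta$, and closedness of $\Delta$ finishes the proof.

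\textbf{Your route.} You use condition (3) of Proposition \ref{PROP_GysinUnramifiedCriterion}, the constancy of $c=\dim H^0_?(G,M_\chi)$ on $\cvar{G}-\Delta$. From the classification you need only the coarse shape $\text{FSupp}(M)=\Delta_S\times Z$ given by Theorem \ref{THM_ClassificationFull}. If $c\neq 0$, a connected component $C_0$ of $\cvar{G}$ not contained in $\Delta$ meets $\cvar{G}-\Delta$ in a dense open subset, so $C_0\subseteq\text{FSupp}(M)$. The dimension count of Lemma \ref{LEM_ClosedDimensionTAC} rules this out once $\Delta_S\times Z\neq\cvar{G}$. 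If $c=0$, the inclusion is immediate. This connected-component argument is the actual proof in your Case $c\neq 0$; the preceding sentences there (``either all of $\cvar{G}$ or contained in a proper thin subset\ldots unless $\Delta$ is huge'') are not needed.

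\textbf{What each route buys.}
\begin{itemize}
\item Your approach avoids the finer degree computations of Corollary \ref{COR_ClassificationWeaklyUnramifiedChars}, and the genericity bookkeeping via Proposition \ref{PROP_irredcompinctacs} needed to make them bite.
\item In exchange, it genuinely needs the lisseness built into the definition of ``unramified''. Condition (3) is derived from the lisseness of the Fourier--Mellin transforms, and it fails for merely weakly unramified characters (compare Example \ref{EXP_GysinWUnrVSUnr}).
\item The paper's argument therefore proves slightly more: degree-zero concentration of the constituents outside $\Delta$ already suffices.
\end{itemize}

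\textbf{A shortcut for your Case $c\neq 0$.} You cite Corollary \ref{COR_unramdimformula} but do not exploit it. It gives $c=\chi(S,i_\eta^*\text{FT}(M))$, where $\eta$ is the generic point of $\widehat{U}$. This vanishes whenever the Fourier support of the geometrically irreducible $M$ is proper. Either $Z\neq\widehat{U}$, so the generic stalk is zero, or $Z=\widehat{U}$, and then the case distinction in the proof of Theorem \ref{THM_ClassificationFull} forces the Euler characteristic to vanish. That removes the need for the density argument altogether.
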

\begin{proof}
	We can reduce to $G = S\times U $ by Lemma \ref{LEM_fsuppisogeny}, Proposition \ref{PROP_RamifiedFiniteDescent}, Proposition \ref{PROP_CharIsogenyClosed}, and Theorem \ref{THM_AppendixStructureTheorem}. By Theorem \ref{THM_SupportLemma}, we are free to assume $M$ is geometrically irreducible.
	
	For each $\chi \notin \Delta$, the cohomology 
	\[
	H^*(G, M_\chi)
	\]
	is concentrated in degree zero by Corollary \ref{COR_GysinUnrImpliesWunr}. By Corollary \ref{COR_ClassificationWeaklyUnramifiedChars}, a generic character $\chi \in \text{FSupp}(M)\cap \Delta^c$ has cohomology not supported in degree zero because $M$ is negligible. Thus we obtain $\text{FSupp}(M)\subseteq \Delta$. 
\end{proof}
\begin{proposition}\label{PROP_intermedconv}
	Let $\mathbf{\Delta}$ be a family of closed subsets. There exists an equivalence of categories
	\[
	\text{int}\colon \PervUnr{\mathbf{\Delta}}{G} \rightarrow \PervInt{\mathbf{\Delta}}{G}, ~ M \mapsto M^{\text{int}}
	\]
	If $\cvar{G} \notin\mathbf{\Delta}$, we declare this functor to be an equivalence of Tannakian categories. The resulting convolution product
	\[
	-* - \colon \PervInt{\mathbf{\Delta}}{G}\times \PervInt{\mathbf{\Delta}}{G} \rightarrow \PervInt{\mathbf{\Delta}}{G}.
	\]
	is called the \textit{intermediate convolution}.
\end{proposition}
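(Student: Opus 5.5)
The plan is to define $M^{\text{int}}$ as a canonical "minimal" lift of $M$, in the spirit of the intermediate extension and of \cite[Ch.~3]{KowalskiTannaka}. I would then show that this lift is well defined, functorial, and inverse (up to isomorphism) to the quotient functor restricted to $\PervInt{\mathbf{\Delta}}{G}$.

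The first step is to work with an actual lift. Fix $M \in \PervUnr{\mathbf{\Delta}}{G}$, choose a lift $M' \in \Perv{}{G}$, and choose $\Delta \in \mathbf{\Delta}$ such that $M'$ is strongly $\Delta$-unramified. By Lemma \ref{LEM_unramifiednegligible}, every perverse subquotient of $M'$ whose Fourier support is proper lies in $S_\Delta \subseteq S_{\mathbf{\Delta}}$. Let $M'_1 \subseteq M'$ be the largest subobject with proper Fourier support. It exists because, by Theorem \ref{THM_SupportLemma}, a sum of two such subobjects again has Fourier support equal to the union, hence proper ($\cvar{G}$ is irreducible in the thin topology and these supports are contained in $\Delta$). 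Dually, let $M'/M'_2$ be the largest quotient of $M'$ with proper support. I then set $M^{\text{int}} := \text{im}(M'_2 \rightarrow M'/M'_1)$, or equivalently the image of $M'_2$ in $M'/M'_1$ after iterating the construction once more if needed.

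Next I check that $M^{\text{int}}$ has no subobject or quotient with proper support. A subobject $P$ of $M^{\text{int}}$ with proper support would pull back to a subobject of $M'/M'_1$ with proper support. By Theorem \ref{THM_SupportLemma} it would then extend $M'_1$ to a larger subobject of $M'$ with proper support, contradicting maximality. Quotients are handled dually, using the duality $K \mapsto K^\vee$ and Corollary \ref{COR_SupportDuality}. Strong unramifiedness passes to subquotients, so $M^{\text{int}} \in \PervInt{\mathbf{\Delta}}{G}$, and it is isomorphic to $M$ in $\Perv{\mathbf{\Delta}}{G}$ because the kernels and cokernels involved lie in $S_{\mathbf{\Delta}}$.

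For well-definedness and full faithfulness, the key claim is that for $A, B \in \PervInt{\mathbf{\Delta}}{G}$ the map $\text{Hom}_{\Perv{}{G}}(A,B) \rightarrow \text{Hom}_{\Perv{\mathbf{\Delta}}{G}}(A,B)$ is bijective. A morphism in the Serre quotient is represented by $A' \rightarrow B/B'$ with $A/A'$ and $B'$ in $S_{\mathbf{\Delta}}$. Since $B$ has no nonzero subobject with proper support, $B' = 0$. Since $A$ has no nonzero quotient with proper support, $A' = A$. This gives surjectivity. For injectivity, a morphism $f\colon A\to B$ that becomes zero in the quotient has image in $S_{\mathbf{\Delta}}$, which is a subobject of $B$ with proper support, so $f = 0$. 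Consequently the quotient functor $\PervInt{\mathbf{\Delta}}{G} \rightarrow \PervUnr{\mathbf{\Delta}}{G}$ is fully faithful. The construction above shows it is essentially surjective, so it is an equivalence, and $\text{int}$ is a quasi-inverse. In particular independence of the choice of lift is automatic.

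The main obstacle is the existence of the maximal subobject with proper support, that is, that proper supports are closed under sums. This is where Lemma \ref{LEM_unramifiednegligible} is essential: it forces all such supports into the single proper set $\Delta$, so unions stay proper.
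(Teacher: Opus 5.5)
Your proposal is correct and follows the paper's proof. Your $\mathrm{im}(M'_2\to M'/M'_1)=M'_2/(M'_1\cap M'_2)$ is exactly the paper's $(M_t)^t$, Lemma~\ref{LEM_unramifiednegligible} is what puts the discarded pieces $M'_1$ and $M'/M'_2$ into $S_{\mathbf{\Delta}}$, and full faithfulness comes from the Serre-quotient description of Hom sets, which you spell out more carefully than the paper does.

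Two small remarks. First, no further iteration is needed: by maximality, every quotient of $M'_2$ with proper support and every subobject of $M'/M'_1$ with proper support is zero. Second, your step forcing $B'=0$ tacitly uses $\cvar{G}\notin\mathbf{\Delta}$, and the statement genuinely requires this, since otherwise $\Perv{\mathbf{\Delta}}{G}=0$ while $\delta_0\in\PervInt{\mathbf{\Delta}}{G}$.
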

\begin{proof}
	We define the functor as follows. Let $M \in \Perv{\mathbf{\Delta}}{G}$. By definition of the Serre localization, there exists an object $M \in \Perv{}{G}$ corresponding to $M$. For each $N \in \Perv{}{G}$, we define $N^t$ to be the largest quotient of $M$ with negligible kernel and $N_t$ the largest subobject of $M$. We put
	\[
	M^{\text{int}} := (M_t)^t.
	\]
	Lemma \ref{LEM_unramifiednegligible} implies that we have an isomorphism
	\[
	M \cong M^{\text{int}}
	\]
	in $\PervUnr{\mathbf{\Delta}}{G}$. Moreover, the definition of morphism sets in Serre localizations implies
	\[
	\text{Hom}_{\PervUnr{\mathbf{\Delta}}{G}}(M^{\text{int}}, N^{\text{int}}) = \text{Hom}_{\PervInt{\mathbf{\Delta}}{G}}(M^{\text{int}}, N^{\text{int}})
	\]
	Thus we obtain an equivalence.
\end{proof}
\subsection{Comparison} Note that our definition of intermediate convolution at least formally depends on the choice of a family of closed subsets. The goal of this section is to show that the intermediate convolution does not depend on the family of closed subsets.
\begin{definition}
	Let $\mathbf{\Delta}$ and $\mathbf{\Delta}'$ be two families of closed subsets. We write $\mathbf{\Delta} \subseteq\mathbf{\Delta}'$ if for all $\Delta \in \mathbf{\Delta}$ there exists $\Delta' \in \mathbf{\Delta}$ with $\Delta\subseteq\Delta'$.
\end{definition}
\begin{remark}
	Suppose $\mathbf{\Delta} \subseteq\mathbf{\Delta}'$. Proposition \ref{PROP_tannakafunc} applied to the identity morphism constructs a natural tensor functor
	\[
	\alpha\colon \PervUnr{\mathbf{\Delta}}{G} \rightarrow \PervUnr{\mathbf{\Delta}'}{G}.
	\] 
\end{remark}
Now we come to the statement, which will allow us to prove basic properties of the intermediate convolution.
\begin{proposition}\label{PROP_compindlimit}
	Suppose $\mathbf{\Delta} \subseteq\mathbf{\Delta}'$. We have a commutative square of functors
	\[
	\begin{tikzcd}
		\PervUnr{\mathbf{\Delta}}{G} \arrow[d, "\text{int}"] \arrow[r, "\alpha"] & \PervUnr{\mathbf{\Delta}'}{G} \arrow[d, "\text{int}"] \\
		\PervInt{\mathbf{\Delta}}{G} \arrow[r, "\text{inc}"]           & \PervInt{\mathbf{\Delta}'}{G}                        
	\end{tikzcd}
	\]
	If $\cvar{G} \notin \mathbf{\Delta}'$, then the fully faithful inclusion functor $\text{inc}$ is a tensor functor.
\end{proposition}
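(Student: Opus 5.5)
To prove Proposition \ref{PROP_compindlimit}, I would fix $M \in \PervUnr{\mathbf{\Delta}}{G}$ and choose a lift $M' \in \Perv{}{G}$. By definition, there is some $\Delta \in \mathbf{\Delta}$ such that $M'$ is strongly $\Delta$-unramified.

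The functor $\alpha$ is induced by the identity on $\Perv{}{G}$. So $\alpha(M)$ is represented by the same $M'$, and since $\Delta \subseteq \Delta'$ for some $\Delta' \in \mathbf{\Delta}'$, the sheaf $M'$ is also strongly $\Delta'$-unramified. Commutativity of the square then reduces to a canonical isomorphism
\[
\bigl((M')_t\bigr)^t \;\cong\; \bigl((M')_{t'}\bigr)^{t'},
\]
where $t$ and $t'$ denote the largest negligible quotient and subobject constructions for $\mathbf{\Delta}$ and $\mathbf{\Delta}'$ respectively.

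The key input is Lemma \ref{LEM_unramifiednegligible}. For a strongly $\Delta$-unramified perverse sheaf, a subquotient has Fourier support contained in $\Delta$ if and only if its Fourier support is proper. A subquotient of a strongly unramified sheaf is again strongly unramified, since its geometric constituents are among those of $M'$. Hence the condition ``proper Fourier support'' is intrinsic and does not depend on the family. The maximal subobject or quotient with proper Fourier support is therefore the same for $\mathbf{\Delta}$ and $\mathbf{\Delta}'$. To see this, I would apply Lemma \ref{LEM_unramifiednegligible} with $\Delta$ and with $\Delta'$ respectively; both are proper closed subsets because $\cvar{G}$ lies in neither family. This identifies $\text{int}(M)$ with $\text{int}(\alpha(M))$ as objects of $\Perv{}{G}$. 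I would check naturality using the universal property of the largest sub- and quotient objects: morphisms in the Serre quotients between objects in $\PervInt{}{}$ agree with morphisms in $\Perv{}{G}$, as in the proof of Proposition \ref{PROP_intermedconv}. It also follows that $\PervInt{\mathbf{\Delta}}{G}$ is a full subcategory of $\PervInt{\mathbf{\Delta}'}{G}$, and the inclusion is fully faithful.

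For the tensor statement, the plan is to use that $\alpha$ is a tensor functor by Proposition \ref{PROP_tannakafunc}, applied to the identity morphism, and that both vertical arrows are tensor equivalences by declaration. With the square commuting up to natural isomorphism, this gives
\[
\text{inc} \cong \text{int}\circ\alpha\circ\text{int}^{-1},
\]
a composite of tensor functors, so $\text{inc}$ is a tensor functor.

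The main obstacle is the case where only $\cvar{G} \notin \mathbf{\Delta}'$ is assumed. Then the left-hand category may not be Tannakian, and one should only check that $\alpha$ is compatible with the localized convolution. This compatibility I would deduce from Theorem \ref{THM_isoconv}, together with the fact that $*_!$ commutes with the quotient functors. I also need care that the hypothesis ``no subobject or quotient has proper Fourier support'' is genuinely family-independent, which is exactly where Lemma \ref{LEM_unramifiednegligible} is used.
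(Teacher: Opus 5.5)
Your proposal is correct and follows the paper's own argument: commutativity comes from the definition of $\text{int}$, which you justify via Lemma \ref{LEM_unramifiednegligible} to show the negligibility condition is family-independent, and the tensor property comes from $\text{inc}\cong\text{int}\circ\alpha\circ\text{int}^{-1}$ with $\alpha$ a tensor functor by Proposition \ref{PROP_tannakafunc}. The ``main obstacle'' you raise at the end is not an issue, since $\mathbf{\Delta}\subseteq\mathbf{\Delta}'$ and $\cvar{G}\notin\mathbf{\Delta}'$ already force $\cvar{G}\notin\mathbf{\Delta}$, so both sides are Tannakian.
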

\begin{proof}
	This commutativity follows from the definition of the $\text{int}$-functor. The functor $\text{inc}$ is a tensor functor, because the functors $\text{int}$ are equivalences and $\alpha$ is a tensor functor. 
\end{proof}
\begin{proposition}\label{PROP_tannakacomp}
	Suppose $\mathbf{\Delta} \subseteq\mathbf{\Delta}'$ and $\cvar{G} \notin \mathbf{\Delta}'$. Let $M \in \PervInt{\mathbf{\Delta}}{G}$. We denote by $\langle M \rangle_{\mathbf{\Delta}}$ the Tannakian category generated by $M$ in $\PervInt{\mathbf{\Delta}}{G}$ and by $\langle M \rangle_{\mathbf{\Delta}'}$ the Tannakian category generated by $M$ in $\PervInt{\mathbf{\Delta}'}{G}$. The natural functor
	\[
	\text{inc}\colon \langle M \rangle_{\mathbf{\Delta}} \rightarrow \langle M \rangle_{\mathbf{\Delta}'}
	\]
	is an equivalence.
\end{proposition}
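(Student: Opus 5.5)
Let $\mathscr{T} := \langle M \rangle_{\mathbf{\Delta}}$ and $\mathscr{T}' := \langle M \rangle_{\mathbf{\Delta}'}$. By Proposition \ref{PROP_compindlimit}, $\text{inc}\colon \PervInt{\mathbf{\Delta}}{G} \rightarrow \PervInt{\mathbf{\Delta}'}{G}$ is a fully faithful tensor functor. It therefore maps $\mathscr{T}$ into $\mathscr{T}'$ and is fully faithful there. The plan is to show that the restriction is essentially surjective.

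\textbf{Tensor generators.} The image of $\mathscr{T}$ contains $M$ and $M^\vee$, and it is closed under the convolution, since $\text{inc}$ is a tensor functor. Hence it contains every object $T := M^{*a}*(M^\vee)^{*b}$ built in $\PervInt{\mathbf{\Delta}'}{G}$.

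\textbf{Subquotients.} The category $\mathscr{T}'$ consists of the subquotients of finite sums of such tensor generators. So it suffices to show that the image of $\text{inc}$ is closed under taking subobjects and quotients inside $\PervInt{\mathbf{\Delta}'}{G}$. Let $T = \text{inc}(T_0)$ and let $T \twoheadrightarrow Q$ be a quotient in $\PervInt{\mathbf{\Delta}'}{G}$.

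\begin{enumerate}
\item Lift $Q$ to the Serre quotient $\Perv{\mathbf{\Delta}'}{G}$, represented by a genuine perverse sheaf. Lift the epimorphism to an actual morphism $T_0 \rightarrow Q'$ of perverse sheaves, with kernel and cokernel having Fourier support in some $\Delta' \in \mathbf{\Delta}'$.
\item Replace $Q'$ by the image of $T_0$. This image is a quotient of $T_0$ in $\Perv{}{G}$, and all subquotients of $T_0$ are strongly $\mathbf{\Delta}$-unramified. Hence the image $Q''$ is strongly $\mathbf{\Delta}$-unramified.
\item The object $Q''$ agrees with $Q$ in the $\mathbf{\Delta}'$-localization, because the cokernel is $\mathbf{\Delta}'$-negligible.
\item Apply the functor $\text{int}$ for $\mathbf{\Delta}$ to $Q''$. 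By the commutative square of Proposition \ref{PROP_compindlimit}, $\text{int}_{\mathbf{\Delta}}(Q'')$ maps to $\text{int}_{\mathbf{\Delta}'}(Q'') \cong Q$. So $Q$ lies in the essential image.
\end{enumerate}

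Subobjects are treated dually, using the duality $\vee$ and Corollary \ref{COR_SupportDuality}, or directly using kernels.

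\textbf{Main obstacle.} The hard step is the compatibility used in (4). One must show that strongly $\mathbf{\Delta}$-unramified sheaves have the same "int" in both localizations. That is, a sub- or quotient object of an object of $\PervInt{}{}$ that is negligible for $\mathbf{\Delta}'$ must already be negligible for $\mathbf{\Delta}$.

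Lemma \ref{LEM_unramifiednegligible} is exactly the tool here. A strongly $\Delta$-unramified sheaf with proper Fourier support has Fourier support contained in $\Delta$ for some $\Delta \in \mathbf{\Delta}$. So "proper Fourier support" is the common notion of negligibility for both families. This is why $\PervInt{\mathbf{\Delta}}{G}$ is defined through properness of support rather than through $\mathbf{\Delta}$ itself.

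I would first check that $\text{int}_{\mathbf{\Delta}}$ and $\text{int}_{\mathbf{\Delta}'}$ agree on strongly $\mathbf{\Delta}$-unramified perverse sheaves by this lemma. Then I would verify that the image $Q''$ has no subobject or quotient with proper Fourier support after applying $(\cdot_t)^t$. Together these yield essential surjectivity and hence the claimed equivalence.
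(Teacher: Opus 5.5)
Your argument is correct, and its key input is the same as the paper's: a subobject or quotient of $\text{inc}(N)$ in $\PervInt{\mathbf{\Delta}'}{G}$ is already a subquotient of $N$ in $\Perv{}{G}$. It is therefore strongly $\mathbf{\Delta}$-unramified, and hence an object of $\PervInt{\mathbf{\Delta}}{G}$.

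The difference is in the packaging. The paper uses the group side of the Tannakian dictionary. The induced homomorphism of monodromy groups is a closed immersion because $\langle M\rangle_{\mathbf{\Delta}'}$ is generated by $M$. It is faithfully flat because $\text{inc}$ is fully faithful and every subobject of $\text{inc}(N)$ comes from a subobject of $N$. So the paper only checks subobjects; quotients are handled by the closed-immersion half. You prove essential surjectivity by hand. This forces you to treat both subobjects and quotients, but it avoids the reconstruction theorem and never uses a fiber functor, so it is the more elementary route.

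Two remarks on your execution.
\begin{itemize}
\item In step (1), only the cokernel of the lifted map is negligible. The kernel of an epimorphism need not be. You never use this, so the slip is harmless.
\item Your ``main obstacle'' is lighter than you make it. The condition ``no nonzero subobject or quotient with proper Fourier support'' in the definition of $\PervInt{\mathbf{\Delta}}{G}$ does not depend on the family. Since $\cvar{G}\notin\mathbf{\Delta}'$, every $\mathbf{\Delta}'$-negligible sheaf has proper Fourier support. Two consequences follow:
\begin{itemize}
\item Morphisms in $\PervInt{\mathbf{\Delta}'}{G}$ are honest morphisms of perverse sheaves.
\item For an epimorphism $\text{inc}(T_0)\twoheadrightarrow Q$, the cokernel in $\Perv{}{G}$ is a quotient of $Q$ with proper support, hence zero.
\end{itemize}
So $Q$ is itself a quotient of $T_0$ in $\Perv{}{G}$. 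It is thus strongly $\mathbf{\Delta}$-unramified and already lies in $\PervInt{\mathbf{\Delta}}{G}$, with $\text{inc}(Q)=Q$. You do not need the image $Q''$, the functor $\text{int}$, or Lemma \ref{LEM_unramifiednegligible} at this point; that lemma underlies Proposition \ref{PROP_compindlimit}, which you cite anyway. This direct observation is exactly the paper's one-line argument.
\end{itemize}
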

\begin{proof}
	By Proposition \ref{PROP_compindlimit}, the functor $\text{inc}$ is a tensor functor between neutral Tannakian categories. The associated morphism of groups is a closed immersion because both categories are generated by $M$. The tensor functor is fully faithful by Proposition \ref{PROP_compindlimit}. Let $N \in  \langle M \rangle_{\mathbf{\Delta}}$ and suppose $N' \subseteq \text{inc}(N)$ is a subobject. Then $N'$ is a subobject of the strongly $\mathbf{\Delta}$-unramified object $N$ (by basic properties of the Serre localization). Thus $N'$ is strongly $\mathbf{\Delta}$-unramified, i.e. $N' \in \PervInt{\mathbf{\Delta}}{G}$. This implies $N'$ is a subobject of $N$ itself. Thus we obtain that the morphism is an isomorphism of algebraic groups. Hence the above functor is an equivalence. 
\end{proof}
\newpage
\section{Tannakian monodromy groups} In this section, we attach Tannakian monodromy groups to perverse sheaves on a group $G$. We again work exclusively with groups $G$ defined over a finite field $k$.
\subsection{Geometric and arithmetic Tannakian monodromy groups} The goal of this section is to attach a geometric and an arithmetic Tannakian monodromy group to a perverse sheaf on a group $G$.
\begin{definition}
	We define the category $\Perv{\text{int}}{G}$ to be the full subcategory of objects $M \in \text{Perv}(G)$ such that $M$ has no quotient or subobject which is negligible. 
\end{definition}
\begin{remark}\label{RMK_constructionproduct}
	Note that we have an equality
	\[
	\Perv{\text{int}}{G}  = \PervUnr{\mathbf{\Delta}_{\text{FFK}}}{G}.
	\]
	by Theorem \ref{THM_VanishingTHM} and Theorem \ref{THM_ClassificationFull}. The category $\Perv{\text{int}}{G}$ is equal to the Tannakian category $\mathbf{P}_{\text{int}}(G)$ considered in \cite{KowalskiTannaka} by Theorem \ref{THM_ClassificationFull}.
	
	Thus we have the structure of a neutral Tannakian category on $\Perv{\text{int}}{G}$. The tensor product 
	\[
	-*-\colon \Perv{\text{int}}{G} \times\Perv{\text{int}}{G} \rightarrow \Perv{\text{int}}{G} 
	\]
	is called the \textit{intermediate convolution}. The duality functor is given by $M \mapsto M^\vee$. This convolution product  is equal to the convolution product $-*_{\text{int}}-$ constructed in \cite{KowalskiTannaka} by definition.
\end{remark}
\begin{definition}
	Let $M \in 	\Perv{\text{int}}{G}$. We define the \textit{arithmetic Tannakian monodromy group $\mathbf{G}_M^{\text{ari}}$} to be the automorphism group of a fiber functor of the Tannakian category $\langle M \rangle$ generated by $M$. We define the \textit{geometric Tannakian monodromy group $\mathbf{G}_M^{\text{geo}}$} to be the automorphism group of a fiber functor of $\langle M_{\overline{k}} \rangle$. 
\end{definition}
\begin{remark}
	Remark \ref{RMK_constructionproduct} implies that the Tannakian monodromy groups considered here are equal to the Tannakian monodromy groups considered in \cite{KowalskiTannaka}.
\end{remark}
\begin{theorem}\label{THM_fiberfunc}
	Let $\Delta \subseteq \cvar{G}$ be a closed subset and $M \in \Perv{\text{int}}{G}$. Suppose $M$ is strongly $\Delta$-unramified. \footnote{This means all subquotients of $M_{\overline{k}}$ are $\Delta$-unramified. For example, if $M$ is $\iota$-pure, then strongly $\Delta$-unramified and $\Delta$-unramified are equivalent.}For all $\chi \notin \Delta$, we have fiber functors given by
	\[
	\omega_\chi\colon \langle M \rangle \rightarrow \Qbarl\text{-Vec},~N \mapsto H^0_c(G, M_\chi).
	\]
	Moreover, all objects in the category $\langle M \rangle$ are strongly $\Delta$-unramified.
\end{theorem}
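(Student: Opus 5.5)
The strategy is to realise $\langle M\rangle$ inside the $\Delta$-localized Tannakian category of Theorem \ref{THM_tannakaexistence}, and then transport its fiber functors back along the equivalences of Proposition \ref{PROP_intermedconv} and Proposition \ref{PROP_tannakacomp}. Let $\mathbf{\Delta}$ be the family $\{\Delta\}$. If $\Delta=\cvar{G}$ there is no $\chi\notin\Delta$, so the first claim is vacuous. In that case one only has to check that objects of $\langle M\rangle$ are strongly $\Delta$-unramified, and this is automatic. So we may assume $\cvar{G}\notin\mathbf{\Delta}$. Since $M\in\Perv{\text{int}}{G}$ has no negligible subobject or quotient, Lemma \ref{LEM_unramifiednegligible} applies to each such constituent. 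I would use it to show that $M$ has no subobject or quotient whose Fourier support is proper, so that $M\in\PervInt{\mathbf{\Delta}}{G}$. I also want $\mathbf{\Delta}\subseteq\mathbf{\Delta}'$ for some family $\mathbf{\Delta}'$ with $\Perv{\text{int}}{G}=\PervInt{\mathbf{\Delta}'}{G}$. For this I take $\mathbf{\Delta}'$ to be the family generated by $\mathbf{\Delta}_{\text{FFK}}$ and $\Delta$. Then Remark \ref{RMK_constructionproduct} together with Proposition \ref{PROP_compindlimit} identifies the intermediate convolution computed in $\PervInt{\mathbf{\Delta}}{G}$ with the one computed in $\Perv{\text{int}}{G}$.

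By Proposition \ref{PROP_tannakacomp}, the inclusion $\langle M\rangle_{\mathbf{\Delta}}\to\langle M\rangle_{\mathbf{\Delta}'}=\langle M\rangle$ is an equivalence of Tannakian categories. Hence every object of $\langle M\rangle$ lies in $\PervInt{\mathbf{\Delta}}{G}$, and in particular it is strongly $\Delta$-unramified; this is the second assertion. For the first assertion, Theorem \ref{THM_tannakaexistence} gives, for each $\chi\notin\Delta$, the fiber functor $\omega_\chi$ on $\PervUnr{\mathbf{\Delta}}{G}$. We pull it back along the equivalence $\text{int}$ of Proposition \ref{PROP_intermedconv}, noting that $\text{int}$ is the identity on objects of $\PervInt{\mathbf{\Delta}}{G}$. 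What remains is to check that the resulting functor is $N\mapsto H^0_c(G,N_\chi)$ on genuine perverse sheaves. Two inputs are needed. First, $H^*_c(G,-_\chi)$ kills $\Delta$-negligible objects, so it factors through the localization. Second, by Proposition \ref{PROP_GysinUnramifiedCriterion}(2), $H^*_c(G,N_\chi)$ is concentrated in degree zero for a $\Delta$-unramified $N$. This makes $\omega_\chi$ exact and $\Qbarl$-valued. It is a tensor functor by the Künneth computation underlying the proposition on $\omega_{?,\chi}$, combined with Theorem \ref{THM_isoconv}, which gives $N*_!N'\cong N*N'$ in the localization. Faithfulness comes from Theorem \ref{THM_DeltaLocNegligibleComplexes}, and compatibility with duals from the duality isomorphisms together with $H^*_c=H^*$ at unramified characters.

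I expect the main obstacle to be the first step, the inclusion $\Perv{\text{int}}{G}\cap\{\text{strongly }\Delta\text{-unramified}\}\subseteq\PervInt{\mathbf{\Delta}}{G}$. The difficulty is that the notions of ``negligible'' differ: for $\Perv{\text{int}}{G}$ a constituent is negligible when its Fourier support is a proper thin set, while $\PervInt{\mathbf{\Delta}}{G}$ asks only for proper Fourier support. I would settle this using Theorem \ref{THM_ClassificationFull}, which says that a geometrically irreducible sheaf with proper Fourier support has support $\Delta_S\times Z$, a proper thin set. Hence the two notions agree, and Lemma \ref{LEM_unramifiednegligible} then forces such constituents to have support inside $\Delta$. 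A secondary concern is making the passage through the families $\mathbf{\Delta}\subseteq\mathbf{\Delta}'$ rigorous, since $\mathbf{\Delta}_{\text{FFK}}$ is not a single closed set. I would resolve this by noting that every argument takes place inside the finitely generated category $\langle M\rangle$, so only finitely many closed sets are ever involved.
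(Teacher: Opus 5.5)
Your proposal is correct and follows the paper's own argument: the paper likewise treats $\Delta=\cvar{G}$ as trivial, and otherwise deduces the statement from Proposition~\ref{PROP_tannakacomp}, Proposition~\ref{PROP_intermedconv} and Theorem~\ref{THM_tannakaexistence}. Your auxiliary family $\mathbf{\Delta}'$, generated by $\mathbf{\Delta}_{\text{FFK}}$ and $\Delta$, together with your use of the classification to identify ``negligible'' with ``proper Fourier support'', makes explicit a comparison the paper leaves implicit. You should also check that $\cvar{G}\notin\mathbf{\Delta}'$. This holds because a proper thin set meets each irreducible component of $\cvar{G}$ in a proper closed subset, so it cannot cover the nonempty, hence dense, open complement of the proper closed set $\Delta$ in some component.
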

\begin{proof}
	When $\Delta = \cvar{G}$, there is nothing to prove. Otherwise, this follows from Proposition \ref{PROP_tannakacomp}, Proposition \ref{PROP_intermedconv}, and Theorem \ref{THM_tannakaexistence}. 
\end{proof}
\begin{definition}
	Let $\Delta \subseteq \cvar{G}$ be a closed subset and $M \in \Perv{\text{int}}{G}$ a strongly $\Delta$-unramified perverse sheaf. For each arithmetic character $\chi \notin \Delta$, Theorem \ref{THM_fiberfunc} constructs a \textit{Frobenius conjugacy class} $\theta_{M, \chi}$ in $\mathbf{G}^{\text{ari}}_M$ by considering the conjugacy class in $\mathbf{G}^{\text{ari}}_M$ associated to Frobenius acting on the fiber functor $\omega_\chi$. 
\end{definition}
\begin{remark}
	We retain the notation of the above definition. The conjugacy classes $\theta_{M, \chi}$ can be characterized by the following property. Let $N \in \langle M \rangle$ be an object in the Tannakian category generated by $M$. The conjugacy class $\theta_{M, \chi}$ naturally acts on $H^0_c(G, N_\chi)$ because this is a fiber functor. The Frobenius conjugacy class acting on $H^0_c(G, N_\chi)$ agrees with $\theta_{M, \chi}$.
	
	The trace of this operator can be evaluated from the following remark. Note that all the objects in the category $\langle M \rangle$ are strongly $\Delta$-unramifed. Thus we obtain
	\[
	\text{Tr}(\text{Fr}_{k_n} | H^0_c(G, N_\chi)) = \sum_{i \in \BZ} (-1)^i\text{Tr}(\text{Fr}_{k_n} | H^i_c(G, N_\chi)).
	\]
	The right hand side of this equation can be rewritten using the Grothendieck-Lefschetz fixed point formula. 
\end{remark}
\begin{lemma}\label{LEM_intconvform}
	Let $M, N \in \Perv{\text{int}}{G}$. Then we have
	\[
	M*N = \pervCoh{0}{M*_!N}^{\text{int}} = \pervCoh{0}{M*_*N}^{\text{int}}
	\]
\end{lemma}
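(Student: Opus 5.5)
By Remark \ref{RMK_constructionproduct}, the intermediate convolution on $\Perv{\text{int}}{G}$ is the tensor product transported along the equivalence $\text{int}$ of Proposition \ref{PROP_intermedconv}. This holds for the family $\mathbf{\Delta}_{\text{FFK}}$, for which every perverse sheaf is strongly unramified by Theorem \ref{THM_VanishingTHM}. The plan is therefore to unwind this transport. We choose a proper thin subset $\Delta$ such that $M$, $N$ and all geometric constituents of $M$ and $N$ are $\Delta$-unramified. Such a $\Delta$ exists by Theorem \ref{THM_VanishingTHM} together with the fact that a finite union of proper thin subsets is proper thin. In $\Perv{\mathbf{\Delta}_{\text{FFK}}}{G}$ the product $M*N$ is by definition the class of $M*_!N$, and this class is perverse there by Theorem \ref{THM_isoconv}. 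Applying $\text{int}$ then gives $M*N=(\text{lift of }M*_!N)^{\text{int}}$.

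The first step is to identify this lift with $\pervCoh{0}{M*_!N}$. Because the quotient functor $\derCat{c}{G}{}\to\derCat{\mathbf{\Delta}}{G}{}$ is $t$-exact, $M*_!N$ being perverse in the localization means that $\pervCoh{n}{M*_!N}$ lies in $S_{\Delta}$ for $n\neq 0$. Hence the image of $M*_!N$ in the heart $\Perv{\mathbf{\Delta}}{G}$ is isomorphic to the image of $\pervCoh{0}{M*_!N}$. By Theorem \ref{THM_isoconv} the map $M*_!N\to M*_*N$ is an isomorphism in $\derCat{\Delta}{G}{}$. Applying ${}^p\!H^0$, which commutes with the $t$-exact localization, shows that $\pervCoh{0}{M*_!N}\to \pervCoh{0}{M*_*N}$ is an isomorphism in $\Perv{\Delta}{G}$.

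The second step uses that $\text{int}$ depends only on the isomorphism class in the Serre quotient. If two strongly unramified perverse sheaves $A$ and $B$ are isomorphic in $\Perv{\mathbf{\Delta}}{G}$, then $A^{\text{int}}\cong B^{\text{int}}$ in $\Perv{}{G}$. To see this, note that both are objects of $\PervInt{\mathbf{\Delta}}{G}$ isomorphic to $A$ in the quotient. Proposition \ref{PROP_intermedconv} shows that $\text{int}$ is an equivalence and that Hom-sets coincide, so an isomorphism in the quotient between objects with no negligible sub or quotient lifts to a genuine isomorphism. Concretely, a roof $A^{\text{int}}\leftarrow A'\to B^{\text{int}}$ has negligible kernel and cokernel, and the absence of negligible subobjects and quotients forces these maps to be isomorphisms. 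Applying this to $A=\pervCoh{0}{M*_!N}$ and $B=\pervCoh{0}{M*_*N}$ yields both equalities.

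The main obstacle is verifying that $\pervCoh{0}{M*_!N}$ and $\pervCoh{0}{M*_*N}$ are strongly $\mathbf{\Delta}$-unramified, which is what we need in order to apply $\text{int}$ and Lemma \ref{LEM_unramifiednegligible}. One could try to get this from the last assertion of Theorem \ref{THM_isoconv}, but that only concerns the localized object. The cleaner route is to bypass the issue. Every perverse sheaf is strongly $\mathbf{\Delta}_{\text{FFK}}$-unramified by Theorem \ref{THM_VanishingTHM}, and for $\mathbf{\Delta}_{\text{FFK}}$ the negligible sheaves are exactly those with proper Fourier support, by Theorem \ref{THM_ClassificationFull}. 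So $(-)^{\text{int}}$ is defined on all of $\Perv{}{G}$ and agrees with the construction in \cite{KowalskiTannaka}.
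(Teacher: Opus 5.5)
Your proof is correct and takes the paper's route: the paper derives the lemma in one line from the definition of the intermediate convolution (the localized $!$-convolution transported along $\text{int}$) and Theorem \ref{THM_isoconv}, and you unwind exactly this, making explicit that the lift of the localized perverse object $M*_!N$ is ${}^{p}H^0(M*_!N)$ and that $\text{int}$ depends only on the isomorphism class in the Serre quotient. Your ``concretely'' remark about roofs is slightly loose, since the left leg of a general roof need not be injective, but the Hom-set identification from Proposition \ref{PROP_intermedconv} that you also cite already yields the genuine isomorphism you need.
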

\begin{proof}
	This follows from the definition of the convolution and Theorem \ref{THM_isoconv}.
\end{proof}
\begin{proposition}
	Let $\Delta \subseteq \cvar{G}$ be a closed subset. Suppose $M \in \Perv{\text{int}}{G}$ is $\iota$-pure of weight zero. All objects $N \in \langle M \rangle$ are $\iota$-pure of weight zero.
\end{proposition}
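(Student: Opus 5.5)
The plan is to first show that the whole Tannakian category $\langle M\rangle$ is built from $M$ and $M^\vee$ by operations that preserve $\iota$-purity of weight zero. Every object of $\langle M\rangle$ is a subquotient, taken in $\Perv{\text{int}}{G}$, of a finite direct sum of objects $M^{*a}*(M^\vee)^{*b}$. The intermediate convolution is computed by Lemma \ref{LEM_intconvform} as $\pervCoh{0}{M*_!N}^{\text{int}}$. So it suffices to verify three things.

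\begin{enumerate}
\item $M^\vee$ is $\iota$-pure of weight zero.
\item If $M,N$ are $\iota$-pure of weight zero in $\Perv{\text{int}}{G}$, then so is $\pervCoh{0}{M*_!N}^{\text{int}}$.
\item Subquotients in $\Perv{\text{int}}{G}$ of pure weight-zero objects are pure of weight zero.
\end{enumerate}

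Point (1) follows because Verdier duality exchanges weights $\leq w$ and $\geq -w$, and $\text{inv}^*$ is an automorphism, so $D$ preserves $\iota$-purity of weight zero.

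Point (3) holds because, by Gabber's theorem, a $\iota$-pure perverse sheaf is geometrically semisimple and its perverse subquotients are again $\iota$-pure of the same weight. The functor $\text{int}$ only passes to a subquotient of a perverse sheaf, since $M^{\text{int}}=(M_t)^t$. Subquotients in the Serre quotient can therefore be represented by honest perverse subquotients, which remain pure.

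For point (2), note that $M\boxtimes N$ is pure of weight zero. By Deligne's Weil II, $m_!$ preserves weights $\leq 0$ and $m_*$ preserves weights $\geq 0$. Hence $\pervCoh{0}{M*_!N}$ is mixed of weights $\leq 0$ and $\pervCoh{0}{M*_*N}$ is mixed of weights $\geq 0$.

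By Theorem \ref{THM_isoconv} (with $\mathbf{\Delta}=\mathbf{\Delta}_{\text{FFK}}$, which applies since every object is strongly unramified by Theorem \ref{THM_VanishingTHM}), the forget-supports map $M*_!N\to M*_*N$ is an isomorphism in the localized category. Thus the induced map $\pervCoh{0}{M*_!N}\to\pervCoh{0}{M*_*N}$ has negligible kernel and cokernel. Applying $\text{int}$ gives $\pervCoh{0}{M*_!N}^{\text{int}}\cong \pervCoh{0}{M*_*N}^{\text{int}}$ as honest perverse sheaves, by the equivalence of Proposition \ref{PROP_intermedconv}. The left side is a subquotient of an object of weights $\leq 0$, hence has weights $\leq 0$; the right side likewise has weights $\geq 0$. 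So the object is pure of weight zero.

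The main obstacle is this last identification. The map in the localized category must be realized by an actual isomorphism of the int-representatives, so that the weight bounds from both sides can be compared on a single perverse sheaf. I would argue it as follows. The weight filtration on $\pervCoh{0}{M*_!N}$ is Frobenius-stable, and its graded pieces of weight $<0$ must map to zero in the weight-$\geq 0$ object. Hence those pieces lie in the negligible kernel, and $\text{int}$ removes them. One has to check that $\text{int}$ removes negligible pieces sitting in the middle of a filtration, not only those that occur as subobjects or quotients. To see this, use geometric semisimplicity: after base change to $\overline{k}$, a mixed object of weights $\leq 0$ whose int-part injects into a weights-$\geq 0$ object can have no non-negligible constituent of weight $<0$. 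This gives purity geometrically, which is what $\iota$-purity requires. Induction on the length of the tensor expression then finishes the argument.
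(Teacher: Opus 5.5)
Your proposal is correct and is essentially the paper's proof. The paper likewise uses Lemma \ref{LEM_intconvform} to identify the intermediate convolution with the $\text{int}$-part of ${}^pH^0$ of both the $!$- and the $*$-convolution, squeezes the weights between $\leq 0$ and $\geq 0$ via Weil II, and then passes to subquotients. Your binary induction is, if anything, a cleaner way to handle the iterated products.

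The worry in your last paragraph is unnecessary. For any perverse sheaf $P$, the object $P^{\text{int}}$ is an honest subquotient of $P$, and the localization is fully faithful on objects with no negligible subobjects or quotients. Hence $({}^pH^0(M*_!N))^{\text{int}}\cong({}^pH^0(M*_*N))^{\text{int}}$ is an actual isomorphism of perverse sheaves over $k$, and the two weight bounds transfer directly. You never need $\text{int}$ to remove negligible constituents from the middle of a filtration.
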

\begin{proof}
	Note that any object in $\langle M \rangle$ is a subquotient of
	\[
	M^{*a}*(M^\vee)^{*b}
	\]
	for some $a, b \geq 1$. We have
	\[
	M^{*a}*(M^\vee)^{*b} = \pervCoh{0}{	M^{*_?a}*_?(M^\vee)^{*_?b}}^{\text{int}}
	\]
	by Lemma \ref{LEM_intconvform}. Therefore, the generalized Riemann hypothesis implies that all subquotients of $M^{*a}*(M^\vee)^{*b}$ are pure of weight zero.
\end{proof}
\begin{corollary}
	Let $\Delta \subseteq \cvar{G}$ be a closed subset. Suppose $M \in \Perv{\text{int}}{G}$ is $\Delta$-unramified arithmetically semisimple, and $\iota$-pure of weight zero. Let $K\subset \mathbf{G}^{\text{ari}}_M\otimes_\iota\BC$ be a maximal compact subgroup. The semisimplified conjugacy classes $\iota(\Theta_{M, \chi}^{\text{ss}})$ are conjugate to an element in $K$.
\end{corollary}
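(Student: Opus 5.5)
The plan is to reduce to the standard fact that a unitary-type argument holds once the whole Tannakian category $\langle M\rangle$ consists of pure weight-zero objects. I will show that for every $\chi\notin\Delta$ and every representation $N\in\langle M\rangle$ of $\mathbf{G}^{\text{ari}}_M$, the eigenvalues of $\iota(\Theta_{M,\chi})$ acting on $\omega_\chi(N)=H^0_c(G,N_\chi)$ all have complex absolute value $1$. I will then deduce from this that the semisimple part is conjugate into $K$.

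First, by the preceding Proposition, every object $N\in\langle M\rangle$ is $\iota$-pure of weight zero. By Theorem \ref{THM_fiberfunc}, all such $N$ are strongly $\Delta$-unramified, so by Proposition \ref{PROP_GysinUnramifiedCriterion} the cohomology $H^*_?(G,N_\chi)$ is concentrated in degree $0$ and the forget supports map $H^0_c(G,N_\chi)\to H^0(G,N_\chi)$ is an isomorphism. Deligne's Weil II gives that $H^0_c(G,N_\chi)$ is $\iota$-mixed of weights $\leq 0$ and $H^0(G,N_\chi)$ is $\iota$-mixed of weights $\geq 0$. Here $N_\chi$ is pure of weight zero because $\SL_\chi$ for arithmetic $\chi$ is pure of weight $0$, being of finite order. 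The isomorphism therefore forces $H^0_c(G,N_\chi)$ to be $\iota$-pure of weight $0$. Hence $\theta_{M,\chi}$ acts on every $\omega_\chi(N)$ with eigenvalues of absolute value $1$ after applying $\iota$.

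Next comes the group-theoretic step. Let $g=\iota(\Theta_{M,\chi}^{\text{ss}})\in\mathbf{G}^{\text{ari}}_M(\BC)$. This is a semisimple element that acts with unit-modulus eigenvalues in every finite-dimensional representation. Since $M$ is arithmetically semisimple, $\langle M\rangle$ is semisimple, so $\mathbf{G}^{\text{ari}}_M$ is reductive over $\BC$ (in characteristic zero, a semisimple tensor category has reductive group). Take the Zariski closure of the group generated by $g$. Its identity component is a torus $S$ and its component group is finite cyclic. All characters of $S$ take values in the unit circle on $g$, because they occur in some representation $\omega_\chi(N)$: every representation of $\mathbf{G}^{\text{ari}}_M$ occurs as a subquotient of tensor powers of $\omega_\chi(M)$ and its dual. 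Hence the closed subgroup generated by $g$ in the analytic topology is compact. A compact subgroup of a complex reductive group is contained in a maximal compact subgroup, and all maximal compact subgroups are conjugate. Therefore $g$ is conjugate into $K$.

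The main obstacle I expect is the weight bookkeeping. One must check that the eigenvalue bound holds for all representations of $\mathbf{G}^{\text{ari}}_M$, which needs every $N\in\langle M\rangle$ to be strongly $\Delta$-unramified and pure. Both facts are supplied by Theorem \ref{THM_fiberfunc} and the preceding Proposition. One must also confirm that $\omega_\chi$ is compatible with the Frobenius action, so that $\theta_{M,\chi}$ genuinely computes Frobenius on each $H^0_c(G,N_\chi)$. This is exactly the characterization in the Remark following the definition of Frobenius conjugacy classes.
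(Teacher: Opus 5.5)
Your proposal is correct and follows the paper's argument. The forget-supports isomorphism from Proposition \ref{PROP_GysinUnramifiedCriterion}, combined with Weil II, forces $H^0_c(G,M_\chi)$ to be $\iota$-pure of weight zero, and a semisimple element whose eigenvalues have modulus one is conjugate into $K$. The paper checks this only on the faithful representation $\omega_\chi(M)$, which already suffices, and it leaves implicit the group-theoretic step that you spell out. Your pass through every $N\in\langle M\rangle$ is therefore unnecessary; it would also require strong $\Delta$-unramifiedness, which follows from the geometric semisimplicity of the $\iota$-pure $M$.
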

\begin{proof}
	The morphism
	\[
	H^0_c(G, M_\chi) \rightarrow H^0(G, M_\chi)
	\]
	is an isomorphism by Proposition \ref{PROP_GysinUnramifiedCriterion}. The generalized Riemann hypothesis implies that the eigenvalues of $\iota(\Theta_{M, \chi})$ have absolute value $1$. Therefore, the semisimplification is conjugate to an element of $K$. 
\end{proof}
\begin{definition}
	Let $\Delta \subseteq \cvar{G}$ be a closed subset. Suppose $M \in \Perv{\text{int}}{G}$ is strongly $\Delta$-unramified, arithmetically semisimple, and $\iota$-pure of weight zero. We define the conjugacy class
	\[
	\mathbf{\Theta}_{M, \chi} := \iota(\Theta_{M,\chi}^{\text{ss}})
	\]
	in the space of conjugacy classes of a maximal compact subgroup of $\mathbf{G}^{\text{ari}}_{M}\otimes_\iota\BC$.
\end{definition}
\subsection{Equidistribution} We state a general stratification theorem for Fourier coefficients on arbitrary groups. Our previous stratification theorem only yielded bounds for general exponential sums indexed by the characters of a group. If we wish to apply the Tannakian categories developed here, note that we require the first stratum to be closed for the equidistribution theorem to apply.
\begin{theorem}\label{THM_stratsfourier}
	Let $M \in \Perv{}{G}$ be  perverse of weight $\leq 0$. There exists a stratification by thin subsets
	\[
	\cvar{G} = \Delta_0 \supset \Delta_1 \supseteq \ldots\supseteq \Delta_{n - 1} \supseteq \Delta_n = \emptyset
	\]
	such that
	\begin{enumerate}
		\item The subset $\Delta_i \subseteq\cvar{G}$ has codimension $\geq i$. 
		\item The perverse sheaf $M$ is strongly $\Delta_1$-unramified.
		\item We have
		\[
		\sum_{x \in G(k_n)} \chi(x)t_M(x) \ll |k_n|^{(i - 1)/2}
		\]
	for all $i \geq 0$, $n \geq 1$, and arithmetic characters $\chi \in \widehat{G}(k_n)$ with $\chi \notin \Delta_i$.
	\end{enumerate}
\end{theorem}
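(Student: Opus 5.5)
We reduce to a product and then combine the vanishing theorem with the absolute stratification. First, by Theorem \ref{THM_AppendixStructureTheorem} we choose an isogeny $q\colon G \rightarrow S\times U$ and replace $M$ by $q_*M$. Proposition \ref{PROP_ProjectionFormula2} and Proposition \ref{PROP_RamifiedFiniteDescent} show that the exponential sums and the ramification of $M$ at $q^*(\chi)$ are those of $q_*M$ at $\chi$. Proposition \ref{PROP_CharIsogenyClosed} and Proposition \ref{PROP_IsogenyPreservesDimension} show that $q^*$ carries thin subsets of codimension $\geq i$ to thin subsets of codimension $\geq i$. Pulling back a stratification of $\cvar{S\times U}$ therefore gives one of $\cvar{G}$, and we may assume $G = S\times U$. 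Strong unramifiedness is a condition on the finitely many geometric constituents $M^{(1)},\dots,M^{(r)}$ of $M_{\overline{k}}$, each arithmetic over some $k'$. The estimates can be checked after a finite extension, up to replacing thin sets by their Frobenius translates, which are finitely many.

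For the first stratum, Theorem \ref{THM_VanishingTHM} gives a proper thin subset $\Delta^{(j)}$ such that $M^{(j)}$ is $\Delta^{(j)}$-unramified. We also take a proper thin $\Delta^{(0)}$ with $M$ itself $\Delta^{(0)}$-unramified. We set $\Delta_1 := \bigcup_j \Delta^{(j)}$, a proper thin subset by Proposition \ref{PROP_thintop}, hence of codimension $\geq 1$; this gives (2). For $\chi \notin \Delta_1$, Corollary \ref{COR_GysinUnrImpliesWunr} says $H^*_c(G,M_\chi)$ is concentrated in degree $0$ and agrees with $H^*(G,M_\chi)$. Since $M$ has weight $\leq 0$, Deligne's theorem gives $H^0_c$ of weight $\leq 0$, and by Lefschetz the sum is $O(1) = O(|k_n|^{0})$. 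This is at least as good as the required $|k_n|^{(i-1)/2}$ for $i = 0$ or $1$. The $O(1)$ depends only on $\dim H^0_c$, which is bounded on $\Delta_1^c$, for instance by Corollary \ref{COR_unramdimformula}.

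For the deeper strata, we apply Theorem \ref{THM_stratcoh} with $X$ a point to $K = M$ (perverse amplitude $[0,0]$). This gives a closed stratification $Z_\bullet$ of $\widehat{U}$ and a thin stratification $\Delta'_\bullet$ of $\cvar{S}$. For characters with $\pi_U^*\chi\notin Z_{a+1}$ and $\pi_S^*\chi\notin\Delta'_{b+1}$, the complex $H^*_c$ has amplitude in $[-b, a+b]$. Since weight $\leq 0$ gives $|{\rm tr}|$ on $H^m_c$ at most $O(|k_n|^{m/2})$, the sum is $\ll |k_n|^{(a+b)/2}$. We then define, for $i\geq 2$,
\[
\Delta_i := \Delta_1\cap\bigcup_{a+b\geq i-1}\big(\Delta'_{b}\times\cvar{U}\;\cup\;\cvar{S}\times Z_{a}\big)_{\text{suitably intersected}},
\]
more precisely the union over $a+b=i-1$ of $\Delta'_{b}\times Z_{a}$, intersected with $\Delta_1$. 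Each piece $\Delta'_b\times Z_a$ is thin of codimension $\geq a+b = i-1$. This would give only codimension $i-1$, so an index shift is needed, and this is exactly why the bound reads $(i-1)/2$. A character outside $\Delta_i$ lies in some $(\Delta'_{b+1}\times Z_{a+1})^c$-type region with $a+b\leq i-1$, so the sum is $\ll|k_n|^{(i-1)/2}$.

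The main obstacle is matching codimensions and exponents: getting codimension $\geq i$ for $\Delta_i$ while keeping exponent $(i-1)/2$. I would build $\Delta_i$ for $i\geq 1$ from the products $\Delta'_{b}\times Z_{a}$ with $a+b = i$, intersected with $\Delta_1$ and including $\Delta_1$ at level $1$. Outside this set, some $b'+a' \leq i-1$ satisfies the hypotheses of Theorem \ref{THM_stratcoh}, which yields the bound $|k_n|^{(i-1)/2}$. The remaining points are to check that finite unions and intersections of such products stay thin (Proposition \ref{PROP_TopologyProductStructure}) and that the sets are nested, which we enforce by taking cumulative unions.
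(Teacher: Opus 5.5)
Your proposal is correct and follows the paper's proof. Like the paper, you reduce to $G=S\times U$ along an isogeny, take $\Delta_1$ from the vanishing theorem (Theorem \ref{THM_VanishingTHM}) applied to the geometric constituents, and get the deeper strata from the absolute stratification (Theorem \ref{THM_stratcoh} with $X$ a point plus Weil II, which is exactly Theorem \ref{THM_stratificationbounds}). Your final bookkeeping $\Delta_i:=\Delta_1\cap\bigcup_{a+b=i}\Delta'_b\times Z_a$ is the right one: a character outside it has stratum indices summing to at most $i-1$, which gives the bound $|k_n|^{(i-1)/2}$ with codimension $\geq i$, so your earlier $a+b=i-1$ attempt and the talk of an index shift can simply be dropped.
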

\begin{proof}
	We can reduce to a product $G = S\times U$ by Proposition \ref{PROP_CharIsogenyClosed}, Proposition \ref{PROP_GysinUnramifiedCriterion}, Proposition \ref{PROP_ProjectionFormula1}, and the $t$-exactness of pushforward under a finite map. In this case, it follows from the vanishing Theorem \ref{THM_VanishingTHM} and stratification Theorem \ref{THM_stratificationbounds}. 
	\end{proof}
The Leray spectral sequence allows us to deduce the following, slightly more general stratification theorem.
\begin{theorem}\label{THM_stratsfourier2}
	Let $K \in \derCat{c}{G}{}$ be a complex with perverse amplitude $\leq 0$ and of weight $\leq 0$. There exists a stratification by thin subsets
	\[
	\cvar{G} = \Delta_0 \supset \Delta_1 \supseteq \ldots\supseteq \Delta_{n - 1} \supseteq \Delta_n = \emptyset.
	\]
	We have:
	\begin{enumerate}[]\item The stratum $\Delta_i \subseteq \cvar{G}$ has codimension $\geq i$.
		\item We have
		\[
		\sum_{x \in G(k_n)} \chi(x)t_K(x) \ll |k_n|^{(i - 1)/2}
		\]
		for all $i \geq 0$, $n \geq 1$, and arithmetic characters $\chi \in \widehat{G}(k_n)$ with $\chi \notin \Delta_i$.
	\end{enumerate}
\end{theorem}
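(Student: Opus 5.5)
The plan is to deduce Theorem \ref{THM_stratsfourier2} from Theorem \ref{THM_stratsfourier} by filtering $K$ by its perverse cohomology sheaves and taking a common refinement of the resulting thin stratifications.

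\textbf{Step 1: reduce to the perverse cohomology sheaves.} Write $M_q := \pervCoh{q}{K}$. Since $K$ has perverse amplitude $\leq 0$, only finitely many $q \leq 0$ occur. Since $K$ has weight $\leq 0$, each $M_q$ has weight $\leq q$ by Deligne's theorem on weights of perverse cohomology sheaves. Hence $M_q$ has weight $\leq 0$.

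\textbf{Step 2: additivity of the sum.} In the Grothendieck group we have $[K] = \sum_q (-1)^q[M_q[0]]$. Trace functions are additive, so
\[
t_K = \sum_q (-1)^q t_{M_q}.
\]
Consequently $\sum_{x}\chi(x)t_K(x)$ is the alternating sum of the corresponding sums for the $M_q$.

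\textbf{Step 3: apply the perverse case and take unions.} Apply Theorem \ref{THM_stratsfourier} to each $M_q$. This gives stratifications $\Delta^{(q)}_\bullet$ by thin subsets, with $\Delta^{(q)}_i$ of codimension $\geq i$. Define
\[
\Delta_i := \bigcup_q \Delta^{(q)}_i.
\]
A finite union of thin subsets is thin by Proposition \ref{PROP_thintop}. The union of subsets of codimension $\geq i$ still has codimension $\geq i$, since dimension is a maximum over components. Pad the shorter chains with $\emptyset$ so that all chains have the same length, and put $\Delta_0 = \cvar{G}$.

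For $\chi \notin \Delta_i$ we have $\chi \notin \Delta^{(q)}_i$ for every $q$. Each term is then $\ll |k_n|^{(i-1)/2}$, and summing finitely many terms gives the bound with an implied constant depending on $K$.

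\textbf{Main obstacle: the shift $M_q[-q]$.} One might worry whether the shift costs a factor $|k_n|^{-q/2}$, or whether the weight of $M_q$ changes the bound. The shift only affects the sign through $(-1)^q$, and the weight of $M_q$ is $\leq q \leq 0$. So the Riemann-hypothesis bound from Theorem \ref{THM_stratsfourier} applies at least as strongly, and only the $\leq 0$ weight is needed.

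If Theorem \ref{THM_stratsfourier} is read as requiring weight exactly $\leq 0$ with the mixed normalisation, I would instead redo its proof directly for $K$. That proof uses Theorem \ref{THM_stratificationbounds}, which is already stated for complexes of weight $\leq 0$ and perverse amplitude $\leq d$ with $X$ a point, combined with the isogeny reduction and Deligne's theorem.
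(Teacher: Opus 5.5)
Your proposal is correct and is the paper's own argument: apply Theorem \ref{THM_stratsfourier} to each ${}^pH^q(K)$, which has weight $\le q \le 0$ by BBD 5.4.1, take finite unions of the resulting thin strata, and use additivity of trace functions. Your write-up only supplies the details the paper's one-line proof leaves implicit, and the fallback in your last paragraph is unnecessary because weight $\le q \le 0$ already meets the hypothesis of Theorem \ref{THM_stratsfourier}.
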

\begin{proof}
	This stratification theorem follows immediately by applying Theorem \ref{THM_stratsfourier} to the perverse cohomology sheaves $\pervCoh{n}{K}$ and the \quotationMark{additivity} of the trace.
\end{proof}
	To state the equidistribution theorem in a convenient form, we make the following definition (see also \cite[Def.~4.1]{KowalskiTannaka}).
	\begin{definition}
		Let $K$ be a compact Lie group and $Y_n \subseteq K^\sharp$ a finite set of conjugacy classes for all $n \geq 1$. 
		\begin{enumerate}
			\item We say the conjugacy classes $Y_n$ \textit{equidistributes on average in $K$} as $n \rightarrow \infty$ in $K$ if
		\[
		\lim_{N \rightarrow \infty} \frac{1}{N}\sum_{n = 1}^N \frac{1}{|Y_n|} \sum_{\mathbf{\Theta} \in Y_n} f(\mathbf{\Theta}) = \int_K f(g)dg
		\]
		for all continuous class-functions $f\colon K \rightarrow \BC$. The integral is taken with respect to the Haar probability measure.
		\item  We say the set of conjugacy classes $Y_n$ equidistributes in $K$ as $n \rightarrow \infty$ if
		\[
		\lim_{N \rightarrow \infty} \frac{1}{|Y_n|} \sum_{\mathbf{\Theta} \in Y_n} f(\mathbf{\Theta}) = \int_K f(g)dg. 
		\]
		for all continuous class-functions $f\colon K \rightarrow \BC$. The integral is taken with respect to the Haar probability measure.
	\end{enumerate}		
	\end{definition}
	We also require the notion of character codimension (see \cite[Def.~1.22]{KowalskiTannaka}).
	\begin{definition}
		Let $\Delta\subseteq\cvar{G}$ be a subset and $d \geq 0$. We say $\Delta$ has \textit{character codimension $d$} if
		\[
		\frac{|\Delta\cap\widehat{G}(k_n)|}{|\widehat{G}(k_n)|} \ll |k|^{-nd}
		\]
		for all $n \geq 1$.
	\end{definition}
	\begin{remark}
		Proposition \ref{PROP_thinsubsetsestimate} shows that a thin subset $\Delta \subseteq\cvar{G}$ of codimension $d$ has character codimension $d$. In particular, if $\Delta$ is a proper thin subset it has character codimension $\geq 1$. 
	\end{remark}
	\begin{theorem}\label{THM_Equi1}
		Let $M \in \Perv{}{G}$ be $\iota$-pure of weight zero and $\Delta$-unramified for a closed subset $\Delta \subset \cvar{G}$ of character codimension $\geq 1$. Let $K$ be a maximal compact subgroup in the arithmetic Tannkian monodromy group of $M$. The conjugacy classes $\mathbf{\Theta}_{M, \chi}$ for $\chi \in \widehat{G}(k_n)$ with $\chi \notin \Delta$ equidistribute on average in $K$ as $n \rightarrow \infty$.
	\end{theorem}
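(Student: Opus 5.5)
We will follow the Weyl criterion strategy of \cite[Thm.~4.11]{KowalskiTannaka}. By Peter--Weyl, the characters of irreducible representations of $K$ span a dense subspace of the continuous class functions on $K$. It therefore suffices to show two things for every irreducible representation $\rho$ of $\mathbf{G}^{\text{ari}}_M$: that
\[
\lim_{N\to\infty}\frac1N\sum_{n=1}^N\frac{1}{|\widehat{G}(k_n)-\Delta|}\sum_{\chi\in\widehat{G}(k_n),\,\chi\notin\Delta}\text{Tr}(\rho(\mathbf{\Theta}_{M,\chi}))
\]
vanishes when $\rho$ is nontrivial, and that it equals $1$ when $\rho$ is trivial. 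The trivial case is immediate.

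For the first step, Tannakian reconstruction realizes $\rho$ as an object $N_\rho\in\langle M\rangle$, namely a subquotient of $M^{*a}*(M^\vee)^{*b}$. By Theorem \ref{THM_fiberfunc}, $N_\rho$ is strongly $\Delta$-unramified, and $\text{Tr}(\rho(\mathbf{\Theta}_{M,\chi}))=\text{Tr}(\text{Fr}_{k_n}\mid H^0_c(G,(N_\rho)_\chi))$. Because the cohomology is concentrated in degree zero (Proposition \ref{PROP_GysinUnramifiedCriterion}), this trace equals the alternating trace, and the Grothendieck--Lefschetz formula turns it into $\sum_{x\in G(k_n)}\chi(x)t_{N_\rho}(x)$. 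By the proposition preceding the definition of $\mathbf{\Theta}$, $N_\rho$ is $\iota$-pure of weight zero.

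For the second step, we add back the characters $\chi\in\Delta$ and use orthogonality of characters:
\[
\sum_{\chi\in\widehat{G}(k_n)}\sum_{x}\chi(x)t_{N_\rho}(x)=|G(k_n)|\,t_{N_\rho}(0).
\]
The terms with $\chi\in\Delta$ are bounded by $|\Delta\cap\widehat{G}(k_n)|\cdot\max_\chi|\sum_x\chi(x)t_{N_\rho}(x)|$. For the maximum we use the stratification Theorem \ref{THM_stratsfourier} applied to $N_\rho$. On each stratum $\Delta_i$ the sum is $\ll|k_n|^{i/2}$, and Proposition \ref{PROP_thinsubsetsestimate} bounds the number of characters in $\Delta_i$ by $|k_n|^{-i}$ relative to $|\widehat{G}(k_n)|$. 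For a general closed $\Delta$ of character codimension $\geq1$, we intersect $\Delta$ with the strata; the stratum $\Delta_0-\Delta_1$ contributes at most $|\Delta\cap\widehat{G}(k_n)|\cdot O(1)$. The resulting error is therefore $o(|\widehat{G}(k_n)|)$.

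For the third step, we bound the stalk term. Since $N_\rho$ is pure of weight $0$, the stalk $t_{N_\rho}(0)$ is $O(|k_n|^{-\dim\text{supp}/2})$, which is small. We must also exclude the case that $t_{N_\rho}$ has a large punctual part at $0$. If $N_\rho$ had a summand $\delta_0$, that summand would be the unit object, contradicting irreducibility and nontriviality of $\rho$. Any remaining perverse skyscraper summands are translates $\delta_g$, and these give characters $\chi(g)$ that average to zero over $n$; this is exactly why equidistribution holds only \emph{on average}.

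The main obstacle is the precise bookkeeping of the correction from $\Delta$: the stratification bounds the sums by $|k_n|^{(i-1)/2}$ with varying normalizations. We will try first to copy the argument of \cite[Thm.~4.11]{KowalskiTannaka}, which needs exactly the hypotheses now supplied by Theorem \ref{THM_fiberfunc} and Theorem \ref{THM_VanishingTHM}. That is, we will invoke that theorem directly after checking that the category $\Perv{\text{int}}{G}$ and the classes $\mathbf{\Theta}_{M,\chi}$ coincide with theirs (Remark \ref{RMK_constructionproduct}), and that $M$ is generically unramified in their sense.
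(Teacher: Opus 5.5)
Your route is the paper's. The paper's proof is only the remark that Theorem \ref{THM_stratsfourier} supplies the input for the Weyl-criterion argument of \cite[Thm.~4.11]{KowalskiTannaka}. Your first two steps unpack that argument correctly: the Lefschetz trace formula on $H^0_c$ via strong $\Delta$-unramifiedness (Theorem \ref{THM_fiberfunc}), orthogonality to add back $\chi\in\Delta$, and control of those terms by Theorem \ref{THM_stratsfourier} and Proposition \ref{PROP_thinsubsetsestimate}. If you simply invoke \cite[Thm.~4.11]{KowalskiTannaka} as in your last paragraph, you match the paper. However, your third step is wrong exactly where averaging over $n$ is needed.

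Your claim that a summand $\delta_e$ of $N_\rho$ must be the unit object is false in the arithmetic category $\langle M\rangle$. Take $\delta_e\otimes\alpha^{\deg}$, the skyscraper at the identity twisted by the geometrically constant rank-one sheaf on which $\text{Fr}_k$ acts by some $\alpha\neq 1$ with $|\iota(\alpha)|=1$. It is irreducible and nontrivial. Such objects are exactly those attached to nontrivial characters of $\mathbf{G}^{\text{ari}}_M/\mathbf{G}^{\text{geo}}_M$. Since $\mathbf{G}^{\text{geo}}_M$ is normal, they are also the only irreducible $\rho$ whose object contains $\delta_e$ geometrically. For them, $\text{Tr}(\rho(\mathbf{\Theta}_{M,\chi}))=t_{N_\rho}(e)=\alpha^n$ for every $\chi\in\widehat{G}(k_n)$. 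This does not tend to $0$, but its Cesàro mean $\frac{1}{N}\sum_{n\le N}\alpha^n$ does, and that is the only reason the theorem is stated on average.

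Your proposed mechanism is misplaced. Translates $\delta_g$ with $g\neq e$ contribute nothing to $t_{N_\rho}(e)$, and $\chi(g)$ already averages to $0$ over $\chi$ for each fixed $n$. Read literally, your argument would give equidistribution for each $n$, contradicting the need for $\mathbf{G}^{\text{geo}}_M=\mathbf{G}^{\text{ari}}_M$ in Theorem \ref{THM_Equi2}.

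For the remaining nontrivial $\rho$, no geometric constituent of $N_\rho$ is $\delta_e$. The correct bound is then $t_{N_\rho}(e)=O(|k_n|^{-1/2})$: the support condition for intermediate extensions forces $H^0(N_\rho)_e=0$, and $H^i(N_\rho)_e$ has weight $\leq i\leq -1$. Your bound $O(|k_n|^{-\dim\text{supp}/2})$ holds only at generic points of the support, not at $e$, though the corrected bound still suffices.
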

	\begin{proof}
		Theorem \ref{THM_stratsfourier} allows us to apply the argument from \cite[Thm.~4.11]{KowalskiTannaka}.
	\end{proof}
	\begin{theorem}\label{THM_Equi2}
		Let $M \in \Perv{}{G}$ be $\iota$-pure of weight zero and $\Delta$-unramified for a for a closed subset $\Delta \subset \cvar{G}$ of character codimension $\geq 1$. Let $K$ be a maximal compact subgroup in the arithmetic Tannkian monodromy group of $M$. We assume the arithmetic and the geometric Tannakian mondoromy group of $M$ agree. The conjugacy classes $\mathbf{\Theta}_{M, \chi}$ for $\chi \in \widehat{G}(k_n)$ with $\chi \notin \Delta$ equidistribute in $K$ as $n \rightarrow \infty$.
	\end{theorem}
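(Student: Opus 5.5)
The plan is to reduce Theorem \ref{THM_Equi2} to the equidistribution criterion in \cite[Thm.~4.15]{KowalskiTannaka}, exactly as Theorem \ref{THM_Equi1} was reduced to \cite[Thm.~4.11]{KowalskiTannaka}. That criterion is Weyl's criterion. It suffices to show, for every irreducible non-trivial representation $\rho$ of $\mathbf{G}^{\text{ari}}_M$ (restricted to $K$), that
\[
\frac{1}{|\widehat{G}(k_n)\setminus\Delta|}\sum_{\chi\in\widehat{G}(k_n),\,\chi\notin\Delta}\text{Tr}\big(\rho(\mathbf{\Theta}_{M,\chi})\big)\longrightarrow 0,
\]
without averaging over $n$. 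Linear combinations of characters of irreducible representations are dense in continuous class functions on $K$ by Peter--Weyl, so this suffices.

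First I attach to $\rho$ an object $N_\rho\in\langle M\rangle$ via the Tannakian formalism. By Theorem \ref{THM_fiberfunc}, $N_\rho$ is strongly $\Delta$-unramified, and the Frobenius conjugacy class on $\omega_\chi(N_\rho)=H^0_c(G,(N_\rho)_\chi)$ is $\rho(\Theta_{M,\chi})$. By the proposition on purity, $N_\rho$ is $\iota$-pure of weight zero. By the trace remark following Theorem \ref{THM_fiberfunc} and the Grothendieck--Lefschetz formula, for $\chi\notin\Delta$ we get $\text{Tr}(\rho(\mathbf{\Theta}_{M,\chi}))=\sum_{x\in G(k_n)}\chi(x)t_{N_\rho}(x)$. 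We can include $\chi\in\Delta$ as well: Theorem \ref{THM_stratsfourier2} bounds these terms by $|k_n|^{\dim}$-type powers, and $\Delta$ has character codimension $\geq 1$ (Proposition \ref{PROP_thinsubsetsestimate} handles the strata). Together these show that the contribution of $\Delta$ is $o(|\widehat{G}(k_n)|)$. After this step the full sum over $\widehat{G}(k_n)$ becomes, by orthogonality of characters, $|G(k_n)|\,t_{N_\rho}(0)$. This equals $|G(k_n)|\,\text{Tr}(\text{Fr}_{k_n}\mid (N_\rho)_{0})$.

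The key step is to show that this is $o(|G(k_n)|)$ uniformly. Since $\rho$ is irreducible and non-trivial, $N_\rho$ contains no copy of the unit $\delta_0$ geometrically. Here I need the hypothesis $\mathbf{G}^{\text{geo}}_M=\mathbf{G}^{\text{ari}}_M$: it ensures that $N_\rho$ has no geometric component isomorphic to $\delta_0$ twisted by a weight-zero Frobenius character. Without it, such a twist $\delta_0\otimes\alpha^{\deg}$ would give a trace $\alpha^n$ of modulus one, which only vanishes on average in $n$. That is precisely the difference between Theorem \ref{THM_Equi1} and Theorem \ref{THM_Equi2}. So I will decompose $N_\rho$ geometrically, using semisimplicity from purity. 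The summands that are not punctual at $0$ have stalks at $0$ of weight $<0$ by Deligne's Weil II, because a perverse sheaf of weight zero has stalks in degree $-\dim\text{supp}$ of weights $\leq -\dim\text{supp}$. This gives a decay of at least $|k_n|^{-1/2}$. The summands punctual at $0$ but non-trivial would be $\delta_0$ itself, which is excluded. The punctual summands supported at other points contribute nothing to the stalk at $0$.

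I expect the main obstacle to be this last step. One must argue carefully that a geometric constituent $\delta_0$ of $N_\rho$ would force $\rho$ to contain the trivial representation of $\mathbf{G}^{\text{geo}}_M$, and hence of $\mathbf{G}^{\text{ari}}_M$ by the equality of groups. This needs compatibility of the geometric and arithmetic categories $\langle M_{\overline{k}}\rangle\subseteq\langle M\rangle_{\overline{k}}$ under the intermediate-extension functor of Proposition \ref{PROP_intermedconv}. I would first try to import this verbatim from the corresponding step of \cite[Thm.~4.15]{KowalskiTannaka}, using Remark \ref{RMK_constructionproduct} to identify the categories and groups.
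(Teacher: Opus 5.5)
Your proposal is correct and follows the paper's proof. The paper simply invokes the argument of \cite[Thm.~4.15]{KowalskiTannaka}, with Theorem \ref{THM_stratsfourier} supplying the bounds needed to pass from $\chi\notin\Delta$ to all of $\widehat{G}(k_n)$; your Weyl-criterion, orthogonality and stalk-at-the-origin argument, with $\mathbf{G}^{\text{geo}}_M=\mathbf{G}^{\text{ari}}_M$ ruling out a geometric $\delta_0$ constituent, is that argument written out. One small correction: the non-punctual summands contribute $O(|k_n|^{-1/2})$ at $0$ because their stalks there lie in degrees $\leq -1$ and $\mathcal{H}^i$ of a weight-zero perverse sheaf has pointwise weights $\leq i$, not because the stalks are concentrated in degree $-\dim \text{supp}$, which fails at singular points of the support.
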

	\begin{proof}
		By Theorem \ref{THM_stratsfourier}, we can apply the argument from \cite[Thm.~4.15]{KowalskiTannaka}.
	\end{proof}

%% file: Appendix.tex
	\section{Appendix}
	\subsection{The $\ell$-adic character group} \label{SUBSEC_CharGroup}We prove exactness for the group of $\ell$-adic characters of a profinite abelian group. For this section, we denote with $p \in \BN$ any prime and with $\ell$ a fixed prime.
	
	\begin{definition}
		Let $\pi, \pi', \pi''$ be profinite commutative groups. We say a sequence of continuous morphisms
		\[
		0 \rightarrow \pi' \rightarrow \pi \rightarrow \pi'' \rightarrow 0
		\]
		is exact if and only if it is exact pointwise, i.e. as a sequence of commutative groups.
	\end{definition}
	Recall the following structural theorem, which follows from the classification of finitely generated abelian groups. 
	\begin{lemma}\label{LEM_abprofinexact}
		Let $\pi$ be a profinite commutative group. Then
		\[
		\pi = \prod_{p} \pi_p
		\]
		where $\pi_p$ is the maximal pro-$p$-quotient of $\pi$ for a prime $p \in \BN$. The functor
		\[
		\pi \mapsto \pi_p
		\]
		is an exact functor from the category of profinite commutative groups to the category of profinite commutative groups.
	\end{lemma}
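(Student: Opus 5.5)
The plan is to use the structure of finitely generated abelian groups, passing through finite quotients. I would first show that every finite commutative group $\pi/V$, for $V \subseteq \pi$ an open subgroup, decomposes canonically as $\prod_p (\pi/V)_p$. Here $(\pi/V)_p$ is the $p$-Sylow subgroup, which is also the maximal $p$-quotient. This decomposition is functorial in homomorphisms of finite abelian groups, since any homomorphism sends $p$-Sylow subgroups into $p$-Sylow subgroups. Writing $\pi = \varprojlim_V \pi/V$ and using that limits commute with products, we get $\pi = \prod_p \varprojlim_V (\pi/V)_p$. Each factor $\varprojlim_V (\pi/V)_p$ is pro-$p$. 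Moreover, every continuous map from $\pi$ to a finite $p$-group factors through some $\pi/V$, and hence through $(\pi/V)_p$. So this factor is the maximal pro-$p$ quotient $\pi_p$, and functoriality of $\pi \mapsto \pi_p$ is clear.

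For exactness, take $0 \to \pi' \to \pi \to \pi'' \to 0$, exact pointwise. The maps are continuous morphisms of compact Hausdorff groups, so $\pi' \to \pi$ is a closed embedding and $\pi \to \pi''$ is a topological quotient. The key observation is that, via the canonical decomposition, $\pi_p$ is naturally a direct factor of $\pi$: the projection $\pi \to \pi_p$ has a natural section. I can make this concrete using the $\widehat{\BZ}$-module structure on profinite abelian groups. Let $e_p \in \widehat{\BZ} = \prod_q \BZ_q$ be the idempotent with component $1$ at $p$ and $0$ elsewhere. Then $\pi_p = e_p\pi$, and morphisms are $\widehat{\BZ}$-linear because they are continuous homomorphisms.

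Exactness then follows from elementwise manipulation. Multiplication by $e_p$ is an exact functor on $\widehat{\BZ}$-modules, since it is the projection onto a direct summand of the ring. Concretely:
\begin{enumerate}
\item Injectivity of $e_p\pi' \to e_p\pi$ follows by restriction.
\item For surjectivity, take $y = e_p y \in e_p\pi''$ and lift it to $x \in \pi$. Then $e_p x$ maps to $e_p y = y$.
\item For exactness in the middle, suppose $x = e_p x$ maps to $0$. Then $x$ is the image of some $x'$, so $x$ is the image of $e_p x'$.
\end{enumerate}

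The main obstacle I expect is justifying that the $\widehat{\BZ}$-action is well defined and continuous, and that continuous homomorphisms are linear for it. I would handle this on finite quotients, where $\widehat{\BZ}$ acts through $\BZ/n$, and then pass to the limit; linearity reduces to linearity for integers together with continuity. An alternative route to exactness goes through Pontryagin duality: a discrete torsion abelian group is the direct sum of its $p$-primary parts, and taking the $p$-primary part is obviously exact there. I would keep this as a cross-check.
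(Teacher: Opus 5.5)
Your proposal is correct and follows the route the paper has in mind. The paper gives no written proof, only the remark that the lemma follows from the classification of finitely generated abelian groups, and that is exactly your primary decomposition of the finite quotients $\pi/V$ passed to the limit; your exactness argument via the idempotents $e_p\in\widehat{\mathbb{Z}}$, using that continuous homomorphisms are $\widehat{\mathbb{Z}}$-linear, correctly supplies the detail the paper leaves implicit.
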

	We introduce the group of $\ell$-adic characters for a topological group.
	\begin{definition}
		Let $\pi$ be a topological group. An \textit{$\ell$-adic character of $\pi$} is a continuous group morphism $\chi\colon \pi \rightarrow K_\lambda^*$ for a finite extension $K_\lambda/\BQ_\ell$. We denote by $\SC(\pi)$ the group of all $\ell$-adic characters.
	\end{definition}
	Our objective is to prove the following theorem.
	\begin{theorem}\label{THM_AppendixCharacterGroupExact}	Let $\pi, \pi'$, and $\pi''$ be commutative profinite groups. We assume the maximal pro-$\ell$ quotient of $\pi, \pi'$ and $\pi''$ is finitely generated. If we are given an exact sequence
		\[
		0 \rightarrow \pi' \rightarrow \pi \rightarrow \pi'' \rightarrow 0.
		\]
		then we have an exact sequence of commutative groups
		\[
		0 \rightarrow \mathscr{C}(\pi'') \rightarrow \mathscr{C}(\pi) \rightarrow \mathscr{C}(\pi') \rightarrow 0.
		\]
	\end{theorem}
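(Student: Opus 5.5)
Left exactness is immediate: a continuous character of $\pi''$ pulled back along the surjection $\pi \rightarrow \pi''$ determines it, and a character $\chi$ of $\pi$ is trivial on $\pi'$ exactly when it factors through $\pi/\pi' = \pi''$. Here we use that a continuous bijective morphism of profinite (compact Hausdorff) groups is a homeomorphism, so $\pi/\pi'\cong\pi''$ topologically. So the whole content is surjectivity of restriction $\mathscr{C}(\pi) \rightarrow \mathscr{C}(\pi')$, i.e. extending a continuous character $\chi\colon \pi' \rightarrow K^*$ to $\pi$, possibly after enlarging $K$.

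First I would analyze the shape of a continuous character. The target $\mathcal{O}_K^*$ (the image of a compact group lies in $\mathcal{O}_K^*$) is, after passing to a finite index open subgroup, of the form $\mu(K)\times(1+\mathfrak{m}^N)$ with $1+\mathfrak{m}^N \cong \BZ_\ell^{[K:\BQ_\ell]}$ pro-$\ell$ and torsion-free. Using Lemma \ref{LEM_abprofinexact}, I would write $\pi'=\prod_p \pi'_p$. A continuous $\chi$ then splits as a finite order character times a character factoring through $\pi'_\ell$: indeed the image of $\prod_{p\neq \ell}\pi'_p$ is a compact subgroup whose intersection with the pro-$\ell$ group $1+\mathfrak{m}^N$ is trivial, so that image is finite. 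So it suffices to extend (a) finite order characters and (b) characters of $\pi'_\ell$ with values in $\mathcal{O}_K^*$.

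For (a), a finite order character has open kernel $V\subseteq\pi'$; since $\pi'$ carries the subspace topology, $V=\pi'\cap W$ for an open subgroup $W\subseteq \pi$, and $\chi$ lives on the finite subgroup $\pi'/V \hookrightarrow \pi/W$. Characters of finite abelian groups extend to $\Qbarl^*$-valued characters (divisibility of the roots of unity in $\Qbarl$), landing in a finite extension of $K$. For (b), by exactness of $\pi\mapsto\pi_\ell$ (Lemma \ref{LEM_abprofinexact}) we get $0\rightarrow\pi'_\ell\rightarrow\pi_\ell\rightarrow\pi''_\ell\rightarrow 0$ of finitely generated $\BZ_\ell$-modules, and it suffices to extend from $\pi'_\ell$ to $\pi_\ell$ (then compose with $\pi\rightarrow\pi_\ell$). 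Write $\pi_\ell\cong \BZ_\ell^r\oplus F$ with $F$ finite. The main obstacle is here: $\pi'_\ell$ need not be a direct summand, so a $\BZ_\ell$-linear extension to $1+\mathfrak{m}^N$ may not exist without extracting roots. I would handle it as follows: choose $m$ with $\ell^m\pi_\ell\cap\pi'_\ell \subseteq$ a submodule on which the inclusion becomes split after the elementary divisor theorem (i.e. bases $e_i$ of the free part of $\pi_\ell$ with $\ell^{a_i}e_i$ spanning the free part of $\pi'_\ell$ up to torsion), then define the extension on $e_i$ by choosing an $\ell^{a_i}$-th root of $\chi(\ell^{a_i}e_i)$ in $\Qbarl$; all these roots lie in a finite extension $K'/K$ since finitely many roots are adjoined, and continuity is automatic for a homomorphism from a finitely generated $\BZ_\ell$-module into $\mathcal{O}_{K'}^*$ that is $\BZ_\ell$-linear on a finite index subgroup into the pro-$\ell$ part (every open of the target pulls back to an open, by the $\ell$-adic topology on finitely generated modules). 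The torsion parts are handled by (a).

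Finally combine: the product of the extensions from (a) and (b) is a continuous character of $\pi$ restricting to $\chi$, giving surjectivity and hence exactness of $0 \rightarrow \mathscr{C}(\pi'') \rightarrow \mathscr{C}(\pi) \rightarrow \mathscr{C}(\pi') \rightarrow 0$. The finite generation hypothesis is used exactly to ensure only finitely many roots are needed, so the extension stays in a finite extension of $\BQ_\ell$.
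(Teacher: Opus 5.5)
Your outline is correct in substance and rests on the same key input as the paper, but step (b) needs a repair before it works.

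\textbf{The gap in step (b).} As written, you evaluate $\chi$ at $\ell^{a_i}e_i$. Here the $e_i$ form a basis of a free complement of the torsion of $\pi_\ell$, and the $\ell^{a_i}e_i$ span $\pi'_\ell$ only ``up to torsion''. Nothing forces $\ell^{a_i}e_i\in\pi'_\ell$.

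For instance, take $\pi_\ell=\BZ_\ell\oplus\BZ/\ell$ and $\pi'_\ell=\BZ_\ell\cdot(\ell,1)$. Any free generator has the form $e=(u,c)$ with $u\in\BZ_\ell^*$, and then $\ell e=(\ell u,0)\notin\pi'_\ell$. Moreover this $\pi'_\ell$ is torsion-free, so the discrepancy is not a ``torsion part'' of $\pi'_\ell$.

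\textbf{The fix.} Write $\pi_\ell=L\oplus F$ with $L$ free and $F$ finite, and put $N:=\pi'_\ell\cap L$.
\begin{itemize}
\item Since $\pi'_\ell/N\hookrightarrow\pi_\ell/L\cong F$, the subgroup $N$ is closed of finite index, hence open, in $\pi'_\ell$. It is also a submodule of the free module $L$.
\item The elementary divisor theorem for $N\subseteq L$ gives a basis $e_1,\dots,e_r$ of $L$ such that $\ell^{a_1}e_1,\dots,\ell^{a_s}e_s$ is a basis of $N$. Now $\chi(\ell^{a_i}e_i)$ is defined.
\item Your root extraction, sending $e_{s+1},\dots,e_r$ and $F$ to $1$, yields $\tilde\chi$ on $\pi_\ell$ with $\tilde\chi|_N=\chi|_N$.
\item Then $\chi\,\tilde\chi^{-1}|_{\pi'_\ell}$ factors through the finite group $\pi'_\ell/N$. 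It is this finite-order character that you extend using (a).
\end{itemize}
With this change the proof is complete.

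\textbf{Comparison with the paper.} The paper splits $\mathscr{C}(\pi)=\bigoplus_p\mathscr{C}(\pi_p)$ and handles $p\neq\ell$ by divisibility of $\BQ_p/\BZ_p$. For $p=\ell$ it identifies the sequence with $\text{Hom}_{\BZ_\ell}(-,M)$, where $M=\varinjlim_\lambda(1+\mathfrak{m}_\lambda)$. This $M$ is $\ell$-divisible, hence an injective $\BZ_\ell$-module, so exactness is immediate.

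Your finite-order/pro-$\ell$ split is the same decomposition. Your root extraction is a hands-on proof that $M$ is injective against finitely generated modules.

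The paper's route is shorter. Yours makes explicit two points the paper uses silently:
\begin{itemize}
\item A $\BZ_\ell$-linear map from a finitely generated $\pi_\ell$ into $M$ lands in a single $1+\mathfrak{m}_\lambda$ and is automatically continuous. This is exactly where the finite-generation hypothesis enters.
\item $\pi/\pi'\cong\pi''$ as topological groups, which is needed for left exactness.
\end{itemize}
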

	\begin{proof} We have
		\[
		\cvar{\pi} = \bigoplus_{p \in \BZ}\cvar{\pi_p}.
		\]
	This decomposition splits the sequence on profinite groups and character groups into a direct product and a direct sum of sequences on profinite groups and abstract groups by Lemma \ref{LEM_abprofinexact}. Hence we can assume $\pi$ is pro-$p'$ for a prime $p' \in \BN$.
	
	If $p' \neq \ell$, the claim follows form the $p$-divisibility of $\BQ_p/\BZ_p$. 
	
	Suppose $p' = \ell$. Let $M_\lambda$ be the pro-$\ell$ factor of the profinite group $\CO_\lambda^*$. Then we put
	\[
	M := \varinjlim M_\lambda.
	\]
	The abelian group $M$ is an $\ell$-divisible $\BZ_\ell$-module. Hence it is an injective $\BZ_\ell$-module. The sequence of character groups identifies with the sequence
	\[
	0 \rightarrow \text{Hom}_{\BZ_\ell}(\pi'', M) \rightarrow \text{Hom}_{\BZ_\ell}(\pi, M) \rightarrow \text{Hom}_{\BZ_\ell}(\pi', M) \rightarrow 0.
	\]
	This sequence is exact because $M$ is injective.
\end{proof}
\begin{remark}
	The maximal pro-$\ell$-quotient of $\pi_1^{\text{char}}(G)$ is finitely generated. We can assume $G$ connected by Corollary \ref{COR_ImageFundGroup}. Let $U \subseteq G$ be the maximal unipotent subgroup of $G$, then $G/U$ is semiabelian. By Propositon \ref{PROP_FundamentalGroupExact}, we have an exact sequence
	\[
	0 \rightarrow \pi_1^{\text{char}}(U) \rightarrow \pi_1^{\text{char}}(G) \rightarrow \pi_1^{\text{char}}(G/U) \rightarrow 0.
	\]
	The maximal pro-$\ell$ quotient is exact by Lemma \ref{LEM_abprofinexact} and the group $\pi_1^{\text{char}}(U)$ is pro-$p$ by Lemma \ref{LEM_charuniprop}, thus we can assume $G$ is semiabelian. In this case, it follows from Proposition \ref{PROP_fundcompsemiab} and the well-known comparison of the tame fundamental group with the dual of the first integral cohomology group. 
\end{remark}

	\subsection{Structure theorem for algebraic groups} We require the following structural theorem for algebraic groups.
	\begin{theorem}\label{THM_AppendixStructureTheorem}
		There exists an isogeny
		\[
		G \rightarrow A\times T \times U
		\]
		such that $U$ is a smooth unipotent group.
	\end{theorem}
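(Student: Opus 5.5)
The plan is to obtain the isogeny in two splitting steps, each of the same shape. First split off the unipotent part, then split the semiabelian part into an abelian variety and a torus. Throughout, $G$ is reduced, hence smooth over the perfect field $k$, and all subgroups considered are taken with their reduced structure.

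\textbf{Splitting trick.} Let $0 \to L \to H \to B \to 0$ be an exact sequence of commutative algebraic groups. Suppose the class of this extension in $\text{Ext}^1(B, L)$ is killed by an integer $n$ such that multiplication by $n$ on $B$ is an isogeny. Then the pullback $H' := H\times_{B,[n]} B$ is isomorphic to $L\times B$. The projection $H' \to H$ is surjective with kernel $B[n]$, which is finite, so it is an isogeny. Hence $H \cong (L\times B)/\Gamma$ for a finite subgroup $\Gamma$. The map
\[
H = (L\times B)/\Gamma \to L/\text{pr}_L(\Gamma)\times B/\text{pr}_B(\Gamma)
\]
has kernel contained in $\text{pr}_L(\Gamma)\times\text{pr}_B(\Gamma)$ modulo $\Gamma$, which is finite, and it is surjective. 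So it is an isogeny. The quotients $L/\text{pr}_L(\Gamma)$ and $B/\text{pr}_B(\Gamma)$ remain of the same type as $L$ and $B$ (unipotent, torus, abelian variety). This reduces everything to showing that the relevant extension classes are torsion of order prime to the kernel issues, i.e.\ killed by an $n$ for which $[n]_B$ is an isogeny.

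\textbf{Step 1 (unipotent part).} By Chevalley's theorem and the decomposition of commutative affine groups over a perfect field, $G$ has a maximal connected unipotent subgroup $U$. The quotient $S := G/U$ is semiabelian. Since $U$ is killed by some $p^m$, so is $\text{Ext}^1(S, U)$. Multiplication by $p^m$ on a semiabelian variety is an isogeny. The splitting trick therefore yields an isogeny $G \to S'\times U'$ with $S'$ semiabelian and $U'$ connected unipotent. Replacing $U'$ by $U'_{\text{red}}$, and composing with the Frobenius-type isogeny $U' \to U'_{\text{red}}$ if $U'$ is non-reduced, makes $U'$ smooth.

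\textbf{Step 2 (semiabelian part).} Write $0 \to T \to S' \to A \to 0$. After a finite extension $k'/k$ splitting $T$, the group $\text{Ext}^1(A_{k'}, \BG_m^r)$ is identified with $A^\vee(k')^r$, which is finite because $k'$ is finite. Hence the class over $k$ is torsion: it injects into the $k'$-class up to the finite Galois-cohomology kernel. Multiplication by any $n$ on $A$ is an isogeny, so the splitting trick gives an isogeny $S' \to A'\times T'$. Composing the isogenies from Steps 1 and 2 produces the desired isogeny $G \to A\times T\times U$.

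\textbf{Main obstacle.} The main obstacle is Step 2: establishing that $\text{Ext}^1(A, T)$ is torsion over $k$ itself and not merely over $k'$. I would handle this through the Hochschild--Serre sequence for $\text{Gal}(k'/k)$, whose error terms are finite groups. Alternatively, I would apply the splitting trick over $k'$ and then descend the resulting subgroups by taking the intersection of their Galois conjugates.
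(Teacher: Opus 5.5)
Your argument is correct, but it is organized differently from the paper's. The paper first passes to a smooth group by a Frobenius isogeny (\cite[Prop.~2.9.2]{BrionStructureTheoremsAlgebraicGroups}). It then quotes a decomposition $G = L\cdot A$ with $A\subseteq G$ an abelian subvariety, $L\subseteq G$ connected affine and $L\cap A$ finite (\cite[Cor.~5.5]{BrionStructureTheoremsAlgebraicGroups}). This makes $G\to G/L\times G/A$ directly an isogeny onto an abelian variety times a connected commutative affine group. Over the perfect field $k$ the affine factor is an honest product $T\times U$, and a final Frobenius isogeny makes $U$ smooth.

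You instead prove a general splitting lemma: an extension of $B$ by $L$ whose class is killed by some $n$ with $[n]_B$ an isogeny becomes split after pullback along $[n]_B$. Hence $H\cong (L\times B)/\Gamma$, which maps isogenously onto a product. You apply this lemma twice. The first time is to $0\to U\to G\to S\to 0$, where $p$-power torsion of $\text{Ext}^1(S,U)$ holds over any perfect field. The second is to $0\to T\to S'\to A\to 0$, where finiteness of $k$ enters exactly through $\text{Ext}^1(A_{k'},\mathbb{G}_m)\cong A^\vee(k')$.

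Your route buys transparency. It isolates the single place where a finite base field is needed; over $\mathbb{C}$, a non-torsion point of $A^\vee$ gives a semiabelian variety not isogenous to $A\times T$. In the paper, this input is hidden inside the cited corollary. The paper's route is shorter, and it obtains the torus--unipotent splitting as an exact product rather than only up to isogeny.

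Two minor points. First, in Step~2 the restriction map $\text{Ext}^1_k(A,T)\to\text{Ext}^1_{k'}(A_{k'},T_{k'})$ is actually injective. Since $\text{Hom}(A_{k'},T_{k'})=0$, a splitting over $k'$ is unique, hence Galois-invariant, hence defined over $k$. So $\text{Ext}^1_k(A,T)$ is finite without any spectral sequence.

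Second, the paper allows non-reduced $G$; its proof begins by mapping $G$ isogenously onto a smooth group. You should therefore apply your Frobenius isogeny to $G$ at the outset rather than assume reducedness. Once $G$ is smooth, $U/\text{pr}_U(\Gamma)$ is automatically smooth, being a faithfully flat quotient of a smooth group, so your final smoothing step becomes unnecessary.
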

	\begin{proof} Note that $G$ admits an isogeny to a smooth, algebraic group by \cite[Prop.~2.9.2]{BrionStructureTheoremsAlgebraicGroups}. Thus we may assume $G$ is smooth. By \cite[Cor.~5.5]{BrionStructureTheoremsAlgebraicGroups}, there exists an abelian variety $A \subseteq G$ and a connected, affine subgroup $L \subseteq G$ such that $G = L\cdot A$ and $L\cap A$ is finite. In particular, the map
		\[
		G \rightarrow G/L\times G/A
		\]
		is an isogeny. The group $G/L$ admits the isogeny $A \rightarrow G/L$, hence it is an abelian variety. The group $G/A$ admits the isogeny $L \rightarrow G/A$, hence the group $G/A$ is affine and connected. By \cite[Thm.~5.3.1~(2)]{BrionStructureTheoremsAlgebraicGroups}, it is the product of a torus with a unipotent group. We apply \cite[Prop.~2.9.2]{BrionStructureTheoremsAlgebraicGroups} to construct an isogeny with $U$ smooth.
	\end{proof}
	
	We prove a Lemma, which is required during the proof of the classification.
	\begin{lemma}\label{COR_AppendixMapToCircle}
		Let $S$ be a semiabelian variety of dimension $d = d_a + d_t$ over a finite field $k$. If $d_t > 0$, there exists a finite extension $k'/k$ and a surjective group morphism $S_{k'} \rightarrow \BG_{m}$ with connected kernel.
	\end{lemma}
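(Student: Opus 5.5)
We reduce to the case of a split torus and then use a character of $\BG_m$. Since $d_t>0$, the maximal torus $T\subseteq S$ is nontrivial. We choose a finite extension $k'/k$ over which $T_{k'}\cong\BG_m^{d_t}$ is split. The quotient $S\to A:=S/T$ is an abelian variety quotient, and $S$ is an extension
\[
0\rightarrow T\rightarrow S\rightarrow A\rightarrow 0.
\]
A surjection $S_{k'}\to\BG_m$ is not obtained simply by restriction: an extension of $A$ by a torus need not split, so we cannot just project. The plan is instead to kill the extension class on a one-dimensional quotient of $T$.

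Concretely, the extension is classified by an element of $\text{Ext}^1(A,T)\cong \text{Hom}(X^*(T),\text{Ext}^1(A,\BG_m))=\text{Hom}(X^*(T),A^\vee)$ (Weil--Barsotti). Write $\phi\colon X^*(T)\cong\BZ^{d_t}\to A^\vee(k')$ for this homomorphism.
\begin{itemize}
\item[(i)] Since $k'$ is finite, $A^\vee(k')$ is a finite group. Enlarging $k'$ if needed, we may assume all characters of $T$ and $\phi$ are defined over $k'$.
\item[(ii)] Since $A^\vee(k')$ is finite, $\ker\phi$ has finite index in $X^*(T)$, so it contains a nonzero primitive character. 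We pick such a character $\lambda\in\ker\phi$, primitive in $X^*(T)$.
\item[(iii)] Pushing out the extension along $\lambda\colon T\to\BG_m$ gives the extension of $A$ by $\BG_m$ with class $\phi(\lambda)=0$. Hence the pushout $S':=S\sqcup_T\BG_m$ splits as $\BG_m\times A$ over $k'$.
\item[(iv)] Composing $S\to S'\cong\BG_m\times A\to\BG_m$ gives a group morphism. It restricts to $\lambda$ on $T$, which is surjective, so the composite is surjective.
\item[(v)] Its kernel $K$ is an extension of $A$ by $\ker(\lambda)$. Because $\lambda$ is primitive, $\ker\lambda$ is a torus, hence connected, and $A$ is connected. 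Therefore $K$ is connected.
\end{itemize}

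For step (ii), the choice must be primitive and must also lie in the finite-index sublattice $\ker\phi$. If $\ker\phi$ contains no vector that is primitive in $X^*(T)$, we instead take any $\lambda\in\ker\phi$ with $\lambda\neq0$, write $\lambda=m\lambda_0$ with $\lambda_0$ primitive, and run the argument with $\lambda$. This gives a surjection $S_{k'}\to\BG_m$ whose kernel has finitely many components. We then factor it through $S_{k'}/K^0\to\BG_m$. The group $S_{k'}/K^0$ is a one-dimensional connected torus, i.e.\ a form of $\BG_m$, split after a further finite extension. The map $S_{k'}\to S_{k'}/K^0\cong\BG_m$ then has connected kernel $K^0$. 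This fallback makes step (ii) unproblematic.

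I expect the main obstacle to be justifying the identification $\text{Ext}^1(A,\BG_m)=A^\vee$ over the finite field and its compatibility with pushout along characters. I would cite Serre's \emph{Algebraic groups and class fields} for this, as the paper does elsewhere for related facts.
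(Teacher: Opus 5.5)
Your proof is correct, but it takes a different route from the paper. The paper never examines the extension class. It quotes its structure theorem (Theorem~\ref{THM_AppendixStructureTheorem}, which rests on Brion's decomposition $G=L\cdot A$ with $L\cap A$ finite) to get an isogeny $S\to T\times A$. It then splits $T$ over some $k'$, projects onto one $\mathbb{G}_m$-factor, and passes to $S_{k'}/(S')^0$, where $S'$ is the possibly disconnected kernel. You instead prove directly the one input of that structure theorem that matters here. By Weil--Barsotti, the class of $0\to T\to S\to A\to 0$ is a homomorphism $\phi\colon X^*(T)\to A^\vee(k')$ with finite image, so some $\lambda\neq 0$ kills it and the pushout along $\lambda$ splits. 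This shows exactly where finiteness of $k$ is used. Once $T$ is split, a surjection $S\to\mathbb{G}_m$ exists if and only if $\ker\phi\neq 0$, and this fails over $\mathbb{C}$ for a non-torsion class. Your route also gives an explicit map restricting to $\lambda$ on $T$. The paper's route is shorter but relies on a black box that encodes the same torsion phenomenon.

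Two small corrections. First, the claim in (ii) that a finite-index sublattice of $X^*(T)$ contains a vector primitive in $X^*(T)$ is false. Take $d_t=1$ and a class of order $N>1$; then $\ker\phi=N\,X^*(T)$. Your ``fallback'' is therefore the general case, and it is exactly the paper's final step, so you can drop the primitivity discussion. Second, no further field extension is needed at the end. $S_{k'}/K^0$ is a one-dimensional torus that is $k'$-isogenous to $\mathbb{G}_m$, so Galois acts trivially on its rank-one character lattice and it is already split over $k'$.

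On your worry about the reference: Serre works over an algebraically closed field. You can run the whole argument over $\bar k$, where $A^\vee(\bar k)$ is torsion so $\phi$ still has finite image. The resulting homomorphism is then defined over some finite extension of $k$.
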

	\begin{proof}
		By Theorem \ref{THM_AppendixStructureTheorem}, there exists an isogeny
		\[
		S \rightarrow T\times A.
		\]
		There exists a finite extension $k'/k$ such that the torus $T$ splits, i.e.
		\[
		T_{k'} \cong \BG_m^{d_t}.
		\]
		This means we can find a group morphism
		\[
		S_{k'} \rightarrow \BG_m.
		\]
		Let $S'$ be the kernel, then we have the isogeny
		\[
		S_{k'}/(S')^0 \rightarrow \BG_m.
		\]
		Thus
		\[
		S_{k'}/(S')^0  \cong \BG_m,
		\]
		hence the Corollary.
	\end{proof}
	\subsection{Canonical map}\label{SUBSEC_CanonicalMap}	We verify the commutativity of a certain diagram. This section is written for Noetherian schemes, because that is the natural generality for the commutativity of this diagram. 
	\begin{definition}
		Let $f\colon X \rightarrow Y$ be a separated morphism of finite type between Noetherian schemes and $K, L \in \derCat{c}{X}{\Qbarl}$ (see \cite[Ch.~10.1]{LeiFu}). We define the map $f^* K \otimes f^! L \rightarrow f^!(K\otimes L)$ as the adjoint of
		\[
		f_!(f^*K \otimes f^! L) \xrightarrow{\sim} K\otimes f_!f^* L \rightarrow K \otimes L 
		\]
		where the second map is the counit and the first arrow is the isomorphism coming from the projection formula. 
	\end{definition}
	\begin{remark} Let $i_y\colon \overline{y} \rightarrow Y$ be the inclusion of a geometric point supported at a point $y \in Y$ into $y$. This map is not of finite type. Therefore, the functor $i_y^!$ is strictly speaking not defined. We define it as follows: let $i\colon \overline{\{y\}} \rightarrow Y$ be the closed immersion of the reduced subscheme supported over the closure of $\{y\}$. Then we define $i_y^!K := i_y^*i^!K$. This also applies to the canonical map.
	\end{remark}
	\begin{remark}
		When $f$ is proper, there are natural transformations $f^*f_! \rightarrow 1$ and $1 \rightarrow f_!f^*$. Using these maps map, one can write the morphism defining the map as
		\[
		f_!(f^*K\otimes f^! L) \leftarrow f_!(f^*K\otimes f^*f_!f^! L) \leftarrow K\otimes f_!f^!L \rightarrow K\otimes L.
		\]
		The first two maps compose to give the projection formula, hence the composite is an isomorphism.
	\end{remark}
	We now state that a certain diagram is commuative
	\begin{lemma}\label{LEM_Diagram1}
		Consider a cartesian diagram of separated morphisms of finite type between Noetherian schemes
		\[
		\begin{tikzcd}
			X' \arrow[r, "f'"] \arrow[d, "g'"] & Y' \arrow[d, "g"] \\
			X \arrow[r, "f"]                   & Y                
		\end{tikzcd}
		\]
		Let $K \in \derCat{c}{X}{\Qbarl}$ and $L \in \derCat{c}{Y}{\Qbarl}$. We form the diagram
		\[
		\begin{tikzcd}
			g^*f_*K\otimes g^!L \arrow[d] \arrow[r]  & g^!(f_*K\otimes L) \arrow[dd]             \\
			(f'_*g'^*K)\otimes g^!L \arrow[d]        &                                           \\
			f'_*(g'^* K\otimes f'^*g^! L) \arrow[d]  & g^!f_* (K \otimes f^*L) \arrow[d, "\sim"] \\
			f'_*(g'^* K \otimes g'^! f^*L) \arrow[r] & f'_*g'^! (K\otimes f^* L)                
		\end{tikzcd}
		\]
		It is commutative.
	\end{lemma}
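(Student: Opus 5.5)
The plan is to reduce the commutativity of the large diagram in Lemma \ref{LEM_Diagram1} to the basic compatibilities of the six functors: the projection formula, base change, and the unit/counit adjunctions. The guiding principle is that every arrow in the diagram is defined as the adjoint of a map under some adjunction. I will therefore transpose both composites along the adjunction $g_! \dashv g^!$, because the two composites end in $g^!(\ldots)$ or $f'_*g'^!(\ldots) \cong g^!f_*(\ldots)$. Both composites then become maps
\[
g_!\big(g^*f_*K\otimes g^!L\big) \longrightarrow f_*(K\otimes f^*L),
\]
and it suffices to check that these two adjoint maps agree.

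\textbf{The top-right path.} The map $g^*f_*K\otimes g^!L\to g^!(f_*K\otimes L)$ is by definition adjoint to
\[
g_!(g^*f_*K\otimes g^!L)\xrightarrow{\sim} f_*K\otimes g_!g^!L\to f_*K\otimes L.
\]
Composing with the projection formula isomorphism $f_*K\otimes L\cong f_*(K\otimes f^*L)$ gives the first adjoint map.

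\textbf{The left-bottom path.} Here I unwind the base change $g^*f_*\to f'_*g'^*$, the projection formula for $f'$, and the canonical map for $g'$ (which is adjoint to a counit $g'_!g'^!\to 1$). I then transport everything through the identification $g'_!f'^*\cong f^*g_!$ and the base change isomorphism $g^!f_*\cong f'_*g'^!$, the latter being adjoint to proper/smooth base change $g_!f'_*\cong f_*g'_!$ in the appropriate form. After applying $g_!$ and the adjunction, the second adjoint map becomes a composite of projection formulas, base change isomorphisms, and one counit $g'_!g'^!\to 1$.

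\textbf{Comparing the two.} I would compare them by showing that the counit $g_!g^!\to1$, base changed along $f$, coincides with $g'_!g'^!\to1$ pulled back by $f^*$. This is the standard compatibility of the base change isomorphism $f^*g_!\cong g'_!f'^*$ with counits. The remaining squares only involve projection formulas and base change, and they commute by the compatibility between these two operations, as recorded in SGA4 XVII (see also \cite[Ch.~10]{LeiFu}).

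The main obstacle is bookkeeping rather than content. In particular, $f_*$ (as opposed to $f_!$) appears, so the projection formula $f_*K\otimes L\to f_*(K\otimes f^*L)$ is in general only a map and not an isomorphism. I would first try to reduce to the case where $f$ is proper, using a compactification, or simply accept the map as a morphism rather than an isomorphism. That is fine, since the lemma only asserts commutativity and the argument above never inverts it. If this route becomes unwieldy, the fallback is to factor $g$ as a closed immersion followed by a smooth morphism, as done for the application in Proposition \ref{PROP_GysinDiagram}, and check each case directly. For the smooth case $g^!=g^*(d)[2d]$, and the diagram reduces to the compatibility of $g^*$ with base change and projection. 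For a closed immersion $g^!=g^*\underline{\Gamma}$, and the claim is the local-cohomology form of the same statement.
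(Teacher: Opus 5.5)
Your proposal is correct and follows essentially the paper's route. The paper also transposes along $g_!\dashv g^!$, checks the resulting maps $g_!(g^*f_*K\otimes g^!L)\to f_*(K\otimes f^*L)$ using that projection formulas and adjunction counits commute with base change, and then matches the result with the isomorphism $g^!f_*\cong f'_*g'^!$. The one difference is that the paper first reduces to $g$ proper (rather than $f$), which is exactly what makes the map $g_!f'_*\to f_*g'_!$ you invoke ``in the appropriate form'' the composition isomorphism $g_*f'_*\cong f_*g'_*$; for non-proper $g$ it is only the mate of the base change isomorphism and in general not invertible.
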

	\begin{proof}
		We can assume $g$ is proper. Consider the diagram:
		\[
		\begin{tikzcd}[column sep=tiny]
			g_!(g^*f_* K\otimes g^! L) \arrow[d]           & g_!g^*(f_*K\otimes g_!g^! L) \arrow[l] \arrow[d]             & f_*K\otimes g_!g^! L \arrow[l] \arrow[r] \arrow[d]      & f_*K\otimes L \arrow[d]        \\
			g_!(f'_*g'^* K \otimes g^!L) \arrow[d]         & g_!g^*f_*(K\otimes f^*g_!g^! L) \arrow[d]                    & f_*(K\otimes f^*g_!g^! L) \arrow[l] \arrow[r] \arrow[d] & f_*(K\otimes f^* L)            \\
			g_!f'_{*}(g^{'*}K\otimes f^{'*}g^!L) \arrow[d] & g_!f'_*g'^{*}(K\otimes g'_!f^{'*}g^{!}L) \arrow[d] \arrow[l] & f_*(K\otimes g'_!f^{'*}g^! L) \arrow[d]                 &                                \\
			f_*g'_!(g^{'*}K\otimes g^{'!}f^* L)            & f_*g'_!g^{'*}(K\otimes g'_!g^{'!}f^*L) \arrow[l]             & f_*(K\otimes g_!'g^{'!}f^* L) \arrow[l] \arrow[r]       & f_*(K\otimes f^* L). \arrow[uu, equal]
		\end{tikzcd}
		\]
		The arrows on the left are constructed by pulling $g^*$ into the tensor product and then applying functoriality. It is commutative: most of the squares commute by functoriality. The middle square commutes by functoriality as well, it can be factored. The diagram on the bottom right commutes because adjunction  unites commute with basechange. The diagram on the top left commutes because the projection formula commutes with base change.
		
		The functor $g_!$ is right adjoint to $g^!$, so we obtain a commutative diagram
		\[
		\begin{tikzcd}
			g^*f_*K\otimes g^! L \arrow[d] \arrow[r]  & g^!(f_*K\otimes L)    \arrow[ddd]            \\
			(f'_*g'^* K) \otimes g^! L \arrow[d]           &                                   \\
			f'_*(g'^* K\otimes f'^* g^!L) \arrow[d] &                                   \\
			g^!f'_*(g'^*K\otimes g'^!f^*L) \arrow[r]     & g^!(f_*(K\otimes f^*L))
		\end{tikzcd}
		\]
		For each map $g'_! M \rightarrow N$ between complexes $M, N \in \derCat{c}{X'}{\Qbarl}$, have a commutative diagram
		\[
		\begin{tikzcd}
			& g^!g_! f'_* M \arrow[r] \arrow[dd] & g^! f_* N\arrow[dd] \\
			f'_* M \arrow[ru] \arrow[rd] &                                    &                      \\
			& f'_*g'^!g'_! M \arrow[r]           & f'_*g'^! N         
		\end{tikzcd}
		\]
		We can put this diagram with suitable choices of $M$ and $N$ together with the one we obtained from the large diagram on the bottom to obtain the diagram from the Lemma.
	\end{proof}
	\begin{lemma}\label{LEM_Diagram2}
		Consider a cartesian diagram of morphisms of finite type between Noetherian schemes
		\[
		\begin{tikzcd}
			X' \arrow[r, "f'"] \arrow[d, "g'"] & Y' \arrow[d, "g"] \\
			X \arrow[r, "f"]                   & Y                
		\end{tikzcd}
		\]
		Let $u\colon X \rightarrow \overline{X}$ be an open immersion over $Y$ such that $\overline{X}$ is a proper $Y$-scheme. Let $K \in \derCat{c}{X}{\Qbarl}$ be a complex on $X$ and $L \in \derCat{c}{Y}{\Qbarl}$. We form the diagram
		\[
		\begin{tikzcd}
			g^*f_!K\otimes g^!L \arrow[d, "\sim"] \arrow[r] & g^!(f_!K\otimes L) \arrow[dd]                      \\
			(f'_!g'^*K)\otimes g^!L \arrow[d, "\sim"]       &                                                    \\
			f'_!(g'^* K\otimes f'^*g^! L) \arrow[d]         & g^!f_! (K \otimes f^*L) \arrow[d]                  \\
			f'_!(g'^* K \otimes g'^! f^*L) \arrow[r]        & \overline{f}'_*\overline{g}'^! u_!(K\otimes f^* L)
		\end{tikzcd}
		\]
		It is commutative.
	\end{lemma}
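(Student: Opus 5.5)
The plan is to deduce the statement from Lemma \ref{LEM_Diagram1}, applied to the proper morphism $\overline{f}\colon \overline{X} \rightarrow Y$. Form the cartesian diagram obtained by base changing the compactification along $g$. Write $\overline{X}' := \overline{X}\times_Y Y'$, with $\overline{f}'\colon \overline{X}' \rightarrow Y'$, $\overline{g}'\colon \overline{X}' \rightarrow \overline{X}$, and the open immersion $u'\colon X' \rightarrow \overline{X}'$. Then $f_! = \overline{f}_*u_!$ and $f'_! = \overline{f}'_*u'_!$, and the proper base change isomorphism $g^*f_! \cong f'_!g'^*$ factors as
\[
g^*\overline{f}_*u_! \cong \overline{f}'_*\overline{g}'^*u_! \cong \overline{f}'_*u'_!g'^*.
\]
The first isomorphism is proper base change for $\overline{f}$. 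The second is the (trivial) base change for the open immersion $u$.

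First, I apply Lemma \ref{LEM_Diagram1} to the square $(\overline{f},\overline{g}',\overline{f}',g)$ with the complex $u_!K$ on $\overline{X}$ and $L$ on $Y$. This gives a commutative diagram whose top row is the canonical map $g^*\overline{f}_*u_!K\otimes g^!L \rightarrow g^!(\overline{f}_*u_!K\otimes L)$. Its bottom row is $\overline{f}'_*$ applied to the canonical map
\[
\overline{g}'^*u_!K\otimes \overline{g}'^!\overline{f}^*L \rightarrow \overline{g}'^!(u_!K\otimes \overline{f}^*L).
\]
I then rewrite the objects. On the right, the projection formula $u_!K\otimes\overline{f}^*L\cong u_!(K\otimes f^*L)$ identifies $g^!f_!(K\otimes f^*L)$ with the term $\overline{f}'_*\overline{g}'^!u_!(K\otimes f^*L)$ appearing in the statement. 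On the left, I substitute $\overline{g}'^*u_! \cong u'_!g'^*$. It then remains to compare the left column of the statement with that of Lemma \ref{LEM_Diagram1}. The left column of the statement passes through $f'_!(g'^*K\otimes f'^*g^!L)$ and $f'_!(g'^*K\otimes g'^!f^*L)$. The left column of Lemma \ref{LEM_Diagram1} passes through $\overline{f}'_*(\overline{g}'^*u_!K\otimes \overline{f}'^*g^!L)$. These are matched by the projection formula for $u'_!$ and the identity $u'^*\overline{f}'^* = f'^*$. Each of these identifications is natural, so the squares they produce commute by functoriality of the projection formula and of base change.

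The remaining, genuinely nontrivial step is the compatibility of the canonical map with extension by zero along $u'$. The required square has top arrow
\[
u'_!(g'^*K\otimes g'^!f^*L) \rightarrow \overline{g}'^!u_!(K\otimes f^*L),
\]
the arrow appearing in the bottom row of the statement. Its left vertical arrow is the projection formula isomorphism $u'_!(g'^*K\otimes g'^!f^*L)\cong u'_!g'^*K\otimes \overline{g}'^!\overline{f}^*L$, which uses $u'^*\overline{g}'^! = g'^!u^*$. Its bottom arrow is the canonical map for $\overline{g}'$ displayed above. I must check that the composite via the bottom arrow equals the top arrow.

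To do this, I would write both maps through the adjunction $\overline{g}'_!\dashv\overline{g}'^!$ (note $\overline{g}'$ is only proper if $g$ is, so as in Lemma \ref{LEM_Diagram1} I first reduce to $g$ proper by factoring $g$). After applying $\overline{g}'_!$, both adjoints become composites of projection formulas and counits. Their equality then follows from the compatibility of the projection formula for $u_!$ with proper base change. I also use that the counit $\overline{g}'_!\overline{g}'^!\rightarrow 1$ restricts, over the open $X$, to the counit $g'_!g'^!\rightarrow 1$, via $u^*\overline{g}'_! = g'_!u'^*$. I expect this bookkeeping, essentially a larger version of the big diagram in the proof of Lemma \ref{LEM_Diagram1} with an extra column for $u_!$, to be the main obstacle. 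Every cell should commute by naturality or by the standard compatibility of base change with adjunction units.
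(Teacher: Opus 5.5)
Your proposal is correct and follows the paper's argument: the paper's entire proof is to apply Lemma \ref{LEM_Diagram1} to $u_!K$ and $L$ for the compactified square. You additionally make explicit the identifications the paper leaves implicit, including the compatibility of the canonical map with extension by zero along $u'$. That last check is easier than you expect and needs no reduction to $g$ proper: via the adjunction $u'_!\dashv u'^*$ and $u'^*\overline{g}'^! = g'^!u^*$, it reduces to the compatibility of the canonical map with restriction to an open subset.
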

	\begin{proof}
		Apply Lemma \ref{LEM_Diagram1} to $u_! K$ and $L.$
	\end{proof}
	\begin{proof}[Proof of Prop.~\ref{PROP_GysinDiagram}]\label{PROOF_GysinDiagram}
		The natural forget supports map can be used to define a morphism from the diagram in Lemma \ref{LEM_Diagram1} to the diagram in Lemma \ref{LEM_Diagram2}. The base change in Lemma \ref{LEM_Diagram1} is an isomorphism for $i_y^!$ because we are assuming the morphism is smooth (see \cite[Prop.~8.2.4~(i)]{LeiFu})  
	\end{proof}
	\subsection{Perverse sheaves} We record a few lemmas on perverse sheaves, since they do not fit the text otherwise. The following lemma is based on \cite[Prop.~6.7.1]{KatzPerversityExpSums}.
	\begin{lemma}\label{LEM_PerverseLisseCrit}
		Let $X$ be an irreducible geometrically unibranch scheme of dimension $d$. Let $M \in \Perv{}{X}$ be a perverse sheaf on $X$. If the stalks of $M$ are concentrated in degree $-d$ and have the same dimension at all closed points, then $M$ is lisse.
	\end{lemma}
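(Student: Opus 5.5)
We will prove Lemma \ref{LEM_PerverseLisseCrit} by showing that all cohomology sheaves of $M$ other than $\mathcal{H}^{-d}(M)$ vanish, and that $F:=\mathcal{H}^{-d}(M)$ is lisse. The argument follows the strategy of \cite[Prop.~6.7.1]{KatzPerversityExpSums}.

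\emph{Step 1: reduce to a single sheaf.} The hypothesis says that every stalk $M_{\overline{x}}$ at a closed point is concentrated in degree $-d$. A constructible sheaf whose stalks vanish at all closed points vanishes, because closed points are dense in every constructible subset. Hence $\mathcal{H}^i(M)=0$ for $i\neq -d$, and $M\cong F[d]$ for a constructible sheaf $F$ all of whose stalks at closed points have the same rank $r$.

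\emph{Step 2: control the generic rank.} Choose a dense open $j\colon V\to X$ on which $F$ is lisse; it has rank $r$ there, since $V$ contains closed points. The key input from perversity is the cosupport condition for $M=F[d]$. The plan is to show that $F$ has no nonzero subsheaf supported on a proper closed subset. Suppose $G\subseteq F$ is a subsheaf supported on a closed $Z$ of dimension $e<d$. Then $G$ contributes to $\mathcal{H}^{-d}(i_Z^!M)$, which must vanish because perversity forces $\mathcal{H}^k(i_Z^!M)=0$ for $k<-e$, and $-d<-e$. It follows that the adjunction map $F\to j_*j^*F$ is injective.

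\emph{Step 3: compare ranks.} For a closed point $x$, consider the composite $F_{\overline{x}}\hookrightarrow (j_*j^*F)_{\overline{x}}\hookrightarrow (j^*F)_{\overline{\eta}}$. The second map is injective because $X$ is geometrically unibranch: the strict henselization at $x$ is irreducible, so the punctured neighborhood $V\times_X \mathrm{Spec}\,\mathcal{O}^{sh}_{X,x}$ is connected, and sections of the lisse sheaf over it inject into the generic stalk. Both ends have dimension $r$, so every map in the composite is an isomorphism. In particular $F_{\overline{x}}\cong (j_*j^*F)_{\overline{x}}$ at all closed points, and therefore $F\cong j_*j^*F$, since two constructible sheaves agreeing at closed points agree, being constructible.

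Moreover, the specialization map from the stalk at $x$ to the generic stalk is an isomorphism at every closed point. This is the step I expect to be the main obstacle: we must deduce that $j^*F$ extends lisse across $Z=X\setminus V$, i.e.\ that the inertia acts trivially. The plan is to use that $(j_*j^*F)_{\overline{x}}$ equals the invariants of the local fundamental group of the connected punctured strict henselization acting on the generic fiber. Full rank of these invariants means the local monodromy is trivial at every closed point. Then a Zariski–Nagata-type purity is not needed, because trivial local monodromy on connected unibranch punctured neighborhoods lets the representation of $\pi_1(V)$ factor through $\pi_1(X)$. Concretely, the finite étale cover trivializing the mod-$\ell^n$ reductions extends to a finite étale cover of $X$ (its normalization is unramified at every closed point, hence everywhere). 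Consequently $j_*j^*F$ is lisse, and so is $F$. If that factorization proves delicate in the non-normal case, we will first pass to the normalization $\nu\colon \tilde X\to X$. Since $X$ is geometrically unibranch, $\nu$ is a universal homeomorphism, so étale sites agree and the question reduces to the normal case.
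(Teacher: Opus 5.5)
Your proof is correct and follows the paper's route. The cosupport condition kills $\mathcal{H}^{-d}(i^!M)$, which makes $\mathcal{H}^{-d}(M)\to \mathcal{H}^{-d}(j_*j^*M)$ injective; geometric unibranchness makes the specialization maps of $j_*j^*M$ injective; and the equal-rank hypothesis then forces the specialization maps of $\mathcal{H}^{-d}(M)$ to be isomorphisms. The step you flagged as the main obstacle is where the paper simply invokes the standard criterion that a constructible sheaf whose specialization maps are all bijective is lisse. Your local-monodromy and extension-of-covers argument (with the reduction to the normal case via the normalization being a universal homeomorphism) re-proves that criterion; it is more than is needed, but it is valid.
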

	\begin{proof}
		Let $U \subseteq X$ be a dense open subset where $M$ is lisse and denote by $Z := X - U$ the complement of $U$. We consider immersions $i\colon Z \rightarrow X$ and $j\colon U \rightarrow X$. We have a triangle
		\[
		i_*i^!M \rightarrow  M \rightarrow j_*j^*M \xrightarrow{+}.
		\] 
		The sheaf $i_*i^!M$ is concentrated in non-negative perverse degrees and is concentrated on $Z$. Hence
		\[
		H^{-d}(i_*i^! M) = 0.
		\]
		In particular, we obtain an injection
		\[
		H^{-d}(M) \rightarrow H^{-d}(j_*j^*M).
		\]
		The specialization maps on $j_*j^*M$ are injective because $X$ is geometrically unibranch. Hence the specialization maps on $H^{-d}(M)$ are injective. Note that all the stalks in $H^{-d}(M)$ have the same dimension, hence the specialization maps are isomorphisms. Thus $H^{-d}(M)$ is lisse and hence $M$ is lisse.
	\end{proof}
	We begin by proving the following Lemma:
	\begin{lemma}\label{LEM_PervRelativePerversity}
		Let $K \in \derCat{c}{X_{\overline{k}}}{\Qbarl}$ be a complex with perverse amplitude $[a, b]$. Let $X \rightarrow Y$ be a morphism to an irreducible scheme $Y$ of dimension $d$. There exists a dense open subset $U \subseteq Y$ such that $i_y^*(M)[-d]$ has perverse amplitude $[a, b]$ for all points $y \in U$. 
	\end{lemma}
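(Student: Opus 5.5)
The plan is to reduce to the case where $Y$ is smooth and the constant sheaf on $Y$ behaves well, and then apply generic base change together with the fact that, over a dense open of a smooth base, restriction to fibres shifts perversity by exactly the relative dimension. First, replace $Y$ by its reduction and then shrink it to a dense open subset $U_0$ such that $U_0$ is smooth over $\overline{k}$ (possible since $Y_{\text{red}}$ is generically smooth over a perfect field). By the Leray spectral sequence argument and the perverse truncation triangles, it suffices to treat a single perverse sheaf $M$ (the case $a=b$). Indeed, the bounds $[a,b]$ for $K$ follow from the bounds for each $\pervCoh{n}{K}$ by induction on the number of nonzero perverse cohomology sheaves. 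The reason is that the functor $i_y^*[-d]$, once shown to be $t$-exact on each of the finitely many $\pervCoh{n}{K}$ over a common dense open, preserves the triangles $\pvTRC{<n}K \to \pvTRC{\leq n}K \to \pervCoh{n}{K}[-n]$.

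For a single perverse sheaf $M$ on $X$, the intended argument is to show that $i_y^*M[-d]$ is perverse for $y$ in a dense open. I would use generic local acyclicity (Deligne's generic base change theorem, the same input as \cite[Cor.~9.3.4]{LeiFu} invoked earlier). It provides a dense open $U \subseteq U_0$ over which $M|_{X_U}$ is universally locally acyclic relative to $X_U \to U$. For a ULA complex over a smooth base of dimension $d$, Proposition \ref{PROP_GysinIsomorphismULA} gives $i_y^!M \cong i_y^*M \otimes i_y^!\Qbarl = i_y^*M(-d)[-2d]$ for closed points $y$ (and similarly at geometric points via the remark defining $i_y^!$). The standard bounds, obtained from a decomposition into a regular immersion of codimension $d$, say that $i_y^*[-d]$ has perverse amplitude $\leq 0$ and $i_y^![d]$ has perverse amplitude $\geq 0$. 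Combined with the isomorphism $i_y^*[-d] \cong i_y^![d](d)$, this forces $i_y^*M[-d]$ to be perverse. Shrinking $U$ finitely many times handles the finitely many $\pervCoh{n}{K}$, giving the lemma.

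The main obstacle I expect is the amplitude bounds for $i_y^*$ and $i_y^!$ at non-closed points and the ULA input. The cleanest route is probably not to cite the bounds abstractly but to stratify instead. Choose a stratification of $X$ adapted to $M$ by smooth strata $X_\alpha$ on which the cohomology sheaves of $M$ are lisse, and shrink $U$ so that each $X_\alpha \cap X_U \to U$ is smooth with fibres of pure relative dimension $\dim X_\alpha - d$ (generic smoothness), or empty. Then the support and cosupport conditions for $i_y^*M[-d]$ on the fibre strata follow from those for $M$ by a dimension count. The cosupport condition is where ULA (equivalently, the generic base change isomorphism $i_y^*\,j_{\alpha*} \cong j_{\alpha*}\,i_y^*$ on the strata) is genuinely needed. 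Checking this dimension count carefully is the step I would spend the most effort on.
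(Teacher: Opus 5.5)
Your argument is essentially the paper's: reduce to a smooth base (the paper uses Noether normalization to $\mathbb{A}^d$ instead of generic smoothness), pass to a dense open over which $K$ is universally locally acyclic, use Proposition \ref{PROP_GysinIsomorphismULA} to get $i_y^!K\cong i_y^*K(-d)[-2d]$, and intersect the amplitude bounds for $i_y^*$ and $i_y^!$. Your reduction to a single perverse sheaf is harmless but unnecessary, since the comparison works directly for $K$ of amplitude $[a,b]$.

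One correction: you have the bounds the wrong way round. For the codimension-$d$ regular immersion of a fibre, $i_y^*[-d]$ sends perverse sheaves into perverse degrees $[0,d]$ and $i_y^![d]$ sends them into $[-d,0]$; a skyscraper at a point of the fibre shows this. The argument is unaffected, because the isomorphism $i_y^*M[-d]\cong i_y^!M[d](d)$ still confines both to degree $0$.
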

	\begin{proof} We can assume $Y$ is affine. By Noether normalization, we can assume $Y = \BA^d$. The problem is Zariski local on $X$, hence we can assume $X \rightarrow Y$ is affine. There is a closed embedding $X \rightarrow \BA^N_Y$, hence we can assume $X = \BA^N_Y$. There exists a dense open subset $U \subseteq Y$ over which $K$ is  universally locally acyclic. Proposition \ref{PROP_GysinIsomorphismULA} implies that the canonical map
		\[
		i_y^*(M)\otimes i_y^!(\Qbarl) \rightarrow i_y^!(M)
		\]
		is an isomorphism for all $y \in U$. Let $y \in U$. Artin's vanishing theorem (and the existence of systems of parameters) implies that the cohomological amplitude of $i_y^*$ is $[-d, -d+c]$ and the cohomological amplitude of $i_y^!$ is $[-d + c, -d + 2c]$. Smoothness implies
		\[
		i_y^!(\Qbarl) = \Qbarl(-c)[-2c].
		\]
		The statement follows by comparing the cohomological amplitude of the left- and the right-hand side of the canonical map.  

	\end{proof}
	We note two consequences of this lemma.
	\begin{proposition}\label{PROP_PervRelativePerversity}
		Let $K \in \derCat{c}{X}{\Qbarl}$ have perverse amplitude $[a, b]$ and $X \rightarrow Y$ a morphism. Suppose $Y$ has pure dimension $d$. There is a stratification
		\[
		Y = Z_{0} \supseteq Z_1 \supseteq \ldots \supseteq Z_{d - 1}\supseteq Z_d = \emptyset
		\]
		by closed subsets with the following properties. 
		\begin{enumerate}
			\item For all $0 \leq i \leq d$ the subset $Z_{i}$ has codimension $\geq i$. 
			\item For all $0 \leq i \leq d - 1$ and all points $y \notin Z_{i + 1}$ the complex $i_y^*(M)$ has amplitude $[-d + a, -d + i + b]$. 
		\end{enumerate}
	\end{proposition}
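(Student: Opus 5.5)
We would prove Proposition \ref{PROP_PervRelativePerversity} by Noetherian induction on $Y$, using Lemma \ref{LEM_PervRelativePerversity} as the generic input at each step and a crude uniform bound on the special loci. Concretely, we build the chain $Z_0 \supseteq Z_1 \supseteq \cdots$ one step at a time, aiming to satisfy the invariant that, for every point $y \notin Z_{i+1}$, the complex $i_y^*(K)$ has amplitude $[-d+a, -d+i+b]$.

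\emph{Step 1.} Put $Z_0 := Y$. Applying Lemma \ref{LEM_PervRelativePerversity} to each irreducible component of $Y$ (all of dimension $d$), we get a dense open subset $U_0 \subseteq Y$ on which $i_y^*(K)[-d]$ has perverse amplitude $[a,b]$. This gives the stratum with $i=0$, since it shifts to amplitude $[-d+a,-d+b]$. We then set $Z_1 := Y - U_0$, which has codimension $\geq 1$.

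\emph{Inductive step.} Suppose $Z_i$ has been constructed with codimension $\geq i$. We decompose $Z_i$ into irreducible components; each has dimension $\leq d - i$. Consider the restriction $K_i := K|_{X\times_Y Z_i}$. Pulling back along the closed immersion of the preimage of $Z_i$ is right $t$-exact and has left amplitude bounded by the codimension, in the standard BBD estimate for $i^*$ of a perverse sheaf. Hence $K_i$ has perverse amplitude $[a - i', b]$ for a suitable shift, and after the dimension normalisation the upper bound degrades by at most $i$. This is exactly where the $+i$ in $[-d+a, -d+i+b]$ comes from.

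Now apply Lemma \ref{LEM_PervRelativePerversity} to $K_i$ over each component of $Z_i$. On a dense open subset of each component, the fibres $i_y^*(K_i)$ are controlled by the dimension of that component, which is at most $d-i$. Converting back to the normalisation by $d$ shifts the lower bound in the favourable direction and the upper bound by at most $i$. We define $Z_{i+1}$ to be the union of the complements of these dense opens. It has codimension $\geq i+1$, and on $Z_i - Z_{i+1}$ the required amplitude holds. Points outside $Z_i$ are already handled by the previous steps, and the amplitude ranges only widen as $i$ grows, so the invariant continues to hold for all $y \notin Z_{i+1}$. The chain terminates once the codimension exceeds $d$, so $Z_d = \emptyset$ after at most $d+1$ steps; re-indexing if necessary puts it in the stated form.

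The main obstacle is the bookkeeping of the shift when we pass to the closed stratum. On one side, the perverse amplitude of $K|_{Z_i}$ must be compared with that of $K$. On the other, Lemma \ref{LEM_PervRelativePerversity} normalises by the dimension of the base, which is now $\dim Z_i$ rather than $d$. To handle this, we would first apply the bound that, for a closed immersion of codimension $c$, $i^*$ sends ${}^pD^{\leq b}$ to ${}^pD^{\leq b}$ and ${}^pD^{\geq a}$ to ${}^pD^{\geq a-c}$. We would then check that the two shifts, $-\dim Z_i$ versus $-d$ in the lemma and the fibre dimension in its proof via $i_y^*$ having amplitude governed by Artin vanishing, combine so that the lower bound $-d+a$ is preserved and the upper bound increases by exactly $i$. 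If this combination does not work out cleanly, the fallback is the cruder route: read off the ordinary cohomological amplitude of stalks directly from the standard support conditions for ${}^pD^{\leq b}$ restricted to strata of dimension $\leq d-i$.
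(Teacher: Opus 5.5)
Your scheme (generic lemma, pass to the bad locus, bound the restricted complex, repeat) has the same shape as the paper's. However, the estimate that carries all the content is false as you state it. For a closed immersion of codimension $c$, $i^*$ does \emph{not} in general send ${}^pD^{\geq a}$ to ${}^pD^{\geq a-c}$, and this is not a standard BBD estimate. For a counterexample, let $Y=X=P_1\cup P_2\subset\BA^4$ be two planes meeting only at $0$, so $Y$ is pure of dimension $2$. Let $L\subset P_1$ be a line through $0$, of codimension $1$, and let $K=\Qbarl_{P_2}[2]$, which is perverse. Then $K|_L=\Qbarl_{\{0\}}[2]$ lies in perverse degree $-2$.

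What Artin vanishing actually gives is that $i^*$ has perverse amplitude $[-1,0]$ when the complementary open immersion is affine. This holds, for instance, for the preimage of a locally principal closed subset of $Y$; by a \v{C}ech argument one also gets $[-c,0]$ when the subset is set-theoretically cut out by $c$ functions. This is the idea you are missing, and it is what the paper uses:
\begin{enumerate}
\item After reducing to $Y=\BA^d$ by Noether normalization, the bad locus from Lemma~\ref{LEM_PervRelativePerversity} can be enlarged to a hypersurface $Z$.
\item The preimage of $Z$ in $X$ is locally principal, so $K|_{X_Z}$ has perverse amplitude $[a-1,b]$.
\item Since $Z$ is pure of dimension $d-1$, induction on $d$ costs exactly one unit in the upper bound per step.
\end{enumerate}
Alternatively, you could repair your version by working, for each component $W$ of codimension $c$, on a neighbourhood of its generic point where $W$ is cut out by a system of parameters of $\mathcal{O}_{Y,\eta_W}$.

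Second, your bookkeeping of the upper bound runs the wrong way. A component of $Z_i$ of dimension $e<d-i$ has generic fibres with upper bound $-e+b=-d+(d-e)+b$, which is \emph{worse} than $-d+i+b$. The loss equals the actual codimension, so it is at least $i$, not at most $i$. For example, take $K=\delta_0$ on $\BA^2$. If the lemma returns $U_0=\BA^2-\{0\}$, your recipe gives $Z_1=\{0\}$ and $Z_2=\emptyset$, and predicts amplitude $[-2,-1]$ at $0$. But the fibre there is $\Qbarl$ in degree $0$. Components of $Z_i$ of codimension $>i$ must therefore be placed in $Z_{i+1}$. In the paper this never arises, because each stratum is a hypersurface in the previous one and hence pure of the expected dimension.

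Your fallback does not rescue the argument. Support conditions, combined with a fibre-dimension count, can give the upper bound. They say nothing about the lower bound $-d+a$, which is exactly where the restriction estimate is needed.

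Finally, no re-indexing can yield $Z_d=\emptyset$, because the statement as printed is off by one. For $\delta_0$ on $\BA^1$, the fibre at $0$ is not in $[-1,-1]$. What your construction and the paper's proof (whose $Z_0$ is already the first divisor) actually produce is a chain ending with $Z_{d+1}=\emptyset$ and $0\le i\le d$.
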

	\begin{proof}
	We can assume $Y$ is affine.  By Noether normalization, we can assume $Y = \BA^d$.  By Lemma \ref{LEM_PervRelativePerversity}, there is a divisor $Z_0 \subseteq Y$ such that $i_x^*(M)$ has perverse amplitude $[-d + a, -d + b]$ for all $x \notin Z_0$. By Artin vanishing, we can pass to $Z_0$ and proceed by induction on dimension because $Z_0$ pulls back to a locally principal subset on $X$.
	\end{proof}
	When the complex is an irreducible perverse sheaf, we obtain an improved stratification. This is not required in the text, but perhaps it is useful in another context.
	\begin{corollary}\label{COR_PervRelPerversity}
		Let $M \in \Perv{}{X}$ be an irreducible perverse sheaf and $X \rightarrow Y$ a morphism. Suppose $Y$ has pure dimension $d$ and the support of $M$ maps densely onto $Y$. There is a stratification
		\[
		Y = Z_0 \supset Z_1 \supseteq \ldots \supseteq Z_{d - 2} \supseteq Z_{d - 1}\supseteq \emptyset
		\]
		by closed subsets with the following properties. 
		\begin{enumerate}
			\item For all $1 \leq i \leq d - 1$ the subset $Z_{i}$ has codimension $\geq i + 1$. 
			\item For all $0 \leq i \leq d - 2$ and all points $y \notin Z_{i + 1}$ the complex $i_y^*(M)$ has amplitude $[-d, -d + i]$. 
		\end{enumerate}
	\end{corollary}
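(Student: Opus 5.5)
Since $M$ is irreducible, the derived stalks are controlled by the intermediate extension of a lisse sheaf on a locally closed stratum of $X$, and the aim is to gain one degree over Proposition \ref{PROP_PervRelativePerversity} by using that a perverse sheaf with no constituents supported on a smaller locus has no top-degree contributions there. I will reduce as in Proposition \ref{PROP_PervRelativePerversity}: work locally, so assume $Y$ affine, and by Noether normalization replace $Y$ by $\BA^d$. The composite still maps the support densely, and codimensions are preserved under finite surjections. Write $M = j_{!*}\SL[\dim W]$ for a smooth dense open $W$ of the support $\overline{W}$, with $\SL$ irreducible lisse.

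\textbf{Step 1 (generic stratum).} By Lemma \ref{LEM_PervRelativePerversity} with $[a,b]=[0,0]$, there is a dense open $U_0 \subseteq Y$ over which $i_y^*(M)$ has amplitude $[-d,-d]$. This gives the condition for $i = 0$ on the complement of $Z_1$.

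\textbf{Step 2 (codimension gain).} I will take $Z_1 := Y - U_0$ and then shrink it to something of codimension $\geq 2$. Since $\overline{W}\to Y$ is dominant, after shrinking $U_0$ I can take the generic points of $Z_1$ to be codimension-one points $\eta$ of $Y$. The key claim is that at such $\eta$, the restriction $i_\eta^*M$ still has amplitude $[-d,-d]$ generically along $\eta$ (i.e.\ on a dense open of the divisor), so the bad locus has codimension $\geq 2$.

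To prove the claim, I will use Artin vanishing: $i_D^*M$ has perverse amplitude $[-1,0]$ for the Cartier divisor $D$ over $\eta$. Its $\pervCoh{0}{}$ is then handled as follows. Irreducibility of $M$ and the intermediate extension condition forbid a quotient of $M$ supported on $D$, and $\pervCoh{0}{i_D^*M}$ is such a quotient via adjunction, so $\pervCoh{0}{i_D^*M}=0$. Hence $i_D^*M[-1]$ is perverse on $D$, and applying Lemma \ref{LEM_PervRelativePerversity} on $D$ (of dimension $d-1$) gives fiber amplitude exactly $[-d,-d]$ on a dense open of $D$.

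\textbf{Step 3 (induction).} For the deeper strata I will induct as in Proposition \ref{PROP_PervRelativePerversity}. On $Z_{i}$, the restriction of $M$ has perverse amplitude widened by Artin vanishing, each cut by a locally principal subset contributing one degree. Because the first cut gained a codimension, the index shifts by one: the complement of $Z_{i+1}$ has fibers of amplitude $[-d,-d+i]$ with $\operatorname{codim} Z_i\geq i+1$.

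The main obstacle is Step 2: justifying that $\pervCoh{0}{i_D^*M}$ vanishes, i.e.\ that $M$ has no quotient supported on $D$. This needs $D$ not to contain $\overline{W}$, which is exactly where density of the support is used. I would first try the adjunction $M \to i_{D*}\pervCoh{0}{i_D^*M}$ together with the defining property of $j_{!*}$ that it admits no nonzero quotients supported off $W$. For this to apply, $D$ must be chosen to meet $\overline{W}$ properly, and if necessary I will shrink $W$ to $W - D$, which is dense.
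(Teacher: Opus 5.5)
Your proposal is correct and is essentially the paper's argument. The paper also kills $\pervCoh{0}{i_D^*M}$ on the closure $D$ of a codimension-one point, using Artin vanishing and the irreducibility of $M$; it only phrases this as a contradiction with the constructible function $\varphi(x)=\max\{n \mid \pervCoh{n}{i_x^*M}\neq 0\}$. It then gets generic amplitude $[-d,-d]$ along $D$ from Lemma \ref{LEM_PervRelativePerversity} and finishes with Proposition \ref{PROP_PervRelativePerversity} on an enlargement of the bad locus.

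Your Step 3 is cleanest if you take $D$ to be the whole principal divisor $Y-U_0$ and apply Proposition \ref{PROP_PervRelativePerversity} directly to the perverse sheaf $i_D^*M[-1]$ over $D$. This gives the shifted indices and the codimension bounds at once.
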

	\begin{proof}
		We can assume $Y$ is affine. By Noether normalization, we can assume $Y = \BA^n$. Lemma \ref{LEM_PervRelativePerversity} proves that the function
		\[
		\varphi(x) := \text{max}\{n \in \BZ ~|~ \pervCoh{n}{i_x^*(M)} \neq 0\}
		\]
		is constructible and generically admits the value $-d$. We consider the set
		\[
		Z_0' := \{x \in X~|~ \varphi(x) > - d\}.
		\]
		The set $Z_0'$ is locally closed. Suppose there is a point $y \in Z_0'$ of codimension $1$ in $Z_0'$. There is an open subset $U \subseteq Z_0'$  with $y \in U$ such that 
		\[
		\pervCoh{-n + 1}{i_x^*M} \neq 0
		\]
		for all $x \in U$. Let $D \subset Y$ be the closure of $y$. Lemma \ref{LEM_PervRelativePerversity} implies 
		\[
		\pervCoh{0}{i_D^*(M)} \neq 0.
		\]
		This contradicts the irreducibility of $M$ by Artin vanishing. We can apply Proposition \ref{PROP_PervRelativePerversity} to $M$ restricted to a suitable enlargement of $Z_0'$. 
	\end{proof}